\definecolor{gray}{gray}{0}
\numberwithin{equation}{chapter}
\theoremstyle{plain}
\newtheorem{theorem}{Theorem}[section]
\newtheorem{proposition}[theorem]{Proposition}
\newtheorem{corollary}[theorem]{Corollary}
\theoremstyle{definition}
\newtheorem{Problem}[theorem]{Problem}
\theoremstyle{remark}
\newtheorem{remark}[theorem]{Remark}
\newtheorem{problem}[theorem]{Problem}
\numberwithin{equation}{section}
\DeclareMathAlphabet{\mathpzc}{OT1}{pzc}{m}{it}
 \newcommand{\cE}{\mathcal{E}}
 \newcommand{\cL}{\mathcal{L}}
 \newcommand{\cX}{\mathcal{X}}
 \newcommand{\cY}{\mathcal{Y}}
 \newcommand{\cZ}{\mathcal{Z}}
 \newcommand{\sC}{\mathscr{C}}
 \newcommand{\sH}{\mathscr{H}}
 \newcommand{\sL}{\mathscr{L}}
\newcommand{\E}{{\mathsf{E}}}
\newcommand{\TF}{{\mathsf{TF}}}
\newcommand{\sfH}{{\mathsf{H}}}
\newcommand{\Scott}{{\mathsf{Scott}}}
\newcommand{\Dirac}{{\mathsf{Dirac}}}
\newcommand{\Schwinger}{{\mathsf{Schwinger}}}
\newcommand{\D}{{\mathsf{D}}}
\newcommand{\N}{{\mathsf{N}}}
\newcommand{\W}{{{\mathsf{W}}}}
\newcommand{\y}{{\mathsf{y}}}
\newcommand{\const}{{\mathsf{const}}}
\newcommand{\dist}{{{\mathsf{dist}}}}
\newcommand{\bC}{{\mathbb{C}}}
\newcommand{\bR}{{\mathbb{R}}}
\newcommand{\bZ}{{\mathbb{Z}}}
\newcommand{\fH}{{\mathfrak{H}}}
\newcommand{\boldupsigma}{{\boldsymbol{\upsigma}}}
\def\1{\boldsymbol {|}}
\newcommand{\3}{{|\!|\!|}}
\newcommand{\Def}{\mathrel{\mathop:}=}
\newcommand{\Hess}{\operatorname{Hess}}
\newcommand{\mes}{\operatorname{mes}}
\renewcommand{\Re}{\operatorname{Re}}       
\newcommand{\Spec}{\operatorname{Spec}}
\newcommand{\supp}{\operatorname{supp}}
\newcommand{\tr}{\operatorname{tr}}
\newcommand{\Tr}{\operatorname{Tr}}
\newenvironment{claim}[1][{\textup{(\theequation)}}]{\refstepcounter{equation}\vglue10pt
\begin{trivlist}
\item[{\hskip\labelsep#1}]}{\vglue10pt\end{trivlist}}
\newenvironment{claim*}[1][{}]{\vglue10pt
\begin{trivlist}
\item[{\hskip\labelsep#1}]}{\vglue10pt\end{trivlist}}
\newenvironment{phantomequation}[1][]{\refstepcounter{equation}}{}
\newcounter{note}
\DeclareTextCommand{\textbeta}{PU}{\83\262}
\DeclareTextCommand{\textmu}{PU}{\80\265}
\DeclareTextCommand{\texttau}{PU}{\83\304}
\DeclareTextCommand{\textlesssim}{PU}{\9042\162}
\DeclareTextCommand{\textgtrsim}{PU}{\9042\163}
\DeclareTextCommand{\textpartial}{PU}{\9042\002}
\DeclareTextCommand{\texttwosuperior}{PU}{\80\262}
\DeclareTextCommand{\textGamma}{PU}{\83\223}
\DeclareTextCommand{\textxinferior}{PU}{\9040\223}
\DeclareTextCommand{\textiinferior}{PU}{\9035\142}
\DeclareTextCommand{\textjinferior}{PU}{\9054\174}
\DeclareTextCommand{\textge}{PU}{\9042\145}
\DeclareTextCommand{\textle}{PU}{\9042\144}
\DeclareTextCommand{\texthat}{PD1}{\136}
\begin{document}
\title{Asymptotics of the ground state energy of heavy atoms and molecules in combined magnetic field}
\author{Victor Ivrii}

\maketitle
{\abstract%
We consider asymptotics of the ground state energy of heavy atoms  in the combined magnetic field and derive it including Scott, and in some cases even Schwinger and Dirac corrections (if magnetic field is not too strong).

In the next versions we will consider also molecules and  related topics:  an excessive negative charge, ionization energy and excessive positive charge when atoms can still bind into molecules.
\endabstract}

\chapter{Introduction}
\label{sect-27-1}

In this Chapter instead of Schr\"odinger operator without magnetic field as in Chapter~\ref{book_new-sect-24}, or with a constant magnetic field as in Chapter~\ref{book_new-sect-25}, or with a self-generated magnetic field as in Chapter~\ref{book_new-sect-26} we consider Schr\"odinger operator (\ref{book_new-26-1-1})
with unknown magnetic field $A$ but then we add to the ground state energy of the atom (or molecule) the energy of the self-generated magnetic field (see selected term in (\ref{27-1-1}) thus arriving to
\begin{equation}
\E(A)= \inf \Spec (\sfH_{A,V} ) +
\underbracket{\alpha^{-1} \int |\nabla \times (A-A^0)|^2\,dx}
\label{27-1-1}
\end{equation}
where $A^0=\frac{1}{2} B (-x_2,x_1,0)$ is a constant \index{magnetic field!external}\emph{external magnetic field\/}.

Then finally
\begin{equation}
\E ^*=\inf_{A-A^0\in \sH^1_0} \E(A)
\label{27-1-2}
\end{equation}
defines a ground state energy with a \index{magnetic field!combined}\emph{combined magnetic field\/} $A$ while $A'\Def A-A^0$ is \index{magnetic field!self-generated}\emph{self-generated magnetic field\/}.

Note that
\begin{equation}
\int |\nabla \times (A-A^0)|^2\,dx =
\int \bigl(|\nabla \times A|^2-|\nabla \times A^0|^2\bigr) \,dx
\label{27-1-3}
\end{equation}
which seems to be a more ``physical'' definition.

\subsection*{Plan of the Chapter}
\label{sect-27-1-a}

First of all, we are lacking so far a \emph{semiclassical local theory\/} and we are developing it in Sections~\ref{sect-27-2}--\ref{sect-27-4} where we consider one-particle quantum Hamiltonian (\ref{27-2-1}) with an external constant magnetic field $A^0$ of intensity $\beta$, $h\ll 1$ and a self-generated magnetic field $(A-A^0)$. Here theory significantly depends if
$\beta h\lesssim 1$ or $\beta h\gtrsim 1$ and with self-generated magnetic field.

While Section~\ref{sect-27-2} is preparatory and rather functional-analytical, Sections~\ref{sect-27-3} and~\ref{sect-27-4} are microlocal; they cover cases $\beta h\lesssim 1$ and $\beta h\gtrsim 1$ respectively. These three sections are similar to a single Section~\ref{book_new-sect-26-2}. However in Sections~\ref{sect-27-3} and~\ref{sect-27-4} various non-degeneracy assumptions play a very significant role, especially for large $\beta$.

Then in Section~\ref{sect-27-5} we consider a global theory if a potential has Coulomb singularities and (in some statements) behaves like (magnetic) Thomas-Fermi potential both near singularities and far from them.

Finally, in Section~\ref{sect-27-6} we apply these results to our original problem of the ground state energy so far assuming that the number of nuclei is $1$. One can recover the same results as $M\ge 2$ but the external magnetic field $B$ is weak enough. No surprise that the theory is different in the cases $B\le Z^{\frac{4}{3}}$ and $Z^{\frac{4}{3}}\le B\ll Z^3$ (see Chapter~\ref{book_new-sect-25} where this difference appears). Since as $M=1$ the strongest non-degeneracy assumption is surely achieved and as $M\ge 2$ much weaker non-degeneracy assumption is achieved in the border zone (see Chapter~\ref{book_new-sect-25}) our remainder estimates for large $B$ significantly differ in the atomic and molecular cases.

In Appendix~\ref{sect-27-A} we first generalize Lieb-Loss-Solovej estimate to the case of the combined magnetic field (which is necessary as
$\beta h\gtrsim 1$), then establish electrostatic inequality in the current settings and finally study very special pointwise spectral expressions for a Schr\"odinger operator in $\bR^3$ with linear magnetic and scalar potentials (we considered such operators already in Section~\ref{book_new-sect-16-5})

\subsection*{Unfinished business}
\label{sect-27-1-b}

One can apply these results to estimates of the excessive positive and negative charges (latter--as $M\ge 2$ in the free nuclei framework) and estimates or asymptotics of the ionization energy in the same manner as we did it in Chapters~\ref{book_new-sect-24}--\ref{book_new-sect-26}; however there are no new ideas but rather tedious calculations and we leave it to those readers who decide to explore these topics, which is clearly serious task.

\chapter{Local semiclassical trace asymptotics: Preparation}
\label{sect-27-2}

\section{Toy-model}
\label{sect-27-2-1}

Let us consider operator (\ref{book_new-26-1-4})
\begin{equation}
H=H _{A,V}=\bigl((h D -A)\cdot \boldupsigma \bigr) ^2-V(x)
\label{27-2-1}
\end{equation}
in $\bR^3$ where $A,V$ are real-valued functions and
$V\in \sC^{\frac{5}{2}}$, $A-A^0\in \sH^1_0$. Then operator $H_{A,V}$ is self-adjoint. We are interested in $\Tr^- (H_{A,V})= \Tr^- (H_{A,V}^-)$ (the sum of all negative eigenvalues of this operator). Let
\begin{gather}
\E^*=\E ^*_\kappa\Def \inf_{A-A^0\in \sH^1_0}\E(A),\label{27-2-2}\\
\shortintertext{where}
\E(A)=\E_\kappa (A)\Def \Bigl( \Tr^-(H_{A,V} ) +
\kappa^{-1} h^{-2}\int |\partial (A-A^0)|^2\,dx\Bigr)
\label{27-2-3}
\end{gather}
with $\partial A=(\partial_i A_j)_{i,j=1,2,3}$ a matrix. Recall that $A^0$ is a linear potential, $A^0(x)=\frac{1}{2}\beta (-x_2,x_1,0)$. We consider rather separately cases\begin{phantomequation}\label{27-2-4}\end{phantomequation}
\begin{equation}
\beta h\lesssim 1 \qquad \text{and} \qquad \beta h\gtrsim 1
\tag*{$\textup{(\ref*{27-2-4})}_{1,2}$}\label{27-2-4-*}
\end{equation}
of the \index{magnetic field!external!moderate}\emph{moderate\/} and \index{magnetic field!external!strong}\emph{strong\/} external magnetic field.

To deal with the described problem we need to consider first a formal semiclassical approximation.

\section{Formal semiclassical theory}
\label{sect-27-2-2}

\subsection{Semiclassical theory: \texorpdfstring{$\beta h\lesssim 1$}{\textbeta h \textlesssim 1}}
\label{sect-27-2-2-1}

Let us replace trace expression $\Tr (H^-_{A,V}\psi )$ by its magnetic semiclassical approximation
$-h^{-3}\int P_{B h}(V)\psi \,dx$ where
$B=|\nabla \times A|$ is a scalar intensity of the magnetic field and $P_*(.)$ is a pressure. Then
$\E(A)\approx \cE(A)$ with
\begin{equation}
\cE(A)=\cE_\kappa (A)\Def
-h^{-3}\int P_{Bh}(V)\psi \,dx + \frac{1}{\kappa h^2} \int |\partial A'|^2\,dx.
\label{27-2-5}
\end{equation}
Assuming that $|\partial A'|\ll \beta$, $A'=(A-A^0)$ we find out that

\noindent\begin{minipage}{1.1\linewidth}
\begin{multline}
-h^{-3}\int P_{B h}(V)\psi \,dx
\approx -h^{-3}\int \Bigl(P_{\beta h}(V)-
\partial_\beta P_{\beta h} (V)(B-\beta)\psi\Bigr) \,dx \\
\begin{aligned}
&\approx -h^{-3}\int P_{\beta h}(V)\psi\,dx \\
&-h^{-3}\int \Bigl[\partial_{x_2} \bigl(
\partial_\beta P_{\beta h}(V)\psi\bigr)\cdot A'_1-
\partial_{x_1} \bigl(
\partial_\beta P_{\beta h}(V)\psi\bigr) \cdot A'_2\Bigr]\,dx
\end{aligned}
\label{27-2-6}
\end{multline}\end{minipage}

\bigskip\noindent
where we used that $B\approx \beta - \partial_{x_2}A'_1+\partial_{x_1}A'_2$ and integrated by parts. Then $\cE (A) \approx \bar{\cE}(A)$ with
\begin{multline}
\bar{\cE}(A)=\bar{\cE}_\kappa (A)\Def
-h^{-3}\int P_{\beta h}(V)\psi\,dx \\
-h^{-3}\int \Bigl[\partial_{x_2} \bigl(
\partial_\beta P_{\beta h}(V)\psi\bigr) \cdot A'_1-
\partial_{x_1} \bigl(
\partial_\beta P_{\beta h}(V)\psi\bigr) \cdot A'_2\Bigr]\,dx +
\frac{1}{\kappa h^2} \int |\partial A'|^2\,dx
\label{27-2-7}
\end{multline}
and replacing approximate equalities by exact ones and optimizing with respect to $A'$ we arrive to
\begin{multline}
\Delta A'_1= -\frac{1}{2}\kappa h^{-1}
\partial_{x_2}\bigl(\partial_\beta P_{\beta h}(V)\psi\bigr), \quad
\Delta A'_2=
\frac{1}{2} \kappa h^{-1}\partial_{x_1} \bigl(\partial_\beta P_\beta(V)\psi\bigr),\\
\Delta A'_3=0
\label{27-2-8}
\end{multline}
and
\begin{equation}
\cE^*_\kappa\Def \inf _{A:\, A-A^0\in \sH^1_0} \cE_\kappa(A) \approx \bar{\cE}^*_\kappa \Def \inf _{A:\, A-A^0\in \sH^1_0} \bar{\cE}_\kappa(A).
\label{27-2-9}
\end{equation}

To justify our analysis we need to justify approximate equality
\begin{multline}
-h^{-3}\int P_{Bh}(V)\,dx + \frac{1}{\kappa h^2} \int |\partial A'|^2\,dx \\
\approx -h^{-3}\int P_{\beta h}(V)\psi dx-
h^{-3}\int \Bigl[\partial_\beta P_{\beta h}(V)\psi
(-\partial_{x_2} A'_1+ \partial_{x_1} A'_2)\Bigr]\,dx+\\
\frac{1}{\kappa h^2} \int |\partial A'|^2\,dx
\label{27-2-10}
\end{multline}
and estimate an error when we minimize the right-hand expression instead of the left-hand one. To do this observe that (even without assumptions $Bh\lesssim 1$,
$\beta h\lesssim 1$)
\begin{multline}
|P_{Bh}(V)-P_{\beta h}(V)- \partial_{\beta h} P_{\beta h} (V) \cdot (B-\beta)h |\le\\
C(B-\beta)^2h^2+ C|B-\beta|^{\frac{3}{2}}\beta h^{\frac{5}{2}}.
\label{27-2-11}
\end{multline}
Indeed, one can prove it easily recalling that
\begin{equation}
P_{\beta h} (V)=\varkappa_0 \sum_{j\ge 0} (1-\frac{1}{2}\updelta_{j0})
(V-2j \beta h)_+^{\frac{3}{2}} \beta h
\label{27-2-12}
\end{equation}
and considering cases $\beta h \gtrless 1$, $B h \gtrless 1$,
$|B -\beta | \gtrless \beta h$, and considering different terms in (\ref{27-2-12}) and observing that the last term in (\ref{27-2-11}) appears only in the case $\beta h\lesssim 1$, $|B-\beta|\lesssim \beta $.

Then since $|B-\beta |\le |B'|$ (where $B'=|\partial (A-A^0)|$) we conclude that the left-hand expression of (\ref{27-2-10}) is greater than
\begin{equation*}
-h^{-3}\int P_{\beta h}(V)\psi dx- C\|B'\| \beta h^{-1} - C\|B'\|^{\frac{3}{2}}\beta h^{-\frac{1}{2}} + \kappa^{-1}h^{-2}\|B'\|^2
\end{equation*}
where we used that
\begin{equation}
| \partial_{\beta h} P_{\beta h} (V) |\le C\beta h\qquad \text{as\ \ } V\le c;
\label{27-2-13}
\end{equation}
then a minimizer for the left-hand expression of (\ref{27-2-10}) must satisfy
\begin{equation}
\|B'\|\le C\kappa \beta h
\label{27-2-14}
\end{equation}
and one can observe easily that the same is true and for the minimizer for the right-hand expression as well.

Also observe that
\begin{equation}
B=\beta + \partial_{x_1}A'_2-\partial_{x_2}A'_1+ O(\beta^{-1}|B'|^2).
\label{27-2-15}
\end{equation}
Then for both minimizers the difference between the left-hand and right-hand expressions of (\ref{27-2-10}) does not exceed $C\kappa^{\frac{5}{2}}\beta^{\frac{5}{2}}$ and therefore
\begin{equation}
|\cE^* -\bar{\cE}^*|\le C\kappa^{\frac{5}{2}}\beta^{\frac{5}{2}}.
\label{27-2-16}
\end{equation}

One can calculate easily the minimizer for
\begin{equation}
-h^{-3}\int \Bigl[\partial_\beta P_{\beta h}(V)\psi
(-\partial_{x_2} A'_1+ \partial_{x_1} A'_2)\Bigr]\,dx+
\frac{1}{\kappa h^2} \int |\partial A'|^2\,dx
\label{27-2-17}
\end{equation}
and conclude that $A'_j=\kappa \beta h a_j$ with $a_3=0$ and
\begin{equation}
\Delta a_1=- (\beta h)^{-1}\partial{x_2}\partial _{\beta h} P_{\beta h}(V),\quad
\Delta a_2= (\beta h)^{-1}\partial{x_1}\partial _{\beta h} P_{\beta h}(V)
\label{27-2-18}
\end{equation}
and the minimum is negative and $O(\kappa \beta^2)$; we call it \emph{correction term\/}; in fact,
$\|\partial A'\|\asymp \kappa \beta h$ and the minimum is
$\asymp -\kappa \beta^2$ in the generic case.

Then a minimum of the left-hand expression of (\ref{27-2-10}) is equal to the minimum of the right-hand expression modulo
$O(\kappa^{\frac{3}{2}}\beta^{\frac{5}{2}} h)$.

\begin{remark}\label{rem-27-2-1}
\begin{enumerate}[label=(\roman*), fullwidth]
\item\label{rem-27-2-1-i}
One can improve this estimate under non-degeneracy assumptions (\ref{27-3-60}) or (\ref{27-3-65}). However even in the general case observe that
\begin{multline*}
\hskip-.1in
\cE(A'')-\cE (A') \\
\ge -C\beta h^{-\frac{1}{2}}\|B'-B''\|^{\frac{3}{2}} -
C\beta h^{-\frac{1}{2}}\|B'\|^{\frac{1}{2}}\cdot \|B'-B''\| +
2\epsilon_0\kappa^{-1}h^{-2}\|\partial (A'-A'')\|^2\\[3pt]
\ge
 -C\kappa^2 \beta^{2}h +
\epsilon_0\kappa^{-1}h^{-2}\|\partial (A'-A'')\|^2\hskip-.15in
\end{multline*}
if $A'$ is the minimizer for $\bar{\cE}$ and therefore since
$\|B'\|\le C\kappa \beta h$ we conclude that
\begin{gather}
\cE^* \ge \cE (A') -C\kappa^2 \beta^{2}h
\label{27-2-19}
\shortintertext{and}
\|\partial (A'-A'')\|\le C\kappa^{\frac{3}{2}}\beta h^{\frac{1}{2}}
\label{27-2-20}
\end{gather}
if $A''$ is an almost-minimizer for $\cE(A'')$.

\item\label{rem-27-2-1-ii}
Observe, that picking up $A'=0$ and applying arguments of Chapter~\ref{book_new-sect-18} we can derive an upper estimate
\begin{equation*}
\E^* \le  -\int P_{\beta h}(V)\psi\,dx + O(h^{-1});
\end{equation*}
however this estimate is not sharp as
$\kappa \beta^2\gg h^{-1}$ as $\cE^* $ is less than the main term here with a gap $\asymp \kappa \beta^2$. As $\kappa \asymp 1$ it gives us a proper upper estimate only as $\beta \le h^{-\frac{1}{2}}$.

Therefore as $\kappa \beta^2\gg h^{-1}$ an upper estimate is not as trivial as in Chapter~\ref{book_new-sect-26}; in the future we pick up as $A'$ a minimizer for $\bar{\cE}(A)$ (mollified by $x$ as this minimizer is not smooth enough).
\end{enumerate}
\end{remark}

\subsection{Semiclassical theory: \texorpdfstring{$\beta h\gtrsim 1$}{\textbeta h \textgtrsim 1}}
\label{sect-27-2-2-2}

Consider $\beta h\ge 1$. Without any loss of the generality one can assume that $\|V\|_{\sL^\infty} \le \beta h$ and
$\|\partial A'\|_{\sL^\infty} \le \frac{1}{2}\beta$. Then in the definition (\ref{27-2-12}) of $P_{\beta h}(V)$ (etc) remains only term with $j=0$:
\begin{equation}
P_{\beta h} (V)= \frac{1}{2}\varkappa_0
V_+^{\frac{3}{2}} \beta h
\tag*{$\textup{(\ref*{27-2-12})}'$}\label{27-2-12-'}
\end{equation}
which leads to simplification of $\cE(A')$ and $\bar{\cE}(A')$; both of them become
\begin{multline}
\frac{1}{2}\varkappa_0 \beta h^{-2} \int V_+^{\frac{3}{2}}\psi\,dx \\+
\frac{1}{2}\varkappa_0 h^{-2}
\int V_+^{\frac{3}{2}}\bigl(\partial_{x_1}A'_2-\partial_{x_2}A'_1\bigr)\psi\,dx + \kappa^{-1}h^{-2}\|\partial A'\|^2
\tag*{$\textup{(\ref*{27-2-7})}'$}\label{27-2-7-'}
\end{multline}
modulo $O(\beta^{-1}h^{-2}\|B'\|^2 )$ and equations to the minimizer become
\begin{equation}
\Delta A'_1= -\frac{1}{2}\varkappa_0\kappa
\partial_{x_2}\bigl(V_+^{\frac{3}{2}}\psi\bigr), \quad
\Delta A'_2=
\frac{1}{2}\varkappa_0\kappa
\partial_{x_1}\bigl(V_+^{\frac{3}{2}}\psi\bigr),\quad
\Delta A'_3=0.
\tag*{$\textup{(\ref*{27-2-8})}'$}\label{27-2-8-'}
\end{equation}
Then $\|B'\| \asymp \kappa $ and a correction term is negative and
$\asymp -\kappa h^{-2}$ in the generic case. An error $O(\beta^{-1}h^{-2}\|B'\|^2 )$ becomes $O(\kappa \beta^{-1}h^{-2})$ (and thus not exceeding microlocal error $O(\beta)$).

\section{Estimate from below}
\label{sect-27-2-3}

\subsection{Basic estimates}
\label{sect-27-2-3-1}

Let us estimate $\E(A)$ from below. First we need the following really simple

\begin{proposition}\footnote{\label{foot-27-1} Cf. Proposition~\ref{book_new-prop-26-2-1}.}\label{prop-27-2-2}
Consider operator $H_{A,V}$ defined on $\sH^2(B(0,1))\cap \sH^1_0 (B(0,1))$\,\footnote{\label{foot-27-2} I.e. on $\sH^2(B(0,1))$ with the Dirichlet boundary conditions.}. Let $V\in \sL^4$.
\begin{enumerate}[label=(\roman*), fullwidth]
\item\label{prop-27-2-3-i}
Let $\beta h \le 1$. Then
\begin{gather}
\E^*\ge -C h^{-3}
\label{27-2-21}\\
\shortintertext{and either}
\frac {1}{\kappa h^2} \int |\partial (A-A^0)|^2\,dx \le C h^{-3}
\label{27-2-22}
\end{gather}
or $\E (A) \ge ch^{-3}$;
\item\label{prop-27-2-3-ii}
Let $\beta h\ge 1$. Then
\begin{gather}
\E^*\ge -C\beta h^{-2}- C\kappa^{\frac{1}{3}}\beta^{\frac{4}{3}} h^{-\frac{4}{3}}
\label{27-2-23}\\
\shortintertext{and either}
\frac {1}{\kappa h^2} \int |\partial A'|^2\,dx \le
C\beta h^{-2}+ C\kappa^{\frac{1}{3}}\beta^{\frac{4}{3}} h^{-\frac{4}{3}}
\label{27-2-24}
\end{gather}
or $\E (A) \ge
C\beta h^{-2}+ C\kappa^{\frac{1}{3}}\beta^{\frac{4}{3}} h^{-\frac{4}{3}}$;
\item\label{prop-27-2-3-iii}
Furthermore, if $\beta h\ge 1$ and
\begin{gather}
\kappa \beta h^2\le c\label{27-2-25}\\
\shortintertext{then}
\E^*\ge -C\beta h^{-2}\label{27-2-26}\\
\shortintertext{and either}
\frac {1}{\kappa h^2} \int |\partial A'|^2\,dx \le C\beta h^{-2}
\label{27-2-27}
\end{gather}
or $\E (A) \ge C\beta h^{-2}$;
\end{enumerate}
\end{proposition}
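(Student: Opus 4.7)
The plan is to deduce everything from the generalized Lieb--Loss--Solovej (LLS) inequality for Schr\"odinger operators in a combined magnetic field, which is the main subject of Appendix~\ref{sect-27-A}. Because $\E(A) = \Tr^-(H_{A,V}) + \kappa^{-1}h^{-2}\|\partial A'\|^2$ is exactly the natural total energy, the LLS estimate should directly give a lower bound depending only on $V$ and $\beta$: schematically,
$$
\E(A) \ge -C h^{-3}\int V_+^{5/2}\,dx - C \beta h^{-2}\int V_+^{3/2}\,dx - C\kappa^{1/3}\beta^{4/3}h^{-4/3},
$$
where the third term is the residual from absorbing the $B'$-dependence into the Maxwell term via an interpolated Young's inequality. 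Since $V\in\sL^4$ and the operator is restricted to $B(0,1)$, both $V_+$-integrals are uniformly bounded, yielding
$$
\E(A) \ge -Ch^{-3} - C\beta h^{-2} - C\kappa^{1/3}\beta^{4/3}h^{-4/3}.
$$

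Part~\ref{prop-27-2-3-i}, $\beta h\le 1$, is then immediate: $C\beta h^{-2} \le Ch^{-3}$, and (under the implicit assumption $\kappa\le c$) the mixed term is $\le Ch^{-3}$ as well, giving (\ref{27-2-21}). The dichotomy (\ref{27-2-22}) follows by the standard argument: if $\kappa^{-1}h^{-2}\|\partial A'\|^2 \ge \tilde C h^{-3}$ for a sufficiently large constant $\tilde C$, the Maxwell term dominates and $\E(A) \ge ch^{-3}$; otherwise (\ref{27-2-22}) holds. Part~\ref{prop-27-2-3-ii}, $\beta h\ge 1$, is the same argument with $Ch^{-3}$ now absorbed into $C\beta h^{-2}$, giving (\ref{27-2-23}) and the corresponding dichotomy (\ref{27-2-24}). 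Part~\ref{prop-27-2-3-iii} is then immediate from~\ref{prop-27-2-3-ii}: the assumption $\kappa\beta h^2\le c$ rewrites as $\kappa^{1/3}\beta^{4/3}h^{-4/3} \le c^{1/3}\beta h^{-2}$, so the third error is absorbed into the Landau term, yielding (\ref{27-2-26}) and (\ref{27-2-27}).

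The main obstacle is establishing the combined-field LLS inequality in the precise form above, with residual error $C\kappa^{1/3}\beta^{4/3}h^{-4/3}$ rather than the cruder $C\kappa h^{-2}$ that a naive $L^2$ Young's inequality on $\|B'\|$ would produce. The refined exponent requires exploiting an interpolation between $\|B'\|_{L^2}$ and the a priori pointwise bound $\|B'\|_{L^\infty}\lesssim \beta$, so that the $B'$-contribution to the LLS error appears with an effective weight proportional to $\beta$ rather than $1$; this is where Appendix~\ref{sect-27-A} does the real work. Once that refined LLS is in hand, the proposition itself is essentially a bookkeeping exercise using the dichotomy argument above.
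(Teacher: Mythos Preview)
Your approach is correct and is essentially the paper's: the proof is the single displayed inequality \textup{(\ref{27-2-28})}, obtained from the generalized Lieb--Loss--Solovej estimate \textup{(\ref{27-A-2})} by semiclassical rescaling, followed by Young's inequality against the Maxwell term to produce the $\kappa^{1/3}\beta^{4/3}h^{-4/3}$ error and the dichotomy; Statement~\ref{prop-27-2-3-iii} then follows from~\ref{prop-27-2-3-ii} exactly as you say.

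One correction to your last paragraph: the refined exponent does \emph{not} come from any a~priori bound $\|B'\|_{\sL^\infty}\lesssim\beta$ (none is available at this point---such bounds are only established later, in Propositions~\ref{prop-27-2-8} and~\ref{prop-27-2-10}). The mechanism in \textup{(\ref{27-A-2})} is structural: the moving-frame decomposition gives $H_{A,0}\ge H'_{A,0}+P_3^2-B''$, so only the self-generated part $B''$ enters linearly as an effective potential, while the constant-direction external field is handled through the $2$D operator $H'_{A,0}$. After the CLR-type estimate this produces $(\int B^{\prime\prime\,2})^{1/4}$ rather than $(\int B^{\prime\prime\,2})^{1/2}$; the factor $\beta$ enters separately via $(\int B^2)^{1/2}\asymp\beta$. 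It is this $1/4$ exponent on $X=\|\partial A'\|^2$ in \textup{(\ref{27-2-28})} that, after Young's inequality with conjugate exponents $(4,4/3)$, yields $\kappa^{1/3}\beta^{4/3}h^{-4/3}$. Your schematic term $-C\beta h^{-2}\int V_+^{3/2}$ is also not quite what \textup{(\ref{27-A-2})} gives; the $\beta h^{-2}$ contribution comes from the cross term $(\int B^2)^{1/2}(\int V^2)^{1/4}(\int V^4)^{1/4}$, not a $V_+^{3/2}$ integral.
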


\begin{proof}
Using estimate (\ref{27-A-2})\,\footnote{\label{foot-27-3} Magnetic Lieb-Thirring inequality (5) of E.~H.~Lieb, M.~Loss,~M. and J.~P.~Solovej~\cite{lieb:loss:solovej}) would be sufficient as $\beta h\le 1$ but will lead to worse estimate than we claim as $\beta h\ge 1$.} we have
\begin{multline}
\E(A) \ge -C(1+\beta h) h^{-3}\\
\shoveright{-
C\beta h^{-\frac{3}{2}} \Bigl(\int |\partial A'|^2\,dx\Bigr)^{\frac{1}{4}}-
C h^{-\frac{3}{2}} \Bigl(\int |\partial A'|^2\,dx\Bigr)^{\frac{3}{4}}}
+\frac{1}{\kappa h^2}\Bigl(\int |\partial A'|^2\,dx\Bigr)
\label{27-2-28}
\end{multline}
which implies both Statements~\ref{prop-27-2-3-i}--\ref{prop-27-2-3-ii}.
\end{proof}

\begin{remark}\label{rem-27-2-3}
\begin{enumerate}[label=(\roman*), fullwidth]
\item\label{rem-27-2-3-i}
Definitely we would prefer to have an estimate
\begin{multline}
\E(A) \ge -C(1+\beta h) h^{-3}\\-
C h^{-2} \Bigl(\int |\partial A'|^2\,dx\Bigr)^{\frac{1}{2}}
+\frac{1}{\kappa h^2}\Bigl(\int |\partial A'|^2\,dx\Bigr)
\label{27-2-29}
\end{multline}
from the very beginning but we cannot prove it without some smoothness conditions to $A$ and they will be proven only later under the same assumption (\ref{27-2-25});
\item\label{rem-27-2-3-ii}
This assumption (\ref{27-2-25}) in a bit stronger form \ref{27-2-25-*} will be required for our microlocal analysis in Section~\ref{sect-27-3}.
\end{enumerate}
\end{remark}

\begin{remark}\label{rem-27-2-4}
\begin{enumerate}[label=(\roman*), fullwidth]
\item \label{rem-27-2-4-i}
Proposition \ref{book_new-prop-26-2-2} (existence of the minimizer) remains valid;
\item \label{rem-27-2-4-ii}
As in Remark~\ref{book_new-rem-26-2-3} we do not know if the minimizer is unique. From now on until further notice let $A$ be a minimizer; we also assume that $V$ is sufficiently smooth ($V\in \sC^{2+\delta}$);

\item \label{rem-27-2-4-iii}
Proposition \ref{book_new-prop-26-2-4} (namely, equation (\ref{book_new-26-2-14}) to a minimizer) remains valid for both $A$ and $A'$.
\end{enumerate}
\end{remark}

\begin{proposition} \label{prop-27-2-5}
\begin{enumerate}[fullwidth, label=(\roman*)]
\item \label{prop-27-2-5-i}
Let $\beta h\le 1$, $0<\kappa \le (1-\epsilon_0)\kappa^*$ and
\begin{gather}
\E^*(\kappa^*,\beta,h) \ge \cE - CM, \label{27-2-30}\\[3pt]
\E^*(\kappa,\beta,h) \le \cE + CM\label{27-2-31}
\intertext{with the same number $\cE$ and with
$M\ge C h^{-1}+C\kappa^* \beta^2$. Then for this $\kappa$}
\int |\partial A'|^2\,dx \le C_1\kappa h^2M;
\label{27-2-32}
\end{gather}
\item \label{prop-27-2-5-ii}
Let $\beta h\ge 1$, $\kappa^* \beta h \le c$,
$0<\kappa \le (1-\epsilon_0)\kappa^*$ and \textup{(\ref{27-2-30})}--\textup{(\ref{27-2-31})} be fulfilled with the same number $\cE$ and with $M\ge C \beta +C\kappa^* h^{-2}$. Then for this $\kappa$
estimate \textup{(\ref{27-2-32})} holds.
\end{enumerate}
\end{proposition}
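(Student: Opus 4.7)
The plan is to exploit the fact that for a fixed vector potential $A$, the functional $\E_\kappa(A)$ in (\ref{27-2-3}) is affine in $\kappa^{-1}$: only the second term carries any dependence on the coupling constant. Let $A$ be the minimizer of $\E_\kappa$ — it exists by Remark~\ref{rem-27-2-4}\ref{rem-27-2-4-i} — so that $\E_\kappa(A)=\E^*(\kappa,\beta,h)$. Since $A-A^0\in\sH^1_0$, the same field is admissible in the variational problem at the larger coupling $\kappa^*$, and the affine structure yields the identity
\begin{equation*}
\E_{\kappa^*}(A)=\E_\kappa(A)-\bigl(\kappa^{-1}-(\kappa^*)^{-1}\bigr)h^{-2}\|\partial A'\|^2.
\end{equation*}

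Next, I would chain together $\E^*(\kappa^*,\beta,h)\le\E_{\kappa^*}(A)$ (by definition of the infimum), the above identity, and the two-sided bracketing (\ref{27-2-30})--(\ref{27-2-31}) around the common value $\cE$. This produces
\begin{equation*}
\bigl(\kappa^{-1}-(\kappa^*)^{-1}\bigr)h^{-2}\|\partial A'\|^2 \le \E^*(\kappa,\beta,h)-\E^*(\kappa^*,\beta,h)\le 2CM.
\end{equation*}
The assumption $\kappa\le(1-\epsilon_0)\kappa^*$ implies $\kappa^{-1}-(\kappa^*)^{-1}=(\kappa^*-\kappa)/(\kappa\kappa^*)\ge \epsilon_0/\kappa$, and substituting gives (\ref{27-2-32}) with $C_1=2C/\epsilon_0$. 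Case~\ref{prop-27-2-5-ii} is established by exactly the same manipulation; the only difference is that the relevant a priori control comes from Proposition~\ref{prop-27-2-2}\ref{prop-27-2-3-iii} rather than from \ref{prop-27-2-3-i}.

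The one potential snag is whether a true minimizer of $\E_\kappa$ exists; if one had to settle for a near-minimizer $A$ satisfying $\E_\kappa(A)\le \E^*(\kappa,\beta,h)+M$, the chain would acquire an additional $M$ on the right-hand side, harmlessly absorbed into $C_1$. The stated lower bounds on $M$ in each case play no role in the algebra above; they merely ensure that the bracketing (\ref{27-2-30})--(\ref{27-2-31}) is consistent with the a priori estimates of Proposition~\ref{prop-27-2-2} and with the correction-term scale discussed in Remark~\ref{rem-27-2-1}\ref{rem-27-2-1-ii}, so that the hypotheses of the proposition are not vacuous.
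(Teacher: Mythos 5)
Your proof is correct and is essentially the argument the paper has in mind: the paper itself gives only ``Proof is obvious'' with a cross-reference to the analogous Proposition~\ref{book_new-prop-26-2-5}, and the affine-in-$\kappa^{-1}$ comparison you spell out (using the minimizer of $\E_\kappa$ as a test field for $\E_{\kappa^*}$, then sandwiching the difference $\E^*(\kappa)-\E^*(\kappa^*)$ by the two-sided hypotheses) is precisely the standard and essentially forced route. Your closing remarks about near-minimizers and the role of the lower bounds on $M$ are also accurate.
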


\begin{proof}
Proof is obvious\footnote{\label{foot-27-4} Cf. Proposition~\ref{book_new-prop-26-2-5}.}.
\end{proof}

\subsection{Estimates to minimizer: \texorpdfstring{$\beta h\lesssim 1$}{\textbeta h \textlesssim 1}}
\label{sect-27-2-3-2}

Consider first simpler case $\beta h\le 1$.

\begin{proposition}\label{prop-27-2-6}
Let $\beta h\le 1$. Then as $\mu =\|\partial A'\|_\infty$
\begin{gather}
|e(x,y,\tau)| \le
C\bigl(1+ \mu^{\frac{1}{2}}h^{\frac{1}{2}}\bigr)h^{-3}
\label{27-2-33}\\
\shortintertext{and}
|((hD-A)_x\cdot \boldupsigma )e(x,y,\tau)|  \le
C\bigl(1+ \mu^{\frac{1}{2}}h^{\frac{1}{2}}\bigr)h^{-3}.
\label{27-2-34}
\end{gather}
\end{proposition}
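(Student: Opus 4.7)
This is the direct analog of Proposition~\ref{book_new-prop-26-2-6}, and I would proceed by the same route: a semiclassical Tauberian reduction combined with a short-time parametrix for the propagator $e^{-itH/h}$, with the only novelty being the bookkeeping required by the fact that $A'$ is merely Lipschitz, with $\|\partial A'\|_{\sL^\infty}\le\mu$.

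First I would reduce both (\ref{27-2-33}) and (\ref{27-2-34}) to diagonal estimates. Writing $e(x,y,\tau)=\sum_n\mathbf{1}(\lambda_n\le\tau)\phi_n(x)\overline{\phi_n(y)}$, Cauchy--Schwarz gives $|e(x,y,\tau)|^2\le e(x,x,\tau)\,e(y,y,\tau)$, and the analogous inequality on the gradient kernel reads $|(hD-A)_x\boldupsigma\,e(x,y,\tau)|^2\le D(x)\,e(y,y,\tau)$ with $D(x)=\bigl((hD-A)\boldupsigma\,P(\tau)\,(hD-A)\boldupsigma\bigr)(x,x)$. Since $((hD-A)\boldupsigma)^2=H+V$, the operator $(hD-A)\boldupsigma\,P(\tau)\,(hD-A)\boldupsigma$ is dominated by $(|\tau|+\|V\|_{\sL^\infty})\,P(\tau)$; applying the same parametrix argument to $D$ as to $e(x,x,\tau)$ one recovers the same bound up to a constant factor, so it is enough to establish (\ref{27-2-33}) on the diagonal $y=x$.

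For the diagonal bound I would regularize in energy using a time cutoff $\chi(t/T)\in\sC^\infty_0$, representing $e(x,x,\tau)$ via its Fourier dual as an oscillatory integral against $e^{-itH/h}(x,x)$ on $|t|\le T$. The Tauberian remainder is $O(hT^{-1}\cdot h^{-3})$ under the crude a priori bound $|e(x,x,\lambda)|\le Ch^{-3}$, which follows from the magnetic Lieb--Thirring estimate (\ref{27-A-2}) using $\beta h\le 1$. On $|t|\le T$ I would construct a parametrix by gauge-transforming so that $A(x_0)=0$ at the base point, then freezing the linear part of $A^0$ and the value of $V$ at $x_0$; the frozen model is a constant-coefficient magnetic Schr\"odinger operator whose Mehler--Landau kernel yields the principal $Ch^{-3}$ contribution on the diagonal. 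Because $|A'(x)-A'(x_0)|\le\mu|x-x_0|$, the operator discrepancy between the true and the frozen propagators is $O(\mu|t|)$, and one Duhamel iteration produces a pointwise kernel error of $O(\mu T\cdot h^{-3})$. Balancing $hT^{-1}\sim\mu T$ gives $T\sim(h/\mu)^{1/2}$ in the regime $\mu h\ge 1$, with total error $\asymp(\mu h)^{1/2}\cdot h^{-3}$; in the complementary regime $\mu h\le 1$ one takes $T\sim 1$ and recovers $Ch^{-3}$.

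The main obstacle is the parametrix construction under the minimal regularity $A'\in\sC^{0,1}$: a symbolic iteration of the Duhamel formula would require bounds on $\partial^{2}A'$, which are not available. One must instead close after a single Duhamel step, using the diamagnetic inequality together with a direct energy estimate on the remainder. It is precisely this single-step bookkeeping that produces the square-root gain and yields the sharp $\mu^{1/2}h^{1/2}$ prefactor, rather than the crude linear-in-$\mu h$ loss from naive perturbation theory.
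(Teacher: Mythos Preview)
Your Tauberian/parametrix route diverges from the paper's, and as written it has a genuine gap. The claimed a priori bound $|e(x,x,\lambda)|\le Ch^{-3}$ cannot come from (\ref{27-A-2}): that inequality controls $-\Tr(H_{A,V}^-)$, an integrated quantity, and yields no pointwise bound on the spectral-projector kernel. No such universal $Ch^{-3}$ bound holds without restricting $A'$---a locally very strong self-generated field drives the Landau density arbitrarily high. Worse, the logic is self-defeating: in the only nontrivial regime $\mu h\ge 1$ the pretended bound $Ch^{-3}$ is already \emph{stronger} than the target $(1+\mu^{1/2}h^{1/2})h^{-3}$, so the whole parametrix/balancing step would be unnecessary; your Tauberian machinery has no legitimate input to run on. A second gap is the pointwise Duhamel remainder $O(\mu T\cdot h^{-3})$: one Duhamel iteration controls the \emph{operator norm} of the remainder by $O(\mu T)$, but converting that to a diagonal-kernel bound at scale $h^{-3}$ with only $A'\in\sC^{0,1}$ is exactly the missing step, and ``diamagnetic inequality plus an energy estimate'' is a slogan, not an argument.

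The paper's proof is far more elementary and avoids the propagator altogether. It uses only the projector identity $E(\tau)=E(\tau)^{*}E(\tau)$ together with a local Sobolev/trace estimate. After gauging so that $A^0=A'=0$ at the center $z$ and reducing to $\mu\ge h^{-1}$ (so that $\beta\le h^{-1}\le\mu$ and the external field is dominated), one works on a cube of side $\ell=\mu^{-1/2}h^{1/2}$; there $|A|\le C\mu\ell$, and since $\|(hD-A)\boldupsigma\,E(\tau)\|\le C$ one obtains $\|\phi D^{\alpha}E(\tau)\|\le C\zeta^{|\alpha|}$ with $\zeta=\mu^{1/2}h^{-1/2}$ for $\alpha\in\{0,1\}^3$. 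Sobolev embedding then bounds the point-evaluation $\upgamma_{z}E(\tau)\colon\sL^{2}\to\bC^{q}$ by $C\zeta^{3/2}$, and writing $e(x,y,\tau)$ as $\upgamma_{x}E(\tau)\bigl(\upgamma_{y}E(\tau)\bigr)^{*}$ gives the kernel bound $C\zeta^{3}$ directly---no time evolution, no Tauberian theorem, and only the Lipschitz bound on $A'$ is used.
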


\begin{proof}
Without any loss of the generality one can assume that $\mu \ge h^{-1}$.
Consider
$\mu ^{-\frac{1}{2}}h^{\frac{1}{2}}$ element in $\bR^3_x$. Without any loss of the generality one can assume that $A^0=A'=0$ in its center $z$.

Since both operators $E(\tau)$ and $((hD-A)_x\cdot \boldupsigma )E(\tau)$ have their operator norms bounded by $c$ in $\sL^2$ one can prove easily that operators $\phi D^\alpha E(\tau)$ and
$\phi D^\alpha ((hD-A)_x\cdot \boldupsigma )E(\tau)$ have their operator norms bounded by $C \zeta^{|\alpha|}$ with $\zeta= \mu^{\frac{1}{2}}h^{-\frac{1}{2}}$ as $\alpha \in \{0,1\}^3$ and $\phi $ is supported in the mentioned element.

Then operator norms of operators $\upgamma_x E(\tau)$ and
$\upgamma_x ((hD-A)_x\cdot \boldupsigma )E(\tau)$ from $\sL^2$ to $\bC^q$ do not exceed $C_0\zeta_1\zeta_3^{\frac{1}{2}}$ and therefore the same is true for adjoint operators; here $\upgamma_z$ is operator of restriction to $x=z$.

Since $E(\tau)^*=E(\tau)^2=E(\tau)$ we conclude that the left-hand expressions in (\ref{27-2-33}) and (\ref{27-2-34}) do not exceed $C\zeta ^3$ which is exactly the right-hand expressions.
\end{proof}

Then from equation (\ref{book_new-26-2-14}) which remains valid (see Remark~\ref{rem-27-2-4}\ref{rem-27-2-4-iii}) we conclude that
\begin{gather}
\|\Delta A'\|_{\sL^\infty} \le C\kappa
\bigl(1+ \mu^{\frac{1}{2}}h^{\frac{1}{2}}\bigr)h^{-1}\label{27-2-35}\\
\shortintertext{and therefore}
\|\partial ^2 A'\|_{\sL^\infty}\le C\kappa|\log h |
 \bigl(1+ \mu^{\frac{1}{2}}h^{\frac{1}{2}}\bigr)h^{-1}.
\label{27-2-36}
\end{gather}

Further, combining (\ref{27-2-36}) with (\ref{27-2-22})  and standard inequality
$\|\partial A'\|_{\sL^\infty}\le \|\partial ^2 A'\|_{\sL^\infty}^{\frac{3}{5}} \cdot \|\partial A'\|^{\frac{2}{5}}$ we conclude that
\begin{gather}
\mu \le C(\kappa|\log h|)^{\frac{4}{5}}
\bigl(1+ \mu^{\frac{1}{2}}h^{\frac{1}{2}}\bigr) ^{\frac{4}{5}} h^{-\frac{4}{5}}\notag\\
\shortintertext{and then}
\|\partial A'\|_{\sL^\infty} \le C (\kappa |\log h|) ^{\frac{4}{5}} h^{-\frac{4}{5}}
\label{27-2-37}\\
\intertext{and therefore due to (\ref{27-2-36})}
\|\partial A'\|_{\sL^\infty} \le C\kappa |\log h| h^{-1}
\label{27-2-38}
\end{gather}
(where for a sake of simplicity we slightly increase power of logarithm) thus arriving to

\begin{proposition}\label{prop-27-2-7}
Let $\beta h\le 1$ and $\kappa \le \kappa^*$. Then estimates \textup{(\ref{27-2-37})} and \textup{(\ref{27-2-38})} hold.
\end{proposition}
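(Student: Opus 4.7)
The plan is to run a one-step bootstrap on $\mu\Def \|\partial A'\|_{\sL^\infty}$, chaining together three ingredients already available: the pointwise spectral-kernel bounds of Proposition~\ref{prop-27-2-6}, the Euler-Lagrange equation satisfied by the minimizer (Remark~\ref{rem-27-2-4}\ref{rem-27-2-4-iii}, i.e.\ the analogue of \textup{(\ref{book_new-26-2-14})}), and the $\sL^2$-control of $\partial A'$ from Proposition~\ref{prop-27-2-2}\ref{prop-27-2-3-i}, namely $\int|\partial A'|^2\,dx\le C\kappa h^{-1}$.

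First, I would feed the diagonal bounds
$$|e(x,x,0)|\le C(1+\mu^{1/2}h^{1/2})h^{-3},\qquad |((hD-A)_x\cdot \boldupsigma)e(x,x,0)|\le C(1+\mu^{1/2}h^{1/2})h^{-3}$$
from Proposition~\ref{prop-27-2-6} into the minimizer equation, which expresses $\Delta A'$ as a Pauli-type current constructed from these two quantities. This yields the pointwise bound \textup{(\ref{27-2-35})}, i.e.\ $\|\Delta A'\|_{\sL^\infty}\le C\kappa(1+\mu^{1/2}h^{1/2})h^{-1}$. A standard Calderón-Zygmund-type estimate with a cutoff at unit scale, using $A'\in\sH^1_0$ to control the low-frequency part, converts this into the full Hessian bound \textup{(\ref{27-2-36})} with the characteristic logarithmic loss.

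Next, I combine \textup{(\ref{27-2-36})} with the $\sL^2$-bound via the Gagliardo-Nirenberg interpolation $\|\partial A'\|_{\sL^\infty}\le C\|\partial^2 A'\|_{\sL^\infty}^{3/5}\|\partial A'\|_{\sL^2}^{2/5}$ in $\bR^3$, producing the self-improving inequality
$$\mu \le C(\kappa|\log h|)^{4/5}(1+\mu^{1/2}h^{1/2})^{4/5} h^{-4/5}.$$
The bootstrap closes in two alternatives: in the regime $\mu^{1/2}h^{1/2}\le 1$ one reads off \textup{(\ref{27-2-37})} directly, while in the opposite regime the inequality degenerates to $\mu^{3/5}\le C(\kappa|\log h|)^{4/5}h^{-2/5}$, which for $\kappa\le \kappa^*$ and $h\ll 1$ contradicts $\mu^{1/2}h^{1/2}\ge 1$; thus the first alternative always applies. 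Substituting \textup{(\ref{27-2-37})} back into the $\|\Delta A'\|_{\sL^\infty}$ bound makes the factor $(1+\mu^{1/2}h^{1/2})$ absorbable into a power of $|\log h|$, and reapplying the Calderón-Zygmund step yields the cleaner \textup{(\ref{27-2-38})} — the mild inflation of the logarithmic exponent swallows the remaining absolute constants.

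The delicate step is the passage from $\|\Delta A'\|_{\sL^\infty}$ to $\|\partial^2 A'\|_{\sL^\infty}$: one must arrange a cutoff so that the non-local Calderón-Zygmund contribution is controlled by the $\sL^2$ information on $\partial A'$ without producing spurious powers of $h$, and so that the unavoidable logarithmic divergence at the critical index is the \emph{only} loss incurred. Everything else — the spectral bounds, the Euler-Lagrange equation, and the interpolation — is either already stated in the preceding paragraph or is entirely routine.
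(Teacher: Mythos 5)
Your proposal matches the paper's argument step for step: the pointwise kernel bounds of Proposition~\ref{prop-27-2-6} fed through the minimizer equation \textup{(\ref{book_new-26-2-14})} give \textup{(\ref{27-2-35})}, elliptic regularity with the usual logarithmic loss at the critical index gives \textup{(\ref{27-2-36})}, and the Gagliardo--Nirenberg interpolation $\|\partial A'\|_{\sL^\infty}\le C\|\partial^2 A'\|_{\sL^\infty}^{3/5}\|\partial A'\|_{\sL^2}^{2/5}$ combined with the $\sL^2$-control \textup{(\ref{27-2-22})} closes the bootstrap exactly as the paper does (your explicit two-alternative analysis ruling out the $\mu^{1/2}h^{1/2}\ge 1$ regime is precisely what the paper's terse ``and then'' elides). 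The only caveat worth flagging is that \textup{(\ref{27-2-38})} in the paper is evidently a typo for $\|\partial^2 A'\|_{\sL^\infty}$ (compare the analogous pair \textup{(\ref{27-2-39})}--\textup{(\ref{27-2-40})} in Proposition~\ref{prop-27-2-8}), and your final step --- substituting \textup{(\ref{27-2-37})} back into the Hessian bound --- produces exactly that, so your reading is the correct one.
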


Furthermore, the standard scaling arguments applied to the results of Section~\ref{book_new-sect-26-2} imply that in fact the left-hand expressions of estimates (\ref{27-2-34}) and (\ref{27-2-22}) do not exceed
$C(1+\beta +\mu)h^{-2}$ and $C(1+\beta +\mu)^2h^{-1}$ respectively and then
$\|\partial ^2 A'\|_{\sL^\infty}$ does not exceed
$C\kappa|\log h |(1+\beta +\mu)$ and $\|\partial A'\|$ does not exceed $C\kappa^{\frac{1}{2}}(1+\beta +\mu)h^{\frac{1}{2}}$ and then
$\mu \le
C(\kappa|\log h |)^{\frac{4}{5}} (1+\beta +\mu)^{\frac{4}{5}} h^{\frac{1}{5}}$
which implies $\mu
\le C(\kappa|\log h |)^{\frac{4}{5}} (1+\beta )^{\frac{4}{5}} h^{\frac{1}{5}}$
and we arrive to

\begin{proposition}\label{prop-27-2-8}
Let $\beta h\le 1$ and $\kappa \le \kappa^*$. Then
\begin{gather}
\|\partial A'\|_{\sL^\infty} \le
C (\kappa |\log h|) ^{\frac{4}{5}} (1+\beta )^{\frac{4}{5}} h^{\frac{1}{5}}
\label{27-2-39}\\
\intertext{and}
\|\partial^2 A'\|_{\sL^\infty} \le C\kappa |\log h| (1+\beta).
\label{27-2-40}
\end{gather}
\end{proposition}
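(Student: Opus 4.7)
The plan is to follow exactly the chain of inequalities that produced Proposition~\ref{prop-27-2-7}, but fed with sharper inputs obtained by bringing $\beta$ into the scaling. The point is that Proposition~\ref{prop-27-2-6} and estimate~\eqref{27-2-22} are wasteful when $\beta$ is large, since they were proved ignoring the contribution of the external field~$A^0$. Applying the standard microlocal rescaling used in Section~\ref{book_new-sect-26-2} (where $A^0$ enters the operator on the same footing as the self-generated piece), I would first upgrade~\eqref{27-2-34} to
\begin{equation*}
|((hD-A)_x\cdot\boldupsigma)\,e(x,y,\tau)| \le C(1+\beta+\mu)h^{-2},
\end{equation*}
and similarly replace the $h^{-3}$ bound of~\eqref{27-2-22} on $(\kappa h^2)^{-1}\int|\partial A'|^2\,dx$ by $C(1+\beta+\mu)^2 h^{-1}$, i.e.\ $\|\partial A'\|_{\sL^2}\le C\kappa^{1/2}(1+\beta+\mu)h^{1/2}$.

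Next I would substitute the improved pointwise bound into the minimizer equation~(\ref{book_new-26-2-14}), which remains available by Remark~\ref{rem-27-2-4}\ref{rem-27-2-4-iii}. This replaces~\eqref{27-2-35} by $\|\Delta A'\|_{\sL^\infty}\le C\kappa(1+\beta+\mu)$, and the same logarithmic Calderón--Zygmund / Sobolev step that produced~\eqref{27-2-36} now gives
\begin{equation*}
\|\partial^2 A'\|_{\sL^\infty}\le C\kappa|\log h|(1+\beta+\mu),
\end{equation*}
which is already the target bound~\eqref{27-2-40} up to absorbing $\mu$ on the right.

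To close the argument I would apply the Gagliardo--Nirenberg-type interpolation $\|\partial A'\|_{\sL^\infty}\le C\|\partial^2 A'\|_{\sL^\infty}^{3/5}\|\partial A'\|_{\sL^2}^{2/5}$ used in the proof of Proposition~\ref{prop-27-2-7}. Plugging in the two bounds above yields an implicit inequality of the form $\mu\le C(\kappa|\log h|)^{4/5}(1+\beta+\mu)^{4/5}h^{1/5}$, and absorbing $\mu$ on the right (legitimate since the prefactor is $o(1)$ for $h$ small enough, and trivial otherwise) leads to~\eqref{27-2-39}. Inserting~\eqref{27-2-39} back into the $\|\partial^2 A'\|_{\sL^\infty}$ bound and accepting a tiny slack in the logarithmic factor yields~\eqref{27-2-40}.

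The main obstacle is the first step: carefully executing the rescaling of the Section~\ref{book_new-sect-26-2} estimates so that $\beta$ and $\mu$ are tracked jointly in the combination $(1+\beta+\mu)$. Here one uses the magnetic length $\beta^{-1/2}$ as the natural scale and verifies that the microlocal arguments of Section~\ref{book_new-sect-26-2} survive the splitting $A=A^0+A'$, with $\beta$ playing the role that was previously reserved for the fluctuation size. Everything else is either linear elliptic regularity on balls of size comparable to $1$ or the same interpolation already used in Proposition~\ref{prop-27-2-7}; the only delicate bookkeeping is making sure that $\kappa$ never gets promoted to a higher power than $\kappa^{4/5}$ along the way.
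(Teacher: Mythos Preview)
Your proposal is correct and follows essentially the same route as the paper: rescale the Section~\ref{book_new-sect-26-2} estimates so that the left-hand sides of \eqref{27-2-34} and \eqref{27-2-22} are bounded by $C(1+\beta+\mu)h^{-2}$ and $C(1+\beta+\mu)^2h^{-1}$, feed these into the minimizer equation to get $\|\partial^2 A'\|_{\sL^\infty}\le C\kappa|\log h|(1+\beta+\mu)$ and $\|\partial A'\|\le C\kappa^{1/2}(1+\beta+\mu)h^{1/2}$, interpolate, and absorb~$\mu$. The paper's argument is exactly this chain, stated in a single paragraph between Propositions~\ref{prop-27-2-7} and~\ref{prop-27-2-8}.
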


\subsection{Estimates to minimizer: \texorpdfstring{$\beta h\gtrsim 1$}{\textbeta h \textgtrsim 1}}
\label{sect-27-2-3-3}

Consider more complicated case $\beta h\ge 1$.

\begin{proposition}\label{prop-27-2-9}
Let $\beta h\ge 1$. Then as $\mu =\|\partial A'\|_\infty$
\begin{gather}
|e(x,y,\tau)| \le C\bigl(\beta +\mu\bigl)
\bigl(1+ \mu^{\frac{1}{2}}h^{\frac{1}{2}}\bigr)h^{-2}
\label{27-2-41}\\
\shortintertext{and}
|((hD-A)_x\cdot \boldupsigma )e(x,y,\tau)|  \le
C\bigl(\beta +\mu\bigl)
\bigl(1+ \mu^{\frac{1}{2}}h^{\frac{1}{2}}\bigr)h^{-2}
\label{27-2-42}
\end{gather}
\end{proposition}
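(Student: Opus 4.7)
The plan is to run the proof of Proposition~\ref{prop-27-2-6} again, with the pointwise spectral density upgraded from $h^{-3}$ to $(\beta+\mu)h^{-2}$ to reflect the Landau band structure that dominates when $\beta h\gtrsim 1$. Without loss of generality take $\mu\ge h^{-1}$, for otherwise the factor $(1+\mu^{\frac12}h^{\frac12})$ is bounded and the statement reduces to the magnetic-semiclassical estimate $|e|\le C(\beta+\mu)h^{-2}$.

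First, as in Proposition~\ref{prop-27-2-6}, translate and gauge-transform so that $A(z)=0$ at the reference point, which yields $|A(x)|\le C(\beta+\mu)|x-z|$ in a neighborhood since $\|\partial A\|_\infty\le\beta+\mu$. Localize with a smooth cutoff $\phi$ supported in a cube of size $\ell=\mu^{-\frac12}h^{\frac12}$ centered at $z$. The operator bounds $\|E(\tau)\|_{\sL^2}\le 1$ and $\|(hD-A)\cdot\boldupsigma\,E(\tau)\|_{\sL^2}\le C$ remain valid, and the commutator calculations of Proposition~\ref{prop-27-2-6} carry over verbatim once one checks that the external-field contribution on $\supp\phi$ is harmless: $\beta\ell/h=(\beta/\mu)^{\frac12}\,\zeta\le\zeta$ when $\beta\le\mu$ (the case $\beta\ge\mu$ is handled separately by appealing directly to the Landau pointwise kernel bound). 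Thus
\[
\|\phi D^\alpha E(\tau)\|_{\sL^2}\le C\,\zeta^{|\alpha|},\qquad \zeta=\mu^{\frac12}h^{-\frac12},\ \alpha\in\{0,1\}^3,
\]
and similarly with $(hD-A)\cdot\boldupsigma$ inserted on the left.

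Second, the Sobolev restriction to the center $z$ must be carried out \emph{anisotropically}: along the direction of the combined magnetic field the natural derivative scale remains $\zeta$, but in the two transverse directions the Landau confinement prefers the magnetic length $(\beta+\mu)^{-\frac12}h^{\frac12}$ with a corresponding transverse derivative scale $(\beta+\mu)^{\frac12}h^{-\frac12}$. Combining the three directions yields
\[
\|\upgamma_z E(\tau)\|_{\sL^2\to\bC^q}^2\le C(\beta+\mu)h^{-2}\cdot\zeta = C(\beta+\mu)\bigl(1+\mu^{\frac12}h^{\frac12}\bigr)h^{-2},
\]
and then $e(x,y,\tau)=\bigl\langle\upgamma_x E(\tau),\upgamma_y E(\tau)\bigr\rangle$ together with Cauchy--Schwarz gives (\ref{27-2-41}). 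The same reasoning with $(hD-A)\cdot\boldupsigma$ inserted on one side gives (\ref{27-2-42}).

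The main obstacle will be getting the anisotropy right. The cube size $\ell=\mu^{-\frac12}h^{\frac12}$ is driven by the self-generated field and is in general different from the transverse magnetic length $(\beta+\mu)^{-\frac12}h^{\frac12}$, so one must decompose the pointwise restriction direction-by-direction---separating the longitudinal coordinate (where the $\zeta$ scale applies) from the two transverse ones (where the Landau scale applies)---and verify that the commutator bounds from the first step remain consistent with this anisotropic treatment. Once that is done, the argument reproduces the structure of Proposition~\ref{prop-27-2-6}.
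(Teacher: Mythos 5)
Your conceptual instinct is right — the new ingredient over Proposition~\ref{prop-27-2-6} is precisely the anisotropy between the two transverse directions (magnetic length scale) and the longitudinal one (dictated by the self-generated field) — but the proposal as written has a genuine gap, and it sits in exactly the case you defer.

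First, the case split is backwards. You take $\beta\le\mu$ as the main case and relegate $\beta\ge\mu$ to ``appealing directly to the Landau pointwise kernel bound.'' But the case $\mu\ge\beta$ is the easy one: there the combined field can be treated as effectively self-generated and the isotropic argument of Proposition~\ref{prop-27-2-6} (with cube $\mu^{-\frac12}h^{\frac12}$, scale $\zeta=\mu^{\frac12}h^{-\frac12}$) already gives $|e|\le C(1+\mu^{\frac12}h^{\frac12})h^{-3}\le C(\beta+\mu)(1+\mu^{\frac12}h^{\frac12})h^{-2}$ since $\mu\ge\beta\ge h^{-1}$. Indeed in that regime $(\beta+\mu)^{\frac12}h^{-\frac12}\asymp\zeta$, so the ``anisotropy'' in your second step is vacuous. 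The hard and genuinely new case is $\mu\le\beta$, which is why the paper opens with the \emph{opposite} reduction. And in that case ``the Landau pointwise kernel bound'' is not a proof: that bound is for a constant field, whereas $A'$ varies across the box, and controlling its effect transversally at the Landau scale is exactly what is at issue.

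Second, the method of grafting an anisotropic Sobolev restriction onto isotropic commutator bounds is internally inconsistent. From $\|\phi D^\alpha E(\tau)\|\le C\zeta^{|\alpha|}$ with a single scale $\zeta$, the pointwise restriction can only yield $\|\upgamma_z E(\tau)\|^2\lesssim\zeta^3$; it cannot spontaneously acquire a transverse scale $(\beta+\mu)^{\frac12}h^{-\frac12}$ larger than $\zeta$. To get anisotropic Sobolev output you need anisotropic derivative input, and that comes from choosing the localization box anisotropically from the outset. This is what the paper does: after reducing to $\mu\le\beta$, it works on the box of sides $\beta^{-\frac12}h^{\frac12}\times\beta^{-\frac12}h^{\frac12}\times(\mu+1)^{-\frac12}h^{\frac12}$, proving $\|\phi D^\alpha E(\tau)\|\le C\zeta^\alpha$ componentwise with $\zeta_1=\zeta_2=\beta^{\frac12}h^{-\frac12}$ and $\zeta_3=h^{-1}+\mu^{\frac12}h^{-\frac12}$, and then concludes $\|\upgamma_z E(\tau)\|^2\le C\zeta_1\zeta_2\zeta_3=C\beta h^{-2}(1+\mu^{\frac12}h^{\frac12})$, giving the claim via $E^*=E^2=E$. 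You acknowledge the anisotropy as ``the main obstacle'' but leave it unresolved, so the proposal stops short of being a proof. (There is also a stray power of $h$ in the displayed chain $\|\upgamma_z E(\tau)\|^2\le C(\beta+\mu)h^{-2}\cdot\zeta$ with your $\zeta=\mu^{\frac12}h^{-\frac12}$, but that is secondary to the structural issues above.) Finally, the opening reduction to $\mu\ge h^{-1}$ is also not free: for $\mu\le h^{-1}$ the claim $|e|\le C(\beta+\mu)h^{-2}$ still requires showing that the variable $A'$ does not degrade the Landau-scale density, and the anisotropic box handles all these subcases uniformly.
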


\begin{proof}
Without any loss of the generality one can assume that $\mu \le \beta$.
Consider $(\beta ^{-\frac{1}{2}}h^{\frac{1}{2}},
\beta ^{-\frac{1}{2}}h^{\frac{1}{2}},(\mu+1)^{-\frac{1}{2}}h^{\frac{1}{2}})$-box in $\bR^3_x$. Without any loss of the generality one can assume that $A^0=A'=0$ in its center $z$.

Since both operators $E(\tau)$ and $((hD-A)_x\cdot \boldupsigma )E(\tau)$ have their operator norms bounded by $c$ in $\sL^2$ one can prove easily that operators
$\phi D^\alpha E(\tau)$ and
$\phi D^\alpha ((hD-A)_x\cdot \boldupsigma )E(\tau)$ have their operator norms bounded by $C \zeta^\alpha$ with
$\zeta_1 =\zeta_2= \beta^{\frac{1}{2}}h^{-\frac{1}{2}}$ and
$\zeta_3=(h^{-1}+\mu^{\frac{1}{2}}h^{-\frac{1}{2}})$ as $\alpha \in \{0,1\}^3$ and $\phi $ is supported in the mentioned cube.

Then operator norms of operators $\upgamma_x E(\tau)$ and
$\upgamma_x ((hD-A)_x\cdot \boldupsigma )E(\tau)$ from $\sL^2$ to $\bC^q$ do not exceed $C_0\zeta_1\zeta_3^{\frac{1}{2}}$ and therefore the same is true for adjoint operators; recall that $\upgamma_z$ is operator of restriction to $x=z$.

Since $E(\tau)^*=E(\tau)^2=E(\tau)$ we conclude that the left-hand expressions in (\ref{27-2-41}) and (\ref{27-2-42}) do not exceed $C\zeta_1^2\zeta_3$ which is exactly the right-hand expressions.
\end{proof}

Then from equation (\ref{book_new-26-2-14}) which remains valid (see Remark~\ref{rem-27-2-4}\ref{rem-27-2-4-iii}) we conclude that
\begin{gather}
\|\Delta A'\|_{\sL^\infty} \le C\kappa \bigl(\beta +\mu\bigl)
\bigl(1+ \mu^{\frac{1}{2}}h^{\frac{1}{2}}\bigr)\label{27-2-43}\\
\shortintertext{and therefore}
\|\partial ^2 A'\|_{\sL^\infty}\le C\kappa|\log \beta |
 \bigl(\beta +\mu\bigl)\bigl(1+ \mu^{\frac{1}{2}}h^{\frac{1}{2}}\bigr).
\label{27-2-44}
\end{gather}

Let (\ref{27-2-25})) be fulfilled. Then combining (\ref{27-2-44}) with (\ref{27-2-27})  and
$\|\partial A'\|_{\sL^\infty}\le \|\partial ^2 A'\|_{\sL^\infty}^{\frac{3}{5}} \cdot \|\partial A'\|^{\frac{2}{5}}$ we conclude that
\begin{gather*}
\mu \le C(\kappa|\log \beta|)^{\frac{3}{5}}
\bigl(\beta +\mu\bigl)^{\frac{3}{5}}
\bigl(1+ h^{\frac{1}{2}}\mu^{\frac{1}{2}}\bigr) ^{\frac{3}{5}}
\times \kappa ^{\frac{1}{5}} \beta ^{\frac{1}{5}}\\
\shortintertext{and then either}
1\le \mu h \le C (\kappa \beta h|\log \beta|) ^{\frac{6}{7}}
(\kappa \beta h^2) ^{\frac{2}{7}}
\end{gather*}
or $\mu h\le 1$. In the former case
\begin{gather}
\|\partial A'\|_{\sL^\infty}\le
C (\kappa \beta |\log \beta|) ^{\frac{8}{7}} h^{\frac{3}{7}}
\label{27-2-45}\\
\shortintertext{and}
\|\partial^2 A'\|_{\sL^\infty}\le
C (\kappa \beta |\log \beta|) ^{\frac{11}{7}}h^{\frac{5}{7}}.
\label{27-2-46}
\end{gather}
Observe that the right-hand expression of (\ref{27-2-45}) is less than $C\beta$ under assumption \begin{equation}
\kappa \beta h^2 |\log \beta|^K\le c
\tag*{$\textup{(\ref*{27-2-25})}^*$}\label{27-2-25-*}
\end{equation}
with sufficiently large $K$; however the right-hand expression of (\ref{27-2-46}) is not necessarily less than $C\beta$ under this assumption and we need more delicate arguments.

Without any loss of the generality we can assume that $\partial_3 A_3 (z)=0$ (we can reach it by a gauge transformation).

Then considering
$(\beta^{-\frac{1}{2}}h^{\frac{1}{2}}, \beta^{-\frac{1}{2}}h^{\frac{1}{2}},
\nu^{-\frac{1}{3}}h^{\frac{1}{3}})$-box in $\bR^3$ we can replace factor $(1+\mu^{\frac{1}{2}}h^{\frac{1}{2}})h^{-1}$ by
$(1+\nu^{\frac{1}{3}}h^{\frac{2}{3}})h^{-1}$ in all above estimates with
$\nu = \|\partial^2 A'\|_{\sL^\infty}$ and therefore (\ref{27-2-44}) is replaced by
\begin{gather}
\nu \le C\kappa |\log \beta|\beta \bigl(1+\nu^{\frac{1}{3}}h^{\frac{2}{3}}\bigr)
\notag\\
\intertext{and then under assumption \ref{27-2-25-*}}
\nu= \|\partial^2 A'\|_{\sL^\infty}\le
C\kappa |\log \beta|\beta\label{27-2-47}\\
\shortintertext{which implies}
\|\partial A'\|_{\sL^\infty}\le
C(\kappa |\log \beta|\beta)^{\frac{4}{5}}.\label{27-2-48}
\end{gather}

Thus we have proven
\begin{proposition}\label{prop-27-2-10}
Let $\beta h\ge 1$, $\kappa \le \kappa^*$ and \ref{27-2-25-*} be fulfilled. Then estimates \textup{(\ref{27-2-47})}--\textup{(\ref{27-2-48})} hold.
\end{proposition}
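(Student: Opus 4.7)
The plan is to bootstrap four ingredients: the minimizer equation~(\ref{book_new-26-2-14}), the pointwise spectral bounds (\ref{27-2-41})--(\ref{27-2-42}) of Proposition~\ref{prop-27-2-9}, the $\sL^2$ energy bound $\int|\partial A'|^2\,dx\le C\kappa\beta$ that follows from Proposition~\ref{prop-27-2-2}\ref{prop-27-2-3-iii} once (\ref{27-2-25}) is in force, and the standard interpolation $\|\partial A'\|_{\sL^\infty}\le \|\partial^2 A'\|_{\sL^\infty}^{3/5}\cdot\|\partial A'\|^{2/5}$ already used in the case $\beta h\le 1$. The new feature compared with Proposition~\ref{prop-27-2-8} is that after a preliminary isotropic estimate I would rescale in an anisotropic box whose $x_3$-side is tuned to $\nu=\|\partial^2 A'\|_{\sL^\infty}$ rather than to $\mu=\|\partial A'\|_{\sL^\infty}$, so that the factor $\mu^{1/2}h^{1/2}$ in the pointwise kernel bound is replaced by the smaller $\nu^{1/3}h^{2/3}$.

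First I would carry out the isotropic step exactly as sketched in the text preceding the proposition: insert (\ref{27-2-41})--(\ref{27-2-42}) into the minimizer equation to get (\ref{27-2-43}), pass from $\Delta A'$ to $\partial^2 A'$ at the cost of one logarithmic factor (standard interior Calder\'on--Zygmund with a cutoff at unit scale) to arrive at (\ref{27-2-44}), and then combine with the energy bound (\ref{27-2-27}) and the interpolation inequality. This gives the dichotomy $\mu h\le 1$ or $\mu h\le C(\kappa\beta h|\log\beta|)^{6/7}(\kappa\beta h^2)^{2/7}$, hence the preliminary bounds (\ref{27-2-45})--(\ref{27-2-46}); in particular $\mu\ll \beta$ under \ref{27-2-25-*}. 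However (\ref{27-2-46}) is still a power of $h$ away from the target (\ref{27-2-47}) and must be improved.

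The improvement uses a gauge normalization together with an anisotropic box. Fix a center $z$; a gauge transformation with generating function quadratic in $x_3$ can be chosen so that $\partial_3 A_3(z)=0$ without altering $\partial A'$ or the operator $H_{A,V}$. With this normalization the Taylor expansion of $A_3$ along the $x_3$-axis near $z$ is controlled by $\nu$ rather than by $\mu$, so on the anisotropic box of sides $(\beta^{-1/2}h^{1/2},\beta^{-1/2}h^{1/2},\nu^{-1/3}h^{1/3})$ the commutator bounds driving Proposition~\ref{prop-27-2-9} still close, now with weights $\zeta_1=\zeta_2=\beta^{1/2}h^{-1/2}$ and $\zeta_3=h^{-1}+\nu^{1/3}h^{-2/3}$. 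Repeating the $E(\tau)=E(\tau)^*E(\tau)$ argument then replaces the factor $(1+\mu^{1/2}h^{1/2})$ by $(1+\nu^{1/3}h^{2/3})$ throughout (\ref{27-2-43})--(\ref{27-2-44}), and feeding this back into the minimizer equation yields the self-improving inequality
\begin{equation*}
\nu\le C\kappa|\log\beta|\,\beta\bigl(1+\nu^{1/3}h^{2/3}\bigr).
\end{equation*}
Under \ref{27-2-25-*} the term $\nu^{1/3}h^{2/3}$ on the right is absorbed into the left, producing (\ref{27-2-47}); inserting (\ref{27-2-47}) together with $\int|\partial A'|^2\,dx\le C\kappa\beta$ into the interpolation inequality then yields (\ref{27-2-48}).

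The main obstacle will be the anisotropic version of Proposition~\ref{prop-27-2-9}: since the $x_3$-side of the box $\nu^{-1/3}h^{1/3}$ depends on the very quantity $\nu$ that is being bounded, the argument must be set up as a continuity/bootstrap argument starting from the preliminary bound (\ref{27-2-46}), and one has to verify uniformly in $z$ that (i) the gauge $\partial_3 A_3(z)=0$ makes the linear term in the Taylor expansion of $A_3$ vanish so that the only contribution on the box is the $\nu$-controlled quadratic remainder, (ii) the operator norm estimates on $\phi D^\alpha E(\tau)$ and $\phi D^\alpha((hD-A)\cdot\boldupsigma)E(\tau)$ survive the anisotropy, and (iii) the logarithmic passage from $\Delta A'\in\sL^\infty$ to $\partial^2 A'\in\sL^\infty$ is uniform. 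Everything else is a repeat of the bootstrap in the proof of Proposition~\ref{prop-27-2-8}.
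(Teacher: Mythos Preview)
Your proposal is correct and follows essentially the same approach as the paper: the preliminary isotropic bootstrap leading to (\ref{27-2-45})--(\ref{27-2-46}), then the gauge normalization $\partial_3 A_3(z)=0$ and the anisotropic box $(\beta^{-1/2}h^{1/2},\beta^{-1/2}h^{1/2},\nu^{-1/3}h^{1/3})$ to replace $(1+\mu^{1/2}h^{1/2})$ by $(1+\nu^{1/3}h^{2/3})$, yielding the self-improving inequality for $\nu$ and hence (\ref{27-2-47})--(\ref{27-2-48}). Your final interpolation step in fact gives $\mu\le C(\kappa\beta)^{4/5}|\log\beta|^{3/5}$, which is marginally sharper than the stated (\ref{27-2-48}) and certainly implies it.
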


\chapter{Microlocal analysis unleashed: \texorpdfstring{$\beta h\lesssim 1$}{\textbeta h \textlesssim 1}}
\label{sect-27-3}

\section{Rough estimate to minimizer}
\label{sect-27-3-1}

Recall equation (\ref{book_new-26-2-14}) to a minimizer $A$ of $\E (A)$:
\begin{multline}
\frac{2}{\kappa h^2} \Delta A_j (x)  = \Phi_j(x)\Def\\
-\Re\tr \upsigma_j\Bigl( \bigl( (hD-A)_x \cdot \boldupsigma e (x,y,\tau)+
 e (x,y,\tau)\,^t(hD-A)_y \cdot \boldupsigma\bigr)\Bigr) |_{y=x}
\tag{\ref*{book_new-26-2-14}}\label{26-2-14x}
\end{multline}
where $e(x,y,\tau)$ is the Schwartz kernel of the spectral projector
$\uptheta (\tau-H_{A,V})$. Indeed, this equation should be replaced by
\begin{equation}
\frac{2}{\kappa h^2} \Delta \bigl(A_j (x) -A^0_j(x)\bigr)= \Phi_j(x)
\label{27-3-1}
\end{equation}
with $\Phi_j(x)$ defined above but since $\Delta A^0_j=0$ these two equations are equivalent. We assume in this section that $\beta h\lesssim 1$.

\begin{proposition}\label{prop-27-3-1}
Let $\beta h\lesssim 1$ and let
\begin{equation}
 \|\partial A'\|_{\sL^\infty}\le \mu \le h^{-1}.
\label{27-3-2}
\end{equation}
Then as $\theta \in [1,2]$
\begin{multline}
\| \Phi_j \|_{\sL^\infty}+ \|h\partial \Phi_j \|_{\sL^\infty} \\[3pt]
\le Ch^{-2} \bigl(1+ \beta^{\frac{3}{2}} h^{\frac{1}{2}} + |\log h|+
(\beta h)^{\frac{\theta-1}{\theta+1}}
\|\partial A'\|_{\sC^{\theta}}^{\frac{1}{\theta+1}}+
(\beta h)^{\frac{\theta-1}{\theta+1}}
\|\partial V\|_{\sC^{\theta}}^{\frac{1}{\theta+1}}
\bigr).
\label{27-3-3}
\end{multline}
\end{proposition}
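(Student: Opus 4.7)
The plan is to bound $\Phi_j$ and $h\partial\Phi_j$ pointwise via a local semiclassical parametrix for the spectral projector $E(\tau)=\uptheta(\tau-H_{A,V})$, exploiting a Pauli-algebra cancellation in the symmetrized expression defining $\Phi_j$ that gains a factor of $h$ over the diagonal estimate $e(x,x,\tau)=O(h^{-3})$ from Proposition~\ref{prop-27-2-2}. First I would localize at a point $z\in\bR^3$: after a gauge change normalizing $A(z)=0$, the hypotheses $\beta h\lesssim 1$ and $\|\partial A'\|_{\sL^\infty}\le\mu\le h^{-1}$ ensure that the symbol $H_{A,V}(x,\xi)=|\xi-A(x)|^2-V(x)$ is close to the constant-coefficient model $|\xi|^2-V(z)$ on a box of size $h^{1/2}$ around $z$, so the microlocal parametrix of Section~\ref{book_new-sect-26-2} yields an oscillatory-integral representation of $e(x,y,\tau)$.

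Next, inserting this representation into the definition of $\Phi_j$, the principal symbol of $(hD-A)_x\cdot\boldupsigma$ contributes an integrand proportional to $\tr\bigl(\upsigma_j(\xi-A(x))\cdot\boldupsigma\bigr)\uptheta(\tau-H_{A,V})=2(\xi_j-A_j(x))\uptheta\bigl(\tau-|\xi-A(x)|^2+V(x)\bigr)$. After the shift $\eta=\xi-A(x)$ this is odd in $\eta$ and integrates to zero, so the naive $O(h^{-3})$ leading contribution vanishes identically, and what remains is the subprincipal Weyl correction of order $h^{-2}$. Its $\beta$-dependence produces the term $\beta^{3/2}h^{1/2}$ (the Landau-level correction in the subprincipal symbol for the Pauli operator, cf.\ the analogous calculation in Chapter~\ref{book_new-sect-25}); the factor $|\log h|$ arises from the time cut-off in the Tauberian passage from the propagator $e^{ith^{-1}H_{A,V}}$ to $E(\tau)$. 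The bound for $h\partial\Phi_j$ follows identically, since each $x$-differentiation of the parametrix kernel costs $h^{-1}$ which is compensated by the prefactor $h$.

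Finally, to accommodate the merely $\sC^\theta$ regularity of $V$ and $\partial A'$, I would mollify at scale $\varepsilon$, writing $V=V_\varepsilon+W_V$ and $A'=A'_\varepsilon+W_{A'}$ with $\|W_V\|_{\sL^\infty}\lesssim\varepsilon^{\theta+1}\|\partial V\|_{\sC^\theta}$ and $\|\partial W_{A'}\|_{\sL^\infty}\lesssim\varepsilon^\theta\|\partial A'\|_{\sC^\theta}$. The mollified symbols support the parametrix and the cancellation argument; the rough remainders enter as $\sL^\infty$-perturbations contributing via a Duhamel/resolvent estimate an error of order $h^{-3}\|W\|_{\sL^\infty}$, while the elevated higher seminorms of $V_\varepsilon,A'_\varepsilon$ produce subprincipal parametrix errors of order $(\beta h)h^{-2}\varepsilon^{\theta-1}$. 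Balancing these two contributions in $\varepsilon$ yields exactly the interpolation exponents $(\theta-1)/(\theta+1)$ and $1/(\theta+1)$ appearing in the statement. The main obstacle is this interpolation book-keeping: one must verify that the principal cancellation survives mollification (it does, resting only on the Pauli trace identity and the symmetry $\eta\mapsto-\eta$ of the mollified Hamiltonian), and one must track the loss of smoothness through the parametrix phase and amplitude so that no logarithmic loss beyond the single $|\log h|$ accumulates when the propagation time is optimized against the seminorms of the mollified symbols.
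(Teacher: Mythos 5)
Your skeleton — cancellation of the leading Weyl term for $\Phi_j$ via the Pauli trace identity $\tr\bigl(\upsigma_j(\eta\cdot\boldupsigma)\bigr)=2\eta_j$ together with odd symmetry in $\eta=\xi-A$, plus mollification of $A'$ and $V$ to accommodate $\sC^\theta$ regularity, plus rescaling to remove normalization — matches the paper's in outline; the cancellation is exactly what the paper records as ``the Weyl expression for $\Phi_j$ is just $0$.'' However, the remaining mechanism you propose does not produce the stated terms. The paper does \emph{not} run a single parametrix on a box of size $h^{1/2}$: it decomposes phase space into longitudinal-momentum shells $\cZ_\rho=\{|\xi_3-A_3|\asymp\rho\}$, $\rho_*\lesssim\rho\lesssim 1$, uses the zone-dependent Tauberian time $T^*\asymp\rho$ (because propagation along $x_3$ is ballistic at speed $\rho$ while cyclotron motion confines the transverse variables) and a zone-dependent mollification scale $\varepsilon=h\rho^{-1}$, and treats the inner core $\cZ'_{\rho}$ with the short time $T_*\asymp\beta^{-1}$. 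Your single scale $\varepsilon$ cannot see this anisotropy.

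The three specific terms in \textup{(\ref{27-3-3})} all arise from this decomposition in ways different from what you describe: (i) the $|\log h|$ comes from summing $\sim h^{-2}$ over the $O(|\log h|)$ dyadic shells $\rho$, not from a Tauberian time cutoff; (ii) the exponents $(\beta h)^{(\theta-1)/(\theta+1)}\|\cdot\|^{1/(\theta+1)}$ come from optimizing the inner/outer cutoff $\rho$, balancing the core contribution $\beta\rho h^{-2}$ against the shell contributions $\rho^{-(\theta-1)/2}h^{(\theta-1)/2}\|\cdot\|^{1/2}_{\sC^\theta}h^{-2}$ — your balance of $h^{-3}\|W\|_{\sL^\infty}$ against $(\beta h)h^{-2}\varepsilon^{\theta-1}$ yields the H\"older norm raised to a \emph{negative} power, which cannot be right; and (iii) the term $\beta^{3/2}h^{1/2}$ is not a Landau subprincipal correction at all, but simply what $h^{-2}$ becomes under the rescaling $x\mapsto x\gamma^{-1}$, $h\mapsto h\gamma^{-1}$ with $\gamma=\min\bigl((\beta h^{1/3})^{-3/2},\mu^{-1}\bigr)$ used to remove the working hypothesis $\beta h^{1/3}\lesssim 1$. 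Without the $\rho$-shell decomposition and the zone-dependent choice of $T$ and $\varepsilon$, the argument as you have sketched it stalls and does not recover the interpolation exponents or the logarithm.
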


\begin{proof}
\hypertarget{proof-27-3-1-i}{(i)} Assume first that $V\asymp 1$,
\begin{equation}
\beta h^{\frac{1}{3}}\lesssim 1\qquad \text{and}\qquad \mu = 1.
\label{27-3-4}
\end{equation}
Note that we need to consider only case $\beta\ge 2$ as otherwise estimate has been proven in Section~\ref{book_new-sect-26-2} (see Proposition~\ref{book_new-prop-26-2-16}). Then the contribution of the zone $\cZ'_\rho\Def \{|\xi_3- A_3(x)|\le \rho\}$ with $\rho\ge \rho_*\Def C_0\beta^{-1}$ to the Tauberian remainder with
$T=T_*\Def \epsilon \beta^{-1}$ does not exceed
\begin{equation}
Ch^{-2} \rho\bigl(\beta +
h^{\frac{1}{2}(\theta-1)}\|\partial A'\|^{\frac{1}{2}}_{\sC^{\theta}}+
h^{\frac{1}{2}(\theta-1)}\|\partial V\|^{\frac{1}{2}}_{\sC^{\theta}}
\bigr).
\label{27-3-5}
\end{equation}
Indeed, if $Q$ is $h$-pseudo-differential operator supported in this zone then exactly as in the proof of (\ref{book_new-26-2-47}) for $T\le T_*$
\begin{gather*}
|F_{t\to h^{-1}\tau}
\bar{\chi}_{T} (t) \Gamma_x \bigl((hD)^ k Q_x U_\varepsilon \bigr) |\le
C\rho h^{-2}\\
\shortintertext{and}
|F_{t\to h^{-1}\tau}
\bar{\chi}_{T} (t) \Gamma_x \bigl((hD)^ k Q_x (U-U_\varepsilon) \bigr) |\le
C\rho h^{-4}\vartheta T^2
\end{gather*}
where $U$, $U_\varepsilon$ are Schwartz kernels of $e^{-ih^{-1}tH_{A,V}}$ and
$e^{-ih^{-1}tH_{A_\varepsilon,V_\varepsilon}}$ and $\vartheta$ is an operator norm of perturbation $\bigl(H_{A_\varepsilon,V_\varepsilon}-H_{A,V}\bigr)$, $A_\varepsilon$ and $V_\varepsilon$ are $\varepsilon$-mollification of $A$ and $V$ respectively and $\varepsilon\ge h$; then
\begin{equation*}
|F_{t\to h^{-1}\tau} \bar{\chi}_{T} (t) \Gamma_x \bigl((hD)^ k Q_x U\bigr) |\le
C\rho (h^{-2}+h^{-4}\vartheta T^2)
\end{equation*}
and therefore the Tauberian error does not exceed
$C\rho (h^{-2}T^{-1}+h^{-4}\vartheta T)$.

Optimizing by $T\le T_*$ we get
$C\rho (h^{-2}\rho^{-1} + h^{-3}\vartheta ^{\frac{1}{2}})$ with $\varepsilon =h$ and $\vartheta= \varepsilon ^{\theta+1}\|\partial A'\|_{\sC^\vartheta}$ which is exactly the (\ref{27-3-5}).

Further, following arguments of Section~\ref{book_new-sect-26-2} we conclude that an error when we pass from the Tauberian expression to the Weyl expression does not exceed
\begin{equation}
C\rho h^{-2} \bigl(1+
h^{\frac{1}{2}(\theta-1)}\|\partial A'\|^{\frac{1}{2}}_{\sC^{\theta}}+
h^{\frac{1}{2}(\theta-1)}\|\partial V\|^{\frac{1}{2}}_{\sC^{\theta}}
\bigr).
\label{27-3-6}
\end{equation}

\medskip\noindent
\hypertarget{proof-27-3-1-ii}{(ii)} On the other hand, the contribution of zone
$\cZ_\rho =\{|\xi_3- A_3(x)|\asymp \rho \}$ with
$\rho \ge \rho_*= C_0\beta^{-1}$ to the Tauberian remainder with
$T=T^*\Def \epsilon \rho$ does not exceed
\begin{equation}
Ch^{-2} \bigl(1+
\rho^{\frac{1}{2}(1-\theta)} h^{\frac{1}{2}(\theta-1)}
\|\partial A'\|^{\frac{1}{2}}_{\sC^{\theta}}+
\rho^{\frac{1}{2}(1-\theta)} h^{\frac{1}{2}(\theta-1)}
\|\partial V\|^{\frac{1}{2}}_{\sC^{\theta}}
\bigr).
\label{27-3-7}
\end{equation}
Indeed, if $Q$ is $h$-pseudo-differential operator supported in this zone then for $T\le T^*$
\begin{gather*}
|F_{t\to h^{-1}\tau}
\bar{\chi}_{T} (t) \Gamma_x \bigl((hD)^ k Q_x U_\varepsilon \bigr) | \le
C\rho h^{-2},\\
\intertext{and for $T_*\le T\le T^*$}
|F_{t\to h^{-1}\tau} \chi_T(t) \Gamma_x \bigl((hD)^ k Q_x U_\varepsilon \bigr) |\le
C\rho h^{-2} (h/T\rho^2)^s,\\
\shortintertext{and then}
|F_{t\to h^{-1}\tau} \bigl(\bar{\chi}_{T} (t) - \bar{\chi}_{T_*} (t)\bigr)
\Gamma_x \bigl((hD)^ k Q_x U_\varepsilon\bigr) |\le
C\rho h^{-2} (\beta h/\rho^2)^s\\
\shortintertext{and}
|F_{t\to h^{-1}\tau}
\bar{\chi}_{T} (t) \Gamma_x \bigl((hD)^ k Q_x U_\varepsilon\bigr) |\le
C\rho h^{-2}
\end{gather*}
while approximation error is estimated in the same way as before but with $\varepsilon = h\rho^{-1}$ and thus $\vartheta $ acquires factor $\rho^{-1-\theta}$.

Then the Tauberian error is estimated and optimized by $T\le T^*$ and it does not exceed  (\ref{27-3-7}).

Following arguments of Section~\ref{book_new-sect-26-2} we conclude that an error when we pass from the Tauberian expression to the Weyl expression does not exceed (\ref{27-3-6}).

Then summation of the Tauberian error with respect to $\rho$ ranging from $\rho=\rho'$ to $\rho=1$ (where in what follows we use $\rho$ instead of $\rho'$ notation) returns
\begin{equation}
Ch^{-2} \bigl(1+|\log \rho|+
\rho^{-\frac{1}{2}(\theta-1)} h^{\frac{1}{2}(\theta-1)}
\|\partial A'\|^{\frac{1}{2}}_{\sC^{\theta}}+
\rho^{-\frac{1}{2}(\theta-1)}
h^{\frac{1}{2}(\theta-1)}\|\partial V\|^{\frac{1}{2}}_{\sC^{\theta}}
\bigr)
\label{27-3-8}
\end{equation}
and adding contribution of zone $\cZ'_\rho$ we conclude that the total Tauberian remainder does not exceed
\begin{equation}\\[3pt]
Ch^{-2} \bigl(\beta \rho +|\log \rho|+
\rho^{-\frac{1}{2}(\theta-1)} h^{\frac{1}{2}(\theta-1)}
\|\partial A'\|^{\frac{1}{2}}_{\sC^{\theta}}+
\rho^{-\frac{1}{2}(\theta-1)}
h^{\frac{1}{2}(\theta-1)}\|\partial V\|^{\frac{1}{2}}_{\sC^{\theta}}
\bigr).
\label{27-3-9}\\[3pt]
\end{equation}

Meanwhile summation of the Tauberian-to-Weyl error with respect to $\rho$ returns (\ref{27-3-9}) albeit without logarithmic term. Optimizing with respect to $\rho\ge \rho_*$ we arrive to
\begin{equation}
Ch^{-2} \bigl(1 +|\log h|+
(\beta h)^{\frac{\theta-1}{\theta+1}}
\|\partial A'\|_{\sC^{\theta}}^{\frac{1}{\theta+1}}+
(\beta h)^{\frac{\theta-1}{\theta+1}}
\|\partial V\|_{\sC^{\theta}}^{\frac{1}{\theta+1}}
\bigr).
\label{27-3-10}
\end{equation}

Furthermore, observe that
\begin{claim}\label{27-3-11}
If in $\epsilon$-vicinity of $x$ inequality $|\nabla V|\le \zeta$ holds (with $\zeta\ge |\log h|^{-1}$) then we can pick up
$T^*= \epsilon\min (\zeta ^{-1}\rho,1) $.
\end{claim}

Indeed, we can introduce
\begin{equation*}
p'_3 \Def \xi_3-A_3 -
\beta^{-1} \alpha_1 (\xi_1-A_1)-\beta^{-1} \alpha_2 (\xi_2-A_2)
\end{equation*}
such that $\{H, p'_3\} = V_{\xi_3} + O(\nu\beta^{-1})$ with
$\nu \Def\|\partial^2A\|_{\sC^\infty}$.

Therefore, in this case remainder estimate does not exceed
\begin{equation}
Ch^{-2} \bigl(1 +\zeta |\log h|+
(\beta h)^{\frac{\theta-1}{\theta+1}}
\|\partial A'\|_{\sC^{\theta}}^{\frac{1}{\theta+1}}+
(\beta h)^{\frac{\theta-1}{\theta+1}}
\|\partial V\|_{\sC^{\theta}}^{\frac{1}{\theta+1}}
\bigr).
\label{27-3-12}
\end{equation}

\medskip\noindent
\hypertarget{proof-27-3-1-iii}{(iii)}
Finally, observe that Weyl expression for $\Phi_j$ is just $0$. Therefore under assumption (\ref{27-3-4}) slightly improved estimate (\ref{27-3-3}) has been proven: $\| \Phi_j \|_{\sL^\infty}+ \|h\partial \Phi_j \|_{\sL^\infty}$ does not exceed expression (\ref{27-3-12}).

\medskip\noindent
\hypertarget{proof-27-3-1-iv}{(iv)}
To get rid off assumption (\ref{27-3-4}) we scale $x\mapsto x\gamma^{-1}$, $h\mapsto h\gamma^{-1}$, $\beta\mapsto \beta \gamma$ and pick up
$\gamma= \min \bigl((\beta h^{\frac{1}{3}})^{-\frac{3}{2}},\mu^{-1}\bigr)$ ; then $\beta h\mapsto \beta h$, and $Ch^{-2}\mapsto Ch^{-2}\gamma^{-1}=
C\beta^{\frac{3}{2}}h^{-\frac{3}{2}}+ C\mu h^{-2}$ (as we need to multiply by $\gamma^{-3}$) and  both $\|\partial A'\|_{\sC^{\theta}}^{\frac{1}{\theta+1}}$
and $\|\partial V\|_{\sC^{\theta}}^{\frac{1}{\theta+1}}$ acquire factor $\gamma$.

\medskip
Observing that we can take $\zeta = C\gamma$ and that factor $\gamma$ also pops up in all other terms (except $1$) in (\ref{27-3-12}) we arrive to estimate (\ref{27-3-3}).

\medskip
Furthermore, to get rid of assumption $V\asymp 1$ we also can scale with
$\gamma = \epsilon |V|+h^{\frac{2}{3}}$ and multiply operator by $\gamma^{-1}$; then $h\mapsto h\gamma^{-\frac{3}{2}}$,
$\beta\mapsto \beta \gamma^{\frac{1}{2}}$ and estimate (\ref{27-3-3}) does not deteriorate; we need to multiply by $\gamma^{\frac{1}{2}}$ which does not hurt.
\end{proof}

\begin{remark}\label{rem-27-3-2}
\begin{enumerate}[label=(\roman*), fullwidth]

\item\label{rem-27-3-2-i}
We can use $\theta'\ne \theta$ for norm of $V$;

\item\label{rem-27-3-2-ii}
If $V$ is smooth enough we can skip the related term (details later);

\item\label{rem-27-3-2-iii}
We can take $\theta=1$ but in this case factor $\rho^{-\frac{1}{2}(\theta-1)}h^{\frac{1}{2}(\theta-1)}$ in (\ref{27-3-8}) (i.e. after summation) and in (\ref{27-3-9}) is replaced by $|\log \rho|$; then taking into account \ref{rem-27-3-2-i} we replace (\ref{27-3-10}) by
\begin{equation}
Ch^{-2} \bigl(1 +|\log h|+
|\log h|\cdot
\|\partial A'\|_{\sC^{1}}^{\frac{1}{2}}+
(\beta h)^{\frac{\theta'-1}{\theta'+1}}
\|\partial V\|_{\sC^{\theta'}}^{\frac{1}{\theta'+1}}
\bigr)
\tag*{$\textup{(\ref*{27-3-10})}'$}\label{27-3-10-'}
\end{equation}
and similarly we deal with (\ref{27-3-12}) and (\ref{27-3-3}):
\begin{multline}
\| \Phi_j \|_{\sL^\infty}+ \|h\partial \Phi_j \|_{\sL^\infty} \le\\
Ch^{-2} \bigl(1+ \beta^{\frac{3}{2}} h^{\frac{1}{2}} + |\log h|+
|\log h| \|\partial A'\|_{\sC^{1}}^{\frac{1}{2}}+
(\beta h)^{\frac{\theta'-1}{\theta'+1}}
\|\partial V\|_{\sC^{\theta'}}^{\frac{1}{\theta'+1}}
\bigr).
\tag*{$\textup{(\ref*{27-3-3})}'$}\label{27-3-3-'}
\end{multline}

\item\label{rem-27-3-2-iv}
From the very beginning we could assume that $\mu \le \beta$; otherwise we could rescale as above with $\gamma=\beta^{-1}$ and apply arguments of Section~\ref{book_new-sect-26-2} simply ignoring external field.
\end{enumerate}
\end{remark}

\begin{corollary}\label{cor-27-3-3}
Let in the framework of Proposition~\ref{prop-27-3-1} $A'$ be a minimizer. Then as $\theta,\theta' \in [1,2]$
\begin{multline}
|\log h|^{-1}\|\partial A'\|_{\sC^{1}} +
h^{\theta-1}\|\partial A'\|_{\sC^{\theta}} \\
\le C\kappa \bigl(\beta^{\frac{3}{2}}h^{\frac{1}{2}} + |\log h| + \mu  +
(\beta h)^{\frac{1}{2}(\theta'-1)}\|V\|^{\frac{1}{2}}_{\sC^{\theta'+1}}
\bigr)\\
+C\kappa^2 |\log h| |\log h|^2+C \|\partial A'\|.
\label{27-3-13}
\end{multline}
\end{corollary}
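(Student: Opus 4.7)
The plan is to differentiate the minimizer equation (\ref{27-3-1}), bound the right-hand side via Proposition~\ref{prop-27-3-1}, convert the resulting $\sL^\infty$ bounds on $\Delta A'$ into H\"older bounds on $\partial A'$ through elliptic regularity on $\bR^3$, and finally absorb the self-referential terms using Young's inequality.

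From (\ref{27-3-1}) we have $\Delta A'_j=\tfrac{1}{2}\kappa h^2\Phi_j$, so
$$\|\Delta A'\|_{\sL^\infty}+\|h\partial\Delta A'\|_{\sL^\infty}\le \tfrac{1}{2}\kappa h^2\bigl(\|\Phi\|_{\sL^\infty}+\|h\partial\Phi\|_{\sL^\infty}\bigr).$$
Applying Proposition~\ref{prop-27-3-1} with $\mu=\|\partial A'\|_{\sL^\infty}$ (the $\mu$ on the right of (\ref{27-3-13}) comes from the scaling step in that proof, cf.\ Remark~\ref{rem-27-3-2}\ref{rem-27-3-2-iv}) and using Remark~\ref{rem-27-3-2}\ref{rem-27-3-2-i} to take independent parameters $\theta,\theta'$ for the $\partial A'$- and $\partial V$-pieces bounds this by
$$C\kappa\bigl(\beta^{3/2}h^{1/2}+|\log h|+\mu+(\beta h)^{(\theta-1)/(\theta+1)}\|\partial A'\|_{\sC^\theta}^{1/(\theta+1)}+(\beta h)^{(\theta'-1)/(\theta'+1)}\|\partial V\|_{\sC^{\theta'}}^{1/(\theta'+1)}\bigr).$$
To recover $\partial A'$ from $\Delta A'$ I invoke elliptic regularity for the Laplacian on $\bR^3$: the standard logarithmic-loss estimate
$$\|\partial^2 A'\|_{\sL^\infty}\le C|\log h|\bigl(\|\Delta A'\|_{\sL^\infty}+\|\partial A'\|\bigr)$$
yields the bound on $|\log h|^{-1}\|\partial A'\|_{\sC^1}$, while for $\theta\in(1,2]$ Schauder's estimate combined with the interpolation $\|\Delta A'\|_{\sC^{\theta-1}}\le C\|\Delta A'\|_{\sL^\infty}^{2-\theta}\|\Delta A'\|_{\sC^1}^{\theta-1}$ and Young's inequality gives
$$h^{\theta-1}\|\partial A'\|_{\sC^\theta}\le C\bigl(\|\Delta A'\|_{\sL^\infty}+\|h\partial\Delta A'\|_{\sL^\infty}\bigr)+C\|\partial A'\|.$$

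The key step is the absorption. Setting $X=h^{\theta-1}\|\partial A'\|_{\sC^\theta}$, the term $C\kappa(\beta h)^{(\theta-1)/(\theta+1)}\|\partial A'\|_{\sC^\theta}^{1/(\theta+1)}$ rewrites as $C\kappa\beta^{(\theta-1)/(\theta+1)}X^{1/(\theta+1)}$, and Young's inequality with conjugate exponents $\theta+1$ and $(\theta+1)/\theta$ absorbs a small multiple of $X$ into the left-hand side at the cost of a residue of order $\kappa^{(\theta+1)/\theta}\beta^{(\theta-1)/\theta}$, which for $\theta\in[1,2]$ is dominated by the explicit terms on the right of (\ref{27-3-13}). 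The borderline case $\theta=1$ is handled via the logarithmic replacement of Remark~\ref{rem-27-3-2}\ref{rem-27-3-2-iii}: there $(\beta h)^{(\theta-1)/(\theta+1)}$ is replaced by $|\log h|$, and the absorption for $Y=|\log h|^{-1}\|\partial A'\|_{\sC^1}$ reads $C\kappa|\log h|^{3/2}Y^{1/2}\le \epsilon Y+C\kappa^2|\log h|^3$, producing precisely the $C\kappa^2|\log h|^3$ piece of (\ref{27-3-13}). The $V$-dependent term is rewritten using $\|\partial V\|_{\sC^{\theta'}}\le\|V\|_{\sC^{\theta'+1}}$ and the identity
$$(\beta h)^{(\theta'-1)/(\theta'+1)}\|V\|_{\sC^{\theta'+1}}^{1/(\theta'+1)}=\bigl((\beta h)^{(\theta'-1)/2}\|V\|_{\sC^{\theta'+1}}^{1/2}\bigr)^{2/(\theta'+1)},$$
together with the elementary $Y^p\le 1+Y$ (valid for $Y\ge 0$, $p\in[0,1]$, which applies since $2/(\theta'+1)\le 1$ for $\theta'\ge 1$), producing the $(\beta h)^{(\theta'-1)/2}\|V\|_{\sC^{\theta'+1}}^{1/2}$ form on the right of (\ref{27-3-13}).

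The main obstacle is bookkeeping: exponents on the self-referential $\|\partial A'\|_{\sC^\theta}^{1/(\theta+1)}$ term and on the $V$-norm term must be tracked carefully through the successive applications of Young's inequality to land precisely on the form in (\ref{27-3-13}), and the borderline $\theta=1$ case must be treated separately via Remark~\ref{rem-27-3-2}\ref{rem-27-3-2-iii} in order to produce the logarithmic residue. A secondary, a priori point is the assumption $\mu\le h^{-1}$ needed to activate Proposition~\ref{prop-27-3-1}, which is already supplied by Proposition~\ref{prop-27-2-8}.
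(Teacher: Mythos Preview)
Your overall strategy matches the paper's: elliptic regularity gives
\[
|\log h|^{-1}\|\partial A'\|_{\sC^{1}} + h^{\theta-1}\|\partial A'\|_{\sC^{\theta}}
\le C\bigl(\|\Delta A'\|_{\sL^\infty}+\|h\partial\Delta A'\|_{\sL^\infty}\bigr)+C\|\partial A'\|,
\]
then the minimizer equation and Proposition~\ref{prop-27-3-1} bound $\|\Delta A'\|_{\sL^\infty}+\|h\partial\Delta A'\|_{\sL^\infty}$, and an absorption removes the self-referential piece. The paper's two-line proof leaves exactly these details implicit.

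There is, however, a gap in your absorption for $\theta>1$. You claim the residue $\kappa^{(\theta+1)/\theta}\beta^{(\theta-1)/\theta}$ is dominated by the explicit terms on the right of (\ref{27-3-13}), but this fails in general: take $\theta=2$, $\kappa\asymp\kappa^*$, and $\beta\asymp h^{-1/4}$. The residue is then $\asymp(\kappa^*)^{3/2}h^{-1/8}$, while $\kappa\beta^{3/2}h^{1/2}\asymp\kappa^* h^{1/8}$, $\kappa|\log h|\asymp\kappa^*|\log h|$, and $\kappa^2|\log h|^3\asymp(\kappa^*)^2|\log h|^3$ are all $o(h^{-1/8})$. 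So the residue is not controlled.

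The fix is that the $\theta$ appearing on the left of (\ref{27-3-13}) need not be the $\theta$ you feed into Proposition~\ref{prop-27-3-1} for the $A'$-term. Since elliptic regularity already bounds $h^{\theta-1}\|\partial A'\|_{\sC^\theta}$ by $\|\Delta A'\|_{\sL^\infty}+\|h\partial\Delta A'\|_{\sL^\infty}+C\|\partial A'\|$ for every $\theta\in[1,2]$, you should \emph{always} invoke the $\theta=1$ version (Remark~\ref{rem-27-3-2}\ref{rem-27-3-2-iii}, estimate \ref{27-3-3-'}) when bounding $\Delta A'$. The only self-referential term is then $C\kappa|\log h|\,\|\partial A'\|_{\sC^1}^{1/2}$, and your own computation $C\kappa|\log h|^{3/2}Y^{1/2}\le \epsilon Y+C\kappa^2|\log h|^3$ with $Y=|\log h|^{-1}\|\partial A'\|_{\sC^1}$ closes the loop and produces precisely the $C\kappa^2|\log h|^3$ term in (\ref{27-3-13}). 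Your treatment of the $V$-term and of the $\mu$ contribution is correct.
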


\begin{proof}
Indeed, the left-hand expression of (\ref{27-3-13}) does not exceed
\begin{equation*}
\|\Delta A'\|_{\sL^\infty} +\| h\partial \Delta A'|_{\sL^\infty}+
C \|\partial A'\|
\end{equation*}
while for a minimizer
$\|\Delta A'\|_{\sL^\infty} +\| h\partial \Delta A'|_{\sL^\infty}$ does not exceed the right-hand expression of (\ref{27-3-3}) multiplied by $C\kappa h^2$.
\end{proof}

\section{Microlocal analysis}
\label{sect-27-3-2}

As long as $\beta \le C_0 h^{-\frac{1}{3}}$ we are rather happy with our result here but we want to improve it otherwise. First we will prove that singularities propagate along magnetic lines; however since we do not know self-generated magnetic field we just consider all possible lines which will be in the cone
$\{(x,y):\, |x'-y'|\le C_0\mu \beta^{-1} T\}$ where $1\le \mu \le \beta$.

\begin{proposition}\label{prop-27-3-4}
Assume that $\beta h\lesssim 1$,
\begin{gather}
\| V\|_{\sC^1(B(0,2))}\le C_0
\label{27-3-14}\\
\shortintertext{and}
\|\partial A'\|_{\sC(B(0,2))}\le \mu \qquad (1\le \mu \le \epsilon \beta)
\label{27-3-15}
\end{gather}
with sufficiently small constant $\epsilon >0$.

Let $U(x,y,t)$ be the Schwartz kernel of $e^{ih^{-1}tH_{A,V}}$. Then
\begin{enumerate}[label=(\roman*), fullwidth]

\item\label{prop-27-3-4-i}
For $T\asymp 1$ estimate
\begin{equation}
\|F_{t\to h^{-1}\tau} \bar{\chi}_T(t)
\psi_1 (x)\psi_2 (y) U\| \le C h^s
\label{27-3-16}
\end{equation}
holds for all $\psi_1,\psi_2 \in \sC_0^\infty(B(0,1))$, such that
$\dist(\supp \psi_1, \supp \psi_2)\ge C_0T$ and $\tau\le c_0$; here $\|.\|$ means an operator norm from $\sL^2$ to $\sL^2$ and $s$ is arbitrarily large;

\item\label{prop-27-3-4-ii}
For $\bar{\rho}\le \rho \lesssim 1$ with $\bar{\rho}\Def C_0\mu \beta^{-1}$ and
$T\asymp \rho $ estimate
\begin{multline}
\|F_{t\to h^{-1}\tau} \bar{\chi}_T(t)
\varphi_1 (\rho^{-1}p_{3x})\varphi_2 (\rho^{-1}p_{3y})\psi_1 (x)\psi_2 (y)U\| \\[3pt]
\le C \rho^{1-3s}h^s+
C\rho^{2-\theta}h^{\theta}\bigl(\3 A'\3_{\theta+1}+\3 V\3_{\theta+1}\bigr)
\label{27-3-17}
\end{multline}
holds for all all $\varphi_1,\varphi_2 \in \sC_0^\infty$,
$\psi_1,\psi_2 \in \sC_0^\infty(B(0,1))$, such that\newline
$\dist(\supp \varphi_1, \supp \varphi_1)\ge C_0$, and $\tau\le c_0$; here and below $p_j= hD_j-A_j$, $p^0_j= hD_j-A^0_j$;

\item\label{prop-27-3-4-iii}
For $\bar{\rho}\le \rho \lesssim 1$ and $T\asymp \rho \lesssim 1$ estimate
\begin{multline}
\|F_{t\to h^{-1}\tau} \bar{\chi}_T(t)
\varphi_1 (\rho^{-1}p_{3x})\varphi_2 (\rho^{-1}p_{3y}) \psi_1 (\gamma^{-1}x)\psi_2 (\gamma^{-1}y) U\| \\[3pt]
\le C\rho^{1-s}\gamma^{-s} h^s+
Ch^{\theta}\gamma \rho^{-\theta}\bigl(\3 A'\3_{\theta+1}+\3 V\3_{\theta+1}\bigr)
\label{27-3-18}
\end{multline}
holds  for all $\varphi_1,\varphi_2 \in \sC_0^\infty$,
$\psi_1,\psi_2 \in \sC_0^\infty$, such that
$\dist(\supp \psi_1, \supp \psi_2)\ge C_0$, $\gamma = \rho T\ge \beta^{-1}$, $\rho \gamma \ge h$ and $\tau\le c_0$.
\end{enumerate}
\end{proposition}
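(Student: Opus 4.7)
The three estimates are propagation-of-singularities statements: (i) asserts a finite speed of propagation $C_0$ in configuration space, while (ii) and (iii) refine this by microlocalizing in the third momentum component $p_3=hD_3-A_3$ (which is the ``parallel'' momentum along the external field $A^0$). My plan is to combine classical Egorov/commutator arguments with a mollification of $(A,V)$ at a suitable scale $\varepsilon$, since the Hölder norms $\3\cdot\3_{\theta+1}$ enter through the error produced by this regularization. Throughout I will work on the energy shell $\tau\le c_0$, where the hypothesis (\ref{27-3-14})–(\ref{27-3-15}) yields $|p|\le C$.

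For part~\ref{prop-27-3-4-i}, the principal symbol $H=|\xi-A|^2-V$ satisfies $|\nabla_\xi H|=2|p|\le C$ on the energy shell, so one takes $C_0$ larger than the supremum of this speed and applies the standard commutator trick with a cutoff $\chi\bigl((|x-y|-C_0T)/\delta T\bigr)$: the Heisenberg derivative $\partial_t+i h^{-1}[H,\cdot]$ of the cutoff is non-positive on the relevant support modulo $O(h^s)$, independently of the smoothness of $A,V$, which is why no Hölder error appears in (\ref{27-3-16}).

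For parts~\ref{prop-27-3-4-ii}--\ref{prop-27-3-4-iii}, I would fix a scale $\varepsilon$ (ultimately $\varepsilon\asymp h/\rho$ to balance the two error terms) and replace $(A,V)$ by the $\varepsilon$-mollifications $(A_\varepsilon,V_\varepsilon)$. Duhamel's formula yields
\begin{equation*}
U-U_\varepsilon = \frac{1}{ih}\int_0^t U_\varepsilon(t-s)\bigl(H_{A,V}-H_{A_\varepsilon,V_\varepsilon}\bigr)U(s)\,ds,
\end{equation*}
and the perturbation has operator norm $O(\varepsilon^{\theta+1}(\3 A'\3_{\theta+1}+\3 V\3_{\theta+1}))$; integrating over $|t|\le T$ and writing $T\asymp\rho$ produces the second term on the right of~(\ref{27-3-17}) resp.~(\ref{27-3-18}). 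For the smoothed operator $H_{A_\varepsilon,V_\varepsilon}$ I would invoke classical microlocal propagation: along the Hamiltonian flow of the \emph{unperturbed} symbol $|\xi-A^0|^2-V$ one has $\dot p_3 = 2(p_1\{p_1,p_3\}+p_2\{p_2,p_3\})+\partial_{x_3}V = \partial_{x_3}V + O(\mu)$, because the constant vertical magnetic field $A^0$ gives $\{p_j,p_3\}=0$ for $j=1,2$ and the corrections from $A'$ are controlled by $\mu$. The cyclotron rotation in the $(x_1,x_2)$-plane with frequency $\asymp\beta$ averages the $O(\mu)$ contribution down on time scales longer than $\beta^{-1}$; this is exactly why the threshold $\bar\rho=C_0\mu\beta^{-1}$ appears, as for $\rho\ge\bar\rho$ the drift of $p_3$ over $T\asymp\rho$ stays well below $\rho$. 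Standard Egorov pulls the second symbol $\varphi_2(\rho^{-1}p_3)$ back along the mollified flow, keeps its support separated from that of $\varphi_1(\rho^{-1}p_3)$ by the non-stationarity just established, and a repeated integration by parts in the $F_{t\to h^{-1}\tau}$ yields the $h^s$-decay; the prefactor $\rho$ reflects the measure of the $\rho$-slab in $p_3$. For part~\ref{prop-27-3-4-iii} the same Egorov argument shows that at $|p_3|\asymp\rho$ the $x_3$-component moves with speed $\asymp\rho$, hence a displacement $\asymp\gamma=\rho T$; choosing $C_0$ large enough and using $\gamma\ge\beta^{-1}$ (so the cyclotron radius does not spoil the separation) makes the support of the propagated symbol disjoint from $\supp\psi_2$, and integration by parts gives the $\gamma^{-s}h^s$ factor.

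The main obstacle is the interaction between the fast cyclotron rotation (frequency $\beta$) and the slow parallel dynamics (speed $\rho$): one must ensure that the $A'$-perturbation of $\dot p_3$, naively of size $\mu$, does not spoil the $p_3$-separation over $T\asymp\rho$. This is handled by the restriction $\rho\ge\bar\rho=C_0\mu\beta^{-1}$, which guarantees enough cyclotron periods in $[0,T]$ for the first-order averaging to work. A secondary technical issue is balancing the mollification scale $\varepsilon$: too small and the smoothed-operator propagation fails, too large and the Duhamel error dominates; the choice $\varepsilon\asymp h/\rho$ (so that $\varepsilon$ matches the semiclassical width in the $\rho$-microlocalized problem) produces exactly the Hölder error written in (\ref{27-3-17})–(\ref{27-3-18}).
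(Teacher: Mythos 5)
Your plan captures the correct physics and the key parameter balance $\varepsilon\asymp h/\rho$, but it takes a noticeably different route from the paper for parts~\ref{prop-27-3-4-ii}--\ref{prop-27-3-4-iii}. The paper's proof does not run an Egorov/averaging argument in the original variables: after establishing part~\ref{prop-27-3-4-i}, it rescales $t\mapsto t/T$, $x_3\mapsto x_3/\gamma$ (with $\gamma=\rho T$), $h\mapsto\hbar=h/(\rho\gamma)$ and reduces to the arguments of Proposition~\ref{book_new-prop-26-2-11}(ii)--(iii) in the self-generated-field case. The magnetic complications are then handled algebraically: to propagate in $x_1,x_2$ the paper introduces the guiding-center coordinates $x'_1=x_1+\beta^{-1}p^0_2$, $x'_2=x_2-\beta^{-1}p^0_1$ and exploits $[p^0_j,x'_k]=0$, $[p_j,x'_k]=O(\beta^{-1}\mu h)$; and to control $p_3$ it passes to a point $z$ where, after a gauge transformation and a rotation of coordinates, $A'(z)=\partial A'(z)=0$, with the rotation shifting $p^0_3$ by exactly $O(\bar\rho)$ --- this is what makes the threshold $\bar\rho=C_0\mu\beta^{-1}$ appear. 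Your cyclotron-averaging heuristic is the dynamical counterpart of the same fact; indeed the paper makes it precise just above (proof of Proposition~\ref{prop-27-3-1}, claim~(\ref{27-3-11})) by replacing $p_3$ with a corrected $p'_3=p_3-\beta^{-1}\alpha_1p_1-\beta^{-1}\alpha_2p_2$ whose Heisenberg derivative has no $O(\mu)$ term. What your route buys is physical transparency; what the paper's route buys is that the $O(\mu\beta^{-1})$ loss and the role of $\gamma\ge\beta^{-1}$ fall out of explicit commutator bounds rather than a secular-averaging estimate, which is easier to make rigorous under the rough regularity $A'\in\sC^{\theta+1}$, $V\in\sC^{\theta+1}$. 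If you stay with averaging, you should replace the informal ``averages the $O(\mu)$ contribution down'' by the explicit corrected invariant $p'_3$ (or by the guiding-center coordinates) to control the secular drift over $T\asymp\rho$; otherwise the argument has a soft spot exactly where the $\mu$-dependence of $\bar\rho$ is decided.
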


\begin{proof}
Statement~\ref{prop-27-3-4-i} claims that the general propagation speed is bounded by $C_0$, Statement~\ref{prop-27-3-4-ii} claims that the propagation on distances $\ge C_0\bar{\rho}$ speed with respect to $p_3$ is also bounded by $C_0$, and Statement~\ref{prop-27-3-4-iii} claims that on distances
$\ge C_0\bar{\rho}$ the propagation speed with respect to $x$ is bounded by
$C_0\rho $. Note that from Corollary~\ref{cor-27-3-3} we know that
$\mu \lesssim \beta$.

\medskip\noindent
\hypertarget{proof-27-3-4-a}{(a)} Proof follows the proof of Proposition~\ref{book_new-prop-26-2-11} in the framework of a strong magnetic field. Namely, proof of Statement~\ref{prop-27-3-4-i} is a straightforward repetition of the proof of Proposition~\ref{book_new-prop-26-2-11}\ref{book_new-prop-26-2-11-i}. Since here we do not apply at this stage operators $(hD_x)^\alpha$ and $(hD_y)^{\alpha'}$, no assumption to the smoothness of $A$ is needed.

\medskip\noindent
\hypertarget{proof-27-3-4-b}{(b)} Assume that $A_3\equiv 0$ (we will get rid off this assumption on the next step). After Statement \ref{prop-27-3-4-i} has been proven we rescale $t\mapsto t/T$,
$x_3\mapsto x_3/\gamma$ with $\gamma=\rho T$ (since $\varphi_l$ depend only on $\xi_3$ other coordinates are rather irrelevant),
$h\mapsto \hbar=h/(\rho \gamma)$, $T\mapsto 1$. Then we apply the arguments used in the proof of Proposition~\ref{book_new-prop-26-2-11}\ref{book_new-prop-26-2-11-ii} and conclude that the left-hand expression of (\ref{27-3-17}) does not exceed
\begin{equation*}
T\Bigl( \hbar^s + C\hbar^\theta \gamma^{\theta+1}
\bigl(\3 A'\3_{\theta+1}+\3 V\3_{\theta+1}\bigr)\Bigr)
\end{equation*}
where factor $T$ is due to the scaling in the Fourier transform and $\gamma^{\theta+1}$ is due to the scaling in $\3\cdot\3$-norms. Plugging
$\hbar, T$, and $\gamma=\rho^2$ we get the right-hand expression of (\ref{27-3-17}).

Then if $\psi_l$ depend only on $x_3$, $y_3$ we can follow the proof of Proposition~\ref{book_new-prop-26-2-11}\ref{book_new-prop-26-2-11-iii} and prove statement~\ref{prop-27-3-4-iii}.

\medskip\noindent
\hypertarget{proof-27-3-4-c}{(c)}
Let $A_3$ be not necessarily identically $0$. To consider $\psi_l$ depending only on $x_1$ or $x_2$ we should introduce (in the standard magnetic Schr\"odinger manner) $x'_1\Def x_1+ \beta^{-1}p^0_2$ or
$x'_2\Def x_2- \beta^{-1}p^0_1$ respectively; recall that $p^0_j= hD_j-A^0_j$ and $p_j= hD_j-A_j$, $j=1,2,3$.

Then $[p^0_1, p^0_2]=ih\beta^{-1}$, $[p^0_j, x_k]=-ih\updelta_{jk}$,
$[p^0_j, x'_k]=0$ and $[p_j, x'_k]=O(\beta^{-1}\mu h)$ (for any
$\mu: 1\le \mu \le \beta$) as $j=1,2,3$ and $k=1,2$. Now we can apply the same arguments as above as long as $\rho \ge \bar{\rho}$.

\medskip\noindent
\hypertarget{proof-27-3-4-d}{(d)}
Next we need to recover Statement~\ref{book_new-prop-26-2-11-ii} in the general case. Without any loss of the generality we may consider a vicinity of point $z$ where $A'(z)=0$ and also $\partial A'(z)=0$. Indeed we can achieve the former by the gauge transformation and the latter by a rotation of coordinates in which case increment of $p^0_3$ will be $O(\bar{\rho})$.

In this case we just repeat the same arguments of Part~\hyperlink{proof-27-3-4-b}{(b)} of our proof.

\medskip\noindent
\hypertarget{proof-27-3-4-e}{(e)}
Finally, the proof of Statement~\ref{book_new-prop-26-2-11-iii} as $\psi_l$ depend only on $x_3$ follows from Statement~\ref{book_new-prop-26-2-11-ii}.

\medskip
We leave all easy details to the reader.
\end{proof}

\begin{proposition}\label{prop-27-3-5}
Let $\beta h\lesssim 1$ and assumptions \textup{(\ref{27-3-14})} and \textup{(\ref{27-3-15})} be fulfilled. Then
\begin{enumerate}[label=(\roman*), fullwidth]

\item\label{prop-27-3-5-i}
For $h\le T\le 1$ estimate
\begin{equation}
| F_{t\to h^{-1}\tau} \bar{\chi}_T(t) p_x^\alpha p_y^{\alpha'} U | \le C T^{-s}h^{-3+s}
\label{27-3-19}
\end{equation}
holds for all $\alpha:|\alpha|\le 2$, $\alpha':|\alpha'|\le 2$, and all $x,y\in B(0,1)$, such that $|x-y|\ge C_0T$ and $\tau\le c_0$;

\item\label{prop-27-3-5-ii}
In the framework of Proposition~\ref{prop-27-3-4}\ref{prop-27-3-4-ii} the following estimate holds for all $\alpha:|\alpha|\le 2$,
$\alpha':|\alpha'|\le 2$, and $\tau\le c$:
\begin{multline}
|F_{t\to h^{-1}\tau} \bar{\chi}_T(t)
p_x^\alpha p_y^{\alpha'} \varphi_1 (\rho^{-1}p_{3x})\varphi_2 (\rho^{-1}p_{3y})\psi_1 (x)\psi_2 (y)U | \\[3pt]
\le C \rho h^{-1}\bigl(\beta h^{-1}+ \rho^2 h^{-2})
\Bigl(\rho^{1-3s}h^s+
\rho^{2-\theta}h^{\theta}\bigl(\3 A'\3_{\theta+1}+\3 V\3_{\theta+1}\bigr)\Bigr);
\label{27-3-20}
\end{multline}

\item\label{prop-27-3-5-iii}
In the framework of Proposition~\ref{prop-27-3-4}\ref{prop-27-3-4-iii} the following estimate holds for all $\alpha:|\alpha|\le 2$,
$\alpha':|\alpha'|\le 2$, and $\tau\le c$:
\begin{multline}
|F_{t\to h^{-1}\tau} \bar{\chi}_T(t)
p_x^\alpha p_y^{\alpha'}\varphi_1 (\rho^{-1}p_{3x})\varphi_2 (\rho^{-1}p_{3y}) \psi_1 (\gamma^{-1}x)\psi_2 (\gamma^{-1}y) U| \\[3pt]
\le C \rho h^{-1} \bigl(\beta h^{-1}+ \rho^2 h^{-2}\bigr)
\Bigl( \rho^{1-s}\gamma^{-s} h^s+
h^{\theta}\gamma \rho^{-\theta}\bigl(\3 A'\3_{\theta+1}+\3 V\3_{\theta+1}\bigr) \Bigr).
\label{27-3-21}
\end{multline}
\end{enumerate}
\end{proposition}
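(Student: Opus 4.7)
Proposition~\ref{prop-27-3-5} is the pointwise-kernel counterpart of Proposition~\ref{prop-27-3-4}: once we have the $L^2\to L^2$ operator-norm bounds on the microlocally cut-off operators in Proposition~\ref{prop-27-3-4}, we extract pointwise bounds on the Schwartz kernel (with the derivatives $p_x^\alpha p_y^{\alpha'}$ applied) by the standard restriction-to-a-point ($\upgamma_x$) scheme already used to prove Propositions~\ref{prop-27-2-6} and~\ref{prop-27-2-9}. Concretely, for any $L^2\to L^2$ operator $Q$ and $x,y$ in its domain of smoothness,
\begin{equation*}
|(p_x^\alpha p_y^{\alpha'} K_Q)(x,y)|\le
\|\upgamma_x p_x^\alpha\|_{L^2\to\bC^q}\cdot \|Q\|_{L^2\to L^2}\cdot
\|\upgamma_y p_y^{\alpha'}\|_{L^2\to\bC^q},
\end{equation*}
so the whole problem reduces to controlling the two ``$\upgamma$-factors'' on the microlocal subspace singled out by the cutoffs $\varphi_l,\psi_l$ of Proposition~\ref{prop-27-3-4}.

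\textbf{Step 1 (Statement~\ref{prop-27-3-5-i}).} First upgrade Proposition~\ref{prop-27-3-4}\ref{prop-27-3-4-i} (stated there only for $T\asymp 1$) to the full range $h\le T\le 1$; this is done by iterating the propagation estimate at the time-scale $T$, the smoothness of $\bar\chi_T$ producing the factor $T^{-s}$. Thus the operator $F_{t\to h^{-1}\tau}\bar\chi_T(t)\psi_1(x)\psi_2(y)U$ has $L^2\to L^2$ norm $O(T^{-s}h^s)$ under the support hypothesis $\dist(\supp\psi_1,\supp\psi_2)\ge C_0T$. Next, as in the proof of Proposition~\ref{prop-27-2-6}, $p_x^\alpha E(\tau)$ is bounded on $L^2$ with norm $O(1)$ on the spectral window $\{\tau\le c_0\}$ for $|\alpha|\le 2$, so that $\|\upgamma_x p_x^\alpha\|_{L^2\to\bC^q}\le Ch^{-3/2}$. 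The product of the two $\upgamma$-factors with the operator-norm bound is $CT^{-s}h^{-3+s}$, which is (\ref{27-3-19}).

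\textbf{Step 2 (Statements~\ref{prop-27-3-5-ii} and~\ref{prop-27-3-5-iii}).} The operator-norm bounds are already provided by (\ref{27-3-17}) and (\ref{27-3-18}); only the $\upgamma_x$-factor requires magnetic refinement. In the microlocal setting $|p_3|\le C_0\rho$ imposed by $\varphi_l$, combined with $|p_1|,|p_2|\le C_0$ from the spectral window, we repeat the box argument of Proposition~\ref{prop-27-2-9}: at the reference point $z$ pass to a gauge with $A(z)=0$ and rotate coordinates so that $\partial_3A_3(z)=0$ (exactly as in part~(d) of the proof of Proposition~\ref{prop-27-3-4}). On a box of dimensions matching the two competing momentum scales $\zeta_1=\zeta_2=\beta^{1/2}h^{-1/2}$ (the inverse magnetic length) versus $\zeta_{1,2}=h^{-1}$ (the bulk scale), together with $\zeta_3=\rho/h$, a Plancherel/Cauchy-Schwarz computation yields
\begin{equation*}
\|\upgamma_x p_x^\alpha\|^2_{L^2\to\bC^q}\le C\,(\zeta_1\zeta_2\zeta_3)\prod_{j}(h\zeta_j)^{2\alpha_j}.
\end{equation*}
Taking the sum of the two regimes and accounting for the derivative factors for $|\alpha|,|\alpha'|\le 2$ reproduces the prefactor $\rho h^{-1}(\beta h^{-1}+\rho^2h^{-2})$, whose multiplication by the operator-norm bounds (\ref{27-3-17}) and (\ref{27-3-18}) yields (\ref{27-3-20}) and (\ref{27-3-21}).

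\textbf{Main technical point.} The delicate part is setting up the two-regime box estimate so that the \emph{sum} $\beta h^{-1}+\rho^2h^{-2}$ emerges correctly: the first summand is the ``single Landau-level'' contribution (from $\zeta_{1,2}^2=\beta h^{-1}$) while the second is the ``bulk'' contribution (from $\zeta_3^2=\rho^2h^{-2}$ together with differentiation along $x_3$), and one must verify that neither regime is neglected while respecting the gauge normalization $A(z)=0$ and the rotation used in part~(d) of the proof of Proposition~\ref{prop-27-3-4}. For Statement~\ref{prop-27-3-5-iii} the additional cube decomposition in $x_3$ performed in part~(e) of that proof produces no new difficulty, as the $\upgamma_x$ computation is carried out cube-by-cube with the same bookkeeping.
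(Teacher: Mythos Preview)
Your overall strategy---pass from the operator-norm bounds of Proposition~\ref{prop-27-3-4} to pointwise kernel bounds via the $\upgamma_x$--restriction scheme of Propositions~\ref{prop-27-2-6}--\ref{prop-27-2-9}---is exactly the paper's. Two points, however, separate your write-up from a complete proof.

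\emph{Handling of $p_x^\alpha p_y^{\alpha'}$.} The paper does not compute $\|\upgamma_x p_x^\alpha\|$ directly. Instead it first observes that the operator-norm estimates (\ref{27-3-16})--(\ref{27-3-18}) survive insertion of $p_x^\alpha p_y^{\alpha'}$ (this follows from the equation $hD_t U=H_xU$ and its adjoint in $y$, which let you trade $p^\alpha$ for bounded operators on the energy shell). One then only needs the $\upgamma$-scheme for $\alpha=\alpha'=0$, and recovers general $|\alpha|,|\alpha'|\le 2$ by invoking the equations once more. This is cleaner than tracking derivative factors $\prod_j(h\zeta_j)^{2\alpha_j}$ through the box computation.

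\emph{The prefactor.} Your ``two-regime'' box argument does not give $\rho h^{-1}(\beta h^{-1}+\rho^2h^{-2})$. With $\zeta_3=\rho h^{-1}$ and your ``bulk'' choice $\zeta_1=\zeta_2=h^{-1}$ you get $\zeta_1\zeta_2\zeta_3=\rho h^{-3}$, so the sum of your two regimes is $\rho h^{-1}(\beta h^{-1}+h^{-2})\asymp\rho h^{-3}$, strictly weaker than the stated bound when $\rho\ll 1$. The paper obtains the sharper factor by first gauge-normalising so that $A'=0$ at a point of $\supp\psi_1$, replacing $p$ by the \emph{unperturbed} magnetic momenta $p^0$, and then using the \emph{canonical form} of $p^0_1,p^0_2,p^0_3$ (the exact Landau-level structure): one inserts slightly larger cutoffs $\bar\varphi_l(\rho^{-1}p^0_{3})$ on each side, reads off the correct phase-space density $\rho h^{-1}(\beta h^{-1}+\rho^2 h^{-2})$ from that canonical form, and then drops the $\bar\varphi_l$ since they equal $1$ on $\supp\varphi_l$. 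The naive box argument misses the $\rho^2$ in the second summand because it does not exploit this structure.
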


\begin{proof}
Observe that estimates (\ref{27-3-16})--(\ref{27-3-18}) hold if one applies
operator $p_x^\alpha p_y^{\alpha'}$ under the norm (this follows from equations for $U$ by $(x,t)$ and dual equations by $(y,t)$). Then estimates (\ref{27-3-19})--(\ref{27-3-21}) hold with $\alpha=\alpha'=0$.

Really, without any loss of the generality one can assume that $A'=0$ at some point of $\supp (\psi_1)$; then estimates (\ref{27-3-16})--(\ref{27-3-18}) hold if one applies operator $p_x^{0\alpha} p_y^{0\alpha'}$ instead. Then estimate (\ref{27-3-19}) holds with $\alpha=\alpha'=0$; further, estimates (\ref{27-3-20})--(\ref{27-3-21}) also hold with $\alpha=\alpha'=0$ if one applies an extra factor $\bar{\varphi}_1(\rho^{-1}p^0_{3x})\bar{\varphi}_2(\rho^{-1}p^0_{3y})$ under the norm (this follows from the properties of $p^0_j$, $j=1,2,3$, in particular, canonical form). However if $\bar{\varphi}_l=1$ in $\epsilon$-vicinity of $\supp(\varphi_l)$ then we can skip this factor.

Finally, appealing to equations for $U$ by $(x,t)$ and $(y,t)$ again we recover estimates (\ref{27-3-19})--(\ref{27-3-21}) with $|\alpha|\le 2$,
$|\alpha'|\le 2$.
\end{proof}

\begin{proposition}\label{prop-27-3-6}
Let $\beta h\lesssim 1$ and \textup{(\ref{27-3-14})} and \textup{(\ref{27-3-15})} be fulfilled. Let $z\in B(0,1)$.
Then:

\begin{enumerate}[fullwidth, label=(\roman*)]
\item\label{prop-27-3-6-i}
The following estimate
\begin{multline}
|F_{t\to h^{-1}\tau} \bar{\chi}_T(t)
p_x^\alpha p_y^{\alpha'}\varphi_1 (\rho^{-1}p_{3x})\varphi_2 (\rho^{-1}p_{3y}) U|_{x=y=z}| \\[3pt]
\le C \rho T h^{-1} \bigl(\beta h^{-1} +\rho^2 h^{-2}\bigr)
\label{27-3-22}
\end{multline}
holds for all $\alpha:|\alpha|\le 2$, $\alpha' :|\alpha'|\le 2$, and
$\tau\le c$;

\item\label{prop-27-3-6-ii}
Let $A_z(x)= A(z) + \langle x-z,\nabla _z\rangle A(z)$,
$V_z(x)= V(z) + \langle x-z,\nabla _z\rangle V(z)$ be linear approximations to $A$ and $V$ at $z$; let $H_{z}= H_{A_z,V_z}$, $U_z(x,y,t)$ be its Schwartz kernel. Then for $h^{1-\delta}\le T\le C_0$, $\rho\le C_0$ estimate
\begin{multline}
|F_{t\to h^{-1}\tau} \bar{\chi}_T(t)
p_x^\alpha p_y^{\alpha'}\varphi_1 (\rho^{-1}p_{3x})\varphi_2 (\rho^{-1}p_{3y}) (U-U_z)|_{x=y=z}| \\[3pt]
\le C \rho T ^2 h^{-2} \bigl(\beta h^{-1}+\rho^2 h^{-2}\bigr) \nu \gamma^2
\label{27-3-23}
\end{multline}
holds with
\begin{gather}
\gamma = \gamma(\rho,T)\Def C_0(\rho+T)T + C_0h\rho^{-1},
\label{27-3-24}\\
\shortintertext{and}
\nu= \bigl(\3 A'\3_{2}+\3 V\3_{2}\bigr)+\mu^2.
\label{27-3-25}
\end{gather}
\end{enumerate}
\end{proposition}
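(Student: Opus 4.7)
For Statement~\ref{prop-27-3-6-i} I will use the standard Tauberian framework combined with the propagation bounds of Proposition~\ref{prop-27-3-4}. Choose an admissible $\bar\chi$ with $\hat{\bar\chi}\ge 0$, so that the operator $Q_\tau\Def F_{t\to h^{-1}\tau}[\bar\chi_T(t) e^{ih^{-1}tH}]=T\hat{\bar\chi}(T(\tau-H)/h)$ is non-negative. Setting $\cA_j\Def p^{\alpha_j}\varphi_j(\rho^{-1}p_3)$ for $j=1,2$, the quantity to bound is $(\cA_1 Q_\tau \cA_2^*)(z,z)$. By Cauchy--Schwarz applied to the positive form $Q_\tau$,
\[
\bigl|(\cA_1 Q_\tau\cA_2^*)(z,z)\bigr|\le \bigl[(\cA_1 Q_\tau\cA_1^*)(z,z)\bigr]^{\frac12}\bigl[(\cA_2 Q_\tau\cA_2^*)(z,z)\bigr]^{\frac12},
\]
so it suffices to bound $(\cA Q_\tau\cA^*)(z,z)$ for each of $\cA=\cA_1,\cA_2$ separately. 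The standard Tauberian pointwise estimate then gives $(\cA Q_\tau\cA^*)(z,z)\lesssim (T/h)\cdot \bigl(\cA e(\cdot)\cA^*\text{ increment over }|\lambda-\tau|\le h/T\bigr)$, and the pointwise spectral-function increment with $p_3$ microlocalized at scale $\rho$ is bounded by $C\rho(\beta h^{-1}+\rho^2 h^{-2})$ from the magnetic Weyl asymptotics. The result is exactly \textup{(\ref{27-3-22})}; contributions from $|t|\ge C_0 T$ are $O(h^s)$ by Proposition~\ref{prop-27-3-4}\ref{prop-27-3-4-i} and are absorbed in the constant.

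For Statement~\ref{prop-27-3-6-ii} I will apply Duhamel's principle. Let $W\Def H_{A,V}-H_{A_z,V_z}$; on microlocally $p_3\sim\rho$--restricted states supported in $|w-z|\le\gamma$, Taylor expansion at $z$ shows that the coefficients of $W$ are $O(\nu\gamma^2)$ with $\nu$ as in \textup{(\ref{27-3-25})}. Then
\[
U(t)-U_z(t)=(ih)^{-1}\int_0^t U_z(t-s)\,W\,U(s)\,ds,
\]
and after Fourier-transforming in $t$ against $\bar\chi_T$ and evaluating at $x=y=z$, I obtain a triple integral over $s$, $t$, $w$. I localize the $w$-integration to $|w-z|\le C\gamma$, where the three pieces of $\gamma=(\rho+T)T+h\rho^{-1}$ correspond respectively to (a) spatial propagation at speed $\rho$ over time $T$ in the $p_3\sim\rho$ shell (Proposition~\ref{prop-27-3-4}\ref{prop-27-3-4-iii}), contributing $\rho T$; (b) the general transverse drift $T\cdot T=T^2$ stemming from Proposition~\ref{prop-27-3-4}\ref{prop-27-3-4-i}; and (c) the microlocal length scale $h\rho^{-1}$ below which $\varphi_j(\rho^{-1}p_3)$ cannot further localize $x$. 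On this set $|W|\lesssim\nu\gamma^2$; bounding the $U_z$ and $U$ factors via Statement~\ref{prop-27-3-6-i} applied with cutoffs supported in $B(z,C\gamma)$, integrating in $s$ to pick up a factor $T$, and absorbing the $1/h$ from Duhamel then yields \textup{(\ref{27-3-23})}.

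The main obstacle will be the $w$-localization at scale $\gamma$ in Statement~\ref{prop-27-3-6-ii}: all three contributions to $\gamma$ must be justified simultaneously, the cutoffs $\varphi_j(\rho^{-1}p_3)$ have to be preserved under the Duhamel iteration so that both $U$ and $U_z$ factors continue to satisfy \textup{(\ref{27-3-22})}, and the quadratic-in-$p$ piece of $W$ (arising from $(A-A_z)^2$ in the expansion of $((hD-A)\cdot\boldupsigma)^2$) must be controlled as an $O(\nu\gamma^2)$ perturbation on the relevant microlocalized states rather than as an unbounded operator; it is this last point that forces the $\mu^2$ term in the definition \textup{(\ref{27-3-25})} of $\nu$. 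Once $\gamma$ is fixed as stated, the three factors combine cleanly to deliver \textup{(\ref{27-3-23})}.
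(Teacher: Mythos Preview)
Your outline for Statement~\ref{prop-27-3-6-i} is acceptable; the paper itself declares this ``easy and left to the reader,'' and your Cauchy--Schwarz/Tauberian reduction is one way to arrange the bookkeeping. One small caveat: invoking ``magnetic Weyl asymptotics'' for the spectral increment is mildly circular here, since that asymptotic is precisely what (\ref{27-3-22}) is building towards. A cleaner phrasing is to bound the propagator kernel directly for $|t|\lesssim T$ using the energy localisation, which gives the same $\rho(\beta h^{-1}+\rho^2h^{-2})$ phase-space volume factor without appeal to a Weyl law.

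For Statement~\ref{prop-27-3-6-ii} you have the right starting point (Duhamel) but there is a real gap in the localisation step. You write ``I localize the $w$-integration to $|w-z|\le C\gamma$'' and then treat the exterior as negligible. In this non-smooth setting that is not justified: Proposition~\ref{prop-27-3-4} (and its pointwise upgrade Proposition~\ref{prop-27-3-5}) do \emph{not} give $O(h^\infty)$ decay outside the propagation set. The bounds (\ref{27-3-17})--(\ref{27-3-18}) and (\ref{27-3-20})--(\ref{27-3-21}) carry a second term proportional to $h^\theta\gamma\rho^{-\theta}\bigl(\3 A'\3_{\theta+1}+\3 V\3_{\theta+1}\bigr)$, which is only polynomially small in $h$. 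You cannot simply discard the region $|w-z|>C\gamma$.

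The paper resolves this by a dyadic decomposition. In the Duhamel expansion (\ref{27-3-26}) one inserts a partition $\sum_k\psi_k$ with $\psi_0$ supported in $B(z,2\gamma)$ and $\psi_k$ supported in the shell $\gamma_k\Def 2^k\gamma$. The $k=0$ term is bounded directly by the right-hand side of (\ref{27-3-23}). For each $k\ge 1$ one uses the \emph{pointwise} off-diagonal estimate (\ref{27-3-21}) from Proposition~\ref{prop-27-3-5}\ref{prop-27-3-5-iii} (with $\gamma$ replaced by $\gamma_k$), multiplies by the perturbation size $CTh^{-1}\min(\nu\gamma_k^2,1)$, and sums over $k$. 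The two terms in (\ref{27-3-21}) then give, after summation,
\[
C\rho h^{-1}T(\beta h^{-1}+\rho^2h^{-2})\Bigl(\rho^{-s}\gamma^{-s}h^s\min(\nu\gamma^2,1)+h^2\rho^{-2}\nu\min(\nu,1)\Bigr),
\]
which is dominated by (\ref{27-3-23}). The key point you are missing is that the growth of the perturbation ($\sim\nu\gamma_k^2$) competes with the off-diagonal decay of the kernel shell by shell, and both terms of the propagation remainder must be summed.

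A minor correction on your reading of $\gamma$: the $T^2$ contribution in (\ref{27-3-24}) is not ``transverse drift from Proposition~\ref{prop-27-3-4}\ref{prop-27-3-4-i}.'' It comes from combining \ref{prop-27-3-4-ii} (drift of $p_3$ at unit speed, so $|p_3|$ can grow to $\rho+T$ over time $T$) with \ref{prop-27-3-4-iii} ($x$-speed bounded by $|p_3|$), yielding $(\rho+T)T$. This is also why the paper remarks ``we just replace $\rho$ by $T$ if needed'' when applying (\ref{27-3-21}).
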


\begin{proof}
(i) Proof of Statement~\ref{prop-27-3-6-i} is easy and left to the reader.

\medskip\noindent
(ii) To prove Statement~\ref{prop-27-3-6-ii} observe that
\begin{multline}
e^{ith^{-1}H}= e^{ith^{-1}H_z} +
ih^{-1}\int_0^t e^{i(t-t')h^{-1}H }(H-H_z) e^{it'h^{-1}H_z}\,dt'=\\
e^{ith^{-1}H_z} +
\sum_{0\le k\le K} \underbracket{ih^{-1}\int_0^t e^{i(t-t')h^{-1}H }(H-H_z) \psi_k e^{it'h^{-1}H_z}\,dt'}
\label{27-3-26}
\end{multline}
where $\psi_0$ is a $\gamma$-admissible function supported in $B(z, 2\gamma)$ and $\psi_k$ are $\gamma_k$-admissible functions supported in
$B(z, \gamma_k)\setminus B(z, \frac{1}{2}\gamma_k)$ with $\gamma_k=2^k \gamma$. Plugging (\ref{27-3-26}) into the left-hand expression of (\ref{27-3-23}) we note that the first term is cancelled and we have the sum with respect to $k:0\le k\le K$ obtained from this expression when we replace $(U-U_z)$ by the Schwartz kernel of the selected above term.

Further, observe that the term with $k=0$ does not exceed the right-hand expression of (\ref{27-3-23}).

Furthermore, terms with $k:1\le k\le K$ do not exceed the right-hand expression of (\ref{27-3-21}) multiplied by $CTh^{-1}\min(\nu \gamma_k^2,1)$; indeed, we just replace $\rho$ by $T$ if needed. After summation with respect to
$k:0\le k\le K$ we get
\begin{equation*}
C\rho h^{-1}T \bigl(\beta h^{-1}+\rho^2 h^{-2}\bigr) \times \Bigl(\rho^{-s}\gamma^{-s}h^s \min (\nu \gamma^2,1)+
h^2 \rho^{-2} \nu \min (\nu,1)\Bigr)
\end{equation*}
which again does not exceed the right-hand expression of (\ref{27-3-23}).
\end{proof}

\begin{remark}\label{rem-27-3-7}
Actually Statement \ref{prop-27-3-6-ii} is better than Statement \ref{prop-27-3-6-i} only if $\nu \gamma^2 Th^{-1}\le 1$.
\end{remark}

\section{Advanced estimate to a minimizer}
\label{sect-27-3-3}

Now we are going to apply the results of the previous Subsection~\ref{sect-27-3-2} to the right-hand of (\ref{26-2-14x}).

\subsection{Tauberian estimate}
\label{sect-27-3-3-1}

Consider different zones (based on magnitude of $|p_3|$). Recall that $\bar{\rho}= \beta^{-1}$\,\footnote{\label{foot-27-5} As we assume that $\mu= 1$; otherwise $\bar{\rho}=\mu \beta^{-1}$.} and
$\rho^*=(\beta h)^{\frac{1}{2}}$.

\subsubsection{Zone $\{\rho'\lesssim |p_3|\lesssim \rho^*\}$.}\label{sect-27-3-3-1-1} Observe that Proposition~\ref{prop-27-3-6}\ref{prop-27-3-6-ii} implies that for
$\varphi_j\in \sC_0^\infty ([-2,-\frac{1}{2}]\cup[\frac{1}{2},2])$ estimate
\begin{gather}
|F_{t\to h^{-1}\tau} \bar{\chi}_T(t) p_x^\alpha p_y^{\alpha'}
\varphi_1 (\rho^{-1}p_{3x})\varphi_2 (\rho^{-1}p_{3y}) U|_{x=y=z}| \le C S(\rho, T)
\label{27-3-27}\\
\shortintertext{holds with}
S(\rho, T)= \bigl(\beta h^{-1}+\rho^2h^{-2}\bigr)
\bigl(\rho^{-1} +\rho  h^{-2}\nu \gamma^2 T^2\bigr)
\label{27-3-28}
\end{gather}
where $\gamma=\gamma(\rho,T)$ is defined by (\ref{27-3-24}).

Indeed, one can prove easily that
\begin{multline}
|F_{t\to h^{-1}\tau} \bar{\chi}_T(t)
p_x^\alpha p_y^{\alpha'}\varphi_1 (\rho^{-1}p_{3x})\varphi_2 (\rho^{-1}p_{3y}) U_z|_{x=y=z}|\\[3pt]
\le C(\beta h^{-1}\rho^{-1} +\rho^2 h^{-2}).
\label{27-3-29}
\end{multline}
Let us take $\alpha=\alpha'$, $\varphi_1=\varphi_2$ and $\rho_1=\rho_2$. Since in this case expression
\begin{equation}\\[2pt]
p_x^\alpha p_y^{\alpha'}\varphi_1 (\rho_1^{-1}p_{3x})\varphi_2 (\rho_2^{-1}p_{3y}) e(.,.,\tau)|_{x=y}
\label{27-3-30}
\end{equation}
is a monotone function with respect to $\tau$ then the standard Tauberian arguments (part I; we leave easy details to the reader) imply that
\begin{multline}
|p_x^\alpha p_y^{\alpha}\varphi_1 (\rho^{-1}p_{3x})\varphi_1 (\rho^{-1}p_{3y}) \bigl[e(.,.,\tau)-e(.,.,\tau')\bigr]|_{x=y}|\\[3pt]
\le C S(\rho,T) \bigl(T^{-1}+|\tau-\tau'|h^{-1}\bigr)
\label{27-3-31}
\end{multline}
for all $\tau'\le \tau \le c$ and therefore
\begin{multline}
|p_x^\alpha p_y^{\alpha'}
\varphi_1 (\rho_1^{-1}p_{3x}) \varphi_2 (\rho_2^{-1}p_{3y}) \bigl[e(.,.,\tau)-e(.,.,\tau')\bigr]|_{x=y}| \\[3pt]
\le C \bigl(S(\rho_1,T_1) S(\rho_2,T_2)\bigr)^{\frac{1}{2}}
\Bigl(T_1^{-\frac{1}{2}}T_2^{-\frac{1}{2}}+
|\tau-\tau'|^{\frac{1}{2}}\bigl(T_1^{-\frac{1}{2}}+T_2^{-\frac{1}{2}}\bigr)
h^{-\frac{1}{2}} +|\tau-\tau'|h^{-1}\Bigr).
\label{27-3-32}
\end{multline}

Then the standard Tauberian arguments (part II, with the minor modifications; again we leave easy details to the reader) imply that expression (\ref{27-3-30})
is given by the standard Tauberian formula with an error not exceeding the right-hand expression of (\ref{27-3-32}) with $|\tau-\tau'|$ replaced by $hT^{-1}$ which is
\begin{equation}
C \bigl(S(\rho_1,T_1) S(\rho_2,T_2)\bigr)^{\frac{1}{2}}
\Bigl(T_1^{-\frac{1}{2}}T_2^{-\frac{1}{2}}+
\bigl(T_1^{-\frac{1}{2}}+T_2^{-\frac{1}{2}}\bigr)
T^{-\frac{1}{2}} +T^{-1}\Bigr).
\label{27-3-33}
\end{equation}
Note that for $T\ge \max(T_1,T_2)$ the second factor in (\ref{27-3-33}) is $\asymp T_1^{-\frac{1}{2}}T_2^{-\frac{1}{2}}$.

In other words, contribution of the pair $(\rho_1,\rho_2)$ to the Tauberian error does not exceed a square root of
$S(\rho_1,T_1) T_1^{-1} \times S(\rho_2,T_2)T_2^{-1}$ with
\begin{multline}
S(\rho,T) T^{-1}=\beta h^{-1}
\Bigl(\rho ^{-1}T^{-1} +
h^{-2}\rho \nu T \bigl(T^4 + \rho ^2 T^2 + h^2\rho ^{-2}\bigr)\Bigr)\\
\shoveright{\asymp
\beta h^{-1}
\Bigl(\rho^{-1}T^{-1} + h^{-2} \nu \rho^3 T^3 +
h^{-2}\nu \rho T^5 +\nu \rho^{-1} T \Bigr)}\\
\text{as\ \ } \bar{\rho}\le \rho \le \rho^*.
\label{27-3-34}
\end{multline}
Minimizing this expression by $h \le T\lesssim 1$ we get
\begin{multline*}
\beta h^{-1}
\Bigl(\rho^{-\frac{3}{4}}(h^{-2} \nu \rho^3)^{\frac{1}{4}}+
\rho^{-\frac{5}{6}}(h^{-2} \nu \rho )^{\frac{1}{6}}+
\rho^{-1} \nu^{\frac{1}{2}}\Bigr)\\
\asymp
\beta h^{-1} \Bigl(h^{-\frac{1}{2}}\nu^{\frac{1}{4}}+
h^{-\frac{1}{3}} \nu ^{\frac{1}{6}}\rho^{-\frac{2}{3}} +
\nu ^{\frac{1}{2}}\rho^{-1}\Bigr);
\end{multline*}
summation by $\rho\in [\rho',\rho^*]$ returns
\begin{equation}
\beta h^{-1} \Bigl(h^{-\frac{1}{2}}\nu^{\frac{1}{4}}|\log h|+
h^{-\frac{1}{3}} \nu ^{\frac{1}{6}}\rho^{-\frac{2}{3}} +
\nu ^{\frac{1}{2}}\rho^{-1}\Bigr)
\label{27-3-35}
\end{equation}
with $\rho=\rho'$ to be selected later.

\subsubsection{Zone $\{\rho^*\lesssim |p_3|\lesssim 1\}$.}\label{sect-27-3-3-1-2}
Further, we claim that

\begin{claim}\label{27-3-36}
For $h^{\frac{1}{3}}\le \rho \le C_0$, $h\le T\le \epsilon_0\rho$ we can take $\gamma =h\rho^{-1}$.
\end{claim}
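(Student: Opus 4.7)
The plan is to establish (27-3-36) by revisiting the Duhamel expansion (27-3-26) that underlies the proof of Proposition~\ref{prop-27-3-6}\ref{prop-27-3-6-ii}, this time using the tighter spatial partition at scale $\gamma = C_0 h\rho^{-1}$. The formula (27-3-24) builds $\gamma$ as the sum of a classical-propagation reach $(\rho+T)T$ and the Heisenberg uncertainty scale $h\rho^{-1}$; under the stricter hypotheses $\rho\ge h^{\frac{1}{3}}$ and $T\le\epsilon_0\rho$, the classical-propagation term is at most $2\epsilon_0\rho^2$ while $h\rho^{-1}\le \rho^2$, so the claim is that we may simply discard the first term and still arrive at the bound (27-3-23).

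First I would rerun the partition-of-unity argument with $\psi_0$ supported in $B(z, 2\gamma)$ and $\psi_k$ supported in the dyadic annuli $B(z,\gamma_k)\setminus B(z,\tfrac{1}{2}\gamma_k)$, $\gamma_k = 2^k\gamma$, but now with the smaller choice $\gamma = C_0 h\rho^{-1}$. The inner ($k=0$) term is handled unchanged: $\|H-H_z\|_{\sL^\infty(B(z,2\gamma))}\le C\nu\gamma^2$ by the second-order Taylor bound, and combined with the $T/h$ integration factor and Proposition~\ref{prop-27-3-6}\ref{prop-27-3-6-i}, this already yields the announced right-hand side of (27-3-23) with $\gamma = h\rho^{-1}$. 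For outer annuli with $\gamma_k\ge C_0\rho T$, Propositions~\ref{prop-27-3-4}\ref{prop-27-3-4-iii} and~\ref{prop-27-3-5}\ref{prop-27-3-5-iii} furnish $h^s$-savings, and a geometric summation in $k$ renders this contribution negligible.

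The delicate case is the intermediate annuli $\gamma_k\in [2C_0 h\rho^{-1}, C_0\rho T]$, where classical propagation can still reach the annulus in time $T$. Here I would exploit that the $p_3$-cutoffs $\varphi_j(\rho^{-1}p_{3,\cdot})$ already enforce $x_3$-localization at the Heisenberg scale $h\rho^{-1}$, so the effective overlap of $(H-H_z)\psi_k$ with the cutoff kernel is restricted. Combining this with the hypothesis $\rho\ge h^{\frac{1}{3}}$, one checks that the resulting sum over $k$ of the $\nu\gamma_k^2$-type contributions, each multiplied by the propagation loss from (27-3-21), still fits within the right-hand side of (27-3-23) with $\gamma = h\rho^{-1}$; the constraint $T\le\epsilon_0\rho$ ensures the number of intermediate dyadic steps is $O(|\log h|)$ and absorbs the resulting logarithmic loss.

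The main obstacle is this intermediate-annulus bookkeeping: at face value $(H-H_z)$ is of size $\nu(\rho T)^2$ on annuli near $\gamma_k\sim \rho T$, and one must verify that the $p_3$-cutoffs, together with the propagation factors $h/(\rho\gamma_k)$ saved per annulus via Proposition~\ref{prop-27-3-5}\ref{prop-27-3-5-iii}, suffice to shrink the total contribution back to the size of the inner term. Quantitatively, $\rho\ge h^{\frac{1}{3}}$ is what makes the Heisenberg scale $h\rho^{-1}$ no larger than $\rho^2$, thereby enabling the absorption, and $T\le\epsilon_0\rho$ is what keeps the cascade of annuli finite and the geometric series summable.
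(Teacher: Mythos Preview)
Your approach is not the paper's, and the intermediate-annulus step has a genuine gap. The paper does not rerun the Duhamel partition (27-3-26) with the smaller $\gamma=h\rho^{-1}$. Instead it uses the $\varepsilon$-mollification technique already employed in the proof of Proposition~\ref{prop-27-3-1}: replace $A',V$ by smoothed versions $A'_\varepsilon,V_\varepsilon$ with $\varepsilon=(h\rho^{-1})^{1-\delta}$; for the mollified operator the coefficients are smooth enough that standard propagation of singularities makes the contribution of the time window $T_*\le|t|\le T^*$ (with $T_*=\rho^{-1}\varepsilon$, $T^*=\epsilon_0\rho$) negligible; then the $\delta$-loss is removed by the Chapter~\ref{book_new-sect-26} bootstrap. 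In this scheme $\gamma$ is really the effective approximation scale $\varepsilon$, not a spatial partition radius.

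Your direct attack fails precisely at the intermediate annuli $\gamma_k\in[2C_0h\rho^{-1},\,C_0\rho T]$. The bound (27-3-21) (and the underlying Proposition~\ref{prop-27-3-4}\ref{prop-27-3-4-iii}) requires \emph{spatially separated} supports at scale $\gamma_k$, but in (27-3-26) restricted to $x=y=z$ the two endpoints coincide, and $x_3$-transport at speed $\asymp\rho$ over time $T$ can enter the annulus and return whenever $\gamma_k\lesssim\rho T$. Your appeal to ``$p_3$-cutoffs enforce $x_3$-localization at scale $h\rho^{-1}$'' conflates Heisenberg spread with dynamical spread: localizing $p_3$ to scale $\rho$ fixes the \emph{initial} $x_3$-width at $h\rho^{-1}$, but the state still propagates at speed $\rho$, so after time $T$ it has width $\asymp\rho T$, not $h\rho^{-1}$. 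Consequently the $k$-th intermediate term is genuinely of order $Th^{-1}\cdot\nu\gamma_k^2$ times the full propagator bound $\rho Th^{-1}(\beta h^{-1}+\rho^2h^{-2})$, and summing $\gamma_k^2$ up to $(\rho T)^2$ exactly reproduces the $(\rho T)^2$ factor in (27-3-24) that you were trying to discard. There is no oscillatory or off-diagonal gain available to rescue this; the mollification route is what actually closes the argument.
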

Indeed, observe that if we use $\varepsilon$-approximation with
$\varepsilon = (\rho^{-1}h)^{1-\delta}$ then the contribution of time intervals $\{t:\,T_*\le |t|\le T^*\}$ with $T_*=\rho^{-1}(\rho^{-1}h)^{1-\delta}$, $T^*=\epsilon_0\rho$ is negligible and the transition from
$\varepsilon = (\rho^{-1}h)^{1-\delta}$ to $\varepsilon = (\rho^{-1}h)$ is done like in the previous Chapter~\ref{book_new-sect-26}. Again we leave easy details to the reader.

Then
\begin{gather}
S(\rho,T) T^{-1} \asymp \rho h^{-2}\bigl(T^{-1} + \nu T \bigr);
\label{27-3-37}\\
\intertext{minimizing by $T:\, h\le T\le \epsilon \rho $ we get}
\rho h^{-2}\bigl(  \nu ^{\frac{1}{2}} + \rho^{-1} + \nu h\bigr);
\notag
\end{gather}
then summation by $\rho\in [\rho^*, C_0]$ returns
\begin{equation}
h^{-2}\bigl(  \nu ^{\frac{1}{2}} + (1 + \nu h)|\log h|\bigr).
\label{27-3-38}
\end{equation}

Observe that $\rho^*= (\beta h)^{\frac{1}{2}}\ge h^{\frac{1}{3}}$ as
$\beta\ge h^{-\frac{1}{3}}$.

\subsubsection{Zone $\{ |p_3|\lesssim \rho'\}$.}\label{sect-27-3-3-1-3}
Finally, the remaining zone $\{|p_3|\lesssim \rho'\}$ is covered by a single element $\varphi(\rho^{-1}p_3)$ with $\varphi \in \sC_0^\infty ([-2,2])$, $\rho=\rho'$.

Then instead of minimized $S(\rho,T) T^{-1}$ we can take $\rho \beta h^{-2}$ which should be added to the sum of expressions (\ref{27-3-38}) and (\ref{27-3-35}) which estimate contributions of two other zones resulting in
\begin{multline}
h^{-2}\bigl(  \nu ^{\frac{1}{2}} + (1 + \nu h)|\log h|\bigr) \\
+ \beta h^{-1} \Bigl[h^{-\frac{1}{2}}\nu^{\frac{1}{4}}|\log h|+
\underbracket{h^{-\frac{1}{3}} \nu ^{\frac{1}{6}}\rho^{-\frac{2}{3}} +
\nu ^{\frac{1}{2}}\rho^{-1}+ \rho h^{-1}}\Bigr].
\label{27-3-39}
\end{multline}

Obviously the second term in (\ref{27-3-39}) should be minimized by $\rho=\rho'\in [\bar{\rho}, \rho^*]$ resulting in
\begin{multline*}
\beta h^{-1}\Bigl[h^{-\frac{1}{2}}\nu ^{\frac{1}{4}}|\log h|+
h^{-\frac{2}{5}}(h^{-\frac{1}{3}}\nu ^{\frac{1}{6}})^{\frac{3}{5}} +
h^{-\frac{1}{2}}(\nu ^{\frac{1}{2}})^{\frac{1}{2}}\Bigr]\\
\asymp \beta h^{-1}\Bigl[h^{-\frac{1}{2}}\nu ^{\frac{1}{4}}|\log h|+
h^{-\frac{3}{5}} \nu ^{\frac{1}{10}} +
h^{-\frac{1}{2}} \nu ^{\frac{1}{4}})\Bigr];
\end{multline*}
two terms arising when we set $\rho= \rho^*$ in the terms with negative power of $\rho$ and one term arising when we set $\rho = \bar{\rho}$ in the term with positive power of $\rho$ are absorbed by other terms in (\ref{27-3-39}) which becomes
\begin{multline}
h^{-2}\bigl(  \nu ^{\frac{1}{2}} + (1 + \nu h)|\log h|\bigr) \\
+ \beta h^{-1}\Bigl[h^{-\frac{1}{2}}\nu ^{\frac{1}{4}}|\log h|+
h^{-\frac{3}{5}} \nu ^{\frac{1}{10}} +
h^{-\frac{1}{2}} \nu ^{\frac{1}{4}}\Bigr].
\label{27-3-40}
\end{multline}

This is an estimate for the whole Tauberian error (with variable $T=T(\rho)$).

\subsection{Calculating Tauberian expression}
\label{sect-27-3-3-2}

Now we need to consider the Tauberian expression for
$p^\alpha_{x}p^{\alpha'}_{y}e(.,.,0)|_{x=y=z}$ and estimate an error made when we replace it by the Tauberian expression for $p^\alpha_{x}p^{\alpha'}_{y}e_z(.,.,0)|_{x=y=z}$; we will call it the \emph{second error\/} in contrast to the first (Tauberian) error. Note that we are interested only in the case $|\alpha|+|\alpha'|=1$.

\medskip
Let us again consider contribution of pair $(\rho_1,\rho_2)$. First, observe that for $\rho_1\asymp \rho_2$ this error does not exceed
$CS(\rho_2,T)T^{-1}$ due to our standard arguments and therefore we get for such pairs the same contribution to the total error as we already got for the Tauberian error.

\medskip
Second, consider pairs with $\rho_1 \gg \rho_2$ and in this case redoing previous arguments we observe that the contribution to the first error does not exceed $C S(\rho_1,T)^{\frac{1}{2}} S(\rho_2,T)^{\frac{1}{2}} T^{-1}$ and the contribution to the second error does not exceed
\begin{equation}
C\beta h^{-1} \times \rho_1^{\frac{1}{2}} \rho_2^{\frac{1}{2}} h^{-2}\nu T^3 (T+\rho_2)^2
\label{27-3-41}
\end{equation}
where the first term which was $C\beta h^{-1}\rho^{-1}T^{-1}$ in the former case $\rho_1\asymp \rho_2$ simply disappear. Indeed, it appears only due to the contribution of the time interval $\{|t|\le \rho\}$ where we should take $\rho=\max(\rho_1,\rho_2)=\rho_1$ and estimate an error due to propagation of singularities.

Similarly, the second term leading to expression (\ref{27-3-41}) would also disappear unless $\rho_1 \lesssim T$ again due to the propagation of singularities. Therefore the combined contribution of any pair to both errors does not exceed
\begin{multline}
\ \Bigl(\rho_1^{-1}T^{-1} + \rho_1^3 h^{-2} \nu  T^3 +
\rho_1h^{-2}\nu  T^5 + \rho_1^{-1} \nu T \Bigr)^{\frac{1}{2}}\\
\times
\Bigl(\rho_2^{-1}T^{-1} + \rho_2^3 h^{-2} \nu  T^3 +
\rho_2 h^{-2}\nu  T^5 + \rho_2^{-1} \nu T \Bigr)^{\frac{1}{2}} +
\rho_1^{\frac{1}{2}} \rho_2^{\frac{1}{2}} h^{-2}\nu T^5
\label{27-3-42}
\end{multline}
multiplied by $C\beta h^{-1}$ as we consider at this moment
$\rho'\le \rho_2 \ll \rho_1 \le \rho^*$ and other cases
($\rho_2\le \rho' \le \rho_1\le \rho^*$; $\rho_2\le \rho' \le \rho^*\le \rho_1$; $\rho'\le \rho_2\le \rho^* \le \rho_1$; and
$\rho^*\le \rho_2\ll \rho_1$) are easier and left to the reader.

Opening parenthesis in (\ref{27-3-42}) and eliminating all smaller terms we arrive to
\begin{multline*}
\rho_1^{-\frac{1}{2}} \rho_2^{-\frac{1}{2}} T^{-1}
+ \rho_1^{\frac{1}{2}} \rho_2^{\frac{1}{2}} h^{-2} \nu T^5
+\rho_1^{\frac{3}{2}}\rho_2^{\frac{1}{2}} h^{-2} \nu T^4\\[3pt]
+ \bigl(\rho_1^{\frac{3}{2}}\rho_2^{\frac{3}{2}} h^{-2} \nu
+ \rho_1^{\frac{1}{2}} \rho_2^{-\frac{1}{2}} h^{-1} \nu \bigr) T^3
+\bigl(\rho_1^{\frac{1}{2}}\rho_2^{-\frac{1}{2}} (h^{-2} \nu )^{\frac{1}{2}} +\rho_1^{\frac{3}{2}}\rho_2^{-\frac{1}{2}} h^{-1} \nu \bigr) T^2\\[3pt]
+\bigl(\rho_1^{-\frac{1}{2}} \rho_2^{-\frac{1}{2}}\nu
+\rho_1^{\frac{3}{2}}\rho_2^{-\frac{1}{2}} (h^{-2} \nu )^{\frac{1}{2}} \bigr) T+
\rho_1^{-\frac{1}{2}} \rho_2^{-\frac{1}{2}} \nu^{\frac{1}{2}};
\end{multline*}
minimizing by $T$ we get
\begin{multline*}
\rho_1^{-\frac{1}{3}}\rho_2^{-\frac{1}{3}}(h^{-2}\nu)^{\frac{1}{6}}+
\rho_1^{-\frac{1}{10}}\rho_2^{-\frac{3}{10}}(h^{-2}\nu)^{\frac{1}{5}}+
(h^{-2}\nu)^{\frac{1}{4}}+
\rho_1^{-\frac{1}{4}}\rho_2^{-\frac{1}{2}}(h^{-1}\nu)^{\frac{1}{4}}\\[3pt]
+
\rho_1^{-\frac{1}{6}}\rho_2^{-\frac{1}{2}}(h^{-1}\nu)^{\frac{1}{6}}+
\rho_1^{-\frac{1}{2}}\rho_2^{-\frac{1}{2}}\nu^{\frac{1}{2}}+
\rho_1^{\frac{1}{2}}\rho_2^{-\frac{1}{2}}(h^{-2}\nu)^{\frac{1}{4}}+
\underbracket{\rho_1^{-\frac{1}{2}} \rho_2^{-\frac{1}{2}} \nu^{\frac{1}{2}}}.
\end{multline*}

Observe, that only the last term has $\rho_1$ in the positive degree. Also observe, that the optimal $T=T(\rho)$ in the Tauberian error is a decreasing function of $\rho$, so $T_1\le T_2$ where $T_j\Def T(\rho_j)$; therefore we consider the Tauberian expression for $T \le T_2$ and thus for
$\rho_1\le T \lesssim T_2$.

Therefore $T_2$ must be an upper bound for $\rho_1$ and therefore summation by $\rho_1: \rho_2\le \rho_1 \le T_2$ results in all the terms with negative power of $\rho_1$ in the value as $\rho_1=\rho_2$ and in the exceptional (last) term with $\rho_1=T_2$:
\begin{multline}
\rho_2^{-\frac{2}{3}}(h^{-2}\nu)^{\frac{1}{6}}+
\rho_2^{-\frac{2}{5}}(h^{-2}\nu)^{\frac{1}{5}}+
(h^{-2}\nu)^{\frac{1}{4}}|\log \rho_2(\rho_2^{-2}h^2\nu^{-1})^{-\frac{1}{6}}|\\[3pt]
+
\rho_2^{-\frac{3}{4}}(h^{-1}\nu)^{\frac{1}{4}}
+\rho_2^{-1}\nu^{\frac{1}{2}}+
(\rho_2^{-2}h^2\nu^{-1})^{\frac{1}{12}}
\rho_2^{-\frac{1}{2}}(h^{-2}\nu)^{\frac{1}{4}}
\label{27-3-43}
\end{multline}
(where we used inequality $T_2 \le (\rho_2^{-2}h^2\nu^{-1})^{\frac{1}{6}}$) with the last term equal to the first one.

Then summation by $\rho_2\ge \rho '$ results in the same expression (\ref{27-3-43}) calculated for $\rho_2=\rho'$; adding as usual $\rho' h^{-1}$ (as $\rho'\beta h^{-2}$ estimates the contribution of zone
$\{\rho_2\le \rho'\}$) and minimizing by $\rho'\ge \bar{\rho}$ we get after multiplying by $\beta h^{-1}$ and adding contributions of all other zones and also Tauberian estimate (\ref{27-3-40})
\begin{multline}
h^{-2}\bigl(  \nu ^{\frac{1}{2}} + (\mu + \nu h)|\log h|\bigr) \\
+ \beta h^{-1}\Bigl[h^{-\frac{1}{2}}\nu ^{\frac{1}{4}}|\log h|+
h^{-\frac{3}{5}} \nu ^{\frac{1}{10}} + h^{-\frac{4}{7}}\nu^{\frac{1}{7}}+
h^{-\frac{1}{2}} \nu ^{\frac{1}{4}}\Bigr].
\label{27-3-44}
\end{multline}
Recall that we derived estimate for the difference between
$p_x^\alpha p_y^{\alpha'}e(.,.,0)|_{x=y=z}$ and
$p_x^\alpha p_y^{\alpha'}e_z(.,.,0)|_{x=y=z}$ and thus as $\mu=1$ we arrive to Statement~\ref{prop-27-3-8-i} of Proposition~\ref{prop-27-3-8} below as $\mu=1$.

Observe however that in virtue of Subsection~\ref{sect-27-3-1} the same estimate holds as $\beta \le h^{-\frac{1}{3}}$. Then as $1\le \mu \le \beta$ one can scale $x\mapsto \mu x$, $h\mapsto \mu h$, $\nu\mapsto \mu^2 \nu$,
$\kappa\mapsto \mu\kappa$ and we arrive to the same statement without assumption $\mu=1$.

Furthermore, in virtue of Propositions~\ref{prop-27-A-4} and~\ref{prop-27-A-5} expression $|p_x^\alpha p_y^{\alpha'}e_z(.,.,0)|_{x=y=z}|$  does not exceed $C\beta^{\frac{1}{2}}h^{-2}\|\partial V\| _{\sL^\infty}$ as $|\alpha|+|\alpha'|=1$\,\footnote{\label{foot-27-6} Actually Proposition~\ref{prop-27-A-4} provides better estimate for
$\|\partial V\| _{\sL^\infty}\le \beta^2 h$.}. Therefore we arrive to Statement~\ref{prop-27-3-8-ii} below:

\begin{proposition}\label{prop-27-3-8}
Let $\beta \le h^{-1}$ and \textup{(\ref{27-3-14})} and \textup{(\ref{27-3-15})} be fulfilled. Then as $|\alpha|\le 2$, $|\alpha'|\le 2$

\begin{enumerate}[label=(\roman*), fullwidth]

\item\label{prop-27-3-8-i}
$|p_x^\alpha p_y^{\alpha'}\bigl[e (.,.,0)-e_z(.,.,0)\bigr]|_{x=y=z}|$ does not exceed expression \textup{(\ref{27-3-44})};

\item\label{prop-27-3-8-ii}
Consider $|\alpha|+|\alpha'|=1$; then $|p_x^\alpha p_y^{\alpha'}e (.,.,0)|_{x=y=z}|$ does not exceed expression \textup{(\ref{27-3-44})} plus
$C \omega h^{-2}$ with
\begin{equation}
\omega= \left\{\begin{aligned}
&1\qquad &&\text{as\ \ } \beta \le h^{-\frac{1}{3}},\\
&\beta^{\frac{3}{2}}h^{\frac{1}{2}} \qquad
&&\text{as\ \ } h^{-\frac{1}{3}} \le \beta \le h^{-\frac{1}{2}},\\
&\beta^{\frac{1}{2}} &&\text{as\ \ } h^{-\frac{1}{2}} \le \beta \le h^{-1}.
\end{aligned}\right.
\label{27-3-45}
\end{equation}
\end{enumerate}
\end{proposition}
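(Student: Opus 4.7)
The plan is to assemble the proposition by combining the two error sources developed in Sections~\ref{sect-27-3-3-1}--\ref{sect-27-3-3-2}: the Tauberian error estimating $p_x^\alpha p_y^{\alpha'} e(\cdot,\cdot,0)|_{x=y=z}$ in terms of a short-time propagator expression, and the \emph{second error} incurred when one replaces $H_{A,V}$ by its linearization $H_z=H_{A_z,V_z}$ at $z$. I would first reduce to the normalized case $\mu=1$: the scaling $x\mapsto \mu x$, $h\mapsto \mu h$, $\nu\mapsto \mu^2\nu$, $\kappa\mapsto \mu \kappa$ preserves assumption~\textup{(\ref{27-3-15})} and the magnetic structure, so the general bound follows from the $\mu=1$ version by rescaling in the final estimate.

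In the $\mu=1$ case I would insert a partition of unity in $p_3$ adapted to the three zones $\{|p_3|\lesssim \rho'\}$, $\{\rho'\lesssim |p_3|\lesssim \rho^*\}$, $\{\rho^*\lesssim |p_3|\lesssim 1\}$ with $\bar\rho=\beta^{-1}$, $\rho^*=(\beta h)^{1/2}$, and write the contribution of each pair $(\rho_1,\rho_2)$ of frequency cut-offs via Proposition~\ref{prop-27-3-6}. For the diagonal pairs $\rho_1\asymp\rho_2$, use monotonicity of \textup{(\ref{27-3-30})} in $\tau$ and the standard two-step Tauberian argument (estimate \textup{(\ref{27-3-32})}, then Tauberian formula with error controlled by $hT^{-1}$) to bound contributions by $S(\rho,T)T^{-1}$ as in \textup{(\ref{27-3-34})}. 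Minimize in $T$, then sum in $\rho\in[\rho',\rho^*]$ and $\rho\in[\rho^*,C_0]$, and add the single-patch $\rho'\beta h^{-2}$ contribution from the innermost zone; optimization in $\rho'\ge\bar\rho$ yields the Tauberian estimate \textup{(\ref{27-3-40})}.

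For the second error, the key observation is that off-diagonal pairs $\rho_1\gg\rho_2$ behave differently: the $\beta h^{-1}\rho^{-1}T^{-1}$ term drops out (no contribution from the $\{|t|\le \rho\}$ interval unless $\rho_1\lesssim T$), and the only genuinely new contribution is \textup{(\ref{27-3-41})}. I would take the geometric mean of $S(\rho_1,T)T^{-1}$ and $S(\rho_2,T)T^{-1}$, combine with \textup{(\ref{27-3-41})}, open the brackets, and minimize in $T$; the resulting expression is a sum of terms in which the last (survivor) is the only one with $\rho_1$ to a positive power. Propagation of singularities (Proposition~\ref{prop-27-3-4}) forces $\rho_1\lesssim T_2$, so summation over $\rho_1$ and then over $\rho_2\ge\rho'$ produces \textup{(\ref{27-3-43})}; adding the innermost-zone contribution and minimizing in $\rho'$ and combining with \textup{(\ref{27-3-40})} gives \textup{(\ref{27-3-44})}, which is Statement~\ref{prop-27-3-8-i} for $\mu=1$; the rescaling then removes that restriction. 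I would also note that in the low-$\beta$ regime $\beta\le h^{-1/3}$ Section~\ref{sect-27-3-1} already gives the same bound, so the passage from $\mu=1$ to general $\mu\le\beta$ via scaling is legitimate throughout $\beta\le h^{-1}$.

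For Statement~\ref{prop-27-3-8-ii}, Statement~\ref{prop-27-3-8-i} together with the triangle inequality reduces the task to estimating $|p_x^\alpha p_y^{\alpha'} e_z(\cdot,\cdot,0)|_{x=y=z}|$ for $|\alpha|+|\alpha'|=1$; this is a purely linear-model computation covered by Propositions~\ref{prop-27-A-4} and~\ref{prop-27-A-5}, giving the bound $C\beta^{1/2}h^{-2}\|\partial V\|_{\sL^\infty}$ in the relevant range, which translates into the case-by-case factor $\omega$ in \textup{(\ref{27-3-45})} after distinguishing $\beta\le h^{-1/3}$ (trivial cubic count), $h^{-1/3}\le\beta\le h^{-1/2}$ (intermediate), and $h^{-1/2}\le\beta\le h^{-1}$ (strong-field regime). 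The main obstacle is the off-diagonal pair analysis: keeping track of which quantities do and do not survive when $\rho_1\gg\rho_2$ requires carefully exploiting the propagation estimates of Proposition~\ref{prop-27-3-4}\ref{prop-27-3-4-ii}--\ref{prop-27-3-4-iii} to identify the unique term with positive power of $\rho_1$ (so that the $\rho_1$-summation terminates at $T_2$ rather than blowing up), and then verifying that after this double summation no term beats those already present in \textup{(\ref{27-3-40})} except for the new entries $h^{-4/7}\nu^{1/7}$ appearing in \textup{(\ref{27-3-44})}.
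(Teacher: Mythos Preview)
Your proposal is correct and follows essentially the same route as the paper: the argument is precisely the assembly of Subsections~\ref{sect-27-3-3-1}--\ref{sect-27-3-3-2} (Tauberian error plus second error via the $(\rho_1,\rho_2)$ pair analysis, with the off-diagonal survivor term capped by $\rho_1\le T_2$), reduction to $\mu=1$ by the scaling $x\mapsto\mu x$, $h\mapsto\mu h$, $\nu\mapsto\mu^2\nu$, the fallback to Subsection~\ref{sect-27-3-1} when $\beta\le h^{-1/3}$, and then Propositions~\ref{prop-27-A-4}--\ref{prop-27-A-5} for the linear-model term in Statement~\ref{prop-27-3-8-ii}. The only point worth sharpening is the intermediate case $h^{-1/3}\le\beta\le h^{-1/2}$ of \textup{(\ref{27-3-45})}: the paper obtains $\omega=\beta^{3/2}h^{1/2}$ not from a separate ``cubic count'' but from the refined bound of Proposition~\ref{prop-27-A-4} valid when $\|\partial V\|_{\sL^\infty}\le\beta^2 h$ (see footnote~\ref{foot-27-6}); you should invoke that explicitly rather than leave it as ``intermediate''.
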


\begin{remark}\label{rem-27-3-9}
\begin{enumerate}[fullwidth, label=(\roman*)]
\item\label{rem-27-3-9-i}
Observe that as $\beta \le h^{-\frac{1}{2}}$ we got any improvement over results of Subsection~\ref{sect-27-3-1};

\item\label{rem-27-3-9-ii}
One can replace $\mu$ in the definition of $\bar{\rho}$ by $\nu^{\frac{1}{2}}$. Indeed, we can assume that $\partial A'(z)=0$. Then $\gamma$-vicinity of $z$ we have $\mu=O(\nu \gamma)$ and scaling we should be concerned only abut this vicinity. We select $\gamma=\nu^{-\frac{1}{2}}$.
\end{enumerate}
\end{remark}

\subsection{Estimating \texorpdfstring{$|\partial^2 A'|$}{|\textpartial\texttwosuperior A'|}}
\label{sect-27-3-3-3}

Recall that if $A'$ is a minimizer then it must satisfy (\ref{26-2-14x}) and then as $h^{-\frac{1}{2}}\le \beta \le h^{-1}$ due to Proposition~\ref{prop-27-3-8}\ref{prop-27-3-8-ii} $\|\Delta A'\|_{\sL^\infty}$ does not exceed
$C\kappa h^2\bigl(\textup{(\ref{27-3-44})}+ \beta^{\frac{1}{2}}h^{-2}\bigr)$ and then $\|\partial^2 A'\|_{\sL^\infty}$ must not exceed this expression multiplied by $C|\log h|$ plus
$\|\partial A'\|_{\sL^\infty}$\,\footnote{\label{foot-27-7} Which can be replaced by a different norm, say, $\|\partial A'\|$.}:
\begin{multline}
\|\partial^2 A'\|_{\sL^\infty}
\le C\kappa |\log h|
\bigl(  \nu ^{\frac{1}{2}} + (\mu + \nu h)|\log h|\bigr) \\[3pt]
+ C\kappa \beta h|\log h|\bigl(h^{-\frac{3}{5}}\nu^{\frac{1}{10}}+
h^{-\frac{4}{7}}\nu^{\frac{1}{7}}+
h^{-\frac{1}{2}}\nu^{\frac{1}{4}}|\log h|^2\bigr)+
C \kappa |\log h| \beta^{\frac{1}{2}}  \|\partial V\| _{\sL^\infty}\\[3pt]
+ C\|\partial A'\|_{\sL^\infty}.
\label{27-3-46}
\end{multline}
Also recall that we can define
$\nu\Def \max\bigl(\|\partial^2 A'\|_{\sL^\infty},\,1\bigr)$; then we arrive to

\begin{proposition}\label{prop-27-3-10}
Let $1\le \beta \lesssim h^{-1}$ and \textup{(\ref{27-3-14})} be fulfilled. Let $A'$ be a minimizer satisfying \textup{(\ref{27-3-15})}.

Then one of the following two cases holds: \underline{either}
\begin{multline}
\|\partial^2 A'\|_{\sL^\infty}
\le C\mu (\kappa |\log h|^2 +1) \\[3pt]
+ C(\kappa \beta |\log h| )^{\frac{10}{9}} h^{\frac{4}{9}}+
C(\kappa \beta |\log h|)^{\frac{7}{6}} h^{\frac{1}{2}}+
C(\kappa \beta |\log h|^3 )^{\frac{4}{3}} h^{\frac{2}{3}}\\[3pt]
+ C\kappa |\log h| \omega
+ C\|\partial A'\|_{\sL^\infty}
\label{27-3-47}
\end{multline}
with the right-hand expressions $\ge C$ \underline{or}
\begin{multline}
\|\partial^2 A'\|_{\sL^\infty}
\le C \mu ( \kappa|\log h|^2 +1)\\[3pt] +
C \kappa \beta |\log h| h^{\frac{2}{5}}+
C\kappa \beta |\log h| h^{\frac{3}{7}}+
C \kappa \beta |\log h|^3 h^{\frac{1}{2}}\\[3pt]
+ C\kappa |\log h| \omega
+ C\|\partial A'\|_{\sL^\infty}
\label{27-3-48}
\end{multline}
with the right-hand expression $\le C$. Recall that $\omega$ is defined by \textup{(\ref{27-3-45})}.
\end{proposition}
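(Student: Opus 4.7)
The plan is to apply Proposition~\ref{prop-27-3-8}\ref{prop-27-3-8-ii} to the right-hand side of the Euler--Lagrange equation~(\ref{26-2-14x}), which the minimizer $A'$ satisfies in view of Remark~\ref{rem-27-2-4}\ref{rem-27-2-4-iii}. Since $\Phi_j$ is built from the expressions $p_x^\alpha p_y^{\alpha'} e(\cdot,\cdot,0)|_{x=y=z}$ with $|\alpha|+|\alpha'|=1$, that proposition bounds $\|\Delta A'\|_{\sL^\infty}$ by $C\kappa h^2$ times the sum of (\ref{27-3-44}) and $C\omega h^{-2}$; combined with the assumption $\|V\|_{\sC^1}\le C_0$ (which controls $\|\partial V\|_{\sL^\infty}$), this yields the bound on $\|\Delta A'\|_{\sL^\infty}$ underlying (\ref{27-3-46}).

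Next I would pass from $\sL^\infty$ control of $\Delta A'$ to $\sL^\infty$ control of $\partial^2 A'$ via the standard Calder\'on--Zygmund estimate with a logarithmic loss,
\begin{equation*}
\|\partial^2 A'\|_{\sL^\infty} \le C|\log h|\,\|\Delta A'\|_{\sL^\infty} + C\|\partial A'\|_{\sL^\infty},
\end{equation*}
where the second term handles the low-frequency part of the inversion of $\Delta$. Multiplying through by the extra $|\log h|$ produces inequality~(\ref{27-3-46}), whose right-hand side depends on $\nu = \max(\|\partial^2 A'\|_{\sL^\infty},1)$ through the fractional powers $\nu^{1/2}$, $\nu^{1/10}$, $\nu^{1/7}$, $\nu^{1/4}$.

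The heart of the proof is then to resolve this self-referential inequality. For each term of the form $c\nu^\alpha$ with $\alpha<1$, the implicit inequality $\nu \le c\nu^\alpha$ forces $\nu \le c^{1/(1-\alpha)}$. Applied to the three ``magnetic'' terms with coefficients $\kappa\beta|\log h|\cdot h^{-3/5+1}$, $\kappa\beta|\log h|\cdot h^{-4/7+1}$, $\kappa\beta|\log h|^3\cdot h^{-1/2+1}$ and exponents $\alpha=1/10,\ 1/7,\ 1/4$, the exponents $10/9,\ 7/6,\ 4/3$ emerge and reproduce precisely the three polynomial-in-$h$ terms of (\ref{27-3-47}). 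The $\nu^{1/2}$ term contributes at most $(\kappa|\log h|^2)^2$ which is absorbed into the $\kappa|\log h|^2\mu$ contribution, while the linear piece $C\kappa|\log h|^2 h\nu$ is absorbed into the left-hand side provided $C\kappa h|\log h|^2\le 1/2$ (otherwise (\ref{27-3-47}) holds trivially).

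The final dichotomy produces the two alternatives in the conclusion: \emph{either} the nonlinear terms dominate, in which case (\ref{27-3-47}) holds with right-hand side $\ge C$ (consistent with $\nu\ge 1$), \emph{or} $\nu\le C$, in which case substituting $\nu\le 1$ directly into (\ref{27-3-46}) --- so each fractional power is bounded by $1$ and only the coefficients survive --- yields (\ref{27-3-48}) with right-hand side $\le C$. The main obstacle is purely combinatorial bookkeeping: one must verify that each term of (\ref{27-3-46}) is correctly matched with the relevant surviving term of (\ref{27-3-47}) or (\ref{27-3-48}), check consistency of the $\omega$-contribution across all three regimes defined by (\ref{27-3-45}), and confirm that the smaller mixed terms (e.g.\ those with both a fractional $\nu^\alpha$ and a positive power of $h$) are always dominated by one of the terms retained in the final bound.
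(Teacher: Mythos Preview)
Your approach is correct and matches the paper's exactly: the paper derives inequality~(\ref{27-3-46}) in the text preceding the proposition (via Proposition~\ref{prop-27-3-8}\ref{prop-27-3-8-ii} applied to the Euler--Lagrange equation, followed by the logarithmic Calder\'on--Zygmund step), and its proof of Proposition~\ref{prop-27-3-10} is precisely the dichotomy you describe---solve the self-referential inequality term-by-term when $\nu\ge C$ to obtain (\ref{27-3-47}), and substitute $\nu\asymp 1$ directly into (\ref{27-3-46}) to obtain (\ref{27-3-48}). One minor slip: the $\nu^{1/2}$ term $C\kappa|\log h|\,\nu^{1/2}$ yields $(C\kappa|\log h|)^2$, not $(\kappa|\log h|^2)^2$; with the correct exponent the absorption into $C\mu\kappa|\log h|^2$ (using $\mu\ge 1$, $\kappa\lesssim 1$) goes through as you indicate.
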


\begin{proof}
Indeed, if $\nu \ge C$ we have
\begin{multline*}
\nu \le C\kappa \mu |\log h| +
C(\kappa \beta |\log h| h^{\frac{2}{5}})^{\frac{10}{9}}+
C(\kappa \beta |\log h| h^{\frac{3}{7}})^{\frac{7}{6}}+
C(\kappa \beta |\log h|^3 h^{\frac{1}{2}})^{\frac{4}{3}}\\[3pt]
+
C\kappa \beta^{\frac{1}{2}}  \|\partial V\| _{\sL^\infty}+
C\|\partial A'\|_{\sL^\infty}
\end{multline*}
which leads to (\ref{27-3-47}); if $\nu \asymp 1$ we have (\ref{27-3-48}).
\end{proof}

\begin{remark}\label{rem-27-3-11}
\begin{enumerate}[label=(\roman*), fullwidth]

\item\label{rem-27-3-11-i}
Observe that the right-hand expressions of (\ref{27-3-47}) and (\ref{27-3-48}) are either $\lesssim 1$ or $\gtrsim 1$ simultaneously;

\item\label{rem-27-3-11-ii}
The second term in the right-hand expression of (\ref{27-3-47}) (i.e. with the power $\frac{10}{9}$) is always greater than the third and the fourth terms unless $\kappa \beta h \ge |\log h|^{-K}$). Because of this we just take power $K$ of $|\log h|$ in this term and skip two other terms. One can find easily that $K=4$ is sufficient;

\item\label{rem-27-3-11-iii}
The second term in the right-hand expression of (\ref{27-3-47}) is less than the last one as $\beta \le h^{-\frac{8}{11}}(\kappa |\log h|)^{-\frac{20}{11}}$;

\item\label{rem-27-3-11-iv}
Obviously, in (\ref{27-3-48}) we can take $\mu=1$; however we are missing estimate of $\mu $ in (\ref{27-3-47}). Sure we know that $\mu \le C\nu$ but we will be able to do a better work after we estimate $\|\partial A'\|^2$ in Subsubsection~\ref{sect-27-3-5-3}.3.
\end{enumerate}
\end{remark}

\section{Trace term asymptotics}
\label{sect-27-3-4}

\subsection{General microlocal arguments}
\label{sect-27-3-4-1}

Now let us consider the trace term. We are not assuming anymore that $A'$ is a minimizer but that it satisfies
\begin{phantomequation}\label{27-3-49}\end{phantomequation}
\begin{equation}
\|\partial A'\|_{\sL^\infty}\le \mu,\qquad
\|\partial^2 A'\|_{\sL^\infty}\le \nu \qquad
\text{with\ \ } 1\le \mu \le \nu\le \epsilon.
\tag*{$\textup{(\ref*{27-3-49})}_{1,2}$}\label{27-3-49-*}
\end{equation}
We assume that $V\in \sC^2$ uniformly. Later we will impose on $V$ different non-degeneracy assumptions; from now on small constant $\epsilon>0$ in conditions (\ref{27-3-15}), \ref{27-3-49-*} depends also on the constants in the non-degeneracy assumption.

Let us introduce scaling function
\begin{gather}
\ell (x) \Def
\epsilon_0 \bigl( \min_j |V- 2j \beta h| +|\partial V|^2 \bigr)^{\frac{1}{2}}+
\bar{\ell}
\label{27-3-50}\\
\shortintertext{with}
\bar{\ell} \Def
C_0\max \bigl(\nu \beta^{-1},\, \mu \beta^{-1},\, h^{\frac{1}{2}}\bigr).
\label{27-3-51}
\end{gather}

We need the following

\begin{proposition}\label{prop-27-3-12}
Let $\beta h\lesssim 1$ and \ref{27-3-49-*} be fulfilled. Consider $(\gamma,\rho)$-element with respect to $(x,p_3)$ with $\gamma\rho \ge h$, $\gamma\le \max(\ell,\rho)$ and
\begin{equation}
\rho \ge \bar{\rho} \Def
C_0\max \bigl(\mu \beta^{-1},\, h^{\frac{1}{2}}\bigr).
\label{27-3-52}
\end{equation}

Then as
\begin{equation}
T_*\Def h\rho^{-2}\le T\le T^*\Def \epsilon_0\min (1, \rho \ell^{-1})
\label{27-3-53}
\end{equation}
for $(\gamma,\rho)$-element
$\{(x,\xi_3):\, x\in B(z,\gamma), |\xi_3 - A_3(z)|\asymp \rho\}$ estimate
\begin{multline}
|F_{t\to h^{-1}\tau} \bar{\chi}_T(t)
\Gamma \bigl( p_x^\alpha p_y^{\alpha'}\varphi_1 (\rho^{-1}p_{3x})\varphi_2 (\rho^{-1}p_{3y}) \psi_1 (\gamma^{-1}x) \psi_2 (\gamma^{-1}y) U \bigr)| \\
\le CS (\rho, T)\gamma^3
\label{27-3-54}
\end{multline}
holds with
\begin{equation}
S(\rho, T)= \bigl(\beta h^{-1}+\rho^2 h^{-2}\bigr)
\bigl(\rho^{-1} +\rho ^{-3}  h\nu^2 \varepsilon^4 T^3\bigr)
\label{27-3-55}
\end{equation}
where $\varepsilon=h\rho^{-1}$.
\end{proposition}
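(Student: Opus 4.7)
The plan is to mimic the strategy of Propositions~\ref{prop-27-3-4} and~\ref{prop-27-3-6}, but now localized simultaneously in position by $\psi_j(\gamma^{-1}\cdot)$ and in $p_3$-momentum by $\varphi_j(\rho^{-1}\cdot)$. First I would reduce to the normal form at the centre $z$ by a gauge transformation together with a rotation of the $(x_1,x_2)$-plane, so that $A'(z)=0$, $\partial A'(z)=0$, and $A_3(z)=0$; the smallness in~\ref{27-3-49-*} guarantees that this normalization remains accurate on the whole $\gamma$-ball. The propagation results of Proposition~\ref{prop-27-3-4}\ref{prop-27-3-4-iii} then keep the singularities confined inside the $(\gamma,\rho)$-element up to the critical time $T^*=\epsilon_0\min(1,\rho\ell^{-1})$ set by the scaling function $\ell$.

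Next I would split $U=U_z+(U-U_z)$, where $U_z$ is the Schwartz kernel of the propagator of the linearization $H_z=H_{A_z,V_z}$. The $U_z$-piece is handled directly using the explicit Landau-level spectral decomposition for $H_z$: on the momentum shell $|p_3|\asymp\rho$ the spectral density of $H_z$ is $O(\beta h^{-1}+\rho^2 h^{-2})$, the $\Gamma$-trace over $B(z,\gamma)$ contributes the volume factor $\gamma^3$, and the Fourier transform of $\bar{\chi}_T$ restricted to the shell supplies the energy-uncertainty factor $\rho^{-1}$; together this yields the first term of $S(\rho,T)$.

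For $U-U_z$ I would pre-mollify $A$ and $V$ at the natural scale $\varepsilon=h\rho^{-1}$ for the $\rho$-momentum band, as in the proof of Proposition~\ref{prop-27-3-1}, so that the mollification error obeys $\|H-H^{\varepsilon}\|=O(\nu\varepsilon^2)$ uniformly, and the smoothed remainder $H^\varepsilon-H_z$ on the $\gamma$-ball is also $O(\nu\varepsilon^2)$ after microlocal restriction to the shell $|p_3|\asymp\rho$, thanks to the vanishing of $\partial A'(z)$. Iterating the Duhamel identity
\begin{equation*}
e^{ith^{-1}H}-e^{ith^{-1}H_z}= -\tfrac{i}{h}\int_0^t e^{i(t-t')h^{-1}H_z}(H-H_z)e^{it'h^{-1}H_z}\,dt' + (\text{second order in }H-H_z)
\end{equation*}
to second order then contributes $(\nu\varepsilon^2)^2=\nu^2\varepsilon^4$; the two inner $t$-integrations provide $T^2$, and the outer Fourier transform provides another $T$ for a total $T^3$; while the twice-restricted spectral density supplies the prefactor $\rho^{-3}h(\beta h^{-1}+\rho^2 h^{-2})$. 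This is exactly the second term of $S(\rho,T)$.

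The principal obstacle is to justify that the first-order Duhamel correction does not contribute beyond the main $\rho^{-1}(\beta h^{-1}+\rho^2 h^{-2})\gamma^3$ bound: I would need to show either that averaging over the magnetic period $T_{*}=h\rho^{-2}$ cancels its leading part, or that the vanishing of $\partial A'(z)$ combined with the shell cutoff $\varphi_j(\rho^{-1}p_{3})$ already absorbs it into the $U_z$-term. The other, more routine, task is to verify that the microlocal cutoffs $\psi_j(\gamma^{-1}\cdot)$ and $\varphi_j(\rho^{-1}\cdot)$ are effectively preserved along propagation up to $T^*$ so that the volume factor $\gamma^3$ tracks consistently through the iterated Duhamel kernels.
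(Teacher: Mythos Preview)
Your overall architecture (propagation confinement to time $T^*$, mollification at scale $\varepsilon=h\rho^{-1}$, three-term Duhamel) is the paper's architecture, and you have correctly identified the linear Duhamel term as the crux. But the choice of reference operator is wrong, and one claim is simply false.

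You take the linearization $H_z$ as the unperturbed operator and then assert that the smoothed remainder $H^\varepsilon-H_z$ is $O(\nu\varepsilon^2)$ on the $\gamma$-ball ``thanks to the vanishing of $\partial A'(z)$''. This fails already for the scalar potential: $V_\varepsilon(x)-V_z(x)$ is a second-order Taylor remainder and is $O(\nu\gamma^2)$ on $B(z,\gamma)$, and no microlocal cutoff in $p_3$ changes that. Since $\gamma$ can be as large as $\max(\ell,\rho)$ while $\varepsilon=h\rho^{-1}$, this is a much larger perturbation; feeding $\nu\gamma^2$ (rather than $\nu\varepsilon^2$) into the second-order Duhamel would not reproduce the $\rho^{-3}h\nu^2\varepsilon^4 T^3$ term of $S(\rho,T)$.

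The paper never linearizes here. The reference is the \emph{mollified} operator $H_\varepsilon=H_{A_\varepsilon,V_\varepsilon}$, for which the genuine perturbation $H-H_\varepsilon$ is $O(\nu\varepsilon^2)$. The zeroth (unperturbed) term is handled not by an explicit Landau-level formula but by the propagation fact you already stated: the $x_3$-shift is $\asymp\rho T$, hence observable once $T\ge T_*=h\rho^{-2}$, so contributions of $|t|\in[T_*,T^*]$ to $F_{t\to h^{-1}\tau}\bar{\chi}_T\Gamma(\cdots U_\varepsilon)$ are negligible and one may replace $T$ by $T_*$, giving the $\rho^{-1}(\beta h^{-1}+\rho^2h^{-2})\gamma^3$ bound. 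Exactly the same propagation argument applies to the \emph{linear} Duhamel term (the inserted factor $(H-H_\varepsilon)$ is a bounded multiplier and does not spoil the $x_3$-shift), so it too is governed by $T_*$ and is absorbed into the first term---this is the mechanism you were looking for, not any cancellation tied to $\partial A'(z)=0$. Only the quadratic term retains the full $T$, producing $\nu^2\varepsilon^4 T^3$. To repair your argument, replace $H_z$ by $H_\varepsilon$ throughout and drop the linearization step entirely.
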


Observe that we redefined $\bar{\rho}$ possibly increasing it.

\begin{proof}
The proof is similar to one of Theorem~\ref{book_new-thm-26-2-17} and is based on $h\rho^{-1}$-approximation. Note first that the propagation speed with respect to $x_3$ is $\asymp \rho $, propagation speed with respect to $p_3$ is $O(\ell)$ and all other propagation speeds are bounded by $\bar{\rho}$. Therefore the shift with respect to $x_3$ is $\asymp \rho T\lesssim \ell$ as $T\le T^*$ and it is observable as $T\ge T_*=h|\log h|\rho^{-2}$\,\footnote{\label{foot-27-8} But in estimates we can skip the logarithmic factor using our standard scaling arguments.}.

Let us apply three-term approximation. Then as the first term does not includes any error we can estimate it by
\begin{equation*}
 C\bigl(\beta h^{-1}+\rho^2 h^{-2}\bigr)\rho\gamma^3 h^{-1}T_*\asymp
C\bigl(\beta h^{-1}+\rho^2 h^{-2}\bigr)\rho ^{-1}\gamma^3
\end{equation*}
which delivers the first term in $S(\rho,T)\ell^3$.

The second term is linear with respect to perturbation $(A-A_\varepsilon)$ and as we consider a shift by $x_3$ in the estimate of this term we also can take $T=T_*$. Indeed, contribution of intervals $|t|\asymp T'$ with $T_*\le T'\le T^*$ to this term are negligible if we include a logarithmic factor in $T_*$\,\footref{foot-27-8}. Then this term does not exceed
$ C\bigl(\beta h^{-1}+\rho^2 h^{-2}\bigr)\rho \ell^3 \nu\gamma^2 T_*^2 h^{-2}$ and it is less than the first term in $S(\rho,T)\gamma^3$.

Finally, the third term does not exceed the second term in $S(\rho,T)\gamma^3$.
\end{proof}

After estimate (\ref{27-3-54}) has been proven we can estimate the contribution of the given element to the Tauberian error by
$CS(\rho, T) \gamma^3\rho ^2 h^2T^{-2}$\ \footnote{\label{foot-27-9} Factors $\rho^2$ and $h^2T^{-2}$ (rather than $hT^{-1}$) appear because we consider the trace term.} which is
\begin{equation}
C(\beta+\rho ^2 h^{-1} )
\bigl(\rho T^{-2} + \rho^{-1} \nu^2 h T\bigr) \gamma^3.
\label{27-3-56}
\end{equation}
Consider an error appearing when we replace in the Tauberian expression $T$ by $T_*$. The first two terms are negligible on intervals $|t|\asymp T'$ with $T_*\le T'\le T^*$ and the third term contributes here
\begin{equation*}
C(\beta+\rho ^2 h^{-1} ) \rho^{-1} \nu^2 h T' \gamma^3
\end{equation*}
which sums to its value as $T'=T$. Therefore this error does not exceed (\ref{27-3-56}) as well.

Minimizing expression (\ref{27-3-56}) by $T\le T^*$ we get
\begin{multline}
C(\beta+\rho ^2 h^{-1} )
\bigl(\rho T^{*\,-2} + \rho^{-\frac{1}{3}} \nu^{\frac{4}{3}}h^{\frac{2}{3}} \bigr) \gamma^3 \\
\asymp C(\beta+\rho ^2 h^{-1} )
\bigl(\rho+ \rho ^{-1}\ell^2 + \rho^{-\frac{1}{3}} \nu^{\frac{4}{3}}h^{\frac{2}{3}} \bigr) \gamma^3
\label{27-3-57}
\end{multline}
where we do not include the last term with $T=T_*$ as then the first term would be larger than $C\beta h^{-2}\rho^3\gamma^3$ which is the trivial estimate.

Now let us sum over the partition. Observe first that

\begin{claim}\label{27-3-58}
Contribution of zone $\{\rho:\, \rho \ge (\beta h)^{\frac{1}{2}}\}$ does not exceed
\begin{equation}
 Q_0\Def Ch^{-1} + Ch^{-\frac{1}{3}}\nu^{\frac{4}{3}}
\label{27-3-59}\\[-15pt]
\end{equation}
\end{claim}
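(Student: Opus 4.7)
The plan is to obtain \eqref{27-3-58} by a direct dyadic summation of the per-element Tauberian error \eqref{27-3-57} over the zone $\{\rho \ge (\beta h)^{\frac{1}{2}}\}$ of the $(\gamma,\rho)$-partition constructed in Proposition~\ref{prop-27-3-12}. First I would exploit the zone condition: since $\rho^2 \ge \beta h$ we have $\rho^2 h^{-1}\ge \beta$, so the prefactor simplifies as
\begin{equation*}
\bigl(\beta + \rho^2 h^{-1}\bigr) \asymp \rho^2 h^{-1},
\end{equation*}
and \eqref{27-3-57} reduces to
\begin{equation*}
C\bigl(\rho^3 h^{-1} + \rho\, \ell^2 h^{-1} + \rho^{\frac{5}{3}}\nu^{\frac{4}{3}}h^{-\frac{1}{3}}\bigr)\gamma^3.
\end{equation*}

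Next I would sum over the spatial part of the partition. For a fixed dyadic value of $\rho$, the $\gamma$-admissible covering satisfies $\sum \gamma^3 \le C$ uniformly in $B(0,1)$ (this is the volume bound for any admissible partition of a fixed domain), while $\ell(x)\le C_0$ by \eqref{27-3-50}. Hence after summing in $x$ each of the three terms is bounded, respectively, by $C\rho^3 h^{-1}$, $C\rho\, h^{-1}$ (using $\int \ell^2\,dx \le C$), and $C\rho^{\frac{5}{3}}\nu^{\frac{4}{3}}h^{-\frac{1}{3}}$.

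Then I would sum dyadically in $\rho$ from $\rho = (\beta h)^{\frac{1}{2}}$ up to $\rho \asymp 1$. All three $\rho$-exponents are positive, so the geometric sums are dominated by their value at $\rho \asymp 1$, yielding
\begin{equation*}
Ch^{-1} + Ch^{-1} + Ch^{-\frac{1}{3}}\nu^{\frac{4}{3}},
\end{equation*}
which is precisely $Q_0$ defined in \eqref{27-3-59}.

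The only mild subtlety is making sure the replacement of $T$ by $T_*$ in the Tauberian expression has already been absorbed into \eqref{27-3-57}; this was verified in the paragraph following \eqref{27-3-56}, so the bound \eqref{27-3-57} does represent the full contribution per element (both the Tauberian remainder and the error from replacing $T$ by $T_*$), and no separate correction is needed. The main obstacle is bookkeeping: confirming that the $\gamma$-dependence is harmless (so that $\sum \gamma^3 \le C$ suffices, even though $\gamma$ may depend on $x$ through $\ell$ or through $\rho$ via the constraint $\gamma\le \max(\ell,\rho)$), and checking that the middle term involving $\ell$ does not pick up an extra logarithmic factor in the spatial sum, which it does not since $\ell$ is bounded. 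With these checks, the dyadic summation gives \eqref{27-3-58} exactly as stated.
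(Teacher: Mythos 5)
Your proof is correct and matches the paper's argument, which compresses the entire justification into the remark ``as $\rho$ here would be in the positive degree'': in the zone $\rho\ge(\beta h)^{1/2}$ the prefactor in \textup{(\ref{27-3-57})} becomes $\asymp\rho^2h^{-1}$, all three terms then carry positive powers of $\rho$, so the dyadic $\rho$-sum is dominated by $\rho\asymp 1$, and the spatial sum is handled by $\sum\gamma^3\le C$ together with $\ell\le C$. Your extra remarks about the $T\mapsto T_*$ replacement and the constraint $\gamma\le\max(\ell,\rho)$ are accurate and do not introduce any deviation from the paper's reasoning.
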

as $\rho$ here would be in the positive degree. Consider now contribution of zone $\{\rho:\, \rho \le (\beta h)^{\frac{1}{2}}\}$.

\subsection{Strong non-degenerate case}
\label{sect-27-3-4-2}

Here $\rho$ is in the negative degree but we can help it under \emph{strong non-degeneracy assumption\/}
\begin{equation}
\min_j |V- 2j \beta h|+|\partial V| \ge \epsilon_0
\qquad\text{in\ \ } B(0, 1).
\label{27-3-60}
\end{equation}
which later will be relaxed. Indeed, the relative measure of $x$-balls with $\gamma=\rho^2$ where operator is non-elliptic is $\rho^2 (\beta h)^{-1}$ as $\rho\ge h^{\frac{1}{3}}$. Then the total contribution of such elements does not exceed $\rho^2 h^{-1} \bigl(\rho ^{-1}\ell^2 + \rho^{-\frac{1}{3}} \nu^{\frac{4}{3}}h^{\frac{2}{3}} \bigr)$
and summation over $\rho$ results in (\ref{27-3-59}). Meanwhile the total contribution of balls with $|\xi_3-A'_3| \le \bar{\rho}=h^{\frac{1}{3}}$ does not exceed $C\bar{\rho}^5 h^{-3}$ which is smaller\footnote{\label{foot-27-10} These arguments work even if $\beta \le h^{-\frac{1}{3}}$ (and therefore
$(\beta h)^{\frac{1}{2}}\le h^{\frac{1}{3}}$): we just set $\gamma= h\rho^{-1}$ as $(\beta h)^{\frac{1}{2}}\le \beta \le h^{\frac{1}{3}}$.}.

However we have another restriction, namely,
$\bar{\rho}\ge C_0 \mu \beta^{-1}$\ \footnote{\label{foot-27-11} Here we can take $\mu =\|\partial A'\|_{\sL^\infty}$ without resetting it to $1$ if the former is smaller.}. Because of this we need to increase the remainder estimate
by $C \bar{\rho} \beta ^2h^{-1} \times \bar{\rho} ^2(\beta h)^{-1}$ i.e.
\begin{equation}
Q'= \mu ^3\beta^{-2}  h^{-2}.
\label{27-3-61}
\end{equation}
We should not be concerned about zone
$\{\rho:\, \bar{\rho}\ge\rho\ge (\beta h)^{\frac{1}{2}}\}$ as here we can always use $T\asymp \beta ^{-1}$ and its contribution to remainder will be the same
$\mu ^3\beta^{-2} h^{-2}$.

Now we need to pass from the Tauberian expression with $T=T_*$ to the magnetic Weyl expression and we need to consider only two first terms in the successive approximations. We can involve our standard methods of Section~\ref{book_new-sect-18-9}:
note that $|x-y|\le c\rho T_*=C\varepsilon$ in the propagation and then we consider another unperturbed operator with $V=V(y)$ and
$A'_j= A'(y) + \langle \nabla A_j (y), x-y\rangle$ frozen at point $y$ (when we later set $x=y$). Then one can see that there terms modulo error not exceeding $Q_0$ are respectively
\begin{phantomequation}\label{27-3-62}\end{phantomequation}
\begin{gather}
-h^{-3}\int P_{B_\varepsilon h} (V)\psi\,dx
\tag*{$\textup{(\ref*{27-3-62})}_1$}\label{27-3-62-1}\\
\shortintertext{and}
-h^{-3}\int \bigl(P_B(V)-P_{B_\varepsilon h} (V)\bigr)\psi\,dx
\tag*{$\textup{(\ref*{27-3-62})}_2$}\label{27-3-62-2}
\end{gather}
with $B_\varepsilon = |\nabla \times (A^0+A'_\varepsilon)|$ and then we arrive to estimate (\ref{27-3-64}) below.

Observe that non-degeneracy condition (\ref{27-3-60}) was used only to estimate by $\rho^2(\beta h)^{-1}$ a relative measure of some set. However the same estimate would be achieved under slightly weaker non-degeneracy condition
\begin{equation}
\min_j |V- 2j \beta h|+|\partial V| + |\det(\Hess V)| \ge \epsilon_0 \qquad\text{in\ \ } B(0, 1);
\label{27-3-63}
\end{equation}
all arguments including transition to the magnetic Weyl expression work. Therefore under this assumption the same estimate holds and we arrive to

\begin{proposition}\label{prop-27-3-13}
Let $\beta h\lesssim 1$ and conditions \ref{27-3-49-*} be fulfilled. Then under non-degeneracy assumptions \textup{(\ref{27-3-60})} or \textup{(\ref{27-3-63})} estimate
\begin{equation}
|\Tr (H^-_{A,V} \psi)+ h^{-3}\int P_{Bh} (V)\psi\, dx|\le CQ
\label{27-3-64}
\end{equation}
holds with $Q=Q_0+Q'$ with $Q_0$ and $Q'$ defined by \textup{(\ref{27-3-59})} and \textup{(\ref{27-3-61})}.
\end{proposition}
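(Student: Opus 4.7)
The plan is to combine the Tauberian machinery built in Proposition~\ref{prop-27-3-12} with a partition-of-unity argument controlled by the non-degeneracy of $V$, and then pass from the Tauberian expression to the magnetic Weyl formula by two-term successive approximation. Concretely, I would fix a partition subordinate to a $(\gamma,\rho)$-covering of phase space with $\gamma = \gamma(x) \asymp \rho^2$ in the non-trivial part of the spectrum and $\bar{\rho}\le \rho\le 1$, $\bar{\rho}= C_0\max(\mu\beta^{-1},h^{1/2})$. On each such element (\ref{27-3-54})--(\ref{27-3-55}) gives the Tauberian bound, so its contribution to the remainder in the trace is controlled by (\ref{27-3-56}); minimising in $T\le T^*=\epsilon_0\min(1,\rho\ell^{-1})$ yields (\ref{27-3-57}).

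Next I would sum over $\rho$. The contribution of $\{\rho\ge(\beta h)^{1/2}\}$ is trivially bounded by $Q_0$ (claim (\ref{27-3-58})), since all powers of $\rho$ appearing there are positive. For $\{\rho\le(\beta h)^{1/2}\}$ the dangerous negative powers of $\rho$ are tamed by non-degeneracy: under either (\ref{27-3-60}) or (\ref{27-3-63}) the relative $x$-measure of the set where the model operator is non-elliptic at level $\rho$ is $O(\rho^2(\beta h)^{-1})$ (in the former case this is immediate; in the latter one extracts it from the Morse structure of $V$ near its critical points on the Landau levels, which is where this condition is genuinely needed). Multiplying (\ref{27-3-57}) by this factor and summing over $\rho\ge \bar{\rho}$ gives again an expression absorbed into $Q_0$. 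The floor at $\bar{\rho}$ forces an extra contribution from the zone $\{|p_3|\lesssim\bar{\rho}\}$, estimated by $C\bar{\rho}^3\beta^2 h^{-1}\cdot \bar{\rho}^2(\beta h)^{-1}\asymp \mu^3\beta^{-2}h^{-2}=Q'$, which is precisely the second piece of $Q$.

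Having controlled the Tauberian remainder, I would pass from the Tauberian expression with $T=T_*=h\rho^{-2}$ to the magnetic Weyl expression $-h^{-3}\int P_{Bh}(V)\psi\,dx$ using the successive-approximation method of Section~\ref{book_new-sect-18-9}. Because $|x-y|\le c\rho T_*=C\varepsilon$ in the propagation, one may replace the full operator by its freezing $H_{A_y,V_y}$ at $y$, where $V$ is frozen to $V(y)$ and the magnetic potential to its linearization at $y$. The two nontrivial successive-approximation terms then equal (\ref{27-3-62-1}) and (\ref{27-3-62-2}) modulo $O(Q_0)$, and their sum reconstructs $-h^{-3}\int P_{Bh}(V)\psi\,dx$ up to the claimed error.

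The main obstacle is the book-keeping in the second step: the mollification scale $\varepsilon=h\rho^{-1}$, the pressure function $P_{\beta h}$ and the approximation parameter interact delicately with the non-degeneracy hypothesis, and verifying that all remainders from the two-term successive approximation, from the pointwise freezing of $V$ and of $A'$, and from replacing $B$ by $B_\varepsilon$ indeed fit inside $Q_0$ (and not inside $Q'$, which is reserved solely for the $\bar{\rho}$-floor) requires a careful case analysis across the ranges of $\rho$ relative to $h^{1/3}$, $(\beta h)^{1/2}$ and $\bar{\rho}$. Once this is organised, assembling the final bound (\ref{27-3-64}) is routine.
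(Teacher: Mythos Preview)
Your proposal is correct and follows essentially the same approach as the paper: partition by $(\gamma,\rho)$-elements, apply Proposition~\ref{prop-27-3-12} and the minimised Tauberian bound (\ref{27-3-57}), split at $\rho=(\beta h)^{1/2}$, use the non-degeneracy assumption to supply the relative-measure factor $\rho^2(\beta h)^{-1}$ in the inner zone, pick up $Q'$ from the floor at $\bar{\rho}$, and pass to the magnetic Weyl expression by freezing at $y$ via the methods of Section~\ref{book_new-sect-18-9}. One small slip: your displayed contribution of the zone $\{|p_3|\lesssim\bar{\rho}\}$ should be $C\bar{\rho}\,\beta^2 h^{-1}\cdot \bar{\rho}^2(\beta h)^{-1}$ (not $C\bar{\rho}^3\beta^2 h^{-1}\cdot\bar{\rho}^2(\beta h)^{-1}$); the expression you wrote simplifies to $\mu^5\beta^{-4}h^{-2}$ rather than the $Q'=\mu^3\beta^{-2}h^{-2}$ you then (correctly) state.
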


\begin{remark}\label{rem-27-3-14}
We will show that for a minimizer $Q\asymp Q_0$ in both cases (and even under even weaker non-degeneracy assumption (\ref{27-3-65})).
\end{remark}

\subsection{Non-degenerate case}
\label{sect-27-3-4-3}

Assume now that  even weaker non-degeneracy condition is fulfilled:
\begin{equation}
\min_j |V- 2j \beta h|+|\partial V| + |\partial^2 V| \ge \epsilon_0 \qquad\text{in\ \ } B(0, 1).
\label{27-3-65}
\end{equation}
Then the measure of degenerate set is $\rho (\beta h)^{-\frac{1}{2}}$ but even this is sufficient to obtain the same sum from the second term. In the first term we get however extra $C\beta |\log h|$ (which is $O(h^{-1})$ provided $\beta \le (h|\log h|)^{-1}$) but we can help with this too: indeed, as we fix $\ell \ge 2\bar{\ell}$ the relative measure does not exceed
$\rho^2 (\beta h \ell)$ and summation results in $O(h^{-1})$. We still need to consider set $\{x:\, \ell (x)\le 2\bar{\ell}\}$, but its contribution is obviously less than $C\beta \bar{\ell}|\log h|$ which in turn is
$O(h^{-1}+\nu |\log h|)$ (and this is $O(h^{-1})$ for a minimizer).

However contribution of the degenerate set becomes
$C\beta h^{-2}\bar{\rho}^3\bar{\ell}$ which boils down to the same expression $Q'$.
Then we arrive to

\begin{proposition}\label{prop-27-3-15}
Let $\beta h\lesssim 1$ and conditions \ref{27-3-49-*} be fulfilled. Then under non-degeneracy assumption \textup{(\ref{27-3-65})} estimate \textup{(\ref{27-3-63})} holds with $Q=Q_0+Q''$, $Q''=Q'+ \nu |\log h|$ with $Q_0$ and $Q'$ defined by \textup{(\ref{27-3-59})} and \textup{(\ref{27-3-61})} respectively.
\end{proposition}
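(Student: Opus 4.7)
The plan is to follow the proof of Proposition~\ref{prop-27-3-13} with two local modifications dictated by the weakened non-degeneracy hypothesis~\textup{(\ref{27-3-65})}. The microlocal input of Proposition~\ref{prop-27-3-12}, the Tauberian error estimate~\textup{(\ref{27-3-57})}, and the passage from the Tauberian expression at $T=T_*$ to the magnetic Weyl expression via the three-term successive approximation leading to~\ref{27-3-62-1}--\ref{27-3-62-2}, all remain valid verbatim. What changes is only the measure estimate for the $x$-set on which the operator is non-elliptic at scale $\rho$, together with a boundary stratum $\{\ell\le 2\bar\ell\}$ that has to be tracked separately; this last contribution is precisely what produces the extra $\nu|\log h|$ in~$Q''$.

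Under~\textup{(\ref{27-3-65})}, on any ball of radius $\gamma=\rho^2$ the set $\{\min_j|V-2j\beta h|\le \rho^2\}$ occupies relative measure only of order $\rho(\beta h)^{-\frac{1}{2}}$ (rather than $\rho^2(\beta h)^{-1}$ as under~\textup{(\ref{27-3-60})} or~\textup{(\ref{27-3-63})}), since near a degenerate critical value of $V-2j\beta h$ the sublevel set has $\rho$-linear rather than $\rho^2$-quadratic density. Despite this weaker estimate, once the summable piece $\rho^{-1}\ell^2+\rho^{-\frac{1}{3}}\nu^{\frac{4}{3}}h^{\frac{2}{3}}$ of~\textup{(\ref{27-3-57})} is multiplied by $\rho(\beta h)^{-\frac{1}{2}}$ and summed over dyadic $\rho\in[\bar\rho,(\beta h)^{\frac{1}{2}}]$, the total is still absorbed into $Q_0$. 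The remaining piece of~\textup{(\ref{27-3-57})}, proportional to $\rho\,(\beta+\rho^2h^{-1})$, would naively generate an extra $C\beta|\log h|$ from the same summation, which is fatal unless $\beta\le (h|\log h|)^{-1}$.

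To control this residual piece, stratify via the scaling function $\ell$ of~\textup{(\ref{27-3-50})}. On $\{\ell\ge 2\bar\ell\}$ apply Proposition~\ref{prop-27-3-12} at the genuine local scale $\gamma\asymp\ell$ given by the bound on $\partial V$ on such balls; the near-elliptic relative measure then improves to $\rho^2(\beta h\ell)^{-1}$, and the dyadic sum over $\rho$ returns $O(h^{-1})$, as desired. On the complementary set $\{\ell\le 2\bar\ell\}$, whose total volume is $O(\bar\ell)$, use the trivial a priori bound $C\beta\bar\ell|\log h|$ for the trace over that set; by the definition~\textup{(\ref{27-3-51})} of $\bar\ell$ this equals $O(h^{-1}+\nu|\log h|)$, accounting exactly for the new term in $Q''$. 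The magnetically degenerate zone $\{|p_3|\le \bar\rho\}$ is treated as in Proposition~\ref{prop-27-3-13}: the trivial estimate $C\beta h^{-2}\bar\rho^3\bar\ell$ collapses once more to~$Q'$, since the additional factor $\bar\ell$ is compensated by the already present $\bar\rho$-power.

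The hard part will be the bookkeeping around the transitional scale $\ell\asymp\bar\ell$: one must check that the two regimes $\ell\gtrless 2\bar\ell$ glue properly, and that the successive-approximation procedure of Section~\ref{book_new-sect-18-9} --- which measures the perturbations of the frozen-coefficient comparison operator by powers of $\ell$ --- remains compatible with our chosen $(\gamma,\rho)$-partition when we replace the Tauberian formula at $T=T_*$ by $-h^{-3}\int P_{Bh}(V)\psi\,dx$ locally on each element. Once this is verified, the various contributions assemble into $Q=Q_0+Q'+\nu|\log h|=Q_0+Q''$, and estimate~\textup{(\ref{27-3-64})} follows.
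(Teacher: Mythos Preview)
Your proposal is correct and follows essentially the same approach as the paper. The paper's own argument (given in the paragraphs of Subsection~\ref{sect-27-3-4-3} immediately preceding the proposition, with the formal proof left as ``easy details to the reader'') proceeds exactly as you outline: the weaker density $\rho(\beta h)^{-1/2}$ under~\textup{(\ref{27-3-65})} still handles the $\nu^{4/3}h^{2/3}$ term, the problematic $C\beta|\log h|$ from the first term is killed by stratifying in $\ell$ and using the improved density $\rho^2(\beta h\ell)^{-1}$ on $\{\ell\ge 2\bar\ell\}$, the stratum $\{\ell\le 2\bar\ell\}$ contributes $C\beta\bar\ell|\log h|=O(h^{-1}+\nu|\log h|)$, and the zone $\{|p_3|\le\bar\rho\}$ gives $C\beta h^{-2}\bar\rho^3\bar\ell$, which reduces to $Q'$.
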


We leave easy details to the reader.

\subsection{Degenerate case}
\label{sect-27-3-4-4}

Let us derive a remainder estimate without any non-degeneracy assumptions. In comparison with the non-degenerate case we need to sum
$\beta \rho^{-\frac{1}{3}} \nu^{\frac{4}{3}}h^{\frac{2}{3}}$ and we sum it over $\rho\ge \rho_*$ resulting in the same expression with $\rho$ replaced by $\rho_*$; adding contribution of degenerate zone $\beta \rho_*^3$ we get
$\beta \rho_*^{-\frac{1}{3}} \nu^{\frac{4}{3}}h^{\frac{2}{3}}+\beta h^{-2}\rho_*^3$ which should be minimized by $\rho_*\ge \bar{\rho}$ resulting in
$\beta \nu^{\frac{6}{5}}h^{\frac{2}{5}} +\beta \bar{\rho}^3$ i.e.
\begin{equation}
Q'''=\beta \nu^{\frac{6}{5}}h^{\frac{2}{5}} + \beta h^{-\frac{1}{2}}+
 \mu^3\beta^{-2}h^{-2}+ \nu |\log h|.
\label{27-3-66}
\end{equation}
Thus we arrive to

\begin{proposition}\label{prop-27-3-16}
Let $\beta h\lesssim 1$ and conditions \ref{27-3-49-*} be fulfilled. Then estimate \textup{(\ref{27-3-63})} holds with $Q$ replaced by $Q=Q_0+Q'''$ with $Q_0$ and $Q'''$ a defined by \textup{(\ref{27-3-59})} and \textup{(\ref{27-3-66})} respectively.
\end{proposition}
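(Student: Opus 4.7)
The plan is to mimic the arguments of Propositions~\ref{prop-27-3-13} and~\ref{prop-27-3-15}, but without invoking any non-degeneracy hypothesis on $V$; the measure-theoretic gain on the ``non-elliptic'' set must then be replaced by a trivial phase-space volume bound on a degenerate zone $\{|p_3|\le \rho_*\}$, with $\rho_*$ chosen to balance the two contributions.

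First I would retain from the proof of Proposition~\ref{prop-27-3-13} the per-element Tauberian remainder estimate \textup{(\ref{27-3-57})} which holds under assumptions \ref{27-3-49-*} alone (its derivation via Proposition~\ref{prop-27-3-12}, choice of $T\in[T_*,T^*]$, and three-term approximation uses no non-degeneracy). Summing over the $(\gamma,\rho)$-partition in $x$, the $\rho$ and $\rho^{-1}\ell^2$ contributions are absorbed into $Q_0$ exactly as before. The remaining obstruction is the $\beta \rho^{-\frac{1}{3}}\nu^{\frac{4}{3}}h^{\frac{2}{3}}$ summand: without any non-degeneracy, we have no relative-measure factor to offset its negative power of $\rho$, so summation over $\rho\ge \rho_*$ returns its value at the lower endpoint, namely $\beta \rho_*^{-\frac{1}{3}}\nu^{\frac{4}{3}}h^{\frac{2}{3}}$.

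Next I would handle the complementary zone $\{|p_3|\le \rho_*\}$ trivially: the phase-volume of the set where the symbol can vanish is $O(\beta h^{-2}\rho_*^3)$, which is a valid upper bound for its contribution to the trace remainder. Adding this to the sum and minimizing
\begin{equation*}
\beta \rho_*^{-\frac{1}{3}}\nu^{\frac{4}{3}}h^{\frac{2}{3}}+\beta h^{-2}\rho_*^3
\end{equation*}
over $\rho_*\ge \bar{\rho}$ yields $\rho_*\asymp \nu^{\frac{2}{5}}h^{\frac{4}{5}}$ in the unconstrained range, producing the leading term $\beta \nu^{\frac{6}{5}}h^{\frac{2}{5}}$ of $Q'''$; when this optimum falls below $\bar{\rho}=C_0\max(\mu\beta^{-1},h^{\frac{1}{2}})$ we are forced to take $\rho_*=\bar{\rho}$, and the boundary value $\beta h^{-2}\bar{\rho}^3$ reproduces the remaining terms $\beta h^{-\frac{1}{2}}$ and $\mu^3\beta^{-2}h^{-2}$ of \textup{(\ref{27-3-66})} (the latter by the mechanism already recorded in \textup{(\ref{27-3-61})}).

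Finally, the passage from the Tauberian expression with $T=T_*$ to the magnetic Weyl expression $-h^{-3}\int P_{Bh}(V)\psi\,dx$ proceeds exactly as in Proposition~\ref{prop-27-3-13}: freeze $V$ and the linear part of $A$ at $y$, expand via \ref{27-3-62-1}--\ref{27-3-62-2}, and absorb the approximation error into $Q_0$; the $\nu|\log h|$ term in $Q'''$ arises, as in Proposition~\ref{prop-27-3-15}, from the contribution of the thin zone $\{\ell\asymp\bar{\ell}\}$ whose measure is $O(\bar{\ell})$, bounded by $C\beta\bar{\ell}|\log h|$. The main bookkeeping obstacle is tracking the two regimes of $\rho_*$ (unconstrained optimum versus boundary value $\bar{\rho}$) simultaneously, and verifying that all cross-terms generated by the prefactor $(\beta+\rho^2h^{-1})$ in \textup{(\ref{27-3-57})} are indeed absorbed by either $Q_0$ or $Q'''$; given the detailed calculations already in place in Propositions~\ref{prop-27-3-13}--\ref{prop-27-3-15}, this is routine and I would leave the verification to the reader.
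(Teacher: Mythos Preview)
Your proposal is correct and follows essentially the same route as the paper: sum the residual term $\beta\rho^{-\frac{1}{3}}\nu^{\frac{4}{3}}h^{\frac{2}{3}}$ over $\rho\ge\rho_*$ without any relative-measure gain, add the trivial estimate $C\beta h^{-2}\rho_*^3$ for the degenerate zone $\{|p_3|\le\rho_*\}$ (this is exactly the ``trivial estimate'' noted after \textup{(\ref{27-3-57})}), optimize over $\rho_*\ge\bar{\rho}$, and inherit the $\nu|\log h|$ term from the $\{\ell\lesssim\bar{\ell}\}$ contribution as in Proposition~\ref{prop-27-3-15}. The paper's argument in Subsection~\ref{sect-27-3-4-4} is the same computation in two sentences.
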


\begin{remark}\label{rem-27-3-17}
We are going to apply our results to $V=W^\TF_B+\lambda$ with chemical potential $\lambda$. We know that

\begin{enumerate}[label=(\roman*), fullwidth]
\item\label{rem-27-3-17-i}
As $M=1$ (single nucleus case) after rescalings non-degeneracy condition (\ref{27-3-60}) is fulfilled everywhere including the \emph{boundary zone\/}
$\{x:\, \epsilon_0 \bar{r} \le r(x) \le C_0 \bar{r}\}$;

\item\label{rem-27-3-17-ii}
As $M\ge 2$ (multiple nuclei case) after rescalings non-degeneracy condition (\ref{27-3-60}) is fulfilled as $r(x)\le \epsilon d$ where $d$ is the minimal distance between nuclei;

\item\label{rem-27-3-17-iii}
Further, as $M\ge 2$ and $B\le Z^{\frac{4}{3}}$ after rescalings non-degeneracy condition (\ref{27-3-65}) is fulfilled in the zone
$\{x:\,Z^{-\frac{1}{3}}\le r(x) \le \epsilon_0 \bar{r}\}$ where $r(x)$ is the distance to the closest nuclei and
$\bar{r}=\min \bigl(B^{-\frac{1}{4}},\, (Z-N)_+^{-\frac{1}{3}}\bigr)$;

\item\label{rem-27-3-17-iv}
On the other hand, non-degeneracy condition (\ref{27-3-63}) is uncalled:
while we believe that that this condition is often fulfilled while \textup{(\ref{27-3-60})} fails we have no proof of this;

\item\label{rem-27-3-17-v}
As $M\ge 2$ in the boundary zone a more delicate scaling needs to be applied to improve remainder estimate which is possible not only because $W^\TF_B$ is more regular than just $\sC^2$ but also has some special properties.
\end{enumerate}
\end{remark}

\section{Endgame}
\label{sect-27-3-5}

Until now in this Section we assumed that $A'$ satisfies equation to minimizer locally (and assumptions (\ref{27-3-14})--(\ref{27-3-15})) but now we assume that $A'$ is a minimizer.

\subsection{Upper estimate}
\label{sect-27-3-5-1}
Let us first derive an upper estimate for $\E^*_\kappa$ and for this we need to pick-up some $A'$. First of all we try $A'=0$ resulting in
\begin{gather}
\E^*_\kappa \le \cE^*_0 + Ch^{-1}\le \cE^*_\kappa + Ch^{-1}+ C\kappa \beta^2
\notag\\
\intertext{which is a good estimate as $\kappa \beta^2 \lesssim h^{-1}$:}
\E^*_\kappa \le \cE^*_\kappa + Ch^{-1}.
\label{27-3-67}
\end{gather}
However as $\kappa \beta^2 \gtrsim h^{-1}$ we need to be more subtle. Namely we pick up a mollified minimizer for modified functional $\bar{\cE}_\kappa (A')$ (\ref{27-2-7}). More precisely, let $A'$ be the minimizer for $\bar{\cE}_\kappa (A')$; then $\cE_\kappa (A') = \cE^*_\kappa + O(h^{-1})$.

Still it is not a good choice as our approach relies upon $\sC^2$-smoothness but $A'$ is only $\sC^{\frac{3}{2}}$-smooth.

\begin{proposition}\label{prop-27-3-18}
Let $\beta h\lesssim 1$ and $\kappa \beta^2 h\gtrsim 1$; let $A'$ be a minimizer for the modified functional $\bar{\cE}_\kappa (A')$ and let $A'_\varepsilon$ be its $\varepsilon$-mollification. Then there exists $\varepsilon>0$
such that\begin{phantomequation}\label{27-3-68}\end{phantomequation}
\begin{gather}
|\partial A'_\varepsilon|\le C\mu = \kappa \beta h,\quad
|\partial^2 A'_\varepsilon|\le C\nu =
C \bigl(1+ (\kappa \beta)^{\frac{4}{3}}h^{\frac{2}{3}}\bigr)|\log h|
\tag*{$\textup{(\ref*{27-3-68})}_{1,2}$}\label{27-3-68-*}\\
\shortintertext{and}
\cE_\kappa (A'_\varepsilon) = \cE^*_\kappa + O(h^{-1}).
\label{27-3-69}
\end{gather}
\end{proposition}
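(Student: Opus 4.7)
\medskip

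The plan is to exploit the explicit Newton-potential formula for the minimizer of $\bar{\cE}_\kappa$ and then commit to the right mollification scale. Since $A'$ minimizes the quadratic-in-$A'$ functional $\bar{\cE}_\kappa$, it satisfies the equation (\ref{27-2-8}) literally, so that
\begin{equation*}
A'_j(x) = -\tfrac{1}{2}\kappa h^{-1} (\Delta^{-1} \partial_k)\bigl(\partial_\beta P_{\beta h}(V)\psi\bigr),\qquad j,k\in\{1,2\},\ A'_3=0,
\end{equation*}
with the convolution kernel being the Riesz potential of $\partial_\beta P_{\beta h}(V)\psi$, which is compactly supported. Using bound (\ref{27-2-13}), namely $|\partial_{\beta h} P_{\beta h}(V)|\le C\beta h$ on $\supp\psi$ and the chain-rule identity $\partial_\beta P_{\beta h}(V)=h\,\partial_{\beta h} P_{\beta h}(V)$, together with $V\in\sC^{5/2}$, one concludes that $\|\Delta A'\|_{\sL^\infty}\le C\kappa \beta h$ and that $\Delta A'\in\sC^{1/2}$ with the same bound for the Hölder seminorm. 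Standard Newton-potential estimates for compactly supported data in $\bR^3$ then give $\|\partial A'\|_{\sL^\infty}\le C\kappa\beta h$, and mollification never increases the $\sL^\infty$ norm; this yields $\textup{(\ref{27-3-68})}_1$.

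For $\textup{(\ref{27-3-68})}_2$ I would upgrade the previous step by a Calderón--Zygmund / Schauder argument: writing $\partial A' = R(\partial_\beta P_{\beta h}(V)\psi)$ with $R$ a second-order Riesz-type operator and $\partial_\beta P_{\beta h}(V)\in\sC^{1/2}$ (the $V_+^{1/2}$ factors in (\ref{27-2-12}) preventing higher regularity), one obtains $\|\partial A'\|_{\sC^{1/2}}\le C\kappa\beta h$. Mollifying at scale $\varepsilon$ then gives
\begin{equation*}
\|\partial^2 A'_\varepsilon\|_{\sL^\infty}\le C\kappa\beta h\,\varepsilon^{-1/2}|\log\varepsilon|.
\end{equation*}
Choosing $\varepsilon \asymp (\kappa\beta)^{-2/3} h^{2/3}$ balances this against the pure $|\log h|$ contribution and yields exactly $\nu\le C(1+(\kappa\beta)^{4/3}h^{2/3})|\log h|$.

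It remains to show (\ref{27-3-69}). The idea is to compare four numbers in turn: $\cE_\kappa(A'_\varepsilon)$, $\cE_\kappa(A')$, $\bar{\cE}_\kappa(A')=\bar{\cE}^*_\kappa$, and $\cE^*_\kappa$. The difference $\cE_\kappa(A'_\varepsilon)-\cE_\kappa(A')$ is controlled by applying the pointwise bound (\ref{27-2-11}) to $B$ and $B_\varepsilon=|\nabla\times(A^0+A'_\varepsilon)|$: since $\|B-B_\varepsilon\|_{\sL^\infty}\le C\varepsilon^{1/2}\|\partial A'\|_{\sC^{1/2}}\le C\varepsilon^{1/2}\kappa\beta h$ and the corresponding $\sL^2$ estimate gives an even better bound on the self-energy correction, with the above choice of $\varepsilon$ the resulting error is $O(h^{-1})$. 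The difference $\cE_\kappa(A')-\bar{\cE}_\kappa(A')$ and $\bar{\cE}^*_\kappa-\cE^*_\kappa$ are both $O(\kappa^{3/2}\beta^{5/2}h)$ by (\ref{27-2-16}) and the analysis culminating in it, which in the regime $\beta h\lesssim 1$, $\kappa\beta^2 h\gtrsim 1$ is again $O(h^{-1})$ after using $\kappa\lesssim 1$ (else the ellipticity of $\cE_\kappa$ fails and there is nothing to prove). Chaining the four comparisons produces (\ref{27-3-69}).

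The main obstacle is the third step: the estimates of Subsection~\ref{sect-27-2-2-1} on $\cE-\bar{\cE}$ were derived for $\cC^\infty$ test fields, and one needs to rerun them carefully with $A'$ only $\sC^{3/2}$ and with the explicit mollification error kept below $h^{-1}$ simultaneously for the Pressure and magnetic-energy terms, all under the constraint that $\varepsilon$ is also large enough to give a usable $\sC^2$ bound. The choice $\varepsilon\asymp(\kappa\beta)^{-2/3}h^{2/3}$ is the only one that achieves this balance in the whole range $1\le \kappa\beta^2 h\lesssim h^{-\delta}$.
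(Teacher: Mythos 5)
The broad strategy — solve the minimizer equation explicitly, mollify at scale $\varepsilon\asymp(\kappa\beta)^{-2/3}h^{2/3}$, check $\varepsilon\gtrsim h$ using $\kappa\beta^2 h\gtrsim 1$ — is indeed the paper's, but there are two genuine gaps.

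First, the claim that "$\|\Delta A'\|_{\sL^\infty}\le C\kappa\beta h$ and $\Delta A'\in\sC^{1/2}$ with the same bound for the H\"older seminorm" is false. From (\ref{27-2-8}), $\Delta A'_j = -\tfrac12\kappa\,\partial_{x_k}\bigl(\partial_{\beta h}P_{\beta h}(V)\,\psi\bigr)$, and $\partial_{\beta h}P_{\beta h}(V)$ contains $(V-2j\beta h)_+^{1/2}$ factors; applying $\partial_{x_k}$ produces $(V-2j\beta h)_+^{-1/2}$ singularities, so $\Delta A'$ is \emph{unbounded}. What is true is $\partial_{\beta h}P_{\beta h}(V)\in\sC^{1/2}$, hence $\partial A' = R\bigl(\partial_{\beta h}P_{\beta h}(V)\psi\bigr)\in\sC^{1/2}$ with $R$ a Calder\'on--Zygmund operator — the correct statement is the one your later step actually needs. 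The (unnecessary) false claim doesn't by itself kill the argument, since your mollified-second-derivative bound only draws on $\|\partial A'\|_{\sC^{1/2}}\lesssim\kappa\beta h$, but it should be removed; and the "$1$" in the paper's $\nu$ (the $\sC^1$-part contribution, of order $C\kappa|\log h|$) is missing from your $\|\partial^2 A'_\varepsilon\|_{\sL^\infty}$ bound.

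The serious gap is in (\ref{27-3-69}). You chain $\cE_\kappa(A'_\varepsilon)\to\cE_\kappa(A')\to\bar{\cE}_\kappa(A')=\bar{\cE}^*_\kappa\to\cE^*_\kappa$ and bound two of the links by $O(\kappa^{3/2}\beta^{5/2}h)$, asserting this is $O(h^{-1})$. That assertion fails: with $\kappa\asymp 1$, $\beta\asymp h^{-1}$ one gets $\kappa^{3/2}\beta^{5/2}h\asymp h^{-3/2}\gg h^{-1}$. The paper avoids the detour through $\bar{\cE}^*_\kappa$ entirely: since $A'$ minimizes $\bar{\cE}_\kappa$, the term linear in $\partial(A'-A'')$ in $\cE_\kappa(A'')-\cE_\kappa(A')$ vanishes, yielding (\ref{27-2-19}), i.e.\ $\cE^*_\kappa\ge\cE_\kappa(A')-C\kappa^2\beta^2 h$, hence $|\cE_\kappa(A')-\cE^*_\kappa|\le C\kappa^2\beta^2 h = O(h^{-1})$. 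The same cancellation of linear terms is what makes (\ref{27-3-71}) hold, giving $|\cE_\kappa(A')-\cE_\kappa(A'_\varepsilon)|=O(h^{-1})$ for your $\varepsilon$. Your bound on $\cE_\kappa(A'_\varepsilon)-\cE_\kappa(A')$ via (\ref{27-2-11}) alone, without invoking that cancellation, leaves an uncontrolled first-order contribution $\sim h^{-2}\int\partial_{\beta h}P_{\beta h}(V)(B_\varepsilon-B)\psi\,dx\sim\kappa^{2/3}\beta^{5/3}h^{1/3}$, which again exceeds $h^{-1}$ at $\beta\asymp h^{-1}$. You need to invoke the Euler--Lagrange equation of $\bar{\cE}_\kappa$ to kill these first-order terms before (\ref{27-3-69}) can be established.
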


\begin{proof}
From equation to $A'$ we observe that
\begin{gather}
|\partial(A'-A'_\varepsilon)|\le
C\kappa( \varepsilon + \beta h\varepsilon^{\frac{1}{2}}), \quad
|\partial ^2 A'_\varepsilon|\le
C\kappa(1 +\beta h\varepsilon^{-\frac{1}{2}})|\log h|
\label{27-3-70}\\
\intertext{and}
|\cE_\kappa (A')- \cE_\kappa (A'_\varepsilon)|\le
C(\varepsilon^2 + \beta h\varepsilon^{\frac{3}{2}})\kappa h^{-3} +
C\kappa (\varepsilon +\beta h\varepsilon^{\frac{1}{2}})^2 h^{-2}
\label{27-3-71}
\end{gather}
because linear with respect to $\partial(A'-A'_\varepsilon)$ terms disappear due to equation to a minimizer. Then as $\varepsilon \ge h$ the right-hand expression of (\ref{27-3-71}) is
$O(h^{-1}+ \kappa \beta h^{-2}\varepsilon^{\frac{3}{2}})$
and we take
$\varepsilon = \min \bigl(1, (\kappa \beta)^{-\frac{2}{3}}h^{\frac{2}{3}}\bigr)$\,\footnote{\label{foot-27-12} Then $\varepsilon \gtrsim h$ due to $\kappa \beta^2 h\gtrsim 1$.} resulting in \ref{27-3-68-*} and (\ref{27-3-69}) since $|\partial A'|\le C\kappa\beta h$.
\end{proof}

Then applying Propositions~\ref{prop-27-3-12},~\ref{prop-27-3-15} and \ref{prop-27-3-16} to operator $H_{A_\varepsilon,V}$ with $A_\varepsilon=A^0+A'_\varepsilon$ we arrive to

\begin{proposition}\label{prop-27-3-19}
\begin{enumerate}[label=(\roman*), fullwidth]
\item\label{prop-27-3-19-i}

As $\beta h\lesssim 1$ and $\kappa \beta ^2 h \lesssim 1$ estimate \textup{(\ref{27-3-67})} holds;

\item\label{prop-27-3-19-ii}
As $\beta h\lesssim 1$ and $\kappa \beta ^2 h \gtrsim 1$ estimate
$\E^*_\kappa \le \cE^*_\kappa + CQ$ holds with ${Q=Q_0}$ under non-degeneracy assumption \textup{(\ref{27-3-65})} and $Q=Q_0+Q'''$ in the general case calculated with
$\nu =\bigl(1+ (\kappa \beta)^{\frac{4}{3}}h^{\frac{2}{3}}\bigr)|\log h|$,
$\bar{\rho}=h^{\frac{1}{2}}$, and $\bar{\ell}= \max (h^{\frac{1}{2}}, \beta^{-1}\nu)$.
\end{enumerate}
\end{proposition}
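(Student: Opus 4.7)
The statement splits along the dichotomy $\kappa\beta^2 h \lessgtr 1$, and in each regime the proof is a single-shot construction of a trial magnetic field, followed by an application of the appropriate trace asymptotics from Section~\ref{sect-27-3-4}.

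For Part~\ref{prop-27-3-19-i}, the plan is to use the trivial trial field $A'=0$, so that $A=A^0$ and no self-generated part contributes to the Coulomb energy. The trace term $\Tr^-(H_{A^0,V})$ then admits its standard magnetic Weyl asymptotics with remainder $O(h^{-1})$ (the usual non-self-generated case covered in Section~\ref{book_new-sect-26-2}), giving $\E(A^0)\le \cE(A^0)+Ch^{-1}=\cE^*_0+Ch^{-1}$. On the other hand, minimizing over $A'$ versus keeping $A'=0$ can cost at most the size of the magnetic correction term, which was shown in Section~\ref{sect-27-2-2-1} to be $\asymp \kappa\beta^2$; concretely, $\cE^*_0\le \cE^*_\kappa + C\kappa\beta^2$. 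Under the hypothesis $\kappa\beta^2 h\lesssim 1$ this extra error is absorbed into $Ch^{-1}$, yielding (\ref{27-3-67}). This is essentially the computation already sketched in the paragraphs immediately preceding the proposition, and nothing is left to do.

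For Part~\ref{prop-27-3-19-ii}, the trivial choice $A'=0$ is no longer good enough, since the correction $\kappa\beta^2$ dominates $h^{-1}$; so one must pick a genuine almost-minimizer. The plan is to take $A'_\varepsilon$, the $\varepsilon$-mollification of the (smoother) minimizer of the modified functional $\bar{\cE}_\kappa$, as constructed in Proposition~\ref{prop-27-3-18}. By that proposition, with the optimal choice $\varepsilon=\min(1,(\kappa\beta)^{-2/3}h^{2/3})$, the mollified field satisfies $|\partial A'_\varepsilon|\le C\mu$ and $|\partial^2 A'_\varepsilon|\le C\nu$ with $\mu=\kappa\beta h$ and $\nu=(1+(\kappa\beta)^{4/3}h^{2/3})|\log h|$, and moreover $\cE_\kappa(A'_\varepsilon)=\cE^*_\kappa+O(h^{-1})$. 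In particular $A'_\varepsilon$ satisfies hypotheses (\ref{27-3-15}) and \ref{27-3-49-*} with the stated values of $\mu,\nu$, and with $\bar\rho=h^{1/2}$, $\bar\ell=\max(h^{1/2},\beta^{-1}\nu)$.

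With this trial field in hand, the remainder is controlled by applying the trace asymptotics already proven: Proposition~\ref{prop-27-3-15} under the non-degeneracy hypothesis (\ref{27-3-65}), and Proposition~\ref{prop-27-3-16} in the general case. These give
\begin{equation*}
\Tr^-(H_{A^0+A'_\varepsilon,V}\psi)+h^{-3}\!\int P_{Bh}(V)\psi\,dx = O(Q),
\end{equation*}
with $Q=Q_0$ respectively $Q=Q_0+Q'''$. Adding the magnetic energy $\kappa^{-1}h^{-2}\|\partial A'_\varepsilon\|^2$ to both sides converts the trace to $\E(A^0+A'_\varepsilon)$ and the leading term to $\cE_\kappa(A'_\varepsilon)$, yielding $\E^*_\kappa\le \E(A^0+A'_\varepsilon)\le \cE_\kappa(A'_\varepsilon)+CQ = \cE^*_\kappa+O(h^{-1})+CQ$. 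Since $h^{-1}$ is already contained in $Q_0$, this is the claimed estimate.

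The main subtle point is bookkeeping rather than a conceptual obstruction: one must verify that the values of $\mu,\nu$ produced by Proposition~\ref{prop-27-3-18} are indeed $\le\epsilon$ (so that \ref{27-3-49-*} applies), which holds in the regime $\beta h\lesssim 1$ together with any reasonable smallness of $\kappa$; and one must check that the $O(h^{-1})$ gap between $\cE_\kappa(A'_\varepsilon)$ and $\cE^*_\kappa$ together with $Q'$ in the trace remainder are absorbed into $Q_0$ for the parameter values at hand. These are routine case splits on the relative size of $\kappa\beta h$ versus $1$. The only conceptual input beyond Propositions~\ref{prop-27-3-15}, \ref{prop-27-3-16}, and \ref{prop-27-3-18} is that the approximation step from Proposition~\ref{prop-27-3-18} was designed precisely so that the smoothness needed to invoke the trace asymptotics is achieved while the functional value is preserved modulo $O(h^{-1})$.
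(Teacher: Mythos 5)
Your proposal is correct and follows exactly the paper's route: $A'=0$ for part~(i), then the $\varepsilon$-mollified minimizer of $\bar{\cE}_\kappa$ from Proposition~\ref{prop-27-3-18} as the trial field plugged into Propositions~\ref{prop-27-3-15}/\ref{prop-27-3-16} for part~(ii). One small imprecision: the minimizer of $\bar{\cE}_\kappa$ is not itself ``smoother''---it is only $\sC^{3/2}$---the mollification is what supplies the $\sC^2$ regularity needed by the trace asymptotics; and for part~(ii) under (\ref{27-3-65}), Proposition~\ref{prop-27-3-15} actually returns $Q=Q_0+Q''$ with $Q''=Q'+\nu|\log h|$, so the bookkeeping you flag must absorb \emph{both} $Q'=\mu^3\beta^{-2}h^{-2}\lesssim\kappa^3\beta h\lesssim h^{-1}$ (using $\mu=\kappa\beta h$, $\beta h\lesssim 1$) and $\nu|\log h|\lesssim Q_0$ into $Q_0$, which does go through for the stated $\nu$ in the regime $\kappa\beta^2h\gtrsim 1$.
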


Indeed, for $A'$ selected above $\mu\le\kappa \beta h $ and one can check easily that $Q'\le Q_0$. Since $\nu$ here is lesser than one derived for a minimizer of $\E^*_\kappa(A')$, we are happy and skip calculation of $Q'''$.

\subsection{Lower estimate}
\label{sect-27-3-5-2}

Estimate (\ref{27-3-63}) implies that
\begin{gather}
\underbracket{
\Tr (H^-_{A,V} \psi)+\kappa^{-1}h^{2}\|\partial A'\|^2}_{=\E_\kappa (A')}
\ge \underbracket{
-h^{-3} \int P_{Bh} (V)\psi\, dx +\kappa^{-1}h^{-2}\|\partial A'\|^2}_{
=\cE_\kappa (A')} - CQ
\notag\\
\intertext{and therefore}
\E^* _\kappa \ge \cE^* _\kappa -CQ\qquad\text{with\ \ }
\E^*_\kappa=\inf_{A'} \E_\kappa(A'),\quad
\cE^*_\kappa=\inf_{A'} \cE_\kappa(A')
\label{27-3-72}
\end{gather}
where $A'=A'_\kappa$ is a minimizer of $\E_\kappa (A')$ and $Q$ is defined in Propositions~\ref{prop-27-3-12}, \ref{prop-27-3-15} and \ref{prop-27-3-16} and $\nu$ is a right-hand expression of (\ref{27-3-47}) or $1$ whatever is larger.
For a sake of simplicity we replace it by a marginally larger expression
\begin{equation}
\nu= (\mu+1) (\kappa |\log h|^2+1)
+ (\kappa \beta )^{\frac{10}{9}} h^{\frac{4}{9}} |\log h|^K+
 \kappa \beta^{\frac{1}{2}}|\log h|;
\tag*{$\textup{(\ref*{27-3-47})}^*$}\label{27-3-47-*}
\end{equation}
recall that in ``old'' (\ref{27-3-47}) $\mu=\|\partial A'\|_{\cL^\infty}$ or $\mu=1$ whatever is larger so we modified the first term here accordingly.

Our problem is that so far we know neither
$\|\partial A'\|_{\cL^\infty}$ nor $\|\partial A'\|$. Observe however that
\begin{multline*}
\E^*_\kappa =\E_\kappa (A') \ge
\E_{2\kappa }(A') +(2\kappa)^{-1}\|\partial A'\|^2\ge
\cE^*_{2\kappa} + (2\kappa)^{-1}\|\partial A'\|^2 - CQ\\[3pt]
\ge
\cE^*_{\kappa}(A') -C\kappa \beta^2 (2\kappa)^{-1}\|\partial A'\|^2 - CQ
\end{multline*}
and therefore combining this with an upper estimate we arrive to estimate
\begin{gather}
\|\partial A'\| \le C (\kappa h^2 Q)^{\frac{1}{2}} + C\kappa \beta h
\label{27-3-73}\\
\shortintertext{and we have also}
\|\partial A'\|_{\sL^\infty} \le C \|\partial A'\| ^{\frac{2}{5}}\cdot
\|\partial^2 A'\|_{\sL^\infty}^{\frac{3}{5}}.
\label{27-3-74}
\end{gather}
We are going to explore what happens if
\begin{equation}
\nu \asymp \mu (\kappa |\log h|^2+1)
\label{27-3-75}
\end{equation}
where the right-hand expression is the sum of all terms in \ref{27-3-47-*} containing $\mu$.

\begin{remark}\label{rem-27-3-20}
\begin{enumerate}[label=(\roman*), fullwidth]
\item\label{rem-27-3-20-i}
Observe first that remainder estimate $Q=O(\beta ^2 h^{-1})$ is guaranteed and therefore $\|\partial A'\|\le C\beta h^{\frac{1}{2}}$. Then $\mu \le C\beta^{\frac{2}{5}}h^{\frac{1}{5}}\nu^{\frac{3}{5}}$ due to (\ref{27-3-74});

\item\label{rem-27-3-20-ii}
Further, if (\ref{27-3-75}) is fulfilled then $\mu \le \beta h^{\frac{1}{2}}|\log h|^K$ and the same estimate holds for $\nu$ and then $Q_0\asymp h^{-1}$, $Q'''\le C\beta h^{-\frac{1}{2}}|\log h|^K$; then the rough estimate to $Q$ is improved and then $\mu \ll \beta h^{\frac{1}{2}}$,
$\nu \ll \beta h^{\frac{1}{2}}$ and finally $Q\asymp h^{-1}$ under assumption (\ref{27-3-65}) and $Q\asymp h^{-1} +\beta h^{-\frac{1}{2}}$ in the general case and we also have nice estimates to $\mu,\nu$.

Therefore we can assume that (\ref{27-3-75}) is not fulfilled, but then $\nu$ is defined by remaining terms and then
\begin{equation}
\nu = \kappa |\log h|+
\kappa \min (\beta ^{\frac{3}{2}}h^{-\frac{1}{2}},\,\beta^{\frac{1}{2}})|\log h| + (\kappa \beta)^{\frac{10}{9}}h^{\frac{4}{9}}|\log h|^K
\label{27-3-76}
\end{equation}
(and from now we do not reset to $1$ if this expression is smaller).
\end{enumerate}
\end{remark}

We almost proved the following estimates for $Q$:
\begin{claim}\label{27-3-77}
$Q=Q_0$ under non-degeneracy assumption (\ref{27-3-65}) and $Q=Q_0+\beta h^{-\frac{1}{2}}\asymp h^{-1}+\beta h^{-\frac{1}{2}}$ in the general case.
\end{claim}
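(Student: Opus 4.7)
The plan is to execute the bootstrap sketched in Remark~\ref{rem-27-3-20} rigorously, splitting according to whether the $\mu$-driven terms in \ref{27-3-47-*} dominate $\nu$.

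\textbf{Step 1 (seed bound).} Start from the trivial a priori bound $Q=O(\beta^2 h^{-1})$, which follows from plugging $\nu\le C\beta^2$ into $Q_0$ and using the assumption $\beta h\lesssim 1$. Inserting this into (\ref{27-3-73}) yields $\|\partial A'\|\le C\beta h^{\frac{1}{2}}$. Combined with Proposition~\ref{prop-27-3-10} (rewritten as \ref{27-3-47-*}) and the interpolation (\ref{27-3-74}), this gives a first pair of bounds on $\mu$ and $\nu$, which I will then iterate.

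\textbf{Step 2 (dichotomy).} I distinguish the two cases of Remark~\ref{rem-27-3-20}\ref{rem-27-3-20-ii}.

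\emph{Case A: assumption (\ref{27-3-75}) holds,} i.e.\ $\nu\asymp \mu(\kappa|\log h|^2+1)$. Feed $\|\partial A'\|\le C\beta h^{\frac{1}{2}}$ into (\ref{27-3-74}) to obtain $\mu\le C\beta^{\frac{2}{5}}h^{\frac{1}{5}}\nu^{\frac{3}{5}}$; combined with (\ref{27-3-75}) and solved for $\nu$, this produces
\begin{equation*}
\mu+\nu\le C\beta h^{\frac{1}{2}}|\log h|^K.
\end{equation*}
With these bounds $h^{-\frac{1}{3}}\nu^{\frac{4}{3}}\le C\beta^{\frac{4}{3}}h^{\frac{1}{3}}|\log h|^K\le Ch^{-1}|\log h|^K$ (using $\beta h\lesssim 1$), so $Q_0\asymp h^{-1}$ up to logs; similarly $Q'=\mu^3\beta^{-2}h^{-2}\lesssim \beta h^{-\frac{1}{2}}|\log h|^K$ and $\nu|\log h|\lesssim h^{-1}|\log h|^K$, while $\beta\nu^{\frac{6}{5}}h^{\frac{2}{5}}\lesssim \beta h^{-\frac{1}{2}}|\log h|^K$. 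A second pass of the bootstrap—reinserting these improved values of $\mu$, $\nu$ into (\ref{27-3-73}), then (\ref{27-3-74}), and finally \ref{27-3-47-*}—absorbs all spare logarithmic factors, yielding the claim.

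\emph{Case B: (\ref{27-3-75}) fails.} Then $\nu$ is controlled by the remaining terms of \ref{27-3-47-*}, giving (\ref{27-3-76}). Check term by term: the contribution $\kappa|\log h|$ is harmless; $\kappa\min(\beta^{\frac{3}{2}}h^{-\frac{1}{2}},\beta^{\frac{1}{2}})|\log h|$ feeds into $h^{-\frac{1}{3}}\nu^{\frac{4}{3}}$ and $\beta\nu^{\frac{6}{5}}h^{\frac{2}{5}}$ where a direct calculation using $\beta h\lesssim 1$ shows both are bounded by $Ch^{-1}+C\beta h^{-\frac{1}{2}}$. The third term $(\kappa\beta)^{\frac{10}{9}}h^{\frac{4}{9}}|\log h|^K$ is the governing one, and exactly matches the rate already present in $h^{-\frac{1}{3}}\nu^{\frac{4}{3}}\subset Q_0$ (its $\frac{4}{3}$-power reproduces the same combination). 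Finally $\mu\le\nu$ gives $Q'=\mu^3\beta^{-2}h^{-2}\le \nu^3\beta^{-2}h^{-2}$, which an elementary computation shows is $\le Q_0+\beta h^{-\frac{1}{2}}$ for all admissible $\beta\le h^{-1}$.

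\textbf{Step 3 (degenerate vs.\ non-degenerate).} Under (\ref{27-3-65}) we start from $Q=Q_0+Q''$ (Proposition~\ref{prop-27-3-15}), so the only correction is $Q'+\nu|\log h|$, both of which are absorbed in $Q_0$ by Step~2; hence $Q\asymp Q_0$. In the general case we start from $Q=Q_0+Q'''$ (Proposition~\ref{prop-27-3-16}), whose distinguished term $\beta h^{-\frac{1}{2}}$ cannot be eliminated and produces the extra summand in the stated estimate.

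\textbf{Main obstacle.} The delicate point is Case~A: each pass of the bootstrap improves $\mu,\nu$ only by a power of $|\log h|$, so one has to verify that a \emph{bounded} number of iterations suffices to extinguish the logarithms and that the self-consistent system (\ref{27-3-73})--(\ref{27-3-74})--\ref{27-3-47-*} has the unique fixed point $\mu,\nu\ll\beta h^{\frac{1}{2}}$ rather than the trivial one $\mu\sim\beta h^{\frac{1}{2}}$. Tracking the exact value of $K$ in $|\log h|^K$ at each round, as indicated in Remark~\ref{rem-27-3-11}\ref{rem-27-3-11-ii}, is the bookkeeping that has to be done carefully.
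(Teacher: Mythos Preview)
There is a genuine gap in your Case~B, and it undermines both the non-degenerate and the general conclusions. The crude bound $\mu\le\nu$ is not good enough to control $Q'=\mu^3\beta^{-2}h^{-2}$. Concretely, take $\kappa\asymp 1$ and $\beta$ close to $h^{-1}$; then the dominant term in (\ref{27-3-76}) is $\nu\asymp(\kappa\beta)^{10/9}h^{4/9}|\log h|^K$, and your bound gives
\[
\nu^3\beta^{-2}h^{-2}\asymp \kappa^{10/3}\beta^{4/3}h^{-2/3}|\log h|^{3K},
\]
which at $\beta=h^{-1}$ is of order $h^{-2}|\log h|^{3K}$. This is much larger than both $Q_0\asymp h^{-1}+(\kappa\beta)^{40/27}h^{7/27}|\log h|^K\asymp h^{-11/9}|\log h|^K$ and $\beta h^{-1/2}\asymp h^{-3/2}$, so the ``elementary computation'' you announce does not go through.

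The paper closes this gap by a second bootstrap tailored to the bad scenario $Q\asymp Q'$. Feeding $Q\asymp\mu^3\beta^{-2}h^{-2}$ back into (\ref{27-3-73}) gives $\|\partial A'\|\le C(\kappa\mu^3\beta^{-2})^{1/2}$, and then (\ref{27-3-74}) yields $\mu^{2/5}\le C\kappa^{1/5}\beta^{-2/5}\nu^{3/5}$, i.e.\ $\mu\le C\kappa^{1/2}\beta^{-1}\nu^{3/2}$. This produces $Q'\le C\kappa^{3/2}\beta^{-5}\nu^{9/2}h^{-2}$, which with $\nu$ from (\ref{27-3-76}) is $\le Q_0$ once $\beta\gtrsim h^{-1/5}|\log h|^{-K}$; smaller $\beta$ fall under the weak magnetic field approach of Subsection~\ref{sect-27-3-5-3}. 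Incidentally, the ``main obstacle'' is not the log-bookkeeping of Case~A (which is essentially Remark~\ref{rem-27-3-20}\ref{rem-27-3-20-ii} and already done in the paper before the claim is stated) but precisely this $Q'$ term, which the paper singles out in the sentence immediately following the claim.
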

However we need still explore what happens if
$Q\asymp \mu^3 \beta^{-2}h^{-2}$. In this case
$\mu \le (\mu^3 \beta^{-2})^{\frac{1}{5}}\nu^{\frac{3}{5}}$ and then
$\mu^{\frac{2}{5}}\le \beta^{-\frac{2}{5}}\nu^{\frac{3}{5}}$ and using (\ref{27-3-76}) one can prove easily (\ref{27-3-77}) unless
$\beta \le h^{-\frac{1}{5}}|\log h|^K$ in which case
 $\nu = (\kappa |\log h|+1)$.

\subsection{Weak magnetic field approach.}
\label{sect-27-3-5-3}

Consider now case $\beta \lesssim h^{-\frac{1}{3}}$. Recall that then
$\nu \le C\kappa |\log h|$ and $\mu \ll 1$. Then as we study propagation with respect to $p_3$ we do not need to correct it to $p'_3$ and then we do not need $\bar{\rho}$.

Then we can apply \emph{weak magnetic field approach\/} (see Section~\ref{book_new-sect-13-3}). Now contribution of a partition element with
$\rho \ge C_0\beta^{-1}$ does not exceed $\rho^3 h^{-1}\times \rho^{-2}$ as we use $T\le \epsilon_0\rho$ and the total contribution of such elements does not exceed $Ch^{-1}$; meanwhile the total contribution of elements with
$\rho =C_0\beta^{-1}$ does not exceed
$C\beta^2 h^{-1}\times \beta^{-3}\le Ch^{-1}$ as we use $T=\epsilon_0\beta^{-1}$.

\subsection{Main theorem}
\label{sect-27-3-5-4}

Therefore after we plug $\nu$ into $Q_0$ we have proven our estimate from below and also the main theorem of this Section:

\begin{theorem}\label{thm-27-3-21}
Let $\beta h\lesssim 1$ and $\kappa\le \kappa^*$. Then
\begin{enumerate}[label=(\roman*), fullwidth]
\item\label{thm-27-3-21-i}
Estimate
\begin{equation}
|\E^* _\kappa - \cE^*_\kappa |\le C Q
\label{27-3-78}
\end{equation}
holds where under non-degeneracy assumption \textup{(\ref{27-3-65})}
\begin{gather}
Q\Def h^{-1}+
\kappa^{\frac{40}{27}} \beta^{\frac{40}{27}} h^{\frac{7}{27}}|\log h|^K
\label{27-3-79}\\
\intertext{and in the general case}
Q\Def h^{-1} + \beta h^{-\frac{1}{2}};
\label{27-3-80}
\end{gather}
\item\label{thm-27-3-21-ii}
For a minimizer the following estimate holds:
$\|\partial^2 A'\|_{\sL^\infty}\le C\nu$ with $\nu$ defined by \textup{(\ref{27-3-76})}.
\end{enumerate}
\end{theorem}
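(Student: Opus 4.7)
The plan is to combine the upper estimate from Proposition~\ref{prop-27-3-19} with a lower estimate obtained by applying the trace asymptotics of Propositions~\ref{prop-27-3-15}--\ref{prop-27-3-16} directly to the minimizer $A'$ of $\E_\kappa$, and then to close a bootstrap loop for the quantities $\mu=\|\partial A'\|_{\sL^\infty}$ and $\nu=\|\partial^2 A'\|_{\sL^\infty}$ using Proposition~\ref{prop-27-3-10}. First I would dispose of the easy range $\kappa\beta^2 h\lesssim 1$, where the upper estimate with trial field $A'=0$ combined with the crude bound $\cE^*_0\le\cE^*_\kappa+C\kappa\beta^2$ already gives $|\E^*_\kappa-\cE^*_\kappa|\le Ch^{-1}$, and where the weak magnetic field approach of Subsection~\ref{sect-27-3-5-3} handles $\beta\lesssim h^{-1/3}$ (the corresponding remainder $Ch^{-1}$ beats every other term in (\ref{27-3-79})). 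So the substance is the range $\kappa\beta^2 h\gtrsim 1$ and $\beta\gtrsim h^{-1/3}$.

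For the upper bound in this range I would take $A'$ equal to the $\varepsilon$-mollification (with $\varepsilon=(\kappa\beta)^{-2/3}h^{2/3}$) of the minimizer of the modified functional $\bar{\cE}_\kappa$ constructed in Proposition~\ref{prop-27-3-18}; this $A'$ satisfies \ref{27-3-68-*} and $\cE_\kappa(A'_\varepsilon)=\cE^*_\kappa+O(h^{-1})$, and Propositions~\ref{prop-27-3-15}--\ref{prop-27-3-16} applied to the operator $H_{A^0+A'_\varepsilon,V}$ then convert $\Tr(H^-_{A,V}\psi)$ into the magnetic Weyl term with error $CQ$, the parameter $\nu$ being the one in \ref{27-3-68-*}. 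One checks that with this choice $Q'$ is absorbed by $Q_0$, so the upper estimate $\E^*_\kappa\le \cE^*_\kappa+CQ$ holds.

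For the lower bound I apply the same trace estimate but to the genuine minimizer $A'$ of $\E_\kappa$: Propositions~\ref{prop-27-3-15}--\ref{prop-27-3-16} give
\begin{equation*}
\E_\kappa(A')\ge \cE_\kappa(A')-CQ,
\end{equation*}
so $\E^*_\kappa\ge\cE^*_\kappa-CQ$, provided the a priori bounds (\ref{27-3-14})--(\ref{27-3-15}) and \ref{27-3-49-*} on $A'$ are verified. To close the loop I use the comparison trick of Subsection~\ref{sect-27-3-5-2}: combining $\E_\kappa(A')\ge \E_{2\kappa}(A')+(2\kappa)^{-1}\|\partial A'\|^2\ge\cE^*_{2\kappa}+(2\kappa)^{-1}\|\partial A'\|^2-CQ$ with the upper estimate yields $\|\partial A'\|\le C(\kappa h^2 Q)^{1/2}+C\kappa\beta h$, and interpolation (\ref{27-3-74}) together with \ref{27-3-47-*} feeds this back into $\nu$. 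Iterating as in Remark~\ref{rem-27-3-20}, one shows that either the $\mu$-dependent branch of \ref{27-3-47-*} dominates (in which case $\nu\le\beta h^{1/2}|\log h|^K$, making $Q\asymp h^{-1}$ under (\ref{27-3-65}) and $Q\asymp h^{-1}+\beta h^{-1/2}$ in general) or the $\mu$-free branch (\ref{27-3-76}) dominates; plugging the dominant term $\nu=(\kappa\beta)^{10/9}h^{4/9}|\log h|^K$ into $Q_0=h^{-1}+h^{-1/3}\nu^{4/3}$ gives exactly the exponent $h^{7/27}\kappa^{40/27}\beta^{40/27}|\log h|^K$ of (\ref{27-3-79}).

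The main obstacle is closing this bootstrap cleanly: the right-hand side of \ref{27-3-47-*} contains $\mu$ itself (through its first term) and also contributes to $Q$ via $Q'=\mu^3\beta^{-2}h^{-2}$, so one has to verify that neither of these feedbacks ever becomes the dominant mechanism. The case analysis at the end of Subsection~\ref{sect-27-3-5-2} is what rules out the $Q'$-branch: if $Q\asymp\mu^3\beta^{-2}h^{-2}$, then $\mu^{2/5}\le\beta^{-2/5}\nu^{3/5}$, and substituting (\ref{27-3-76}) shows this is possible only when $\beta\le h^{-1/5}|\log h|^K$, in which range $\nu\asymp\kappa|\log h|+1$ and the weak-magnetic-field argument of Subsection~\ref{sect-27-3-5-3} again gives $Q\asymp h^{-1}$. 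The estimate $\|\partial^2 A'\|_{\sL^\infty}\le C\nu$ with $\nu$ from (\ref{27-3-76}) is then just the stabilized value produced by the bootstrap, which proves part~\ref{thm-27-3-21-ii}.
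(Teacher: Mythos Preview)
Your proposal is correct and follows essentially the same route as the paper: the upper estimate via Proposition~\ref{prop-27-3-19} (trial field equal to the mollified $\bar{\cE}_\kappa$-minimizer), the lower estimate by applying Propositions~\ref{prop-27-3-15}--\ref{prop-27-3-16} to the true minimizer, the comparison trick $\E_\kappa(A')\ge\E_{2\kappa}(A')+(2\kappa)^{-1}\|\partial A'\|^2$ to bound $\|\partial A'\|$, the bootstrap via \ref{27-3-47-*} and Remark~\ref{rem-27-3-20}, the case analysis ruling out $Q\asymp Q'$, and the weak-magnetic-field argument for small $\beta$. The only organizational difference is that the paper invokes the weak-magnetic-field approach once at the end (Subsection~\ref{sect-27-3-5-3}) to cover the residual range $\beta\lesssim h^{-1/3}$, while you invoke it both up front and again to close the $Q'$-branch; this is cosmetic.
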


We leave as an easy exercise to the reader

\begin{problem}\label{problem-27-3-22}
\begin{enumerate}[label=(\roman*), fullwidth]

\item\label{problem-27-3-22-i}
Starting from estimate $\|\partial^2 A'\|_{\sL^\infty}\le C\nu $ derive from (\ref{27-3-73})--(\ref{27-3-74}) estimate for $\|\partial A'\|_{\sL^\infty}$; consider separately cases $1\le \beta \le h^{-\frac{1}{3}}$,
$h^{-\frac{1}{3}}\le \beta \le h^{-\frac{1}{2}}$ and
$h^{-\frac{1}{2}}\le \beta \le 1$;

\item\label{problem-27-3-22-ii}
Prove that as $\kappa \beta^2 h\gtrsim 1$
\begin{equation}
\|\partial(A'-A'')\| \le C(\kappa Qh^2)^{\frac{1}{2}}
\label{27-3-81}
\end{equation}
where $A'$ and $A''$ are minimizers for $\E_\kappa $ and $\bar{\cE}_\kappa$ respectively; in particular, observe that
$\|\partial(A'-A'')\|\ll \|\partial A'\|$ for $\kappa \beta^2h\gg 1$;

\item\label{problem-27-3-22-iii}
Since (\ref{27-3-81}) holds for $A''$ replaced by $A''_\varepsilon$ as well and since we have estimate $\|\partial^2 A''_\varepsilon \|_{\sL^\infty}\le C\nu $
derive from (\ref{27-3-81}) estimate for
$\|\partial (A'-A''_\varepsilon)\|_{\sL^\infty}$ and then the same estimate for $\|\partial A'\|_{\sL^\infty}$.
\end{enumerate}
\end{problem}

\begin{remark}\label{rem-27-3-23}
In the following observations we use simpler but less sharp upper estimates to critical $\beta$:
\begin{enumerate}[label=(\roman*), fullwidth]
\item
\label{rem-27-3-23-i}
Under non-degeneracy assumption (\ref{27-3-65}) $Q\le C h^{-1}$ and
$\nu \le C\beta^{\frac{1}{2}}|\log h|$ as $\beta \le h^{-\frac{2}{3}}$ and therefore $\mu \le 1$ as $\beta \le h^{-\frac{2}{3}}|\log h|^{-K}$;

\item
\label{rem-27-3-23-ii}
In the general case  $Q\le C h^{-1}+ C\beta h^{-\frac{1}{2}}$ and
$\nu \le C\beta^{\frac{1}{2}}|\log h|$ as $\beta \le h^{-\frac{2}{3}}$ and therefore $\mu \le 1$ as $\beta \le h^{-\frac{3}{5}}|\log h|^{-K}$; we used here estimate $\mu \le C(\kappa h^2Q)^{\frac{1}{5}}\nu^{\frac{3}{5}}$.
\end{enumerate}
\end{remark}

\section{$\N$-term asymptotics}
\label{sect-27-3-6}

\subsection{Introduction}
\label{sect-27-3-6-1}

In the application to the ground state energy one needs to consider also $\N$-term asymptotics and $\D$-term estimates. Let us start from the former: we consider $\N$-term
\begin{equation}
\int e(x,x,0)\psi(x)\,dx.
\label{27-3-82}
\end{equation}
Again we consider this asymptotics in the more broad content of assumptions $V\in \sC^2$ and \ref{27-3-49-*} and we will follow arguments employed for a trace term using the same notations. Then the Tauberian error does not exceed
$CS(\rho, T) \gamma^3 hT^{-1}$\,\footnote{\label{foot-27-13} With $S(\rho, T)$ defined by (\ref{27-3-28}).} which is
\begin{equation}
C(\beta+\rho ^2 h^{-1} )h^{-1}
\bigl(\rho ^{-1} T^{-1} + \rho^{-3} \nu^2 h T^{-2}\bigr) \gamma^3.
\tag*{$\textup{(\ref*{27-3-56})}'$}\label{27-3-56-'}
\end{equation}
Minimizing by $T\le T^*$ we get
\begin{equation}
C(\beta+\rho ^2 h^{-1} )h^{-1}
\bigl(\rho ^{-1}+ \rho ^{-2}\ell  +
\rho^{-\frac{5}{3}} \nu^{\frac{2}{3}} h^{\frac{1}{3}}\bigr) \gamma^3.
\tag*{$\textup{(\ref*{27-3-57})}'$}\label{27-3-57-'}
\end{equation}
Then summation over zone $\{\rho:\,\rho^2 \ge \beta h\}$ results in
\begin{gather}
R_0=C(h^{-2} + h^{-\frac{5}{3}}\nu^{\frac{2}{3}})
\tag*{$\textup{(\ref*{27-3-59})}'$}\label{27-3-59-'}\\
\intertext{and we need to consider only contributions of elements belonging to zone $\{\rho:\,\rho^2\le \beta h\}$}
\beta h^{-1}\bigl(\rho ^{-1}+ \rho ^{-2}\ell  +
\rho^{-\frac{5}{3}} \nu^{\frac{2}{3}} h^{\frac{1}{3}}\bigr) \gamma^3.
\label{27-3-83}
\end{gather}

\subsection{Strong non-degenerate case}
\label{sect-27-3-6-2}
Under non-degeneracy conditions (\ref{27-3-60}) or (\ref{27-3-63}) expression (\ref{27-3-83}) should be multiplied by $\rho^2 (\beta h)^{-1}\gamma^{-3}$ and after summation by $\rho$ we get an extra term $Ch^{-2}|\log h|$.

However we can get rid off the logarithmic factor by the standard trick: in one direction time could be improved to $\rho ^{1-\delta}\ell^{\delta}$. We leave easy details to the reader.

Further, adding $R'=C\beta h^{-2}\bar{\rho}\times \bar{\rho}^2(\beta h)^{-1}$
with $\bar{\rho}=C_0\max(\mu \beta^{-1},\, h^{\frac{1}{2}})$\, \footref{foot-27-11} i.e.
\begin{equation}
R'= \mu^3\beta^{-3}h^{-3}
\tag*{$\textup{(\ref*{27-3-61})}'$} \label{27-3-61-'}
\end{equation}
which is a contribution of zone $\{\rho:\,\rho\le \bar{\rho}\}$ we arrive to

\begin{proposition}\label{prop-27-3-24}
Let $\beta h\lesssim 1$ and conditions \ref{27-3-49-*} be fulfilled. Then under non-degeneracy assumption \textup{(\ref{27-3-60})} or \textup{(\ref{27-3-63})} estimate
\begin{equation}
|\int \bigl(\tr e(x,x,0)-h^{-3}\int P'_{B h} (V)\bigr)\psi(x)\,dx|\le CR
\label{27-3-84}
\end{equation}
holds with $R=R_0+R'$, $R_0$, $R'$ defined by \ref{27-3-59-'} and \ref{27-3-61-'}.
\end{proposition}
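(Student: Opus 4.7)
The plan is to mimic the proof of Proposition~\ref{prop-27-3-13} (the trace-term estimate under non-degeneracy), replacing the trace Tauberian factor $\rho^2 h^2 T^{-2}$ by the $\N$-term factor $hT^{-1}$ as already reflected in \ref{27-3-56-'} and \ref{27-3-57-'}. Thus I start from the microlocal bound of Proposition~\ref{prop-27-3-12} on $(\gamma,\rho)$-elements and partition phase space dyadically in $\rho=|p_3|$ and by a scaling function $\ell$ in $x$ as defined by \textup{(\ref{27-3-50})}--\textup{(\ref{27-3-51})}.

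First I would minimize \ref{27-3-56-'} over $T\le T^*=\epsilon_0\min(1,\rho\ell^{-1})$ to obtain \ref{27-3-57-'} as the contribution of a $(\gamma,\rho)$-element. Then I sum over the zone $\{\rho^2\ge \beta h\}$, where $\rho$ occurs in non-negative powers (after accounting for the $\gamma^3$ factor and the $x$-volume), and this gives exactly $R_0$ as in \ref{27-3-59-'}. For the zone $\{\rho^2\le \beta h\}$, the required contribution is the expression \textup{(\ref{27-3-83})}, and here is where the non-degeneracy assumption \textup{(\ref{27-3-60})} (or \textup{(\ref{27-3-63})}) enters: it bounds the relative measure of $x$-balls on which the operator fails to be elliptic (after freezing momentum in a $\rho$-shell) by $C\rho^2(\beta h)^{-1}$. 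Multiplying \textup{(\ref{27-3-83})} by $\rho^2(\beta h)^{-1}\gamma^{-3}$ and summing in $\rho$ then yields a bound of order $h^{-2}|\log h|$, that is $R_0$ up to a logarithmic loss.

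The main obstacle is removing this logarithmic factor. I handle it by the standard anisotropic $T^*$-trick used elsewhere in the monograph: in the direction along the magnetic field one may lengthen $T^*$ to $\rho^{1-\delta}\ell^\delta$, which kills the logarithm arising from the $\rho$-summation; I would sketch the cancellation rather than reproduce it in full, noting that it is identical to what was used for the trace term. Under \textup{(\ref{27-3-63})} the measure estimate $\rho^2(\beta h)^{-1}$ still holds because the Hessian condition confines the degenerate set to a lower-dimensional stratum, and the anisotropic argument applies verbatim.

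Finally, I add the contribution of the elliptic buffer zone $\{\rho\le \bar\rho\}$, where one simply uses the trivial bound $\beta h^{-2}\bar\rho \times \bar\rho^2(\beta h)^{-1}$ to obtain $R'=\mu^3\beta^{-3}h^{-3}$ as in \ref{27-3-61-'}. It remains to pass from the Tauberian expression with $T=T_*$ to the magnetic Weyl density $h^{-3}P'_{Bh}(V)$; this is done by two-term successive approximation exactly as in \ref{27-3-62-1}--\ref{27-3-62-2}, freezing $V$ and the linear part of $A$ at $y$ and invoking the results of Section~\ref{book_new-sect-18-9}, with the resulting error controlled by $R_0$. Combining the contributions gives \textup{(\ref{27-3-84})} with $R=R_0+R'$, proving the proposition.
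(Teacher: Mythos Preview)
Your proposal is correct and follows essentially the same route as the paper's own argument: start from the $(\gamma,\rho)$-element bound of Proposition~\ref{prop-27-3-12}, pass to \ref{27-3-56-'}--\ref{27-3-57-'}, sum the zone $\{\rho^2\ge\beta h\}$ to obtain $R_0$, use the non-degeneracy measure bound $\rho^2(\beta h)^{-1}$ in the zone $\{\rho^2\le\beta h\}$, kill the resulting $|\log h|$ via the anisotropic improvement $T^*\to\rho^{1-\delta}\ell^\delta$, add the buffer contribution $R'$, and pass to the Weyl density by successive approximation. The only minor inaccuracy is your remark that the logarithm removal ``is identical to what was used for the trace term'': in the trace case the extra $\rho^2$ factor already suppresses the logarithm, so the anisotropic trick is genuinely needed here for the first time in this section (though it is indeed standard elsewhere in the monograph).
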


\begin{remark}\label{rem-27-3-25}
The above estimate is sufficiently good as a weak magnetic field approach brings remainder estimate $C\mu h^{-2}$ even without any non-degeneracy assumption. We leave easy details to the reader.
\end{remark}

\subsection{Non-degenerate case}
\label{sect-27-3-6-3}

Under non-degeneracy assumption (\ref{27-3-65}) we need to apply more subtle arguments than before. Consider first subelements\footnote{\label{foot-27-14} We call them ``subelements'' but they live in the phase spaces in contrast to elements which live in the coordinate spaces.}. with $\rho\ge \ell$; for them we need to multiply expression (\ref{27-3-83}) by $\rho\gamma^{-3}$ and sum by $\rho\ge \rho_*$ resulting in
$C\beta h^{-1}|\log h|+
 C\beta h^{-\frac{2}{3}} \nu^{\frac{2}{3}}\rho_*^{-\frac{2}{3}}$.

On the other hand, for subelements with $\rho\le \ell$ we need to multiply (\ref{27-3-83}) by $\rho^2(\beta h )^{-1}\gamma^{-3}$ and sum by
$\rho_*\le \rho\le \ell$ and then by $\ell\ge \rho_*$ resulting in the same expression albeit with a factor $|\log h|^2$ instead of $|\log h|$. Here we get also $Ch^{-2}|\log h|$ term but we deal with it exactly as in the previous Subsubsection.

We need also add contributions of subelements with
$\rho_*\le \rho \le \ell \le \bar{\ell}$ and with $\rho\le \rho_*$. For the former subelements we need to consider only term
$\beta h^{-1}\rho^{-2}\bar{\ell}\gamma^3$ in (\ref{27-3-83}) and only as $\bar{\ell}=\nu \beta^{-1}$ resulting in
$\beta h^{-1}\rho_*^{-1}\bar{\ell}\gamma^3$ and contribution of the latter subelements we estimate by $\beta h^{-2}\rho_*^2$. So we get
\begin{equation}
C\beta \bigl( h^{-1}|\log h|+ h^{-1}\rho_*^{-1}\bar{\ell}+
h^{-\frac{2}{3}} \nu^{\frac{2}{3}} \rho_*^{-\frac{2}{3}}
+ h^{-2}\rho_*^2\bigr)
\label{27-3-85}
\end{equation}
which should be minimized by $\rho_*\ge \bar{\rho}$ resulting in
\begin{equation*}
C\beta \bigl(h^{-1}|\log h|^2+ h^{-\frac{4}{3}}\bar{\ell}^{\frac{2}{3}}+
h^{-1} \nu^{\frac{1}{2}} + h^{-2}\bar{\rho}^2\bigr)
\end{equation*}
and plugging $\bar{\rho}$ and $\bar{\ell}=\nu \beta^{-1}$ we arrive to
\begin{equation}
R''=C\beta \bigl(h^{-1}|\log h|^2+ h^{-\frac{4}{3}}\nu^{\frac{2}{3}}\beta^{-\frac{2}{3}}+
h^{-1} \nu^{\frac{1}{2}}\bigr) + C\mu^2 \beta^{-1}h^{-2}
\label{27-3-86}
\end{equation}
thus proving Proposition~\ref{prop-27-3-26}\ref{prop-27-3-26-i} below.

\subsection{Degenerate case}
\label{sect-27-3-6-4}

In the general case we arrive to (\ref{27-3-85}) albeit with factor $\rho_*^{-1}$
\begin{equation}
C\beta \bigl( h^{-1}\rho_*^{-1}+h^{-1}\rho_*^{-2}\bar{\ell}+
h^{-\frac{2}{3}} \nu^{\frac{2}{3}} \rho_*^{-\frac{5}{3}}
+ h^{-2}\rho_*\bigr)
\label{27-3-87}
\end{equation}
which should be minimized by
$\rho_*\ge \bar{\rho}$ resulting in
\begin{equation*}
C\beta \bigl(h^{-\frac{3}{2}} + h^{-\frac{5}{3}}\bar{\ell}^{\frac{1}{3}}
+ h^{-\frac{4}{3}} \nu^{\frac{1}{4}} +h^{-2}\bar{\rho}\bigr)
\end{equation*}
and plugging $\bar{\rho}$ and $\bar{\ell}$ we arrive to
\begin{equation}
R'''=C\beta \bigl(h^{-\frac{3}{2}} + h^{-\frac{5}{3}}\nu^{\frac{1}{3}}\beta^{-\frac{1}{3}}+
 h^{-\frac{4}{3}} \nu^{\frac{1}{4}} \bigr)+
C\mu h^{-2}
\label{27-3-88}
\end{equation}
thus proving Proposition~\ref{prop-27-3-26}\ref{prop-27-3-26-ii}:

\begin{proposition}\label{prop-27-3-26}
Let $\beta h\lesssim 1$ and conditions \ref{27-3-49-*} be fulfilled. Then
\begin{enumerate}[fullwidth, label=(\roman*)]
\item\label{prop-27-3-26-i}
Under non-degeneracy assumption \textup{(\ref{27-3-65})} estimate \textup{(\ref{27-3-63})}
holds with $R=R_0+R''$, $R_0$, $R''$ defined by \ref{27-3-59-'} and \textup{(\ref{27-3-86})};

\item\label{prop-27-3-26-ii}
In the general case estimate \textup{(\ref{27-3-63})}
holds with $R=R_0+R'''$, $R_0$, $R'''$ defined by \ref{27-3-59-'} and \textup{(\ref{27-3-88})}.
\end{enumerate}
\end{proposition}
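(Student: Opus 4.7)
The plan is to carry over the microlocal--Tauberian framework of Section~\ref{sect-27-3-4} from the trace functional to the counting functional $\int e(x,x,0)\psi(x)\,dx$. The decisive change is that the Tauberian error now carries a single factor $hT^{-1}$ rather than the two factors $h^2T^{-2}$ of the trace, so the contribution of a $(\gamma,\rho)$-element is bounded by $CS(\rho,T)\gamma^3 hT^{-1}$ as stated in \textup{(\ref{27-3-56-'})}. Minimizing in $h\le T\le T^*=\epsilon_0\min(1,\rho\ell^{-1})$ produces the per-element estimate \textup{(\ref{27-3-57-'})}, and the two-term successive approximation of Section~\ref{book_new-sect-18-9} transfers the Tauberian expression to the magnetic Weyl side with an error of the same order.

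First I would dispatch the high-frequency zone $\{\rho^2\ge\beta h\}$, where summation of \textup{(\ref{27-3-57-'})} delivers $R_0$ as in \textup{(\ref{27-3-59-'})}; only the bulk zone $\{\rho^2\le\beta h\}$ with per-element weight \textup{(\ref{27-3-83})} requires the non-degeneracy input. For part~(i), assumption \textup{(\ref{27-3-65})} reduces the relative measure of non-elliptic subelements to $\rho(\beta h)^{-1/2}$, so I would split the bulk at $\rho=\ell$: for $\rho\ge\ell$ I multiply \textup{(\ref{27-3-83})} by $\rho\gamma^{-3}$ and sum in $\rho\ge\rho_*$; for $\rho\le\ell$ I multiply by $\rho^2(\beta h)^{-1}\gamma^{-3}$ and sum first in $\rho$, then in $\ell\ge\rho_*$. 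This produces the three leading $\rho_*$-dependent terms of \textup{(\ref{27-3-85})} together with a spurious $Ch^{-2}|\log h|$, which I would kill by the standard directional trick: along one coordinate direction the allowed propagation time extends from $\rho$ to $\rho^{1-\delta}\ell^\delta$. Adding the contribution of $\{\rho_*\le\rho\le\ell\le\bar\ell\}$ (only the $\rho^{-2}\bar\ell$ summand of \textup{(\ref{27-3-83})} is active there) and the contribution $C\beta h^{-2}\rho_*^2$ of $\{\rho\le\rho_*\}$ completes \textup{(\ref{27-3-85})}. Minimizing in $\rho_*\ge\bar\rho$ and substituting $\bar\ell=\nu\beta^{-1}$ and $\bar\rho=C_0\max(\mu\beta^{-1},h^{1/2})$ yields $R''$ as in \textup{(\ref{27-3-86})}.

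For part~(ii) the non-degeneracy gain $(\beta h)^{-1/2}$ is unavailable, so each subelement in the bulk is weighted only by its volume, making the per-subelement estimate heavier by a factor $\rho^{-1}$ relative to part~(i). This yields the four terms of \textup{(\ref{27-3-87})}, and the analogous minimization over $\rho_*\ge\bar\rho$ together with the substitution of $\bar\rho$ and $\bar\ell$ produces $R'''$ as in \textup{(\ref{27-3-88})}.

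The main obstacle is the careful bookkeeping across the four regimes (high-frequency $\{\rho^2\ge\beta h\}$, bulk with $\rho\ge\ell$, bulk with $\rho\le\ell$, and the small-$\rho$ reservoir) while maintaining the admissibility constraints $T_*\le T\le T^*$ and $\rho\ge\bar\rho$ through the successive $T$- and $\rho_*$-minimizations. The genuinely subtle point is verifying that the logarithmic slack generated by the two-dimensional $(x,\rho)$-summation under \textup{(\ref{27-3-65})} can indeed be absorbed by the directional speed-of-propagation improvement rather than inflating the final remainder by a factor $|\log h|$; this is precisely where the analysis diverges from the strong-non-degeneracy case handled in Proposition~\ref{prop-27-3-24}.
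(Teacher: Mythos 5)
Your plan reproduces the paper's own argument in Subsections~\ref{sect-27-3-6-1}--\ref{sect-27-3-6-4}: reduce the Tauberian factor from $h^2T^{-2}$ to $hT^{-1}$ to get \ref{27-3-56-'} and \ref{27-3-57-'}, dispatch $\{\rho^2\ge\beta h\}$ into $R_0$, split the bulk at $\rho=\ell$ with the two density weights $\rho\gamma^{-3}$ and $\rho^2(\beta h)^{-1}\gamma^{-3}$, absorb the spurious $Ch^{-2}|\log h|$ by extending $T$ to $\rho^{1-\delta}\ell^\delta$ in one direction, add the $\bar\ell$- and $\{\rho\le\rho_*\}$-reservoir contributions to land on \textup{(\ref{27-3-85})} (resp.\ \textup{(\ref{27-3-87})} with the lost factor $\rho^{-1}$), and minimize over $\rho_*\ge\bar\rho$. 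This is essentially identical to the paper's proof, so nothing further needs to be said.
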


\section{$\D$-term estimate}
\label{sect-27-3-7}

Consider now $\D$-term
\begin{equation}
\D \bigl( [e(x,x, 0) - h^{-3} P'_{B h} (V)]\psi ,\,
[e(x,x, 0) - h^{-3} P'_{B h} (V)]\psi\bigr)
\label{27-3-89}
\end{equation}
with $\psi \in \sC^\infty_0((B(0,1))$.

\begin{proposition}\label{prop-27-3-27}
Let $\beta h \lesssim 1$ and $A'$ satisfy \ref{27-3-49-*}. Then under non-degeneracy assumption \textup{(\ref{27-3-60})} $\D$-term \textup{(\ref{27-3-89})} does not exceed $CR^2$ with $R=R_0+R'$, $R_0$ and $R'$ defined by \ref{27-3-59-'} and \ref{27-3-61-'}.
\end{proposition}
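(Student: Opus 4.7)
The plan is to reduce the estimate to a microlocal decomposition paralleling the N-term analysis of Proposition~\ref{prop-27-3-24} and then to bound the resulting $\D$-sum by a combination of Hardy--Littlewood--Sobolev for close pairs and the pointwise bound $|x-y|^{-1}$ for far pairs. Let $f(x) \Def \tr e(x,x,0)\psi(x) - h^{-3} P'_{Bh}(V)\psi(x)$. I want to show $\D(f,f)\le C R^2$ with $R=R_0+R'$. The key facts I would use are: \emph{(a)} $\D$ is a positive definite bilinear form, so $\D(f,f)^{1/2}$ is subadditive under decompositions $f=\sum_j f_j$; \emph{(b)} for functions supported in a ball of radius $\gamma$, $\D(g,g)\le C\|g\|_{L^{6/5}}^2 \le C \|g\|_{L^\infty}^2 \gamma^5$; and \emph{(c)} for supports separated by distance $d$, $\D(g_1,g_2)\le d^{-1}\|g_1\|_{L^1}\|g_2\|_{L^1}$.

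First, I would reuse the microlocal $(\gamma,\rho)$-partition of unity from Subsection~\ref{sect-27-3-4-1}: partition the base $B(0,1)$ into balls of radius $\gamma_j$ and phase-space into annuli $\{|p_3|\asymp \rho_j\}$, matching the scales $(\gamma,\rho)$ used in Proposition~\ref{prop-27-3-12}. For each element, the Tauberian remainder yields a pointwise estimate on the corresponding piece $f_j$ of $f$; precisely, the arguments leading to \textup{(\ref*{27-3-56})$'$}--\textup{(\ref*{27-3-57})$'$} bound $|f_j(x)|$ by the integrand of \textup{(\ref*{27-3-56})$'$} divided by $\gamma_j^3$, and hence $\|f_j\|_{L^{6/5}}\le C\, [\text{contribution of this element to $R$}]\cdot \gamma_j^{-1/2}$. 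Adding non-elliptic partition elements, the degenerate zone $\{\rho\le \bar\rho\}$, and the transition-to-Weyl approximation error, one gets a decomposition $f=\sum_j f_j$ with controlled $L^1$, $L^\infty$ and support data.

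Second, I would split $\D(f,f) = \sum_{j,k}\D(f_j,f_k)$ into close and far contributions. For close pairs (same scale, $d_{jk}\lesssim \gamma_j\sim \gamma_k$), Cauchy--Schwarz gives $\D(f_j,f_k)\le \D(f_j,f_j)^{1/2}\D(f_k,f_k)^{1/2}$ and each factor is bounded by HLS and \emph{(b)}. For far pairs I would use \emph{(c)} and then estimate the double sum by $\bigl(\sum_j \|f_j\|_{L^1}\bigr)^2$ since all supports lie in $B(0,2)$ and the inverse-distance kernel integrates to a constant against any bounded measure on a fixed ball. The N-term bound $\int |f|\,dx \le CR$ from Proposition~\ref{prop-27-3-24} then controls the far contribution by $CR^2$. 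For the close contribution, the same $(\gamma,\rho)$-balance that produced the minimization in \textup{(\ref*{27-3-57})$'$}--(\ref{27-3-83}) now produces, after squaring and summing, the bound $CR^2$; this is where the strong non-degeneracy assumption \textup{(\ref{27-3-60})} is used, exactly as in Proposition~\ref{prop-27-3-24}, to convert the factor $\rho^2(\beta h)^{-1}\gamma^{-3}$ into the correct ``relative measure'' of the non-elliptic phase-space region.

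The main obstacle will be the handling of the innermost zone $\{\rho\le \bar\rho\}$ contributing $R'$, together with the transition from the Tauberian to the magnetic Weyl expression. In the N-term this zone was estimated crudely by $\beta h^{-2}\bar\rho \cdot \bar\rho^2(\beta h)^{-1} = R'$; for the $\D$-term the corresponding piece $f_{\mathrm{bad}}$ satisfies $\|f_{\mathrm{bad}}\|_{L^1}\le CR'$ and $\|f_{\mathrm{bad}}\|_{L^\infty}\le C\beta h^{-2}$, and one needs the combined bound $\D(f_{\mathrm{bad}},f_{\mathrm{bad}})\le C(R')^2$; I would get this by comparing with the frozen-coefficient operator $H_z$ (using Proposition~\ref{prop-27-3-6}\ref{prop-27-3-6-ii} and Propositions~\ref{prop-27-A-4}--\ref{prop-27-A-5} as in Subsection~\ref{sect-27-3-3-2}) and bounding $\D$ for the ``frozen'' contribution explicitly. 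The remaining details (the Tauberian-to-Weyl transition error, and the summation over $\rho$) are routine variants of what was already done for $R$ and will be left to the reader.
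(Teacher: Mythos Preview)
Your far-pair step contains a genuine error. The claim that ``the inverse-distance kernel integrates to a constant against any bounded measure on a fixed ball'' is false: $\int_{B(0,2)}|x-y|^{-1}\,d\mu(y)$ is bounded only for measures with controlled density (Lebesgue, say), not for arbitrary finite measures, and your microlocal pieces $f_j$ at different $\rho$-shells share the same spatial support, so the effective measure $\sum_j\|f_j\|_{L^1}\delta_{x_j}$ can be highly concentrated. Concretely, two pieces at spatial distance $d_{jk}\asymp\gamma$ contribute $\gamma^{-1}\|f_j\|_{L^1}\|f_k\|_{L^1}$ to your far sum, which is not dominated by $\|f_j\|_{L^1}\|f_k\|_{L^1}$ when $\gamma$ is small. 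The same defect appears in your close-pair step: the bound $\D(f_j,f_j)^{1/2}\le C\|f_j\|_{L^\infty}\gamma_j^{5/2}$ is a factor $\gamma_j^{-1/2}$ worse than the $\N$-term contribution $\|f_j\|_{L^\infty}\gamma_j^{3}$, and nothing in your outline explains how this loss is absorbed after summation over scales.

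The paper's proof avoids this by first applying the Fefferman--de~Llave decomposition of $|x-y|^{-1}$: one fixes a dyadic distance scale $r$ (the kernel piece is $\asymp r^{-1}$ and supported on $|x-y|\asymp r$), covers each variable by $r$-balls with bounded overlap, and only then introduces $(\gamma,\rho)$-subelements on each side, obtaining three running parameters $(r,\rho_x,\rho_y)$. Matching the spatial localization to the kernel scale is exactly what absorbs the $\gamma^{-1/2}$ you are missing. The non-degeneracy assumption~(\ref{27-3-60}) enters via the relative-measure factor $\rho^2(\beta h)^{-1}$ as you anticipated, but in the regime $h\le r\le\rho^*$ with $\rho_x^2,\rho_y^2\le\epsilon r$ one needs a sharper version: compare the measure of non-elliptic $\rho^2$-balls to that of the larger ball $B(z,r)$ and insert factors $(\rho_x^2/r)^\theta$, $(\rho_y^2/r)^\theta$ with $\tfrac{5}{6}<\theta<1$, so that the exponents of $\rho_x,\rho_y$ become positive and the exponent of $r$ (including the $r^{-1}$ from the kernel) stays above $-3$. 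This step is essential to close the estimate and is absent from your sketch.
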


\begin{proof}\emph{Step 1\/}.
Let us apply Fefferman-de Llave decomposition (\ref{book_new-16-3-1}); then we need to consider pairs of elements $B(\bar{x},r)$ and $B(\bar{y},r)$ with
$3r\le |\bar{x}-\bar{y}|\le 4r$. If $r\ge \bar{\rho}$ on each of these elements we should consider $(\gamma,\rho)$ subelements (we call them ``subelements'' but they live in the phase spaces in contrast to elements which live in the coordinate spaces). Then we have three parameters--$(r,\rho_x,\rho_y)$.

Since for each $\bar{y}$ the number of matching $x$-elements is $\asymp 1$ and then summation with respect to $\rho_x\ge \rho^*=(\beta h)^{\frac{1}{2}}$ results in $R_0r^2$ multiplied by a contribution of $(\gamma,\rho_y)$-subelement; after summation by $r$ and then by these $(\gamma,\rho_y)$-subelements we get $CR_0R$. Similarly we are dealing with $\rho_y\ge \rho^*$.

Therefore we need to consider only case when both $\rho_x$ and $\rho_y$ do not exceed $\rho^*$. If $r \ge c\rho^*$ then ``the relative measure trick'' allows us to add factors $\rho_x^2 (\beta h)^{-1}$ and
$\rho_y^2 (\beta h)^{-1}$ even if $\rho_x^2 \ge r$ or $\rho_y^2 \ge r$ and then the total contribution of such subelements also does not exceed $CR^2$.

\medskip\emph{Step 2.\/}
Consider next $h\le r \le \rho^*$\,\footnote{\label{foot-27-15} Observe that we do not need to keep $t\ge \bar{\rho}$ but we need to keep $\rho r\ge h$.} and we should look only at $\rho_x\le \rho^*$,
$\rho_y\le \rho^*$. Further, if $\rho_x^2\ge \epsilon r$ or
$\rho_y^2\ge \epsilon r$ we can always inject factor $c\rho_x^2r^{-1}$ or $c\rho_y^2r^{-1}$ ending up again with $CR^2$.

On the other hand, if both $\rho_x^2\le \epsilon r$ and $\rho_y^2\le \epsilon r$ but $r\ge c\bar{\rho}$ we can apply ``the relative measure trick'' but comparing the measure of $\rho_x^2$- or $\rho_y^2$-elements with violated ellipticity assumption to the total measure of $B(z,r)$; then we can inject factors $(\rho_x^2/r)^{\theta}$ and $(\rho_yx^2/r)^{\theta}$ with
$0\le \theta \le 1$ and we select any $\theta: \frac{5}{6}< \theta <1$ to have positive powers of $\rho_x$ and $\rho_y$ and power of $r$ (counting $r^{-1}$) greater than $-3$. We end up again with $CR^2$.

Observe that these arguments cover also cases $\rho_x\le \bar{\rho}$ or
$\rho_y\le \bar{\rho}$.

\medskip\emph{Step 3.\/}
To estimate contribution of zone $\{ (x,y):\, r\le h\}$ we just estimate
$|e(x,x,\tau)|\le Ch^{-3}$.
\end{proof}

As $M\ge 2$ we will need to estimate $\D$-term under non-degeneracy assumptions (\ref{27-3-63}), or (\ref{27-3-65}), or without any non-degeneracy assumption.

\begin{proposition}\label{prop-27-3-28}
Let $\beta h \lesssim 1$ and $A'$ satisfy \ref{27-3-49-*}. Then $\D$-term \textup{(\ref{27-3-89})} does not exceed $CR^2$ where
\begin{enumerate}[fullwidth, label=(\roman*)]

\item\label{prop-27-3-28-i}
Under non-degeneracy assumption \textup{(\ref{27-3-63})} $R=R_0+R'$, $R_0$ and $R'$ defined by \ref{27-3-59-'} and \ref{27-3-61-'};

\item\label{prop-27-3-28-ii}
Under non-degeneracy assumption \textup{(\ref{27-3-65})} $R=R_0+R''$, $R_0$ and $R''$ defined by \ref{27-3-59-'} and \textup{(\ref{27-3-86})};

\item\label{prop-27-3-28-iii}
In the general case $R=R_0+R'''$, $R_0$ and $R'''$ defined by \ref{27-3-59-'} and \textup{(\ref{27-3-88})}
\end{enumerate}
\end{proposition}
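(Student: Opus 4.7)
The plan is to reuse the Fefferman--de Llave decomposition scheme of the proof of Proposition~\ref{prop-27-3-27}, keeping its three-parameter setup $(r,\rho_x,\rho_y)$---with $r\asymp|\bar x-\bar y|$ and $\rho_x,\rho_y$ the magnitudes of $p_3$ on the two phase-space subelements---but replacing the single-point bound $R_0$ by the appropriate refined single-point estimate of Proposition~\ref{prop-27-3-26}. Since the analysis behind Proposition~\ref{prop-27-3-26} is already local in $x$, the contribution of a pair of subelements factorizes, up to the matching condition imposed by the cutoff $3r\le|\bar x-\bar y|\le 4r$; in particular, whenever $\rho_x\ge\rho^*=(\beta h)^{\frac12}$ or $\rho_y\ge\rho^*$ the argument of Proposition~\ref{prop-27-3-27} (Step~1) applies without change and produces a $CR_0R$ contribution. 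So in each of the three cases it suffices to control pairs with both $\rho_x,\rho_y\le\rho^*$.

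For case~\ref{prop-27-3-28-i}, assumption~\textup{(\ref{27-3-63})} delivers the same relative-measure bound $\rho^2(\beta h)^{-1}$ for the non-elliptic subset as~\textup{(\ref{27-3-60})}, so the ``ellipticity injection'' of Step~2 of Proposition~\ref{prop-27-3-27} is available verbatim and yields $CR^2$ with $R=R_0+R'$. For case~\ref{prop-27-3-28-ii} I would split the $\rho$-summation at $\rho=\ell$ following Subsubsection~\ref{sect-27-3-6-3}: for $\rho\ge\ell$ the relative measure is only $\rho(\beta h)^{-\frac12}$, while for $\rho\le\ell$ it is $\rho^2(\beta h\ell)^{-1}$. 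Plugging these weights into the three-parameter summation and performing the subsequent dyadic summation in $\ell$ down to $\bar\ell$ replaces each single-point factor $R_0$ by $R_0+R''$; the residual zone $\{\ell\le 2\bar\ell\}$ is absorbed exactly as in that subsubsection using the pointwise bound $|e(x,x,\tau)|\le C\beta h^{-2}$.

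The main obstacle is case~\ref{prop-27-3-28-iii}, since without any non-degeneracy hypothesis the relative-measure trick fails outright. Here I would mimic Subsubsection~\ref{sect-27-3-6-4}: introduce a free truncation scale $\rho_*\ge\bar\rho$, estimate the contribution of $\{|p_3|\le\rho_*\}$ by the trivial bound $|e(x,x,\tau)|\le C\beta h^{-2}$ (which integrates to $C\beta h^{-2}\rho_*$ per factor), and for $\rho_x,\rho_y\ge\rho_*$ inject a fractional factor $(\rho_x^2/r)^{\theta}(\rho_y^2/r)^{\theta}$ with $\frac56<\theta<1$---which, unlike the sharp $\theta=1$ version, survives in the absence of any ellipticity statement---to keep the $r$-summation convergent. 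Optimizing over $\rho_*\ge\bar\rho$ reproduces the exponents of $R'''$ on each factor, and the zone $r\le h$ is dispatched by $|e(x,x,\tau)|\le Ch^{-3}$ as in Step~3 of Proposition~\ref{prop-27-3-27}. The routine bookkeeping of cross-cases (e.g., $\rho_x\le\rho_*\le\rho_y$) is handled as in that proposition and left to the reader.
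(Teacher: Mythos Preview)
Your treatment of~\ref{prop-27-3-28-i} and~\ref{prop-27-3-28-ii} is broadly in line with the paper: the paper also uses the Fefferman--de~Llave decomposition and the $(\gamma,\rho)$ subelements, and the measure bounds you quote under~(\ref{27-3-63}) and~(\ref{27-3-65}) are the right ones. The paper organizes the bookkeeping somewhat differently---it runs a single unified argument built around the $\ell$-partition (with a Part~1/Part~2 split according to whether $\ell_x\lesssim r$ or $\ell_x\gtrsim r$) and only specializes to the three non-degeneracy levels at the very end via the measure weight $\ell^m$ with $m=2,1,0$---but your case-by-case scheme would get there as well.

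There is, however, a genuine gap in your argument for~\ref{prop-27-3-28-iii}. The fractional factor $(\rho_x^2/r)^{\theta}(\rho_y^2/r)^{\theta}$ from Step~2 of Proposition~\ref{prop-27-3-27} is \emph{not} available without a non-degeneracy assumption. That factor is the relative measure, inside $B(z,r)$, of the set where $\min_j|V-2j\beta h|\lesssim\rho^2$; under~(\ref{27-3-60}) this set is a slab of thickness $\asymp\rho^2$ and hence has relative measure $\asymp\rho^2/r$. With no hypothesis on $\nabla V$ the non-elliptic set may fill the whole ball, and no positive power of $\rho^2/r$ can be extracted---the choice $\theta<1$ does not help, since the issue is not analytic convergence but the absence of any measure bound to begin with. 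So the $r$-summation in your scheme does not close in the general case.

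The paper circumvents this by never invoking the relative-measure trick in~\ref{prop-27-3-28-iii}. Instead it sums~\ref{27-3-57-'} over subelements of a fixed $\ell$-element directly (arriving at~(\ref{27-3-91})--(\ref{27-3-92}) with a free parameter $\rho_*$), and then integrates over $x$ using only the trivial volume bound $\mes\{\ell(x)\sim\ell\}\le C$; this is the $m=0$ case of~(\ref{27-3-97}). The Part~1/Part~2 dichotomy ($\ell_x\lesssim r$ versus $\ell_x\gtrsim r$) is then needed to handle the kernel singularity when $x$ and $y$ sit in the same $\ell$-element. Your sketch should be reworked along these lines for the degenerate case: drop the $(\rho^2/r)^\theta$ injection, introduce the $\ell$-partition explicitly, and let the optimization over $\rho_*(\ell)$ produce the exponents of $R'''$ on each factor.
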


\begin{proof}
Let us use ideas used in the proofs of Proposition~\ref{prop-27-3-26} and Propositions~\ref{prop-27-3-24} and~\ref{prop-27-3-26}. Let us apply Fefferman-de Llave decomposition (\ref{book_new-16-3-1}); then we need to consider pairs of elements $B(\bar{x},r)$ and $B(\bar{y},r)$ with
$3r\le |\bar{x}-\bar{y}|\le 4r$. If $r\ge \bar{\rho}$ on each of these elements we should consider $(\gamma,\rho)$ subelements\footref{foot-27-14}. Then we have three parameters--$(r,\rho_x,\rho_y)$. On the other hand, there is a scaling function $\ell(x)$ and covering of $B(0,1)$ by $\ell$-elements.

\medskip\noindent
\emph{Part 1.\/} Consider case of $\ell_x \lesssim r$ (and therefore
$\ell_y \lesssim r$). Then we must assume that $\rho_x \ell_x \gtrsim h$, $\rho_y \ell_y \gtrsim h$. Observe as in Step 1 that if
$\rho_x \gtrsim \rho^*=(\beta h)^{\frac{1}{2}}$ then the relative density of such subelements is $\rho_x^2 /(\beta h)$ and therefore summation over such subelements of the given $x$-element results in $CR_0 \ell_x^3$. Therefore double summation over corresponding subelements of $x$- and $y$-elements
results in $CR_0^2 \ell_x ^3\ell_y^3 r^{-1}$. Finally, after double summation over $x$- and $y$- elements we get $CR_0^2\int |x-y|^{-1}\,dxdy$ which does not exceed $CR_0^2$.

\smallskip
 Therefore in what follows we need to consider only subelements with
$\rho_x \lesssim \rho^*$, $\rho_y \lesssim \rho^*$\,\footnote{\label{foot-27-16} Due to positivity quadratic form $\D(.,.)$ we need to consider only ``pure'' pairs. We will use this observation many times.}.
Further, observe that the same arguments are applicable as
$\ell_x \gtrsim \rho^*$, $\ell_y \gtrsim \rho^*$ and we are left with pairs of elements with $\ell_x \lesssim \rho^*$, $\ell_y\lesssim \rho^*$ and their subelements with $\rho_x \lesssim \rho^*$, $\rho_y \lesssim \rho^*$ as we will always keep $\ell_x \ge \bar{\rho}$, $\ell_y \ge \bar{\rho}$.

Observe that summation of (\ref{27-3-83}) over subelements with
$\rho\ge \ell$ of the given element results in
\begin{equation}
C\beta h^{-1} \bigl(\ell^{-1}+\ell^{-\frac{5}{3}}\nu^{\frac{2}{3}}h^{\frac{1}{3}}\bigr)\ell^3.
\label{27-3-90}
\end{equation}
On the other hand, as $\rho\le \ell$ the relative density of $\rho$-subelements of the given $\ell$-element does not exceed
$C\rho^2\ell^{-2}$ and therefore summation over such subelements results in (\ref{27-3-90}) again.

However in (\ref{27-3-83}) if $\ell\le \bar{\ell}$ we need to take in the middle term $\ell=\bar{\ell}$ and here we can ignore other options but $\bar{\ell}=\nu \beta^{-1}$.

Then summation of this term over subelements with $ \rho \ge \max(\ell,\rho_*)$ results in $C\beta h^{-1}\min (\rho_*^{-2},\ell^{-2})\bar{\ell}\ell^3$ and summation over subelements with $\rho_*\le \rho \le \ell$
results in $C\beta h^{-1}\rho_*^{-2}\bar{\ell}(1+|\log \rho_*\ell^{-1}|)\ell^3$ (and should be counted as $\ell\ge \rho_*$ only.

Finally, contribution of subelements with $\rho\le \rho_*$ does not exceed
$C\beta h^{-2}\rho_*^3\ell$ and $C\beta h^{-2}\rho_*\ell^3$ as $\rho_*\le \ell$ and $\rho_*\ge \ell$ respectively. So, in the former case the total contribution of all subelements does not exceed
\begin{gather}
C\beta h^{-1} \Bigl[\ell^{-1}+\ell^{-\frac{5}{3}}\nu^{\frac{2}{3}}h^{\frac{1}{3}}+
\rho_*^{-2}\bar{\ell}(1+|\log \rho_*\ell^{-1}|)+ h^{-1}\rho_*^3\ell^{-2}\Bigr]\ell^3.
\label{27-3-91}\\
\intertext{Minimizing this expression by $\rho_*$ we get}
C\beta h^{-1} \Bigl[\ell^{-1}+\nu^{\frac{2}{3}}h^{\frac{1}{3}}\ell^{-\frac{5}{3}}+
\bar{\ell}^{\frac{3}{5}} h^{-\frac{2}{5}}
(1+|\log \ell_*\ell^{-1}|)^{\frac{3}{5}}\ell^{-\frac{4}{5}} +
h^{-1}\bar{\rho}^3\ell^{-2}\Bigr]\ell^3
\label{27-3-92}
\end{gather}
achieved as
\begin{multline}
\rho_* =\rho_*(\ell)
\asymp \max( (\bar{\ell}h \ell^2|\log h|)^{\frac{1}{5}},\bar{\rho})\\
\text{as\ \ }
\ell\ge \ell_*= \max((\bar{\ell}h|\log h|)^{\frac{1}{3}},\,\bar{\rho}).
\label{27-3-93}
\end{multline}

Then $\N$-term does not exceed
\begin{multline}
C\beta h^{-1} \int_{\ell_x \ge \ell_*} \Bigl[\ell_x^{-1}+\nu^{\frac{2}{3}}h^{\frac{1}{3}}\ell_x^{-\frac{5}{3}}+
\bar{\ell}^{\frac{3}{5}} h^{-\frac{2}{5}}
(1+|\log \ell_*\ell_x^{-1}|)^{\frac{3}{5}}\ell_x^{-\frac{4}{5}} +
h^{-1}\bar{\rho}^3\ell_x^{-2}\Bigr]\,dx+\\[3pt]
C\beta h^{-1} \Bigl[\ell_*^{-1}+\nu^{\frac{2}{3}}h^{\frac{1}{3}}\ell_*^{-\frac{5}{3}}+
\bar{\ell}^{\frac{3}{5}} h^{-\frac{2}{5}}\ell_*^{-\frac{4}{5}} +
h^{-1}\bar{\rho}^3\ell_*^{-2}\Bigr]
\mes(\{\ell_x\le \ell_*\})
\label{27-3-94}
\end{multline}
where the first and second terms estimate contributions of elements with $\ell_x\ge \ell_*$ and $\ell_x\le \ell_*$ respectively.

\begin{remark}\label{rem-27-3-29}
Observe that
\begin{enumerate}[label=(\roman*), fullwidth]
\item\label{rem-27-3-29-i}
Under non-degeneracy assumption (\ref{27-3-60}) we get $C(R_0+R')$ as expected and under non-degeneracy assumption (\ref{27-3-63}) we get $C(R_0+R'|\log h|)$ but this is only because we counted here contribution of subelements with
$\{x:\,\rho_x\le \ell_x\}$ in the less efficient way.

\item\label{rem-27-3-29-ii}
Under non-degeneracy assumption (\ref{27-3-65}) we get $C(R_0+R'')$ and
and in the general case we get $C(R_0+R''')$ where $R_0,R',R'',R'''$ are defined in Propositions~\ref{prop-27-3-24} and~\ref{prop-27-3-26}.
\end{enumerate}
\end{remark}

Similarly, total contribution of the zone considered here (in Part I) to $\D$-term does not exceed
\begin{multline}
C\beta^2 h^{-2} \iint_{\ell_x \ge \ell_*, \ell_y\ge \ell*, |x-y|\ge \max(\ell_x,\ell_y)} \\
\times \Bigl[\ell_x^{-1}+\nu^{\frac{2}{3}}h^{\frac{1}{3}}\ell_x^{-\frac{5}{3}}+
\bar{\ell}^{\frac{3}{5}} h^{-\frac{2}{5}}
(1+|\log \ell_*\ell_x^{-1}|)^{\frac{3}{5}}\ell_x^{-\frac{4}{5}} +
h^{-1}\bar{\rho}^3\ell_x^{-2}\Bigr]\\[3pt]
\times
\Bigl[\ell_y^{-1}+\nu^{\frac{2}{3}}h^{\frac{1}{3}}\ell_y^{-\frac{5}{3}}+
\bar{\ell}^{\frac{3}{5}} h^{-\frac{2}{5}}
(1+|\log \ell_*\ell_y^{-1}|)^{\frac{3}{5}}\ell_y^{-\frac{4}{5}} +
h^{-1}\bar{\rho}^3\ell_y^{-2}\Bigr]\\[3pt]
\shoveright{\times \,|x-y|^{-1}\,dxdy}\\[3pt]
+ C\beta ^2h^{-2} \Bigl[\ell_*^{-1}+\nu^{\frac{2}{3}}h^{\frac{1}{3}}\ell_*^{-\frac{5}{3}}+
\bar{\ell}^{\frac{3}{5}} h^{-\frac{2}{5}}\ell_*^{-\frac{4}{5}} +
h^{-1}\bar{\rho}^3\ell_*^{-2}\Bigr]^2
\iint _{\ell_x \le \ell_*, \ell_y\le \ell*} |x-y|^{-1}\,dxdy.
\label{27-3-95}
\end{multline}
Then under non-degeneracy assumption (\ref{27-3-60}) we get $C(R_0+R')^2$ as expected and under non-degeneracy assumption (\ref{27-3-63}) we get $C(R_0+R'|\log h|)^2$ but this is only because we counted here contribution of subelements with $\{x:\,\rho_x\le \ell_x\}$ in the less efficient way. Using method employed in the proof of Proposition~\ref{prop-27-3-24} we can recover estimate $C(R_0+R')^2$ as well.

Further, under non-degeneracy assumption (\ref{27-3-63}) we get $C(R_0+R'')^2$
and under non-degeneracy assumption (\ref{27-3-65}) we get $C(R_0+R''')^2$.

\medskip\noindent
\emph{Part 2.\/} Consider case of $\ell_x \ge C r$ (and therefore
$\ell_y \asymp \ell_x$). Then we apply the same arguments as before albeit with $\ell^2$ replaced by $\ell r$. First, consider pairs of subelements with $\rho_x \ge \rho^*$, $\rho_y \ge \rho^*$. Their contributions to $\D$-term does not exceed expression (\ref{27-3-83}) with $\rho=\rho_x$, $\gamma=\gamma_x$ multiplied by $\rho_x^2 (\beta h)^{-1}$ multiplied by expression (\ref{27-3-83}) with $\rho=\rho_y$, $\gamma=\gamma_y$ multiplied by $\rho_x^2 (\beta h)^{-1}$ multiplied by $|x-y|^{-1}$
and double summation by $\rho_x$, $\rho_y$ results in
$Ch^{-4} (1+\nu^{\frac{4}{3}}h^{\frac{2}{3}}) \gamma_x^3\gamma_y^3 |x-y|^{-1}$;
then the double summation over $x$, $y$ returns $Ch^{-4} (1+\nu^{\frac{4}{3}}h^{\frac{2}{3}})\iint |x-y|^{-1}\,dxdy\lesssim CR^2$.

Then we need to consider pairs with $\rho_x \le \rho^*$,
$\rho_y \le \rho^*$ and also pairs with
$|x-y|\le h \max (\rho_x^{-1}, \rho_y^{-1})$.

\medskip
Next consider pairs of subelements with
$\rho^*\ge \rho_x \ge (\ell r)^{\frac{1}{2}}$,
$\rho^*\ge \rho_y \ge (\ell r)^{\frac{1}{2}}$. Their contributions to $\D$-term does not exceed expression
\begin{equation*}
C\beta^2h^{-2}\bigl( \rho_x^{-1} \rho_y^{-1} +
\bar{\ell}^2 \rho_x^{-2} \rho_y^{-2} +\rho_x^{-\frac{5}{3}}\rho_y^{-\frac{5}{3}}\nu^{\frac{4}{3}}h^{\frac{2}{3}}
\bigr)\underbracket{\gamma_x^3\gamma_y^3 |x-y|^{-1}}
\end{equation*}
and the double summation over $x,y$ in $B(z,\ell)$ with $\ell_z=\ell$ results in the same expression with the selected factor replaced by $\ell^5$ and then the double summation over $\rho_x$, $\rho_y$ results in
\begin{equation}
C\beta^2h^{-2}\Bigl[ \ell^{-2} + \bar{\ell}^2 \ell^{-4}+ \ell^{-\frac{10}{3}}\nu^{\frac{4}{3}}h^{\frac{2}{3}}\Bigr]\ell^5.
\label{27-3-96}
\end{equation}
Meanwhile, considering pairs of subelements with
$(\ell r)^{\frac{1}{2}}\ge \rho_x \ge \rho_*$,
$(\ell r)^{\frac{1}{2}}\ge \rho_y \ge \rho_*$ (we use $\rho_*=\rho_*(\ell)$ and $\ell_*$ introduced in (\ref{27-3-93})) we gain factor
$\rho_x^2\rho_y^2 /(\rho \ell)$ in the summation by subelements and we arrive to the same expression (\ref{27-3-96}) but with a logarithmic factor at $\bar{\ell}^2$:
\begin{equation*}
C\beta^2h^{-2}\Bigl[ \ell^{-2} +
\bar{\ell}^2 \ell^{-4}(1+|\log \rho_*\ell^{-1}|)^3+ \ell^{-\frac{10}{3}}\nu^{\frac{4}{3}}h^{\frac{2}{3}}\Bigr]\ell^5.
\end{equation*}
However we can get rid off logarithmic factors exactly as in the proof of Proposition~\ref{prop-27-3-26} thus getting (\ref{27-3-96})
Then we need to sum by balls $B(z,\ell_z)$ resulting in the same expression multiplied by $\ell^{-3}$ and integrated
\begin{gather}
C\beta^2h^{-2}\int_{\{\ell_x\ge \ell_*\}} \Bigl[ \ell_x^{-2} +
\bar{\ell}^2 \ell_x^{-4}+ \ell_x^{-\frac{10}{3}}\nu^{\frac{4}{3}}h^{\frac{2}{3}}\Bigr]\ell_x^2\,dx
\notag\\
\intertext{which we estimate by}
C\beta^2h^{-2}\int_{\ell\ge \ell_*} \Bigl[ \ell^{-2} +
\bar{\ell}^2 \ell^{-4}+ \ell^{-\frac{10}{3}}\nu^{\frac{4}{3}}h^{\frac{2}{3}}\Bigr]\ell^{2+m}\,d\ell
\label{27-3-97}
\end{gather}
with $m=2,1,0$ under non-degeneracy assumptions (\ref{27-3-63}),
(\ref{27-3-65}) and in the general case respectively. Then we arrive to the terms of integrand multiplied by $\ell$ and calculated either for $\ell=1$ or $\ell=\ell_*$. One can see easily that this does not exceed $CR^2$ with $R$ defined in Proposition~\ref{prop-27-3-24}.

One also can derive easily the same estimate for contributions of the pairs of subelements with $\rho_x\le \rho_*(\ell)$, $\rho_y\le \rho_*(\ell)$,
$\ell\ge \ell_*$, and for contributions of the pairs of subelements with $\rho_x\le \ell_*$, $\rho_y\le \ell_*$, $\ell\le \ell_*$, assuming in both cases that $\rho_x r\ge h$, $\rho_y r\ge h$.

Finally, like in the proof of Proposition~\ref{prop-27-3-26} we estimate contribution of zone $\{x,y:\,\rho_x r\le h,\ \rho_y r\le h\}$. We leave easy details to the reader.
\end{proof}

\begin{remark}\label{rem-27-3-30}
These arguments also work to estimate
\begin{equation}
\D\bigl( \Gamma_x (hD-A)_x\cdot \boldupsigma e(.,.,0),\,
\Gamma_x \bigr(hD-A)_x\cdot \boldupsigma e(.,.,0)).
\label{27-3-98}
\end{equation}
Indeed, Weyl expression for $\Gamma_x (hD-A)_x\cdot \boldupsigma e(.,.,0)$ is just $0$. Therefore we arrive under either of non-degeneracy assumptions (\ref{27-3-60}), (\ref{27-3-63}), (\ref{27-3-65}) and in the general case to estimate
\begin{equation}
\|\nabla A'\| \le C\kappa h^2 R
 \label{27-3-99}
\end{equation}
which could be better or worse than estimate
$\|\nabla A'\| \le C\kappa^{\frac{1}{2}}$ which we have already. It is not clear if estimate $\|\nabla A'\| \le C\kappa$ holds.
\end{remark}

\chapter{Microlocal Analysis: \texorpdfstring{$\beta h\gtrsim 1$}{\textbeta h \textgtrsim 1}}
\label{sect-27-4}

Now let us investigate the case of $\beta h\gtrsim 1$. We assume not only that $\kappa \lesssim 1$ but also \ref{27-2-25-*}:
$\kappa \beta h^2 |\log h|^K\le 1$. We can apply the same arguments as before and in the end of the day we will get the series of the statements; we leave most of the easy details to the reader.

\section{Estimate to minimizer}
\label{sect-27-4-1}

Observe first that
\begin{gather}
\|\partial A'\|^2 \le C\beta h^{-2}\times \kappa h^2=
C\kappa \beta \le Ch^{-2}|\log h|^{-K}
\label{27-4-1}\\
\shortintertext{and}
\qquad \qquad |\Delta A'|\le C\beta h^{-2} \times \kappa h^2 =
C\kappa \beta \le Ch^{-2}|\log h|^{-K}\notag\\
\shortintertext{and therefore}
|\partial^2 A'|\le
C\kappa \beta \le Ch^{-2}|\log h|^{-K}.
\label{27-4-2}
\end{gather}

First of all repeating arguments leading to Proposition~\ref{prop-27-4-8} we arrive to estimate (\ref{27-3-46}) modified
\begin{multline}
\|\partial^2 A'\|_{\sL^\infty}
\le C\kappa\beta h \nu ^{\frac{1}{2}} |\log h|^2
\\[3pt]
+ C\kappa \beta h|\log h|\bigl(
\underbracket{h^{-\frac{3}{5}} \nu^{\frac{1}{10}} }+
\underbracket{ h^{-\frac{4}{7}}\nu^{\frac{1}{7}}}+
h^{-\frac{1}{2}}\nu^{\frac{1}{4}}|\log h|^2\bigr)+
C \kappa |\log h| \beta^{\frac{1}{2}}  \|\partial V\| _{\sL^\infty}\\[3pt]
+ C\|\partial A'\|_{\sL^\infty}
\label{27-4-3}
\end{multline}
(note a new factor $\beta h$ in the first line). We prove it first as $\mu=1$ and then rescale and in virtue of Remark~\ref{rem-27-3-9}\ref{rem-27-3-9-ii} we can always replace $\mu$ by $\nu^{\frac{1}{2}}\ll h^{-1}$.

Further, under additional super-strong non-degeneracy assumption
\begin{equation}
\min _{x, j\ge 0} |V-2j\beta h |\asymp 1
\label{27-4-4}
\end{equation}
we can skip two selected terms in the second line of (\ref{27-4-3}):
\begin{multline}
\|\partial^2 A'\|_{\sL^\infty}
\le C\kappa\beta h \nu ^{\frac{1}{2}} |\log h|^2 \\[3pt]
+ C\kappa \beta h^{-\frac{1}{2}}
\nu^{\frac{1}{4}}|\log h|^3+
C \kappa |\log h| \beta^{\frac{1}{2}}  \|\partial V\| _{\sL^\infty}
+ C\|\partial A'\|_{\sL^\infty}.
\label{27-4-5}
\end{multline}

Then we arrive to the following:

\begin{proposition}\footnote{\label{foot-27-17} Cf.  Proposition~\ref{prop-27-3-10}.}\label{prop-27-4-1}
Let $\beta h \gtrsim 1$, $\kappa \le \kappa^*$ and \ref{27-2-25-*} be fulfilled; let $V\in \sC^2$; then
\begin{enumerate}[label=(\roman*), fullwidth]
\item\label{prop-27-4-1-i}
The following estimates hold:
\begin{equation}
\|\partial ^2A'\|_{\sL^\infty} \le \nu
\label{27-4-6}
\end{equation}
with
\begin{align}
&\nu \Def C\kappa \beta^{\frac{1}{2}}|\log h|+
C(\kappa \beta)^{\frac{10}{9}}h^{\frac{4}{9}}|\log h|^K
 &&\text{as\ \ } \kappa \beta h\le 1 \label{27-4-7}\\
\shortintertext{and}
&\nu \Def C\kappa \beta^{\frac{1}{2}}|\log h|+
C(\kappa \beta)^{\frac{4}{3}}h^{\frac{2}{3}}|\log h|^K
\quad &&\text{as\ \ } \kappa \beta h\ge 1; \label{27-4-8}
\end{align}
\item\label{prop-27-4-1-ii}
Moreover, under assumption \textup{(\ref{27-4-4})}
estimate $\nu$ is given by \textup{(\ref{27-4-8})} even as
$\kappa \beta h \lesssim 1$.
\end{enumerate}
\end{proposition}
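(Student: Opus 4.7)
My plan is to follow the template of the proof of Proposition~\ref{prop-27-3-10}, adapted to the strong external field $\beta h\gtrsim 1$. The starting ingredients are the rough bounds \textup{(\ref{27-4-1})}--\textup{(\ref{27-4-2})} and Proposition~\ref{prop-27-2-10}, which already guarantee $\mu=\|\partial A'\|_{\sL^\infty}$ and $\nu=\|\partial^2 A'\|_{\sL^\infty}$ lie below the thresholds required by the microlocal propagation machinery of Section~\ref{sect-27-3-2}; in particular, by Remark~\ref{rem-27-3-9}\ref{rem-27-3-9-ii} we can legitimately replace $\mu$ by $\nu^{\frac{1}{2}}$ throughout. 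The equation \textup{(\ref{26-2-14x})} reduces everything to a pointwise bound on the RHS $\Phi_j$: passing from $\|\Delta A'\|_{\sL^\infty}\le C\kappa h^2\|\Phi_j\|_{\sL^\infty}$ to $\|\partial^2 A'\|_{\sL^\infty}$ costs a factor $C|\log h|$ and an additive term $\|\partial A'\|_{\sL^\infty}$, exactly as in Section~\ref{sect-27-3-3-3}.

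Step~1 (pointwise bound on $\Phi_j$): Redo the three-zone Tauberian analysis of Subsection~\ref{sect-27-3-3-1} plus the transition-to-$H_z$ argument of Subsection~\ref{sect-27-3-3-2}, now with the density of states replaced by its strong-field leading order (only the $j=0$ Landau term in \ref{27-2-12-'} survives, cf.\ Proposition~\ref{prop-27-2-9}). The reshuffling of the proof of Proposition~\ref{prop-27-3-8} is mechanical and produces the same structure of error terms as in \textup{(\ref{27-3-46})}, but with each term multiplied by an extra factor $\beta h$ coming from the fact that $|e_z|$ is of size $\beta h^{-2}$ rather than $h^{-3}$; this is estimate \textup{(\ref{27-4-3})}.

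Step~2 (non-degenerate simplification): Under \textup{(\ref{27-4-4})} the potential stays a uniform distance from every Landau level throughout $\supp\psi$, so the whole intermediate zone $\{\rho_*\le |p_3|\le \rho^*\}$ used to produce the exponents $\frac{1}{10}$ and $\frac{1}{7}$ in \textup{(\ref{27-4-3})} is elliptic and contributes only negligibly. This eliminates the two boxed terms, leaving \textup{(\ref{27-4-5})}; this is precisely the $\beta h\gtrsim 1$ analogue of the reduction afforded by \textup{(\ref{27-3-60})} in Section~\ref{sect-27-3-4-2}.

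Step~3 (fixed-point for $\nu$): Substitute $\mu=\nu^{\frac{1}{2}}$ into \textup{(\ref{27-4-3})} or \textup{(\ref{27-4-5})} and view it as an inequality in $\nu$. Each $\nu$-dependent term appears with an exponent strictly less than $1$, so the inequality admits an explicit solution by absorption. In the general case the governing term is $(\kappa\beta h|\log h|^2)\nu^{\frac{1}{2}}$, which gives a contribution $\asymp(\kappa\beta h)^{2}|\log h|^{K}$; comparing with the remaining addends one checks that for $\kappa\beta h\le 1$ this is dominated by $(\kappa\beta)^{\frac{10}{9}}h^{\frac{4}{9}}|\log h|^{K}$ from the $\rho^{-\frac{2}{3}}\nu^{\frac{1}{6}}$ Tauberian contribution (exponent $\frac{1}{10}$ after bootstrap), while for $\kappa\beta h\ge 1$ it is dominated by $(\kappa\beta)^{\frac{4}{3}}h^{\frac{2}{3}}|\log h|^{K}$ from the $\nu^{\frac{1}{4}}$ term; the baseline $\kappa\beta^{\frac{1}{2}}|\log h|$ comes from the contribution of $\|\partial V\|_{\sL^\infty}\lesssim 1$. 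This yields \textup{(\ref{27-4-7})}--\textup{(\ref{27-4-8})}, and under \textup{(\ref{27-4-4})} only the $\nu^{\frac{1}{4}}$ term survives, so we obtain \textup{(\ref{27-4-8})} uniformly in $\kappa\beta h$, proving~\ref{prop-27-4-1-ii}.

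The main obstacle is Step~1: tracking the exact combinatorics of the $(\rho,T)$-partition in the $\beta h\gtrsim 1$ regime where the cyclotron radius $\beta^{-1}h^{\frac{1}{2}}$ is much smaller than $h^{\frac{1}{2}}$, and ensuring that the transition-to-$H_z$ second error does not destroy the $\beta h$ scaling gained from the density of states. One must verify that the assumption \ref{27-2-25-*} is exactly what is needed to keep both the bootstrap in Step~3 and the propagation hypotheses of Proposition~\ref{prop-27-3-12} self-consistent with the resulting bound on $\nu$.
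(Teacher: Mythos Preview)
Your proposal is correct and follows essentially the same route as the paper: derive the modified inequality \textup{(\ref{27-4-3})} (and its simplification \textup{(\ref{27-4-5})} under \textup{(\ref{27-4-4})}) by rerunning the Tauberian/transition analysis of Subsection~\ref{sect-27-3-3} in the regime $\beta h\gtrsim 1$, replace $\mu$ by $\nu^{\frac{1}{2}}$ via Remark~\ref{rem-27-3-9}\ref{rem-27-3-9-ii}, and then close the bootstrap. One small clarification: the extra factor $\beta h$ relative to \textup{(\ref{27-3-46})} appears only in the \emph{first} line (the second line already carried a $\beta h$), exactly as the paper notes; your heuristic that it comes from $|e_z|\asymp\beta h^{-2}$ rather than $h^{-3}$ is the right explanation.
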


\begin{remark}\label{rem-27-4-2}
\begin{enumerate}[label=(\roman*), fullwidth]
\item\label{rem-27-4-2-i}
While case $\beta h \asymp 1$ has been already explored, we missed an important case when non-degeneracy assumption (\ref{27-4-4}) is fulfilled; so we reexamine this case;

\item\label{rem-27-4-2-ii}
While technically (\ref{27-4-3}) and (\ref{27-4-5}) hold even if assumption \ref{27-2-25-*} fails provided $\nu \le \epsilon \beta$ we cannot guarantee in this case that this inequality holds.
\end{enumerate}
\end{remark}

\section{Trace term asymptotics}
\label{sect-27-4-2}

Further, continuing our analysis we arrive to the following

\begin{proposition}\footnote{\label{foot-27-18} Cf.  Propositions~\ref{prop-27-3-13}, \ref{prop-27-3-15} and~\ref{prop-27-3-16}; only factor $\beta h$ appears in the definition of $Q_0$.}\label{prop-27-4-3}
Let $\beta h \gtrsim 1$, $\kappa \le \kappa^*$ and $\nu h^2\le 1$\,\footnote{\label{foot-27-19} In the framework of Proposition~\ref{prop-27-4-1} for a minimizer this assumption is due to \ref{27-2-25-*}.}. Then

\begin{enumerate}[label=(\roman*), fullwidth]
\item\label{prop-27-4-3-i}
Under non-degeneracy assumption \textup{(\ref{27-3-60})}, or \textup{(\ref{27-3-63})} remainder estimate
\begin{gather}
|\Tr (H^- _{A,V}\psi)+h^{-3}\int P_{B h}(V)\psi\,dx|\le Q\label{27-4-9}\\
\intertext{holds with $Q=Q_0+Q'$,}
Q_0 \Def C\beta + C\beta \nu^{\frac{4}{3}}h^{\frac{2}{3}},
\label{27-4-10}
\end{gather}
and $Q'$ defined by \textup{(\ref{27-3-61})};

\item\label{prop-27-4-3-ii}
Under non-degeneracy assumption \textup{(\ref{27-3-65})} remainder estimate
\textup{(\ref{27-4-9})} holds with $Q=Q_0+Q''$ with $Q''=Q'+ \nu |\log h$ and $Q_0$ and $Q'$ defined by \textup{(\ref{27-4-9})} and \textup{(\ref{27-3-61})} respectively;

\item\label{prop-27-4-3-iii}
In the general case remainder estimate
\textup{(\ref{27-4-9})} holds with $Q=Q_0+Q'''$ with $Q_0$ and $Q'''$ defined by \textup{(\ref{27-4-9})} and \textup{(\ref{27-3-66})} respectively.
\end{enumerate}
\end{proposition}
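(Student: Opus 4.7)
The plan is to parallel the microlocal-to-Weyl machinery of Section~\ref{sect-27-3-4}, with the simplifications and modifications dictated by the strong-field regime $\beta h\gtrsim 1$. First, I would reprove analogs of Propositions~\ref{prop-27-3-4}--\ref{prop-27-3-6} and~\ref{prop-27-3-12} in the present range. The key point is that for $\beta h\gtrsim 1$ only the Landau level $j=0$ contributes to $P_{\beta h}(V)$ (see \ref{27-2-12-'}), so the unperturbed $U_z$ is governed by a much simpler canonical form, and the propagation-speed bounds with respect to $p_3$ (order $\rho$) and transverse coordinates (order $\bar\rho=C_0\max(\mu\beta^{-1},h^{\frac{1}{2}})$) carry over verbatim. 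Consequently, the $(\gamma,\rho)$-localized Tauberian bound has the same shape as in Proposition~\ref{prop-27-3-12}, but the leading prefactor $(\beta h^{-1}+\rho^2h^{-2})$ is now dominated by its first summand throughout the relevant range, which is exactly what produces the $\beta$ (rather than $h^{-1}$) prefactor in $Q_0$ in \textup{(\ref{27-4-10})}.

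Next I would estimate the Tauberian remainder zone by zone, paralleling Subsections~\ref{sect-27-3-4-2}--\ref{sect-27-3-4-4}. Using $T_*=h\rho^{-2}$, $T^*=\epsilon_0\min(1,\rho\ell^{-1})$ and minimizing the analog of \textup{(\ref{27-3-56})} in $T$ yields per element the bound
\begin{equation*}
C\beta\bigl(\rho+\rho^{-1}\ell^{2}+\rho^{-\frac{1}{3}}\nu^{\frac{4}{3}}h^{\frac{2}{3}}\bigr)\gamma^{3}.
\end{equation*}
Summation over the zone $\{\rho\ge(\beta h)^{\frac{1}{2}}\}$, where $\rho$ appears in positive powers, gives $Q_0=C\beta+C\beta\nu^{\frac{4}{3}}h^{\frac{2}{3}}$ as in \textup{(\ref{27-4-10})}. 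The contribution of the zone $\{|p_3|\lesssim\bar\rho\}$ is, as in the weak-field case, bounded by $C\beta h^{-2}\bar\rho^{3}$, which is exactly $Q'$ of \textup{(\ref{27-3-61})}. Under the strong non-degeneracy \textup{(\ref{27-3-60})} or \textup{(\ref{27-3-63})} the relative measure of a non-elliptic $\rho^2$-ball is $\rho^2(\beta h)^{-1}$, and inserting this factor absorbs the summation over $\rho\le(\beta h)^{\frac{1}{2}}$ into $Q_0$. Under the weaker \textup{(\ref{27-3-65})} the relative measure becomes $\rho(\beta h)^{-\frac{1}{2}}$, and the extra logarithmic $\beta|\log h|$ from the $P_{Bh}(V)$-to-$P_{B_\varepsilon h}(V)$ transition is controlled by fixing $\ell\ge 2\bar\ell$ and adding the trivial bound $C\beta\bar\ell|\log h|=O(\nu|\log h|)$ from $\{\ell\le 2\bar\ell\}$, producing the additional $\nu|\log h|$ term in $Q''$. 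In the general case I sum $\beta\rho^{-\frac{1}{3}}\nu^{\frac{4}{3}}h^{\frac{2}{3}}$ down to some $\rho_*\ge\bar\rho$ and balance against the degenerate contribution $\beta h^{-2}\rho_*^3$, which reproduces $Q'''$ of \textup{(\ref{27-3-66})}.

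Finally, I would pass from the Tauberian expression with $T=T_*$ to the magnetic Weyl expression by the $h\rho^{-2}$-approximation technique of Section~\ref{book_new-sect-18-9}, freezing $V$ and the linearization of $A$ at the diagonal point $y$. In the strong-field regime this is cleaner because the successive-approximation target \ref{27-3-62-1}--\ref{27-3-62-2} reduces to the single Landau-level expression \ref{27-2-12-'}, and the error in replacing $B$ by $B_\varepsilon$ is bounded by \textup{(\ref{27-2-11})} integrated against the partition, which is absorbed into $Q_0$ once $\nu h^2\le 1$ (this is where the standing hypothesis enters).

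The main obstacle will be the book-keeping near the transition zone $\rho\asymp(\beta h)^{\frac{1}{2}}$ combined with the constraint $\bar\rho=C_0\max(\mu\beta^{-1},h^{\frac{1}{2}})$: one must verify that the degenerate-zone bound $Q'$ truly captures the contribution of $\{|p_3|\lesssim\bar\rho\}$ without double-counting against the $\rho^{-\frac{1}{3}}\nu^{\frac{4}{3}}h^{\frac{2}{3}}$ summation, and that the $T=\epsilon\beta^{-1}$ choice in the intermediate zone $\{\bar\rho\ge\rho\ge(\beta h)^{\frac{1}{2}}\}$ indeed yields the same $\mu^3\beta^{-2}h^{-2}$ contribution and not a worse one. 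Modulo this careful comparison, the estimates assemble exactly as in Propositions~\ref{prop-27-3-13}, \ref{prop-27-3-15}, \ref{prop-27-3-16}.
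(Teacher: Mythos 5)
The high-level strategy is right (re-run the machinery of Section~\ref{sect-27-3-4} with the simplifications coming from a single Landau level and the now-dominant prefactor $\beta h^{-1}$), but there is a concrete error in the key relative-measure step, and it is exactly the step that produces the $\beta h$ factor distinguishing $Q_0=C\beta+C\beta\nu^{4/3}h^{2/3}$ from the weak-field $Q_0=Ch^{-1}+Ch^{-1/3}\nu^{4/3}$.

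You write that under \textup{(\ref{27-3-60})}/\textup{(\ref{27-3-63})} the relative measure of a non-elliptic $\gamma=\rho^2$-ball is $\rho^{2}(\beta h)^{-1}$, and under \textup{(\ref{27-3-65})} that it is $\rho(\beta h)^{-1/2}$. Those are the weak-field formulas; they encode the fact that, with $V\asymp 1$ and Landau levels spaced $2\beta h$, there are $\asymp(\beta h)^{-1}$ (resp.\ $(\beta h)^{-1/2}$ after accounting for the degenerate direction) possible resonant levels $j$. In the regime $\beta h\gtrsim 1$ only $j=0$ survives (see \ref{27-2-12-'}); there is no gain from the count of levels, and the relative measures become $\rho^{2}$ and $\rho$ respectively. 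Plugging your $\rho^{2}(\beta h)^{-1}$ into the per-element bound $C\beta(\rho+\rho^{-1}\ell^2+\rho^{-1/3}\nu^{4/3}h^{2/3})\gamma^3$ yields, after summation in $\rho$, $Ch^{-1}+Ch^{-1/3}\nu^{4/3}$ --- a factor $\beta h$ smaller than the claimed $Q_0$ and, in this regime, too small to be correct. With the corrected relative measure $\rho^{2}$ the same summation gives $C\beta(\rho^3+\rho\ell^2+\rho^{5/3}\nu^{4/3}h^{2/3})$, which sums to $C\beta+C\beta\nu^{4/3}h^{2/3}$, exactly \textup{(\ref{27-4-10})}.

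Two related bookkeeping slips compound the problem. First, the zone $\{\rho\ge(\beta h)^{1/2}\}$ on which you claim to sum is empty once $\beta h\gtrsim 1$ (since $\rho\lesssim 1\le(\beta h)^{1/2}$), so the sentence ``Summation over the zone $\{\rho\ge(\beta h)^{1/2}\}$ gives $Q_0$'' is vacuous as written; the contribution all comes from $\rho\le 1$ via the (corrected) relative-measure argument. Second, your final paragraph worries about the transition zone $\rho\asymp(\beta h)^{1/2}$, which likewise does not exist here; the genuine subtlety you should flag instead is precisely the change in the non-elliptic relative measure (and the definition of the scaling function $\ell$ via \textup{(\ref{27-3-50})}, where the $\min_j$ collapses to $j=0$). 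The parts of your argument that do not rely on the level-count --- the propagation bounds, the $T$-optimization, the $\{|p_3|\lesssim\bar\rho\}$ contribution $Q'=\mu^3\beta^{-2}h^{-2}$, and the degenerate-case balance $\beta\rho_*^{-1/3}\nu^{4/3}h^{2/3}$ vs.\ $\beta h^{-2}\rho_*^3$ yielding $Q'''$ --- are fine.
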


Applying Proposition~\ref{prop-27-4-1} we arrive to

\begin{corollary}\label{cor-27-4-4}
In the framework of Proposition~\ref{prop-27-4-3} let $A'$ be a minimizer. Then
\begin{enumerate}[label=(\roman*), fullwidth]
\item\label{cor-27-4-4-i}
Under non-degeneracy assumption \textup{(\ref{27-3-60})}, or \textup{(\ref{27-3-63})}, or even \textup{(\ref{27-3-65})} estimate \textup{(\ref{27-4-9})} holds with
\begin{align}
&Q = Q_0= C\beta +
C\kappa^{\frac{40}{27}} \beta^{\frac{67}{27}}h^{\frac{34}{27}}|\log h|^K
\qquad &&\text{as\ \ } \kappa \beta h\le 1 \label{27-4-11}
\shortintertext{and}
&Q = Q_0= C\beta +
C\kappa^{\frac{16}{9}} \beta^{\frac{25}{9}}h^{\frac{14}{9}}|\log h|^K
\qquad &&\text{as\ \ } \kappa \beta h\ge 1. \label{27-4-12}
\end{align}
\item\label{cor-27-4-4-ii}
Furthermore, under assumption \textup{(\ref{27-4-4})} $Q_0$ is defined by \textup{(\ref{27-4-12})} even as $\kappa \beta h \le 1$;
\item\label{cor-27-4-4-iii}
In the general case estimate \textup{(\ref{27-4-9})} holds with
\begin{equation}
Q=Q_0+\beta h^{-\frac{1}{2}}+
\kappa^{\frac{8}{5}} \beta^{\frac{13}{5}}h^{\frac{6}{5}}|\log h|^K.
\label{27-4-13}
\end{equation}
\end{enumerate}
\end{corollary}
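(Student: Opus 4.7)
The plan is to substitute the bounds on $\nu=\|\partial^2A'\|_{\sL^\infty}$ supplied by Proposition~\ref{prop-27-4-1} into the remainder estimate $Q_0=C\beta+C\beta\nu^{4/3}h^{2/3}$ from Proposition~\ref{prop-27-4-3}, and then to show that the sub-leading contributions $Q'$, $Q''$, $Q'''$ are absorbed by what is left.

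For part \ref{cor-27-4-4-i}, in the regime $\kappa\beta h\le1$ expand $\nu^{4/3}$ using (\ref{27-4-7}) into the two pieces coming from the two summands in the definition of $\nu$. Multiplying the piece originating from $(\kappa\beta)^{10/9}h^{4/9}|\log h|^K$ by $\beta h^{2/3}$ yields exactly the displayed term $\kappa^{40/27}\beta^{67/27}h^{34/27}|\log h|^K$ of (\ref{27-4-11}); the piece from $\kappa\beta^{1/2}|\log h|$ contributes $(\kappa\beta h)^{2/3}\beta|\log h|^{4/3}$, which is dominated by $C\beta$ once \ref{27-2-25-*} is invoked with $K$ large enough. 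The regime $\kappa\beta h\ge 1$ is entirely analogous, using (\ref{27-4-8}) in place of (\ref{27-4-7}) to produce (\ref{27-4-12}). It remains to absorb $Q'=\mu^3\beta^{-2}h^{-2}$, and under assumption (\ref{27-3-65}) the extra $\nu|\log h|$ in $Q''$. For this I bound $\mu$ via the interpolation $\mu\le C\|\partial A'\|^{2/5}\nu^{3/5}$ (cf. (\ref{27-3-74})) combined with $\|\partial A'\|\le C(\kappa\beta)^{1/2}$ from (\ref{27-4-1}); substituting $\nu$ from (\ref{27-4-7})--(\ref{27-4-8}) and invoking \ref{27-2-25-*} shows that $Q'$ and $\nu|\log h|$ are dominated by terms already present in $Q_0$.

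For part \ref{cor-27-4-4-ii}, under super-strong non-degeneracy (\ref{27-4-4}) Proposition~\ref{prop-27-4-1}\ref{prop-27-4-1-ii} supplies the sharper bound (\ref{27-4-8}) on $\nu$ even when $\kappa\beta h\le1$, so the strong-field computation from part~\ref{cor-27-4-4-i} directly produces (\ref{27-4-12}).

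For part \ref{cor-27-4-4-iii}, Proposition~\ref{prop-27-4-3}\ref{prop-27-4-3-iii} gives $Q=Q_0+Q'''$ with $Q'''$ from (\ref{27-3-66}). The new contributions to handle are $\beta\nu^{6/5}h^{2/5}$, $\beta h^{-1/2}$, $\mu^3\beta^{-2}h^{-2}$ and $\nu|\log h|$. The decisive computation is in the strong-field sub-regime $\kappa\beta h\ge 1$: substituting (\ref{27-4-8}) into $\beta\nu^{6/5}h^{2/5}$ produces $\kappa^{8/5}\beta^{13/5}h^{6/5}|\log h|^K$, the new summand in (\ref{27-4-13}); in the weak-field sub-regime the same calculation using (\ref{27-4-7}) yields contributions dominated by $Q_0$ of (\ref{27-4-11}) together with $\beta h^{-1/2}$. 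The $\beta h^{-1/2}$ piece has no $\nu$ factor to trade against and must be retained explicitly, while $\mu^3\beta^{-2}h^{-2}$ and $\nu|\log h|$ are absorbed exactly as in part~\ref{cor-27-4-4-i}. The main technical obstacle is precisely this absorption bookkeeping: since the bounds in Proposition~\ref{prop-27-4-1} change functional form at $\kappa\beta h=1$, each summand of $Q'''$ must be compared with each summand of $Q_0$ in both sub-regimes, and the constant $K$ in \ref{27-2-25-*} must be chosen large enough for every logarithmic factor arising from a sub-leading term to be controlled.
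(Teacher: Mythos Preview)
Your proof is correct and follows exactly the paper's route: the paper itself gives no proof beyond the line ``Applying Proposition~\ref{prop-27-4-1} we arrive to'', and what you have written is precisely the substitution-and-absorption bookkeeping that this line encodes. One small algebraic slip: the contribution of the term $\kappa\beta^{1/2}|\log h|$ to $\beta\nu^{4/3}h^{2/3}$ is $(\kappa^{2}\beta h)^{2/3}\beta|\log h|^{4/3}$ rather than $(\kappa\beta h)^{2/3}\beta|\log h|^{4/3}$, but since $\kappa\le\kappa^{*}$ this does not affect your absorption argument.
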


\begin{remark}\label{rem-27-4-5}
Observe that
\begin{enumerate}[label=(\roman*), fullwidth]
\item\label{rem-27-4-5-i}
If assumption \ref{27-2-25-*} holds then
$Q_0 \lesssim \kappa^{\frac{1}{3}}\beta^{\frac{4}{3}}h^{-\frac{4}{3}}\lesssim \beta h^{-2}$ where the middle expression appears in (\ref{27-2-23}); on the other hand, if \ref{27-2-25-*} fails then the reverse inequalities hold; in the general case we assume that $\beta h^2 \ll 1$ to get a remainder estimate smaller than the main term;

\item\label{rem-27-4-5-ii}
Also $\nu \lesssim \beta$ provided \ref{27-2-25-*} holds; if $\kappa=1$ then $\nu \lesssim \beta$ if and only if \ref{27-2-25-*} holds.
\end{enumerate}
\end{remark}

\section{Endgame}
\label{sect-27-4-3}

Similarly to Theorem~\ref{thm-27-3-21} we arrive to

\begin{theorem}\label{thm-27-4-6}
Let $\beta h \gtrsim 1$, $\kappa \le \kappa^*$ and \ref{27-2-25-*} be fulfilled. Then estimate \textup{(\ref{27-3-78})}
\begin{equation*}
|\E^* _\kappa - \cE^*_\kappa |\le C Q
\end{equation*}
holds where
\begin{enumerate}[label=(\roman*), fullwidth]
\item\label{thm-27-4-6-i}
Under non-degeneracy assumption \textup{(\ref{27-3-65})} $Q$ is defined by \textup{(\ref{27-4-11})} and \textup{(\ref{27-4-12})};
\item\label{thm-27-4-6-ii}
In the general case $Q$ is defined by \textup{(\ref{27-4-11})}; in particular, $Q=\beta h^{-\frac{1}{2}}$ as $\kappa \beta h\lesssim 1$.
\end{enumerate}
\end{theorem}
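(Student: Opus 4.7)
The proof will follow the same template as Theorem~\ref{thm-27-3-21}, splitting into a lower and an upper estimate and then bootstrapping the regularity of a minimizer. The lower estimate is essentially already packaged: let $A'$ be a minimizer of $\E_\kappa$; by Proposition~\ref{prop-27-4-1} and assumption \ref{27-2-25-*} it satisfies \ref{27-3-49-*} with the $\nu$ from \textup{(\ref{27-4-7})}--\textup{(\ref{27-4-8})}, so Corollary~\ref{cor-27-4-4} applies and gives
\begin{equation*}
\Tr(H^-_{A,V}\psi) + h^{-3}\int P_{Bh}(V)\psi\,dx \ge -CQ.
\end{equation*}
Adding $\kappa^{-1}h^{-2}\|\partial A'\|^2$ to both sides and recognising the right-hand side as $\cE_\kappa(A')\ge \cE^*_\kappa$ produces $\E^*_\kappa = \E_\kappa(A')\ge \cE^*_\kappa - CQ$. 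Under \textup{(\ref{27-3-65})} we use part~\ref{cor-27-4-4-i} of Corollary~\ref{cor-27-4-4}, and in the general case part~\ref{cor-27-4-4-iii}; note that for $\kappa\beta h\lesssim 1$ the term $\beta h^{-\frac12}$ is genuinely dominant, yielding the sharpened statement in \ref{thm-27-4-6-ii}.

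For the upper estimate one cannot simply take $A'=0$, since once $\kappa\beta^2\gg h^{-1}$ the self-generated field makes $\cE^*_\kappa$ strictly smaller than the pure magnetic Weyl term. Following Proposition~\ref{prop-27-3-18}, the plan is to pick a minimizer $\tilde A'$ of the quadratic model functional $\bar\cE_\kappa$ (whose minimizer has an explicit form through the equations analogous to \ref{27-2-8-'}) and to replace it by its $\varepsilon$-mollification $\tilde A'_\varepsilon$ with $\varepsilon=\min(1,(\kappa\beta)^{-2/3}h^{2/3})$. Because the difference $\tilde A'-\tilde A'_\varepsilon$ enters the functional only quadratically near the minimizer, the error $|\cE_\kappa(\tilde A')-\cE_\kappa(\tilde A'_\varepsilon)|$ is controlled by $Q$ via the same computation as in Proposition~\ref{prop-27-3-18}, but now scaled to $\beta h\gtrsim 1$: the basic bounds $|\partial \tilde A'_\varepsilon|\lesssim \kappa$, $|\partial^2\tilde A'_\varepsilon|\lesssim \kappa(1+(\kappa\beta)^{1/3}h^{-1/3})|\log h|$ drop out of the linearised equations \ref{27-2-8-'}. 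These bounds verify \ref{27-3-49-*} (using \ref{27-2-25-*} to keep $\nu h^2\le 1$), so Proposition~\ref{prop-27-4-3} applies and gives $\Tr(H^-_{\tilde A_\varepsilon,V}\psi)\le -h^{-3}\int P_{\tilde B_\varepsilon h}(V)\psi\,dx + CQ$; adding the field energy and using $\cE_\kappa(\tilde A'_\varepsilon)=\cE^*_\kappa+O(Q)$ yields $\E^*_\kappa\le \cE^*_\kappa+CQ$.

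The matching of the two bounds is not automatic: Proposition~\ref{prop-27-4-3} requires a priori knowledge of $\mu=\|\partial A'\|_{\sL^\infty}$ and $\nu=\|\partial^2 A'\|_{\sL^\infty}$ for the \emph{actual} minimizer, which enter through $Q_0$ and $Q'$. Exactly as in Remark~\ref{rem-27-3-20}, I would close the loop by a monotonicity argument: compare $\E^*_\kappa$ with $\E^*_{2\kappa}$, extract
\begin{equation*}
\|\partial A'\|\le C(\kappa h^2 Q)^{1/2}+C\kappa\beta h,
\end{equation*}
interpolate via $\|\partial A'\|_{\sL^\infty}\le C\|\partial A'\|^{2/5}\|\partial^2 A'\|_{\sL^\infty}^{3/5}$, and feed the result back into \textup{(\ref{27-4-7})}--\textup{(\ref{27-4-8})}. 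Under \ref{27-2-25-*} the auxiliary factor $\kappa\beta h$ is controlled, so the fixed-point iteration terminates at the $\nu$ claimed in Proposition~\ref{prop-27-4-1}, and the $Q'=\mu^3\beta^{-2}h^{-2}$ piece is absorbed into $Q_0$ just as in \ref{27-3-77}.

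The main obstacle I anticipate is the bootstrap itself in the regime $\kappa\beta h\gtrsim 1$ with only the weak non-degeneracy \textup{(\ref{27-3-65})}: the interplay between the $(\kappa\beta)^{10/9}h^{4/9}$ and $(\kappa\beta)^{4/3}h^{2/3}$ branches in the definition of $\nu$ and the $Q'$ contribution must be checked to stay consistent with the assumption $\nu\le\epsilon\beta$ (needed to run Proposition~\ref{prop-27-4-3}), and this is exactly where \ref{27-2-25-*} with sufficiently large exponent $K$ is used. Verifying that the resulting exponents in \textup{(\ref{27-4-11})}--\textup{(\ref{27-4-12})} indeed dominate every correction that appears in the iteration is tedious but mechanical; no new microlocal input is needed beyond what Sections~\ref{sect-27-4-1} and~\ref{sect-27-4-2} already supply.
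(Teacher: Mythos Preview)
Your proposal is correct and follows essentially the same route the paper takes: the paper's entire proof is the single line ``Similarly to Theorem~\ref{thm-27-3-21} we arrive to'', and you have correctly unpacked what that means---the lower bound via Proposition~\ref{prop-27-4-1} and Corollary~\ref{cor-27-4-4}, the upper bound via a mollified minimizer of $\bar\cE_\kappa$ as in Proposition~\ref{prop-27-3-18} (adapted to the $\beta h\gtrsim 1$ equations \ref{27-2-8-'}), and the $\E_{2\kappa}$-comparison bootstrap of Remark~\ref{rem-27-3-20} to absorb $Q'$ into $Q_0$. One small simplification: the $\nu$-bootstrap you describe is actually already built into Proposition~\ref{prop-27-4-1} (via Remark~\ref{rem-27-3-9}\ref{rem-27-3-9-ii}, which lets one replace $\mu$ by $\nu^{1/2}$ from the start), so the only genuine loop to close is the one controlling $\mu$ in $Q'=\mu^3\beta^{-2}h^{-2}$.
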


\begin{problem}\footnote{\label{foot-27-20} Cf. problem~\ref{problem-27-3-22}.}\label{problem-27-4-7}
In this new settings recover estimates for $\|\partial (A'-A'')\|$,
$\|\partial (A'-A'')\|_{\sL^\infty}$ and $\|\partial A'\|_{\sL^\infty}$ where
$A''$ is a minimizer for $\bar{\cE}(A'')$.
\end{problem}

\section{$\N$-term asymptotics and $\D$-term estimates}
\label{sect-27-4-4}

Repeating arguments of the proofs of Propositions~\ref{prop-27-3-24}, \ref{prop-27-3-26} we arrive to

\begin{proposition}\label{prop-27-4-8}
Let $\beta h\gtrsim 1$ and conditions \ref{27-3-49-*} be fulfilled. Then

\begin{enumerate}[label=(\roman*), fullwidth]
\item\label{prop-27-4-8-i}
Under non-degeneracy assumption \textup{(\ref{27-3-60})} or \textup{(\ref{27-3-63})} estimate \textup{(\ref{27-3-84})} holds with $R=R_0+R'$,
\begin{equation}
R_0=\beta h^{-1}+\beta h^{-\frac{2}{3}}\nu^{\frac{2}{3}}
\label{27-4-15}
\end{equation}
and $R'$ defined by \ref{27-3-61-'};
\item\label{prop-27-4-8-ii}
Under non-degeneracy assumption \textup{(\ref{27-3-63})} estimate \textup{(\ref{27-3-84})} holds with $R=R_0+R''$, $R_0$ and $R''$ defined by \textup{(\ref{27-4-15})} and \textup{(\ref{27-3-86})};
\item\label{prop-27-4-8-iii}
In the general case estimate \textup{(\ref{27-3-84})} holds with $R=R_0+R'''$, $R_0$ and $R'''$ defined by \textup{(\ref{27-4-15})} and \textup{(\ref{27-3-88})}.
\end{enumerate}
\end{proposition}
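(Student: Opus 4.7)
\emph{Proof plan.} The plan is to mirror the proofs of Propositions~\ref{prop-27-3-24} and~\ref{prop-27-3-26}, adapting each step to the strong-field regime $\beta h\gtrsim 1$. The microlocal propagation results (Propositions~\ref{prop-27-3-4}--\ref{prop-27-3-6}) and the Tauberian-type bound of Proposition~\ref{prop-27-3-12} are insensitive to whether $\beta h\lesssim 1$ or $\beta h\gtrsim 1$; only the arithmetic of $\bar{\rho}$, $\rho^{*}=(\beta h)^{1/2}$, and $S(\rho,T)$ changes. Since $\rho^{*}\ge 1$ here, the high-momentum zone $\{\rho\ge \rho^{*}\}$ effectively collapses, and all contributions come from the analog of zone (\ref{27-3-83}) with $\rho\lesssim 1$; moreover only the $j=0$ term in (\ref{27-2-12}) contributes, see \ref{27-2-12-'}.

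First I would rerun the Tauberian estimate for the $\N$-term. The per-$(\gamma,\rho)$-sub\-element contribution to the Tauberian error is $CS(\rho,T)\gamma^{3}hT^{-1}$ with $S(\rho,T)$ of (\ref{27-3-55}); optimizing in $T\le T^{*}=\epsilon_{0}\min(1,\rho\ell^{-1})$ yields an analog of \ref{27-3-57-'}, but with prefactor $(\beta h^{-1}+\rho^{2}h^{-2})\asymp \beta h^{-1}$ because $\rho\lesssim 1\le \rho^{*}$. Summing over $\rho\in[\bar{\rho},1]$ produces the new $R_{0}$ of (\ref{27-4-15}); the extra factor $\beta h$ compared to \ref{27-3-59-'} is precisely the enhancement of the density from $h^{-3}$ to $\beta h\cdot h^{-3}=\beta h^{-2}$ in the strong-field regime.

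Next I would handle the three statements separately, in full analogy with Section~\ref{sect-27-3-6}. For (i), under \textup{(\ref{27-3-60})} or \textup{(\ref{27-3-63})}, apply the relative-measure trick: at scale $\rho$ the set where the symbol fails to be microhyperbolic has relative measure $\lesssim \rho^{2}/(\beta h)$; this converts the negative powers of $\rho$ into a convergent sum yielding $R_{0}$, and the contribution of the residual zone $\{\rho\le\bar{\rho}\}$ is estimated trivially by $\beta h^{-2}\bar{\rho}^{3}\cdot \bar{\rho}^{2}(\beta h)^{-1}$, giving $R'$ of \ref{27-3-61-'}. For (ii), under \textup{(\ref{27-3-65})}, introduce the scaling function $\ell(x)$ of (\ref{27-3-50})--(\ref{27-3-51}), split subelements by whether $\rho\gtrless \ell$, apply the refined relative measure, optimize in $\rho_{*}\ge\bar{\rho}$ and plug in $\bar{\ell}=\nu\beta^{-1}$, reproducing $R''$ of (\ref{27-3-86}). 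For (iii), omit the relative-measure refinement entirely; optimizing the analog of (\ref{27-3-87}) in $\rho_{*}$ gives $R'''$ of (\ref{27-3-88}).

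The main obstacle is checking that the successive-approximation passage from the Tauberian expression to the magnetic Weyl density $h^{-3}P'_{Bh}(V)$ still holds in the strong-field regime: one must verify that the linearized operator $H_{z}$ appearing in Proposition~\ref{prop-27-3-6}\ref{prop-27-3-6-ii} reproduces the $\N$-density modulo the declared error $R_{0}$. Since estimate (\ref{27-3-23}) is valid regardless of the size of $\beta h$, and since in (\ref{27-2-12}) only the $j=0$ term survives (as $\|V\|_{\sL^{\infty}}\le \beta h$), this reduces to routine bookkeeping; the smoothness conditions \ref{27-3-49-*} on $A'$ are what make this step go through uniformly in $\beta h$.
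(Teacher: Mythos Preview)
Your plan is exactly the paper's own proof, which reads in full: ``Repeating arguments of the proofs of Propositions~\ref{prop-27-3-24}, \ref{prop-27-3-26} we arrive to [Proposition~\ref{prop-27-4-8}].'' You have supplied considerably more detail than the paper does, and the structural picture you draw---$\rho^{*}=(\beta h)^{1/2}\ge 1$ collapses the high-$\rho$ zone, only the $j=0$ term of \ref{27-2-12-'} survives, and $R_{0}$ acquires an extra factor $\beta h$---is correct.

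One concrete slip to fix: in the strong-field regime the relative measure of the non-microhyperbolic set at scale $\rho$ is $\asymp\rho^{2}$, \emph{not} $\rho^{2}(\beta h)^{-1}$. The factor $(\beta h)^{-1}$ in Section~\ref{sect-27-3-4-2} counted the number of relevant Landau levels $j$ with $2j\beta h\lesssim 1$; when $\beta h\gtrsim 1$ only $j=0$ is present and that factor disappears. With the correct relative measure $\rho^{2}$, multiplying (\ref{27-3-83}) and summing over $\rho\in[\bar{\rho},1]$ gives precisely $R_{0}=\beta h^{-1}+\beta h^{-2/3}\nu^{2/3}$; with your stated $\rho^{2}(\beta h)^{-1}$ you would instead reproduce the \emph{old} $R_{0}$ of \ref{27-3-59-'}, contradicting your own (correct) conclusion. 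Relatedly, your formula for the residual-zone contribution should read $\beta h^{-2}\bar{\rho}\cdot\bar{\rho}^{2}(\beta h)^{-1}$ (as in the derivation of \ref{27-3-61-'}), not $\beta h^{-2}\bar{\rho}^{3}\cdot\bar{\rho}^{2}(\beta h)^{-1}$; and in the strong-field regime the relative-measure factor here is again $\bar{\rho}^{2}$ rather than $\bar{\rho}^{2}(\beta h)^{-1}$. These are bookkeeping errors, not a gap in the strategy.
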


Repeating arguments of the proof of Propositions~\ref{prop-27-3-27} and~\ref{prop-27-3-27} we arrive to

\begin{proposition}\label{prop-27-4-9}
Let $\beta h\gtrsim 1$ and conditions \ref{27-3-49-*} be fulfilled. Then
\begin{enumerate}[label=(\roman*), fullwidth]
\item\label{prop-27-4-9-i}
Under non-degeneracy assumptions \textup{(\ref{27-3-60})} or \textup{(\ref{27-3-63})} $\D$-term \textup{(\ref{27-3-89})} does not exceed $CR^2$ with $R=R_0+R'$, $R_0$ and $R'$ defined by \textup{(\ref{27-4-15})} and \ref{27-3-61-'} respectively;

\item\label{prop-27-4-9-ii}
Under non-degeneracy assumption \textup{(\ref{27-3-65})} $\D$-term \textup{(\ref{27-3-89})} does not exceed $CR^2$ with $R=R_0+R''$, $R_0$ and $R'$ defined by \textup{(\ref{27-4-15})} and \textup{(\ref{27-3-86})};

\item\label{prop-27-4-9-iii}
In the general case  $\D$-term \textup{(\ref{27-3-89})} does not exceed $CR^2$ with $R=R_0+R'''$, $R_0$ and $R'$ defined by \textup{(\ref{27-4-15})} and \textup{(\ref{27-3-88})}.
\end{enumerate}
\end{proposition}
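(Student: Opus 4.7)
The plan is to mimic closely the arguments of the proofs of Propositions~\ref{prop-27-3-27} and~\ref{prop-27-3-28}, replacing the bounds of the moderate-field regime by the bounds available in the strong-field regime $\beta h\gtrsim 1$. Since the $\D$-term splits via the Fefferman-de Llave decomposition as a sum over pairs of balls $B(\bar x,r)$, $B(\bar y,r)$ with $3r\le |\bar x-\bar y|\le 4r$, the basic unit of bookkeeping is the same three-parameter family $(r,\rho_x,\rho_y)$ of pairs of $(\gamma,\rho)$-subelements, and the only new input is that the basic Tauberian building-block has the form \ref{27-3-57-'} with $\beta$ replacing the factor $h^{-1}$ in front, which is exactly what produces the modified $R_0$ of \textup{(\ref{27-4-15})}.

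First I would deal with ``pure'' pairs where at least one of $\rho_x,\rho_y\ge \rho^*\Def (\beta h)^{\frac{1}{2}}$: on such a side the relative density trick lets us insert the factor $\rho^2/(\beta h)$, so summation over that side returns $R_0 r^2$, and double summation over $r$ and the other subelement yields $CR_0 R$; by positivity of the quadratic form $\D(\cdot,\cdot)$ (\footref{foot-27-16}) this reduces everything to pairs with $\rho_x,\rho_y\le \rho^*$. On these remaining pairs I would use the scaling function $\ell(x)$ from \textup{(\ref{27-3-50})}, distinguish the two regimes $\ell_x\lesssim r$ and $\ell_x\gtrsim r$ exactly as in Parts~1 and~2 of the proof of Proposition~\ref{prop-27-3-28}, and use the density trick with the weaker exponent $\rho^2\ell^{-2}$ that comes from \textup{(\ref{27-3-65})} (resp. $\rho (\beta h \ell)^{-1}$ or $\rho_x^2 (\beta h)^{-1}$ for the stronger non-degeneracy assumptions). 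The sums over $\rho_*\ge\bar\rho$ reproduce the analogues of \textup{(\ref{27-3-92})}--\textup{(\ref{27-3-93})}, only with $h^{-1}$ replaced by $\beta$, so the three cases \textbf{(i)}, \textbf{(ii)}, \textbf{(iii)} come out with the required $R''$ and $R'''$.

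For the two-dimensional integral in $x,y$, after plugging in the single-element bound one gets, in the pattern of \textup{(\ref{27-3-95})} and \textup{(\ref{27-3-97})}, an integral of the form
\begin{equation*}
C\beta^2 \iint \bigl[\cdots\bigr]_x\bigl[\cdots\bigr]_y |x-y|^{-1}\,dxdy
\end{equation*}
plus a contribution from $\{\ell_x,\ell_y\le \ell_*\}$, and the integrability properties of $|x-y|^{-1}$ in $\bR^3$ give exactly $(R_0+R'')^2$ or $(R_0+R''')^2$ under the respective hypotheses; the contribution from the boundary ``degenerate'' zone $\{\rho\le \bar\rho\}$ and from $\{r\le h\}$ is handled exactly as in Step~3 of Proposition~\ref{prop-27-3-27}, using the trivial bound $|e(x,x,\tau)|\lesssim \beta h^{-2}$ in this regime.

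The main obstacle I anticipate is the logarithmic factor that naturally appears under assumption \textup{(\ref{27-3-63})} when one counts contributions of subelements with $\rho_x\le \ell_x$ in the straightforward way: just as in Remark~\ref{rem-27-3-29}\ref{rem-27-3-29-i}, one obtains $(R_0+R'|\log h|)^2$ rather than $(R_0+R')^2$. Eliminating this $|\log h|$ requires the improvement-of-time trick already used in the proof of Proposition~\ref{prop-27-3-24} (exploiting that in one direction $T$ can be taken up to $\rho^{1-\delta}\ell^\delta$), which must be reimplemented in the strong-field scaling where the bound $\bar\ell=\nu\beta^{-1}$ rather than $\bar\ell=h^{1/2}$ dominates. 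All other steps are routine adaptations of the $\beta h\lesssim 1$ arguments and we leave the calculations to the reader.
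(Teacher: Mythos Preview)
Your proposal is correct and takes essentially the same approach as the paper: the paper's own ``proof'' of this proposition consists of the single sentence ``Repeating arguments of the proof of Propositions~\ref{prop-27-3-27} and~\ref{prop-27-3-28} we arrive to\ldots'', and your sketch is precisely a fleshed-out version of that repetition, with the moderate-field building block \ref{27-3-59-'} replaced by the strong-field $R_0$ of \textup{(\ref{27-4-15})}. One small remark: when $\beta h\gtrsim 1$ the threshold $\rho^*=(\beta h)^{1/2}\gtrsim 1$, so the zone $\{\rho\ge\rho^*\}$ is essentially empty and the whole argument lives in the second zone; this is why the leading term of $R_0$ becomes $\beta h^{-1}$ rather than $h^{-2}$, and it slightly simplifies (rather than complicates) the bookkeeping you outline.
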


\begin{Problem}\label{Problem-27-4-10}
In the general case (without any non-degeneracy assumptions) for
$\beta h\lesssim 1$ and for $\beta h\gtrsim 1$ improve remainder estimates for trace term and $\N$-term and estimates for $\D$-term (so, make $R'''$ and $Q'''$ smaller) under assumption $V\in \sC^{s}$ with $s>2$.

To do this use more advanced partition of unity as in Chapter~\ref{book_new-sect-25}. Most likely, however, it will affect only terms $C\beta h^{-\frac{1}{2}}$ and
$C\beta h^{-\frac{3}{2}}$ in $Q'''$ and $R'''$ replacing them by
$C\beta h^{(s-4)/(s+2)}$ and $C\beta h^{-1-2/(s+2)}$ respectively.
\end{Problem}

\chapter{Global trace asymptotics in the case of Thomas-Fermi potential: \texorpdfstring{$B\le Z^{\frac{4}{3}}$}{B \textle Z\textfoursuperior\textthreesuperior}}
\label{sect-27-5}

\section{Introduction}
\label{sect-27-5-1}

In this Section we consider global trace asymptotics for Thomas-Fermi potential. First we consider the singularity zones where our results would follow from Section~\ref{book_new-sect-26-3},then we consider their interaction with the regular zone which would lead to the deterioration of the remainder estimates as $\beta \gg h^{-\frac{1}{2}}$ and finally the boundary zone where non-degeneration properties could be violated (especially for $M\ge 2$) which requires rather subtle analysis and usage of specific properties of Thomas-Fermi potential.

\begin{remark}\label{rem-27-5-1}
Recall that according to Chapter~\ref{book_new-sect-25} there are two cases:
\begin{enumerate}[label={(\alph*)}, fullwidth]
\item\label{rem-27-5-1-a}
$B\le Z^{\frac{4}{3}}$ when the most contributing to both the number of particles and the energy zone is $\{x:\,\ell(x) \asymp r^* = Z^{-\frac{1}{3}}\}$ (where $\ell(x)$ is the distance to the closest nucleus) and then rescaling
$x\mapsto xr^{*\,-1}$, $\tau \mapsto \tau Z^{-\frac{4}{3}}$ we arrive in this zone to $\beta= BZ^{-1}$, $h =Z^{-\frac{1}{3}}$ with $\beta h\le 1$;

\item\label{rem-27-5-1-b}
$Z^{\frac{4}{3}}\le B\le Z^3$ when the most contributing to both the number of particles and the energy zone is
$\{x:\,\ell(x) \asymp r^* = B^{-\frac{2}{5}}Z^{\frac{1}{5}}\}$
and then rescaling $x\mapsto xr^{*\,-1}$,
$\tau \mapsto \tau B^{-\frac{2}{5}}Z^{-\frac{4}{5}}$ we arrive in this zone to $\beta= B^{\frac{2}{5}}Z^{-\frac{1}{5}}$,
$h =B^{\frac{1}{5}}Z^{-\frac{3}{5}}$ with $\beta h\ge 1$.
\end{enumerate}
\end{remark}

We also recall that in the movable nuclei model distance between nuclei was greater than $\epsilon r^*$ (which would be the case in the current settings as well as we show later), so we will assume that it is the case deducting our main results.

\section{Estimates to the minimizer}
\label{sect-27-5-2}

\subsection{Preliminary analysis}
\label{sect-27-5-2-1}

Consider potential $V$ with Coulomb-like singularities, exactly as in Section~\ref{book_new-sect-26-3} i.e. satisfying (\ref{book_new-26-3-1})--(\ref{book_new-26-3-3}).

\begin{proposition}\footnote{\label{foot-27-21} Cf. Proposition~\ref{book_new-prop-26-3-1}.}\label{prop-27-5-2}
Let $V$ satisfy \textup{(\ref{book_new-26-3-1})}--\textup{(\ref{book_new-26-3-2})} and
\begin{multline}
|D^\alpha W|\le C_\alpha \sum_{1\le m\le M} z_m\bigl(|x-\bar{\y}_m|+1\bigr)^{-4}|x-\bar{\y}_m|^{-|\alpha|}\\
\forall \alpha:|\alpha|\le 2.
\label{27-5-1}
\end{multline}
Let $\kappa \le \kappa^*$ and $\beta h\le 1$. Then the near-minimizer $A$ satisfies
\begin{gather}
|\Tr (H_{A,V}^-) +
\int h^{-3}P_{\beta h} (V(x))\Bigr)\,dx|
\le Ch^{-2}
\label{27-5-2}\\
\shortintertext{and}
\|\partial A'\| \le C\kappa^{\frac{1}{2}}.
\label{27-5-3}
\end{gather}
\end{proposition}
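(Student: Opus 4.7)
The plan is to imitate the proof of Proposition~\ref{book_new-prop-26-3-1}, cited in the footnote, adapting it to the combined-magnetic-field setting: partition $\bR^3$ into dyadic shells around each nucleus together with a bounded-scale cover of the complement, rescale each shell to reduce to the local semiclassical framework of Sections~\ref{sect-27-2}--\ref{sect-27-4}, apply the local trace and minimizer estimates shell-by-shell, and then sum.

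First I would fix, for each nucleus $\bar{\y}_m$, the dyadic shells $\Omega_{m,k}\Def\{x\colon\gamma\le|x-\bar{\y}_m|\le 2\gamma\}$ with $\gamma=2^{-k}$, $k$ ranging so that $\gamma\lesssim\epsilon_0\min_{m'\neq m}|\bar{\y}_m-\bar{\y}_{m'}|$, and cover the complement by balls of unit scale. In $\Omega_{m,k}$ the Coulomb-type change of variables $x=\bar{\y}_m+\gamma y$, followed by division of the operator by $z_m/\gamma$, reduces the problem to a local one with effective parameters $h_{\eff}=h/\sqrt{z_m\gamma}$, $\beta_{\eff}=B\gamma^{3/2}/\sqrt{z_m}$, and effective coupling $\kappa_{\eff}\asymp\kappa$, the rescaled perturbation $W_{\eff}$ still satisfying $\|W_{\eff}\|_{\sC^2}\lesssim 1$ by~(\ref{27-5-1}). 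Since we are in the regime $B\le Z^{4/3}$ one verifies $\beta_{\eff}h_{\eff}\lesssim 1$ throughout the relevant dyadic range, so the entire analysis of Chapter~\ref{sect-27-3}, and in particular Theorem~\ref{thm-27-3-21}, applies uniformly.

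Next I would apply Theorem~\ref{thm-27-3-21} on each shell. The non-degeneracy condition (\ref{27-3-65}) (and, in the generic range of $\gamma$, even (\ref{27-3-60})) holds for the rescaled Thomas--Fermi-like potential by Remark~\ref{rem-27-3-17}\ref{rem-27-3-17-i}--\ref{rem-27-3-17-ii}; hence in each shell the local trace remainder is $O(h_{\eff}^{-1})$. Unscaling and summing the contributions over the dyadic shells and over $m$, the decisive zone is $\gamma\asymp Z^{-1/3}$ (equivalently $h_{\eff}\asymp 1$ after the further global rescaling in Remark~\ref{rem-27-5-1}\ref{rem-27-5-1-a}); the geometric series in $\gamma$ converges both toward the nucleus and toward the outer scale, and the total is $O(h^{-2})$, which is estimate~(\ref{27-5-2}).

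For (\ref{27-5-3}) I would combine the lower bound just obtained with the trivial upper bound from $A'=0$: since $\E(A^0)\le -h^{-3}\int P_{\beta h}(V)\,dx+O(h^{-2})$ by the same local arguments applied to the operator without self-generated field, and since $A$ is a near-minimizer, the selected term $\kappa^{-1}h^{-2}\|\partial A'\|^2$ in $\E(A)$ is bounded by $Ch^{-2}$, which gives $\|\partial A'\|\le C\kappa^{1/2}$. The main obstacle I anticipate is the bookkeeping in the dyadic sum, in particular verifying that the \emph{near}-minimizer property survives the rescaling uniformly in $\gamma$ (so that the hypotheses of Theorem~\ref{thm-27-3-21} genuinely hold on every shell), and handling the transitional shell $\gamma\asymp\bar{r}$ at the edge of the classically forbidden region where the potential nearly vanishes and one must use the finer non-degeneracy afforded by Thomas--Fermi rather than just (\ref{27-3-60}).
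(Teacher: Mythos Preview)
Your overall scheme---dyadic shells around each nucleus, rescale, apply the local trace theorems, sum---is exactly the paper's, and your derivation of (\ref{27-5-3}) from the two-sided trace bounds is correct. Two points need repair, however.

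First, the claim that $\beta_{\eff}h_{\eff}\lesssim 1$ throughout the dyadic range is false: with $\zeta=\ell^{-2}$ for $\ell\ge 1$ one has $\beta_1h_1=\beta h\,\zeta^{-2}=\beta h\,\ell^4$, which exceeds $1$ once $\ell\gtrsim(\beta h)^{-1/4}$; the paper treats this outer zone separately via the $\beta h\ge 1$ theory of Section~\ref{sect-27-4} (and a still farther zone by a rough variational bound). At the opposite end, for $\ell\le\ell_*=h^2$ one has $h_{\eff}\gtrsim 1$ and Theorem~\ref{thm-27-3-21} does not apply; the paper handles this core by a direct variational estimate (Appendix~\ref{book_new-sect-26-A-1}) after rescaling to $h\asymp 1$. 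Incidentally, you do not need non-degeneracy here: Proposition~\ref{prop-27-5-2} is stated for general $V$ satisfying (\ref{book_new-26-3-1})--(\ref{book_new-26-3-2}) and (\ref{27-5-1}), and the general-case bound $Q\lesssim h_1^{-1}(1+\beta_1)$ suffices.

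Second, the obstacle you anticipate---that the global near-minimizer $A$ is not a local near-minimizer on each shell---is real, and the resolution is not to verify any property of $A$ but to bypass it. On each shell one writes
\[
\Tr^-(\psi_\ell H_{A,V}\psi_\ell)+\varepsilon\,\kappa^{-1}h^{-2}\|\partial A'\|^2_{(\ell)}
\;\ge\; \E^*_{\kappa/\varepsilon,\text{ local}},
\]
with the norm on the left taken only over a slightly fattened shell (one may localize $A'$ this way, cf.\ Remark~\ref{rem-27-5-3}), and then applies Theorems~\ref{thm-27-3-21}/\ref{thm-27-4-6} to the right-hand side with coupling $\kappa/\varepsilon$. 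Summing over shells, the $\varepsilon$-terms combine by bounded overlap to $C_0\varepsilon\,\kappa^{-1}h^{-2}\|\partial A'\|^2$, which is absorbed into the full field-energy term of $\E_\kappa(A)$ for $\varepsilon$ small. This is the content of (\ref{27-5-7})--(\ref{27-5-8}); without this $\varepsilon$-trick you cannot pass from the local $\E^*$ bounds to a lower bound on $\Tr^-(H_{A,V})$ for the \emph{given} $A$.
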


\begin{proof}
We follow the proof of Proposition~\ref{book_new-prop-26-3-1}. Observe that scaling $x\mapsto (x-\bar{\y}_m )\ell^{-1}$, $\tau \mapsto \tau\zeta ^{-2}$ leads us to
\begin{gather}
h\mapsto h_1=h\ell^{-1}\zeta^{-1},\qquad
\beta \mapsto \beta_1=\beta \ell \zeta^{-1},\qquad
\kappa \mapsto \kappa_1=\kappa\zeta^2\ell.
\label{27-5-4}
\intertext{Also observe that for $\zeta=\ell^{-\frac{1}{2}}$}
\kappa_1\beta_1 h_1^2 \gtrsim 1 \implies
\ell \gtrsim \ell^* =(\beta+1)^{-\frac{1}{2}}h^{-1}
\label{27-5-5}
\end{gather}
and for $\kappa \asymp 1$ those are equivalent.

\medskip\noindent
(i) First, we pick up $A'=0$. Then
\begin{equation}
|\Tr \bigl( H_{A^0,V} ^-(0)\bigr)+h^{-3}\int P_{\beta h}(V(x))\,dx|\le
Ch^{-2};
\label{27-5-6}
\end{equation}
this estimate follows from the standard partition with $\ell$-admissible partition elements, supported in $\{x:\,\ell (x)\lesssim \ell \}$ as $\ell=\ell_*$ and and in $\{x:\,\ell (x)\asymp \ell \}$ as $\ell\ge 2\ell_*$.

\medskip\noindent
(ii) On the other hand, consider $A'\ne 0$. Let us prove first that
\begin{equation}
\Tr^- (\psi_\ell H \psi_\ell) \ge -C_\varepsilon h^{-2} -
\varepsilon \kappa^{-1} h^{-2} \|\partial A'\|^2
\label{27-5-7}
\end{equation}
as $\ell=\ell_*=h^2$ where one can select constant $\varepsilon$ arbitrarily small.

Rescaling $x\mapsto (x-\bar{\y}_m) /\ell$ and $\tau \mapsto \tau/\ell$ and therefore $h\mapsto h \ell^{-\frac{1}{2}}\asymp 1$ and $A\mapsto A\ell^{\frac{1}{2}}$ (because singularity is Coulomb-like), we arrive to the same problem with the same $\kappa$ and with $\ell=h=1$ and with $\beta$ replaced by $\beta h^3$. Then as $\beta h^3\le 1$ we refer to Appendix~\ref{book_new-sect-26-A-1} as $H_{A,V}\ge H_{A',V'}$ with $V'=V-\beta^2|x|^2$.

\medskip\noindent
(iii) Consider now $\psi_\ell$ as in (i) with $\ell\ge \ell_*$. Then according to Theorems~\ref{thm-27-3-21} and \ref{thm-27-4-6} as $\beta_1h_1\lesssim 1$,
$\kappa \zeta^2\ell\le \kappa^*$
\begin{multline}
\Tr^- \bigl(\psi_\ell H_{A,V} \psi_\ell\bigr) +
h^{-3}\int P_{\beta h} (V(x))\psi_\ell^2(x) \,dx\\
\ge - C_\varepsilon \zeta^2 (h_1^{-1}+\beta_1h_1^{-1})
-\varepsilon \kappa ^{-1} h^ {-2} \|\partial A'\|^2.
\label{27-5-8}
\end{multline}

\begin{remark}\label{rem-27-5-3}
Observe that if $\psi_\ell $ is supported in
$\{x:\,\frac{1}{2}r \le \ell(x)\le 2r\}$ then we can take a norm of $\partial A'$ over $\{x:\,\frac{1}{4}r \le \ell(x)\le 4r\}$. Indeed, we can just replace $A'$ by $A''=\phi_\ell (A'-\eta)$ with arbitrary constant $\eta$ and with $\phi_\ell$ supported in $\{x:\,\frac{1}{4}r \le \ell(x) \le 4r\}$ and equal $1$ in $\{x:\,\frac{1}{3}r \le \ell\le 3r\}$ (and $\varepsilon $ by
$c \varepsilon$).

Then summation of these norms returns
$-C_0\varepsilon \kappa^{-1}h^{-2}\|\partial A'\|^2$.
\end{remark}

Furthermore, the first term in the right-hand expression of (\ref{27-5-8}) is
$-C_\varepsilon (\zeta^3\ell^{-1} h^{-1}+ \zeta^2\ell^2 \beta h^{-1})$ and summation over $\ell\ge h^2$ returns $-C_\varepsilon h^{-2}$ since
$\zeta =\min (\ell^{\frac{1}{2}},\, \ell^{-2})$.

\bigskip\noindent
(iv) Consider next zone where $\beta_1h_1\ge 1$ (and $\ell\ge 1$) but still $h_1\le 1$. According to previous Section~\ref{sect-27-5} (\ref{27-5-8}) should be replaced by
\begin{multline}
\Tr^- \bigl(\psi_\ell H_{A,V} \psi_\ell\bigr) +
h^{-3}\int P_{\beta h} (V(x))\psi_\ell^2(x) \,dx \\
\ge - C_\varepsilon \zeta^2 \beta_1 h_1^{-1}
\bigl(1+ \nu_1^{\frac{4}{3}}h_1^{\frac{5}{3}}\bigr)
-\varepsilon \kappa ^{-1} h^ {-2} \|\partial A'\|^2
\label{27-5-9}
\end{multline}
with $\nu_1 = (\kappa_1\beta_1)^{\frac{10}{9}}h_1^{\frac{4}{9}} |\log h|^K $\,\footnote{\label{foot-27-22} Because $\kappa_1\beta_1h_1=
\kappa \beta h\ell \le \kappa (\beta h)^{\frac{3}{4}}\le 1$.} and
$\kappa_1\beta_1 = \kappa\beta \zeta\ell^2=\kappa\beta $ and thus with $\beta_1h_1^{-1}\zeta^2 = \beta h^{-1}\zeta^2\ell^2$. Then summation of the first term in the right-hand expression results in its value as $\ell$ is the smallest i.e. $\beta_1h_1=1$ and one can check easily\footnote{\label{foot-27-23} Sufficient to check as $\beta=h^{-1}$, $\ell=1$ and $\kappa=1$.} that this is less than $Ch^{-2}$.

Further, Remark~\ref{rem-27-5-3} remains valid. Then adding this zone does not change inequality in question.

\bigskip\noindent
(v) The rest of the proof is obvious. Zone $\{x:\,\ell(x)\ge \ell^*\}$ is considered as a single element and just rough variational estimate is used there to prove that its contribution does not exceed $C h^{-2}$.
\end{proof}

\begin{remark}\label{rem-27-5-4}
Later we will improve both upper and lower estimates using different tricks:
imposing non-degeneracy assumptions, picking for an upper estimate semiclassical self-generated magnetic field, using Scott approximation terms. These improvements will lead not only to our final goal, but also to our intermediate one--getting better estimates for a minimizer.
\end{remark}

\begin{proposition}\footnote{\label{foot-27-24} Cf. Proposition~\ref{book_new-prop-26-3-2}.}\label{prop-27-5-5}
In the framework of Proposition~\ref{prop-27-5-2} there exists a minimizer $A$.
\end{proposition}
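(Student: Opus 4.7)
The plan is the standard direct method of the calculus of variations, closely paralleling the proof of Proposition~\ref{book_new-prop-26-3-2}, where the analogous statement was established in the pure self-generated case. First I would pick a minimizing sequence $A_n$ with $A_n - A^0 \in \sH^1_0(\bR^3)$ and $\E(A_n) \downarrow \E^*$. By Proposition~\ref{prop-27-5-2}, in particular estimate~\textup{(\ref{27-5-3})}, along such a sequence one has the uniform bound $\|\partial A'_n\| \le C\kappa^{\frac{1}{2}}$ where $A'_n \Def A_n - A^0$. Thus $A'_n$ is bounded in $\sH^1_0$, and after passing to a subsequence, $A'_n \rightharpoonup A'$ weakly in $\sH^1_0$ and strongly in $\sL^p_{\loc}(\bR^3)$ for every $p < 6$ by Rellich-Kondrachov. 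Set $A \Def A^0 + A'$; note that $A - A^0 \in \sH^1_0$ by weak closedness.

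The magnetic-field part of $\E$ is handled by standard weak lower semicontinuity of the $\sL^2$-norm, giving
\begin{equation*}
\kappa^{-1}h^{-2}\int |\partial A'|^2\,dx \le \liminf_{n\to\infty} \kappa^{-1}h^{-2}\int |\partial A'_n|^2\,dx.
\end{equation*}
The remaining task---and the principal obstacle---is to prove
\begin{equation*}
\Tr^-(H_{A,V}) \le \liminf_{n\to\infty} \Tr^-(H_{A_n,V}).
\end{equation*}
I would approach this via Rayleigh-Ritz: the magnetic Lieb-Thirring bound~\textup{(\ref{27-A-2})}, applied uniformly in $n$, guarantees that the number $N$ of negative eigenvalues of $H_{A_n,V}$ and the sum $\Tr^-(H_{A_n,V})$ are bounded independently of $n$, so after a diagonal extraction the $j$th negative eigenvalue $\tau_j^{(n)}$ and a corresponding orthonormal eigenfunction $\varphi_j^{(n)} \in \sH^1_0(\bR^3;\bC^2)$ can be selected so that $\tau_j^{(n)} \to \tau_j^\infty \le 0$ and $\varphi_j^{(n)} \rightharpoonup \varphi_j^\infty$ weakly in $\sH^1$.

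The delicate point is to pass to the limit in the quadratic form. This splits into (a) preserving orthonormality in the limit, and (b) continuity of the map $A \mapsto \langle \varphi, H_{A,V} \varphi\rangle$ for fixed $\varphi$. For~(b) the strong $\sL^p_{\loc}$ convergence $A_n \to A$ combined with Sobolev embedding $\sH^1 \hookrightarrow \sL^6$ controls both the cross term $\langle \varphi, A_n \cdot hD \varphi\rangle$ and the quadratic term $\langle \varphi, |A_n|^2 \varphi\rangle$, while $V$ is relatively form-compact by~\textup{(\ref{book_new-26-3-1})}--\textup{(\ref{book_new-26-3-2})}. For~(a), the hard part is the concentration-compactness issue: ruling out that negative-eigenvalue mass escapes to infinity. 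Here I would use that $V \to 0$ at infinity together with a Persson/Agmon-type argument showing that negative eigenfunctions of $H_{A_n,V}$ are exponentially localized in a bounded region uniformly in $n$; then the Rellich compact embedding on the localization ball upgrades the weak convergence of the $\varphi_j^{(n)}$ to strong $\sL^2$-convergence, preserving orthonormality. Combining~(a) and~(b) yields the Rayleigh-Ritz inequality $\sum_j \tau_j^\infty \ge \liminf_n \Tr^-(H_{A_n,V}) \ge \Tr^-(H_{A,V})\cdot(-1)$ in the appropriate sign, and together with the magnetic-field lower semicontinuity gives $\E(A) \le \liminf_n \E(A_n) = \E^*$, so $A$ is a minimizer. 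All other bookkeeping is identical to Proposition~\ref{book_new-prop-26-3-2}.
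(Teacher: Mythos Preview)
Your approach is correct and is essentially what the paper does: its entire proof reads ``After Proposition~\ref{prop-27-5-2} has been proven we just repeat arguments of the proof of Proposition~\ref{book_new-prop-26-2-2}.'' (Note: the proof cites Proposition~\ref{book_new-prop-26-2-2}, the basic existence result, whereas the footnote's ``cf.\ Proposition~\ref{book_new-prop-26-3-2}'' points to the analogous \emph{statement} in the singular setting; the underlying argument is the same direct-method proof you outline.)

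One minor correction: your claim that ``the number $N$ of negative eigenvalues of $H_{A_n,V}$ \dots\ [is] bounded independently of $n$'' is not justified for Coulomb-like $V$, which can have infinitely many negative eigenvalues. What the Lieb--Thirring-type bound~\textup{(\ref{27-A-2})} gives you is a uniform bound on $\Tr^-(H_{A_n,V})$, not on the eigenvalue count. The Rayleigh--Ritz argument still goes through: for any fixed $K$, pass to the limit in the first $K$ eigenvalues and eigenfunctions as you describe, obtaining $\sum_{j\le K}\tau_j^\infty \ge \Tr^-(H_{A,V})$ via min--max, then let $K\to\infty$ using the uniform trace bound to control the tail. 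This is the standard fix and does not change the structure of your proof.
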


\begin{proof}
After Proposition~\ref{prop-27-5-2} has been proven we just repeat arguments of the proof of Proposition~\ref{book_new-prop-26-2-2}.
\end{proof}

\subsection{Estimates to a minimizer: interior zone}
\label{sect-27-5-2-2}

Recall equation (\ref{book_new-26-2-14}) for a minimizer $A$:
\begin{multline}
\frac{2}{\kappa h^2} \Delta A_j (x)  = \Phi_j\Def\\
-\Re\tr \upsigma_j\Bigl( (hD -A)_x \cdot \boldupsigma e (x,y,\tau)+
e (x,y,\tau)\,^t (hD-A)_y \cdot \boldupsigma \Bigr) \Bigr|_{y=x}
\tag{\ref*{book_new-26-2-14}}\label{26-2-14xx}
\end{multline}
After rescaling $x\mapsto x/\ell$, $\tau\mapsto \tau/\zeta^2$,
$h\mapsto \hbar=h/(\zeta\ell)$, $A\mapsto A\zeta^{-1}\ell$,
$\beta \mapsto \beta \zeta^{-1}\ell$ this equation becomes (\ref{book_new-26-3-12})
\begin{multline}
\Delta A_j=\\
-2\kappa \zeta^2\ell \hbar^2 \Re \tr \upsigma_j \Bigl(
( \hbar D -\zeta^{-1} A)_x \cdot\boldupsigma e (x,y,\tau) +
e (x,y,\tau) \,^t(\hbar D-\zeta^{-1}A)_y\cdot\boldupsigma
 \Bigr)\Bigr|_{y=x}
\tag{\ref*{book_new-26-3-12}}\label{26-3-12x}
\end{multline}
and as we can take $\zeta^2\ell=1$ we arrive to (\ref{book_new-26-3-13})
\begin{multline}
\Delta A_j=\\
-2\kappa \hbar^2
\Re \tr \upsigma_j\Bigl(
 ( \hbar D-\zeta^{-1} A)_x \cdot \boldupsigma  e (x,y,\tau) +
e (x,y,\tau)\,^t( \hbar D-\zeta^{-1} A)_y \cdot \boldupsigma  \Bigr)\Bigr|_{y=x}.
\tag{\ref*{book_new-26-3-13}}\label{26-3-13x}
\end{multline}
Let us modify arguments of Subsection~\ref{book_new-26-3-1}. First observe that
\begin{equation}
|\partial A'|\le C\kappa ^{\frac{1}{2}}h^{-3}, \quad
|\partial^2 A'|\le C\kappa^{\frac{1}{2}} h^{-5}
\qquad \text{as\ \ }\ell \le 2\ell_*
\label{27-5-10}
\end{equation}
with $\ell_*=h^2$; this follows from above equations rescaled and from
$\beta h^3\le \epsilon_0$. Let
\begin{equation}
\mu (r)= \sup _{\ell(x)\ge r} |\partial A'|\ell\zeta^{-1},\qquad
\nu (r)= \sup _{\ell(x)\ge r} |\partial^2 A'|\ell^2\zeta^{-1};
\label{27-5-11}
\end{equation}
then $\nu(r)$ should not exceed\footnote{\label{foot-27-25} As long as $\beta_1h_1\le 1$.}
\begin{multline}
F(\nu)= C\kappa_1\Bigl(1+\mu +
\min
\bigl(\beta_1^{\frac{3}{2}}h_1^{\frac{1}{2}}, \, \beta_1^{\frac{1}{2}}\bigr)\\
+\beta_1 h_1 \bigl(\nu^{\frac{1}{10}}h_1^{-\frac{3}{5}}+
\nu^{\frac{1}{7}}h_1^{-\frac{4}{7}}+
\nu^{\frac{1}{4}}h_1^{-\frac{1}{2}}|\log h_1|^2\bigr)\Bigr)|\log h_1|
+ C\kappa^{\frac{1}{2}}(\ell\zeta^2)^{-\frac{1}{2}}
\label{27-5-12}
\end{multline}
where here $\nu=\nu(\frac{1}{2}r)$, $\mu=\mu(r)$, the last term is just an estimate for $\|\partial A'\|$ rescaled and $\ell\asymp r$ in that term. Indeed, (\ref{27-5-12}) is derived exactly as (\ref{27-3-46}) but here we cut a hole
$\{x:\,\ell(x)\le \frac{1}{2}r\}$ in our domain.

We also know that
$\mu \le C\nu^{\frac{3}{5}}\kappa^{\frac{1}{5}}(\ell\zeta^2)^{-\frac{1}{5}}$.
Using (\ref{27-5-12}) and (\ref{27-5-10}) one can prove easily that $\nu(r)$ does not exceed solution of the equation $\nu=F(\nu)$ multiplied by $C$\,\footnote{\label{foot-27-26} As long as a resulting expression rescaled, see (\ref{27-5-14}) is a decaying function of $\ell$.}, i.e.
\begin{multline}
\nu \le C\kappa_1 \Bigl(1+ \min \bigl(\beta_1^{\frac{3}{2}}h_1^{\frac{1}{2}}, \, \beta_1^{\frac{1}{2}}\bigr)\Bigr) |\log h_1| \\[3pt]
+ C\Bigl((\kappa_1 \beta _1)^{\frac{10}{9}}h_1^{\frac{4}{9}}+
(\kappa_1 \beta _1)^{\frac{4}{3}}h_1^{\frac{2}{3}}\Bigr)|\log h_1|^K+
C\kappa^{\frac{1}{2}}(\ell\zeta^2)^{-\frac{1}{2}}.
\label{27-5-13}
\end{multline}

In particular, scaling back and setting $\zeta=\ell^{-\frac{1}{2}}$ we arrive to
\begin{multline}
|\partial^2 A'|\le
C\kappa \Bigl(\ell^{-\frac{5}{2}}+
\min\bigl(\beta^{\frac{3}{2}}h^{\frac{1}{2}}\ell^{-\frac{1}{2}},\,
\beta^{\frac{1}{2}}\ell^{-\frac{7}{4}}\bigr)\Bigr)|\log \ell/\ell_*| \\[3pt]
+
C\Bigl((\kappa \beta )^{\frac{10}{9}}h ^{\frac{4}{9}}\ell^{-\frac{19}{18}}+
(\kappa \beta )^{\frac{4}{3}}h ^{\frac{2}{3}}\ell^{-\frac{5}{6}}\Bigr)
|\log \ell/\ell_*|^K+ C\kappa^{\frac{1}{2}}\ell^{-\frac{5}{2}}.
\label{27-5-14}
\end{multline}

The same arguments work for $\beta_1h_1\ge 1$ but now we need to replace
$|\log h_1|$ by $|\log \beta_1|$ which however is also $\asymp \ell/\ell_*$ as $\beta h\le 1$.

After this estimate is proven we can remove the last term in the right-hand expression and we arrive to

\begin{proposition}\label{prop-27-5-6}
In the framework of Proposition~\ref{prop-27-5-2}
\begin{multline}
|\partial^2 A'|\le
C\kappa \Bigl(\ell^{-\frac{5}{2}} +
\min\bigl(\beta^{\frac{3}{2}}h^{\frac{1}{2}}\ell^{-\frac{1}{2}},\,
\beta^{\frac{1}{2}}\ell^{-\frac{7}{4}}\bigr)\Bigr)|\log \ell/\ell_*|\\[3pt]
+
C\Bigl((\kappa \beta )^{\frac{10}{9}}h ^{\frac{4}{9}}\ell^{-\frac{19}{18}}+
(\kappa \beta )^{\frac{4}{3}}h ^{\frac{2}{3}}\ell^{-\frac{5}{6}}\Bigr)
|\log \ell/\ell_*|^K.
\label{27-5-15}
\end{multline}
\end{proposition}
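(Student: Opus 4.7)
My plan is essentially to formalize the sketch given in the paragraph preceding the proposition: solve the fixed-point inequality in~(\ref{27-5-12}) and then bootstrap away its last term. Here is how I would organize it.

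First I would set up the localized estimate. Fix $r\ge \ell_*=h^2$ and a point $z$ with $\ell(z)\asymp r$. On $B(z,\frac{1}{2}r)$ I apply the rescaling $x\mapsto (x-\bar\y_m)/\ell$, $\tau\mapsto \tau/\zeta^2$ with $\zeta^2\ell=1$, so the operator becomes one with parameters $h_1=h\ell^{-1/2}$, $\beta_1=\beta\ell^{3/2}$, $\kappa_1=\kappa$ and an order-unity potential on a unit ball. The point of the Coulomb-like bound~(\ref{book_new-26-3-1})–(\ref{book_new-26-3-2}) and~(\ref{27-5-1}) is that the rescaled $V$ lies uniformly in $\sC^2$. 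Because I cut a hole around the singularity, conditions~(\ref{27-3-14}) and the pointwise bounds~\ref{27-3-49-*} apply on the rescaled ball (with $\mu,\nu$ the rescaled sup-norms). I then invoke Proposition~\ref{prop-27-3-10} if $\beta_1 h_1\lesssim 1$ and Proposition~\ref{prop-27-4-1} if $\beta_1 h_1\gtrsim 1$; in both regimes the right-hand side gives exactly~(\ref{27-5-12}), where I have used the interpolation inequality $\mu\le C\nu^{3/5}\|\partial A'\|^{2/5}$ together with the initial global bound~(\ref{27-5-3}) to bound the $\mu$-contribution and the tail $\|\partial A'\|$ by the last term $C\kappa^{1/2}(\ell\zeta^2)^{-1/2}=C\kappa^{1/2}$.

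Next I would solve the implicit inequality $\nu\le F(\nu)$. Since all powers of $\nu$ in~(\ref{27-5-12}) are strictly less than $1$ (the largest being $\nu^{1/4}$), standard fixed-point reasoning — write $\nu=F(\nu)$ as $\nu^{1-\theta}\le (\text{coeff})$ term by term and take the max — gives~(\ref{27-5-13}). Scaling back via $\zeta=\ell^{-1/2}$, $h_1=h\ell^{-1/2}$, $\beta_1=\beta\ell^{3/2}$, $|\log h_1|\asymp|\log\ell/\ell_*|$ (valid in both regimes because $\beta h\le 1$), and noting that $\min(\beta_1^{3/2}h_1^{1/2},\beta_1^{1/2})$ becomes $\min(\beta^{3/2}h^{1/2}\ell^{1/2},\beta^{1/2}\ell^{3/4})$ after multiplying by $\zeta=\ell^{-1/2}$ gives exactly the terms displayed in~(\ref{27-5-14}).

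The last step, removing the tail $C\kappa^{1/2}\ell^{-5/2}$ from~(\ref{27-5-14}), is a bootstrap: once~(\ref{27-5-14}) has been established, the pointwise bound it provides is, in every dyadic shell, stronger than what one gets by merely inserting the global estimate $\|\partial A'\|\le C\kappa^{1/2}$ into the interpolation for $\mu$. So I re-enter the fixed-point argument using as input this sharper local bound on $\partial A'$ (obtained by integrating $\partial^2 A'$ out from the boundary of each shell), which eliminates the dependence on the global $L^2$-norm and leaves precisely the expression~(\ref{27-5-15}).

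The main obstacle I expect is the bookkeeping around the self-referential character of~(\ref{27-5-12}): I have to verify that the resulting $\nu$ from~(\ref{27-5-13}), once scaled back to~(\ref{27-5-14}), is indeed a decreasing function of $\ell$ down to $\ell_*$ (so that the ``sup over $\ell(x)\ge r$'' definition of $\mu(r),\nu(r)$ matches the local bound at scale $r$ itself), and that each of the five competing terms is self-consistent under iteration. The transition at $\beta_1h_1=1$, i.e. $\ell\asymp(\beta h)^{-2/3}/h$, requires checking that the two estimates of Proposition~\ref{prop-27-4-1} match up continuously with those of Proposition~\ref{prop-27-3-10}; fortunately under $\beta h\le 1$ the logarithmic factors $|\log h_1|$ and $|\log\beta_1|$ are both comparable to $|\log\ell/\ell_*|$, so the two regimes splice seamlessly.
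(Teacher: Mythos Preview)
Your proposal is correct and follows essentially the same route as the paper: localize and rescale to obtain the self-referential bound~(\ref{27-5-12}), solve the sublinear fixed-point to get~(\ref{27-5-13}), scale back to~(\ref{27-5-14}), and then bootstrap away the tail term coming from the global $L^2$ bound~(\ref{27-5-3}). You also correctly identify the monotonicity check (footnote~\ref{foot-27-26} in the paper) and the matching at $\beta_1 h_1\asymp 1$. One minor slip: the conversion from the rescaled $\nu$ back to $|\partial^2 A'|$ multiplies by $\zeta\ell^{-2}=\ell^{-5/2}$, not by $\zeta=\ell^{-1/2}$; with the correct factor your intermediate expression for the $\min$ term gives exactly the exponents in~(\ref{27-5-14}).
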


\subsection{Estimates to a minimizer: exterior zone}
\label{sect-27-5-2-3}

Let us estimate $|\partial^2 A'|$ as $\ell\ge \ell^*$. Observe that
\begin{equation}
A'_j(x) = -\frac{\kappa h^2}{4\pi} \int |x-y|^{-1} \Phi_j (y)\,dy
\label{27-5-16}
\end{equation}
where $\Phi_j$ is given by (\ref{26-2-14xx}). Then $\partial^2 A'$ is expressed via $\Phi_j$ as an integral with a kernel $K(x,y)$ singular as $x=y$ and such that $|K(x,y)|\le c(|x|+|y|)^{-3}$ as $|x-y|\asymp |x|+|y|$. Further, applying representation like in Proposition~\ref{book_new-prop-26-3-9} we can get an extra factor $|y|(|x|+|y|)^{-1}$ upgrading it to $|K(x,y)|\le c|y|(|x|+|y|)^{-4}$.

Then, starting from (\ref{27-5-15}) and iterating (\ref{27-3-46}) we arrive to estimate
\begin{equation*}
|\partial^2 A'|\le
C\kappa \ell^{-4} |\log h|+
C(\kappa \beta)^{\frac{10}{9}}h^{\frac{4}{9}}\ell^{-\frac{32}{9}+\delta}
|\log h|^K
\qquad\text{as\ \ } \ell\ge 1
\end{equation*}
with arbitrarily small $\delta>0$. Furthermore using arguments of the proof of Proposition~\ref{prop-27-5-6} we can make $\delta=0$ thus arriving to

\begin{proposition}\label{prop-27-5-7}
In the framework of Proposition~\ref{prop-27-5-2}
\begin{equation}
|\partial^2 A'|\le
C\kappa \ell^{-4} |\log h|+
C(\kappa \beta)^{\frac{10}{9}}h^{\frac{4}{9}}\ell^{-\frac{32}{9}}|\log h|^K
\qquad\text{as\ \ } \ell\ge 1.
\label{27-5-17}
\end{equation}
\end{proposition}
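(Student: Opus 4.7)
The plan is to derive (27-5-17) from the integral representation (27-5-16) combined with the interior estimate (27-5-15) already established for $\{\ell\le 1\}$, by analyzing the decay of the Newtonian-potential kernel when the source $\Phi_j$ is localized near the nuclei and the evaluation point $x$ is far away.

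First, I would differentiate (27-5-16) twice to write
\begin{equation*}
\partial^2 A'_j(x)= -\frac{\kappa h^2}{4\pi}\int K(x,y)\,\Phi_j(y)\,dy
\end{equation*}
where $K(x,y)$ is a second-derivative Riesz kernel, homogeneous of degree $-3$ in $x-y$. For $|x-y|\asymp |x|+|y|$ the direct bound is $|K(x,y)|\le c(|x|+|y|)^{-3}$, but as noted in the excerpt we can upgrade this to $|K(x,y)|\le c|y|(|x|+|y|)^{-4}$. This gain of one power of $|y|/|x|$ comes from the fact that $\Phi_j$ has the structure of a divergence of a bilinear form in $e(x,y,\tau)$ (recall (26-2-14xx)), so its monopole moment vanishes; this is exactly the mechanism invoked in Proposition~\ref{book_new-prop-26-3-9} and should be transferred verbatim here. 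The contribution of the singular (diagonal) part of $K$ is, by Calder\'on--Zygmund, controlled by the local behaviour of $\Phi_j$, which is already captured by the interior bound of Proposition~\ref{prop-27-5-6}.

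Second, I would split the $y$-integration into three regions: (a) $|y|\ll |x|$, where the improved kernel $|y|(|x|+|y|)^{-4}\asymp |y||x|^{-4}$ applies and $\Phi_j$ is large (Coulomb region), (b) $|y|\asymp |x|$, where the kernel is $\lesssim \ell^{-3}$, and (c) $|y|\gg |x|$, which is empty at the scale of interest. In region (a) I plug in the pointwise bound (27-5-15) with $\ell=\ell(y)$, giving $|\Phi_j(y)|\lesssim h^{-2}\kappa\,\ell(y)^{-5/2}|\log|+ h^{-2}(\kappa\beta)^{10/9}h^{4/9}\ell(y)^{-19/18}|\log|^K$, and integrate. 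The first term yields $\kappa\ell(x)^{-4}|\log h|$ after carrying out the dyadic sum $\sum_{\ell\le \ell(x)}\ell^{-5/2}\cdot \ell\cdot\ell^3\cdot\ell(x)^{-4}$. The second term, integrated similarly against $|y|/|x|^4$, produces $(\kappa\beta)^{10/9}h^{4/9}\ell(x)^{-32/9+\delta}|\log h|^K$ with some small $\delta>0$ coming from the marginal divergence of $\int_0^{\ell(x)}\rho^{-19/18}\rho^{1}\rho^{2}\,d\rho/\rho \cdot \rho$-factors (the exponent $-32/9$ is exactly the balance point).

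Third, to remove the $\delta$ I would repeat the bootstrap used in the proof of Proposition~\ref{prop-27-5-6}: with $\nu(r)\Def\sup_{\ell(x)\ge r}|\partial^2 A'|\ell^2\zeta^{-1}$ and $\zeta=\ell^{-1/2}$, the estimate just obtained plays the role of an \emph{a priori} bound $\nu(r)<\infty$ which allows the self-consistent inequality $\nu\le F(\nu)$ (the analogue of (27-5-12) in the exterior regime) to be written and solved; the unique fixed-point solution gives back the clean exponent $-32/9$ without the $\delta$ loss.

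The main obstacle is the first step: carefully establishing the monopole-cancellation gain $|y|(|x|+|y|)^{-1}$ in the kernel for the specific current $\Phi_j$ arising from (\ref{26-2-14xx}) in the magnetic setting. The argument of Proposition~\ref{book_new-prop-26-3-9} is for the non-magnetic case, so one has to check that the commutator terms coming from $A$ (which are themselves estimated by $|\partial A'|$ and hence by the quantity we are trying to bound) do not spoil the cancellation, which closes using the bootstrap above.
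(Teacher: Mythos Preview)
Your proposal follows essentially the same route as the paper: the integral representation (27-5-16), the kernel gain $|y|(|x|+|y|)^{-1}$ via Proposition~\ref{book_new-prop-26-3-9}, plugging in the interior bound (27-5-15), obtaining first the estimate with an extra $\ell^{\delta}$ loss, and then removing $\delta$ by the self-consistent iteration of Proposition~\ref{prop-27-5-6}. The paper phrases the middle step as ``starting from (27-5-15) and iterating (27-3-46)'' rather than a direct dyadic integration, but this is the same mechanism.

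One bookkeeping slip: in region~(a) you write $|\Phi_j(y)|\lesssim h^{-2}\kappa\,\ell(y)^{-5/2}|\log h|+\ldots$, but (27-5-15) bounds $|\partial^2 A'|$, not $\Phi_j$; converting via $\Phi_j=\frac{2}{\kappa h^2}\Delta A'_j$ removes the $\kappa$ from the first term and inserts $\kappa^{-1}$ in the second. This is harmless for the strategy since the prefactor $\kappa h^2$ in the representation cancels it back, but keep the factors straight when you actually compute the exponents. Your concern at the end about verifying the cancellation of Proposition~\ref{book_new-prop-26-3-9} in the presence of the external field is legitimate; the paper simply invokes that proposition without further comment, so you are not missing anything there.
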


\begin{remark}\label{rem-27-5-8}
Sure, as $\ell\ge 1$, $\beta _1h_1^{\frac{1}{2}}|\log h_1|^K\le 1$ this estimate could be improved but these improvements would not affect our crucial estimates.
\end{remark}

\section{Trace asymptotics}
\label{sect-27-5-3}

Before proving trace estimates observe

\begin{remark}\label{rem-27-5-9}
\begin{enumerate}[label=(\roman*), fullwidth]
\item\label{rem-27-5-9-i}
All local asymptotics and estimates with with mollification with respect to spatial variables\,\footnote{\label{foot-27-27} Thus trace and $\N$-term asymptotics and $\D$-term estimates.} proven in Sections~\ref{sect-27-3} and~\ref{sect-27-4} with unspecified $\nu \le \epsilon \beta$ remain valid
in the more general framework of smooth non-degenerate external field $A^0(x)$: namely\begin{phantomequation}\label{27-5-18}\end{phantomequation}
\begin{equation}
\|\partial^4 A^0 \|\le C_0 \beta, \qquad
B^0=|\nabla \times A^0|\ge \epsilon_0 \beta.
\tag*{$\textup{(\ref*{27-5-18})}_{1,2}$}\label{27-5-18-*}
\end{equation}
Indeed, we use only $\varepsilon$-approximations with $\varepsilon=h$ or $\varepsilon=h\rho^{-1}$ and we can always change coordinate system so magnetic lines are $(x_1,x_2)=\const$. We leave easy arguments to the reader;

\item\label{rem-27-5-9-ii}
However since we do not have estimates (\ref{27-3-47}) or (\ref{27-4-6})--(\ref{27-4-8}) in this more general framework\footnote{\label{foot-27-28} Even if we believe that these estimates are true. So far we have no need in such generalization.}, we do not have (\ref{27-4-11}), (\ref{27-4-12}) in this framework.
\end{enumerate}
\end{remark}

Now consider the trace term assuming that
\begin{equation}
d \Def \min_{1\le m<m'\le M}|\bar{\y}_m-\bar{\y}_{m'}| \gtrsim 1 .
\label{27-5-19}
\end{equation}
(a) Due to the proofs of Theorem~\ref{book_new-thm-26-3-22} and Proposition~\ref{prop-27-5-2} we can evaluate contribution of zone $\{x:\, |x-\bar{\y}_m| \le \epsilon\}$ provided $\beta \le 1$:
\begin{multline}
|\Tr (H^-_{A,V}\psi_m) - \Tr (H^-_{A,V_m}\psi_m)\\
+h^{-3}\int P_{Bh} (V)\psi_m \,dx -
h^{-3}\int P_{0} (V_m)\psi_m \,dx|
\le C\bigl(h^{-1}+ \kappa |\log \kappa|^{\frac{1}{3}}h^{-\frac{4}{3}}\bigr)
\label{27-5-20}
\end{multline}
where $V_m= z_m|x-\bar{\y}_m|^{-1}$ and $\psi_m$ is supported in
$\{x: |x-\bar{\y}_m|\le \epsilon\}$ and equal $1$ in
$\{x: |x-\bar{\y}_m|\le \frac{1}{2}\epsilon\}$.

Further, we can replace in this estimate
$\Tr (H^-_{A',V_m}\psi_m) +h^{-3}\int P_{0} (V_m)\psi_m $ by
\begin{equation}
\int \Bigl( \int^0_{-\infty} e_{V_m, A'} (x,x,\tau)\,d\tau +
h^{-3} P_{0} (V_m)\Bigr)\psi_m\,dx
\label{27-5-21}
\end{equation}
and we can also replace in the latter expression $\psi_m$ by $1$.

\medskip\noindent
(b) If $\beta \ge 1$ we can apply estimate (\ref{27-5-20}) to zone
$\{x:\, |x-\bar{\y}_m| \le \epsilon b\}$ with $b=\beta^{-\frac{2}{3}}$ scaling
$x\mapsto (x-\bar{\y}_m)b^{-1}$ and $\tau \to \tau $ and
$h\mapsto h_1=hb^{-\frac{1}{2}}$, $\beta\mapsto \beta b^{\frac{3}{2}}=1$,
$\kappa \mapsto \kappa $; now $\psi_m$ is supported in $\{x: |x-\bar{\y}_m|\le \epsilon b\}$ and equals $1$ in
$\{x: |x-\bar{\y}_m|\le \frac{1}{2}\epsilon b\}$ and the right-hand expression of (\ref{27-5-20}) becomes
\begin{equation}
C b^{-1} \bigl(h_1^{-1}+
\kappa |\log \kappa|^{\frac{1}{3}}h_1^{-\frac{4}{3}}\bigr) =
C \bigl(\beta^{\frac{1}{3}} h ^{-1}+
\kappa |\log \kappa|^{\frac{1}{3}}\beta^{\frac{2}{9}} h ^{-\frac{4}{3}}\bigr).
\label{27-5-22}
\end{equation}
As $\beta \ge 1$ consider contribution of zone
$\{x:\, \epsilon_0 b\le |x-\bar{\y}_m|\le \epsilon_0\}$ where due to assumption (\ref{27-5-19})  non-degeneracy condition (\ref{27-3-60}) is automatically satisfied after rescaling; namely before rescaling it is
\begin{equation}
\min_j |V-2 j\beta h|+|\nabla V|\ell \asymp \zeta^2.
\label{27-5-23}
\end{equation}
Contribution of $\ell$-element in this zone does not exceed
\begin{gather}
C\zeta^2 \bigl(h_1^{-1}+ h_1^{-\frac{1}{3}}\nu ^{\frac{4}{3}}\bigr)
\label{27-5-24}\\
\shortintertext{with}
\nu = \sup _{|x-\bar{\y}_m|\asymp \ell} |\partial^2 A'| \ell^2\zeta^{-1}
\label{27-5-25}\\
\shortintertext{and plugging (\ref{27-5-4}) into (\ref{27-5-24}) we get}
C \bigl(\ell^{-\frac{1}{2}} h ^{-1}+
(\kappa \beta )^{\frac{40}{27}}h^{\frac{7}{27}}\ell^{\frac{59}{54}}
|\log (h \ell^{-\frac{1}{2}}) |^K\bigr)\notag\\
\shortintertext{which sums to}
C \bigl(\beta^{\frac{1}{3}} h ^{-1}+
(\kappa \beta )^{\frac{40}{27}}h^{\frac{7}{27}} |\log h |^K\bigr)
\label{27-5-26}
\end{gather}
which obviously does not exceed (\ref{27-5-22}). Thus after scaling \footnote{\label{foot-27-29} $x\mapsto Z^{\frac{1}{3}} x$,
$\tau\mapsto Z^{\frac{4}{3}} \tau $, $1\mapsto h= Z^{-\frac{1}{3}}$,
$B\mapsto \beta =Z^{-1}$, $\alpha \mapsto \kappa =\alpha Z$; recall hat $\beta h\lesssim 1\iff B\lesssim Z^{-\frac{4}{3}}$.} we arrive to

\begin{proposition}\label{prop-27-5-10}
Let $V=W^\TF_B+\lambda$ be Thomas-Fermi potential with $N\le Z$,
$N\asymp Z_1\asymp Z_2 \asymp \ldots \asymp Z_M$, $B\lesssim Z^{\frac{4}{3}}$ and
\begin{equation}
|\y_m-\y_{m'}|\ge d \gtrsim Z^{-\frac{1}{3}} \qquad \forall 1\le m<m'\le M.
\label{27-5-27}
\end{equation}
Then as $\psi_m$ is supported in $\epsilon r^*$-vicinity of $\y_m$
\begin{multline}
|\Tr (H^-_{A,V}\psi_m) - \Tr (H^-_{A,V_m}\psi_m)\\
+\int P_{B} (V)\psi_m \,dx - \int P_{0} (V_m)\psi_m \,dx|
\label{27-5-28}
\end{multline}
does not exceed $CQ_0$ with
\begin{equation}
Q_0\Def \left\{\begin{aligned}
&\bigl(Z^{\frac{5}{3}}+
\alpha |\log (\alpha Z)|^{\frac{1}{3}}Z^{\frac{25}{9}}\bigr)
&&\text{as\ \ } B\le Z,\\[3pt]
&\bigl(B^{\frac{1}{3}} Z^{\frac{4}{3}} +
\alpha |\log (\alpha Z)|^{\frac{1}{3}}B^{\frac{2}{9}}Z^{\frac{23}{9}}\bigr)
&&\text{as\ \ } Z \le B\le Z^{\frac{4}{3}}.
\end{aligned}\right.
\label{27-5-29}
\end{equation}
Furthermore, as $B\le Z$ expression \textup{(\ref{27-5-28})} does not exceed
\begin{equation}
C\Bigl(Z^{\frac{5}{3}}[Z^{-\delta}+(BZ^{-1})^{\delta}+ (dZ^{\frac{1}{3}})^{-\delta}]+
\alpha |\log (\alpha Z)|^{\frac{1}{3}}Z^{\frac{25}{9}}\Bigr).
\label{27-5-30}
\end{equation}
\end{proposition}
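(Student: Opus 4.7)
\medskip\noindent\textbf{Proof proposal.} The plan is to perform the standard Thomas--Fermi rescaling $x\mapsto Z^{\frac{1}{3}}x$, $\tau\mapsto Z^{\frac{4}{3}}\tau$, which reduces the problem to the local semiclassical framework of Sections~\ref{sect-27-3}--\ref{sect-27-4} with parameters $h=Z^{-\frac{1}{3}}$, $\beta=BZ^{-1}$, $\kappa=\alpha Z$. The hypothesis $B\le Z^{\frac{4}{3}}$ translates into $\beta h\lesssim 1$ and the separation \textup{(\ref{27-5-27})} into $|\bar{\y}_m-\bar{\y}_{m'}|\gtrsim 1$, i.e.\ \textup{(\ref{27-5-19})}, so it suffices to argue locally in the $\epsilon$-vicinity of one nucleus.

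I would treat the regimes $\beta\le 1$ and $\beta\ge 1$ separately. When $\beta\le 1$, estimate \textup{(\ref{27-5-20})}, available from the proof of Theorem~\ref{book_new-thm-26-3-22} and Proposition~\ref{prop-27-5-2}, applies on the whole ball $B(\bar{\y}_m,\epsilon)$ and delivers the bound $C(h^{-1}+\kappa|\log\kappa|^{\frac{1}{3}}h^{-\frac{4}{3}})$ for \textup{(\ref{27-5-28})}. When $\beta\ge 1$, I would apply the same estimate on the smaller ball $B(\bar{\y}_m,\epsilon b)$ with $b=\beta^{-\frac{2}{3}}$, using the auxiliary rescaling $x\mapsto (x-\bar{\y}_m)/b$, $h\mapsto h_1=hb^{-\frac{1}{2}}$ (so that $\beta b^{\frac{3}{2}}=1$), and unscale to obtain \textup{(\ref{27-5-22})}; the leftover annulus $\{\epsilon b\le |x-\bar{\y}_m|\le \epsilon\}$ is then handled separately.

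In that annulus the separation \textup{(\ref{27-5-19})} together with the asymptotics of the Thomas--Fermi potential near the nucleus forces the strong non-degeneracy \textup{(\ref{27-5-23})} on every $\ell$-shell with $\zeta=\ell^{-\frac{1}{2}}$. I would introduce an $\ell$-admissible partition and apply Theorem~\ref{thm-27-3-21} on each shell, feeding in the minimizer bound of Proposition~\ref{prop-27-5-6} for $\nu$ via \textup{(\ref{27-5-25})}. The single-shell contribution \textup{(\ref{27-5-24})}, after substituting the rescaled parameters \textup{(\ref{27-5-4})} and summing over $\ell\in[b,1]$, collapses to \textup{(\ref{27-5-26})}, which is dominated by \textup{(\ref{27-5-22})}. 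Unscaling the semiclassical parameters back to $(Z,B,\alpha)$ yields $CQ_0$ with $Q_0$ as in \textup{(\ref{27-5-29})}.

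For the sharper estimate \textup{(\ref{27-5-30})} in the range $B\le Z$ (i.e.\ $\beta\le 1$), the $Z^{\frac{5}{3}}$-term is the Weyl-type $h^{-2}$ remainder, and the gain by the factors $Z^{-\delta}$, $(BZ^{-1})^\delta$, $(dZ^{\frac{1}{3}})^{-\delta}$ is obtained by exploiting the higher smoothness and the special local structure of $W^\TF_B$ in the intermediate and boundary zones beyond the generic $\sC^2$-bound, cf.\ Remark~\ref{rem-27-3-17}\ref{rem-27-3-17-v}. The main obstacle, in my view, is the interface between the singular ball and the annulus when $\beta$ approaches $Z^{\frac{4}{3}}$: one must verify that the $(\kappa\beta)^{\frac{40}{27}}h^{\frac{7}{27}}$-type term in the non-degenerate remainder of Theorem~\ref{thm-27-3-21} and the $\mu^3\beta^{-2}h^{-2}$-contribution tied to $\bar{\rho}$ sum over the shells without accumulating power-of-$\ell$ losses, and this is exactly where the exterior decay estimate of Proposition~\ref{prop-27-5-7} is needed.
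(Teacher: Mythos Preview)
Your outline matches the paper's argument almost exactly: the Thomas--Fermi rescaling, the split into $\beta\le 1$ and $\beta\ge 1$, the use of \textup{(\ref{27-5-20})} on the inner ball $B(\bar{\y}_m,\epsilon b)$ with $b=\beta^{-2/3}$, and the $\ell$-shell decomposition of the annulus under the strong non-degeneracy \textup{(\ref{27-5-23})} leading to \textup{(\ref{27-5-24})}--\textup{(\ref{27-5-26})} are precisely what the paper does.

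Two small corrections. First, the single-shell trace bound \textup{(\ref{27-5-24})} is Proposition~\ref{prop-27-3-13} (the trace estimate under \textup{(\ref{27-3-60})}), not Theorem~\ref{thm-27-3-21}, which concerns $|\E^*_\kappa-\cE^*_\kappa|$; the latter already has the minimizer's $\nu$ inserted, whereas here you feed in the explicit $\nu$ of Proposition~\ref{prop-27-5-6}. Second, the improvement to \textup{(\ref{27-5-30})} for $B\le Z$ is obtained by the standard propagation-of-singularities arguments (long-time Tauberian estimates in the threshold zone $h^{\sigma}\le\ell\le h^{-\sigma}$), not by extra smoothness of $W^\TF_B$; the factors $(BZ^{-1})^\delta$ and $(dZ^{1/3})^{-\delta}$ arise because strong field or close nuclei limit the propagation time. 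Your reference to Remark~\ref{rem-27-3-17}\ref{rem-27-3-17-v} and to Proposition~\ref{prop-27-5-7} is off here: the former concerns the boundary zone $\ell\sim\bar r$, and the latter the exterior $\ell\ge 1$, neither of which enters since $\psi_m$ lives in $\{\ell\le\epsilon\}$ after rescaling.
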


Here improved estimate (\ref{27-5-30}) can be proven by our standard propagation arguments.

Furthermore let us consider the \emph{regular exterior zone\/}
 $\{x:\, \epsilon_0 r^*\le |x-\y_m|\le \epsilon \bar{r}\}$ with
$\bar{r}\Def \min \bigl(B^{-\frac{1}{4}},\, (Z-N)_+^{-\frac{1}{3}}\bigr)$. Then due to Thomas-Fermi equation $W^\TF_B+\lambda$ satisfies here non-degeneracy condition (\ref{27-3-65}) after rescaling and $\zeta= \ell^{-2}$, $h_1=\ell(x)$, $\beta_1=B\ell^3$.

Then the contribution of $\ell$-element in this zone does not exceed (\ref{27-5-25}) as long as $\beta_1h_1\le 1$, $h_1\le 1$ i.e.
$\ell (x)\le \min (\bar{r},1)$ and due to (\ref{27-5-24}) this contribution does not exceed
$C\zeta^2 \bigl( h_1^{-1} +h_1^{-\frac{1}{3}}\nu^{\frac{4}{3}}\bigr)$ where $\nu_1$ is estimate for $|\partial ^2 A'|$ multiplied by $\zeta^{-1}\ell^2$:
\begin{equation}
\nu_1 = \kappa |\log h| + (\kappa\beta)^{\frac{10}{9}} h^{\frac{4}{9}}\ell^{\frac{4}{9}}|\log h|^K.
\label{27-5-31}
\end{equation}
Then calculating $\nu_1$ and plugging it and $h_1=h\ell$ into
$C\zeta^2 \bigl( h_1^{-1} +h_1^{-\frac{1}{3}}\nu^{\frac{4}{3}}\bigr)$ one can see easily that here all terms contain $\ell$ in the negative powers.

Then summation by $\ell$ results in the same expression calculated as $\ell=r^*$ and one can observe easily that it does not exceed (\ref{27-5-29}), (\ref{27-5-30}) as $B\le Z$, $Z\le B\le Z^{\frac{4}{3}}$ respectively. One can see easily that dealing with terms
$C\zeta^2 \times \mu^3 \beta^{-2}h^{-2}$ due to (\ref{27-3-61}) and
$C\zeta^2 \nu|\log h|$ (see Propositions~\ref{prop-27-3-13} and~\ref{prop-27-3-15}) leads to smaller expressions.

Furthermore, using standard propagation arguments one can upgrade (\ref{27-5-29}) to (\ref{27-5-30}). Therefore we conclude that

\begin{claim}\label{27-5-32}
Proposition~\ref{prop-27-5-10} remains true for $\psi_m$ supported in the zone $\{x:\, |x-\y_m|\le \epsilon \min(\bar{r},1)\}$.
\end{claim}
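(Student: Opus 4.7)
\textbf{Proof proposal for (\ref{27-5-32}).} The plan is to decompose $\psi_m = \psi_m^{\inn} + \psi_m^{\out}$ via a smooth partition of unity, where $\psi_m^{\inn}$ is supported in $\{|x-\y_m|\le 2\epsilon r^*\}$ and equals $1$ in $\{|x-\y_m|\le \epsilon r^*\}$, and $\psi_m^{\out}$ is supported in the intermediate/regular exterior annulus $\{\epsilon r^*\le |x-\y_m|\le \epsilon\min(\bar r,1)\}$. For $\psi_m^{\inn}$ Proposition~\ref{prop-27-5-10} applies directly. Hence it suffices to show that the analogue of (\ref{27-5-28}) with $\psi_m$ replaced by $\psi_m^{\out}$ is bounded by $CQ_0$, and moreover by the improved expression in (\ref{27-5-30}) when $B\le Z$.

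On the support of $\psi_m^{\out}$ I would further introduce an $\ell$-admissible partition of unity $\{\phi_\ell\}$ with $\ell=\ell(x)\asymp |x-\y_m|$ ranging dyadically over $r^*\le \ell \le \min(\bar r, 1)$, and rescale $x\mapsto(x-\y_m)/\ell$, $\tau\mapsto \tau\zeta^{-2}$ with the Thomas--Fermi scaling $\zeta=\ell^{-2}$. Then $h\mapsto h_1=h\ell$, $\beta\mapsto \beta_1=B\ell^3$; by the assumption $d\gtrsim Z^{-1/3}$ and the properties of $W_B^\TF$ recalled in Remark~\ref{rem-27-3-17}, the non-degeneracy condition (\ref{27-3-65}) is satisfied after rescaling. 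Applying Propositions~\ref{prop-27-3-13} and~\ref{prop-27-3-15} (when $\beta_1h_1\le 1$) or Proposition~\ref{prop-27-4-3} (when $\beta_1h_1\ge 1$) yields a local bound of the form (\ref{27-5-24}) on each $\ell$-shell, with $\nu$ estimated by (\ref{27-5-31}), which in turn is obtained from Propositions~\ref{prop-27-5-6}--\ref{prop-27-5-7} applied to the minimizer. The subsidiary terms $\mu^3\beta^{-2}h^{-2}$ and $\nu|\log h|$ that appear in $Q'$ and $Q''$ rescale to quantities easily absorbed in (\ref{27-5-29}).

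The key calculation is then to verify that after substituting (\ref{27-5-31}) and $h_1=h\ell$ into $C\zeta^2(h_1^{-1}+h_1^{-1/3}\nu_1^{4/3})$ every term carries a \emph{negative} power of $\ell$, so that the geometric sum over the dyadic shells $r^*\le\ell\le\min(\bar r,1)$ is dominated by its endpoint value at $\ell=r^*$, which matches (\ref{27-5-22}) and hence (\ref{27-5-29}). A parallel check must be made on the pressure side, where the replacement of $P_{Bh}(V)$ by $P_0(V_m)$ on the $\ell$-shell contributes only smooth-symbol errors that are swallowed by the main semiclassical term, since on the support of $\psi_m^{\out}$ one has $|V-V_m|\lesssim 1$ and $W_B^\TF$ is sufficiently regular away from the nuclei.

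The main obstacle is bookkeeping: making sure that the estimate for $\nu_1$ furnished by the a priori bound (\ref{27-5-15}) is sharp enough on \emph{every} shell so that the two competing exponents in $(\kappa\beta)^{10/9}h^{4/9}$ versus $\beta^{1/3}h^{-1}$ actually reproduce (\ref{27-5-29}) without loss; the delicate shells are those near $\ell\asymp 1$ in the regime $Z\le B\le Z^{4/3}$, where the transition $\beta_1h_1=1$ occurs. Once the uniform bound (\ref{27-5-29}) is in place, the refined bound (\ref{27-5-30}) (valid for $B\le Z$) follows by the standard sharp propagation-of-singularities argument used to prove Theorem~\ref{book_new-thm-26-3-22}: the shift factors $Z^{-\delta}$, $(BZ^{-1})^\delta$, and $(dZ^{1/3})^{-\delta}$ arise from gaining an arbitrarily small power $\delta$ by exploiting the long propagation time afforded by the non-degeneracy of $W_B^\TF+\lambda$ on the regular exterior zone and the separation (\ref{27-5-27}) between nuclei.
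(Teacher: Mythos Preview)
Your proposal is correct and follows essentially the same route as the paper: split off the inner cap (handled by Proposition~\ref{prop-27-5-10} directly), cover the regular exterior annulus by an $\ell$-admissible partition, rescale with $\zeta=\ell^{-2}$ so that $h_1=h\ell$, $\beta_1=B\ell^3$, invoke the non-degenerate local trace estimate with $\nu_1$ supplied by (\ref{27-5-31}), verify that every resulting term carries a negative power of $\ell$ so the dyadic sum is dominated by its value at $\ell=r^*$ (which reproduces (\ref{27-5-29})), and finally upgrade to (\ref{27-5-30}) via propagation. One small simplification: because the zone terminates at $\ell=\epsilon\min(\bar r,1)$ one has $\beta_1 h_1=B\ell^4\le 1$ throughout, so Proposition~\ref{prop-27-4-3} is never needed and the ``transition'' at $\beta_1 h_1=1$ that you flag as delicate does not in fact occur inside this zone.
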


Consider now contribution of the \emph{boundary zone\/}
$\{x:\,\ell(x)\ge \min(\bar{r},1)\}$.

\medskip\noindent
(a) Let us start from more difficult and interesting case $B\ge 1$ assuming first that $Z=N$. Rescale this zone first $x\mapsto xB^{\frac{1}{4}}$, $\tau\mapsto \tau B^{-1}$, then we have
$h_1= B^{-\frac{1}{4}}$, $\beta_1=B^{\frac{1}{4}}$.
 Observe that
\begin{claim}\label{27-5-33}
After this rescaling a rescaled magnetic field satisfies
$|\partial^2 A'|\le \nu_1$ where $\nu_1$ is given by (\ref{27-5-31}) as $\ell=\bar{r}$.
\end{claim}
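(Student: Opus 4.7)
The plan is to derive (27-5-33) by invoking the minimizer estimates (27-5-15) and (27-5-17) established in Propositions~\ref{prop-27-5-6} and~\ref{prop-27-5-7}, evaluating them at the boundary scale $\ell=\bar r=B^{-1/4}$, and then tracking how $|\partial^{2}A'|$ transforms under the secondary rescaling $x\mapsto xB^{1/4}$, $\tau\mapsto\tau B^{-1}$.

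First, I would set up the bookkeeping for the scaling. In the coordinates after the initial Thomas--Fermi rescaling we have $h=Z^{-1/3}$, $\beta=BZ^{-1}$, and $\bar r=B^{-1/4}$ since $Z=N$. The extra rescaling $x\mapsto xB^{1/4}$ turns one spatial derivative into $B^{-1/4}\partial_{\text{old}}$, and at the same time rescales the vector potential so that the external field intensity $\beta=BZ^{-1}$ passes to $\beta_{1}=B^{1/4}$; altogether this means a bound on $|\partial^{2}A'|_{\text{old}}$ of size $M$ gives a bound $M\cdot B^{-3/4}\cdot (\text{factor from }A\text{'s rescaling})$ in the new frame (equivalently, the rescaled estimate is obtained by substituting the new $h_{1},\beta_{1}$ and the ambient length scale into the dimensionally correct versions of the relevant terms).

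Second, I would show that the maximum in the boundary zone $\{\ell\ge\bar r\}$ of each term appearing in the right-hand sides of (27-5-15) and (27-5-17) is achieved at $\ell=\bar r$: terms with $\ell$ in negative powers in the original coordinates are decreasing in $\ell$, and the factors $|\log\ell/\ell_{*}|$ are tame. Thus it suffices to evaluate (27-5-15) at $\ell=\bar r=B^{-1/4}$. At this scale the minimum $\min(\beta^{3/2}h^{1/2}\bar r^{-1/2},\beta^{1/2}\bar r^{-7/4})$ reduces to the second branch (using $\beta h\le 1$ and $\bar r=B^{-1/4}$), and a direct check shows that the $(\kappa\beta)^{4/3}h^{2/3}\ell^{-5/6}$ term is dominated by the $(\kappa\beta)^{10/9}h^{4/9}\ell^{-19/18}$ term in the TF regime $B\le Z^{4/3}$. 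Passing these two surviving contributions through the rescaling produces precisely the two summands $\kappa|\log h|$ and $(\kappa\beta)^{10/9}h^{4/9}\bar r^{4/9}|\log h|^{K}$ that constitute $\nu_{1}$ in (27-5-31).

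The main obstacle is this case distinction among the six competing terms in (27-5-15): one must verify that the three terms involving $\beta^{3/2}h^{1/2}$, $\beta^{1/2}$, and $(\kappa\beta)^{4/3}h^{2/3}$ are either absorbed into the two surviving terms at $\ell=\bar r$ or vanish in the admissible range $B\le Z^{4/3}$, $\kappa\lesssim 1$. A secondary check is needed in the outer part $\{\ell\ge 1\}$ of the boundary zone (when $\bar r\ge 1$, i.e.\ $B\le 1$, which here does not occur, or more generally the transition), where one uses (27-5-17) in place of (27-5-15) and notes that the exponents $-4$ and $-32/9$ in $\ell$ yield the same leading behaviour as above; since both estimates agree up to multiplicative constants at $\ell\asymp 1$, they match and the uniform bound $\nu_{1}|_{\ell=\bar r}$ holds throughout the boundary zone after rescaling.
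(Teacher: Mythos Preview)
Your approach is correct and matches the paper's (implicit) reasoning: the paper states (27-5-33) as an observation without a separate proof, and the verification indeed amounts to feeding the minimizer bounds of Propositions~\ref{prop-27-5-6}--\ref{prop-27-5-7} into the boundary-zone rescaling at $\ell=\bar r$ and reading off $\nu_1$ from (\ref{27-5-31}).

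One simplification you can make: in the first-rescaled frame one has $\bar r=(\beta h)^{-1/4}\ge 1$ (since $\beta h\le 1$ throughout this section), so the exterior estimate (\ref{27-5-17}) applies directly at $\ell=\bar r$, and its two terms already match the two summands of $\nu_1$ after multiplying by $\zeta^{-1}\ell^2=\ell^4$:
\[
\kappa\,\ell^{-4}|\log h|\cdot\ell^4=\kappa|\log h|,\qquad
(\kappa\beta)^{10/9}h^{4/9}\ell^{-32/9}|\log h|^K\cdot\ell^4=(\kappa\beta)^{10/9}h^{4/9}\ell^{4/9}|\log h|^K.
\]
This makes your case analysis of the six terms in (\ref{27-5-15}) unnecessary; (\ref{27-5-15}) is only needed for $\ell\le 1$, and the two propositions agree at $\ell\asymp 1$ as you note. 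The monotonicity argument (all powers of $\ell$ in (\ref{27-5-17}) are negative) then gives the uniform bound over $\{\ell\ge\bar r\}$ immediately.
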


As we know from Subsection~\ref{book_new-sect-25-5-1} after first scaling there exists scaling function $\gamma$ such that
\begin{gather}
|\partial^\alpha V|\le C\gamma^{4-|\alpha|}\qquad |\alpha |\le 4,
\label{27-5-34}\\[2pt]
V\asymp \gamma^4,\qquad |\partial^2 V|\asymp \gamma^2
\label{27-5-35}
\end{gather}
and therefore we can use a $\gamma$-admissible partition. Then scaling again $x\mapsto x\gamma^{-1}$, $\tau \mapsto \tau \gamma^{-4}$,
$h_1\mapsto h_2=h\gamma^{-3}$, $\beta_1\mapsto \beta_2=\beta_1\gamma^{-1}$ and $\nu_1\mapsto \nu_2=\nu_1$ we see that non-degeneracy assumption (\ref{27-3-65}) is fulfilled and therefore according to Proposition~\ref{prop-27-4-3}\ref{prop-27-4-3-ii} the contribution of $\gamma$-element to the remainder does not exceed
$CB\gamma^4\beta_2 \bigl(1+ h_2^{\frac{2}{3}}\nu_2^{\frac{4}{3}}\bigr)$ because now $\beta_2h_2\ge 1$ and the total contribution of such elements does not exceed
\begin{equation}
CB \int
\beta_2\bigl(1 + h_2^{\frac{2}{3}}\nu_2^{\frac{4}{3}}\bigr)\gamma^{-3} \,dx
\label{27-5-36}
\end{equation}
with integral taken over zone
$\{x:\,\gamma(x)\ge \bar{\gamma}\Def h_1^{\frac{1}{3}}\}$ where $h_2\le 1$. Plugging $\beta_2$, $h_2$ and $\nu_2=\nu_1$ we get in the second term $\gamma^{-2}$ which is not good. Let us apply Remark~\ref{rem-27-5-9}. Recall that $A'(x)$ is a solution of the Laplace equation and therefore
$A'(x)= \int |x-y|^{-1} F(y)\, dy$; then $A'(x)$ with $x\in B(z,\gamma (z))$ can be decomposed into the sum of two terms; the first one is given by integral over $\{y: |y-z|\ge \frac{3}{2}\gamma(z)\}$ and therefore is smooth and could be included in $A^0(x)$ while the second is given by integral over
$\{y: |y-z|\le 2\gamma(z)\}$ and could be estimated by (\ref{27-3-46}) with $\beta$, $h$, $\kappa$ and $\nu$ replaced by $\beta_2$, $h_2$,
$\kappa_2 =\kappa_1\gamma^5$ and $\nu_1$ and then one can see easily that $\nu_2\le \nu_1\gamma^{\frac{3}{2}+\delta}$. Therefore

\begin{remark}\label{rem-27-5-11}
In the boundary zone calculating trace term, $\N$- and $\D$-terms one can take $\nu_2= \nu_1\gamma^{\frac{3}{2}+\delta}$ with $\delta>0$.
\end{remark}

Plugging this improved $\nu_2$ into (\ref{27-5-36}) we get everywhere $\gamma$ in the positive power and therefore (\ref{27-5-36}) does not exceed integrand as $\gamma=1$, i.e. $CB\beta_1 \bigl(1+h_1^{\frac{2}{3}}\nu_1^{\frac{4}{3}}\bigr)$
which we already got when estimating the contribution of the regular zone.

\medskip\noindent
(b) In the zone $\{x:\, \gamma(x)\le \bar{\gamma}\}$ we just reset $\gamma=\bar{\gamma}$ and as $h_2\asymp 1$ we do not need any non-degeneracy condition here so its contribution does not exceed $CB\beta_1$. Therefore we arrive to Proposition~\ref{prop-27-5-12}\ref{prop-27-5-12-i} as $Z=N$ and $B\ge 1$.

\medskip\noindent
(c) In the case $B\le 1$ we need no non-degeneracy assumption in zone $\{x:\,\ell(x)\gtrsim 1\}$ as $h_1=1$; Proposition~\ref{prop-27-5-12}\ref{prop-27-5-12-i} has been proven in this case as well.

\medskip\noindent
(d) Explore now case $N<Z$. Then eventually we will need to take $V=W^\TF_B+\lambda$ ' where $\lambda$ is a chemical potential. In this case the same arguments hold provided
$B\gamma ^4 \le |\lambda|\implies \gamma\le h_1^{\frac{1}{3}}$ which is equivalent to $(Z-N)_+\le B^{\frac{5}{12}}$ since in this case
$-\lambda\asymp (Z-N)_+\bar{r}^{-1}=(Z-N)_+ B^{\frac{1}{4}}$.

\medskip\noindent
(e) Further, as $M=1$ we do not need assumption $N=Z$ as we can always refer to non-degeneracy condition (\ref{27-5-23}) which is fulfilled and we arrive to Proposition~\ref{prop-27-5-12}\ref{prop-27-5-12-ii}. Indeed, as in Section~\ref{book_new-sect-25-5} we do not partition further elements where this condition is fulfilled. We arrive to Proposition~\ref{prop-27-5-12}\ref{prop-27-5-12-i}, \ref{prop-27-5-12-ii} below.

\medskip\noindent
(f) Furthermore, consider case $M\ge 2$ and
$B^{\frac{5}{12}}\le (Z-N)_+ \le B^{\frac{3}{4}}$. Then
$B\bar{\gamma}^4=(Z-N)_+ B^{\frac{1}{4}}$ and
$\bar{\gamma}=(Z-N)_+^{\frac{1}{4}} B^{-\frac{3}{16}}$.

In this case contribution of $\bar{\gamma}$-element (in the excess what was prescribed before) does not exceed term (\ref{27-3-66}) multiplied by $\zeta^2$. Note that two last terms in (\ref{27-3-66}) are not new. Meanwhile the first term there (i.e. $C\beta \nu^{\frac{6}{5}}h^{\frac{2}{5}}$) becomes $CB\bar{\gamma}^4 \times B^{\frac{1}{4}}\bar{\gamma}^{-1}\times \bar{\nu}^{\frac{6}{5}} \times (B^{-\frac{1}{4}}\bar{\gamma}^{-3})^{\frac{2}{5}}$ and after multiplication by $\bar{\gamma}^{-2}|\log \bar{\gamma}$ it has $\bar{\gamma}$ in the negative degree, so it does not exceed the same value as $\bar{\gamma}=B^{-\frac{1}{12}}$. After easy but tedious calculations one can see that it is less than (\ref{27-5-22}).

This leaves us with the second term in (\ref{27-3-66})
(i.e. $C\beta h^{-\frac{1}{2}}$) which becomes
$CB \bar{\gamma}^4 \times B^{\frac{1}{4}} \bar{\gamma}^{-1} \times
B^{\frac{1}{8}}\bar{\gamma}^{\frac{3}{2}}$ and after multiplication by $\bar{\gamma}^{-2}|\log \bar{\gamma}$ we get
\begin{equation}
CB^{\frac{11}{8}} \bar{\gamma}^{\frac{5}{2}}|\log \bar{\gamma}| \asymp
CQ' \Def CB^{\frac{29}{32}} (Z-N)_+^{\frac{5}{8}}
\bigl(1+ |\log (Z-N)_+ B^{-\frac{3}{4}}|\bigr).
\label{27-5-37}
\end{equation}

\smallskip\noindent
(g) Finally, as $(Z-N)_+\ge B^{\frac{3}{4}}$ we again end up with the term
$C\zeta^2 \beta h^{-\frac{1}{2}}$ this time with
$\zeta^2= (Z-N)_+^{\frac{4}{3}}$, $h=(Z-N)_+^{-\frac{1}{3}}$,
$\beta = B(Z-N)_+^{-1}$ (now $\bar{\gamma}\asymp 1$); so we arrive to
\begin{equation}
CQ''\Def CB(Z-N)_+^{\frac{1}{2}}.
\label{27-5-38}
\end{equation}
We arrive to Proposition~\ref{prop-27-5-12}\ref{prop-27-5-12-iii}, \ref{prop-27-5-12-iv} below.

\begin{proposition}\label{prop-27-5-12}
Let $V=W^\TF_B+\lambda$ be Thomas-Fermi potential with $N\le Z$,
$N\asymp Z_1\asymp Z_2 \asymp \ldots \asymp Z_M$, $B\lesssim Z^{\frac{4}{3}}$ and chemical potential $\lambda$. Let assumption \textup{(\ref{27-5-27})} be fulfilled. Then
\begin{enumerate}[fullwidth, label=(\roman*)]
\item\label{prop-27-5-12-i}
As $Z=N$ ($\lambda=0$) the trace remainder
\begin{multline}
|\Tr (H^-_{A,V}) + \int P_{Bh} (V) \,dx -\\
\sum_{1\le m\le M} \Bigl(\Tr (H^-_{A,V_m}\psi_m)-\int P_{0} (V_m)\psi_m \,dx\Bigr)|
\label{27-5-39}
\end{multline}
does not exceed $CQ_0$ with $Q_0$ defined by \textup{(\ref{27-5-29})}; the same is true as $(Z-N)_+\le B^{\frac{5}{12}}$;

\item\label{prop-27-5-12-ii}
As $M=1$ the trace remainder \textup{(\ref{27-5-39})} also does not exceed $CQ_0$;

\item\label{prop-27-5-12-iii}
As $M\ge 1$ and $B^{\frac{5}{12}}\le (Z-N)_+\le B^{\frac{3}{4}}$ the trace remainder \textup{(\ref{27-5-39})} does not exceed $C(Q_0 +Q')$ with $Q'$ defined by \textup{(\ref{27-5-37})};

\item\label{prop-27-5-12-iv}
As $M\ge 1$ and $(Z-N)_+^{\frac{4}{3}}\ge B$ the trace remainder \textup{(\ref{27-5-39})} does not exceed $C(Q_0 +Q'')$ with $Q''$ defined by \textup{(\ref{27-5-38})}.
\end{enumerate}
\end{proposition}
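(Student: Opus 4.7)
The plan is to combine the three zone analyses (singularity, regular exterior, boundary) already developed in Subsections~\ref{sect-27-5-2}--\ref{sect-27-5-3}, introducing an appropriate partition of unity $\sum_m \psi_m^2 + \chi_{\mathsf{reg}}^2 + \chi_{\mathsf{bd}}^2 = 1$ and summing the local contributions. Each $\psi_m$ will be supported in $\{|x-\bar{\y}_m| \le \epsilon \min(\bar r,1)\}$, the function $\chi_{\mathsf{reg}}$ will be supported in $\{\epsilon_0 \min(\bar r,1) \le \ell(x) \le c_0\}$ (where it is nontrivial only when $\bar r < 1$), and $\chi_{\mathsf{bd}}$ will be supported in $\{\ell(x) \ge \epsilon_0\}$. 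The trace remainder (\ref{27-5-39}) splits accordingly, and matches the Scott-corrected singular contribution by way of the already-verified approximation on supports of $\psi_m$.

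First I would invoke Proposition~\ref{prop-27-5-10} together with its extension \ref{27-5-32} to bound the sum of singular contributions $\sum_m$ by $CQ_0$, using the condition $|\y_m-\y_{m'}|\ge d$ (which after the Thomas-Fermi rescaling becomes the hypothesis (\ref{27-5-19})). The additional regular exterior contribution supported on $\chi_{\mathsf{reg}}$ is covered by the analysis leading to \ref{27-5-32}: after the $\ell$-rescaling the non-degeneracy condition (\ref{27-3-65}) holds automatically from Thomas-Fermi scaling, estimate (\ref{27-5-31}) gives $\nu_1$, and the resulting contribution sums to a value not exceeding $CQ_0$ because all surviving powers of $\ell$ are negative; standard propagation arguments upgrade this to the sharper form absorbed by $Q_0$.

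The heart of the proof, and the main obstacle, lies in handling the boundary zone $\{\ell(x) \gtrsim \min(\bar r, 1)\}$, where the Thomas-Fermi potential degenerates and the strong non-degeneracy condition can fail. Here I would follow the program outlined in steps~(a)--(g) preceding the proposition: rescale by $B^{1/4}$ so that $h_1 = B^{-1/4}$, $\beta_1 = B^{1/4}$, then introduce the Thomas-Fermi scaling function $\gamma$ satisfying (\ref{27-5-34})--(\ref{27-5-35}) and use a $\gamma$-admissible partition. After the second rescaling $x\mapsto x\gamma^{-1}$ the potential is non-degenerate in the sense of (\ref{27-3-65}), allowing Proposition~\ref{prop-27-4-3}\ref{prop-27-4-3-ii}. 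The key technical point is Remark~\ref{rem-27-5-11}: the splitting of $A'$ via its Poisson representation (integral over $|y-z|\ge \tfrac{3}{2}\gamma$ is absorbed into the external field $A^0$; integral over $|y-z|\le 2\gamma$ inherits the improved estimate) yields the improved bound $\nu_2 \le \nu_1\gamma^{3/2+\delta}$, which in turn makes the $\gamma$-integrals converge to their value at $\gamma = 1$ with positive powers of $\gamma$.

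Finally, for case~\ref{prop-27-5-12-i} the effective $\bar\gamma = h_1^{1/3}$ kills the remaining terms and we end up with $Q_0$ only; case~\ref{prop-27-5-12-ii} is similar because for $M=1$ the strong non-degeneracy (\ref{27-5-23}) holds throughout, so no further refinement is needed. For cases~\ref{prop-27-5-12-iii} and~\ref{prop-27-5-12-iv} one sets $\bar\gamma = (Z-N)_+^{1/4}B^{-3/16}$ (respectively $\bar\gamma \asymp 1$), tracks which of the three terms in (\ref{27-3-66}) dominates, and computes that only the term $C\beta h^{-1/2}$ survives to produce the new contribution $Q'$ from (\ref{27-5-37}) or $Q''$ from (\ref{27-5-38}); the other two terms of (\ref{27-3-66}) are absorbed into $Q_0$ by direct calculation. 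Summing over $m$ and combining with the singular and regular exterior estimates gives the four claimed bounds.
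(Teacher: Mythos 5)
Your proposal follows the paper's own proof essentially step by step: the partition into singular, regular exterior, and boundary zones, the invocation of Proposition~\ref{prop-27-5-10} and \ref{27-5-32} for the first two, the $B^{\frac{1}{4}}$ rescaling followed by the $\gamma$-admissible partition (27-5-34)--(27-5-35) in the boundary zone, the improved estimate $\nu_2 \le \nu_1\gamma^{\frac{3}{2}+\delta}$ from Remark~\ref{rem-27-5-11} together with Proposition~\ref{prop-27-4-3}\ref{prop-27-4-3-ii}, and the case split on $\bar\gamma$ (either $h_1^{1/3}$, or $(Z-N)_+^{1/4}B^{-3/16}$, or $\asymp 1$) yielding $Q_0$, $Q'$, or $Q''$ via the surviving $C\beta h^{-1/2}$ term in (\ref{27-3-66}). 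This matches the paper's argument (steps~(a)--(g) preceding the statement) in both structure and detail, with only cosmetic imprecisions.
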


\begin{remark}\label{rem-27-6S-12}
\begin{enumerate}[label=(\roman*), fullwidth]
\item\label{rem-27-5-13-i}
As $B\le Z$ expression (\ref{27-5-29}) could be upgraded to (\ref{27-5-30});

\item\label{rem-27-5-13-ii}
Terms (\ref{27-5-38}) and (\ref{27-5-39}) are rather superficial: they do not depend on $\alpha$ and they were not present in Chapter~\ref{book_new-sect-25}. Indeed, using more precise arguments of Chapter~\ref{book_new-sect-25} one can get rid off them, at least for sufficiently small $\alpha$.

However in the upper estimate we will need to deal with $\D$-term as well and this would give us a far larger error.
\end{enumerate}
\end{remark}

\section{Endgame}
\label{sect-27-6-4}

\subsection{Main theorem: $M=1$}
\label{sect-27-6-4-1}

As $M=1$ we almost immediately arrive to the following statement which we formulate in ``rescaled'' terms:

\begin{theorem}\label{thm-27-5-14}
Let $V=W^\TF_B+\lambda$ be a Thomas-Fermi potential as $B\le Z^{\frac{4}{3}}$, $N\asymp Z$ and $M=1$. Then
\begin{multline}
\cE^*_0+ 2 S(\alpha Z)Z^2 - CZ^{\frac{5}{3}}\bigl(1+\alpha B \bigr)\le
\E^*_\alpha \\[3pt]
\le
\cE^*_\alpha+ 2 S(\alpha Z)Z^2 +
C\bigl(Z^{\frac{5}{3}}+\alpha B^2 Z^{\frac{1}{3}} \bigr)
\label{27-5-40}
\end{multline}
where $S(\alpha Z)Z^2$ is a Scott correction term derived in Section~\ref{book_new-sect-26-3}.
\end{theorem}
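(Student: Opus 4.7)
The plan is to separate the problem into the Coulomb-singular zone near the unique nucleus $\bar{\y}_1$ and the regular exterior, combine the global trace asymptotics of Proposition~\ref{prop-27-5-12}\ref{prop-27-5-12-ii} on the latter with the Scott analysis of Section~\ref{book_new-sect-26-3} on the former, and exploit the asymmetry between $\cE^*_0$ and $\cE^*_\alpha$ captured by the formal semiclassical correction of Subsection~\ref{sect-27-2-2-1}.

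For the lower estimate I take a minimizer $A$ of $\E_\alpha$, which exists by Proposition~\ref{prop-27-5-5}. Proposition~\ref{prop-27-5-12}\ref{prop-27-5-12-ii} applies without any assumption on $(Z-N)_+$ because $M=1$, yielding in rescaled variables
\begin{equation*}
\Tr(H^-_{A,V}) = -\int P_{Bh}(V)\,dx + \cT_1(A) + O(Q_0),
\end{equation*}
where $\cT_1(A) \Def \Tr(H^-_{A,V_1}\psi_1) - \int P_0(V_1)\psi_1\,dx$, with $V_1 = z_1|x-\bar{\y}_1|^{-1}$, $\psi_1$ a smooth cut-off supported near $\bar{\y}_1$, and $Q_0$ from \textup{(\ref{27-5-29})}. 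Dropping the non-negative term $\alpha^{-1}h^{-2}\int|\partial A'|^2$ gives $\E^*_\alpha\ge \cE_\alpha(A)+\cT_1(A)-CQ_0 \ge \cE^*_\alpha+\cT_1(A)-CQ_0$. I then replace $\cE^*_\alpha$ by $\cE^*_0$ at cost $|\cE^*_0-\cE^*_\alpha|\lesssim \kappa\beta^2$, which in physical units is $\alpha B^2Z^{1/3}$ and is absorbed into $CZ^{5/3}(1+\alpha B)$. Running the Scott analysis of Section~\ref{book_new-sect-26-3} on $\cT_1(A)$, with the self-generated field restricted to dyadic shells $\{\ell(x)\asymp 2^kZ^{-1}\}$ and treated as a smooth external perturbation thanks to the uniform estimates of Proposition~\ref{prop-27-5-6}, converts it to $2S(\alpha Z)Z^2$ modulo a residual of size $O(Z^{5/3})$.

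For the upper estimate the naive choice $A'=0$ fails once $\alpha B^2\gg Z^{4/3}$, i.e.\ $\kappa\beta^2\gg h^{-1}$ in rescaled variables, as noted in Remark~\ref{rem-27-2-1}\ref{rem-27-2-1-ii}. I construct a test $A$ by patching: in the bulk $\{\ell(x)\gtrsim Z^{-1}\}$ take the mollified minimizer of the formal functional $\bar{\cE}_\alpha$ supplied by Proposition~\ref{prop-27-3-18}, in the singular zone take the Scott quasi-minimizer of Chapter~\ref{book_new-sect-26}, and join them at the interface with the cut-off device of Remark~\ref{rem-27-5-3}. Plugging this test $A$ into Proposition~\ref{prop-27-5-12}\ref{prop-27-5-12-ii}, invoking $\cE_\alpha(A)\le \cE^*_\alpha+O(h^{-1})$ from Proposition~\ref{prop-27-3-18}, and using the Scott upper bound in the singular zone produces $\E^*_\alpha\le \cE^*_\alpha + 2S(\alpha Z)Z^2 + C(Z^{5/3}+\alpha B^2 Z^{1/3})$. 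The extra term $\alpha B^2Z^{1/3}$ absent from the lower bound is the physical-unit version of $O(\kappa\beta^2)$ lost through the mollification, exactly as predicted by Remark~\ref{rem-27-2-1}.

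The principal obstacle is simultaneous control of $A'$ on the Scott scale $\ell\asymp Z^{-1}$ (where $h_1\asymp 1$ after rescaling) and on the Thomas--Fermi scale $\ell\asymp Z^{-1/3}$ (where $h\asymp Z^{-1/3}$), together with all intermediate dyadic annuli, so that the summation of trace errors over the $\ell$-admissible partition is geometric and collapses to $Q_0$ at the dominant scale. This is precisely what the sharp $\ell$-dependent bounds \textup{(\ref{27-5-15})} interior and \textup{(\ref{27-5-17})} exterior supply, and it is also what lets the glued test field in the upper bound meet the smoothness hypotheses \textup{(\ref{27-3-14})}--\textup{(\ref{27-3-15})} needed by Proposition~\ref{prop-27-5-12}, given the mollification choice $\varepsilon=\min(1,(\kappa\beta)^{-2/3}h^{2/3})$ of Proposition~\ref{prop-27-3-18}.
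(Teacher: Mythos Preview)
Your lower bound has a genuine gap in how the magnetic-energy term is allocated. You feed all of $\kappa^{-1}h^{-2}\|\partial A'\|^2$ into $\cE_\alpha(A)$ and then ask that $\cT_1(A)$ alone be bounded below by $2S(\alpha Z)Z^2 - O(Z^{5/3})$. But the Scott number $S(\alpha Z)$ is \emph{defined} through a minimization that includes the self-energy penalty; with no penalty present, $\cT_1(A)$ admits no such lower bound. Concretely, the best you can write is $\cT_1(A)\ge 2S(\alpha Z)Z^2-\kappa^{-1}h^{-2}\|\partial A'\|^2$, and even with the sharp estimate $\|\partial A'\|\le C(\kappa+\kappa^{1/2}h^{1/2})$ the subtracted term is of order $\alpha Z^3$, which is not absorbed by $Z^{5/3}(1+\alpha B)$ (take $\alpha\asymp Z^{-1}$, $B\asymp 1$). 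The paper does the opposite split: the full magnetic energy stays with the Scott expression (the bracket in \textup{(\ref{27-5-44})}), giving the Scott lower bound directly, while $-h^{-3}\int P_{Bh}(V)\,dx$ is replaced by $\cE^*_0$ (not $\cE^*_\alpha$) with error $C(\kappa\beta+1)h^{-1}$ via \textup{(\ref{27-5-45})}--\textup{(\ref{27-5-46})}; the bound \textup{(\ref{27-5-46})} itself is obtained by comparing $\E^*_\kappa$ with $\E^*_{2\kappa}$.

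Your upper bound takes a different route from the paper and misses its key device. The paper does not patch two fields; it uses a \emph{single} $A'$, namely the minimizer for the pure Coulomb potential $V_1$ with \emph{no} external magnetic field. That problem is invariant under $A'\mapsto -A'$, so both signs are admissible test fields, and the linear-in-$\Phi$ correction $-h^{-3}\int \partial_\beta P_{\beta h}(V)\,\Phi\,dx$ in \textup{(\ref{27-5-43})} changes sign with $A'$ and can therefore be dropped. This sign-flip is precisely what yields the $\alpha B^2Z^{1/3}$ error in the upper estimate instead of the $\alpha BZ^{5/3}$ that the naive bound on that term would give. Your patched construction with the mollified $\bar{\cE}_\alpha$-minimizer in the bulk does not exploit this symmetry, and Proposition~\ref{prop-27-3-18} is a local statement whose direct transplantation to the global Thomas--Fermi setting is not justified.
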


\begin{proof}[Proof: Estimate from above]\label{proof-27-5-14-i}
We already know from (\ref{27-5-20}) that after standard rescaling for any magnetic field $A'$ satisfying the same estimates as a minimizer of
$\E_\kappa (A')$
\begin{multline}
 \Tr (H^-_{A,V}) +\kappa ^{-1}h^{-2}\|\partial A'\|^2\\[3pt]
\le - h^{-3}\int P_{Bh}(V)\, dx +
\underbracket{\textup{(\ref{27-5-21})} + \kappa ^{-1}h^{-2}\|\partial A'\|^2}+
\textup{(\ref{27-5-22})}.
\label{27-5-41}
\end{multline}
Here the left-hand expression is $\E_\kappa (A')\ge \E^*_\kappa$ and in expression (\ref{27-5-21}) we take $\psi_1=1$.

Let us pick up $A'$ which is a minimizer for the Coulomb potential\linebreak $V_1(x)=z_1|x-\y_1|^{-1}$ without any external magnetic field\footnote{\label{foot-27-30} Actually since for Coulomb potential trace is infinite we take potential $V_1(x)+\tau$ with $\tau <0$, establish estimates and then tend $\tau\to -0$.}. Then we can replace selected sum of two terms by $2h^{-2}S(\kappa z)z^2$ (in virtue of Subsubsection~\ref{book_new-sect-26-3-5}.1 an error does not exceed (\ref{27-5-22})).

Unfortunately $A'$ is not a minimizer for a local problem; however we can replace $P_{Bh} (V)$ by
\begin{equation}
P_{\beta h}(V) + \partial _{\beta} P_{\beta h}(V) \cdot \Phi , \qquad
\Phi =\partial_1 A'_2-\partial_2 A_1'
\label{27-5-42}
\end{equation}
and if we apply partition then on each partition element an error in (\ref{27-5-41}) does not exceed $C\kappa_1 \zeta^2\beta_1 h_1^{-\frac{1}{2}} =
C\kappa \beta h^{-\frac{1}{2}}\zeta^{\frac{5}{2}} \ell ^{\frac{5}{2}}$. Indeed, we know that for a minimizer
$\|\partial A'\|\le C(\kappa + \kappa^{\frac{1}{2}} h^{\frac{1}{2}})$ (also see in the proof of the estimate from below).

Summation over partition results in $C\kappa \beta^2$ and we arrive to
\begin{equation}
- h^{-3}\int P_{\beta h}(V)\, dx -
 h^{-3}\int \partial_\beta P_{\beta h}(V)\cdot \Phi \, dx +
 C(h^{-1}+\kappa \beta^2 )
\label{27-5-43}
\end{equation}
in the right-hand expression where the first term is $\cE^*_0$\,\footnote{\label{foot-27-31} I.e. Thomas-Fermi energy calculated as $A'=0$.}.

The second term is rather unpleasant as we cannot estimate it by anything better than $C\beta h^{-1}$ (see in the estimate from below) but \emph{here} we have a trick\footnote{\label{foot-27-32} Which unfortunately we cannot repeat in estimate from below.}: we replace $A'$ by $-A'$ which is also a minimizer for the same Coulomb potential $V_1$ without external magnetic field. Then $\Phi$ and the second term change signs and since nothing else happens we can skip the second term which concludes the proof of the upper estimate.

Scaling back we arrive to the upper estimate in (\ref{27-5-40}). \end{proof}\vspace{-10pt}

\begin{proof}[Proof: Estimate from below]\label{proof-27-5-14-ii}
 Again from (\ref{27-5-20}) we already know that for a minimizer $A'$ of $\E_\kappa (A')$ estimate (\ref{27-5-41}) could be reversed
\begin{multline}
 \Tr (H^-_{A,V}) +\kappa ^{-1}h^{-2}\|\partial A'\|^2\\[3pt]
\ge - h^{-3}\int P_{Bh}(V)\, dx +
\underbracket{\textup{(\ref{27-5-21})} + \kappa ^{-1}h^{-2}\|\partial A'\|^2}-
\textup{(\ref{27-5-22})}.
\label{27-5-44}
\end{multline}
Here the left-hand expression is $\E_\kappa (A')= \E^*_\kappa$ and the selected sum of two terms can be estimated from below by $2h^{-2}S(\kappa z)z^2$.

Again $A'$ is not a minimizer for a local problem; however we can replace
$P_{Bh} (V)$ by $P_{\beta h}(V)$ with an error not exceeding
\begin{multline*}
Ch^{-3} \int |\partial _{\beta} P_{\beta h}(V)|\cdot |\partial A'|\,dx +
Ch^{-3} \int |\partial^2 _{\beta} P_{\beta h}(V)|\cdot |\partial A'|^2\,dx\\
\le Ch^{-3} \|\partial _{\beta} P_{\beta h}(V)\| \cdot \|\partial A'\|+
Ch^{-3}\int |\partial^2 _{\beta} P_{\beta h}(V)|\,dx \times \|\partial A'\|^2
\end{multline*}
with the right-hand expression not exceeding
\begin{equation}
C\beta h^{-1} |\partial A'\| + Ch^{-1} \|\partial A'\|^2.
\label{27-5-45}
\end{equation}
However we already know that
$\E^*_{2\kappa} \ge \cE^*_0 - C(\kappa h^{-2}+h^{-1})$ and therefore since
$\E_\kappa (A')= \E_{2\kappa} (A')+ (2\kappa h^2)^{-1} \|\partial A'\|^2$ we conclude that
\begin{equation}
\|\partial A'\| \le C(\kappa + \kappa^{\frac{1}{2}} h^{\frac{1}{2}});
\label{27-5-46}
\end{equation}
then expression (\ref{27-5-45}) does not exceed $C(\kappa \beta +1)h^{-1}$ which concludes the proof of the lower estimate.

Since $A'$ is a minimizer in the presence of the external field, we cannot replace $A'$ by $-A'$ and thus cannot repeat the trick used in the proof of the upper estimate. Thus in the estimate from below we are left with
$C\kappa \beta h^{-1}$ rather than with $C\kappa \beta^2$.

Scaling back we arrive to the lower estimate in (\ref{27-5-40}).
\end{proof}

\begin{remark}\label{rem-27-5-15}
\begin{enumerate}[label=(\roman*), fullwidth]
\item\label{rem-27-5-15-i}
It is a very disheartening that our estimate deteriorated here. However it may be that indeed the better estimate does not hold due to the \index{entanglement}\emph{entanglement\/} of the singularity and the regular zone via self-generated magnetic field. Still we did not loose Scott correction term as long as $\kappa\beta h\ll 1$;

\item\label{rem-27-5-15-ii}
Recall that in the Section~\ref{book_new-sect-26-3} we already had an entanglement of different singularities which obviously remains with us as $M\ge 2$. Surely both of these entanglement matter only if we are looking for the remainder estimate better than $O(\kappa h^{-2})$. Otherwise we can just pick up $A'=0$ near singularities and Scott correction term $2h^{-2}S(0)z^2$;

\item\label{rem-27-5-15-iii}
The silver lining is that we do not need all these non-degeneracy conditions for such bad estimate and we expect that our arguments would work for $M\ge 2$.
Still for $M\ge 2$ we will need to decouple singularities and to do this we will need to estimate $\|\partial A'\|_{\sL^2(\{\ell (x) \asymp d\})}$ where $d$ is the minimal distance between nuclei (see (\ref{27-5-19})).
\end{enumerate}
\end{remark}

As $\beta \ll 1$, $\kappa \ll h^{\frac{1}{3}}|\log h|^{-\frac{1}{3}}$ we can recover even Schwinger correction term:

\begin{theorem}\label{thm-27-5-16}
Let $V$ be a Thomas-Fermi potential $W^\TF_B+\lambda$ rescaled as $B\ll Z$, $N\asymp Z$ and $M=1$. Let respectively $\beta=BZ^{-1}$, $h=Z^{-\frac{1}{3}}$, and $\kappa=\alpha Z\le \kappa^*$. Then
\begin{multline}
|\E^*_\alpha -\bigl(\cE^*_0+ 2 S(\alpha Z)Z^2 + \Schwinger\bigr) |\\
\le C Z^{\frac{5}{3}}\bigl(Z^{-\delta}+B^{\delta}Z^{-\delta}\bigr) +
 C\alpha |\log \alpha Z|^{\frac{1}{3}}Z^{\frac{25}{9}}
\label{27-5-47}
\end{multline}
where $\Schwinger$ is a Schwinger correction term.
\end{theorem}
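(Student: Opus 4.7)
The approach extends the proof of Theorem~\ref{thm-27-5-14}, keeping its two-zone structure—a Scott zone near the nucleus and a regular Thomas-Fermi zone away from it—while sharpening each component to extract the Schwinger correction. The key observations are: (a) in the regime $\beta\ll 1$, $\kappa\ll h^{1/3}|\log h|^{-1/3}$, both the external and self-generated magnetic fields act as perturbations of the non-magnetic theory of Chapter~\ref{book_new-sect-26}; (b) $W^\TF_B+\lambda$ satisfies the strong non-degeneracy (\ref{27-3-65}) on the interior of the TF zone (Remark~\ref{rem-27-3-17}), so Theorem~\ref{thm-27-3-21}\ref{thm-27-3-21-i} and Proposition~\ref{prop-27-3-15} deliver the remainder $Q\asymp h^{-1}$ there, and the logarithmic-and-propagation refinement that produced (\ref{27-5-30}) from (\ref{27-5-29}) contributes the $(BZ^{-1})^\delta$ gain.

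First I would partition $1=\psi_0+\psi_1$ with $\psi_0$ supported near $\bar y_1$ and invoke Proposition~\ref{prop-27-5-10} (together with Proposition~\ref{prop-27-5-12}\ref{prop-27-5-12-ii}) to reduce the singular contribution to $\Tr(H^-_{A,V_1}\psi_0)-\int P_0(V_1)\psi_0\,dx$ modulo $CQ_0$. Substituting into this the Scott+Schwinger asymptotics developed for a Coulomb singularity in Section~\ref{book_new-sect-26-3} (applicable here because $\beta\ll 1$ makes $A^0$ a tame perturbation that is absorbed into the error) yields $2S(\alpha Z)Z^2+\Schwinger$ plus an error of order $Z^{5/3}(Z^{-\delta}+(BZ^{-1})^\delta)$. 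In the TF zone, the sharper non-degenerate Weyl calculus of Subsection~\ref{sect-27-3-4-3} supplies a remainder of the same order.

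Next I would treat the self-generated field exactly as in Theorem~\ref{thm-27-5-14}: for the upper bound, pick $A'$ to be the minimizer of the Coulomb problem $V_1$ with no external field, expand $P_{Bh}(V)$ via (\ref{27-5-42}), and cancel the linear-in-$A'$ term by the $A'\mapsto -A'$ reflection symmetry available for the bare Coulomb minimizer. For the lower bound, with $A'$ the true minimizer of $\E_\kappa(A')$, use the bootstrap bound (\ref{27-5-46}), $\|\partial A'\|\le C(\kappa+\kappa^{1/2}h^{1/2})$, to estimate the symmetric linearization error (\ref{27-5-45}) by $C(\kappa\beta+1)h^{-1}$. The hypotheses $\beta\ll 1$ and $\kappa\ll h^{1/3}|\log h|^{-1/3}$ are precisely what forces this residual to be $o(Z^{5/3})$ and thus absorbed into the RHS of (\ref{27-5-47}); this is why Schwinger, hidden in Theorem~\ref{thm-27-5-14} by a $\kappa\beta h^{-1}$ slack, becomes visible in the present regime.

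The main obstacle is the matching of the Scott-zone and TF-zone expansions across the transition so that Scott and Schwinger appear with their correct coefficients without double-counting. This is the same matching problem executed in Chapter~\ref{book_new-sect-26} for atoms without magnetic field, but one must verify that the mild magnetic perturbation (weak external $A^0$, small $\|\partial A'\|$) does not disturb the matching beyond the quoted error; the entanglement noted in Remark~\ref{rem-27-5-15}\ref{rem-27-5-15-i}, which obstructed recovery of Schwinger in the general case, is controlled here exactly by the two smallness hypotheses in the statement.
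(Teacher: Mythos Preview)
Your overall strategy is right and matches the paper: extend the proof of Theorem~\ref{thm-27-5-14}, and use propagation-of-singularities refinements to push the remainder below $Z^{5/3}$ so that Schwinger becomes visible. But you misplace the Schwinger term. The expression $\Tr(H^-_{A,V_1}\psi_0)-\int P_0(V_1)\psi_0\,dx$ with $V_1=z_1|x-\bar\y_1|^{-1}$ the pure Coulomb potential produces only the Scott correction $2S(\alpha Z)Z^2$; Coulomb is scale-invariant and carries no Schwinger-type contribution. Schwinger is a \emph{bulk} correction that emerges from the refined semiclassical expansion of $\Tr(H^-_{A,V})+h^{-3}\int P_{Bh}(V)\,dx$ in the regular Thomas--Fermi zone, not from the singular piece.

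The paper's proof is a single sentence: invoke propagation arguments in the threshold zone $\{x:\,(\beta+h)^{\sigma}\le \ell(x)Z^{1/3}\le (\beta+h)^{-\sigma}\}$, exactly as in Section~\ref{book_new-sect-26-3}. That zone is the heart of the TF bulk (rescaled $\ell\sim 1$), and the propagation machinery there (compare the proof of Proposition~\ref{prop-27-7-2}) both improves the local trace remainder to $Ch^{-1}(h^\delta+\beta^\delta)$ \emph{and} extracts Schwinger as the next term in the local expansion. Your references to the passage from (\ref{27-5-29}) to (\ref{27-5-30}) are the correct mechanism, but they act on the regular-zone contribution, not on the Coulomb piece. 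Once you relocate Schwinger to the bulk and run propagation in that threshold zone, the rest of your argument---the $A'\mapsto -A'$ reflection for the upper bound and the bootstrap (\ref{27-5-46}) controlling (\ref{27-5-45}) for the lower bound, with $\beta\ll 1$ killing the $\kappa\beta h^{-1}$ slack---goes through as you wrote it.
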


\begin{proof}
The proof is standard like in Section~\ref{book_new-sect-26-3}: we invoke propagation arguments in the zone
$\{x:\,(\beta +h)^{\sigma}\le \ell (x)Z^{\frac{1}{3}}\le
(\beta +h)^{-\sigma}\}$.
\end{proof}

\section{$\N$-term asymptotics and $\D$-term estimate}
\label{sect-27-5-5}

Consider now $\N$-terms assuming that $A'$ is a minimizer.

\subsection{Case $M=1$}
\label{sect-27-5-5-1}

Assume first that $M=1$.

\medskip\noindent
(a) Consider first singular and regular zones (but not the boundary zone). Then after rescaling contribution of $\ell$-element to the remainder does not exceed
\begin{equation}
C \bigl(h_1^{-2}+h_1^{-\frac{5}{3}}\nu_1^{\frac{2}{3}}\bigr)
\label{27-5-48}
\end{equation}
and summation over zone $\{x:\,\ell(x)\le 1\}$ results in the value of this as $\ell=1$ i.e.
\begin{equation}
C\bigl(h^{-2}+h^{-\frac{5}{3}}\nu^{*\,\frac{2}{3}}\bigr),\qquad
\nu^*=(\kappa \beta )^{\frac{10}{9}}h^{\frac{4}{9}}|\log h|^K.
\label{27-5-49}
\end{equation}
Recall that $\ell \ge 1$ we have $h_1=h\ell$, $\beta_1=\beta \ell^3$ and
$\nu_1$ is defined by (\ref{27-5-31}) which is sufficient even without invoking Remark~\ref{rem-27-5-9} as all powers of $\ell$ in (\ref{27-5-48}) become negative. Therefore contribution of regular exterior zone also does not exceed (\ref{27-5-49}).

\medskip\noindent
(b) Consider now boundary zone. Let us repeat arguments used in the proof of Proposition~\ref{prop-27-5-12}: contribution of $\gamma$-element does not exceed
$C\beta_2 \bigl(h_2^{-1} + h_2^{-\frac{2}{3}}\nu_2^{\frac{2}{3}}\bigr)$ and plugging $\beta_2=\beta_1\gamma^{-1}$, $h_2=h_1\gamma^{-3}$ and $\nu_2=\nu_1\gamma^{\frac{3}{2}+\delta}$ we get
$C\beta_1 \bigl( h_1^{-1} \gamma^{2} + h_1^{-\frac{2}{3}}\nu_1^{\frac{2}{3}}\gamma^{2+\delta}\bigr)$ and therefore the total contribution of the boundary zone to the remainder does not exceed
\begin{equation}
C\beta_1 \int \bigl(h_1^{-1} \gamma^{-1} + h_1^{-\frac{2}{3}}\nu_1^{\frac{2}{3}}\gamma^{-1+\delta}\bigr)\,dx\asymp
C\bigl(h_1^{-2} |\log h| + h_1^{-\frac{5}{3}}\nu_1^{\frac{2}{3}}\bigr)
\label{27-5-50}
\end{equation}
since $\beta_1=h_1^{-1}$ and from Subsection~\ref{book_new-sect-25-5-1} we also know that
\begin{equation}
\D(\gamma ^{-1+s},\gamma ^{-1+s})\le Cs ^{-2}\qquad \text{for\ \ }s>0.
\tag{\ref*{book_new-25-5-13}}\label{25-5-13xx}
\end{equation}
even in the general case. Here we integrate over
$\gamma\ge \bar{\gamma}=h_1^{\frac{1}{3}}$ while contribution of zone $\{x:\,\gamma(x)\le \bar{\gamma}$ does not exceed $Ch_1^{-2}$.

If $\bar{\gamma}\ge h_1^{\frac{1}{3}}$ (i.e. $(Z-N)_+\gtrsim B^{\frac{5}{12}}$ we invoke in zone $\{x:\,\gamma(x)\le \bar{\gamma}$ strong non-degeneracy assumption (\ref{27-3-60}) fulfilled as $M=1$ and estimate its contribution by the same expression (\ref{27-5-50}) albeit without logarithmic term. Similar arguments work also as $(Z-N)_+\ge B^{\frac{3}{4}}$.

\medskip\noindent
(c) In (\ref{27-5-50}) logarithmic factor is mildly annoying. However we can get rid off it using our standard propagation arguments like in Subsection~\ref{book_new-sect-25-6-3}; we leave easy details to the reader.

Note that so far in the estimate we also have $P'_{Bh}(V)$ rather than
$P'_{\beta h}$; observe however that in the virtue of non-degeneracy assumption (\ref{27-3-60}) fulfilled as $M=1$ the error when we replace $P'_{Bh}(V)$ by $P'_{\beta h}(V)$ does not exceed $C\beta_1 h_1^{-1}$ on the regular elements and $Ch_2^{-2}$ on the border elements and summation in both cases results in
$O(\beta h^{-1}\bar{\ell}^2)=O(\beta^{\frac{1}{2}}h^{-\frac{3}{2}})$. Therefore in contrast to the trace estimate there is no deterioration.

Scaling back, we arrive to estimate (\ref{27-5-51}) below.

\medskip\noindent
(d) In the framework of Theorem~\ref{thm-27-5-16} we invoke our standard propagation arguments in the zone
$\{x:\,(\beta +h)^{\sigma}\le \ell (x)\le (\beta +h)^{-\sigma}\}$; considering $\D$-terms we invoke these arguments as both elements in the pair belong to this zone. Again, we leave easy details to the reader. Scaling back, we arrive to estimate (\ref{27-5-53}) below.

\medskip\noindent
(e) We deal with $\D$-term in our usual manner considering double partition and different pairs of partition elements--disjoint when we just apply above arguments as the kernel $|x-y|^{-1}$ is smooth there and non-disjoint when we appeal to the local estimates of $\D$-term. We we leave easy details to the reader.

\begin{proposition}\label{prop-27-5-17}
Let $V=W^\TF_B+\lambda$ be a Thomas-Fermi potential as $B\le Z^{\frac{4}{3}}$, $N\asymp Z$ and $M=1$. Then
\begin{enumerate}[label=(\roman*), fullwidth]
\item\label{prop-27-5-17-i}
\begin{gather}
|\int \bigl( \tr e(x,x,0) -P'_{B}(V(x)\bigr)\,dx|\le C R
\label{27-5-51}\\
\shortintertext{and}
\D \bigl( \tr e(x,x,0) -P'_{B}(V(x),\,\tr e(x,x,0) -P'_{B}(V(x)\bigr)
\le C Z^{\frac{1}{3}}R^2
\label{27-5-52}\\
\shortintertext{with}
R=R_0\Def Def Z^{\frac{2}{3}}+ Z^{\frac{5}{9}}\nu^{*\,\frac{2}{3}},\qquad
\nu^*=(\alpha B )^{\frac{10}{9}}Z^{-\frac{4}{27}}|\log Z|^K
\label{27-5-53}
\end{gather}

\item\label{prop-27-5-17-ii}
In the framework of Theorem~\ref{thm-27-5-16} estimates \textup{(\ref{27-5-51})} and \textup{(\ref{27-5-52})} hold with
\begin{equation}
R\Def C Z^{\frac{2}{3}}\bigl(Z^{-\delta}+ B^{\delta}Z^{-\delta}+(\alpha Z)^{\delta}\bigr).
\label{27-5-54}
\end{equation}
\end{enumerate}
\end{proposition}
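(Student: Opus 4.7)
The plan is to mimic the zone decomposition already used in the proof of Theorem~\ref{thm-27-5-14} and Proposition~\ref{prop-27-5-12}, but now applying the \emph{local} $\N$-term and $\D$-term estimates of Propositions~\ref{prop-27-3-26}, \ref{prop-27-3-28}, \ref{prop-27-4-8}, \ref{prop-27-4-9} rather than the trace estimates. We work in rescaled variables ($h=Z^{-1/3}$, $\beta = BZ^{-1}$, $\kappa = \alpha Z$) and exploit that $M=1$ forces the strong non-degeneracy assumption \textup{(\ref{27-3-60})} after rescaling everywhere outside the innermost Coulomb core, so the \emph{non-degenerate} local estimates with $R = R_0+R'$ (and $R'\lesssim R_0$ for the near-minimizer) apply throughout.

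For part~\ref{prop-27-5-17-i}, first I would handle the singular and regular exterior zones $\{\ell(x)\le 1\}$ and $\{1\le \ell(x)\lesssim 1\}$: rescaling $x\mapsto (x-\bar\y_1)/\ell$, $\tau\mapsto\tau/\zeta^2$ with $\zeta=\ell^{-1/2}$ gives $h_1=h\zeta^{-1}\ell^{-1}$, $\beta_1=\beta\zeta^{-1}\ell$ and the bound $\nu_1$ of Proposition~\ref{prop-27-5-6}. Applying Proposition~\ref{prop-27-3-26}\ref{prop-27-3-26-i} (or Proposition~\ref{prop-27-4-8}\ref{prop-27-4-8-i} when $\beta_1h_1\gtrsim 1$) bounds the contribution of each $\ell$-element by $C(h_1^{-2}+h_1^{-5/3}\nu_1^{2/3})$; plugging in, every power of $\ell$ is negative, so summation returns the value at $\ell=1$, which is exactly \textup{(\ref{27-5-49})}. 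The boundary zone $\{\ell(x)\gtrsim\min(\bar r,1)\}$ requires a second rescaling by the function $\gamma(x)$ of Subsection~\ref{book_new-sect-25-5-1}; now $\beta_2h_2\gtrsim 1$ and the local bound is $C\beta_2(h_2^{-1}+h_2^{-2/3}\nu_2^{2/3})$. Here the essential improvement comes from Remark~\ref{rem-27-5-11}, which gives $\nu_2\le\nu_1\gamma^{3/2+\delta}$ via the harmonic-function decomposition of $A'$ on the $\gamma$-ball; this makes all powers of $\gamma$ in the $\gamma$-integrand nonnegative (after using $\D(\gamma^{-1+s},\gamma^{-1+s})\le Cs^{-2}$ of \textup{(\ref{25-5-13xx})}), so the boundary-zone contribution is bounded by \textup{(\ref{27-5-50})}. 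The logarithmic factor in the first term is removed by the standard refined propagation of Subsection~\ref{book_new-sect-25-6-3}. The substitution of $P'_{Bh}(V)$ by $P'_{\beta h}(V)$ contributes only $O(\beta^{1/2}h^{-3/2})$ under \textup{(\ref{27-3-60})}, which is absorbed into $R_0$. Scaling back gives \textup{(\ref{27-5-51})} with $R$ as in \textup{(\ref{27-5-53})}.

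For the $\D$-term estimate \textup{(\ref{27-5-52})}, I would apply the Fefferman-de~Llave decomposition \textup{(\ref{book_new-16-3-1})} to pairs of balls $B(\bar x,r)$, $B(\bar y,r)$ with $3r\le|\bar x-\bar y|\le 4r$. Close pairs (those contained in a single rescaling element) are handled by Proposition~\ref{prop-27-3-28}\ref{prop-27-3-28-i} (or Proposition~\ref{prop-27-4-9}\ref{prop-27-4-9-i} when $\beta h\gtrsim 1$), producing $CR^2$ per element; summation as in Part~2 of the proof of Proposition~\ref{prop-27-3-28} reproduces the $R^2$ bound on each rescaled region. Disjoint pairs are controlled by the already-established local $\N$-term estimate together with the double integral $\iint|x-y|^{-1}\,dx\,dy$, taken over the support which in physical variables has diameter $\sim 1$; the Coulomb scaling of $\D(\cdot,\cdot)$ explains the factor $Z^{1/3}$.

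Part~\ref{prop-27-5-17-ii} is proved by the same mechanism but with the propagation arguments of Section~\ref{book_new-sect-26-3} applied in the intermediate zone $\{x:(\beta+h)^\sigma\le\ell(x)Z^{1/3}\le(\beta+h)^{-\sigma}\}$, as in the proof of Theorem~\ref{thm-27-5-16}; for the $\D$-term one insists that both members of the pair lie in this zone. This yields the improved factor $(Z^{-\delta}+B^\delta Z^{-\delta}+(\alpha Z)^\delta)$ in $R$. The principal obstacle is bookkeeping the $\gamma$-powers in the boundary-zone integration: without Remark~\ref{rem-27-5-11}'s refinement $\nu_2\le\nu_1\gamma^{3/2+\delta}$, one would obtain $\gamma^{-2}$ singularities as in \textup{(\ref{27-5-36})} that destroy the estimate. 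That the resulting exponents are all nonnegative is precisely what distinguishes the $M=1$ case from the $M\ge 2$ discussion surrounding \textup{(\ref{27-5-37})}--\textup{(\ref{27-5-38})}.
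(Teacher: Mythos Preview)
Your proposal is correct and follows essentially the same route as the paper: zone decomposition into singular/regular/boundary pieces, local $\N$- and $\D$-term estimates with the strong non-degeneracy available when $M=1$, the $\nu_2\le\nu_1\gamma^{3/2+\delta}$ refinement of Remark~\ref{rem-27-5-11} in the boundary zone together with \textup{(\ref{25-5-13xx})}, removal of the logarithm by propagation, and Fefferman--de~Llave plus local $\D$-estimates for \textup{(\ref{27-5-52})}. Two cosmetic points: the correct citation for the $R=R_0+R'$ bound under \textup{(\ref{27-3-60})} is Proposition~\ref{prop-27-3-24} (and Proposition~\ref{prop-27-3-27} for the $\D$-term), not Proposition~\ref{prop-27-3-26}\ref{prop-27-3-26-i}; and in the exterior regular zone $\ell\ge 1$ the scaling is $\zeta=\ell^{-2}$ (so $h_1=h\ell$, $\beta_1=\beta\ell^3$), not $\zeta=\ell^{-1/2}$, though this does not affect the conclusion that all powers of $\ell$ are negative.
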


\subsection{Case $M\texorpdfstring{\ge}{\textge} 2$}
\label{sect-27-5-5-2}

Consider now $M\ge 2$. In comparison with case $M=1$ we should get some extra terms because
\begin{enumerate}[fullwidth, label=(\alph*)]
\item
First, in the regular zone as $\ell(x)\ge \epsilon d$ strong non-degeneracy assumption (\ref{27-3-60}) is replaced by strong assumption (\ref{27-3-65})

\item
In the boundary zone as $\gamma (x)\le \bar{\gamma}$ (with $\bar{\gamma}\ge h_1^{\frac{1}{3}}$) there is no non-degeneracy assumption at all.
\end{enumerate}

Consider a regular zone first. According to (\ref{27-3-86}) we get an extra term
\begin{equation}
C\beta_1 h_1^{-1} \nu_1^{\frac{1}{2}}\asymp
C\beta h^{-1}(\kappa \beta)^{\frac{5}{9}}\ell^{\frac{20}{9}}|\log h|^K
\label{27-5-55}
\end{equation}
as other extra terms are smaller; summation over regular zone results in the value as $\ell=\bar{r}$; as $\bar{r}=(\beta h)^{-\frac{1}{4}}$ we get
$C\kappa^{\frac{5}{9}} \beta h^{-\frac{14}{9}}|\log h|^K$. Scaling back we arrive to
\begin{equation}
CR''=C\alpha^{\frac{5}{9}} B Z^{\frac{2}{27}}|\log Z|^K.
\label{27-5-56}
\end{equation}
Using $\gamma$-partition in the boundary zone and plugging
$h_2=h_1\gamma^{-3}$, $\beta_2= \beta_1\gamma^{-1}$, $\nu_2=\nu_1\gamma^{\frac{3}{2}+\delta}$ (see Remark~\ref{rem-27-5-11}) and using (\ref{27-5-52}) we prove easily that as $\bar{\gamma}=h_1^{\frac{1}{3}}$ (i.e. $(Z-N)_+\le B^{\frac{5}{12}}$) the contribution of the boundary zone is smaller than smaller than $CR''$.

On the other hand, if $ B^{\frac{5}{12}}\le (Z-N)_+\le B^{\frac{3}{4}}$ we need to add $C\beta_2 h_2^{-\frac{3}{2}}\bar{\gamma}^{-2}(1+|\log \bar{\gamma}|)$. Plugging $h_2=h_1\bar{\gamma}^{-3}$, $\beta_2= \beta_1\bar{\gamma}^{-1}$,
$\bar{\gamma}= (Z-N)_+^{\frac{1}{4}}B^{-\frac{3}{16}}$ and scaling back we arrive to
\begin{equation}
CR'''=C(Z-N)_+^{\frac{3}{8}}B^{\frac{11}{32}}
\bigl(1+|\log (Z-N)_+B^{-\frac{3}{4}}|\bigr).
\label{27-5-57}
\end{equation}

Finally as $(Z-N)_+\ge B^{\frac{3}{4}}$ we get $C\beta_1h_1^{-\frac{3}{2}}$ with $\ell=(Z-N)_+^{-\frac{1}{3}}Z^{\frac{1}{3}}$ i.e.
\begin{equation}
CR'''=C(Z-N)_+^{-\frac{1}{2}}B.
\label{27-5-58}
\end{equation}

Thus we arrive to Proposition~\ref{prop-27-5-18}\ref{prop-27-5-18-i}. The similar arguments work for $\D$-term and we arrive to Proposition~\ref{prop-27-5-18}\ref{prop-27-5-18-i}

\begin{proposition}\label{prop-27-5-18}
Let $V=W^\TF_B+\lambda$ be a Thomas-Fermi potential as $B\le Z^{\frac{4}{3}}$, $N\asymp Z$ and $M\ge 2$. Let $d\ge Z^{-\frac{1}{3}}$. Then
\begin{enumerate}[label=(\roman*), fullwidth]
\item\label{prop-27-5-18-i}
Estimate \textup{(\ref{27-5-51})} holds with
$R=R_0+R''$ and $R''$ defined by \textup{(\ref{27-5-56})} as
$(Z-N)_+\le B^{\frac{5}{12}}$, $R=R_0+R''+R'''$ and $R'''$ defined by \textup{(\ref{27-5-57})} as $B^{\frac{5}{12}}\le (Z-N)_+\le B^{\frac{3}{4}}$,
$R=R_0+R'''$ and $R'''$ defined by \textup{(\ref{27-5-58})} as
$(Z-N)_+\ge B^{\frac{3}{4}}$.

Furthermore, as $B\le Z$ estimate \textup{(\ref{27-5-51})} holds with
\begin{equation}
R= Z^{\frac{2}{3}}\bigl[ Z^{-\delta} + B^{\delta}Z^{-\delta}+ (dZ^{\frac{1}{3}})^{-\delta}+(\alpha Z)^{\delta}\bigr].
\label{27-5-59}
\end{equation}

\item\label{prop-27-5-18-ii}
The left-hand expression of \textup{(\ref{27-5-57})} does not exceed
$CZ^{\frac{1}{3}}R_0^2 + CB^{\frac{1}{4}} R^{\prime\prime\,2}$
as $(Z-N)_+\le B^{\frac{5}{12}}$,
$CZ^{\frac{1}{3}}R_0^2 + CB^{\frac{1}{4}} (R''+R''')^2$ and  $R'''$ defined by \textup{(\ref{27-5-57})} as $B^{\frac{5}{12}}\le (Z-N)_+\le B^{\frac{3}{4}}$,
$CZ^{\frac{1}{3}}R_0^2 + (Z-N)_+^{\frac{1}{3}}R^{\prime\prime\prime\,2}$ and $R'''$ defined by \textup{(\ref{27-5-58})} as $(Z-N)_+\ge B^{\frac{3}{4}}$.

Furthermore, as $B\le Z$ the left-hand expression of \textup{(\ref{27-5-57})} does not exceed $CZ^{\frac{1}{3}}R^2$ with $R$ defined by \textup{(\ref{27-5-59})}.
\end{enumerate}
\end{proposition}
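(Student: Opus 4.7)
The plan is to follow the scheme of the proof of Proposition~\ref{prop-27-5-17} and track the two new error sources arising for $M\ge 2$. In the regular exterior zone $\{\epsilon d\le \ell(x)\le \bar r\}$, the strong non-degeneracy assumption (\ref{27-3-60}) holds only for $\ell(x)\le \epsilon d$ (see Remark~\ref{rem-27-3-17}\ref{rem-27-3-17-ii}), so on elements with $\ell(x)\gtrsim d$ one must use (\ref{27-3-65}) instead; in the boundary zone, on sub-elements where $\gamma(x)\le\bar\gamma$ there is no non-degeneracy assumption available at all.

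First I would decompose the domain into the singular zone $\{\ell(x)\le 1\}$, the regular exterior zone $\{1\le \ell(x)\le \bar r\}$ and the boundary zone $\{\ell(x)\ge \bar r\}$, apply the same $\ell$-admissible rescaling as in the proof of Proposition~\ref{prop-27-5-17}, and substitute the estimate of Proposition~\ref{prop-27-5-6} for $|\partial^2 A'|$. On a regular-zone $\ell$-element where only (\ref{27-3-65}) is available, Proposition~\ref{prop-27-3-26}\ref{prop-27-3-26-i} supplies the extra term $C\beta_1 h_1^{-1}\nu_1^{\frac{1}{2}}$ of (\ref{27-5-55}); since this has a positive power of $\ell$, summation over $\ell\in[1,\bar r]$ is attained at $\ell=\bar r=(\beta h)^{-\frac{1}{4}}$ and after scaling back gives the contribution $R''$ in (\ref{27-5-56}). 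The singular zone and the bulk of the regular zone contribute exactly as for $M=1$, giving $R_0$.

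Next I would treat the boundary zone with the $\gamma$-admissible partition of the proof of Proposition~\ref{prop-27-5-12}, with $h_2=h_1\gamma^{-3}$, $\beta_2=\beta_1\gamma^{-1}$, and the improved $\nu_2=\nu_1\gamma^{\frac{3}{2}+\delta}$ from Remark~\ref{rem-27-5-11}. When $\bar\gamma\asymp h_1^{\frac{1}{3}}$, i.e.\ $(Z-N)_+\le B^{\frac{5}{12}}$, an application of (\ref{25-5-13xx}) shows that the contribution of the degenerate sub-elements is absorbed into $R_0+R''$. When $\bar\gamma\gg h_1^{\frac{1}{3}}$, I would apply Proposition~\ref{prop-27-3-26}\ref{prop-27-3-26-ii} on each $\bar\gamma$-element and integrate the resulting $C\beta_2 h_2^{-\frac{3}{2}}$ against $\mes\{\gamma\le\bar\gamma\}\bar\gamma^{-2}(1+|\log\bar\gamma|)$; with $\bar\gamma=(Z-N)_+^{\frac{1}{4}}B^{-\frac{3}{16}}$ this yields (\ref{27-5-57}), and the regime $(Z-N)_+\ge B^{\frac{3}{4}}$ is handled identically with $\bar\gamma\asymp 1$, producing (\ref{27-5-58}). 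The refined bound (\ref{27-5-59}) valid as $B\le Z$ follows from the standard propagation arguments of Subsection~\ref{book_new-sect-25-6-3} applied in the intermediate zone, exactly as in Proposition~\ref{prop-27-5-17}\ref{prop-27-5-17-ii}.

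For the $\D$-term estimate in (ii), I would employ the double-partition scheme from the proof of Proposition~\ref{prop-27-3-28}. On disjoint pairs of elements, obtained via Fefferman-de~Llave decomposition, the $\N$-term bounds just established combine with the $\iint|x-y|^{-1}\,dx\,dy$-type integrals; on non-disjoint pairs one invokes the local $\D$-term bounds of Proposition~\ref{prop-27-3-28}\ref{prop-27-3-28-ii} and \ref{prop-27-3-28-iii}. The Coulomb weight $|x-y|^{-1}$ produces the factors $Z^{\frac{1}{3}}$, $B^{\frac{1}{4}}$ or $(Z-N)_+^{\frac{1}{3}}$ according to the spatial scale at which $R_0$, $R''$ or $R'''$ is concentrated. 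The main obstacle I anticipate is the bookkeeping in the intermediate regime $B^{\frac{5}{12}}\le(Z-N)_+\le B^{\frac{3}{4}}$, where the boundary zone has genuinely two scales ($\bar r$ and $\bar\gamma$); one must verify that the logarithmic factor in (\ref{27-5-57}) is correctly retained, and that no term is double-counted when transferring the $\N$-term estimate into the weighted $\D$-term integrals.
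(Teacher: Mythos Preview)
Your proposal is correct and follows essentially the same route as the paper. The paper's argument (given in the paragraphs immediately preceding the statement) identifies the same two sources of extra error—(a) the regular zone $\{\ell(x)\ge\epsilon d\}$ where only (\ref{27-3-65}) holds, yielding the extra term $C\beta_1 h_1^{-1}\nu_1^{1/2}$ of (\ref{27-5-55}) via (\ref{27-3-86}), summed to $\ell=\bar r$ to give $R''$; and (b) the boundary sub-zone $\{\gamma(x)\le\bar\gamma\}$ with no non-degeneracy, giving $C\beta_2 h_2^{-3/2}\bar\gamma^{-2}(1+|\log\bar\gamma|)$ and hence $R'''$—and then disposes of the $\D$-term in one sentence (``similar arguments work''), whereas you spell out the double-partition and Fefferman--de~Llave structure explicitly; your added detail is consistent with how the paper handles the $M=1$ case in part~(e) of the proof of Proposition~\ref{prop-27-5-17}.
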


\section{More estimates to minimizer}
\label{sect-27-5-6}

Now we want to provide different kinds of estimates to the minimizer as
$\ell(x) \ge Z^{-\frac{1}{3}}$ in the original scale. More precisely, we are looking for
\begin{equation}
\alpha^{-1} \int \phi_r(x) | \partial A'|^2\,dx
\label{27-5-60}
\end{equation}
with $\phi_r$ supported in $\{x:\, \ell(x)\asymp r\}$ as it will appear as an error when we decouple singularities as $M\ge 2$ (in this case we should take $r\asymp d$. Due to equation (\ref{26-2-14xx}) it is $\D$-type term as well: namely, with integral taken over $\bR^3$ it would be equal to
$\alpha Z^{\frac{5}{3}}\D (\phi \Phi_j, \phi \Phi_j)$ calculated in the rescaled coordinates with $\Phi_j$ defined by (\ref{26-2-14xx}); however with the cut-off the integral kernel $|x-y|^{-1}$ needs to be modified. Recall that corresponding Weyl expression is $0$.

First, using (\ref{27-5-16}) and decomposition like in the proof of Proposition~\ref{book_new-prop-26-3-9} we can rewrite (\ref{27-5-60}) as
\begin{equation}
I\Def \int K(z; x, y) \Phi (x) \Phi(y)\,dxdydz
\label{27-5-61}
\end{equation}
multiplied by $\alpha Z^{\frac{5}{3}}$; Here $K(z; x, y)$ is supported in
$\{z:\, \ell(z)\asymp r\}$, singular as $x=z$ and $y=z$ and satisfies
\begin{equation}
|K(z; x, y)|\le |x-z|^{-2}|y-z|^{-2} |x||y| (|x|+r)^{-1}(|y|+r)^{-1}.
\label{27-5-62}
\end{equation}
Here we temporarily replaced $r$ by $Z^{\frac{1}{3}}r$.

Let us make a double $\ell$-admissible partition of unity and consider pairs of elements with $ \ell(x)\asymp r_1$ and $\ell (y)\asymp r_2$. There are three cases:$\le r_1\le \epsilon r$, $r_1\asymp r$ and $r_1\ge c r$ and so for $y$ and we can consider only pure pairs.

\subsection{Case $M=1$}
\label{sect-27-5-6-1}

Assume first that $M=1$. Then contribution of each pair with $r_j\le \epsilon r$ (assuming that they belong to regular zone) does not exceed
\begin{equation}
C \zeta_1 r_1 \bigl(h_1^{-2}+h_1^{-\frac{5}{3}}\nu_1^{\frac{2}{3}}\bigr)\times
\zeta_2 r_2 \bigl(h_2^{-2}+h_2^{-\frac{5}{3}}\nu_2^{\frac{2}{3}}\bigr) \times r^{-3}
\label{27-5-63}
\end{equation}
where $\zeta_j=r_j^{-2}$, $h_j=h r_j$,
$\nu_j=(\kappa \beta)^{\frac{10}{9}}h^{\frac{4}{9}}r_j^{\frac{4}{9}}$ as
$r_j\ge 1$ and $\zeta_j=r_j^{-\frac{1}{2}}$, $h_j=h r_j^{-\frac{1}{2}}$,
$\nu_j=(\kappa \beta)^{\frac{10}{9}}h^{\frac{4}{9}}$ as $r_j\le 1$. Double summation returns its value as $r_1=r_2=1$ i.e.
\begin{equation}
C h^{-4} \bigl(1 +h^{\frac{2}{3}}\nu^{*\,\frac{4}{3}}\bigr) r^{-3}.
\label{27-5-64}
\end{equation}

Further, using Fefferman-de Llave decomposition one can prove easily that contribution of pairs with $r_1\asymp r_2 \asymp r$ (the only case when we have a singular kernel does not exceed (\ref{27-5-63}) calculated as $r_1=r_2=r$ which is decaying function of $r$ and therefore does not exceed (\ref{27-5-64}).

Furthermore, contribution of each pair with $r_j\ge C r$ (assuming that they belong to regular zone) does not exceed
\begin{equation}
C \zeta_1 r_1 ^{-2} \bigl(h_1^{-2}+h_1^{-\frac{5}{3}}\nu_1^{\frac{2}{3}}\bigr)\times
\zeta_2 r_2^{-2} \bigl(h_2^{-2}+h_2^{-\frac{5}{3}}\nu_2^{\frac{2}{3}}\bigr)\times r^3
\label{27-5-65}
\end{equation}
and the double summation returns its value as $r_1=r_2=r$ which is the same as (\ref{27-5-63}) calculated as $r_1=r_2=r$ and again does not exceed (\ref{27-5-64}).

Finally, considering $r_1\asymp r_2\asymp \bar{r}$ we apply if
$(Z-N)_+\le B^{\frac{3}{4}}$ secondary partitions with respect to $x$ and $y$ and using our standard arguments we estimate contribution of this zone by (\ref{27-5-65}) calculated as $r_1=r_2=\bar{r}$.

Therefore we estimated expression (\ref{27-5-61}) by (\ref{27-5-64}). In particular, if $\kappa \beta \le h^{-\frac{17}{20}}|\log h|^{-K}$
i.e. $\alpha B \le Z^{\frac{17}{60}}|\log Z|^{-K}$) then $\nu^*\le h^{-\frac{1}{2}}$ and expression (\ref{27-5-61}) does not exceed $Ch^{-4}r^{-3}$. Plugging $h=Z^{-\frac{1}{3}}$, multiplying by $\alpha Z^{\frac{5}{3}}$ and replacing $r$ by $Z^{\frac{1}{3}} r$ we get $\alpha Z^2r^{-3}$ thus proving
Proposition~\ref{prop-27-5-19}\ref{prop-27-5-19-i}.

On the other hand, as $\kappa \beta \ge h^{-\frac{17}{20}}|\log h|^{-K}$ estimate (\ref{27-5-61}) by (\ref{27-5-64}) could be improved. Indeed, let us apply all the above arguments only as $r_j\ge t$ with $1\le t\le \epsilon r$. Then we get expression (\ref{27-5-63}) as $r_1=r_2=t$ i.e.
\begin{equation}
C h^{-4} \times \nu^{*\frac{4}{3}}h^{\frac{2}{3}} t^{-\frac{128}{27}} r^{-3}
\label{27-5-66}
\end{equation}
where we consider only term possibly exceeding $Ch^{-4}r^{-3}$.

To estimate contribution of zone $\{x,y:\,\ell(x)\le t, \ell(y)\le t\}$ we replace $\Phi_j$ by $(\kappa h^{-2})\Delta A'_j$ and using standard estimate for operator norm in $\sL^2$ we conclude that the corresponding part of
expression (\ref{27-5-61}) does not exceed
$C\kappa ^{-2}h^{-4} \|\partial A'\|^2\times t^3r^{-3}\le Ch^{-4}t^3 r^{-3}$ as
$\|\partial A'\|\le C\kappa$.

Adding to (\ref{27-5-66}) and minimizing by $t\lesssim r$ we get
$Ch^{-4}(\nu^{*\,2}h)^{\frac{54}{209}}r^{-3}$ provided
$r\ge (\nu^{*\,2}h)^{\frac{18}{209}}$.

Plugging $\nu^*$ and $h$, multiplying by $\alpha Z^{\frac{5}{3}}$ and replacing $r$ by $Z^{\frac{1}{3}} r$ we arrive to
Proposition~\ref{prop-27-5-19}\ref{prop-27-5-19-ii}.

\begin{proposition}\label{prop-27-5-19}
Let $V=W^\TF_B+\lambda$ be a Thomas-Fermi potential as $B\le Z^{\frac{4}{3}}$, $N\asymp Z$ and $M=1$. Then
\begin{enumerate}[label=(\roman*), fullwidth]
\item\label{prop-27-5-19-i}
Minimizer satisfies
\begin{gather}
\alpha^{-1} \int_{\{x:\, \ell(x)\ge r\}} | \partial A'|^2\,dx \le
CT_0r^{-3}=C \alpha Z^2 r^{-3}
\label{27-5-67}\\
\shortintertext{as $r\ge r_*=Z^{-\frac{1}{3}}$ holds provided}
\alpha B \le Z^{\frac{17}{60}}|\log Z|^{-K};
\label{27-5-68}
\end{gather}
\item\label{prop-27-5-19-ii}
Otherwise minimizer satisfies
\begin{gather}
\alpha^{-1} \int_{\{x:\, \ell(x)\ge r\}} | \partial A'|^2\,dx \le
CT_0 r^{-3}=C \alpha Z^3 r_*^3 r^{-3}\qquad\text{as\ \ } r\ge r_*
\label{27-5-69}\\
\shortintertext{with}
r_* \Def (\alpha B)^{\frac{40}{209}}Z^{-\frac{81}{209}}|\log Z|^K
\gtrsim Z^{-\frac{1}{3}}.
\label{27-5-70}\end{gather}
\end{enumerate}
\end{proposition}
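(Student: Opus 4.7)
The strategy is to exploit the Coulomb representation (\ref{27-5-16}) for $A'$ in order to rewrite the weighted gradient norm (\ref{27-5-60}) as a double $\D$-type integral. First I would differentiate (\ref{27-5-16}) twice and apply the Poincar\'e-type decomposition used in the proof of Proposition~\ref{book_new-prop-26-3-9} to produce an extra factor $|y|(|x|+r)^{-1}$ in the kernel; combined with the cut-off $\phi_r$ this yields the bound (\ref{27-5-62}). The quantity (\ref{27-5-60}), up to the factor $\alpha Z^{\frac{5}{3}}$ from undoing the overall rescaling, then equals the trilinear integral $I$ in (\ref{27-5-61}).

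Next I would apply a double $\ell$-admissible partition of unity and split into pure pairs with scales $(r_1,r_2)$, where each $r_j$ is either $\ll r$, $\asymp r$, or $\gg r$. On each pair in the regular zone, the $\N$-term remainder from Proposition~\ref{prop-27-5-17} combined with the weight from $K$ gives the per-pair bound (\ref{27-5-63}) or its exterior analogue (\ref{27-5-65}). The crucial observation is that after plugging in $\zeta_j,h_j,\nu_j$ each summand is monotone in $r_j$, so the double sums collapse to the edge value at $r_1=r_2=1$, giving (\ref{27-5-64}). Diagonal pairs with $r_1\asymp r_2\asymp r$ need the Fefferman--de Llave decomposition because of the singular kernel; the boundary layer around $\bar r$ is handled by the same secondary $\gamma$-partition used in the proof of Proposition~\ref{prop-27-5-17}, with $\nu_2=\nu_1\gamma^{\frac{3}{2}+\delta}$ as in Remark~\ref{rem-27-5-11}.

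Under hypothesis (\ref{27-5-68}) we have $\nu^*\le h^{-\frac{1}{2}}$, so the second term in (\ref{27-5-64}) is absorbed by the first, yielding $Ch^{-4}r^{-3}$. Scaling back (multiply by $\alpha Z^{\frac{5}{3}}$, replace $r$ by $Z^{\frac{1}{3}}r$) produces (\ref{27-5-67}) and hence Statement~\ref{prop-27-5-19-i}. Under the opposite hypothesis, the uniform bound (\ref{27-5-64}) is too crude and I would split the $(x,y)$-domain at some threshold scale $t\in[1,\epsilon r]$: on the outer region $\{\ell(x),\ell(y)\ge t\}$ the previous argument applies with edge value $r_1=r_2=t$, producing the contribution (\ref{27-5-66}). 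On the complementary inner region I would replace $\Phi_j$ by $\frac{2}{\kappa h^2}\Delta A'_j$ via equation (\ref{26-2-14xx}) and estimate the resulting bilinear form by the $\sL^2$ operator norm of the Coulomb kernel together with the global bound $\|\partial A'\|\le C\kappa$, producing an $O(h^{-4}t^3r^{-3})$ contribution. Optimizing in $t$ to balance the two terms gives the critical scale $t\asymp(\nu^{*\,2}h)^{\frac{18}{209}}$, hence $r_*$ as in (\ref{27-5-70}).

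The main obstacle will be the bookkeeping: one must verify that after all rescalings the per-pair bounds (\ref{27-5-63}), (\ref{27-5-65}) are decreasing in $\max(r_1,r_2)$ and in $1/\min(r_1,r_2)$ on the appropriate ranges so that the double sum telescopes to the edge value, and one must track the arithmetic of the exponent $\frac{40}{209}$ through the optimization. A subtler geometric point is ensuring that in the boundary zone (where $\gamma\le\bar\gamma$) the secondary partition does not produce extra terms beyond those controlled for $M=1$; here the strong non-degeneracy assumption (\ref{27-3-60}), which is automatic in the single-nucleus case, is what saves us.
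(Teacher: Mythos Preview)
Your proposal is correct and follows essentially the same route as the paper: the kernel representation (\ref{27-5-61})--(\ref{27-5-62}), the double $\ell$-partition into the three ranges of $r_j$ relative to $r$, the collapse of the sums to the edge value (\ref{27-5-64}) via monotonicity, Fefferman--de Llave for the diagonal, and the $t$-splitting with the $\sL^2$ operator-norm argument on the inner piece to reach the exponent $54/209$ (hence $r_*$ via $18/209$). One small point of attribution: the per-pair bound (\ref{27-5-63}) is not literally the $\N$-term remainder of Proposition~\ref{prop-27-5-17} but the analogous estimate for $\Phi_j$, which holds because the Weyl expression for $\Gamma_x(hD-A)_x\cdot\boldupsigma\,e$ vanishes (Remark~\ref{rem-27-3-30}); the analytic machinery is identical, so this does not affect your argument.
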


\begin{remark}\label{rem-27-5-20}
\begin{enumerate}[label=(\roman*), fullwidth]
\item\label{rem-27-5-20-i}
Assumption (\ref{27-5-68}) means exactly that $r_*\le Z^{-\frac{1}{3}}$;

\item\label{rem-27-5-20-ii}
Our usual approach implies that for $B\le Z$ Tauberian error estimate could be slightly improved but it has no implications here because in contrast to
$e(x,x,\tau)$ where the main term in the formal asymptotic decomposition is
$h^{-3}P'_{Bh}$ and the next one is $\asymp \beta^2h^{-2}$, in $\Phi_j$ the main term is $0$ and the next one is $\eta h^{-2}$ with the coefficient $\eta$ depending on $A'$ and trying to calculate it and plug the corresponding term into (\ref{27-5-61}) instead of $\Phi_j$ will certainly result in the identity;

\item\label{rem-27-5-20-iii}
It may happen that $r_*\ge \bar{r}$; then it follows from the proof that $r_*$ should be truncated to $\bar{r}$ in (\ref{27-5-69}). Moreover, estimate
(\ref{27-5-69}) with $r_*=\bar{r}$ holds even as $M\ge 2$ as no non-degeneracy assumption is required.
\end{enumerate}
\end{remark}

\subsection{Case $M\texorpdfstring{\ge}{\textge} 2$}
\label{sect-27-5-6-2}

Assume now that $M\ge 2$ and that (\ref{27-5-19}) is fulfilled. Then we need to take into account excess terms in our estimates. In the regular zone such excess term is
\begin{align}
&C\zeta_1r_1 \beta_1 h_1^{-1}\nu_1^{\frac{1}{2}} \times
\zeta_2r_2 \beta_2 h_2^{-1}\nu_2^{\frac{1}{2}} \times r^{-3}
\qquad\text{as\ \ } r_1\le r, r_2\le r
\label{27-5-71}\\
\shortintertext{and}
&C\zeta_1r_1 ^{-2}\beta_1 h_1^{-1}\nu_1^{\frac{1}{2}} \times
\zeta_2r_2^{-2} \beta_2 h_2^{-1}\nu_2^{\frac{1}{2}} \times r^{3}
\qquad\text{as\ \ } r_1\ge r, r_2\ge r.
\label{27-5-72}
\end{align}
Plugging $\beta_j,h_j$ and $\nu_j$ one observes easily that the former is a growing and the latter is a decaying function of $r_j$ and these expressions coincide as $r_1=r_2=r$. To decouple singularities we need to consider
$r\le\epsilon d$ where $d$ is the minimal distance between singularities; so we will assume this. Observe that extra terms appear only as $r_j\ge \epsilon d$, so we need to consider only (\ref{27-5-72})

Therefore as $d\le \bar{r}$ summation results in expression (\ref{27-5-72}) calculated at $r_1=r_2=d$ which is
\begin{gather}
C\beta^2 h^{-\frac{14}{9}}(\kappa \beta)^{\frac{10}{9}} |\log h|^K d^{-\frac{32}{9}} r^3;
\notag\\
\intertext{which results in the original settings in\footnotemark}
CT'r^3=C(\alpha B)^{\frac{19}{9}}B d^{-\frac{32}{9}} r^3;
\label{27-5-73}
\end{gather}
\footnotetext{\label{foot-27-33} I.e. after we plug $\kappa,\beta,h$, replace $d$ and $r$ by $Z^{\frac{1}{3}}d$ and $Z^{\frac{1}{3}}r$ and multiply by $\alpha Z^{frac{5}{3}}$.}
recall that we plug $\beta$, $h$, $\kappa$,replace $d$ and $r$ by $Z^{\frac{1}{3}}d$ and $Z^{\frac{1}{3}}r$, and multiply by
$\alpha Z^{\frac{5}{3}}$.

Meanwhile using our standard arguments one can prove easily that contribution of the boundary zone is less than this provided $(Z-N)_+\le B^{\frac{5}{12}}$.

On the other hand, as $B^{\frac{5}{12}}\le (Z-N)_+\le B^{\frac{3}{4}}$ an extra contribution of the boundary zone does not exceed
\begin{gather}
C\beta^2h^{-3}\bar{\gamma}^{13}\bar{r}^{-5}r^3 (1+|\log \bar{\gamma}|)^2
\notag\\
\intertext{which results in the original settings in\footref{foot-27-33}}
CT''r^3=C\alpha (Z-N)_+^{\frac{13}{4}}B^{\frac{13}{16}}
\bigl(1+|\log (Z-N)_+B^{-\frac{3}{4}}|\bigr)^2 r^3.
\label{27-5-74}
\end{gather}

Finally, as $ (Z-N)_+\ge B^{\frac{3}{4}}$ an extra contribution of the boundary zone does not exceed
\begin{gather}
C\beta^2h^{-3}\bar{r}^{-5}r^3
\notag\\
\intertext{which results in the original settings in\footref{foot-27-33}}
CT''r^3=C\alpha (Z-N)_+^{\frac{5}{3}}B^{2}.
\label{27-5-75}
\end{gather}

\begin{proposition}\label{prop-27-5-21}
Let $V=W^\TF_B+\lambda$ be Thomas-Fermi potential as
$B\le Z^{\frac{4}{3}}$, $N\asymp Z_1\asymp Z_2\asymp \ldots \asymp Z_M$ and $M\ge 2$. Let $r_*\ll r\ll d \le \epsilon \bar{r}$.

Then expression
\begin{equation}
\alpha^{-1} \int_{\{x:\, \ell(x)\asymp r\}} | \partial A'|^2\,dx
\label{27-5-76}
\end{equation}
does not exceed $C\bigl(T_0 r^{-3}+(T'+T'') r^3\bigr)$ with $T_0$ defined by \textup{(\ref{27-5-68})} or \textup{(\ref{27-5-69})} and
$T'$ defined by \textup{(\ref{27-5-73})}, and $T''$ either $0$ (as $(Z-N)_+\le B^{\frac{5}{12}}$) or defined by \textup{(\ref{27-5-74})}, or \textup{(\ref{27-5-75})}.
\end{proposition}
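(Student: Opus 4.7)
The plan is to follow the strategy from the proof of Proposition~\ref{prop-27-5-19} (case $M=1$) but substitute the $M\ge 2$ versions of the $\N$-term and $\D$-term estimates catalogued in Proposition~\ref{prop-27-5-18} and in Subsubsection~\ref{sect-27-5-5-2}. Concretely, I would start from representation (\ref{27-5-16}) to express $\partial A'$ via $\Phi_j$ defined by (\ref{26-2-14xx}), then rewrite expression (\ref{27-5-76}) as a bilinear form
\begin{equation*}
I=\alpha Z^{5/3}\iiint K(z;x,y)\Phi(x)\Phi(y)\,dx\,dy\,dz
\end{equation*}
where $K$ is supported in $\{z:\ell(z)\asymp r\}$ and satisfies bound (\ref{27-5-62}) using the gradient-type decomposition from the proof of Proposition~\ref{book_new-prop-26-3-9}. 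Since the Weyl part of $\Phi_j$ vanishes identically, the local bounds on the difference between $\Phi$ and its Weyl expression are precisely the $\N$-term and $\D$-term bounds of Propositions~\ref{prop-27-5-17} and~\ref{prop-27-5-18}.

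Next, I would apply a double $\ell$-admissible partition with respect to $x$ and $y$ and, by positivity of the quadratic form, restrict attention to pure pairs with $\ell(x)\asymp r_1$, $\ell(y)\asymp r_2$. For pairs with $r_1,r_2\ll d$ (i.e.\ in the single-nucleus neighborhoods where the strong non-degeneracy assumption (\ref{27-3-60}) holds), the analysis of Proposition~\ref{prop-27-5-19} applies verbatim and produces exactly the $T_0 r^{-3}$ contribution (with the two sub-cases (\ref{27-5-68}) and (\ref{27-5-70}) handled identically, since the singularity-zone and interior-regular-zone arguments are insensitive to $M$). The new, $M\ge 2$-specific contributions come from the inter-nucleus regular zone $r_j\asymp d$, where one only has (\ref{27-3-65}), and from the boundary zone $r_j\asymp\bar r$; in both of these regions the kernel bound (\ref{27-5-62}) degenerates to the ``exterior'' form (\ref{27-5-65}), producing a net $r^3$ rather than $r^{-3}$ after double summation.

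For the inter-nucleus regular zone, I would insert the excess term (\ref{27-5-71})/(\ref{27-5-72}) coming from the second summand of (\ref{27-3-86}) into (\ref{27-5-65}); plugging in $\beta_j,h_j,\nu_j$ in Thomas--Fermi scaling and summing to the value at $r_1=r_2=d$ gives exactly $T'r^3$ as in (\ref{27-5-73}). For the boundary zone, I would apply a secondary $\gamma$-admissible partition and use Remark~\ref{rem-27-5-11} to take $\nu_2=\nu_1\gamma^{3/2+\delta}$; the three sub-cases $(Z-N)_+\lesssim B^{5/12}$, $B^{5/12}\lesssim(Z-N)_+\lesssim B^{3/4}$, and $(Z-N)_+\gtrsim B^{3/4}$ then reproduce, after scaling back, the three versions of $T''$ described by (\ref{27-5-74})--(\ref{27-5-75}) (or absorb into $T'$ when $(Z-N)_+\le B^{5/12}$). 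Pairs straddling the regular and boundary zones are controlled by the geometric mean of the two one-sided bounds and are dominated by $(T'+T'')r^3$.

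The main technical obstacle will be the middle-range case $B^{5/12}\le (Z-N)_+\le B^{3/4}$, where one must carry the scaling function $\bar\gamma=(Z-N)_+^{1/4}B^{-3/16}$ and the logarithmic factor of (\ref{27-5-74}) through the double $\gamma$-partition and check that the boundary-zone contribution is still dominated by (\ref{27-5-65}) evaluated at the appropriate parameters; this parallels the computation in Subsubsection~\ref{sect-27-5-5-2} for $\N$- and $\D$-terms but with an additional factor $r^{\pm 3}$ from the kernel $K$, so cancellations between positive and negative powers of $r_j$ must be verified carefully. Finally, diagonal pairs with $r_1\asymp r_2\asymp r$ are handled by Fefferman--de~Llave decomposition exactly as in the $M=1$ proof, observing that the resulting bound is a decaying function of $r$ and therefore absorbed by the $T_0r^{-3}$ term.
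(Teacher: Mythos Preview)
Your proposal is correct and follows essentially the same route as the paper. The paper's argument (given in the text immediately preceding the proposition) also starts from the kernel representation (\ref{27-5-61})--(\ref{27-5-62}), applies the double $\ell$-admissible partition, observes that the $M=1$ analysis carries over unchanged for $r_j<\epsilon d$ yielding $T_0 r^{-3}$, and then isolates the excess terms (\ref{27-5-71})--(\ref{27-5-72}) arising from the weaker non-degeneracy (\ref{27-3-65}) in the zone $r_j\ge \epsilon d$; since (\ref{27-5-72}) is decaying in $r_j$ the sum over the full range $\epsilon d\le r_j\le \bar{r}$ is dominated by its value at $r_1=r_2=d$, giving $T'r^3$, while the boundary-zone contribution is handled by the $\gamma$-partition and splits into the three $(Z-N)_+$ sub-cases producing $T''r^3$. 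The only small imprecision in your outline is the phrase ``inter-nucleus regular zone $r_j\asymp d$'': the extra terms are present throughout $\epsilon d\le r_j\le \bar r$, and it is the monotonicity of (\ref{27-5-72}) that localizes the dominant contribution to $r_j\asymp d$ --- but you would see this immediately when carrying out the summation.
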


\begin{remark}\label{rem-27-5-22}
\begin{enumerate}[label=(\roman*), fullwidth]
\item\label{rem-27-5-22-i}
Recall that the decoupling error between singularity and a regular part does not exceed $C\alpha B Z^{\frac{5}{3}}$;

\item\label{rem-27-5-22-ii}
Meanwhile in these settings $\Scott-\Scott_0= O(\alpha Z^3)$ and if decoupling error is greater than this there is no point in decoupling.

\item\label{rem-27-5-22-iii}
Obviously we need to assume that $r_*\le \epsilon\bar{r}$ which implies that
\begin{equation}
(\alpha Z)^{40} B^{\frac{369}{4}}\le \epsilon Z^{121}
\label{27-5-77}
\end{equation}
which is just tiny bit stronger than $\alpha Z\lesssim 1$, $B\lesssim Z^{\frac{4}{3}}$. As $(Z-N)_+ \le B^{\frac{3}{4}}$ this condition is also sufficient.

\item\label{rem-27-5-22-iv}
Decoupling singularities we get an error (\ref{27-5-76}) with integration over $\{x:\, \ell(x)\asymp r\}$; therefore as $r_*\ll d\le \bar{r}$ minimizing
$T_0r^{-3}+T'r^3$ by $r: r_*\le r\le d$ we get
\begin{equation}
T_*\Def \bigl(T_0 (T'+T'')\bigr)^{\frac{1}{2}}+ (T'+T'') r_*^3+T_0 d^{-3}.
\label{27-5-78}
\end{equation}
\end{enumerate}
\end{remark}

\section{Endgame: \texorpdfstring{$M\ge 2$}{M \textge 2}}
\label{sect-27-5-4-5}

As $M\ge 2$ we have two rather different results. In the first (\ref{27-5-39}) we appeal to the sum of localized trace terms
$\sum_{1\le m\le M} \Tr (\psi_m H^-_{A,V_m} \psi_m)$ where $\psi_m$
is supported in $\{x:\, |x-\y_m|\le \frac{1}{3}d\}$ (recall that $d$ is the minimal distance between singularities).

In the second one we want to use $2h^{-2}\sum S(\alpha Z_m)Z_m^2$ instead. If $A'=0$ then transition would be immediate. However in our case we need to ``decouple'' singularities. Therefore in the estimate from below we need results from the previous Subsubsection:

\begin{theorem}\label{thm-27-5-23}
Let $V=W^\TF_B+\lambda$ be a Thomas-Fermi potential as $B\le Z^{\frac{4}{3}}$, $N\asymp Z$ and $M\ge 2$. Let $\kappa=\alpha Z\le \kappa^*$. Then as
$r_*\le d\le \bar{r}$
\begin{equation}
|\E^*_\alpha + \int P_{B} (V) \,dx -
2\sum S(\alpha Z_m)Z_m^2- \Schwinger|
\label{27-5-79}
\end{equation}
does not exceed $C(Q + T)$ where $Q$ is the trace estimate obtained in Proposition~\ref{prop-27-5-12}\ref{prop-27-5-12-iii}--\ref{prop-27-5-12-iv}
and $T_*$ is an estimate for expression \textup{(\ref{27-5-76})} given by
\textup{(\ref{27-5-78})}.
\end{theorem}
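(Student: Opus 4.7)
The plan is to reduce the $M\ge 2$ case to $M$ independent single-nucleus problems, already handled by Theorem~\ref{thm-27-5-16}, by decoupling the self-generated field $A'$ across the annulus $\{\ell(x)\asymp d\}$ and paying the cost of decoupling via Proposition~\ref{prop-27-5-21}. Both directions are then a combination of three ingredients: Proposition~\ref{prop-27-5-12}\ref{prop-27-5-12-iii}--\ref{prop-27-5-12-iv} (controlling the trace-to-sum-of-nuclei error by $Q$), Theorem~\ref{thm-27-5-16} (delivering Scott$+$Schwinger at each nucleus), and Proposition~\ref{prop-27-5-21} (controlling the decoupling error by $T_*$).

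For the \emph{upper} estimate I would take as trial field
\begin{equation*}
A'_{\trans} \Def \sum_{m=1}^M \phi_m A'_m,
\end{equation*}
where $A'_m$ is a minimizer for the pure Coulomb problem centered at $\bar\y_m$ (as used in the proof of Theorem~\ref{thm-27-5-14}) and $\phi_m$ is a smooth cutoff equal to $1$ on $\{|x-\bar\y_m|\le d/3\}$ and supported in $\{|x-\bar\y_m|\le d/2\}$. Because the supports of the $\phi_m A'_m$ are pairwise disjoint, the self-field energy splits additively, and the replacement of $P_{Bh}(V)$ by $P_{\beta h}(V)$ and of $\Tr(H^-_{A_{\trans},V})$ by $\sum_m \Tr(H^-_{A_m,V_m}\psi_m)$ proceeds exactly as in the $M=1$ case, with error $Q$ from Proposition~\ref{prop-27-5-12}. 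The single-nucleus Theorem~\ref{thm-27-5-16} then supplies $2\sum_m S(\alpha Z_m)Z_m^2+\Schwinger$ at each site. The cutoff $\phi_m$ introduces an extra self-field term of order $\alpha^{-1}\int_{\{\ell(x)\asymp d\}}|\partial A'_m|^2\,dx$, which by Proposition~\ref{prop-27-5-19} does not exceed $CT_0 d^{-3}\le CT_*$.

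For the \emph{lower} estimate, let $A$ be the genuine minimizer of $\E_\kappa$. Proposition~\ref{prop-27-5-12}\ref{prop-27-5-12-iii}--\ref{prop-27-5-12-iv} gives
\begin{equation*}
\E^*_\alpha \ge -\int P_B(V)\,dx +\sum_m\Bigl(\Tr(H^-_{A,V_m}\psi_m)-\int P_0(V_m)\psi_m\,dx\Bigr) +\alpha^{-1}\|\partial A'\|^2 - CQ.
\end{equation*}
For each $m$ I would pick an intermediate radius $r$ with $r_*\le r\le d/2$ and replace $A'$ by $A'_{(m)}\Def \chi_m A'$ with $\chi_m$ supported in $\{|x-\bar\y_m|\le r\}$ and equal to $1$ on the support of $\psi_m$. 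Since $\psi_m H_{A,V_m}\psi_m=\psi_m H_{A_{(m)},V_m}\psi_m$ and $\|\partial A'\|^2$ splits into the pieces near the nuclei and on the annuli, the gap between the original localized trace and its decoupled counterpart is controlled by an operator inequality of the form $H_{A,V_m}\ge (1-\delta)H_{A_{(m)},V_m}-C_\delta|A-A_{(m)}|^2$, whose right-hand side, after diagonal spectral estimates and the standard Hardy-type bound, contributes at most $C\alpha^{-1}\int_{\{\ell(x)\asymp r\}}|\partial A'|^2\,dx$. Proposition~\ref{prop-27-5-21} bounds this by $C\bigl(T_0 r^{-3}+(T'+T'')r^3\bigr)$, and minimizing in $r\in[r_*,d]$ yields exactly $T_*$ as in \textup{(\ref{27-5-78})}. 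Theorem~\ref{thm-27-5-16} applied to each decoupled $A_{(m)}$ then produces the sum of Scott and Schwinger corrections.

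The main obstacle is this decoupling step: the difference between $\Tr(H^-_{A,V_m}\psi_m)$ and $\Tr(H^-_{A_{(m)},V_m}\psi_m)$ has to be controlled by an integral norm of $\partial(A-A_{(m)})$ (rather than its sup-norm), because on the annulus $\{\ell(x)\asymp d\}$ we only have the $L^2$-type bound of Proposition~\ref{prop-27-5-21}. The calibration of $r$ must be performed consistently for all $m$, and in the regime $(Z-N)_+\ge B^{5/12}$ the boundary-zone contribution to Proposition~\ref{prop-27-5-21} (the terms $T'$ and $T''$) requires a separate check that the decoupling radius $r$ does not venture into the boundary zone, which is guaranteed by the standing assumption $d\le \bar r$. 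Once the decoupling estimate is in place, the remaining bookkeeping is routine.
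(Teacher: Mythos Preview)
Your strategy is the paper's strategy: for the upper bound, a trial field built from cut-off single-nucleus minimizers; for the lower bound, cut the actual minimizer $A'$ into disjoint pieces $\chi_m A'$ and pay the annular cost via Proposition~\ref{prop-27-5-21}. The paper's proof is two lines to this effect.

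One point in your lower-bound discussion is muddled. You correctly write $\psi_m H_{A,V_m}\psi_m=\psi_m H_{A_{(m)},V_m}\psi_m$ (since $\chi_m\equiv 1$ on $\supp\psi_m$), so the localized traces are \emph{identical} --- there is no ``gap between the original localized trace and its decoupled counterpart'' to control, and the operator inequality $H_{A,V_m}\ge(1-\delta)H_{A_{(m)},V_m}-C_\delta|A-A_{(m)}|^2$ is unnecessary. The entire decoupling cost sits in the self-field energy: one must compare $\alpha^{-1}\|\partial A'\|^2$ with $\sum_m\alpha^{-1}\|\partial(\chi_m A')\|^2$. Cutting off produces an extra term $\asymp r^{-2}\int_{\{\ell\asymp r\}}|A'|^2$, and \emph{this} is where the Poincar\'e/Hardy step goes: replace $\chi_m A'$ by $\chi_m(A'-c_m)$ with $c_m$ constant (a gauge change, so the trace is unaffected --- cf.\ Remark~\ref{rem-27-5-3}), and then Poincar\'e on the annulus gives $r^{-2}\int|A'-c_m|^2\lesssim\int_{\{\ell\asymp r\}}|\partial A'|^2$, precisely the quantity Proposition~\ref{prop-27-5-21} bounds. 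Once you relocate the Hardy argument to the magnetic energy rather than the trace, your proof is the paper's.
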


\begin{proof}
\begin{enumerate}[label=(\roman*), fullwidth]
\item\label{proof-27-5-23-i}
In the estimate from below we just replace $A'$ in $\E_\alpha (A)$ by\newline
$\sum_{1\le m\le M} A'\psi_m$ with $\psi_m$ supported in
$\{x:\,|x-\y_m|\le \frac{1}{3} r\}$ and equal $1$ in
$\{x:\,|x-\y_m|\le \frac{1}{4} r\}$ where $r$ is the minimal distance between singularities and observe that $\alpha^{-1} \|\partial A'\|^2$ increased by no more than $CT$;

\item\label{proof-27-5-23-ii}
In the estimate from above we just plug into $\E_\alpha(A)$ \ $A'=\sum_{1\le m\le M} A'_m\psi_m$ with $A'_m$ minimizers for a single-singularity potential $V_m$.
\end{enumerate}
\end{proof}

\begin{remark}\label{rem-27-5-24}
\begin{enumerate}[label=(\roman*), fullwidth]
\item\label{rem-27-5-24-i}
Theorem~\ref{thm-27-5-23} makes sense only as $r_*\ll d\le \bar{r}$ and
$T_*\ll \alpha Z^3$; if any of these assumptions fails we observe that $\E_\alpha (A')$ is greater than $\cE_0+\Schwinger-CQ-C\alpha Z^3$ and in this case we can replace $S(\alpha Z_m)$ by $S(0)$; in this case in the upper estimate we pick up $A'=0$;

\item\label{rem-27-5-24-ii}
In the estimate from above $T=\max_m T_m$ with $T_m$ an estimate for a single-singularity potential $V_m$ delivered by Proposition~\ref{prop-27-5-19}; thus $T=T_0 d^{-3}$ as $r_*\le d\le \bar{r}$. Still it is at least
$C\alpha B^{\frac{3}{4}} Z^2$ while decoupling error of singularity and the regular zone is $C\alpha B Z^{\frac{5}{3}}$ which is smaller.
\end{enumerate}
\end{remark}

\chapter{Global trace asymptotics in the case of Thomas-Fermi potential: \texorpdfstring{$Z^{\frac{4}{3}}\le B\le Z^3$}{Z\textfoursuperior\textthreesuperior \textle Z\textle Z\textthreesuperior}}
\label{sect-27-6}

\section{Trace estimates}
\label{sect-27-6-1}

In this Section we consider the case of $Z^{\frac{4}{3}}\le B\le Z^3$ (corresponding to $\beta h\ge 1$ after rescaling\footnote{\label{foot-27-34} Recall that as $Z^{\frac{4}{3}}\lesssim B\lesssim Z^3$ the scaling is
$x\mapsto B^{\frac{2}{5}}Z^{-\frac{1}{5}}x$ (and the original distance between nuclei is at least $B^{-\frac{2}{5}}Z^{\frac{1}{5}}$),
$\tau \mapsto B^{-\frac{2}{5}}Z^{\frac{4}{5}}\tau$,
$\beta = B^{\frac{2}{5}}Z^{-\frac{1}{5}}$, $h=B^{\frac{1}{5}}Z^{-\frac{3}{5}}$ and $B\lesssim Z^3$.}). We start with

\begin{remark}\label{rem-27-6-1}
\begin{enumerate}[label=(\roman*), fullwidth]

\item\label{rem-27-6-1-i}
In contrast to the previous Section~\ref{sect-27-5} in this case the remainder estimate will be at least $C\kappa h^{-2}$ and therefore there will be no difficulty to decouple between singularities or between singularities and a regular zone and the Scott correction term will be either $\sum 2S(0)Z_m^2$ or even absent.

Therefore in the estimate from above we just pick up $A'=0$ both here and in the multiparticle problem and we will need only $\N$-term and $\D$-terms with
$A'= 0$ referring to Chapter~\ref{book_new-sect-25};

\item\label{rem-27-6-1-ii}
As $\beta h\ge 1$ we have a major dichotomy unrelated to the self-generated magnetic field:

\begin{enumerate}[label=(\alph*), fullwidth]
\item\label{rem-27-6-1-iia}
 $\beta h^2\le 1$ (i.e. $Z^{\frac{4}{3}}\le B\le Z^{\frac{7}{4}}$). In this case Scott correction term could be larger than the contribution of zone $\{x:\, \ell(x)\asymp 1\}$ to the remainder estimate which is no better than $O(\beta)$ and one probably needs to include Scott correction term in the final trace asymptotics;

\item\label{rem-27-6-1-iib}
$\beta h^2\ge 1$ (i.e. $Z^{\frac{7}{4}}\le B\le Z^3$). In this case the opposite is true; then one does not need to include Scott correction term for sure. Recall that condition $C\le Z^3$ is also unrelated to self-generated magnetic field;
\end{enumerate}
\item\label{rem-27-6-1-iii}
Recall that we need to impose condition $\kappa \beta h^2 |\log \beta|^K\le 1$ which is equivalent to
\begin{equation}
\alpha B^{\frac{4}{5}}Z^{-\frac{2}{5}}|\log Z|^K\le 1.
\label{27-6-1}
\end{equation}
\end{enumerate}
\end{remark}

\begin{theorem}\footnote{\label{foot-27-35} Cf. Proposition~\ref{prop-27-5-2}.}\label{thm-27-6-2}
Let $V$ be Thomas-Fermi potential $W^\TF_B+\lambda$ as
$Z^{\frac{4}{3}}\le B\le Z^3$, $N\asymp Z_1\asymp Z_2\asymp \ldots \asymp Z_M$ and $N\le Z$. Let $\alpha Z\le \kappa^*$ and assumption \textup{(\ref{27-6-1})} be fulfilled. Let $A'$ be a minimizer. Then

\begin{enumerate}[label=(\roman*), fullwidth]
\item\label{thm-27-6-2-i}
As \underline{either} $(Z-N)_+\lesssim B^{\frac{4}{15}}Z^{\frac{1}{5}}$ \underline{or} $M=1$ and $\alpha B^{\frac{3}{5}}Z^{\frac{1}{5}} \gtrsim 1$ expression
\begin{gather}
|\E_\kappa (A') - \Scott_0 + \int \int P_{B} (V(x))\,dx|\label{27-6-2}\\
\intertext{does not exceed}
C\Bigl( B^{\frac{1}{3}}Z^{\frac{4}{3}}+ B^{\frac{4}{5}}Z^{\frac{3}{5}} + \alpha Z^3 + \alpha^{\frac{16}{9}}B^{\frac{82}{45}}Z^{\frac{58}{45}}|\log Z|^K\Bigr);
\label{27-6-3}
\end{gather}

\item\label{thm-27-6-2-ii}
As $M= 1$, $(Z-N)_+\gtrsim B^{\frac{4}{15}}Z^{\frac{1}{5}}$ and
$\alpha B^{\frac{3}{5}}Z^{\frac{1}{5}} \lesssim 1$ expression \textup{(\ref{27-6-2})} does not exceed
\begin{multline}
C\Bigl( B^{\frac{1}{3}}Z^{\frac{4}{3}}+ B^{\frac{4}{5}}Z^{\frac{3}{5}} +
\alpha Z^3 + \alpha^{\frac{16}{9}}B^{\frac{82}{45}}Z^{\frac{49}{45}}|\log Z|^K\\
+ \alpha^{\frac{40}{27}}B^{\frac{74}{45}}Z^{\frac{131}{540}}
(Z-N)_+^{\frac{85}{108}} |\log Z|^K\Bigr);
\label{27-6-4}
\end{multline}

\item\label{thm-27-6-2-iii}
As $M\ge 2$ and $(Z-N)_+\gtrsim B^{\frac{4}{15}}Z^{\frac{1}{5}}$ expression \textup{(\ref{27-6-2})} does not exceed
\begin{multline}
C\Bigl( B^{\frac{1}{3}}Z^{\frac{4}{3}}+ B^{\frac{4}{5}}Z^{\frac{3}{5}} +
\alpha Z^3 + \alpha^{\frac{16}{9}}B^{\frac{82}{45}}Z^{\frac{49}{45}}|\log Z|^K\\
 +
B^{\frac{7}{10}} Z^{\frac{11}{40}}(Z-N)_+^{\frac{5}{8}}
|\log (Z-N)_+Z^{-1}|\Bigr).
\label{27-6-5}
\end{multline}
\end{enumerate}
\end{theorem}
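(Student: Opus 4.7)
The plan is to extend the scheme of Theorem~\ref{thm-27-5-23} to the strong-field regime. After the standard rescaling $x \mapsto B^{\frac{2}{5}}Z^{-\frac{1}{5}}x$, $\tau \mapsto B^{-\frac{2}{5}}Z^{\frac{4}{5}}\tau$ of Remark~\ref{rem-27-5-1}\ref{rem-27-5-1-b}, we land in the setting of Section~\ref{sect-27-4} with $\beta = B^{\frac{2}{5}}Z^{-\frac{1}{5}}$, $h = B^{\frac{1}{5}}Z^{-\frac{3}{5}}$, $\kappa = \alpha Z$, and condition \textup{(\ref{27-6-1})} becomes exactly $\kappa\beta h^2|\log h|^K \le 1$. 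In view of Remark~\ref{rem-27-6-1}\ref{rem-27-6-1-i}, the remainder will be at least $\alpha Z^3$, so the relevant Scott term is $\Scott_0 = \sum_m 2S(0)Z_m^2$ and the cost of decoupling singularities from the regular zone is absorbed into the $\alpha Z^3$ piece appearing in all three bounds \textup{(\ref{27-6-3})}--\textup{(\ref{27-6-5})}.

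For the upper estimate, since $A'$ minimizes $\E_\kappa$, it suffices to produce a trial field. Choosing $A' = 0$ reduces the problem to the operator $H_{A^0,V}$ with only the external field, for which the magnetic Weyl asymptotics of Chapter~\ref{book_new-sect-25} yield the main term $-\int P_B(V)\,dx$ with remainder $C(B^{\frac{1}{3}}Z^{\frac{4}{3}} + B^{\frac{4}{5}}Z^{\frac{3}{5}})$; the singular-zone analysis near each $\bar{\y}_m$ contributes $\Scott_0$ modulo an error of the same order, by a verbatim adaptation of Proposition~\ref{prop-27-5-10}.

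For the lower estimate, let $A'$ be the minimizer. Proposition~\ref{prop-27-4-1} delivers $\|\partial^2 A'\|_{\sL^\infty} \le \nu$ with $\nu$ given by \textup{(\ref{27-4-7})} or \textup{(\ref{27-4-8})} according to whether $\kappa\beta h\le 1$ or $\kappa\beta h\ge 1$. I would partition $\bR^3$ into singular balls $\{|x - \bar{\y}_m| \le \epsilon\beta^{-\frac{2}{3}}\}$ (handled by the Scott-type comparison leading to \textup{(\ref{27-5-22})}), the regular interior zone, and the boundary zone where $V$ changes sign. In each non-singular element at scale $\ell$, after local rescaling with $\zeta^2\ell = 1$, Proposition~\ref{prop-27-4-3} combined with Corollary~\ref{cor-27-4-4} gives a local trace error $C\beta_1(1 + h_1^{\frac{2}{3}}\nu_1^{\frac{4}{3}})$; summation over $\ell$ from the singular cut-off up to $\bar{r}$, after restoring the energy and volume factors, produces the $\alpha^{\frac{16}{9}}B^{\frac{82}{45}}Z^{\frac{58}{45}}|\log Z|^K$ (respectively $Z^{\frac{49}{45}}$-version) term appearing in \textup{(\ref{27-6-3})} and \textup{(\ref{27-6-4})}.

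The main technical obstacle, and the source of the three-way split, is the boundary zone. I would introduce the scaling function $\gamma(x) \asymp V(x)_+^{\frac{1}{4}} + \bar\gamma$ from Subsubsection~\ref{book_new-sect-25-5-1} and a secondary rescaling $x \mapsto x/\gamma$, then distinguish three regimes. When $(Z-N)_+ \lesssim B^{\frac{4}{15}}Z^{\frac{1}{5}}$, i.e.\ $\bar\gamma \lesssim h_1^{\frac{1}{3}}$, the elliptic range fills the boundary zone and no non-degeneracy hypothesis is needed, producing only the terms in \textup{(\ref{27-6-3})}. When $M = 1$, strong non-degeneracy \textup{(\ref{27-3-60})} holds after rescaling by Remark~\ref{rem-27-3-17}\ref{rem-27-3-17-i}, so Corollary~\ref{cor-27-4-4}\ref{cor-27-4-4-i} applies and the boundary contribution is controlled by the $\alpha^{\frac{40}{27}}B^{\frac{74}{45}}Z^{\frac{131}{540}}(Z-N)_+^{\frac{85}{108}}$ term in \textup{(\ref{27-6-4})}, with the exponent $\frac{40}{27}$ echoing \textup{(\ref{27-4-11})}. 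When $M \ge 2$ and $(Z-N)_+$ is large, non-degeneracy may fail and one falls back to Corollary~\ref{cor-27-4-4}\ref{cor-27-4-4-iii}; the dominant extra contribution then comes from the $C\beta h^{-\frac{1}{2}}$ term of $Q'''$, which under $\gamma$-rescaling and integration over $\gamma \ge \bar\gamma = (Z-N)_+^{\frac{1}{4}}B^{-\frac{3}{16}}$ produces precisely the $B^{\frac{7}{10}}Z^{\frac{11}{40}}(Z-N)_+^{\frac{5}{8}}\log$ term of \textup{(\ref{27-6-5})}, in direct parallel with the derivation of $Q'$ in \textup{(\ref{27-5-37})}. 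The hard part is the exponent bookkeeping: checking that every power in \textup{(\ref{27-6-3})}--\textup{(\ref{27-6-5})} emerges cleanly from the product of scaling factors $\beta_1 = \beta\ell\zeta^{-1}$, $h_1 = h\ell^{-1}\zeta^{-1}$, $\nu_1 = \nu\ell^2\zeta^{-1}$ summed with the correct weights, and that sub-leading contributions from all other pieces of the partition are indeed absorbed.
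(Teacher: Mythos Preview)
Your strategy coincides with the paper's: upper bound by the trial $A'=0$ and Chapter~\ref{book_new-sect-25}; lower bound by partition into singular, regular, and boundary zones, local trace estimates, and summation. Three points deserve sharpening.

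First, the lower bound does not proceed by invoking Proposition~\ref{prop-27-4-1} for a \emph{global} bound $\|\partial^2 A'\|_{\sL^\infty}\le\nu$ on the Thomas--Fermi minimizer (that proposition assumes a smooth bounded potential). Instead the paper follows the localization scheme of Proposition~\ref{prop-27-5-2}: on each element one estimates $\Tr^-(\psi_\iota H_{A,V}\psi_\iota)+(C_0\alpha)^{-1}\int|\partial A'|^2\,dx$ from below (see (\ref{27-6-6})--(\ref{27-6-7})), replacing $A'$ by a cut-off version as in Remark~\ref{rem-27-5-3} and then applying the \emph{local} endgame bound of Corollary~\ref{cor-27-4-4}. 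The global minimizer's second derivatives never need to be controlled directly.

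Second, the decisive non-degeneracy in the regular zone $r_*\le\ell(x)\le\epsilon\bar r$ where $\beta_1h_1\ge C_0$ is not (\ref{27-3-60}) but the \emph{super-strong} condition (\ref{27-4-4}): since only the lowest Landau level is in range, $\min_j|V-2j\beta_1 h_1|\asymp 1$ after rescaling, independently of $M$. This is what licenses the form (\ref{27-6-9}) with the $(\kappa_1\beta_1)^{\frac{16}{9}}h_1^{\frac{14}{9}}$ exponent (via Corollary~\ref{cor-27-4-4}\ref{cor-27-4-4-ii}) even when $\kappa\beta h\le 1$, and hence the $\alpha^{\frac{16}{9}}$ term in (\ref{27-6-3})--(\ref{27-6-5}).

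Third, your $\bar\gamma=(Z-N)_+^{\frac{1}{4}}B^{-\frac{3}{16}}$ is the Section~\ref{sect-27-5} formula for $B\le Z^{\frac{4}{3}}$. In the present scaling the paper takes $\bar\gamma=(Z-N)_+^{\frac{1}{4}}Z^{-\frac{1}{4}}$, reset to $B^{\frac{1}{15}}Z^{-\frac{1}{5}}$ when the latter is larger; the threshold $(Z-N)_+\asymp B^{\frac{4}{15}}Z^{\frac{1}{5}}$ in the case split is exactly where these coincide.
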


\begin{proof}
Observe first that we need to prove only the estimate from below for expression (\ref{27-6-2}) without absolute value as in the estimate from above we just pick $A'=0$ and apply results of Chapter~\ref{book_new-sect-25} producing estimate $CB^{\frac{4}{5}}Z^{\frac{3}{5}}$.

Proof of the estimate from below repeats one of Proposition~\ref{prop-27-5-2}. Namely we apply an appropriate partition and on each element $\psi_\iota^2$ estimate from below
\begin{equation}
\Tr^- (\psi_\iota H_{A,V}\psi_\iota) + (C_0\alpha )^{-1}\int |\partial A'|^2\,dx
+\int \int P_{B} (V(x))\psi_\iota ^2(x)\,dx
\label{27-6-6}
\end{equation}
as $\iota \ge 1$ and
\begin{multline}
\Tr^- (\psi_0 H_{A,V}\psi_0) + (C_0\alpha )^{-1}\int |\partial A'|^2\,dx \\
+\int \int P_{B} (V(x))\psi_0 ^2(x)\,dx - \Scott_0.
\label{27-6-7}
\end{multline}
As $B\le Z^2$ we separate zone
$\cX_0\Def\{x:\, \ell(x)\le r_*= B^{-\frac{2}{3}}Z^{\frac{1}{3}}\}$ in which after rescaling $x\mapsto r_*^{-1}x$, $\tau\mapsto r_*Z^{-1}\tau$, we have
$\beta=1$, $h=B^{\frac{1}{3}}Z^{-\frac{2}{3}}$ and $\kappa = \alpha Z$. Then for corresponding partition element $\psi_0^2$ expression (\ref{27-6-7}) in virtue of Chapter~\ref{book_new-sect-26} does not exceed (by an absolute value)
\begin{equation*}
C\bigl(h^{-1} + \kappa |\log \kappa|^{\frac{1}{3}}h^{-\frac{4}{3}}\bigr)\times
Zr_*^{-1}
\end{equation*}
which does not exceed (\ref{27-6-3}) without the last term.

As $B\ge Z^2$ we separate zone
$\cX_0\Def \{x:\, \ell(x)\le r_*=Z^{-1}\}$ and after rescaling $x\mapsto r_*^{-1}x$, $\tau\mapsto Z^{-1} r_*\tau$ we have $h=1$,
$\beta= BZ^{-2}$ and apply variational estimates of Appendix~\ref{sect-27-A-1} here. Then the contribution of $\cX_0$ to the remainder does not exceed
$C\beta \times Zr_*^{-1}=CB \le CB^{\frac{4}{5}}Z^{\frac{3}{5}}$.

\medskip\noindent
(b) Further, contribution of each regular element with
$r_*\le \ell(x)\le \epsilon \bar{r}$ (recall that $\bar{r}=B^{-\frac{2}{5}}Z^{\frac{1}{5}}$) does not exceed
\begin{align}
& C\zeta ^2 h_1^{-1}\Bigl(1 +
(\kappa_1\beta_1)^{\frac{40}{27}}h_1^{\frac{34}{27}}|\log h_1|^K\Bigr)
&&\text{as\ \ } \beta_1h_1\le C_0,
\label{27-6-8}\\
&C\zeta^2\beta_1\ \Bigl(1+ (\kappa_1\beta_1)^{\frac{16}{9}}h_1^{\frac{14}{9}}|\log h_1|^K\Bigr)
&&\text{as\ \ } \beta_1 h_1\ge C_0
\label{27-6-9}
\end{align}
with $\zeta=Z^{\frac{1}{2}}\ell^{-\frac{1}{2}}$, $\beta_1=BZ^{-\frac{1}{2}}\ell^{\frac{3}{2}}$, $h_1=Z^{-\frac{1}{2}}\ell^{-\frac{1}{2}}$, $\kappa_1=\alpha Z$. Indeed, as $\beta_1h_1\ge C_0$ and
$\ell(x)\le \epsilon \bar{r}$ super-strong non-degeneracy assumption (\ref{27-4-4}) is fulfilled.

Observe that the first term in (\ref{27-6-8}) has $\ell$ in the negative power and therefore sums to its value as $\ell=r_*$ while the second term has $\ell $ in the positive power and therefore sums to its value as $\beta_1h_1 = 1$ (i.e. $\ell=B^{-1}Z$, $h_1=B^{\frac{1}{2}}Z^{-1}$, $\beta_1=B^{-\frac{1}{2}}Z$); one can see easily that it is less than $\alpha Z^3$. Actually this zone appears only as $B\le Z^2$.

On the other hand, both terms in (\ref{27-6-9}) have $\ell$ in the positive power and thus sum to their values as $\ell=\bar{r}$,
$\beta_1=B^{\frac{2}{5}}Z^{-\frac{1}{5}}$, $h_1=B^{\frac{1}{5}}Z^{-\frac{3}{5}}$ and $\kappa_1=\alpha Z$ which are exactly the second and the fourth terms in (\ref{27-6-2}).

\medskip\noindent
(c) \emph{Boundary zone\/} $\{x:\,\epsilon \le \ell(x) \le c\}$ is treated in the same way albeit with $\zeta=B^{\frac{2}{5}}Z^{\frac{4}{5}}\gamma^2$, $\beta_1=B^{\frac{2}{5}}Z^{-\frac{1}{5}} \gamma^{-1}$,
$h_1=B^{\frac{1}{5}}Z^{-\frac{3}{5}}\gamma^{-3}$ and
$\kappa_1=\alpha Z \gamma^5$ as long as $\gamma \ge C_0\bar{\gamma}$ (with $\bar{\gamma}= (Z-N)_+^{\frac{1}{4}} Z^{-\frac{1}{4}}$ but reset to
$B^{\frac{1}{15}}Z^{-\frac{1}{5}}$ if the latter is larger. Observe that plugging into (\ref{27-6-9}) we get in both terms $\gamma$ in the power greater than $2$; therefore after summation b with respect to partition elements we get expression (\ref{27-6-9}) with $\gamma=1$, $\beta_1=B^{\frac{2}{5}}Z^{-\frac{1}{5}}$, $h_1=B^{\frac{1}{5}}Z^{-\frac{3}{5}}$ and $\kappa_1=\alpha Z$.

This proves the lower estimate (\ref{27-6-2}) in the framework of the first clause of Statement~\ref{thm-27-6-2-i} as contribution of the zone
$\gamma \le B^{\frac{1}{15}}Z^{-\frac{1}{5}}$ is estimated easily; we leave it to the reader.

\medskip\noindent
(d) Assume now that $(Z-N)_+\ge B^{\frac{4}{15}}Z^{\frac{1}{5}}$. We do not partition zone $\{x:\, \gamma (x)\le C_0\bar{\gamma}\}$ further. In this case we need to take
\begin{gather}
\nu_1 = \bigl((\kappa_1\beta_1)^{\frac{4}{3}}h_1^{\frac{2}{3}}+
(\kappa_1\beta_1)^{\frac{10}{9}}h_1^{\frac{4}{9}}\bigr)|\log h_1|^K.
\label{27-6-10}\\
\intertext{As $M=1$ we should plug it into}
C\zeta^2 \beta_1
\bigl(1+ h_1^{\frac{2}{3}}\nu_1^{\frac{4}{3}}\bigr)\gamma^{-2}
\label{27-6-11}
\end{gather}
with $\zeta=B^{\frac{2}{5}}Z^{\frac{4}{5}}\gamma^2$ and $\gamma=\bar{\gamma}$.

As $\kappa \beta h\gtrsim 1$ we estimate it by the same expression with $\bar{\gamma}$ replaced by $1$ but then in $\nu_1$ dominates the first term and we arrive to the lower estimate (\ref{27-6-2}) in the framework of the second clause of Statement~\ref{thm-27-6-2-i}.

\medskip\noindent
(e) As $M=1$ and $\kappa \beta h\lesssim 1$ we need to take into account term (\ref{27-6-11}) with
$\nu _1= (\kappa_1 \beta_1)^{\frac{10}{9}}h_1^{\frac{4}{9}}|\log h_1|^K$ which results in the last term in (\ref{27-6-4}). Indeed, as
$\gamma(x)\le C_0\bar{\gamma}$ super-strong non-degeneracy condition is not fulfilled.

\medskip\noindent
(f) As $M\ge 2$, we need to take into account term
$C\zeta^2 \beta_1 h_1^{-\frac{1}{2}} \gamma^{-2}(1+|\log \gamma|)$ with $\gamma=\bar{\gamma}$ which results in the last term in (\ref{27-6-5}).
This concludes estimate from below.
\end{proof}

\begin{corollary}\label{cor-27-6-4}
In the framework of Theorem~\ref{thm-27-6-2}\ref{thm-27-6-2-i}, \ref{thm-27-6-2-ii}, \ref{thm-27-6-2-iii} expression $\|\partial A'\|^2$ does not exceed expressions \textup{(\ref{27-6-3})}, \textup{(\ref{27-6-4})} and \textup{(\ref{27-6-5})} respectively multiplied by $\alpha$.
\end{corollary}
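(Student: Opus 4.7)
The plan is to apply the standard ``coupling constant doubling trick'' that was already used in Proposition~\ref{prop-27-2-5}, Corollary~\ref{cor-27-3-3}, and in the proof of Theorem~\ref{thm-27-5-14}. Working in the original (unrescaled) variables, let $A$ be a minimizer for $\E_\alpha(A)$ and $A'=A-A^0$. Because only the coefficient of $\int|\partial A'|^2\,dx$ depends on the coupling constant, we have the exact identity
\begin{equation*}
\E_\alpha(A) - \E_{\alpha'}(A) = \bigl(\alpha^{-1}-\alpha^{\prime\,-1}\bigr)\int|\partial A'|^2\,dx
\end{equation*}
for any $\alpha'>0$. Taking $\alpha'=(1+\epsilon_0)\alpha$ (so that $\alpha' Z\le \kappa^*$ remains in the hypothesis range of Theorem~\ref{thm-27-6-2}), and combining with $\E_\alpha(A)=\E^*_\alpha$ and $\E_{\alpha'}(A)\ge \E^*_{\alpha'}$, we get
\begin{equation*}
\frac{\epsilon_0}{(1+\epsilon_0)\alpha}\int|\partial A'|^2\,dx \le \E^*_\alpha-\E^*_{\alpha'}.
\end{equation*}

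The second step is to estimate $\E^*_\alpha-\E^*_{\alpha'}$ by applying Theorem~\ref{thm-27-6-2} twice, once at coupling $\alpha$ and once at coupling $\alpha'$. Both values $\E^*_\alpha$ and $\E^*_{\alpha'}$ are compared with the same asymptotic reference quantity $\Scott_0-\int P_B(V)\,dx$ (note that $\Scott_0$ and the magnetic Thomas--Fermi integral are independent of the self-generated field coupling), and each comparison has an error bounded by expression \textup{(\ref{27-6-3})}, \textup{(\ref{27-6-4})}, or \textup{(\ref{27-6-5})}, depending on which of the regimes \ref{thm-27-6-2-i}, \ref{thm-27-6-2-ii}, \ref{thm-27-6-2-iii} we are in. Since the relevant expression $Q$ is, as a function of $\alpha$, monotone non-decreasing on the short interval $[\alpha,(1+\epsilon_0)\alpha]$ and $\alpha'\asymp \alpha$, the bounds for the two couplings are of the same order, so the difference is controlled by $2CQ$ with $Q$ the original expression.

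Combining the two steps yields
\begin{equation*}
\int |\partial A'|^2\,dx \le \frac{(1+\epsilon_0)}{\epsilon_0}\, \alpha\cdot 2CQ = C'\alpha Q,
\end{equation*}
which is exactly the claim of Corollary~\ref{cor-27-6-4}.

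The only genuine point that needs care is the second step: one must check that applying Theorem~\ref{thm-27-6-2} at the \emph{slightly enlarged} coupling $\alpha'$ does not push us out of the dichotomy regime that determines which of \textup{(\ref{27-6-3})}--\textup{(\ref{27-6-5})} applies. The thresholds in Theorem~\ref{thm-27-6-2} (namely $(Z-N)_+\lessgtr B^{\frac{4}{15}}Z^{\frac{1}{5}}$ and $\alpha B^{\frac{3}{5}}Z^{\frac{1}{5}}\lessgtr 1$) involve $\alpha$ only through the sharp boundary $\alpha B^{\frac{3}{5}}Z^{\frac{1}{5}}\lesssim 1$; choosing $\epsilon_0$ small enough (and absorbing constants in $C'$) keeps $\alpha'$ on the same side of the boundary as $\alpha$ up to a harmless constant factor. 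The rest is bookkeeping of the monotonicity of $Q$ in $\alpha$, which is straightforward term-by-term inspection of \textup{(\ref{27-6-3})}--\textup{(\ref{27-6-5})}.
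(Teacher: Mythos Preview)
Your argument is correct and is exactly the coupling-constant doubling trick the paper uses; the paper's proof is the single line ``Indeed, the same estimates hold with $\alpha$ replaced by $2\alpha$,'' which encodes precisely your two steps (split off $(2\alpha)^{-1}\|\partial A'\|^2$, then compare $\E^*_\alpha$ and $\E^*_{2\alpha}$ against the common $\alpha$-independent main term $\Scott_0-\int P_B(V)\,dx$). Your extra care about the $\alpha$-dependent threshold $\alpha B^{3/5}Z^{1/5}\lessgtr 1$ is fine but not strictly necessary, since the bounds \textup{(\ref{27-6-3})} and \textup{(\ref{27-6-4})} agree up to constants at that boundary.
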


\begin{proof}
Indeed, the same estimates hold with $\alpha$ replaced by $2\alpha$.
\end{proof}

The same methods lead us to a similar result as $B\lesssim Z^{\frac{4}{3}}$:

\begin{theorem}\label{thm-27-6-4}
Let $V$ be Thomas-Fermi potential $W^\TF_B+\lambda$ as
$B\le Z^{\frac{4}{3}}$, $N\asymp Z_1\asymp Z_2\asymp \ldots \asymp Z_M$ and $N\le Z$. Let $\alpha Z\le \kappa^*$. Let $A'$ be a minimizer. Then

\begin{enumerate}[label=(\roman*), fullwidth]
\item\label{thm-27-6-4-i}
As \underline{either} $(Z-N)_+\lesssim B^{\frac{5}{12}}$ \underline{or} $M=1$ expression
\begin{gather}
|\E_\kappa (A') - \Scott_0 + \int \int P_{B} (V(x))\,dx|\label{27-6-12}\\
\intertext{does not exceed}
C\Bigl( B^{\frac{1}{3}}Z^{\frac{4}{3}}+ Z^{\frac{5}{3}} + \alpha Z^3 \Bigr);
\label{27-6-13}
\end{gather}

\item\label{thm-27-6-4-ii}
As $M\ge 2$ and $ B^{\frac{5}{12}}\lesssim (Z-N)_+\lesssim B^{\frac{3}{4}}$ expression \textup{(\ref{27-6-12})} does not exceed
\begin{equation}
C\Bigl( B^{\frac{1}{3}}Z^{\frac{4}{3}}+ Z^{\frac{5}{3}} +
\alpha Z^3 +
B^{\frac{29}{32}} (Z-N)_+^{\frac{5}{8}}|\log (Z-N)_+B^{-\frac{3}{4}}|\Bigr).
\label{27-6-14}
\end{equation}

\item\label{thm-27-6-4-iii}
As $M\ge 2$ and $(Z-N)_+\gtrsim B^{\frac{3}{4}}$ expression \textup{(\ref{27-6-12})} does not exceed
\begin{equation}
C\Bigl( B^{\frac{1}{3}}Z^{\frac{4}{3}}+ Z^{\frac{5}{3}} +
\alpha Z^3 + B (Z-N)_+^{\frac{1}{2}}\Bigr).
\label{27-6-15}
\end{equation}
\end{enumerate}
\end{theorem}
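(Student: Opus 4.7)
\smallskip\noindent\textbf{Proof plan.} I would mirror the strategy of Theorem~\ref{thm-27-6-2}, now in the regime $\beta h\lesssim 1$ obtained after the standard rescaling $x\mapsto Z^{1/3}x$, $\tau\mapsto Z^{4/3}\tau$ producing $h=Z^{-1/3}$, $\beta=BZ^{-1}$, $\kappa=\alpha Z$. For the upper estimate I would simply plug the trial $A'=0$ into $\E_\alpha$, so $\E^*_\alpha\le \Tr^-(H_{A^0,V})$, and then quote the no-self-generated-field trace asymptotics of Chapter~\ref{book_new-sect-25}. That gives $\Tr^-(H_{A^0,V})+\int P_B(V)\,dx=\Scott_0+O(B^{1/3}Z^{4/3}+Z^{5/3})$ in case~\ref{thm-27-6-4-i}, with the extra $B^{29/32}(Z-N)_+^{5/8}\log$ or $B(Z-N)_+^{1/2}$ terms automatically appearing in cases~\ref{thm-27-6-4-ii},~\ref{thm-27-6-4-iii}. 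No $\alpha Z^3$ loss arises from this side, since the trial field is zero.

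\smallskip\noindent
For the lower estimate I would let $A'$ be the minimizer. The two key inputs will be (a)~the \emph{a priori} bound $\|\partial A'\|^2\le C\alpha h^2 Q$ obtained from the scaling identity $\E_\alpha(A')\ge \E_{2\alpha}(A')+(2\alpha)^{-1}h^{-2}\|\partial A'\|^2$ together with Proposition~\ref{prop-27-5-2}; and (b)~Proposition~\ref{prop-27-5-12}, which bounds $\Tr^-(H_{A,V})+\int P_B(V)\,dx$ from below by $\sum_m\bigl[\Tr^-(H_{A,V_m}\psi_m)-\int P_0(V_m)\psi_m\,dx\bigr]-CQ$, with $Q$ equal to $Q_0$, $Q_0+Q'$, or $Q_0+Q''$ in the respective cases. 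To replace the remaining Coulomb traces by $\Scott_0=\sum 2S(0)Z_m^2$, I would substitute $A'=0$ on each singular cut-off $\psi_m$: the resulting trace perturbation is controlled by the localized $\alpha^{-1}h^{-2}\|\partial A'\|^2$, and, crucially, the discrepancy $|2S(\alpha Z_m)Z_m^2-2S(0)Z_m^2|\le C\alpha Z_m^3$ between the magnetic and non-magnetic Scott corrections is the sole source of the $\alpha Z^3$ term in the stated bounds.

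\smallskip\noindent
The main obstacle will be the boundary zone in cases~\ref{thm-27-6-4-ii}--\ref{thm-27-6-4-iii}, where $(Z-N)_+>B^{5/12}$ and the Thomas-Fermi potential loses strong non-degeneracy at the scale $\bar\gamma\asymp(Z-N)_+^{1/4}Z^{-1/4}$. Since~\textup{(\ref{27-3-65})} fails there, I must replace Proposition~\ref{prop-27-3-15} by the degenerate estimate of Proposition~\ref{prop-27-3-16}, rescaled with $h_2=h_1\bar\gamma^{-3}$, $\beta_2=\beta_1\bar\gamma^{-1}$. The dominant new contribution is the $C\beta_2h_2^{-1/2}$ piece of $Q'''$; here Remark~\ref{rem-27-5-11} is decisive, as the Poisson representation for $A'$ yields the improved bound $\nu_2\le\nu_1\bar\gamma^{3/2+\delta}$ which keeps the sum over $\gamma\ge\bar\gamma$ convergent and forces the evaluation at $\gamma=\bar\gamma$. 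Undoing the rescaling produces precisely the $B^{29/32}(Z-N)_+^{5/8}\log$ term of~\textup{(\ref{27-6-14})} and the $B(Z-N)_+^{1/2}$ term of~\textup{(\ref{27-6-15})}. In the complementary range $(Z-N)_+\le B^{5/12}$ one has $\bar\gamma\le h_1^{1/3}$, hence $h_2\asymp1$ in the boundary zone; no non-degeneracy is then needed and $Q_0$ alone suffices, giving case~\ref{thm-27-6-4-i}.
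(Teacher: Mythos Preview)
Your plan is correct and matches the paper's own, which simply records that Theorem~\ref{thm-27-6-4} follows by ``the same methods'' as Theorem~\ref{thm-27-6-2}: the upper estimate via $A'=0$ and Chapter~\ref{book_new-sect-25}, the lower estimate via the partition argument with the singular zone handled through Chapter~\ref{book_new-sect-26} and the replacement $\Scott\to\Scott_0$ costing $O(\alpha Z^3)$. Your third paragraph is redundant once you have invoked Proposition~\ref{prop-27-5-12}, since the boundary-zone contributions $Q'$ of \textup{(\ref{27-5-37})} and $Q''$ of \textup{(\ref{27-5-38})} are already exactly the extra terms appearing in \textup{(\ref{27-6-14})} and \textup{(\ref{27-6-15})}; there is no need to rerun Proposition~\ref{prop-27-3-16} or Remark~\ref{rem-27-5-11}.
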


\section{Estimates to minimizer}
\label{sect-27-6-2}

Observe that only terms $B^{\frac{1}{3}}Z^{\frac{4}{3}}$ and $\alpha Z^3$ are associated with the singularities and they are definitely smaller than $B^{\frac{4}{5}}Z^{\frac{3}{5}}$ as $B\gtrsim Z^{\frac{7}{4}}$. Therefore as
$B\gtrsim Z^{\frac{7}{4}}$ we do not expect estimate for $\D(\rho_\Psi-\rho_B,\,\rho_\Psi-\rho_B)$ better than expressions (\ref{27-6-3})--(\ref{27-6-5}).

However for $B\lesssim Z^{\frac{7}{4}}$ to improve such estimate we need to study a minimizer. We assume that the remainder in Theorem~\ref{thm-27-6-4} does not exceed $C\bigl(B^{\frac{1}{3}}Z^{\frac{4}{3}}+ \alpha Z^3\bigr)$ and therefore
\begin{gather}
\|\partial A'\| \le C\alpha^{\frac{1}{2}}B^{\frac{1}{6}}Z^{\frac{2}{3}}+
C\alpha Z^{\frac{3}{2}}
\label{27-6-16}\\
\intertext{or after our usual scaling}
\|\partial A'\| \le \varsigma \Def
C\bigl(\kappa +\kappa^{\frac{1}{2}}\beta^{\frac{1}{6}}h^{\frac{1}{2}}\bigr).
\label{27-6-17}
\end{gather}

Observe that as $Z^{\frac{4}{3}}\le B \le Z^2$ we have all zones.

\begin{proposition}\label{prop-27-6-5}
Let $V$ be $W^\TF_B+\lambda$ rescaled as $Z^{\frac{4}{3}}\le B\le Z^3$,
$N\asymp Z_1\asymp Z_2\asymp \ldots \asymp Z_M$ and $N\le Z$. Further, let $\alpha Z\lesssim 1$ and
$\alpha B^{\frac{2}{5}} Z^{-\frac{2}{5}}|\log \beta|^K\lesssim 1$. Then under assumption \textup{(\ref{27-6-17})} the minimizer $A'$ satisfies
\begin{enumerate}[label=(\roman*), fullwidth]
\item \label{prop-27-6-5-i}
As $\ell\le r_*=h^2$ (i.e. $h_1\ge 1$)
\begin{equation}
|\partial^2 A'|\le C \varsigma h^{-5};
\label{27-6-18}
\end{equation}
\item \label{prop-27-6-5-ii}
As $r_*\le \ell \le c(\beta h)^{-1}$ (i.e. $h_1\le 1$, $\beta_1h_1\le c$)
\begin{multline}
|\partial^2 A'|\le C\kappa\Bigl( \varsigma \ell^{-\frac{5}{2}}+
\min \bigl( \beta^{\frac{3}{2}}h^{\frac{1}{2}} \ell^{-1},\,
\beta^{\frac{1}{2}}\ell^{-\frac{3}{4}}\bigr)\Bigr)|\log \ell/r_*| \\
+ C (\kappa \beta)^{\frac{10}{9}}h^{\frac{4}{9}} \ell^{-\frac{19}{9}}
|\log \ell/r_*|^K;
\label{27-6-19}
\end{multline}
\item \label{prop-27-65-iii}
As $c(\beta h)^{-1}\le \ell \le c $
\begin{multline}
|\partial^2 A'|\le C\varsigma \ell^{-\frac{5}{2}}+
C\kappa \beta^{\frac{1}{2}}\ell^{-\frac{7}{4}}|\log \ell/r_*| \\
+
C \Bigl((\kappa \beta)^{\frac{10}{9}}h^{\frac{4}{9}}\ell^{-\frac{19}{9}}
+(\kappa \beta)^{\frac{4}{3}} h^{\frac{2}{3}} \ell^{-\frac{5}{6}}\Bigr)
|\log \ell/r_*|^K;
\label{27-6-20}
\end{multline}
\end{enumerate}
\end{proposition}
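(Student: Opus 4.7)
The plan is to transplant the scheme of Proposition~\ref{prop-27-5-6} to the regime $\beta h\ge 1$, using the sharper microlocal estimates of Section~\ref{sect-27-4} in those subregions where the rescaled magnetic field is strong. Rescaling about the nearest nucleus by $x\mapsto(x-\bar{\y}_m)\ell^{-1}$, $\tau\mapsto\tau\zeta^{-2}$ with $\zeta=\ell^{-1/2}$ (so $\zeta^2\ell=1$), the rescaled parameters become $h_1=h\ell^{-1/2}$, $\beta_1=\beta\ell^{3/2}$, $\kappa_1=\kappa$. The three statements in the proposition correspond exactly to $h_1\gtrsim 1$, to $h_1\lesssim 1$ with $\beta_1h_1\lesssim 1$, and to $\beta_1h_1\gtrsim 1$.

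For~\ref{prop-27-6-5-i} the rescaled equation~(\ref{26-3-13x}) sits in the trivial regime $h_1\gtrsim 1$ (with $\beta_1\lesssim 1$ by the standing assumption $\beta h^3\le \epsilon_0$); the crude bound $|e|\lesssim 1$ combined with~(\ref{27-6-17}) immediately yields~(\ref{27-6-18}) after scaling back. For~\ref{prop-27-6-5-ii} and~\ref{prop-27-65-iii} I cut a hole $\{x:\ell(x)\le\tfrac{1}{2}r\}$ and set up a fixed-point inequality $\nu(r/2)\le F(\nu(r/2))$ in the spirit of~(\ref{27-5-12}). In case~\ref{prop-27-6-5-ii}, where $\beta_1h_1\le c$, the function $F$ is supplied by Corollary~\ref{cor-27-3-3}; in case~\ref{prop-27-65-iii}, where $\beta_1h_1\ge c$, it is supplied by Proposition~\ref{prop-27-4-1}, which produces the two competing branches $(\kappa_1\beta_1)^{10/9}h_1^{4/9}|\log h_1|^K$ and $(\kappa_1\beta_1)^{4/3}h_1^{2/3}|\log h_1|^K$ according to whether $\kappa_1\beta_1h_1\lessgtr 1$. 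In both cases the $\sL^2$ contribution on the annulus is controlled by $\varsigma\cdot(\ell\zeta^2)^{-1/2}$ using~(\ref{27-6-17}), and the standard interpolation $\|\partial A'\|_{\sL^\infty}\le\|\partial^2 A'\|_{\sL^\infty}^{3/5}\|\partial A'\|^{2/5}$ closes the loop. Solving the resulting fixed-point inequality and scaling back yields~(\ref{27-6-19}) and~(\ref{27-6-20}) respectively.

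The main obstacle will be two-fold. First, one must verify that the candidate $\nu_1$ produced by the fixed-point argument satisfies $\nu_1\le\epsilon\beta_1$, which is the hypothesis of Proposition~\ref{prop-27-4-1}; this should reduce to the standing smallness $\alpha B^{2/5}Z^{-2/5}|\log\beta|^K\lesssim 1$ combined with $\alpha Z\lesssim 1$. Second, and more delicately, one must check that the right-hand sides of~(\ref{27-6-19}) and~(\ref{27-6-20}) are decreasing in $\ell$ on the relevant $\ell$-interval, which is what allows one to drop the last-iteration contribution $\varsigma(\ell\zeta^2)^{-1/2}$ after the fixed-point argument has converged, in the same manner as the transition from~(\ref{27-5-13}) to Proposition~\ref{prop-27-5-6}. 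The required monotonicity in $\ell$ is precisely what forces the appearance of the minimum of the two branches in case~\ref{prop-27-65-iii}, and also the $\min$ on the second term of~(\ref{27-6-19}); verifying it on the patching interval $\ell\asymp(\beta h)^{-1}$ between~\ref{prop-27-6-5-ii} and~\ref{prop-27-65-iii} requires careful exponent bookkeeping but no new ideas.
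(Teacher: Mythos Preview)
Your approach is correct and is precisely what the paper intends: its entire proof reads ``The proof of these two propositions repeats our standard arguments and is left to the reader,'' and the standard arguments in question are exactly the scheme of Proposition~\ref{prop-27-5-6} that you outline---rescale to $(h_1,\beta_1,\kappa_1)$, feed the local bounds (Corollary~\ref{cor-27-3-3}/(\ref{27-3-46}) in the regime $\beta_1h_1\lesssim 1$, Proposition~\ref{prop-27-4-1}/(\ref{27-4-3}) in the regime $\beta_1h_1\gtrsim 1$) into a fixed-point inequality for $\nu(r)$, solve, and scale back. Your identification of the obstacles (the check $\nu_1\le\epsilon\beta_1$ and the monotonicity in $\ell$, cf.\ footnote~\ref{foot-27-26}) is also on target.

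One small correction: you say the monotonicity ``allows one to drop the last-iteration contribution $\varsigma(\ell\zeta^2)^{-1/2}$ \dots\ in the same manner as the transition from~(\ref{27-5-13}) to Proposition~\ref{prop-27-5-6}.'' But the statement of Proposition~\ref{prop-27-6-5} \emph{retains} the $\varsigma\ell^{-5/2}$ term in both~(\ref{27-6-19}) and~(\ref{27-6-20}); the paper does not perform the analogous removal here. The monotonicity is needed primarily so that the supremum defining $\nu(r)$ in~(\ref{27-5-11}) is realised near $\ell\asymp r$, i.e.\ so that the fixed-point iteration closes at all---not for dropping the $\sL^2$ contribution afterwards. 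Simply omit the ``drop'' step and you recover the stated bounds directly.
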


\begin{proof}
The proof of these two propositions repeats our standard arguments and is left to the reader.
\end{proof}

These propositions may not provide the best $\D$-term estimate as
$\kappa \beta h\le 1$ (i.e. $\alpha B^{\frac{3}{5}}Z^{\frac{1}{5}}\le 1$) and could be improved in virtue of the super-strong non-degeneracy assumption fulfilled at regular elements with $\ell\ge c(\beta h)^{-1} $ and at border elements with $\gamma \ge C_0\bar{\gamma}$. We want to improve term containing
$(\kappa \beta )^{\frac{10}{9}} h^{\frac{4}{8}}\ell^{-\frac{19}{9}}$ in (\ref{27-6-20}). Assume now that $\beta h^2\le 1$ and $\kappa \beta h\le 1$ (case we need to cover). Let us consider zone
$\{x:(\epsilon_0 \beta h \ge V(x)\ge C_0 |\eta|\}$ where in the corresponding scale super-strong non-degeneracy condition is fulfilled and
$\eta=\lambda B^{-\frac{2}{5}}Z^{\frac{1}{5}}$.

\begin{proposition}\label{prop-27-6-6}
Let conditions of Proposition~\ref{prop-27-6-5} be fulfilled. Then

\begin{enumerate}[label=(\roman*), fullwidth]
\item\label{prop-27-6-6-i}
Estimate
\begin{gather}
|\partial^2 A'(x)| \le
C\Bigl(\varsigma \ell^{-\frac{5}{2}}+
\kappa \beta ^{\frac{1}{2}}\ell(x)^{-\frac{7}{4}} +
(\kappa\beta)^{\frac{4}{3}} h^{\frac{2}{3}} \ell(x)^{-\frac{5}{6}}\Bigr)
|\log h|^K
\label{27-6-21}\\
\shortintertext{holds as}
\beta h |\log h|^{-\delta}\ge V(x)\ge |\eta|\cdot |\log h|^\delta
\label{27-6-22}
\end{gather}
with arbitrarily small exponent $\delta>0$;
\item\label{prop-27-6-6-ii}
Furthermore, if $|\eta|\le |\log h|^{-\delta}$ then estimate
\begin{equation}
|\partial^2 A'(x)| \le
C\Bigl(\varsigma+ \kappa \beta ^{\frac{1}{2}} +
(\kappa\beta)^{\frac{4}{3}} h^{\frac{2}{3}} +
(\kappa\beta)^{\frac{10}{9}} h^{\frac{4}{9}}\bar{\gamma}^{\frac{28}{9}}\Bigr) |\log h|^K
\label{27-6-23}
\end{equation}
holds as $|V(x)|\lesssim 1$, $\bar{\gamma}=|\eta|^{\frac{1}{4}}$.
\end{enumerate}
\end{proposition}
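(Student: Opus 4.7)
The strategy is to reduce to a rescaled setting where the super-strong non-degeneracy assumption \textup{(\ref{27-4-4})} holds and then invoke Proposition~\ref{prop-27-4-1}\ref{prop-27-4-1-ii}; this is exactly what suppresses the inferior $(\kappa\beta)^{10/9}h^{4/9}\ell^{-19/9}$ contribution present in the weaker Proposition~\ref{prop-27-6-5}\ref{prop-27-65-iii}. For part~\ref{prop-27-6-6-i}, around a point $x$ satisfying \textup{(\ref{27-6-22})} I would rescale as in \textup{(\ref{27-5-4})} with $\zeta^2\asymp V(x)\asymp \ell^{-1}$, producing $h_1=h/(\ell\zeta)$, $\beta_1=\beta\ell/\zeta$, $\kappa_1=\kappa\zeta^2\ell$. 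Then $\beta_1h_1=\beta h/\zeta^2\ge |\log h|^\delta\gg 1$, placing us in the strong-field regime, and
\begin{equation*}
\min_j|\widetilde V-2j\beta_1 h_1| = \zeta^{-2}\min_j|V-2j\beta h|\gtrsim |\log h|^{-\delta},
\end{equation*}
since the $j=0$ term equals $V/\zeta^2\asymp 1$ while $|V-2\beta h|\gg V$ under \textup{(\ref{27-6-22})}. Hence \textup{(\ref{27-4-4})} holds up to a $|\log h|^{\pm\delta}$ factor which is absorbed into the final $|\log h|^K$.

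Applying Proposition~\ref{prop-27-4-1}\ref{prop-27-4-1-ii} in the rescaled variables gives
\begin{equation*}
\|\partial^2\widetilde A'\|_{\sL^\infty}\le C\kappa_1\beta_1^{1/2}|\log h| + C(\kappa_1\beta_1)^{4/3}h_1^{2/3}|\log h|^K,
\end{equation*}
and I would close the estimate by iterating on the implicit $\|\partial\widetilde A'\|_{\sL^\infty}$-contribution through the interpolation $\|\partial\widetilde A'\|_{\sL^\infty}\le C\|\partial\widetilde A'\|^{2/5}\|\partial^2\widetilde A'\|_{\sL^\infty}^{3/5}$ combined with the rescaled form of the a~priori bound \textup{(\ref{27-6-17})}, exactly as in the proof of Proposition~\ref{prop-27-5-6}. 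Undoing the rescaling (via $|\partial^2 A'|=\zeta\ell^{-2}|\partial^2\widetilde A'|$ with $\zeta^2\asymp\ell^{-1}$) converts the two main terms into $\kappa\beta^{1/2}\ell^{-7/4}|\log h|^K$ and $(\kappa\beta)^{4/3}h^{2/3}\ell^{-5/6}|\log h|^K$, while the iterated bound contributes $\varsigma\ell^{-5/2}$, yielding \textup{(\ref{27-6-21})}. For part~\ref{prop-27-6-6-ii}, I would work in the boundary zone with a $\gamma$-admissible partition and $V\asymp\gamma^4$, so that $\zeta=\gamma^2$, $h_1=h/\gamma^3$, $\beta_1=\beta/\gamma$, $\kappa_1=\kappa\gamma^5$, and apply the same reduction on every subelement with $\gamma\gtrsim\bar\gamma=|\eta|^{1/4}$, producing the first three terms of \textup{(\ref{27-6-23})}. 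On the critical scale $\gamma\asymp\bar\gamma$ super-strong non-degeneracy may fail because $V$ approaches $|\eta|$, so I fall back on the weaker Proposition~\ref{prop-27-4-1}\ref{prop-27-4-1-i}; the inferior term $(\kappa_1\beta_1)^{10/9}h_1^{4/9}|\log h|^K$ it contributes, with $\kappa_1\beta_1=\kappa\beta\bar\gamma^4$ and $h_1=h/\bar\gamma^3$, scales back (with $\zeta\gamma^{-2}=1$ on this scale) to $(\kappa\beta)^{10/9}h^{4/9}\bar\gamma^{28/9}|\log h|^K$.

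The main obstacle is the mild failure of \textup{(\ref{27-4-4})} at the boundary of the zone \textup{(\ref{27-6-22})}: when $V$ approaches either $\beta h$ or $|\eta|$, the separation from the nearest Landau level degrades by a factor of order $|\log h|^{\pm\delta}$, so one must verify that the proof of Proposition~\ref{prop-27-4-1}\ref{prop-27-4-1-ii} is robust under this logarithmic degradation and that all such losses combine into a single uniform $|\log h|^K$ factor. A secondary but delicate step is the bookkeeping of the local $\sL^2$ norm of $\partial A'$ across the partition and rescaling, which is needed to close the iteration via interpolation with the explicit a~priori bound $\varsigma$ from \textup{(\ref{27-6-17})}.
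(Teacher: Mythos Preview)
Your overall strategy is the right one—exploit super-strong non-degeneracy \textup{(\ref{27-4-4})} in the rescaled picture to suppress the $(\kappa\beta)^{10/9}h^{4/9}$ term—but there is a genuine gap in the mechanism. You cannot ``apply Proposition~\ref{prop-27-4-1}\ref{prop-27-4-1-ii}'' and read off the final bound: that proposition is proven for a minimizer of a \emph{local} problem, and after rescaling, $\widetilde{A'}$ is not a local minimizer. What survives locally is the self-referential inequality \textup{(\ref{27-4-5})}, i.e.\ a bound of the form
\[
\nu(\text{small region}) \le C\bigl(\varsigma + \kappa_1\beta_1^{1/2} + \kappa_1\beta_1 h_1^{1/2}\,\nu(\text{slightly larger region})^{1/4}\bigr)|\log h|^{K_1},
\]
and the whole content of the proof is to \emph{close this recursion}. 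The paper does it by introducing nested level sets $\cX_n(t)=\{e^{-\epsilon n-1}t\le V\le e^{\epsilon n+1}t\}$ (all at comparable $\ell$), setting $\nu_n(t)=\sup_{\cX_n(t)}|\partial^2 A'|\ell^{5/2}$, and iterating: after $n$ steps the surviving term carries exponent $4^{-n}$, so the crude a~priori bound $\nu_{n+1}\le h^{-L}$ becomes $O(1)$ once $4^n\gtrsim|\log h|$. This iteration is exactly why the logarithmic buffers in \textup{(\ref{27-6-22})} are needed: the zone must be allowed to expand by a factor $e^{\epsilon n}\lesssim|\log h|^\delta$ in $V$-value to accommodate the $n\asymp\log\log h^{-1}$ steps. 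Your ``iteration on the implicit $\|\partial\widetilde{A'}\|_{\sL^\infty}$-contribution via interpolation'' addresses only the subleading $\mu$-term in \textup{(\ref{27-4-5})}; the decisive loop is on $\nu^{1/4}$, and without it you have no way to eliminate the bad term inherited from Proposition~\ref{prop-27-6-5}.

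For part~\ref{prop-27-6-6-ii} the paper again argues self-referentially on the whole residual zone $\cY=\{V\le|\eta|\,|\log h|^\delta\}$: bound $|\Delta A'|$ there by an expression containing both $\nu^{1/4}$ (where super-strong non-degeneracy still holds) and $\nu^{1/10}$ (at scale $\bar\gamma$ where it fails), use the already-proved estimate \textup{(\ref{27-6-21})} on $\partial\cY$ as boundary data, and solve the resulting algebraic inequality for $\nu=\sup_{\cY}|\partial^2 A'|$. Your $\gamma$-partition heuristic recovers the correct power $\bar\gamma^{28/9}$, but as written it suffers from the same issue: invoking Proposition~\ref{prop-27-4-1}\ref{prop-27-4-1-i} at scale $\bar\gamma$ again presupposes the local minimizer structure rather than closing the recursion.
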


\begin{proof}
(i) Let
\begin{equation*}
\nu_n(t)=\sup _{\cX_n(t)} |\partial^2 A'(x)|\ell(x)^{\frac{5}{2}},\qquad
\cX_n(t)= \{e^{-\epsilon n-1} t\le V(x)\le e^{\epsilon n+1} t\}.
\end{equation*}
Here $\epsilon>0$ is arbitrarily small (but constants may depend on it). Assume that
\begin{equation}
e^{\epsilon n} t\le \epsilon_0,\qquad
e^{-\epsilon n} t \ge C_0 |\eta|, \qquad e^{\epsilon n} \le |\log h|.
\label{27-6-24}
\end{equation}
Here first two conditions assure that in $\cX_n(t)$ the super-strong non-degeneracy assumption is fulfilled after rescaling and the last condition assures that $\ell(x)$ remains the same (modulo logarithmic factor) here. Then
\begin{equation}
\nu_n(t) \le C\Bigl(\varsigma + \kappa \beta_1^{\frac{1}{2}} +
 C\kappa_1 \beta_1 h_1^{\frac{1}{4}} (\nu_{n+1}(t))^{\frac{1}{4}}\Bigr)
 |\log h|^{K_1}
 \label{27-6-25}
\end{equation}
with $\kappa_1=\kappa$, $\beta_1=\beta r^{\frac{3}{2}}$, $h_1=hr^{-\frac{1}{2}}$, $r= \min(t^{-1}, 1)$. Indeed, $|\Delta A'|$ in $\cX_{n+1/2}(t)$ does not exceed the right-hand expression (without term $\varsigma$ and without logarithmic factor) multiplied by $r^{-\frac{5}{2}}$ and $C\varsigma $ estimates $\sL^2$-norm of $\partial A'$ (and we scale it properly). Recall that we scale $x\mapsto x/r$ as $r\le \epsilon_0$ and $x\mapsto x/\gamma$ as $r\asymp 1$ and in the latter case
$\beta_1=\beta \gamma^{-1}$, $h_1=h\gamma^{-3}$ and $\kappa_1=\kappa \gamma^5$; the uncertainty due to $r$ or $\gamma$ defined modulo logarithmic factor compensates by $|\log h|^{K_1}$ in the right-hand expression of (\ref{27-6-25}).

Therefore
\begin{gather*}
F_n(t)\le C\Bigl(\varsigma r^{-\frac{5}{3}}+
C\kappa \beta^{\frac{1}{2}}r^{-\frac{11}{12}} +
C\kappa \beta h^{\frac{1}{2}}
\times (F_{n+1}(t))^{\frac{1}{4}}\Bigr) |\log h|^{K_1}\\
\intertext{for $F_n(t)=\nu_n(t) r^{-\frac{5}{3}}$. Iterating we see that}
F_0(t)\le
C\Bigl(\varsigma r^{-\frac{5}{3}}+
\kappa \beta^{\frac{1}{2}} r^{-\frac{11}{12}}\Bigr)|\log h|^K
+C(\kappa \beta h^{\frac{1}{2}}\bigr)^{\frac{4}{3}}|\log h|^K
\times \underbracket{(F_{n+1}(t))^{\frac{1}{4^n}}}.
\end{gather*}
Since $F_{n+1}(r)\le h^{-L}$ the last factor is bounded by a constant as
$2^n \ge |\log h|$ and we can satisfy this and (\ref{27-6-24}) as
long as (\ref{27-6-22}) holds. This proves Statement~\ref{prop-27-6-6-i}.

\medskip\noindent
(ii) Consider remaining zone
$\cY=\{V(x)\le |\eta|\cdot |\log h|^{\delta}\}$. Let
$\nu = \sup_{\cY} |\partial^2 A'|$. Observe that in $\cY$ \ $|\Delta A'|$ does not exceed
\begin{equation*}
C\Bigl(\kappa \beta ^{\frac{1}{2}} +
\kappa \beta h^{\frac{1}{2}}|\nu|^{\frac{1}{4}}+
\kappa \beta h^{\frac{2}{5}}\bar{\gamma}^{\frac{14}{5}} \nu ^{\frac{1}{10}}
\Bigr) |\log h|^K
\end{equation*}
and on its border (\ref{27-6-23}) is fulfilled. It implies that (\ref{27-6-23}) is fulfilled in $\cY$ as well. This proves Statement~\ref{prop-27-6-6-ii}.
\end{proof}

\begin{remark}\label{rem-27-6-7}
If $|\eta| \ge |\log h|^{-\delta}$ then Proposition~\ref{prop-27-6-5} is sufficiently good in the remaining zone $\cY$.
\end{remark}

\section{$\N$-term asymptotics and $\D$-term estimates}
\label{sect-27-6-3}

We leave to the reader not complicated but rather tedious and error-prone task

\begin{problem}\label{problem-27-6-8}
Estimate remainder in $\N$-term
\begin{gather}
|\int \bigl(e(x,x,\lambda') - P_{B}(V(x)+\lambda')\bigr)\,dx|
\label{27-6-26}\\
\intertext{and $\D$-term}
\D \bigl( e(x,x,\lambda') - P_{B}(V(x)+\lambda'),\,
e(x,x,\lambda') - P_{B}(V(x)+\lambda')\bigr).
\label{27-6-27}
\end{gather}
\end{problem}

After usual rescaling one needs to consider the following zones:

\begin{enumerate}[label=(\alph*), fullwidth]
\item
Zone $\{x:\, \ell (x)\le (\beta h)^{-1}|\log h|^\delta\}$. In this zone one should use $\beta_1=\beta \ell^{\frac{3}{2}}$, $h_1=h \ell^{-\frac{1}{2}}$ and
$\nu _1= (\kappa \beta_1)^{\frac{10}{9}} h_1^{\frac{4}{9}}|\log h_1|^K$ (other terms are not important here); then its contributions to expressions (\ref{27-6-26}) and (\ref{27-6-27}) do not exceed respectively
$C\bigl(h_1^{-2}+ h_1^{-\frac{5}{3}}\nu_1^{\frac{2}{3}}\bigr)$ and
$C\bigl(h_1^{-2}+ h_1^{-\frac{5}{3}}\nu_1^{\frac{2}{3}}\bigr)^2\ell ^{-1}$ (as
$\ell (x)\lesssim (\beta h)^{-1}$ but slight extension just adds some logarithmic factor). In the final tally
$\ell =(\beta h)^{-1}|\log h|^{\delta}$.

\item
Zone
$\{x:\, (\beta h)^{-1}|\log h|^\delta \le \ell (x)\le |\log h|^{-\delta}\}$.
In this zone we have the same expressions for $h_1$ and $\beta_1$ and
$\nu _1= (\kappa \beta_1)^{\frac{4}{3}} h_1^{\frac{2}{3}}|\log h_1|^K$;
then its contributions to expressions (\ref{27-6-26}) and (\ref{27-6-27}) do not exceed respectively
$C\beta_1\bigl( h_1^{-1}+ h_1^{-\frac{2}{3}}\nu_1^{\frac{2}{3}}\bigr)$ and
$C\beta_1^2\bigl(h_1^{-1}+ h_1^{-\frac{2}{3}} \nu_1^{\frac{2}{3}}\bigr)^2 \ell^{-1}$. In the final tally $\ell=1$;

\item
Zone $\{x:\,\ell (x)\ge |\log h|^{-\delta},\,
\gamma (x)\ge \bar{\gamma} |\log h^\delta\}$. In this zone $h_1=h\gamma^{-3}$ and $\beta_1=\beta\gamma^{-1}$ and
$\nu = (\kappa \beta)^{{4}{3}}h^{\frac{2}{3}}|\log h|^K$ but we use according to Remark~\ref{rem-27-5-9} instead
$\nu_1= \kappa_1 \beta_1 h_1^{\frac{1}{4}} \nu^{\frac{1}{4}}$ with
$\kappa_1=\kappa\gamma^5$. Then

\begin{enumerate}[label=(c$_\arabic*$), fullwidth]
\item
As $M=1$ contributions of $\gamma$-element to expressions (\ref{27-6-26}) and (\ref{27-6-27}) do not exceed
$C\beta_1
\bigl( h_1^{-1}+ h_1^{-\frac{2}{3}}\nu_1^{\frac{2}{3}}\bigr)$ and
$C\beta_1^2\bigl(h_1^{-1}+ h_1^{-\frac{2}{3}} \nu_1^{\frac{2}{3}}\bigr)^2 $ respectively.

\item
As $M\ge 2$ contributions of $\gamma$-element to expressions (\ref{27-6-26}) and (\ref{27-6-27}) do not exceed respectively
$C\beta_1 h_1^{-1}\nu_1^{\frac{1}{2}}$ and $C\beta_1^2h_1^{-2} \nu_1 $.
\end{enumerate}

In the final tally $\gamma=1$;

\item
Then as $M=1$ contributions of $\bar{\gamma}$-element to expressions (\ref{27-6-26}) and (\ref{27-6-27}) do not exceed respectively (in comparison to what we have already)
$C\beta_1 h_1^{-\frac{2}{3}}\nu_1^{\frac{2}{3}}$ and
$C\beta_1^2h_1^{-\frac{4}{3}}\nu_1^{\frac{4}{3}}$ with $\nu_1 =
(\kappa \beta)^{\frac{10}{9}}h_1^{\frac{4}{9}}\bar{\gamma}^{\frac{58}{27}}
|\log h|^K$.

On the other hand, as $M\ge 2$ contributions of $\bar{\gamma}$-element to expressions (\ref{27-6-26}) and (\ref{27-6-27}) do not exceed respectively (in comparison to what we have already) $C\beta_1 h_1^{-\frac{3}{2}}$ and $C\beta_1^2 h_1^{-3}$.
\end{enumerate}

Let us partially summarize what we got. Let $\kappa \beta h \gtrsim 1$. Then if $M=1$ the final results are
$C\beta
\bigl(h^{-1}+ h^{-\frac{2}{9}} (\kappa \beta)^{\frac{8}{9}}|\log h|^K\bigr)$  and the same expression squared. If $M\ge 2$ the final results from all zones except $\{x:\,\gamma(x)\le C_0\bar{\gamma}\}$ are
$C\beta h^{-\frac{1}{3}}(\kappa \beta)^{\frac{2}{3}}|\log h|^K$ and the same expression squared.

\smallskip
On the other hand, let $\kappa \beta h \lesssim 1$. Then if $M=1$ the final results do not exceed
$C\beta
\bigl(h^{-1}+ h^{-\frac{10}{27}} (\kappa \beta)^{\frac{20}{27}}|\log h|^K\bigr)$  and the same expression squared.

\chapter{Applications to ground state energy}
\label{sect-27-7}

\section{Preliminary remarks}
\label{sect-27-7-1}

Recall that we are looking for
\begin{equation}
\langle \sfH \Psi,\Psi\rangle + \frac{1}{\alpha} \int |\partial A'|^2\,dx
\label{27-7-1}
\end{equation}
which should be minimized by $\Psi\in \fH$ and $A'$.

We know (see f.e. Subsection~\ref{book_new-24-2-1}) that
\begin{multline}
\langle \sfH \Psi,\Psi\rangle \ge \Tr ^- (H_{A,W+\lambda '}) + \lambda ' N
+\frac{1}{2} \D(\rho_\Psi-\rho,\,\rho_\Psi-\rho)\\
 - \frac{1}{2} \D(\rho,\,\rho)
-C\int \rho_\Psi^{\frac{4}{3}}\,dx
\label{27-7-2}
\end{multline}
where $W=V-|x|^{-1}* \rho$, $V$ is a Coulomb potential of nuclei, $\rho$ and $\lambda\le 0$ are arbitrary.

Therefore to derive estimate from below for expression (\ref{27-7-1}) we
just need to pick up $\rho$ and $\lambda' $ but we cannot pick up $A'$. Let us select $\rho$ and $\lambda$ equal to Thomas-Fermi density $=\rho^\TF_B$ and chemical potential $\lambda$ respectively (but if $N\approx Z$ it is beneficial to pick up $\lambda '=0$), add $\alpha^{-1}\int |\partial A'|^2\,dx$, and apply trace asymptotics without any need to consider $\N$- or $\D$-terms and we have an estimate from below which includes also ``bonus term''
$\frac{1}{2} \D(\rho_\Psi-\rho,\,\rho_\Psi-\rho)$ and is as good as a remainder estimate in the trace asymptotics rescaled--provided we estimate properly
the last term in (\ref{27-7-2}).

Thus here we are missing only estimate for
$\int \rho_\Psi^{\frac{4}{3}}\,dx$ or more sophisticated estimate if we are interested in Dirac and Schwinger terms. We will prove in Appendix~\ref{sect-27-A-2} that in the electrostatic inequality for near ground-state one can replace this term $-C\int \rho_\Psi^{\frac{4}{3}}\,dx$ by
$-CZ^{\frac{5}{3}}$ as $B\le Z^{\frac{4}{3}}$ and $-CB^{\frac{4}{5}}Z^{\frac{3}{5}}$ as $Z^{\frac{4}{3}}\le B\le Z^3$; further, as $B\le Z$ one can replace it by $\Dirac -C Z^{\frac{5}{3}-\delta}$ thus proving Bach-Graf-Solovej estimate in our current settings.

Therefore due to these arguments estimates from below in Theorems~\ref{thm-27-7-4} and \ref{thm-27-7-5} follow immediately from Theorems~\ref{thm-27-5-14},~\ref{thm-27-5-16} and~\ref{thm-27-5-23} as $B\le Z^{\frac{4}{3}}$ and estimates from below in Theorem~\ref{27-7-11} follow immediately from Theorems~\ref{thm-27-6-2} and~\ref{thm-27-6-4} as $Z^{\frac{4}{3}}\le B\le Z^3$.

On the other hand, estimate from above involves picking up $\rho$ (which we select to be $\rho^\TF_B$ again) and picking $A'$ as well--which we choose as in upper estimates of Section~\ref{sect-27-5} (rescaled) and also picking up $\Psi$ which we select $\phi_1(x_1,\varsigma_1)\cdots \phi_N(x_N,\varsigma_N)$ anti-symmetrized by $(x_1,\varsigma_1),\ldots ,(x_N,\varsigma_N)$ but we do not select $\lambda'$ in the trace asymptotics which \emph{must\/} be equal to $\lambda_N$ which is $N$-th eigenvalue of $H_{W,A}$ or to $0$ if there are less than $N$ negative eigenvalues of $H_{W,A}$.

In this case we need to estimate $|\lambda' -\lambda|$ and also
\begin{equation}
\D\bigl( \tr e(x,x,\lambda')- \rho^\TF_B,\,
\tr e(x,x,\lambda')- \rho^\TF_B\bigr)
\label{27-7-3}
\end{equation}
which required some efforts in Chapters~\ref{book_new-sect-24}--\ref{book_new-sect-26} but here we will do it rather easily because in Section~\ref{sect-27-5} we took either $A'$ equal to one for Coulomb potential and without external magnetic field (as $\beta h^2\le 1$) or just $0$ (as $\beta h^2\ge 1$).

\section{Estimate from above: \texorpdfstring{$B\le Z^{\frac{4}{3}}$}{B\textle Z\textfoursuperior\textthreesuperior}}
\label{sect-27-7-2}

Recall that now we can select $\rho$ and $A'$ but cannot select $\lambda'$.

As $B\le Z^{\frac{4}{3}}$ (or $\beta h\le 1$ after rescaling) we select $A'$ as a minimizer for one-particle operator with Coulomb potential and without external magnetic field. Then after rescaling $x\mapsto Z^{\frac{1}{3}}x$,
$\tau\mapsto Z^{-\frac{4}{3}}\tau$, $h=1\mapsto h=Z^{-\frac{1}{3}}$,
$B\mapsto \beta = BZ^{-1}$, $\alpha \mapsto \kappa =\alpha Z$
\begin{align}
&|\partial A'|\le C\kappa \ell^{-\frac{3}{2}}, \qquad
&&|\partial ^2 A'|\le C\kappa \ell^{-\frac{5}{2}}|\log (\ell/h^2)|
\label{27-7-4}\\
\shortintertext{or before it}
&|\partial A'|\le C\alpha Z^{\frac{5}{3}} \ell^{-\frac{3}{2}}, \qquad
&&|\partial ^2 A'|\le C\alpha Z^{\frac{5}{3}}\ell^{-\frac{5}{2}}
|\log (Z\ell)|.
\label{27-7-5}
\end{align}

Let us start from the easy case $M=1$. We need the following

\begin{proposition}\label{prop-27-7-1}
Let $M=1$, $N\asymp Z$, $B\lesssim Z^{\frac{4}{3}}$ and $\alpha Z\le \kappa^*$. Assume that $A'$ satisfies \textup{(\ref{27-7-5})}. Then
\begin{enumerate}[label=(\roman*), fullwidth]
\item\label{prop-27-7-1-i}
The remainder in the trace asymptotics does not exceed
\begin{equation}
\left\{\begin{aligned}
&C\bigl(Z^{\frac{5}{3}} +
\alpha |\log (\alpha Z)|^{\frac{1}{3}}Z^{\frac{25}{9}}\bigr)\qquad
&&\text{as\ \ } B \le Z,\\
& C\bigl(B^{\frac{1}{3}} Z^{\frac{4}{3}} +
\alpha |\log (\alpha Z)|^{\frac{1}{3}}B^{\frac{2}{9}}Z^{\frac{23}{9}}\bigr)
 \qquad &&\text{as\ \ } B \ge Z;
\end{aligned}\right.
\label{27-7-6}
\end{equation}

\item\label{prop-27-7-1-ii}
The remainder in $\N$-term asymptotics does not exceed $CZ^{\frac{2}{3}}$;

\item\label{prop-27-7-1-iii}
$\D$-term does not exceed $CZ^{\frac{5}{3}}$.
\end{enumerate}
\end{proposition}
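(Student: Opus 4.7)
The plan is to exploit that $A'$ is chosen as the minimizer for the single-nucleus Coulomb problem \emph{without} external field, so bounds \textup{(\ref{27-7-5})} are strictly sharper than the generic minimizer bounds \textup{(\ref{27-5-15})}--\textup{(\ref{27-5-17})} of Section~\ref{sect-27-5-2}. After the standard rescaling $x\mapsto x/\ell$, $\zeta = \ell^{-1/2}$, $h_1 = h\ell^{-1/2}$ used throughout Section~\ref{sect-27-5}, bound \textup{(\ref{27-7-5})} reads $\nu_1 \Def |\partial^2 A'|\,\ell^{5/2} \le C\kappa|\log h|$ uniformly in the range $\ell_* \le \ell \lesssim 1$; the $(\kappa\beta)^{10/9}h^{4/9}\ell^{4/9}$-summand of \textup{(\ref{27-5-31})} and the $(\kappa\beta)^{4/3}h^{2/3}$-summand of \textup{(\ref{27-4-8})} that are present for a genuine minimizer of $\E_\kappa$ are simply absent. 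This is the structural gain we will leverage throughout, together with the automatic non-degeneracy \textup{(\ref{27-3-60})} which holds for $M=1$ (Remark~\ref{rem-27-3-17}\ref{rem-27-3-17-i}).

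For part \ref{prop-27-7-1-i}, on the singular element $\{\ell(x) \lesssim r^*\}$ we invoke Proposition~\ref{prop-27-5-10}, which already delivers the error \textup{(\ref{27-5-22})}; this accounts for the second summand in each line of \textup{(\ref{27-7-6})}. On each regular $\ell$-element we apply Proposition~\ref{prop-27-3-13} (if $\beta_1 h_1 \lesssim 1$) or Proposition~\ref{prop-27-4-3}\ref{prop-27-4-3-i} (if $\beta_1 h_1 \gtrsim 1$) under the verified non-degeneracy, yielding an element contribution of order $C\zeta^2\beta_1(1 + h_1^{2/3}\nu_1^{4/3})$. Plugging $\nu_1 \le C\kappa|\log h|$, the $\ell$-sum is dominated by its value at $\ell \asymp 1$, matching the first summand of \textup{(\ref{27-7-6})}; the boundary zone is handled exactly as in Proposition~\ref{prop-27-5-12}\ref{prop-27-5-12-i}, no boundary-type correction appearing because $M=1$.

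For part \ref{prop-27-7-1-ii} we repeat the $\ell$-partition argument of Proposition~\ref{prop-27-5-17}\ref{prop-27-5-17-i}: each element contributes $C\zeta^2(h_1^{-2}+h_1^{-5/3}\nu_1^{2/3})$ to the $\N$-term remainder, cf.\ \textup{(\ref{27-5-48})}. With our improved $\nu_1$ both summands are decreasing in $\ell$ throughout the interior, so the total is dominated by its value at $\ell \asymp 1$, namely $C(h^{-2}+h^{-5/3}\kappa^{2/3}|\log h|^K)$. Since $\kappa = \alpha Z \le \kappa^*$, this is $\le Ch^{-2} = CZ^{2/3}$. For part \ref{prop-27-7-1-iii} the $\D$-term estimate of Proposition~\ref{prop-27-5-17}\ref{prop-27-5-17-i}, $\D \le CZ^{1/3}R^2$, combined with $R \le CZ^{2/3}$ from part \ref{prop-27-7-1-ii}, yields exactly $CZ^{5/3}$.

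The main obstacle is the careful bookkeeping of the $\ell$-summation across the singular $\{\ell \lesssim r^*\}$, regular $\{r^* \lesssim \ell \le \bar r\}$, and boundary $\{\ell \asymp \bar r\}$ zones, and in particular matching the singular-element error \textup{(\ref{27-5-22})} --- which carries the Scott-scale correction $\alpha|\log(\alpha Z)|^{1/3}Z^{25/9}$ traced back to the one-body Coulomb analysis of Section~\ref{book_new-sect-26-3} --- with the clean regular-zone bound $Q_0$. The ``entanglement'' error $\alpha B^2 Z^{1/3}$ of Theorem~\ref{thm-27-5-14} is \emph{not} present here because Proposition~\ref{prop-27-5-12} controls only the raw trace remainder (the extra term in Theorem~\ref{thm-27-5-14} arose from the separate step of replacing $P_{Bh}$ by $P_{\beta h}$ in the upper estimate), so our specific $A'$ remains compatible with all the estimates we invoke.
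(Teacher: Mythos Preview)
Your overall strategy matches the paper's: partition into singular, regular and boundary zones, and on each regular $\ell$-element invoke the local trace/$\N$-term estimates with the specific $\nu_1$ coming from \textup{(\ref{27-7-4})}. However several steps are miscomputed.

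First, for the trace term on a regular element with $\beta_1 h_1\lesssim 1$ (which holds throughout the interior $\ell\le 1$, since $\beta_1 h_1=\beta h\ell\le \beta h\le 1$), Proposition~\ref{prop-27-3-13} gives $Q_0=h_1^{-1}+h_1^{-1/3}\nu_1^{4/3}$ from \textup{(\ref{27-3-59})}, not $\beta_1(1+h_1^{2/3}\nu_1^{4/3})$; you have imported the $\beta h\gtrsim 1$ formula \textup{(\ref{27-4-10})} into the wrong regime. With the correct leading term $\zeta^2 h_1^{-1}=h^{-1}\ell^{-1/2}$ (interior scaling $\zeta=\ell^{-1/2}$), the sum is dominated at $\ell=\ell^*=\epsilon\min(\beta^{-2/3},1)$, not at $\ell\asymp 1$; it is this that produces $B^{1/3}Z^{4/3}$ when $B\ge Z$, matching the singular-zone bound \textup{(\ref{27-7-7})}. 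Your version would give $BZ^{1/3}$ at $\ell=1$, which neither matches nor bounds the true regular-zone contribution.

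Second, your premise that \textup{(\ref{27-7-5})} is ``strictly sharper'' than \textup{(\ref{27-5-15})}--\textup{(\ref{27-5-17})} fails in the exterior zone $\ell\ge 1$: there \textup{(\ref{27-7-4})} gives $|\partial^2 A'|\lesssim\kappa\ell^{-5/2}$, while the TF-minimizer bound \textup{(\ref{27-5-17})} has leading term $\kappa\ell^{-4}$, which is \emph{smaller}. This is exactly why the paper writes ``we cannot directly apply previous results'' and re-does the boundary-zone computation (part~(iib)) with the slower decay $\nu_1=C\kappa\bar\ell^{3/2}|\log\kappa|$, rather than invoking Propositions~\ref{prop-27-5-12} or \ref{prop-27-5-17} wholesale.

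Third, for the $\N$-term the element contribution is $C(h_1^{-2}+h_1^{-5/3}\nu_1^{2/3})$ with no $\zeta^2$ factor (cf.\ \textup{(\ref{27-5-48})}), and $h_1^{-2}=h^{-2}\ell$ is \emph{increasing} in the interior---which is why the sum lands at $\ell\asymp 1$, contrary to your ``decreasing'' claim. The conclusion $Ch^{-2}=CZ^{2/3}$ is correct, but the stated reasoning is inverted.
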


\begin{proof}
We cannot directly apply previous results as $A'$ is now generated by much slower decaying Coulomb potential. The good news however is that $A'$ is generated without presence of the external magnetic field.
Let us scale as we mentioned above.

\medskip\noindent
(i) Consider the remainder estimate in the trace asymptotics.

\medskip\noindent
(ia) Contribution of the near singularity zone
$\{x:\, \ell(x)\le \ell^*= \epsilon \min (\beta^{-\frac{2}{3}},1)\}$ does not exceed
\begin{equation}
\left\{\begin{aligned}
&C(h^{-1} + \kappa |\log \kappa|^{\frac{1}{3}}h^{-\frac{4}{3}})\qquad
&&\text{as\ \ } \beta \le 1,\\
& C(\beta^{\frac{1}{3}} h^{-1} +
\kappa |\log \kappa|^{\frac{1}{3}}\beta^{\frac{2}{9}} h^{-\frac{4}{3}}) \qquad &&\text{as\ \ } \beta \ge 1.
\end{aligned}\right.
\label{27-7-7}
\end{equation}

\medskip\noindent
(ib) Contribution of $\ell$-element in the regular zone
$\{x:\, \ell(x)\ge \ell^*\}$ does not exceed
$C\zeta^2 \bigl(h_1^{-1} + h_1^{-\frac{1}{3}} \nu_1^{\frac{2}{3}}\bigr)$.
We plug $h_1=h/(\zeta \ell)$, and in virtue of (\ref{27-7-4})
$\nu_1 =\kappa \ell^{-\frac{5}{2}} \cdot \ell^3\zeta^{-1}=
\kappa \ell^{\frac{1}{2}}\zeta^{-1}$. Taking a sum we arrive to the same expression as $\ell= \ell^*$ i.e. expression (\ref{27-7-7}). Scaling back we arrive to (\ref{27-7-6}).

\medskip\noindent
(ic) We leave analysis of the boundary zone to the reader. It requires just repetition of the corresponding arguments of Section~\ref{sect-27-5}.

\medskip\noindent
(ii) Next let us consider a remainder estimate in the $\N$-term asymptotics.

\medskip\noindent
(iia) Contribution of $\ell$-element in the regular zone does not exceed
$C\bigl( h_1^{-2}+ h_1^{-\frac{5}{3}}\nu_1^{\frac{1}{3}}\bigr)$ and plugging $h_1$, $\nu_1$ we arrive after summation to the same expression as $\ell=1$ i.e.
$Ch^{-2}$. One can prove easily that the contribution of the near singularity zone is $O(h^{-2+\delta})$.

Consider contribution of the boundary zone. Note that this zone appears only if
$B\ge (Z-N)_+^{\frac{4}{3}}$. Let us scale $x\mapsto x\bar{\ell}^{-1}$, $\tau\mapsto \bar{\ell}^{-4} \tau$, with $\bar{\ell}=(\beta h)^{-\frac{1}{4}}$. Then $h\mapsto h_1= h^{\frac{3}{4}}\beta^{-\frac{1}{4}}$,
$\beta\mapsto \beta_1= h_1^{-1}$ and after scaling $A'$ satisfies
$|\partial^2 A'|\le \nu_1= C\kappa \bar{\ell}^{\frac{3}{2}}|\log \kappa|$.

\medskip\noindent
(iib) Consider contribution of $\gamma$-element; scaling again
$x\mapsto x\gamma^{-1}$, $\tau\mapsto \gamma^{-4}\tau$,
$h_1\mapsto h_2=h_1\gamma^{-3}$, $\beta_1\mapsto \beta \gamma^{-1}$, $\nu_1\mapsto\nu_2=\nu_1$ we see that it does not exceed
$C\beta_2\bigl( h_2^{-1}+ h_2^{-\frac{2}{3}} \nu_2^{\frac{2}{3}}\bigr)=
C\beta_1\bigl(\gamma^2 h_1^{-1}+
\gamma h_1^{-\frac{2}{3}} \nu_1^{\frac{2}{3}}\bigr)$. This expression must be divided by $\gamma^2$ and summed resulting in
\begin{equation}
C\bigl(h_1^{-2}|\log \bar{\gamma}|+
\bar{\gamma}^{-1} h_1^{-\frac{5}{3}} \nu_1^{\frac{2}{3}}\bigr)
\label{27-7-8}
\end{equation}
with $\bar{\gamma}\ge h_1^{\frac{1}{3}}$ (equality is achieved as $(Z-N)_+$ is small enough, otherwise partitioning may be cut-off by a chemical potential). One can get rid off the logarithmic factor by our standard propagation arguments; the second term in (\ref{27-7-8}) does not exceed $Ch_1^{-2}\nu_1^{\frac{2}{3}}$ and plugging $h_1$, $\nu_1$ we get $O(h^{-2})=O(Z^{\frac{2}{3}})$.

\medskip\noindent
(iii) $\D$-term is analyzed in the same way.
\end{proof}

\begin{proposition}\label{prop-27-7-2}
In the framework of Proposition~\ref{prop-27-7-1} assume that $B\le Z$. Then
\begin{enumerate}[label=(\roman*), fullwidth]
\item\label{prop-27-7-2-i}
The remainder in the trace asymptotics (with the Schwinger term) does not exceed
\begin{equation}
CZ^{\frac{5}{3}} \bigl(Z^{-\delta}+ (BZ^{-1})^{\delta}\bigr)+
\alpha |\log (\alpha Z)|^{\frac{1}{3}}Z^{\frac{25}{9}};
\label{27-7-9}
\end{equation}

\item\label{prop-27-7-2-ii}
The remainder in $\N$-term asymptotics does not exceed\\ ${CZ^{\frac{2}{3}}\bigl(Z^{-\delta}+ (BZ^{-1})^{\delta}+(\alpha Z)^{\delta} \bigr)}$;

\item\label{prop-27-7-2-iii}
$\D$-term does not exceed $CZ^{\frac{5}{3}}
\bigl(Z^{-\delta}+ (BZ^{-1})^{\delta}+(\alpha Z)^{\delta}\bigr)$.
\end{enumerate}
\end{proposition}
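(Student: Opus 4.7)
The plan is to refine the proof of Proposition~\ref{prop-27-7-1} by exploiting the extra smallness of the external magnetic field (i.e.\ $B\le Z$, so $\beta\le 1$ after rescaling) together with standard propagation-of-singularities arguments in a well-chosen intermediate zone, exactly in the spirit of the passage from Theorem~\ref{thm-27-5-14} to Theorem~\ref{thm-27-5-16}, and from Proposition~\ref{prop-27-5-17}\ref{prop-27-5-17-i} to Proposition~\ref{prop-27-5-17}\ref{prop-27-5-17-ii}. The contributions of the near-singularity zone $\{\ell(x)\le (\beta+h)^{\sigma}\}$ and of the far/boundary zone $\{\ell(x)\ge (\beta+h)^{-\sigma}\}$ are already controlled by $O(h^{-2+\delta})$ and $O(h^{-2+\delta})$ type terms in the proof of Proposition~\ref{prop-27-7-1}, so the whole gain has to come from the intermediate regular zone.

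First I would work in the intermediate zone $\cX_{\interm}\Def \{x:\,(\beta+h)^{\sigma}\le \ell(x)\le (\beta+h)^{-\sigma}\}$. After the $\ell$-rescaling one has $h_1=h/(\zeta\ell)\ll 1$, $\beta_1h_1\lesssim 1$, and in view of \textup{(\ref{27-7-5})} the rescaled magnetic perturbation $\nu_1 = \kappa \ell^{1/2}\zeta^{-1}|\log(Z\ell)|$ is a positive power of the small parameter $(\beta+h)^{\sigma}$ on each shell. This allows one to apply the long-time microlocal propagation arguments of Section~\ref{sect-27-3-2} (Propositions~\ref{prop-27-3-4}--\ref{prop-27-3-6}) together with the non-degeneracy condition \textup{(\ref{27-3-60})} (which holds for $M=1$ after rescaling, cf.\ Remark~\ref{rem-27-5-17}\ref{rem-27-5-17-i}) to replace the rough $Ch_1^{-2}$ remainder by $Ch_1^{-2+\delta'}$ in both the trace and the $\N$-term asymptotics, and to extract the Schwinger correction term via the standard two-term successive approximation in $\varepsilon$-mollification. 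Summation over the partition converts the $h_1^{-2+\delta'}$ savings into the factor $(Z^{-\delta}+(BZ^{-1})^\delta)$ claimed in (i)--(iii), the second part reflecting the fact that near the boundary $\ell\asymp (\beta h)^{-1/4}$ the parameter is $\beta$ rather than $h$.

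For (ii) and (iii), I would then repeat the $\gamma$-admissible partition argument of the proof of Proposition~\ref{prop-27-7-1}\ref{prop-27-7-1-ii}--\ref{prop-27-7-1-iii} in the boundary zone, but applied to the improved localized $\N$- and $\D$-term estimates of Propositions~\ref{prop-27-3-24} and~\ref{prop-27-3-27} (whose hypotheses are satisfied because $M=1$ gives \textup{(\ref{27-3-60})}). The extra $(\alpha Z)^{\delta}$ factor in (ii)--(iii) absorbs the loss incurred when the self-generated magnetic field bound \textup{(\ref{27-7-5})} is propagated through the Fefferman--de~Llave decomposition as in the proof of Proposition~\ref{prop-27-3-27}, together with the estimate \textup{(\ref{25-5-13xx})} applied on the $\bar{\gamma}$-shell with $\bar{\gamma}=h_1^{1/3}$. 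As in Proposition~\ref{prop-27-7-1}\ref{prop-27-7-1-ii}, the logarithmic loss from integration over $\gamma$ is removed by the standard propagation trick of Subsection~\ref{book_new-sect-25-6-3}.

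The main obstacle, as in Theorem~\ref{thm-27-5-14}, is precisely the \emph{entanglement} between the Coulomb singularity and the regular zone described in Remark~\ref{rem-27-5-15}\ref{rem-27-5-15-i}: the self-generated field $A'$ chosen here decays only like $\kappa\ell^{-3/2}$ (from the unscreened Coulomb problem) and is \emph{not} a minimizer of the local functional, so the linear-in-$\Phi$ correction to $P_{Bh}(V)$ need not vanish after integration. The resolution is the antisymmetry trick used in the upper estimate of Theorem~\ref{thm-27-5-14}: replacing $A'$ by $-A'$ (which is still a Coulomb minimizer with no external field) flips the sign of $\Phi=\partial_1 A'_2-\partial_2 A'_1$, so averaging the two choices kills the offending term and leaves only the quadratic-in-$\partial A'$ error, which is already controlled by \textup{(\ref{27-7-5})} and contributes at worst $O(\alpha|\log(\alpha Z)|^{1/3}Z^{25/9})$ — exactly the last term in \textup{(\ref{27-7-9})}. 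Once this cancellation is in place, the remaining manipulations are routine rescalings of the arguments in Section~\ref{sect-27-5}, and I would leave the detailed bookkeeping to the reader in the same spirit as the paper already does throughout Chapter~\ref{sect-27-7}.
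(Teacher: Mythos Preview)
Your first three paragraphs capture the paper's argument: the refinement over Proposition~\ref{prop-27-7-1} comes from standard propagation-of-singularities arguments applied in the threshold zone $\{x:\,h^{\delta'}\le \ell(x)\le h^{-\delta'}\}$ after rescaling, using the non-degeneracy (\ref{27-3-60}) available for $M=1$. The paper's own proof says literally nothing more than this.

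Your last paragraph, however, is misplaced. Proposition~\ref{prop-27-7-2} (like Proposition~\ref{prop-27-7-1}) concerns remainder estimates for a \emph{fixed} $A'$ satisfying (\ref{27-7-5}); there is no optimization over $A'$ here and hence no room for the $A'\mapsto -A'$ substitution. That trick belongs to the proof of the upper estimate in Theorem~\ref{thm-27-5-14}, where it kills the linear-in-$\Phi$ term when passing from $\int P_{Bh}(V)$ (full combined field) to $\int P_{\beta h}(V)$ (external field only)---a conversion that is \emph{not} part of the trace remainder bounded in this proposition. You also misattribute the last term in (\ref{27-7-9}): the quantity $\alpha|\log(\alpha Z)|^{1/3}Z^{25/9}$ is not a residue of the antisymmetry trick but the contribution of the near-singularity zone $\{\ell(x)\le \ell^*\}$, carried over unchanged from (\ref{27-7-7}) and ultimately from the Chapter~\ref{book_new-sect-26} analysis of the self-generated field at a Coulomb singularity. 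Propagation in the threshold zone cannot touch that zone, which is why this term survives unchanged from (\ref{27-7-6}) to (\ref{27-7-9}).
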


\begin{proof}
Proof includes improved (due to the standard arguments of propagation of singularities) estimates of the contributions of \emph{threshold zone\/}
$\{x:\,h^{\delta'}\le \ell(x)\le h^{-\delta'}\}$ after rescaling. We leave details to the reader.
\end{proof}

As we now have exactly the same $\N$-term asymptotics and the same remainder term estimate as in Subsection~\ref{book_new-sect-25-6-3} as if there was no self-generated magnetic field we immediately arrive to the same estimates of $|\lambda_N-\lambda|$ as there and therefore to the same estimates for $|\lambda_N-\lambda|\cdot N$ and not only for two $\D$-terms
\begin{gather*}
\D\bigl( \tr e(x,x,\tau) - P'_B(W+\tau),\,\tr e(x,x,\tau) - P'_B(W+\tau)\bigr)\\
\intertext{with $\tau=\lambda$ and $\tau=\lambda_N$ but also for the third one}
\D\bigl(P'_B(W+\lambda_N) - P'_B(W+\lambda),\,
P'_B(W+\lambda_N) - P'_B(W+\lambda)\bigr).
\end{gather*}
The trace term however is different--with Scott correction term
$2S(\alpha Z)Z^2$ instead of $2S(0)Z^2$ and the remainder estimate here includes an extra term related to $\alpha$.

It concludes the proof of the estimate from above for $\E^*_N$. Combined with estimate from below it concludes the proof of Theorem~\ref{thm-27-7-4}.

Consider now case $M\ge 2$. Since we need to decouple singularities in this case we need sufficiently fast decaying magnetic field and thus potential generating it. So we will take $A'=\sum_m A'_m \phi_m$ with $A'_m$ defined by $V=W^TF_m$ (where $\W^\TF_m$ corresponds to a single nucleus) without any magnetic field and with $N=Z$ (i.e. $\lambda=0$) and $\phi_m$ is supported in
$\{x:\,|x-\y_m\le \frac{1}{3}d\}$ and equals $1$ in
$\{x:\,|x-\y_m\le \frac{1}{4}d\}$. However everywhere else we take $V=W^\TF_B$.

Assume first that $(Z-N)_+$ is sufficiently small and we take $Z=N$ even in the definition of $W^\TF_B$. Then one can prove easily that
\begin{multline}
|\Tr ^- (H_{A,V}) +\alpha^{-1} \int |\partial A'|^2\,dx + \int P_B(V)\,dx
-\Scott|\\
\le \textup{(\ref{27-7-6})} + C\alpha Z^2 d^{-3}
\label{27-7-10}
\end{multline}
where the last term is due to decoupling; indeed $|\partial A'|$ and
$|\partial ^2 A'|$ decay as $\ell^{-3}$ and $\ell^{-4}$ as
$\ell \ge Z^{-\frac{1}{3}}$.

Moreover, as $B\le Z$ one can replace (\ref{27-7-6}) by
\begin{equation}
CZ^{\frac{5}{3}}
\bigl(Z^{-\delta}+ (BZ^{-1})^{\delta} +
(\alpha Z)^{\delta}+(dZ^{\frac{1}{3}})^{-\delta}\bigr)+
C\alpha |\log (\alpha Z)|^{\frac{1}{3}}Z^{\frac{25}{9}}
\label{27-7-11}
\end{equation}
while including $\Schwinger$ into left-hand expression. This estimate is at least as good as what we got in the estimate from below but probably even better since magnetic field admits now better estimates.

Let us estimate $|\lambda_N|$ if $\lambda_N<0$. To do this consider
\begin{equation}
\int \bigl[e(x,x,\lambda)-e(x,x,\lambda_N)\bigr]\,dx
\label{27-7-12}
\end{equation}
with non-negative integrand. Then contribution of $\ell$-element into the main part of this expression, namely
\begin{gather}
\int \bigl[e(x,x,\lambda)-e(x,x,\lambda_N)\bigr]\psi_\iota^2\,dx
\label{27-7-13}\\
\shortintertext{is}
\int \bigl[P'(x,V(x)+\lambda)-P'(x,V(x)+\lambda_N) \bigr]\psi_\iota^2\,dx
\label{27-7-14}
\end{gather}
does not depend on $A'$. On the other hand, since now $|\partial ^2 A'|$ admits so good estimate, contribution of this element to the remainder estimated as if there was no self-generated magnetic field.

The same is true for the boundary elements as well.

But then $|\lambda_N|$ is estimated exactly as if there was no self-generated magnetic field, i.e. as in Section~\ref{book_new-sect-25-6}. But then all components of the estimate from above, with exception of the trace term, namely
$|\lambda_N|\cdot N$ and all three $\D$-terms are estimated as in as in Section~\ref{book_new-sect-25-6}. Under assumption small $(Z-N)_+$ all of them do not exceed $CZ^{\frac{5}{3}}$ which could be improved to
\begin{equation}
CZ^{\frac{5}{3}}
\bigl(Z^{-\delta}+ (BZ^{-1})^{\delta} +
(\alpha Z)^{\delta}+(dZ^{\frac{1}{3}})^{-\delta}\bigr)
\label{27-7-15}
\end{equation}
as $B\le Z$.

Similarly, as $(Z-N)_+$ is larger than the corresponding threshold we need to consider two cases: $0>\lambda >\lambda_N$ and $0>\lambda_N >\lambda$ and estimate from below
\begin{gather}
\int \bigl(e(x,x,\lambda)-e(x,x,\lambda_N)\bigr)\psi_\iota^2\,dx
\label{27-7-16}\\
\shortintertext{is}
\int \bigl(e(x,x,\lambda_N)-e(x,x,\lambda)\bigr)\psi_\iota^2\,dx
\label{27-7-17}
\end{gather}
respectively leading to estimate of $|\lambda_N-\lambda|$ and then
$|\lambda_N-\lambda|\cdot N$ and all three $\D$-terms and again here these terms are estimated as if there was no self-generating magnetic field.

This concludes the proof of Theorem~\ref{thm-27-7-5}.

\begin{remark}\label{rem-27-7-3}
As $M\ge 2$ one could be concerned about term coming from
$C\beta h^{-\frac{1}{2}}$ in the trace term and $C\beta h^{-\frac{3}{2}}$ or its square in $\N$- and $\D$-terms but assuming that $d\lesssim \bar{r}$ we simply have $A'=0$ due to decoupling there and therefore apply theory of Chapter~\ref{book_new-sect-25} without any modification.
\end{remark}

\section{Main Theorems: \texorpdfstring{$B\le Z^{\frac{4}{3}}$}{B\textle Z\textfoursuperior\textthreesuperior}}
\label{sect-27-7-3}

\subsection{Ground state energy asymptotics}
\label{sect-27-7-3-1}

\begin{theorem}\label{thm-27-7-4}
Let $M=1$, $N\asymp Z$, $B\le Z^{\frac{4}{3}}$ and $\alpha \le \kappa^* Z^{-1}$ with small constant $\kappa^*$. Then

\begin{enumerate}[fullwidth, label=(\roman*)]
\item\label{thm-27-7-4-i}
As $B\le Z$
\begin{equation}
\E^*_N = \cE^\TF_N + \Scott +
O\Bigl(Z^{\frac{5}{3}} + \alpha |\log (\alpha Z)|^{\frac{1}{3}} Z^{\frac{25}{9}} \Bigr)
\label{27-7-18}
\end{equation}
with $\Scott=2Z^2 S(\alpha Z)$;

\item\label{thm-27-7-4-ii}
Moreover as $B\ll Z$ and $\alpha |\log Z| ^{\frac{1}{3}}\ll Z^{-\frac{10}{9}}$ this estimate could be improved to
\begin{multline}
\E^*_N = \cE^\TF_N + 2Z^2 S(\alpha Z) + \Schwinger + \Dirac\\[3pt]
+O\Bigl(Z^{\frac{5}{3}} \bigl[Z^{-\delta}+ B^{\delta}Z^{-\delta}\bigr]+
\alpha |\log (\alpha Z)|^{\frac{1}{3}} Z^{\frac{25}{9}}\Bigr);
\label{27-7-19}
\end{multline}

\item\label{thm-27-7-4-iii}
As $Z\le B\le Z^{\frac{4}{3}}$
\begin{multline}
\E^*_N =\cE^\TF_N + 2Z^2 S(\alpha Z) \\
+
O\Bigl(B^{\frac{1}{3}}Z^{\frac{4}{3}} + \alpha |\log (\alpha Z)|^{\frac{1}{3}} B^{\frac{2}{9}} Z^{\frac{23}{9}}+  \alpha BZ ^{\frac{5}{3}}\Bigr).
\label{27-7-20}
\end{multline}
\end{enumerate}
\end{theorem}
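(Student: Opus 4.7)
\medskip
The proof will split into lower and upper bounds matching at the orders claimed. For the lower bound, I would start from the general electrostatic-type inequality \textup{(\ref{27-7-2})}, choosing $\rho = \rho^{\TF}_B$ and $\lambda' = \lambda$ equal to the Thomas-Fermi chemical potential (and $\lambda' = 0$ when $N = Z$), and add $\alpha^{-1}\int |\partial A'|^2\,dx$ to both sides. The trace term $\Tr^-(H_{A,W+\lambda'}) + \alpha^{-1}\int |\partial A'|^2$ is then bounded below by $\E^*_\alpha$, which is controlled by Theorem~\ref{thm-27-5-14} for part~\ref{thm-27-7-4-i} and Theorem~\ref{thm-27-5-16} for part~\ref{thm-27-7-4-ii} (the $B\le Z$, small-$\alpha$ refinement), both producing the Scott term $2Z^2 S(\alpha Z)$ and, in the second case, Schwinger. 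The Dirac correction and the bound on the problematic term $-C\int \rho_\Psi^{4/3}\,dx$ will come from the near-ground-state electrostatic inequality of Appendix~\ref{sect-27-A-2}, which replaces this term by $\Dirac - CZ^{\frac{5}{3}-\delta}$ when $B\le Z$ and by $-CZ^{\frac{5}{3}}$ otherwise; the $\D(\rho_\Psi - \rho,\rho_\Psi - \rho)$ bonus is simply discarded for the lower bound (it will be needed for the upper bound to control $\D$-terms).

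For the upper bound, I would use a Slater determinant $\Psi = \phi_1 \wedge \cdots \wedge \phi_N$ built from the lowest $N$ eigenfunctions of $H_{A,W}$ with $W = V - |x|^{-1}*\rho^{\TF}_B$, and select $A'$ to be the one-particle self-generated magnetic minimizer for the pure Coulomb problem $V_1 = Z|x-\bar\y_1|^{-1}$ without external field. This choice makes $A'$ generate exactly the Scott term $2Z^2S(\alpha Z)$ by the analysis of Section~\ref{book_new-sect-26-3}, while the decay estimates \textup{(\ref{27-7-5})} for $|\partial A'|$ and $|\partial^2 A'|$ allow application of Proposition~\ref{prop-27-7-1} (and Proposition~\ref{prop-27-7-2} for the improved range) to control the trace, $\N$-term, and $\D$-term remainders. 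The standard variational expansion gives
\begin{equation*}
\langle \sfH\Psi,\Psi\rangle + \alpha^{-1}\|\partial A'\|^2
\le \Tr^-(H_{A,W+\lambda_N}) + \lambda_N N - \tfrac{1}{2}\D(\rho,\rho) + \tfrac{1}{2}\D(\rho_\Psi - \rho, \rho_\Psi - \rho) + \alpha^{-1}\|\partial A'\|^2,
\end{equation*}
plus exchange-type corrections which produce the Dirac term in regime~\ref{thm-27-7-4-ii}.

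The remaining tasks are to estimate $|\lambda_N - \lambda|\cdot N$ and the three $\D$-terms (with $\tau = \lambda$, $\tau = \lambda_N$, and the difference $P'_B(W+\lambda_N) - P'_B(W+\lambda)$). Because the selected $A'$ has no external-field dependence, the $\N$-term asymptotics with remainder $O(Z^{\frac{2}{3}})$ (or the improved power as $B\le Z$) from Proposition~\ref{prop-27-7-1}\ref{prop-27-7-1-ii} apply verbatim, and one obtains the same bound on $|\lambda_N - \lambda|$ as in Subsection~\ref{book_new-sect-25-6-3}, hence $|\lambda_N - \lambda|\cdot N = O(Z^{\frac{5}{3}})$ with corresponding refinements. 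The $\D$-terms fall into the frame of Proposition~\ref{prop-27-7-1}\ref{prop-27-7-1-iii}. Combining all pieces and scaling back reproduces the claimed remainders in \textup{(\ref{27-7-18})}--\textup{(\ref{27-7-20})}.

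The principal obstacle is the ``entanglement'' noted in Remark~\ref{rem-27-5-15}\ref{rem-27-5-15-i}: in the lower bound we cannot replace $A'$ by $-A'$ the way we do in the upper bound, so the term $-h^{-3}\int \partial_\beta P_{\beta h}(V)\cdot \Phi\,dx$ does not cancel and forces the $\alpha B Z^{\frac{5}{3}}$-type contribution in part~\ref{thm-27-7-4-iii} (via $C\kappa\beta h^{-1}$ rather than $C\kappa\beta^2$). This is why the three cases have asymmetric remainders: in part~\ref{thm-27-7-4-ii} the extra smallness of $\alpha$ and $B$ makes this term lower-order, in part~\ref{thm-27-7-4-i} it is absorbed in $\alpha|\log(\alpha Z)|^{\frac{1}{3}}Z^{\frac{25}{9}}$, and in part~\ref{thm-27-7-4-iii} it appears explicitly as $\alpha B Z^{\frac{5}{3}}$. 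No extra work is needed for $M\ge 2$ decoupling since we are in the single-nucleus case, but we must still verify that the boundary zone contributions are absorbed, which follows from the non-degeneracy condition~\textup{(\ref{27-3-60})} that is automatically satisfied for $M=1$.
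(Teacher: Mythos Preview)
Your proposal is correct and follows essentially the same route as the paper: the lower bound via \textup{(\ref{27-7-2})} with $\rho=\rho^\TF_B$, $\lambda'=\lambda$, combined with Theorems~\ref{thm-27-5-14}--\ref{thm-27-5-16} and the electrostatic inequality of Appendix~\ref{sect-27-A-2}; the upper bound via a Slater determinant with $A'$ taken as the Coulomb-problem minimizer without external field, invoking Propositions~\ref{prop-27-7-1}--\ref{prop-27-7-2} and then reducing the $|\lambda_N-\lambda|$ and $\D$-term estimates to Subsection~\ref{book_new-sect-25-6-3}. Your identification of the entanglement obstruction from Remark~\ref{rem-27-5-15}\ref{rem-27-5-15-i} as the source of the $\alpha BZ^{\frac{5}{3}}$ term in part~\ref{thm-27-7-4-iii} is exactly right; note incidentally that for $B\le Z$ this term satisfies $\alpha BZ^{\frac{5}{3}}\le \alpha Z\cdot Z^{\frac{5}{3}}\le \kappa^* Z^{\frac{5}{3}}$, so it is already absorbed in the $Z^{\frac{5}{3}}$ term of part~\ref{thm-27-7-4-i} without needing the logarithmic factor.
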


\begin{theorem}\label{thm-27-7-5}
Let $M\ge 2$, $N\asymp Z_1\asymp Z_2\asymp \ldots \asymp Z_M$,
$B\lesssim Z^{\frac{4}{3}}$. Let $\alpha\le \kappa^* Z^{-1}$,
$d\ge Z^{-\frac{1}{3}}$ be a minimal distance between nuclei capped by $\bar{r}=\min\bigl(B^{-\frac{1}{4}},(Z-N)_+^{-\frac{1}{3}}\bigr)$. Then
\begin{enumerate}[label=(\roman*), fullwidth]
\item\label{thm-27-7-5-i}
As $B\le Z$
\begin{equation}
\E^*_N = \cE^\TF_N + \Scott +
O\Bigl(Z^{\frac{5}{3}}+\alpha \bigl[|\log (\alpha Z)|^{\frac{1}{3}} Z^{\frac{25}{9}} +Z^2d^{-3}\bigr]\Bigr)
\label{27-7-21}
\end{equation}
with $\Scott= 2\sum _{1\le m\le M} Z_m^2S(\alpha Z_m)$ and moreover
\begin{multline}
\E^*_N = \cE^\TF_N + \Scott + \Schwinger +\Dirac\\+
O\Bigl(Z^{\frac{5}{3}}\bigl[Z^{-\delta}+ (BZ^{-1})^{\delta} + (dZ^{\frac{1}{3}})^{-\delta}\bigr] +
\alpha \bigl[|\log (\alpha Z)|^{\frac{1}{3}} Z^{\frac{25}{9}} + Z^2d^{-3}\bigr]\Bigr);
\label{27-7-22}
\end{multline}
\item\label{thm-27-7-5-ii}
As $B\ge Z$, $\alpha B\le Z^{\frac{17}{60}}|\log Z|^{-K}$ and
$(Z-N)_+\le B^{\frac{5}{12}}$
\begin{multline}
\E^*_N = \cE^\TF_N + \Scott\\
+ O\Bigl(B^{\frac{1}{3}}Z^{\frac{4}{3}}+
\alpha\bigl[|\log \alpha|^{\frac{1}{3}} B^{\frac{2}{9}}Z^{\frac{23}{9}}
+  BZ^{\frac{5}{3}}+ Z^2d^{-3}\bigr]\Bigr);
\label{27-7-23}
\end{multline}
\item\label{thm-27-7-5-iii}
As $B\ge Z^{\frac{77}{60}} |\log Z|^{-K}$,
$\alpha\ge B^{-1} Z^{\frac{17}{60}}|\log Z|^{-K}$,
$(Z-N)_+\le B^{\frac{5}{12}}$ and
$d\ge \bar{d}=(\alpha B)^{\frac{40}{209}}Z^{-\frac{81}{209}}|\log Z|^K$
\begin{multline}
\E^*_N = \cE^\TF_N + \Scott\\
O\Bigl(B^{\frac{1}{3}}Z^{\frac{4}{3}}+
\alpha \bigl[|\log \alpha|^{\frac{1}{3}} B^{\frac{2}{9}}Z^{\frac{23}{9}}
+  BZ^{\frac{5}{3}}
+ (\alpha B)^{\frac{120}{209}} Z^{\frac{384}{209}}d^{-3}|\log Z|^K\bigr]\Bigr);
\label{27-7-24}
\end{multline}
\item\label{thm-27-7-5-iv}
In any case
\begin{equation}
\E^*_N = \cE^\TF_N + \Scott_0 +
O\bigl(B^{\frac{1}{3}}Z^{\frac{4}{3}}+\alpha Z^3\bigr)
\label{27-7-25}
\end{equation}
with $\Scott_0= 2\sum _{1\le m\le M} Z_m^2S(0)$.
\end{enumerate}
\end{theorem}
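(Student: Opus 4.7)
\smallskip\noindent\textbf{Proof proposal.}
The plan is to handle upper and lower estimates separately, using Theorem~\ref{thm-27-5-23} (and its single-nucleus variant Theorem~\ref{thm-27-5-16}) as the trace-asymptotic workhorse and then converting trace information into ground-state information via the standard electrostatic chain described in Section~\ref{sect-27-7-1}. For the \emph{lower bound\/}, I would start from (\ref{27-7-2}) with $\rho=\rho^\TF_B$ and $\lambda'=\lambda$ (or $\lambda'=0$ in the neutral case), add $\alpha^{-1}\int|\partial A'|^2\,dx$, and appeal to Theorem~\ref{thm-27-5-23} to identify the right-hand side modulo a remainder controlled by $Q+T_*$ (where $T_*$ comes from (\ref{27-5-78})), plus the Lieb-Oxford-type term $-C\int\rho_\Psi^{4/3}\,dx$. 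Here the key external input is Appendix~\ref{sect-27-A-2}, which replaces $-C\int\rho_\Psi^{4/3}\,dx$ by $-CZ^{\frac{5}{3}}$ in general and by $\Dirac-CZ^{\frac{5}{3}-\delta}$ when $B\le Z$, absorbing the Dirac correction into the main asymptotic and contributing only to the remainder.

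For the \emph{upper bound\/}, I would pick $\Psi$ as the Slater determinant of the first $N$ eigenfunctions of $H_{A,W^\TF_B+\lambda'}$, take $\rho=\rho^\TF_B$, and most importantly pick $A'=\sum_m A'_m\phi_m$, where each $A'_m$ is the minimizer for the Coulomb potential $V_m$ of a single nucleus \emph{without\/} external magnetic field (as in the proof of Theorem~\ref{thm-27-5-14}) and $\phi_m$ is a cut-off supported in $\{|x-\y_m|\le \tfrac{1}{3}d\}$. This choice has three virtues: it yields the correct Scott term $2\sum Z_m^2 S(\alpha Z_m)$ at each nucleus; by the symmetry $A'_m\mapsto -A'_m$ argument used in Theorem~\ref{thm-27-5-14}, the linear-in-$\partial A'$ error term disappears so the resulting trace remainder is $O(\kappa\beta^2)$ rather than $O(\kappa\beta h^{-1})$; and because each $A'_m$ decays like $|x-\y_m|^{-3}$ at infinity, the decoupling cost is bounded by $C\alpha Z^2 d^{-3}$, which accounts for the $Z^2 d^{-3}$ term appearing in (\ref{27-7-21})--(\ref{27-7-22}). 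In regime (iii) the decay is governed instead by the sharper estimate from Proposition~\ref{prop-27-5-19}\ref{prop-27-5-19-ii}, giving the $(\alpha B)^{\frac{120}{209}}Z^{\frac{384}{209}}d^{-3}|\log Z|^K$ term.

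The most delicate step is converting the trace asymptotic for $\Tr^-(H_{A,W+\lambda'})$ at $\lambda'=\lambda_N$ (imposed by the trial state) into the target asymptotic phrased in terms of $\cE^\TF_N$ at the Thomas-Fermi chemical potential $\lambda$. I would estimate $|\lambda_N-\lambda|\cdot N$ and the three $\D$-terms $\D(\rho_\Psi-\rho^\TF_B,\rho_\Psi-\rho^\TF_B)$, $\D(\tr e(\cdot,\cdot,\lambda_N)-P'_B(W+\lambda_N),\cdot)$, and $\D(P'_B(W+\lambda_N)-P'_B(W+\lambda),\cdot)$ by the same argument as in Subsection~\ref{book_new-sect-25-6-3}: because my chosen $A'$ decays fast and $|\partial^2 A'|$ admits the Coulombic bound (\ref{27-7-5}), contributions of regular and boundary zones to $\N$-term and $\D$-term asymptotics are the same as in the $A'=0$ theory of Chapter~\ref{book_new-sect-25}, so the comparison arguments from there apply verbatim, yielding $\D$-bounds of order $Z^{\frac{5}{3}}$ (sharpened to (\ref{27-7-15}) when $B\le Z$). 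Combined with the trace asymptotics this gives (\ref{27-7-21})--(\ref{27-7-24}).

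The \emph{main obstacle\/} is organizing the case analysis for (ii)--(iii): in these regimes one must track when the sharp estimate (\ref{27-5-67}) on $\int|\partial A'|^2$ is available (requiring $\alpha B\le Z^{\frac{17}{60}}|\log Z|^{-K}$) versus when only the weaker (\ref{27-5-69}) holds (requiring $d\ge\bar d$ as in (\ref{27-5-70})), and verifying that the decoupling contribution $T_0 d^{-3}$ from (\ref{27-5-78}) matches the stated remainders. Statement~\ref{thm-27-7-5-iv} is the fallback: if either $d\le r_*$ or $T_*$ is larger than $\alpha Z^3$, then by Remark~\ref{rem-27-5-24}\ref{rem-27-5-24-i} one simply replaces $S(\alpha Z_m)$ by $S(0)$, picks $A'=0$ in the upper estimate, and invokes Chapter~\ref{book_new-sect-26} together with the $B^{\frac{1}{3}}Z^{\frac{4}{3}}$ singularity bound from Theorem~\ref{thm-27-6-4}\ref{thm-27-6-4-i} to obtain (\ref{27-7-25}); this step is essentially bookkeeping once the other three are in place.
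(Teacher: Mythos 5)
Your outline of the overall strategy — lower bound via (\ref{27-7-2}) plus Theorem~\ref{thm-27-5-23} and the electrostatic inequality of Appendix~\ref{sect-27-A-2}, upper bound via a Slater-determinant trial state with a cut-off self-generated field $A'=\sum_m A'_m\phi_m$, and fallback to the $\Scott_0$ asymptotics — is the right one. However, there is a genuine gap in your upper-bound construction: you take each $A'_m$ to be the minimizer for the pure Coulomb potential $V_m=z_m|x-\y_m|^{-1}$ (as in the $M=1$ case of Theorem~\ref{thm-27-5-14}) and then assert that this field decays like $|x-\y_m|^{-3}$. These two claims are incompatible. For the Coulomb-generated minimizer the paper's estimates (\ref{27-7-4})--(\ref{27-7-5}) give only $|\partial A'_m|\lesssim C\kappa\ell^{-3/2}$ and $|\partial^2 A'_m|\lesssim C\kappa\ell^{-5/2}|\log|$, so the cut-off cost becomes
\begin{equation*}
\alpha^{-1}\int_{\{\ell\asymp d\}}|\partial A'|^2\,dx
\;\asymp\; \kappa^{-1}h^{-2}\cdot\kappa^2\,d_1^{-3}\cdot d_1^{\,3}
\;=\;\kappa h^{-2},
\end{equation*}
which scales back to $O(\alpha Z^3)$, \emph{independent of $d$}. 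This only recovers the fallback clause (iv); the $d^{-3}$ factor in (\ref{27-7-21})--(\ref{27-7-23}) is lost. What the paper actually does for $M\ge 2$ is take $A'_m$ to be the minimizer for the \emph{single-nucleus neutral Thomas--Fermi potential} $W^\TF_m$ (with $\lambda=0$, without external field). Near $\y_m$ this agrees with the Coulomb problem to leading order, so the Scott correction $2\sum Z_m^2 S(\alpha Z_m)$ is unchanged; but at distances $\gtrsim Z^{-1/3}$ the Thomas--Fermi tail $\sim\ell^{-4}$ yields, via Proposition~\ref{prop-27-5-7}, the faster decay $|\partial A'_m|\lesssim C\kappa\ell^{-3}$ and $|\partial^2 A'_m|\lesssim C\kappa\ell^{-4}$ — precisely the rates you quote and precisely what turns the cut-off cost into $O(\alpha Z^2 d^{-3})$ (or, in regime (iii), into $T_0 d^{-3}$ with the sharper $T_0$ of Proposition~\ref{prop-27-5-19}\ref{prop-27-5-19-ii}). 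So the trial field you propose is too slowly decaying for the decoupling argument to close as stated; you need to switch to the single-nucleus Thomas--Fermi minimizer for $M\ge 2$, keeping the Coulomb minimizer only for the $M=1$ case where no decoupling is required.
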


Recall that in the free nuclei model excess energy is $\asymp d^{-7}$ (as
$d\le \epsilon B^{-\frac{1}{4}}$) and therefore an error must be greater than $\epsilon d^{-7}$. One can see easily that $d\ge \min (Z^{-\frac{5}{21}}, \, B^{-\frac{1}{4}})$ as $B\le Z$ and then in estimates (\ref{27-7-21}) and (\ref{27-7-22}) the last terms (with $d^{-3}$) could be skipped.

On the other hand, in (\ref{27-7-23})--(\ref{27-7-25}) we can either skip the last terms (with $d^{-3}$) or assume that $d\asymp B^{-\frac{1}{4}}$ and these terms should be calculated under this assumption and we arrive to

\begin{theorem}\label{thm-27-7-6}
Let $M\ge 2$, $N\asymp Z_1\asymp Z_2\asymp \ldots \asymp Z_M$,
$B\lesssim Z^{\frac{4}{3}}$ and $(Z-N)_+\lesssim B^{\frac{1}{2}}$. Consider a free nuclei model. Then
\begin{enumerate}[label=(\roman*), fullwidth]
\item\label{thm-27-7-6-i}
As $B\le Z$
\begin{equation}
\hat{\E}^*_N = \hat{\cE}^\TF_N + \Scott +
O\Bigl(Z^{\frac{5}{3}}+\alpha \bigl[|\log (\alpha Z)|^{\frac{1}{3}} Z^{\frac{25}{9}}\bigr]\Bigr)
\label{27-7-26}
\end{equation}
and moreover
\begin{multline}
\hat{\E}^*_N = \hat{\cE}^\TF_N + \Scott + \Schwinger +\Dirac\\+
O\Bigl(Z^{\frac{5}{3}}\bigl[Z^{-\delta}+ (BZ^{-1})^{\delta}\bigr] +
\alpha \bigl[|\log (\alpha Z)|^{\frac{1}{3}} Z^{\frac{25}{9}} \bigr]\Bigr);
\label{27-7-27}
\end{multline}
\item\label{thm-27-7-6-ii}
As $B\ge Z$,
$\alpha \le B^{-1}Z^{\frac{17}{60}}|\log Z|^{-K}$
\begin{equation}
\hat{\E}^*_N = \hat{\cE}^\TF_N + \Scott
+ O\Bigl(B^{\frac{1}{3}}Z^{\frac{4}{3}}+
\alpha\bigl[|\log \alpha|^{\frac{1}{3}} B^{\frac{2}{9}}Z^{\frac{23}{9}}
+  Z^2B^{\frac{3}{4}}\bigr]\Bigr);
\label{27-7-28}
\end{equation}
\item\label{thm-27-7-6-iii}
As $B\ge Z^{\frac{77}{60}} |\log Z|^{-K}$,
$ B^{-1} Z^{\frac{17}{60}}|\log h|^{-K}\le \alpha \le B^{-\frac{369}{160}}Z^{\frac{81}{40}}|\log Z|^{-K}$
\begin{multline}
\E^*_N = \cE^\TF_N + \Scott\\
O\Bigl(B^{\frac{1}{3}}Z^{\frac{4}{3}}+
\alpha \bigl[|\log \alpha|^{\frac{1}{3}} B^{\frac{2}{9}}Z^{\frac{23}{9}}
+ (\alpha B)^{\frac{120}{209}} Z^{\frac{384}{209}} B^{\frac{3}{4}}
|\log Z|^K\bigr]\Bigr);
\label{27-7-29}
\end{multline}
\item\label{thm-27-7-6-v}
In any case
\begin{equation}
\hat{\E}^*_N = \hat{\cE}^\TF_N + \Scott_0 +
O\bigl(B^{\frac{1}{3}}Z^{\frac{4}{3}}+\alpha Z^3\bigr).
\label{27-7-30}
\end{equation}
\end{enumerate}
\end{theorem}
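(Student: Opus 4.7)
My plan is to derive Theorem~\ref{thm-27-7-6} directly from the fixed-nuclei bound Theorem~\ref{thm-27-7-5} by taking the infimum over nuclear positions and showing that the $d$-dependent error terms in (\ref{27-7-21})--(\ref{27-7-24}) can be controlled at the Thomas-Fermi equilibrium distance. Under the standing hypothesis $(Z-N)_+\lesssim B^{\frac{1}{2}}$ one has $\bar r = \min(B^{-\frac{1}{4}},(Z-N)_+^{-\frac{1}{3}}) = B^{-\frac{1}{4}}$, so the relevant cap on $d$ in Theorem~\ref{thm-27-7-5} is $d \lesssim B^{-\frac{1}{4}}$.

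For the upper estimate, I would fix any configuration $\mathbf{y}=(\y_1,\ldots,\y_M)$ with $d(\mathbf{y}) \asymp B^{-\frac{1}{4}}$, apply the upper half of Theorem~\ref{thm-27-7-5} at this $\mathbf{y}$, and then take the infimum on the right-hand side so the main term becomes $\hat{\cE}^\TF_N$. Substituting $d \asymp B^{-\frac{1}{4}}$ into $\alpha Z^2 d^{-3}$ gives $\alpha Z^2 B^{\frac{3}{4}}$, which is precisely the new error term in (\ref{27-7-28}); similarly plugging $d \asymp B^{-\frac{1}{4}}$ into the more delicate error term of (\ref{27-7-24}) gives $\alpha(\alpha B)^{\frac{120}{209}}Z^{\frac{384}{209}}B^{\frac{3}{4}}|\log Z|^K$, the last term of (\ref{27-7-29}). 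In case~(i) the pre-existing remainder $Z^{\frac{5}{3}}+\alpha|\log(\alpha Z)|^{\frac{1}{3}}Z^{\frac{25}{9}}$ already dominates $\alpha Z^2 B^{-\frac{3}{4}\cdot(-3)}$ after optimization, so no new term appears.

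For the lower estimate, I would argue that any near-minimizing configuration for $\hat{\E}^*_N$ likewise satisfies $d \gtrsim \bar r$; otherwise the Thomas-Fermi excess binding energy, which behaves as $d^{-7}$ for $d\le \epsilon B^{-\frac{1}{4}}$ (see the remark following Theorem~\ref{thm-27-7-5}), would outstrip any electronic energy gain including all self-generated magnetic field corrections. With this $d$-bound in hand, the lower half of Theorem~\ref{thm-27-7-5} gives the matching lower bound with the same error terms. In case~(i) with $B \le Z$ the Thomas-Fermi equilibrium actually gives $d \gtrsim \min(Z^{-\frac{5}{21}},B^{-\frac{1}{4}})$; since $\alpha Z^2 d^{-3}$ at this $d$ is $O(\alpha Z^{\frac{5}{3}+\delta})$, it is absorbed into $\alpha|\log(\alpha Z)|^{\frac{1}{3}}Z^{\frac{25}{9}}$ and hence disappears from the error.

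The main obstacle will be verifying that the self-generated magnetic field cannot create an ``anomalous'' near-minimizer with $d \ll B^{-\frac{1}{4}}$. A priori the $\alpha$-dependent corrections could conceivably provide enough binding to offset the $d^{-7}$ nuclear repulsion in the TF excess; ruling this out requires combining the $\alpha$-error bounds from Theorems~\ref{thm-27-5-14}--\ref{thm-27-5-23} with the explicit $d^{-7}$ scaling to get a self-consistent lower bound on $d$. This is precisely what forces the constraint $\alpha \le B^{-\frac{369}{160}}Z^{\frac{81}{40}}|\log Z|^{-K}$ in case~(iii), which is designed to keep the threshold $\bar d$ from Theorem~\ref{thm-27-7-5}\ref{thm-27-7-5-iii} below $B^{-\frac{1}{4}}$, so that Theorem~\ref{thm-27-7-5}\ref{thm-27-7-5-iii} genuinely applies at $d \asymp B^{-\frac{1}{4}}$.
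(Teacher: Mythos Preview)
Your approach is essentially the same as the paper's. The paper's proof is just the two paragraphs immediately preceding the theorem: use the free-nuclei excess energy $\asymp d^{-7}$ (for $d\le \epsilon B^{-1/4}$) to force $d\ge \min(Z^{-5/21},B^{-1/4})$ when $B\le Z$, which lets the $d^{-3}$ terms in (\ref{27-7-21})--(\ref{27-7-22}) be dropped; and for (\ref{27-7-23})--(\ref{27-7-25}) either drop the $d^{-3}$ terms or evaluate them at $d\asymp B^{-1/4}$. One small arithmetic slip: in case~(i) with $d\gtrsim Z^{-5/21}$ you get $\alpha Z^2 d^{-3}\lesssim \alpha Z^{19/7}$, not $\alpha Z^{5/3+\delta}$, but since $19/7<25/9$ it is still absorbed by $\alpha|\log(\alpha Z)|^{1/3}Z^{25/9}$ as you claim.
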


We leave to the reader the following

\begin{Problem}\label{Problem-27-7-7}
In the frameworks of fixed nuclei and free nuclei models consider the case
$M\ge 2$, $B\ge Z$ and $(Z-N)_+\ge B^{\frac{5}{12}}$. Use results of Sections~\ref{sect-27-5} and~\ref{book_new-sect-25-6}. Recall that there are two cases:
$B^{\frac{5}{12}} \le (Z-N)_+\le B^{\frac{3}{4}}$ and
$(Z-N)_+\ge B^{\frac{3}{4}}$.

In particular find out for which $B$ this assumption could be skipped without deterioration of the remainder estimates.
\end{Problem}

\subsection{Ground state density asymptotics}
\label{sect-27-7-3-2}

Consider now asymptotics of $\rho_\Psi$. Apart of independent interest one needs them for estimate of excessive negative charge and estimate or asymptotics of the ionization energy.

\begin{theorem}\label{thm-27-7-8}
Let $M=1$, $N\asymp Z$, $B\lesssim Z^{\frac{4}{3}}$ and
$\alpha \le \kappa^*Z^{-1}$, with small constant $\kappa^*$. Then

\begin{enumerate}[label=(\roman*), fullwidth]
\item\label{thm-27-7-8-i}
As $B\le Z$
\begin{gather}
\D(\rho_\Psi-\rho^\TF,\,\rho_\Psi-\rho^\TF) = O\bigl(Z^{\frac{5}{3}} \bigr)
\label{27-7-31}\\
\intertext{and moreover this estimate could be improved to}
\D(\rho_\Psi-\rho^\TF,\,\rho_\Psi-\rho^\TF) =
O\Bigl(Z^{\frac{5}{3}} \bigl[Z^{-\delta}+ B^{\delta}Z^{-\delta} +
(\alpha Z)^{\delta}\bigr]\Bigr);
\label{27-7-32}
\end{gather}

\item\label{thm-27-7-8-ii}
As $Z\le B\le Z^{\frac{4}{3}}$
\begin{equation}
\D(\rho_\Psi-\rho^\TF,\,\rho_\Psi-\rho^\TF) \\
= O\Bigl(Z^{\frac{5}{3}} +
(\alpha B)^{\frac{40}{27}} Z^{\frac{101}{81}} |\log Z|^K \Bigr).
\label{27-7-33}
\end{equation}
\end{enumerate}
\end{theorem}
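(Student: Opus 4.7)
The plan is to reduce the statement to results already in hand, via the triangle inequality
\begin{equation*}
\D\bigl(\rho_\Psi-\rho^\TF,\rho_\Psi-\rho^\TF\bigr)\le
2\D\bigl(\rho_\Psi-\rho_A,\rho_\Psi-\rho_A\bigr)+
2\D\bigl(\rho_A-\rho^\TF,\rho_A-\rho^\TF\bigr),
\end{equation*}
where $\rho_A(x)\Def\tr e(x,x,\lambda)$ is the density of the spectral projector of the one-particle operator $H_{A,W^\TF+\lambda}$ at the Thomas-Fermi chemical potential $\lambda$, and $A$ is the many-body self-generated minimizer. The second summand is delivered directly by Proposition~\ref{prop-27-5-17} after the standard rescaling $h=Z^{-\frac{1}{3}}$, $\beta=BZ^{-1}$, $\kappa=\alpha Z$: plugging $\nu^*=(\alpha B)^{\frac{10}{9}}Z^{-\frac{4}{27}}|\log Z|^K$ into $CZ^{\frac{1}{3}}R^2$ with $R=Z^{\frac{2}{3}}+Z^{\frac{5}{9}}\nu^{*\frac{2}{3}}$ reproduces the right-hand sides of \textup{(\ref{27-7-31})}, \textup{(\ref{27-7-32})} and \textup{(\ref{27-7-33})}, since the cross term $Z^{\frac{118}{81}}(\alpha B)^{\frac{20}{27}}|\log Z|^K$ is always absorbed by one of $Z^{\frac{5}{3}}$ or $Z^{\frac{101}{81}}(\alpha B)^{\frac{40}{27}}|\log Z|^K$.

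For the first summand I apply the electrostatic inequality \textup{(\ref{27-7-2})} with $\rho=\rho_A$ and $\lambda'=\lambda$, using the strengthening of Appendix~\ref{sect-27-A-2} which replaces $-C\!\int\!\rho_\Psi^{\frac{4}{3}}\,dx$ by $-CZ^{\frac{5}{3}}$ (respectively by $\Dirac-CZ^{\frac{5}{3}-\delta}$ in the sharp sub-regime $B\le Z$). After adding $\alpha^{-1}\|\partial A'\|^2$ to both sides and rearranging,
\begin{equation*}
\tfrac{1}{2}\D\bigl(\rho_\Psi-\rho_A,\rho_\Psi-\rho_A\bigr)\le \E^*_N-\Xi+\cR,
\end{equation*}
with $\Xi\Def\inf_{A'}\!\bigl\{\Tr^-(H_{A,W^\TF+\lambda})+\alpha^{-1}\|\partial A'\|^2\bigr\}+\lambda N-\tfrac{1}{2}\D(\rho_A,\rho_A)$ and $\cR$ the Bach-Graf-Solovej remainder. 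Expanding $\Xi$ by Theorem~\ref{thm-27-5-14} (or Theorem~\ref{thm-27-5-16} when the Schwinger term is retained), using the Thomas-Fermi identity $\cE^\TF_N=-\!\int\!P_B(W^\TF+\lambda)\,dx-\tfrac{1}{2}\D(\rho^\TF,\rho^\TF)+\lambda N$, and absorbing $|\D(\rho_A,\rho_A)-\D(\rho^\TF,\rho^\TF)|$ into (twice) the $\D(\rho_A-\rho^\TF)$-budget already in play, identifies $\Xi$ with $\cE^\TF_N+\Scott$ (resp.\ $+\Schwinger+\Dirac$) up to the trace remainder. Combined with the matching upper bound for $\E^*_N$ from Theorem~\ref{thm-27-7-4}, the Scott/Schwinger/Dirac corrections cancel and only twice the trace remainder survives.

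The main obstacle is avoiding the loss $\alpha|\log(\alpha Z)|^{\frac{1}{3}}Z^{\frac{25}{9}}$ which appears if one bounds $\E^*_N$ minus its Thomas-Fermi-Scott surrogate naively by \textup{(\ref{27-7-19})}; this additive error is \emph{larger} than $Z^{\frac{5}{3}}(\alpha Z)^{\delta}$ whenever $\alpha Z$ is not small. The remedy is to run the upper and lower energy estimates with the same trial magnetic field---the single-particle Coulomb minimizer without external field already used in the proofs of Theorems~\ref{thm-27-5-14} and~\ref{thm-27-5-16}---so that the $\alpha$-dependent contributions on the two sides cancel identically and the residual error localizes in the threshold zone $\{x:h^{\delta'}\le\ell(x)\le h^{-\delta'}\}$, where the standard propagation-of-singularities argument from the proof of Theorem~\ref{thm-27-5-16} absorbs it into the factor $(\alpha Z)^{\delta}$ via Proposition~\ref{prop-27-5-17}\ref{prop-27-5-17-ii}. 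In the regime $Z\le B\le Z^{\frac{4}{3}}$ no such cancellation is required and Statement~\ref{thm-27-7-8-ii} drops out of the combination above, with the $\D$-term remainder from \textup{(\ref{27-7-33})} dominating.
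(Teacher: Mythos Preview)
Your overall instinct—to make the trace terms in the upper and lower energy bounds cancel so that only a $\D$-term and $CZ^{\frac{5}{3}}$ survive—is exactly what the paper does, but the execution diverges from the paper's in several places, and one of those divergences is a genuine gap.

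The paper does \emph{not} use the triangle inequality with an intermediate density $\rho_A$. It applies \textup{(\ref{27-7-2})} with $\rho=\rho^\TF$ (so that $W=W^\TF$ and the bonus term is $\tfrac12\D(\rho_\Psi-\rho^\TF)$ directly), and crucially with $\lambda'=\lambda_N$. In the upper bound it takes $A'$ to be the \emph{one-particle} minimizer of $\Tr^-(H_{A,W^\TF_B+\lambda_N})+\alpha^{-1}\|\partial A'\|^2$. Since the lower bound, after dropping to $\inf_{A'}$, produces exactly the same one-particle infimum, the two trace expressions coincide and cancel identically, yielding \textup{(\ref{27-7-34})}: $\D(\rho_\Psi-\rho^\TF)\le C\D\bigl(\tr e(\cdot,\cdot,\lambda_N)-\rho^\TF\bigr)+CZ^{\frac{5}{3}}$. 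The right-hand $\D$-term is then estimated by the Section~\ref{sect-27-5} machinery, rescaled.

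Your route has two concrete problems. First, if you apply \textup{(\ref{27-7-2})} with $\rho=\rho_A$ then the potential appearing in the trace is $W_A=V-|x|^{-1}*\rho_A$, not $W^\TF$; your definition of $\Xi$ with $W^\TF$ is therefore inconsistent, and repairing it introduces a $\D(\rho_A-\rho^\TF)$-type coupling into the trace that you have not controlled. Second—and this is the real gap—choosing the \emph{Coulomb} minimizer (the $A'$ from the proof of Theorem~\ref{thm-27-5-14}) in the upper bound does not produce exact cancellation: the lower bound delivers the one-particle \emph{infimum}, while the Coulomb-minimizer value exceeds it by precisely the $O\bigl(\alpha|\log(\alpha Z)|^{\frac13}Z^{\frac{25}{9}}+\alpha BZ^{\frac{5}{3}}\bigr)$ discrepancy quantified in Theorem~\ref{thm-27-5-14}. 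So the $\alpha$-dependent term you are trying to avoid reappears. The paper's choice of the one-particle minimizer for $W^\TF_B+\lambda_N$ is what makes the cancellation exact. A secondary point: your $\rho_A$ is built from the many-body minimizer, yet you invoke Proposition~\ref{prop-27-5-17}, which is proved for the one-particle minimizer; the required $|\partial^2 A'|$ estimates are not established for the many-body field.
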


\begin{proof}
We need to consider only the case when errors in the estimates for $\E^*_N$ exceed those announced in (\ref{27-7-31})--(\ref{27-7-33}). Otherwise an estimate for $\D(\rho_\Psi-\rho^\TF,\,\rho_\Psi-\rho^\TF)$ follow from the estimates from above and below for $\E^*_N$ as estimates from below contain the ``bonus term'' $\D(\rho_\Psi-\rho^\TF,\,\rho_\Psi-\rho^\TF)$.

Let in the estimate from below pick up $\lambda'=\lambda_N$ and in the estimate from above pick up $A'$ as a minimizer for a potential $W^\TF_B+\lambda'$ with $\lambda'=\lambda_N$; we do not calculate asymptotics of the trace terms as these terms in both estimates coincide\footnote{\label{foot-27-36} Due to the matching choices of $A'$ and $\lambda'$.}; then we arrive to estimate
\begin{multline}
\D( \rho_\Psi -\rho^\TF,\,\rho_\Psi -\rho^\TF)\\
\le
C\D( \tr e(x,x,\lambda_N)- \rho^\TF,\, \tr e(x,x,\lambda_N)- \rho^\TF) +
CZ^{\frac{5}{3}}
\label{27-7-34}
\end{multline}
and we need to estimate the first term in the right-hand expression.

Let us scale as usual. Then $\beta h\lesssim 1$ as $B\le Z^{\frac{4}{3}}$ and our standard arguments estimate this term by
$C\bigl(h^{-2}+ h^{-\frac{5}{3}}\nu^{\frac{2}{3}}\bigr)^2$ with
$\nu = (\kappa \beta)^{\frac{10}{9}}h^{\frac{4}{9}}|\log h|^K$.

Plugging $\beta = BZ^{-1}$, $h=Z^{-\frac{1}{3}}$, $\kappa=\alpha Z$ and multiplying by $Z^{\frac{1}{3}}$ due to the spatial scaling we arrive exactly to (\ref{27-7-31}) and (\ref{27-7-33}).

Furthermore, as $\beta \ll 1$ let us consider contribution of the \emph{main zone\/} $\{x:\, h^{\delta} \le \ell(x)\le
h^{-\delta}+(\beta +\kappa)^{\delta}\}$ and use propagation arguments and improved electrostatic inequality; then we arrive to estimate
$Ch^{-4}( h + \beta +\kappa)^{\delta}$ which after rescaling becomes (\ref{27-7-32}).

We leave all easy details to the reader.
\end{proof}

Consider now case $M\ge 2$. Then no matter what is the distance between nuclei (as long as it is greater than $\epsilon Z^{-\frac{1}{3}}$) we need to add one or two more extra terms.

\medskip\noindent
(a) The first one always appears and it is what becomes from
$C\beta_1^2 h_1^{-2} \nu_1 \ell^{-1}$ as we plug $\beta_1=\beta\ell^3$,
$h_1=h\ell$, $\kappa_1=\kappa \ell^{-3}$,
$\nu_1=(\kappa_1 \beta_1)^{\frac{10}{9}} h_1^{\frac{4}{9}}|\log h_1|^K\asymp
(\kappa \beta)^{\frac{10}{9}} h^{\frac{4}{9}}\ell^{\frac{4}{9}}|\log h|^K$, and multiply by $\ell^{-1}$ we get $\ell$ in the positive power and therefore we must plug the largest possible $\ell$ which in case $(Z-N)_+\le B^{\frac{3}{4}}$ is $\ell=(\beta h)^{-\frac{1}{4}}$. Also plugging $\kappa=\alpha Z$,
$\beta =BZ^{-1}$, $h=Z^{-\frac{1}{3}}$,
$\ell =(\beta h)^{-\frac{1}{4}}=B^{-\frac{1}{4}}Z^{\frac{1}{3}}$ and multiplying by $Z^{\frac{1}{3}}$ due to scaling (with a possible improvement as $B\ll Z$) we arrive to $\alpha^{\frac{40}{27}}B^{frac{9}{4}}|\log Z|^K$. One can see easily that this term is larger than the second term in (\ref{27-7-33}).

As $(Z-N)_+\ge B^{\frac{3}{4}}$ this term will be smaller than the second extra term.

\medskip\noindent
(b) The second extra term appears only if $(Z-N)_+\gtrsim B^{\frac{5}{12}}$.

\begin{enumerate}[label=(b$_\arabic*$), fullwidth]
\item
As $B^{\frac{5}{12}}\le (Z-N)_+\le B^{\frac{3}{4}}$ it is what becomes from
$\beta_2^2 h_2^{-3}\bar{\gamma}^{-4}|\log \bar{\gamma}|^2$ with substitutions
$\beta_2=\beta_1\bar{\gamma}^{-1}$, $h_2=h_1\bar{\gamma}^{-3}$,
$\bar{\gamma}= (Z-N)_+^{\frac{1}{4}}B^{-\frac{3}{16}}$ and with $\beta_1,h_1,\ell$ defined above (we still need to multiply by $Z^{\frac{1}{3}}$).

\item
As $ (Z-N)_+\ge B^{\frac{3}{4}}$ it is what becomes out
$\beta_1^2 h_1^{-3}\ell^{-1}$ with $\ell=(Z-N)_+^{-\frac{1}{3}}$.
\end{enumerate}

Thus we arrive to

\begin{theorem}\label{thm-27-7-9}
Let $M\ge 2$, $N\asymp Z_1\asymp \ldots\asymp Z_M$, $B\lesssim Z^{\frac{4}{3}}$. Further, let $\alpha \le \kappa^*Z^{-1}$, $d\gtrsim Z^{-\frac{1}{3}}$. Then

\begin{enumerate}[label=(\roman*), fullwidth]

\item\label{thm-27-7-9-i}
As $B\le Z$ estimate \textup{(\ref{27-7-31})} holds and moreover it could be improved to
\begin{multline}
\D(\rho_\Psi-\rho^\TF,\,\rho_\Psi-\rho^\TF) \\
=
O\Bigl(Z^{\frac{5}{3}} \bigl[Z^{-\delta}+ B^{\delta}Z^{-\delta} +
(\alpha Z)^{\delta}+ (dZ^{\frac{1}{3}})^{-\delta}\bigr]\Bigr);
\label{27-7-35}
\end{multline}
\item\label{thm-27-7-9-ii}
As $ B\ge Z$ and $(Z-N)_+\le B^{\frac{5}{12}}$
\begin{equation}
\D(\rho_\Psi-\rho^\TF,\,\rho_\Psi-\rho^\TF)
= O\Bigl(Z^{\frac{5}{3}} +
\alpha^{\frac{10}{9}}B^{\frac{9}{4}}|\log Z|^K\Bigr);
\label{27-7-36}
\end{equation}

\item\label{thm-27-7-9-iii}
As $ B\ge Z$ and $B^{\frac{5}{12}}\le (Z-N)_+\le B^{\frac{3}{4}}$
\begin{multline}
\D(\rho_\Psi-\rho^\TF,\,\rho_\Psi-\rho^\TF)\\
= O\Bigl(Z^{\frac{5}{3}} +
\alpha^{\frac{10}{9}}B^{\frac{9}{4}} |\log Z|^K + B^{\frac{15}{16}}(Z-N)_+^{\frac{3}{4}}(1+|\log (Z-N)_+B^{-\frac{1}{3}}|)^2\Bigr);
\label{27-7-37}
\end{multline}

\item\label{thm-27-7-9-iv}
As $ B\ge Z$ and $(Z-N)_+\le B^{\frac{3}{4}}$
\begin{equation}
\D(\rho_\Psi-\rho^\TF,\,\rho_\Psi-\rho^\TF)
= O\Bigl(Z^{\frac{5}{3}} + B^2(Z-N)_+^{-\frac{2}{3}} \Bigr);
\label{27-7-38}
\end{equation}
\end{enumerate}
\end{theorem}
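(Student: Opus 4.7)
The plan is to mirror the scheme of Theorem~\ref{thm-27-7-8}, but to feed the multi-nucleus $\D$-term estimate of Proposition~\ref{prop-27-5-18} into the sandwich. As in the single-nucleus case, we may assume the announced $\D$-bound is smaller than the corresponding remainder appearing in the estimates from above and below for $\E^*_N$ in Theorem~\ref{thm-27-7-5} (otherwise the ``bonus term'' $\tfrac12 \D(\rho_\Psi-\rho,\rho_\Psi-\rho)$ in \textup{(\ref{27-7-2})} already yields the claim). Under that assumption we choose $\rho=\rho^\TF_B$, $\lambda'=\lambda_N$ in the lower bound, and in the upper bound we take $A'=\sum_m A'_m \phi_m$ with $A'_m$ a minimizer for the single-nucleus potential $W^\TF_m+\lambda_N$ and $\phi_m$ the cutoffs used in Theorem~\ref{thm-27-7-5}; the trace contributions then cancel (decoupling error is absorbed into $CZ^{5/3}$, cf. Remark~\ref{rem-27-7-3}), leaving
\begin{equation*}
\D(\rho_\Psi-\rho^\TF,\,\rho_\Psi-\rho^\TF)\le C\D\bigl(\tr e(x,x,\lambda_N)-\rho^\TF,\,\tr e(x,x,\lambda_N)-\rho^\TF\bigr)+CZ^{\tfrac{5}{3}}.
\end{equation*}

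Next I would rescale $x\mapsto Z^{1/3}x$, $\tau\mapsto Z^{-4/3}\tau$, so that $h=Z^{-1/3}$, $\beta=BZ^{-1}$, $\kappa=\alpha Z$, and apply Proposition~\ref{prop-27-5-18} to the rescaled spectral projector with $\nu$ estimated via Proposition~\ref{prop-27-5-6}. In the regular zone, using $\nu\asymp(\kappa\beta)^{10/9}h^{4/9}|\log h|^K$ and the non-degenerate $\D$-estimate from Proposition~\ref{prop-27-4-9}, the main new contribution (compared to Theorem~\ref{thm-27-7-8}) comes from the term $C\beta_1^2h_1^{-2}\nu_1$ summed over scales $\ell\in[Z^{-1/3},\bar r]$; since $\beta_1h_1\nu_1$ carries a positive power of $\ell$, the integral is controlled by its value at the largest allowed scale $\ell=(\beta h)^{-1/4}=B^{-1/4}Z^{1/3}$, and after multiplying by $Z^{1/3}$ from the spatial rescaling this yields the $\alpha^{10/9}B^{9/4}|\log Z|^K$ term in \textup{(\ref{27-7-36})}.

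The boundary zone $\{\gamma(x)\le C_0\bar\gamma\}$ produces the case-dependent extra terms. When $(Z-N)_+\le B^{5/12}$, one has $\bar\gamma\asymp h_1^{1/3}$ and its contribution is dominated by the previous one. When $B^{5/12}\le(Z-N)_+\le B^{3/4}$, with $\bar\gamma=(Z-N)_+^{1/4}B^{-3/16}$, I would plug $\beta_2=\beta_1\bar\gamma^{-1}$, $h_2=h_1\bar\gamma^{-3}$ and Remark~\ref{rem-27-5-11} into the general-case $\D$-term estimate of Proposition~\ref{prop-27-4-9}\ref{prop-27-4-9-iii}; the leading term $C\beta_2^2h_2^{-3}\bar\gamma^{-4}(1+|\log\bar\gamma|)^2$ rescales back to $B^{15/16}(Z-N)_+^{3/4}(1+|\log((Z-N)_+B^{-1/3})|)^2$. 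For $(Z-N)_+\ge B^{3/4}$, the relevant scale is $\ell\asymp(Z-N)_+^{-1/3}Z^{1/3}$ and the same computation gives $B^2(Z-N)_+^{-2/3}$. Finally, in regime~\ref{thm-27-7-9-i}, the sharper estimate \textup{(\ref{27-7-35})} follows, as in Theorem~\ref{thm-27-7-8}\ref{thm-27-7-8-i}, by inserting propagation-of-singularities improvements in the threshold zone $\{h^{\delta'}\le\ell(x)\le h^{-\delta'}\}$ with an additional loss $(dZ^{1/3})^{-\delta}$ from the decoupling cut-offs $\phi_m$.

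The main obstacle is the boundary-zone bookkeeping: one has to show that after the double dyadic partition with respect to $\gamma$ and $\rho$, the various terms in Proposition~\ref{prop-27-5-18}\ref{prop-27-5-18-ii} (in particular the $\nu$-dependent ones through Remark~\ref{rem-27-5-11}) sum to a function that is monotone in $\gamma$ and therefore saturates at the endpoint $\gamma=\bar\gamma$. Once that monotonicity is established, the remaining algebra is routine, and the listed extra terms in \textup{(\ref{27-7-36})}--\textup{(\ref{27-7-38})} fall out by plugging $\kappa=\alpha Z$, $\beta=BZ^{-1}$, $h=Z^{-1/3}$ and multiplying by $Z^{1/3}$.
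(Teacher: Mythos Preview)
Your proposal is essentially correct and matches the paper's argument, which is the discussion in paragraphs (a), (b), (b$_1$), (b$_2$) immediately preceding Theorem~\ref{thm-27-7-9}: the sandwich reduction to the $\D$-term of $\tr e(x,x,\lambda_N)-\rho^\TF$, the regular-zone contribution $C\beta_1^2 h_1^{-2}\nu_1\ell^{-1}$ saturating at $\ell=(\beta h)^{-1/4}$ to give $\alpha^{10/9}B^{9/4}|\log Z|^K$, and the case-split boundary-zone terms are all exactly what the paper does.

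One unnecessary complication: you take $A'=\sum_m A'_m\phi_m$ (decoupled single-nucleus minimizers) in the upper bound and then argue that the trace terms cancel up to a decoupling error. The paper (as in the proof of Theorem~\ref{thm-27-7-8}, footnote~\ref{foot-27-36}) instead takes $A'$ to be the one-particle minimizer for the \emph{full} potential $W^\TF_B+\lambda_N$; then the trace plus magnetic terms in the upper and lower bounds are both exactly the one-particle $\E^*_\kappa$ and cancel identically, with no decoupling error to track. The remaining $\D$-term is then bounded directly by Proposition~\ref{prop-27-5-18}\ref{prop-27-5-18-ii} (which is precisely what the discussion (a)--(b) unpacks). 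Your route still works, but the full-potential minimizer makes the cancellation cleaner and avoids the detour through Remark~\ref{rem-27-7-3}.
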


\begin{corollary}\label{cor-27-7-10}
Estimates \textup{(\ref{27-7-31})}, \textup{(\ref{27-7-35})}-- \textup{(\ref{27-7-38})} hold for a free nuclei model.
\end{corollary}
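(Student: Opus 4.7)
The plan is to reduce the free nuclei case to the fixed nuclei case (Theorem~\ref{thm-27-7-9}) by establishing an a priori lower bound on the internuclear distance $d$ at a (near) minimizing nuclear configuration, and then applying Theorem~\ref{thm-27-7-9} at this configuration. The starting observation is that $\D(\rho_\Psi - \rho^\TF,\rho_\Psi - \rho^\TF)$ is evaluated at the nuclear configuration $\{\y_m\}$ realizing (or almost realizing) $\hat\E^*_N$, and for any fixed choice of $\{\y_m\}$ Theorem~\ref{thm-27-7-9} already gives the desired estimate provided $d \gtrsim Z^{-1/3}$.

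First I would prove the required lower bound $d \gtrsim Z^{-1/3}$ at any configuration for which the free nuclei energy is within the error terms appearing in \textup{(\ref{27-7-31})}, \textup{(\ref{27-7-35})}--\textup{(\ref{27-7-38})} of $\hat\cE^\TF_N + \Scott$. This follows from the scaling of Coulomb repulsion: if some pair of nuclei were at distance $d \ll Z^{-1/3}$, the direct interaction term $Z_m Z_{m'}/d$ alone would exceed $Z^{7/3}$, which is much larger than every remainder we are claiming. In fact the paper's earlier remark (preceding Theorem~\ref{thm-27-7-6}) gives a stronger bound $d \gtrsim \min(Z^{-5/21}, B^{-1/4})$ via the $\asymp d^{-7}$ scaling of the free nuclei excess energy at the classical equilibrium; this is already enough to absorb the factor $(dZ^{1/3})^{-\delta}$ appearing in \textup{(\ref{27-7-35})}.

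Second, once $d \gtrsim Z^{-1/3}$ is secured, I would condition on the minimizing configuration $\{\y_m^*\}$ and invoke Theorem~\ref{thm-27-7-9} in the fixed nuclei framework at this configuration, noting that $\rho^\TF$ for the free nuclei model is by definition $\rho^\TF_B$ computed at $\{\y_m^*\}$. The estimates \textup{(\ref{27-7-31})}, \textup{(\ref{27-7-35})}--\textup{(\ref{27-7-38})} depend on the nuclear positions only through $d$, so they transfer directly. For a near-minimizer (rather than an exact minimizer) one uses the usual argument: the lower bound from the electrostatic inequality combined with the upper bound used to prove Theorem~\ref{thm-27-7-6} contains $\D(\rho_\Psi-\rho^\TF,\rho_\Psi-\rho^\TF)$ as a ``bonus term,'' so any approximate minimizer inherits the $\D$-term estimate up to the error of the energy asymptotics, which in each case is already dominated by the stated right-hand side.

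The main obstacle is the case $B \ge Z$ with $(Z-N)_+$ in the intermediate or large range, where the extra terms in \textup{(\ref{27-7-37})}--\textup{(\ref{27-7-38})} come from the boundary zone and could in principle interact with the choice of nuclear positions; here one must check that the scaling $\bar r = \min(B^{-1/4}, (Z-N)_+^{-1/3})$ and the lower bound on $d$ are compatible, i.e.\ that the free nuclei equilibrium does not force $d$ below the threshold used in the derivation of the boundary-zone contributions in Section~\ref{sect-27-5-6}. Since the excess-energy/$d^{-7}$ argument is driven by the leading Thomas-Fermi energy and is insensitive to the magnetic field corrections, this compatibility holds automatically, and the corollary follows.
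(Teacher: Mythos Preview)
Your approach is correct and matches the paper's implicit reasoning: the paper states Corollary~\ref{cor-27-7-10} without proof, relying on the discussion preceding Theorem~\ref{thm-27-7-6} (the excess energy $\asymp d^{-7}$ forces $d\ge \min(Z^{-5/21},B^{-1/4})$ at a free nuclei minimizer), so that Theorem~\ref{thm-27-7-9} applies directly at the minimizing configuration and the $(dZ^{1/3})^{-\delta}$ term in \textup{(\ref{27-7-35})} is absorbed. Your reduction via the bonus term and the a~priori bound on $d$ is exactly this argument spelled out.
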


\section{Main theorems: \texorpdfstring{$Z^{\frac{4}{3}}\le B\le Z^3$}{Z\textfoursuperior\textthreesuperior \textle Z\textle Z\textthreesuperior}}
\label{sect-27-7-4}

\subsection{Ground state energy asymptotics}
\label{sect-27-7-4-1}

As $Z^{\frac{4}{3}}\le B\le Z^3$ we select $A'=0$ in the estimate from above and therefore just apply an upper estimate $\E^*_N$ from Subsection~\ref{book_new-sect-25-6-3}. Combined with estimate from below provided by Theorem~\ref{thm-27-6-2} it implies the following

\begin{theorem}\label{thm-27-7-11}
Let  $Z^{\frac{4}{3}}\lesssim B\lesssim Z^3$, $Z_1\asymp \ldots \asymp Z_M\asymp N$, and
$\alpha \le \kappa^*Z^{-1}$ with small constant $\kappa^*$, and also
$\alpha \le B^{-\frac{4}{5}}Z^{\frac{2}{5}} |\log Z|^{-K}$. Then

\begin{enumerate}[fullwidth, label=(\roman*)]
\item\label{thm-27-7-11-i}
As $M=1$ and either $(Z-N)_+\lesssim B^{\frac{4}{15}}Z^{\frac{1}{5}}$ or
$\alpha B^{\frac{3}{5}} Z^{\frac{1}{5}}\gtrsim 1$
\begin{gather}
|\E^*_N -\cE^\TF_N - \Scott_0|
\label{27-7-39}\\
\shortintertext{does not exceed}
C\Bigl( B^{\frac{1}{3}}Z^{\frac{4}{3}}+ B^{\frac{4}{5}}Z^{\frac{3}{5}} +
\alpha Z^3 + \alpha^{\frac{16}{9}}B^{\frac{82}{45}}Z^{\frac{49}{45}}|\log Z|^K \Bigr);
\tag{\ref{27-6-3}}\label{27-6-3xx}
\end{gather}

\item\label{thm-27-7-11-ii}
As $M=1$ and $(Z-N)_+\gtrsim B^{\frac{4}{15}}Z^{\frac{1}{5}}$ and
$\alpha B^{\frac{3}{5}} Z^{\frac{1}{5}}\lesssim 1$ expression \textup{(\ref{27-7-39})} does not exceed
\begin{multline}
C\Bigl( B^{\frac{1}{3}}Z^{\frac{4}{3}}+ B^{\frac{4}{5}}Z^{\frac{3}{5}} +
\alpha Z^3 + \alpha^{\frac{16}{9}}B^{\frac{82}{45}}Z^{\frac{49}{45}}|\log Z|^K\\
+ \alpha^{\frac{40}{27}}B^{\frac{13}{15}}Z^{\frac{139}{135}}
(Z-N)_+^{\frac{28}{27}} |\log Z|^K\Bigr);
\tag{\ref{27-6-4}}\label{27-6-4xx}
\end{multline}

\item\label{thm-27-7-11-iii}
As $M\ge 2$ and $(Z-N)_+\lesssim B^{\frac{4}{15}}Z^{\frac{1}{5}}$ expression \textup{(\ref{27-7-39})} does not exceed
\begin{equation}
C\Bigl( B^{\frac{1}{3}}Z^{\frac{4}{3}}+
B^{\frac{4}{5}}Z^{\frac{3}{5}} (1+|\log BZ^{-3}|)^2+
\alpha Z^3 + \alpha^{\frac{16}{9}}B^{\frac{82}{45}}Z^{\frac{49}{45}}|\log Z|^K \Bigr);
\label{27-7-40}
\end{equation}

\item\label{thm-27-7-11-iv}
As $M\ge 2$ and $(Z-N)_+\gtrsim B^{\frac{4}{15}}Z^{\frac{1}{5}}$ expression \textup{(\ref{27-7-39})} does not exceed
\begin{multline}
C\Bigl( B^{\frac{1}{3}}Z^{\frac{4}{3}}+ B^{\frac{4}{5}}Z^{\frac{3}{5}}
(1+|\log BZ^{-3}|)^2 +
\alpha Z^3 + \alpha^{\frac{16}{9}}B^{\frac{82}{45}}Z^{\frac{49}{45}}|\log Z|^K\\
 + B^{\frac{2}{3}}(Z-N)_+ (1+|\log (Z-N)_+Z^{-1}|)^2\Bigr).
\label{27-7-41}
\end{multline}
\end{enumerate}
\end{theorem}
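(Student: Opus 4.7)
The plan is to sandwich $\E^*_N$ between matching upper and lower bounds, using the fact that in the strong-field regime $B\gtrsim Z^{4/3}$ the Scott correction degenerates to $\Scott_0$, which allows us to pick $A'=0$ for the upper estimate and so bypass the delicate minimizer analysis needed in Section~\ref{sect-27-5}.

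For the lower bound I would start from the electrostatic-type inequality \textup{(\ref{27-7-2})}, choosing $\rho = \rho^{\TF}_B$ and $\lambda' = \lambda$ (the Thomas-Fermi chemical potential) when $N\asymp Z$, and adding $\alpha^{-1}\int |\partial A'|^2\,dx$ to both sides. This produces
\[
\E^*_N \;\ge\; \inf_{A'}\bigl\{\Tr^-(H_{A,W^{\TF}_B+\lambda}) + \alpha^{-1}\int |\partial A'|^2\bigr\} + \lambda N - \tfrac12\D(\rho^{\TF}_B,\rho^{\TF}_B) - C\!\int \rho_\Psi^{4/3}\,dx,
\]
plus the bonus term $\tfrac12 \D(\rho_\Psi-\rho^{\TF}_B,\rho_\Psi-\rho^{\TF}_B)\ge 0$ which I simply discard. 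The first two explicit terms assemble into $\cE^{\TF}_N + \Scott_0$ plus the bulk Thomas-Fermi piece $-\int P_B(V)\,dx$, modulo the remainder controlled by Theorem~\ref{thm-27-6-2} (rescaled back to original variables, which reinserts the factors $Z^{4/3}$, $BZ^{3/5}$, etc.). The $\sL^{4/3}$-term is replaced by $-CB^{4/5}Z^{3/5}$ via the generalized electrostatic inequality from Appendix~\ref{sect-27-A-2} (as promised in Section~\ref{sect-27-7-1}). Reading off the remainders from the three cases of Theorem~\ref{thm-27-6-2} and the $(Z-N)_+$ dichotomy gives the four lower bounds in \textup{(\ref{27-6-3xx})}--\textup{(\ref{27-7-41})}.

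For the upper bound I would choose $A'=0$ throughout and take $\Psi$ as the Slater determinant built from the first $N$ eigenfunctions of $H_{A^0,W^{\TF}_B+\lambda'}$, with $\lambda' = \lambda_N$. Because $A'\equiv 0$, the one-body Hamiltonian is exactly the constant-field Schr\"odinger operator analyzed in Chapter~\ref{book_new-sect-25}. Thus the trace term produces $\cE^{\TF}_N + \Scott_0$ with the Chapter~\ref{book_new-sect-25} remainder $O(B^{1/3}Z^{4/3}+B^{4/5}Z^{3/5}(1+|\log BZ^{-3}|)^2)$ (the logarithmic factor appearing in the $M\ge 2$ boundary-zone analysis), and the discrepancy $\lambda'=\lambda_N$ versus $\lambda$ is absorbed via the $\N$-term and $\D$-term estimates of Subsection~\ref{book_new-sect-25-6-3}, which control $|\lambda_N-\lambda|\cdot N$ and the three $\D$-terms arising from $P'_B(W+\lambda)$ vs.\ $P'_B(W+\lambda_N)$. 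The key observation is that all such terms are independent of $\alpha$ and $A'$ and therefore match what appears in the lower bound up to $CB^{4/5}Z^{3/5}(1+|\log BZ^{-3}|)^2$.

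The main obstacle is the matching: the lower bound contains the $\alpha$-dependent remainders $\alpha Z^3$ and $\alpha^{16/9}B^{82/45}Z^{49/45}|\log Z|^K$ (and in case \ref{thm-27-7-11-ii} the extra $\alpha^{40/27}$-term of Theorem~\ref{thm-27-6-2}\ref{thm-27-6-2-ii}), which the $A'=0$ upper bound does not reproduce. One must therefore verify that these terms dominate the discrepancy introduced by our suboptimal trial $A'$---that is, that the difference $\cE_\alpha(0) - \cE^*_\alpha$ is itself $O(\alpha Z^3)$. This follows because Corollary~\ref{cor-27-6-4} bounds $\|\partial A'\|^2$ for the true minimizer by exactly these expressions times $\alpha$, and expanding $\E_\alpha(A') - \E_\alpha(0)$ in powers of $A'$ (the linear term vanishes in the appropriate averaged sense, just as in the trick used for Theorem~\ref{thm-27-5-14}'s upper estimate) yields precisely a contribution of size $\alpha^{-1}\|\partial A'\|^2$, absorbed in the announced remainders. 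The case-by-case bookkeeping of the $(Z-N)_+$-dependent boundary terms and the logarithmic loss specific to $M\ge 2$ completes the proof; the mechanics are identical to the $B\le Z^{4/3}$ case treated in Theorems~\ref{thm-27-7-5}--\ref{thm-27-7-6}, and the details are left to the patient reader.
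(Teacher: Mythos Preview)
Your overall strategy is exactly the paper's: the lower bound for $\E^*_N$ comes from the electrostatic inequality \textup{(\ref{27-7-2})} plus Theorem~\ref{thm-27-6-2} (and Proposition~\ref{prop-27-A-2} for the $\int\rho_\Psi^{4/3}$ term), while the upper bound comes from simply setting $A'=0$ and quoting the Chapter~\ref{book_new-sect-25} upper estimate for $\E^*_N$. The paper's entire proof is the one sentence preceding the theorem.

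Your final paragraph, however, manufactures a difficulty that is not there. There is no ``matching obstacle.'' The upper bound with $A'=0$ yields
\[
\E^*_N \le \cE^{\TF}_N + \Scott_0 + C\bigl(B^{\frac{1}{3}}Z^{\frac{4}{3}} + B^{\frac{4}{5}}Z^{\frac{3}{5}}(1+|\log BZ^{-3}|)^2\bigr),
\]
while the lower bound yields
\[
\E^*_N \ge \cE^{\TF}_N + \Scott_0 - C\bigl(\text{the full remainder of Theorem~\ref{thm-27-6-2}, including the $\alpha$-terms}\bigr).
\]
A two-sided estimate on $|\E^*_N - \cE^{\TF}_N - \Scott_0|$ is simply the maximum of these two errors, and the $\alpha$-dependent terms from the lower bound survive in the final statement precisely because they only need to appear on one side. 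You do not need to bound $\cE_\alpha(0)-\cE^*_\alpha$, invoke Corollary~\ref{cor-27-6-4}, or replay the $A'\mapsto -A'$ averaging trick of Theorem~\ref{thm-27-5-14}; those devices were introduced in Section~\ref{sect-27-5} to squeeze out sharper constants in the regime $B\le Z^{4/3}$, where one cannot afford $A'=0$, and they play no role here.
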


\subsection{Ground state density asymptotics}
\label{sect-27-7-4-2}

Consider now asymptotics of $\rho_\Psi$. Apart of independent interest we need them for estimate of excessive negative charge and estimate or asymptotics of the ionization energy. We are interested as usual in
$\D\bigl(\rho_\Psi-\rho^\TF_B,\, \rho_\Psi-\rho^\TF_B\bigr)$ and we know that

\begin{corollary}\label{cor-27-7-12}
In the framework of Theorem~\ref{thm-27-7-11}\ref{thm-27-7-11-i}, \ref{thm-27-7-11-ii}, \ref{thm-27-7-11-iii}, \ref{thm-27-7-11-iv}
$\D\bigl(\rho_\Psi-\rho^\TF_B,\, \rho_\Psi-\rho^\TF_B\bigr)$
does not exceed corresponding remainder estimate \textup{(\ref{27-6-3xx})},
\textup{(\ref{27-6-4xx})}, \textup{(\ref{27-7-40})}, \textup{(\ref{27-7-41})}.
\end{corollary}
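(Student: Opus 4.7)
The plan is to extract the $\D$-term estimate as the ``bonus'' produced when the lower bound in the electrostatic inequality \textup{(\ref{27-7-2})} is compared with the matching upper bound, exactly as outlined in Subsection~\ref{sect-27-7-1}. First I would fix $\rho=\rho^\TF_B$ and $\lambda'=\lambda$ (the Thomas--Fermi chemical potential, or $\lambda'=0$ when $N\approx Z$) in \textup{(\ref{27-7-2})}, so that for any admissible $A$
\begin{equation*}
\langle \sfH\Psi,\Psi\rangle+\alpha^{-1}\!\int|\partial A'|^2\,dx
\ge \Tr^-(H_{A,V})+\lambda N+\tfrac12\D(\rho_\Psi-\rho^\TF_B,\rho_\Psi-\rho^\TF_B)
-\tfrac12\D(\rho^\TF_B,\rho^\TF_B)-C\!\int\rho_\Psi^{4/3}dx,
\end{equation*}
with $V=W^\TF_B+\lambda$. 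Minimizing the left-hand side over $\Psi$ and $A'$ and the right-hand side over $A'$ only can only decrease it further; the $A'$-minimum of $\Tr^-(H_{A,V})+\alpha^{-1}\!\int|\partial A'|^2$ is exactly $\E^*_\kappa$ (in the rescaled variables), to which Theorem~\ref{thm-27-6-2} applies.

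Second, I would convert the Thomas--Fermi identities $\cE^\TF_N=\Tr^-(H_{A^0,V})$-like combination plus $-\int P_B(V)\,dx+\lambda N-\tfrac12\D(\rho^\TF_B,\rho^\TF_B)$ (this is the standard rearrangement used throughout Chapters~\ref{book_new-sect-24}--\ref{book_new-sect-26}) to rewrite the lower bound as
\begin{equation*}
\E^*_N\ge \cE^\TF_N+\Scott_0+\tfrac12\D(\rho_\Psi-\rho^\TF_B,\rho_\Psi-\rho^\TF_B)-C\!\int\rho_\Psi^{4/3}dx-CQ,
\end{equation*}
where $Q$ is the remainder provided by Theorem~\ref{thm-27-6-2} in the appropriate sub-case \ref{thm-27-6-2-i}--\ref{thm-27-6-2-iii} (matching the four cases of Theorem~\ref{thm-27-7-11}). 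The Scott term $\Scott_0=2\sum Z_m^2 S(0)$ appears because for $\beta h\gtrsim 1$ the near-singularity trace asymptotics reduce to $S(0)Z_m^2$ (cf.\ Remark~\ref{rem-27-6-1}\ref{rem-27-6-1-i}).

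Third, I would invoke the strengthened electrostatic inequality of Appendix~\ref{sect-27-A-2}, which for near ground states replaces $-C\!\int\rho_\Psi^{4/3}dx$ by $-CB^{4/5}Z^{3/5}$ in the range $Z^{4/3}\le B\le Z^3$. Since $B^{4/5}Z^{3/5}$ is already absorbed into the remainder estimates \textup{(\ref{27-6-3xx})}, \textup{(\ref{27-6-4xx})}, \textup{(\ref{27-7-40})}, \textup{(\ref{27-7-41})} of Theorem~\ref{thm-27-7-11}, the combined lower bound reads $\E^*_N\ge \cE^\TF_N+\Scott_0+\tfrac12\D(\rho_\Psi-\rho^\TF_B,\rho_\Psi-\rho^\TF_B)-CQ'$ where $Q'$ is the corresponding remainder. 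Subtracting the already-established upper bound $\E^*_N\le \cE^\TF_N+\Scott_0+CQ'$ (which is obtained by picking $A'=0$ and the Slater determinant built from the lowest $N$ eigenfunctions of $H_{A^0,W^\TF_B+\lambda'}$, and which in turn requires only the $\N$-term and $\D$-term analysis of Chapter~\ref{book_new-sect-25} with $A'=0$) yields $\tfrac12\D(\rho_\Psi-\rho^\TF_B,\rho_\Psi-\rho^\TF_B)\le CQ'$, which is the claim.

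The main obstacle is bookkeeping rather than a new idea: one must check that in each of the four regimes of Theorem~\ref{thm-27-7-11} the upper estimate (with $A'=0$ and Slater determinant) really matches the corresponding lower estimate to within the stated remainder, and in particular that the errors coming from $|\lambda_N-\lambda|\cdot N$ and from the three auxiliary $\D$-terms $\D(\tr e(\cdot,\cdot,\lambda)-P'_B(W+\lambda),\cdot)$, $\D(\tr e(\cdot,\cdot,\lambda_N)-P'_B(W+\lambda_N),\cdot)$ and $\D(P'_B(W+\lambda_N)-P'_B(W+\lambda),\cdot)$ are all controlled by the same $Q'$; this reduction is routine given Proposition~\ref{prop-27-4-9} and the $\N$/$\D$-analysis sketched in Subsection~\ref{sect-27-6-3}, but it must be carried out case by case, especially in case~\ref{thm-27-7-11-iv} where the term $B^{2/3}(Z-N)_+$ enters through the boundary-zone contribution.
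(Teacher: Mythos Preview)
Your proposal is correct and follows exactly the approach the paper intends: the corollary is stated without proof because it is the immediate ``bonus term'' consequence of the lower bound \textup{(\ref{27-7-2})} (with the electrostatic refinement of Appendix~\ref{sect-27-A-2}) combined with the upper bound obtained by taking $A'=0$ and invoking Chapter~\ref{book_new-sect-25}, both of which are already packaged into Theorem~\ref{thm-27-7-11}. Your final paragraph on the auxiliary $\D$-terms and $|\lambda_N-\lambda|\cdot N$ is not a new obstacle here---those checks are precisely what already went into the upper estimate of Theorem~\ref{thm-27-7-11}, so once that theorem is in hand the corollary really is immediate.
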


If we want to get rid off $\alpha Z^3+ B^{\frac{1}{3}}Z^{\frac{4}{3}}$ terms (which may be dominant only for $B\le Z^{\frac{7}{4}}$ and $B\le Z^{\frac{11}{7}}$ respectively) we need not to find asymptotics of the trace term but to have trace term in the estimates from above and below more consistent. The only explored option is to take in the estimate from above the same $A'$ as in the estimate from below, which is a minimizer for the corresponding one particle problem.

\begin{theorem}\label{thm-27-7-13}
Let  $Z^{\frac{4}{3}}\lesssim B\lesssim Z^3$, $Z_1\asymp \ldots \asymp Z_M\asymp N$, and $\alpha \le \kappa^*Z^{-1}$ with small constant $\kappa^*$, and also $\alpha \le B^{-\frac{4}{5}}Z^{\frac{2}{5}} |\log Z|^{-K}$.

\begin{enumerate}[label=(\roman*), fullwidth]
\item\label{thm-27-7-13-i}
Let $M=1$.  Then
$\D\bigl(\rho_\Psi-\rho^\TF_B,\,\rho_\Psi-\rho^\TF_B\bigr)$ does not exceed
\begin{multline}
CB^{\frac{4}{5}}Z^{\frac{3}{5}}\\[3pt]
+ C\Bigl(
\alpha ^{\frac{16}{9}} B^{\frac{82}{45}} Z^{\frac{49}{45}} + \alpha^{\frac{40}{27}}B^{-\frac{10}{9}} Z^{\frac{127}{27}}+
\alpha^{\frac{40}{27}}B^{\frac{74}{45}} Z^{\frac{11}{90}} (Z-N)_+^{\frac{29}{54}}\Bigr)|\log Z|^K;
\label{27-7-42}
\end{multline}
\item\label{thm-27-7-13-ii}
Let $M=2$ and the minimal distance between nuclei $d\gtrsim B^{-\frac{2}{5}}Z^{\frac{1}{5}}$.  Then
$\D\bigl(\rho_\Psi-\rho^\TF_B,\,\rho_\Psi-\rho^\TF_B\bigr)$ does not exceed
\begin{multline}
CB^{\frac{4}{5}}Z^{\frac{3}{5}}\\[3pt]
+ C\Bigl(
\alpha ^{\frac{4}{3}} B^{\frac{22}{15}} Z^{\frac{19}{15}} + \alpha^{\frac{40}{27}}B^{-\frac{10}{9}} Z^{\frac{127}{27}}+
B^{\frac{3}{5}} Z^{\frac{9}{20}} (Z-N)_+^{\frac{3}{4}}\Bigr)|\log Z|^K;
\label{27-7-43}
\end{multline}
\end{enumerate}
\end{theorem}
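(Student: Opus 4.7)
\medskip
\noindent\textbf{Proof Proposal for Theorem~\ref{thm-27-7-13}.}
The plan is to mimic the strategy used in Theorem~\ref{thm-27-7-8} (the analogous statement for $B\le Z^{\frac{4}{3}}$), but with the microlocal $\D$-term input drawn from Section~\ref{sect-27-6} rather than Section~\ref{sect-27-5}. The guiding idea is to choose the free parameters in the upper and lower bounds for $\E^*_N$ so that the trace terms match identically; then the gap between the two bounds is exactly the positive ``bonus'' quadratic form $\D(\rho_\Psi-\rho^\TF_B,\rho_\Psi-\rho^\TF_B)$ modulo a $\D$-term of Tauberian-semiclassical type, which we can then estimate using Section~\ref{sect-27-6}.

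Concretely, first I will take the standard electrostatic inequality (\ref{27-7-2}) with $\rho=\rho^\TF_B$ and $\lambda'=\lambda_N$, and in the upper estimate pick $A'=A'_N$ to be the minimizer of the one-particle functional $\Tr^-(H_{A,W^\TF_B+\lambda_N})+\alpha^{-1}\|\partial A'\|^2$, with $\Psi$ the Slater determinant built from the lowest $N$ eigenfunctions of $H_{A,W^\TF_B+\lambda_N}$. Both the trace term and the magnetic self-energy $\alpha^{-1}\|\partial A'\|^2$ then appear with the same coefficient on both sides, so they cancel. The error in the electrostatic inequality is controlled by the improved bound $CB^{\frac{4}{5}}Z^{\frac{3}{5}}$ from Appendix~\ref{sect-27-A-2}, which accounts for the first term in \textup{(\ref{27-7-42})}, \textup{(\ref{27-7-43})}. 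What remains is precisely
\begin{equation*}
\D(\rho_\Psi-\rho^\TF_B,\rho_\Psi-\rho^\TF_B)\le C\D\bigl(\tr e(x,x,\lambda_N)-\rho^\TF_B,\,\tr e(x,x,\lambda_N)-\rho^\TF_B\bigr)+CB^{\frac{4}{5}}Z^{\frac{3}{5}}.
\end{equation*}

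Next I will estimate the right-hand $\D$-term by applying Proposition~\ref{prop-27-4-9} and the summation procedure outlined in Problem~\ref{problem-27-6-8} to the minimizer $A'=A'_N$. Using the smoothness bounds on $A'_N$ supplied by Propositions~\ref{prop-27-6-5}--\ref{prop-27-6-6} (which hold because $A'_N$ is itself a minimizer of a one-particle problem with a Thomas-Fermi-type potential, so the a priori estimates \ref{27-6-17}, \textup{(\ref{27-6-19})}--\textup{(\ref{27-6-20})} are at our disposal), I will perform the $(\gamma,\rho)$-partition in the rescaled variables $\beta=B^{\frac{2}{5}}Z^{-\frac{1}{5}}$, $h=B^{\frac{1}{5}}Z^{-\frac{3}{5}}$, $\kappa=\alpha Z$ and sum over singularity, regular and boundary zones. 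For $M=1$ the super-strong non-degeneracy condition (\ref{27-4-4}) holds outside the singularity zones, so $\nu$ is given by the better formula (\ref{27-4-8}); plugging this into the $\D$-term summation in zones (a)--(c) of the discussion after Problem~\ref{problem-27-6-8} and rescaling back (multiplying by $\ell_0^{-1}=B^{\frac{2}{5}}Z^{-\frac{1}{5}}$) produces the terms $\alpha^{\frac{16}{9}}B^{\frac{82}{45}}Z^{\frac{49}{45}}$ and $\alpha^{\frac{40}{27}}B^{-\frac{10}{9}}Z^{\frac{127}{27}}$. For $M=2$ the non-degeneracy drops back to \textup{(\ref{27-3-65})} in the boundary zone; super-strong non-degeneracy fails near the saddle points of $W^\TF_B+\lambda_N$, producing the additional boundary contributions controlled by the sub-case (c$_2$) and zone (d) estimates, leading to the terms $\alpha^{\frac{4}{3}}B^{\frac{22}{15}}Z^{\frac{19}{15}}$ and $B^{\frac{3}{5}}Z^{\frac{9}{20}}(Z-N)_+^{\frac{3}{4}}$.

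The main obstacle will be the bookkeeping in the boundary zone for $M\ge 2$: there is no single clean non-degeneracy hypothesis, and one must combine the $\gamma$-partition of Subsubsection~\ref{book_new-sect-25-5-1} with the fact that $\bar\gamma=(Z-N)_+^{\frac{1}{4}}Z^{-\frac{1}{4}}$ (capped from below once $(Z-N)_+$ is small), then use Remark~\ref{rem-27-5-9} and Remark~\ref{rem-27-5-11} to upgrade $\nu_2$ by an extra factor $\gamma^{\frac{3}{2}+\delta}$ so that the $\D$-term sum over $\gamma\ge\bar\gamma$ actually converges. A secondary, but routine, subtlety is that one must check that the choice $\lambda'=\lambda_N$ (rather than the Thomas-Fermi chemical potential $\lambda$) does not disturb the cancellation of trace terms; this is handled exactly as in the proof of Theorem~\ref{thm-27-7-8}, since both sides see the same one-particle Hamiltonian and the same external parameter $\lambda_N$, so the extra contribution $|\lambda_N-\lambda|\cdot N$ never arises. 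Finally, the condition $d\gtrsim B^{-\frac{2}{5}}Z^{\frac{1}{5}}$ in \ref{thm-27-7-13-ii} is exactly what is needed to keep the two nuclei in distinct rescaled ``cells'' so that the $\gamma$-partition in the boundary zone still makes sense. I expect no further complication from decoupling because the global minimizer $A'_N$ of the one-particle problem automatically accommodates both singularities, and the standard cutoff argument of Theorem~\ref{thm-27-5-23} carries over with an additional error absorbed in the announced remainder.
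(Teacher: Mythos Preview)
Your proposal is correct and follows essentially the same route as the paper: reduce $\D(\rho_\Psi-\rho^\TF_B,\,\rho_\Psi-\rho^\TF_B)$ to the semiclassical $\D$-term (\ref{27-6-27}) by matching $A'$ and $\lambda'=\lambda_N$ in the upper and lower estimates (exactly as in the proof of Theorem~\ref{thm-27-7-8}), then estimate (\ref{27-6-27}) zone by zone using the minimizer bounds of Propositions~\ref{prop-27-6-5}--\ref{prop-27-6-6} together with Remark~\ref{rem-27-5-9}. The paper carries this out directly---computing the $\N$-type bound $R$ on each of the four zones $\{\ell\lesssim B^{-1}Z\}$, $\{B^{-1}Z\lesssim\ell\lesssim B^{-2/5}Z^{1/5}\}$, $\{\gamma\gtrsim\bar\gamma\}$, $\{\gamma\lesssim\bar\gamma\}$ and then bounding the zone's $\D$-contribution by $R^2$ times the appropriate inverse length scale---which is precisely the content of the summation you invoke via Problem~\ref{problem-27-6-8} and Proposition~\ref{prop-27-4-9}.
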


\begin{proof}
We will use Propositions~\ref{prop-27-6-5} and~\ref{prop-27-6-6} to estimate (\ref{27-6-27}). Let us consider for each partition element
\begin{equation}
|\int \bigl( e(x,x,\lambda') - P_{B}(V(x)+\lambda')\bigr)\psi (x)\,dx|.
\label{27-7-44}
\end{equation}

\medskip\noindent
(a) Zone $\{x:\, \ell(x)\le B^{-1}Z |\log h|^{\delta}\}$. Here for each $\ell$-element expression (\ref{27-7-44}) does not exceed
$R_a=C(h_1^{-2}+\beta_1 h_1^{-1}) (1+ \nu_1^{\frac{2}{3}}h_1^{\frac{1}{3}})$ with $\beta_1=\beta \ell^{\frac{3}{2}}$, $h_1=h\ell^{-\frac{1}{2}}$ and $\nu_1$ defined according to Proposition~\ref{prop-27-6-5}\ref{prop-27-6-5-ii}. As usual
 $\beta =B^{\frac{2}{5}}Z^{-\frac{1}{5}}$ and $h=B^{\frac{1}{5}}Z^{-\frac{3}{5}}$.

Then one can prove easily that the total contribution of this zone to \begin{equation}
\D\bigl(e(x,x,\lambda') - P_{B}(V(x)+\lambda'),\,
e(x,x,\lambda') - P_{B}(V(x)+\lambda')\bigr)
\label{27-7-45}
\end{equation}
does not exceed $CB Z^{-1}R_a^2$\,\footnote{\label{foot-27-37} Calculated for $\ell$ or $\gamma$ on its maximum.} which is the second term in the parenthesis of (\ref{27-7-42}) and (\ref{27-7-43})\,\footnote{\label{foot-27-38} Modulo term not exceeding $CB^{\frac{4}{5}}Z^{\frac{3}{5}}$.}.

\medskip\noindent
(b) Zone $\{x:\, B^{-1}Z |\log h|^{\delta}\le \ell(x)\le \epsilon B^{-\frac{2}{5}}Z^{\frac{1}{5}}\}$ (with the exception of the case
$\bar{\gamma}\ge |\log h|^{-\delta}$ which we leave to the reader).
Here for each $\ell$-element expression (\ref{27-7-44}) does not exceed
$R_b=C\beta_1 h_1^{-1}(1+ \nu_1^{\frac{2}{3}}h_1^{\frac{1}{3}})$ with $\beta_1=\beta \ell^{\frac{3}{2}}$, $h_1=h\ell^{-\frac{1}{2}}$ and $\nu_1$ defined according to Proposition~\ref{prop-27-6-6}\ref{prop-27-6-6-i}.

Then one can prove easily that the total contribution of this zone to (\ref{27-7-45}) does not exceed $CB^{\frac{2}{5}} Z^{-\frac{1}{5}}R_b^2$\,\footref{foot-27-37} which is  the first term in the parenthesis of (\ref{27-7-42})\,\footref{foot-27-38}.

\medskip\noindent
(c) Zone $\{x:\, \bar{\gamma}|\log h|^{\delta}\le \gamma(x)\le C_0\}$.
Here for each $\gamma$-element expression (\ref{27-7-44}) does not exceed $R_c$ where $R_c=C\beta_2 h_2^{-1}(1+ \nu_2^{\frac{2}{3}}h_2^{\frac{1}{3}})$ (as $M=1$) and $R_c=C\beta_2 h_2^{-1}\nu_2^{\frac{1}{2}}$ (as $M \ge 2$)
with $\beta_2=\beta \gamma^{-1}$, $h_2=h\gamma^{-3}$ and $\nu_2$ defined by Proposition~\ref{prop-27-6-6}\ref{prop-27-6-6-i} and redefined by Remark~\ref{rem-27-5-9}.

Then one can prove easily that the total contribution of this zone to
(\ref{27-7-45}) does not exceed $CB^{\frac{2}{5}} Z^{-\frac{1}{5}}R_c^2$\,\footref{foot-27-37} which is the first term in the parenthesis of (\ref{27-7-42}) and (\ref{27-7-43}) for $M=1$ and $M\ge 2$ respectively\,\footref{foot-27-38}.

\medskip\noindent
(d) Zone $\{x:\, \gamma(x) \le \bar{\gamma}|\log h|^{\delta}\}$.
Here for each $\gamma$-element expression (\ref{27-7-44}) does not exceed
$R_d=C\beta_2 h_2^{-1}(1+ \nu_2^{\frac{2}{3}}h_2^{\frac{1}{3}})$ (as $M=1$) and
$R_d=C\beta_2 h_2^{-\frac{3}{2}}$ (as $M \ge 2$)
with $\beta_2=\beta \gamma^{-1}$, $h_2=h\gamma^{-3}$ and $\nu_2$ defined by Proposition~\ref{prop-27-6-6}\ref{prop-27-6-6-ii}.

Then one can prove easily that the total contribution of this zone to (\ref{27-7-45}) does not exceed
$CB^{\frac{2}{5}} Z^{-\frac{1}{5}}R_d^2\bar{\gamma}^{-4}$\,\footref{foot-27-37} which is the third term in the parenthesis of (\ref{27-7-42}) and (\ref{27-7-43}) for $M=1$ and $M\ge 2$ respectively\,\footref{foot-27-38}.
\end{proof}

\begin{subappendices}
\chapter{Appendices}
\label{sect-27-A}

\section{Generalization of Lieb-Loss-Solovej estimate}
\label{sect-27-A-1}

\begin{proposition}\label{prop-27-A-1}
Consider operator $H$ defined by \textup{(\ref{book_new-26-1-1})} with $A= A'+A''$,
$A'= (A'_1(x'), A'_2(x'),0)$, $x'=(x_1,x_2)$, and
$A''=(A''_1(x), A''_2(x),A''_3(x))$ on $\Omega$. Assume that
\begin{equation}
\int B^{\prime\,2}\,dx \ge \int B^{\prime\prime\,2}\,dx
\label{27-A-1}
\end{equation}
with $B=|\nabla \times A|$, $B'=|\nabla \times A'|$, $B''=|\nabla \times A''|$.
Then
\begin{multline}
-\Tr (H_{A,V}^-) \le \\
C\int V_+ ^{\frac{5}{2}}(x)\,dx +
C\Bigl(\int B^2\,dx\Bigr)^{\frac{1}{2}}
\Bigl(\int B^{\prime\prime\,2}\,dx+\int V^2\,dx\Bigr)^{\frac{1}{4}}
\Bigl(\int V^4\,dx\Bigr)^{\frac{1}{4}}+\\
C\Bigl(\int B^2\,dx\Bigr)^{\frac{3}{8}}\Bigl(\int V^2\,dx\Bigr)^{\frac{3}{8}}
\Bigl(\int V^4\,dx\Bigr)^{\frac{1}{4}}.
\label{27-A-2}
\end{multline}
\end{proposition}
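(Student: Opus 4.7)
The plan is to adapt the Lieb--Loss--Solovej magnetic Lieb--Thirring estimate to the combined magnetic field setting, exploiting the two-dimensional structure of $A'$ in order to replace a factor of $\int |B|^2\,dx$ by the smaller quantity $\int |B''|^2\,dx$ in the sharpest term of the bound.

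First, I would invoke the Pauli identity
\[
\bigl((hD - A)\cdot\boldupsigma\bigr)^2 = (hD - A)^2 - h\,\boldupsigma\cdot B,
\]
so that $H_{A,V} = (hD - A)^2 - h\,\boldupsigma\cdot B - V$. Under hypothesis \textup{(\ref{27-A-1})}, the pointwise inequality $|B|^2 \le 2|B'|^2 + 2|B''|^2$ yields $\int B^2\,dx \le 4\int B'^2\,dx$, so that $\int B'^2$ is comparable to $\int B^2$ while $\int B''^2$ may be strictly smaller. A direct application of the classical Lieb--Loss--Solovej inequality already supplies the first and third terms of \textup{(\ref{27-A-2})}, so the only task is to extract the intermediate term, which becomes sharper precisely when $\int B^2$ dominates $\int B''^2 + \int V^2$.

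Second, I would exploit the fact that $A' = (A'_1(x'), A'_2(x'), 0)$ depends only on $x' = (x_1, x_2)$. For each fixed $x_3$, the $(x_1, x_2)$-block of $H_{A,V}$ is essentially a two-dimensional Pauli operator with magnetic potential $A'$, perturbed by the $x_3$-derivative part and by $A''$. A suitable sharp 2D Lieb--Thirring bound for 2D Pauli operators (as in Erd\H{o}s, Sobolev)---exploiting the lowest-Landau-level degeneracy and producing $\bigl(\int |B'|^2\bigr)^{1/2}$ in place of the suboptimal $\bigl(\int |B|^2\bigr)^{3/8}$---applied slice-by-slice and then integrated in $x_3$ via H\"older's inequality with exponents matched so that $\int V^4\,dx$ appears, should yield the middle term of \textup{(\ref{27-A-2})} up to the treatment of $A''$.

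The main obstacle will be handling the $A''$ contribution to the 2D kinetic energy. The cross term $\{hD - A', A''\}$ arising in $(hD - A)^2$ does not respect the 2D structure and must be absorbed into $\int |A''|^2$, which in turn must be controlled by $\int |B''|^2$ via the Biot--Savart representation in the Coulomb gauge combined with an $\sL^2$--$\sL^\infty$ interpolation. Optimizing the split parameter that distributes the kinetic energy between the 2D Pauli square and the $A''$-perturbation should then produce the specific exponents $1/2$, $1/4$, $1/4$ in the intermediate term, while the factor $\int B''^2 + \int V^2$ emerges from the two competing sources of effective potential strength after this optimization.
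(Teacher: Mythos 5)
Your proposal diverges from the paper's proof in a way that leaves a genuine gap, and it also misses the single idea that makes the paper's argument work.

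The paper never slices by $x_3$ and never expands $(hD-A)^2$ to produce cross terms $\{hD-A',A''\}$. Instead it uses the fact that, because $A'$ depends only on $x'=(x_1,x_2)$ and has no third component, the \emph{commutators} $[P_j,P_3]$ for $j=1,2$ involve only derivatives of $A''$, so they are pointwise bounded by $h|B''|$. Writing $P\cdot\boldupsigma = P'\cdot\boldupsigma' + P_3\upsigma_3$ and squaring gives the operator inequality
\begin{equation*}
(P\cdot\boldupsigma)^2 \;=\; (P'\cdot\boldupsigma')^2 + P_3^2 + \tfrac{1}{2}\sum_{j=1,2}[P_j,P_3]\,[\upsigma_j,\upsigma_3] \;\ge\; (P'\cdot\boldupsigma')^2 + P_3^2 - c\,|B''|,
\end{equation*}
with \emph{no} estimate on $A''$ itself ever needed. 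After that the argument is the Lieb--Loss--Solovej running-energy-scale (``moving frame'') scheme: write $-\Tr(H^-_{A,V})=\int_0^\infty\N^-(H_{A,0}+\lambda-V)\,d\lambda$, insert the factor $\phi(\lambda)=\max(1,\lambda\mu^{-1})$, split at $\lambda=\mu$, introduce a parameter $a\ge1$, anisotropically rescale $x_3\mapsto a^{1/2}x_3$, discard the nonnegative $(P'\cdot\boldupsigma')^2$ or replace it by $P_1^2+P_2^2-|B|$, apply the CLR bound, and optimize first in $a$ and then in $\mu$. The exponents $\tfrac12,\tfrac14,\tfrac14$ in the middle term and $\tfrac38,\tfrac38,\tfrac14$ in the last term come out of those two optimizations, not from a two-dimensional Lieb--Thirring applied slice by slice.

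The concrete gap in your route is the step where you propose to absorb the cross term into $\int|A''|^2$ and then control $\int|A''|^2$ by $\int|B''|^2$ via Biot--Savart. In $\bR^3$ the Biot--Savart/Coulomb-gauge relation gives at best $\|A''\|_{\sL^6}\lesssim\|B''\|_{\sL^2}$ (Hardy--Littlewood--Sobolev), not an $\sL^2$ bound; $\|A''\|_{\sL^2}$ is simply not controlled by $\|B''\|_{\sL^2}$, and interpolation does not rescue this without additional information on $A''$ that the hypotheses do not supply. Your cross-term worry is also aimed at the wrong object: in the Pauli square the $A'$-$A''$ cross commutators that remain after subtracting $(P'\cdot\boldupsigma')^2+P_3^2$ involve \emph{only} $A''$ derivatives (i.e.\ components of $B''$), not $A''$ itself. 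Finally, two smaller issues: a fibered ``slice in $x_3$'' decomposition is not available because $P_3^2$ couples slices, and the middle term of \textup{(\ref{27-A-2})} carries $\bigl(\int B^2\bigr)^{1/2}\bigl(\int B''^2+\int V^2\bigr)^{1/4}$, not a $(\int B'^2)^{1/2}$ factor arising from a lowest-Landau-level argument, so the claimed improvement is located in a different place than you suggest.
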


\begin{proof}
Without any loss of the generality we can assume that $V\ge 0$.
We apply ``moving frame technique'' of Lieb-Loss-Solovej \cite{lieb:loss:solovej}.
\begin{multline*}
-\Tr (H_{A,V}^-)=\int_0^\infty \N^- (H_{A,V} +\lambda)\,d\lambda=
\int_0^\infty \N^- (H_{A,0} +\lambda-V)\,d\lambda\\
\le \int_0^\infty \N^- \bigl(H_{A,0} +(\lambda-V)\phi(\lambda)\bigr)\,d\lambda
\end{multline*}
with $\phi(\lambda)=\max (1,\lambda\mu^{-1})$ since $H_{A,0}\ge 0$. Since \begin{gather*}
H_{A,0}=\bigl( P\cdot \boldupsigma\bigr)^2=
\bigl( P'\cdot \boldupsigma'\bigr)^2 + P_3^2 +\sum_{j=1,2} [P_j,P_3] \, [\upsigma_j,\upsigma_3] \ge H'_{A,0} + P_3^2 - B''\\
\shortintertext{with}
H'_{A,0}= \bigl( P'\cdot \boldupsigma'\bigr)^2=
\bigl( \sum_{j=1,2}P_j\cdot \upsigma_j\bigr)^2
\end{gather*}
$P_j=D_j-A_j$ we conclude that $-\Tr (H_{A,V}^-)$ does not exceed
\begin{equation}
\int_0^\infty
\N^- \bigl(H'_{A,0} + P_3^2 -B'' +(\lambda-V)\phi(\lambda)\bigr)\,d\lambda;
\label{27-A-3}
\end{equation}
consider this integral over $(\mu,\infty)$; it is
\begin{equation}
\int_\mu^\infty
\N^- \bigl(H'_{A,0} + P_3^2 -B'' +(\lambda-V)\lambda \mu^{-1}\bigr)\,d\lambda
\label{27-A-4}
\end{equation}
and since $H'_{A,0}\ge 0$ this integral does not exceed
\begin{equation*}
\int_\mu^\infty
\N^- \bigl(H'_{A,0} + a [P_3^2 -B''+(\lambda-V)\lambda \mu^{-1}]\bigr)\,d\lambda
\end{equation*}
with $a\ge 1$; since $H'_{A,0}\ge P_1^2+P_2^2 - |B|$ , the latter integral does not exceed
\begin{equation*}
\int_\mu^\infty
\N^-\bigl(P_1^2+P_2^2-B + a [P_3^2 -B''+(\lambda-V)\lambda \mu^{-1}]\bigr) \,d\lambda
\end{equation*}
which can be estimated due to LCR inequality after rescaling
$x_3\mapsto a^{\frac{1}{2}}x_3$, $P_3\mapsto a^{-\frac{1}{2}}P_3$ by
\begin{multline*}
C\int \int_\mu^\infty a^{-\frac{1}{2}}
\bigl(B +a[B''+(V-\lambda)\lambda \mu^{-1}]\bigr)_+^{\frac{3}{2}}\,d\lambda dx\\ \begin{aligned}
\le &C\int \int_\mu^\infty a^{-\frac{1}{2}}
\bigl(B -\frac{1}{3} a\lambda ^2\mu^{-1}\bigr)_+^{\frac{3}{2}} \,d\lambda dx\\
+&C\int \int_\mu^\infty a
\bigl(B''- \frac{1}{3}\lambda^2 \mu^{-1}\bigr)_+^{\frac{3}{2}}\,d\lambda dx\\
+&C\int \int_\mu^\infty a (\lambda \mu^{-1})^{\frac{3}{2}}
(V-\frac{1}{3}\lambda)_+^{\frac{3}{2}}\,d\lambda dx
\end{aligned}\\
\le Ca^{-1}\mu^{\frac{1}{2}} \int B^2\,dx +
Ca \mu^{\frac{1}{2}}\int B^{\prime\prime2}\,dx +
Ca \mu^{-\frac{3}{2}} \int V^4\,dx
\end{multline*}
where we integrated over $[0,\infty]$. Optimizing with respect to $a\ge 1$ we get
\begin{multline}
C\Bigl( \int B^2\,dx \Bigr)^{\frac{1}{2}}
\Bigl(\mu \int B^{\prime\prime2}\,dx +
\mu^{-1} \int V^4\,dx\Bigr)^{\frac{1}{2}}\\
+C \mu^{\frac{1}{2}}\int B^{\prime\prime2}\,dx+
C\mu^{-\frac{3}{2}}\int V^4\,dx.
\label{27-A-5}
\end{multline}
Therefore integral (\ref{27-A-4}) does not exceed (\ref{27-A-5}).

Consider (\ref{27-A-3}) over $[0,\mu]$; it is
\begin{equation}
\int_0^\mu
\N^- \bigl(H'_{A,0} + P_3^2 -B'' +(\lambda-V)\bigr)\,d\lambda
\label{27-A-6}
\end{equation}
and exactly as before it does not exceed
\begin{multline*}
C\int \int_0^\mu a^{-\frac{1}{2}}
\bigl(B -\frac{1}{3} a\lambda \bigr)_+^{\frac{3}{2}} \,d\lambda dx
+C\int \int_0^\mu a
\bigl(B''- \frac{1}{3}\lambda\bigr)_+^{\frac{3}{2}}\,d\lambda dx\\
+C\int \int_0^\mu a
(V-\frac{1}{3}\lambda)_+^{\frac{3}{2}}\,d\lambda dx
\end{multline*}
with $a\ge 1$; in first two integrals we replace (in parenthesis) $\lambda$ by $\lambda^2\mu^{-1}$ and expand integral to $[0,\infty]$ arriving to
\begin{equation*}
Ca^{-1}\mu^{\frac{1}{2}} \int B^2\,dx +
Ca \mu^{\frac{1}{2}}\int B^{\prime\prime2}\,dx
+C\int a V^{\frac{3}{2}}\min (V,\mu)\ dx;
\end{equation*}
optimizing with respect to $a\ge 1$ we get
\begin{multline}
C\Bigl( \int B^2\,dx \Bigr)^{\frac{1}{2}}
\Bigl( \mu \int B^{\prime\prime2}\,dx
+\mu^{\frac{1}{2}} \int V^{\frac{3}{2}}\min (V,\mu)\ dx\Bigr)^{\frac{1}{2}}\\
+ C \mu^{\frac{1}{2}}\int B^{\prime\prime2}\,dx
+C\int  V^{\frac{3}{2}}\min (V,\mu)\ dx.
\label{27-A-7}
\end{multline}
Therefore integral (\ref{27-A-6}) does not exceed (\ref{27-A-7}) and the whole expression does not exceed
\begin{multline*}
C\Bigl( \int B^2\,dx \Bigr)^{\frac{1}{2}}
\Bigl(\mu \int B^{\prime\prime2}\,dx +
\mu^{-1} \int V^4\,dx+ \mu^{\frac{1}{2}} \int V^{\frac{3}{2}}\min (V,\mu)\ dx \Bigr)^{\frac{1}{2}}\\[3pt]
+C \mu^{\frac{1}{2}}\int B^{\prime\prime2}\,dx+
C\mu^{-\frac{3}{2}}\int V^4\,dx
+C\int  V^{\frac{3}{2}}\min (V,\mu)\, dx;
\end{multline*}
replacing $\min (V,\mu)$ by $V^{\frac{1}{2}}\mu^{\frac{1}{2}}$ and $V$ in the first and second lines respectively we get
\begin{multline*}
C\Bigl( \int B^2\,dx \Bigr)^{\frac{1}{2}}
\Bigl(\mu \int B^{\prime\prime2}\,dx +
\mu^{-1} \int V^4\,dx+ \underbracket{\mu \int V^2 \, dx }\Bigr)^{\frac{1}{2}}\\[3pt]
+C \mu^{\frac{1}{2}}\int B^{\prime\prime2}\,dx+
C\mu^{-\frac{3}{2}}\int V^4\,dx
+C\int  V^{\frac{5}{2}}\, dx.
\end{multline*}
We skip the last term as it is already in (\ref{27-A-2}); temporarily skip monotone increasing by $\mu$ selected term; optimizing the rest by $\mu>0$ we get
\begin{equation*}
C\Bigl(\int B^2\,dx\Bigr)^{\frac{1}{2}}
\Bigl(\int B^{\prime\prime\,2}\,dx\Bigr)^{\frac{1}{4}}
\Bigl(\int V^4\,dx\Bigr)^{\frac{1}{4}}.
\end{equation*}
Now we are left with
\begin{equation*}
C\mu ^{\frac{1}{2}} \Bigl(\int B^2\,dx\Bigr)^{\frac{1}{2}}
\Bigl(\int V^2\,dx\Bigr)^{\frac{1}{2}}+
C\mu ^{-\frac{1}{2}} \Bigl(\int B^2\,dx\Bigr)^{\frac{1}{2}}
\Bigl(\int V^4\,dx\Bigr)^{\frac{1}{2}}+
C\mu^{-\frac{3}{2}}\int V^4\,dx
\end{equation*}
optimizing by $\mu>0$ we get
\begin{equation*}
C \Bigl(\int B^2\,dx\Bigr)^{\frac{1}{2}}
\Bigl(\int V^2\,dx\Bigr)^{\frac{1}{4}}
\Bigl(\int V^4\,dx\Bigr)^{\frac{1}{4}}+
C\Bigl(\int B^2\,dx\Bigr)^{\frac{3}{8}}\Bigl(\int V^2\,dx\Bigr)^{\frac{3}{8}}
\Bigl(\int V^4\,dx\Bigr)^{\frac{1}{4}}
\end{equation*}
which concludes the proof.
\end{proof}

\section{Electrostatic inequality}
\label{sect-27-A-2}

\begin{proposition}\label{prop-27-A-2}
\begin{enumerate}[fullwidth, label=(\roman*)]
\item\label{prop-27-A-2-i}
Let $B\le Z^{3}$, $\alpha \le \kappa^*Z^{-1}$, $c^{-1}Z\le N\le cZ$. Further, if
$B\ge Z^{\frac{4}{3}}$ then
$\alpha B^{\frac{4}{5}}Z^{-\frac{2}{5}} \le \epsilon$. Then
\begin{multline}
\sum_{1\le j< k\le M}
\int |x_j-x_k|^{-1}|\Psi (x_1,\ldots,x_N)|^2\,sdx_1\cdots dx_N\\
\ge
\D (\rho_\Psi,\rho_\Psi) -
C \bigl(Z^{\frac{5}{3}}+B^{\frac{2}{5}}Z^{\frac{17}{15}}+B\bigr);
\label{27-A-8}
\end{multline}
\item\label{prop-27-A-2-ii}
Further, as $B\le Z$ one can replace the last term in \textup{(\ref{27-A-8})} by $\Dirac- CZ^{\frac{5}{3}-\delta}$.
\end{enumerate}
\end{proposition}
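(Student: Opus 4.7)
The plan is to reduce everything to controlling $\int \rho_\Psi^{\frac{4}{3}}\,dx$ for near ground-state wave functions, via the classical Lieb--Oxford inequality
\begin{equation*}
\sum_{1\le j<k\le M}\int |x_j-x_k|^{-1}|\Psi|^2\,dx_1\cdots dx_N
\ge \D(\rho_\Psi,\rho_\Psi) - C\int \rho_\Psi^{\frac{4}{3}}\,dx,
\end{equation*}
which holds for any antisymmetric $\Psi$. Thus Statement~\ref{prop-27-A-2-i} reduces to the bound
\begin{equation*}
\int \rho_\Psi^{\frac{4}{3}}\,dx \le C\bigl(Z^{\frac{5}{3}}+B^{\frac{2}{5}}Z^{\frac{17}{15}}+B\bigr)
\end{equation*}
for near-minimizers of \textup{(\ref{27-7-1})}.

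First, I would estimate $\int (\rho^\TF_B)^{\frac{4}{3}}\,dx$ directly from the Thomas--Fermi scaling used in Chapter~\ref{book_new-sect-25} and Remark~\ref{rem-27-5-1}: integration over the main contributing zone yields $\asymp Z^{\frac{5}{3}}$ if $B\le Z^{\frac{4}{3}}$ and $\asymp B^{\frac{2}{5}}Z^{\frac{17}{15}}$ if $Z^{\frac{4}{3}}\le B\le Z^3$. Second, I would write $\rho_\Psi^{\frac{4}{3}} \le 2(\rho^\TF_B)^{\frac{4}{3}} + C|\rho_\Psi-\rho^\TF_B|^{\frac{4}{3}}$ and control the second piece via H\"older combined with the Hardy--Littlewood--Sobolev inequality in the form
\begin{equation*}
\int |\rho_\Psi-\rho^\TF_B|^{\frac{4}{3}}\,dx \le C\,\D(\rho_\Psi-\rho^\TF_B,\rho_\Psi-\rho^\TF_B)^{\frac{2}{3}}\cdot\bigl\|\rho_\Psi-\rho^\TF_B\bigr\|_{\sL^1}^{\frac{2}{3}}\cdot K,
\end{equation*}
where $K$ is controlled by a local $\sL^\infty$-bound on $\rho^\TF_B$ near and far from singularities. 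The near-singularity $B$-term in \textup{(\ref{27-A-8})} arises from the pointwise bound $\rho^\TF_B\le CB^{\frac{3}{2}}$ in the innermost zone $\ell(x)\le r^*$, where $\int (\rho^\TF_B)^{\frac{4}{3}}\asymp B$.

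The crucial input is that the near-minimizer satisfies $\D(\rho_\Psi-\rho^\TF_B,\rho_\Psi-\rho^\TF_B)$ is bounded by the remainder estimates from Theorems~\ref{thm-27-7-8},~\ref{thm-27-7-9} and Corollary~\ref{cor-27-7-12}; however, this leads to a circular dependence since those estimates rely on \textup{(\ref{27-A-8})}. I would break the circularity by running a bootstrap: first apply Lieb--Oxford with the crude bound $\int \rho_\Psi^{\frac{4}{3}}\le C\int \rho_\Psi \cdot \|\rho_\Psi\|_\infty^{\frac{1}{3}}$, use the rough variational estimates of Chapter~\ref{book_new-sect-25} and the Lieb--Loss--Solovej generalization of Proposition~\ref{prop-27-A-1} to get a first crude $\D$-bound on $\rho_\Psi-\rho^\TF_B$, then iterate once to reach the sharp bound \textup{(\ref{27-A-8})}. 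The main obstacle will be to make this bootstrap quantitative in the regime $Z^{\frac{4}{3}}\le B\le Z^3$ where the near-singularity contribution must be handled by the rescaling $x\mapsto x/r^*$ with $r^*=B^{-\frac{2}{5}}Z^{\frac{1}{5}}$ and Proposition~\ref{prop-27-A-1} applied to the rescaled operator.

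For Statement~\ref{prop-27-A-2-ii}, the regime $B\le Z$ allows the refined Bach--Graf--Solovej exchange inequality, which upgrades Lieb--Oxford to
\begin{equation*}
\sum_{j<k}\int |x_j-x_k|^{-1}|\Psi|^2 \ge \D(\rho_\Psi,\rho_\Psi) + \Dirac_\Psi - C\int \rho_\Psi^{\frac{4}{3}}\,dx \cdot \eta
\end{equation*}
with an improvement factor $\eta=Z^{-\delta}$ on the error. I would apply the same bootstrap, now using the sharper $\D$-estimate \textup{(\ref{27-7-32})} for $B\le Z$, together with the observation that the Dirac term evaluated on $\rho_\Psi$ differs from $\Dirac$ (evaluated on $\rho^\TF_B$) by a quantity of order $Z^{\frac{5}{3}-\delta}$, which is absorbed into the error. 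The hard part here will be showing that the Bach--Graf--Solovej error extends to the combined magnetic field setting; this reduces to controlling exchange-type quadratic expressions in the eigenfunctions of $H_{A,V}$, for which the microlocal estimates of Sections~\ref{sect-27-3}--\ref{sect-27-4} provide the needed smoothness of the one-particle density matrix away from singularities and away from the boundary zone.
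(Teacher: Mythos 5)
Your proposal takes a genuinely different route (Lieb--Oxford $+$ bootstrap) from the paper's proof, but it contains a gap that you yourself flag without resolving. The paper does \emph{not} reduce to bounding $\int \rho_\Psi^{4/3}\,dx$. Instead it follows Appendix~\ref{25-A-1}: one starts from the Lieb exchange-hole electrostatic inequality, replaces $V$ by an auxiliary $W$ with $CP'(W)=\rho_\Psi$ (so $W\asymp \min(B^{-2}\rho_\Psi^2,\,\rho_\Psi^{2/3})$), bounds the exchange term by $\Tr(H^-_{A,W})$, and then replaces $A'$ by the minimizer of $\Tr(H^-_{A,W})+\alpha^{-1}\|\partial A'\|^2$ (which only decreases the expression). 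This reduces the whole statement to a \emph{magnetic Lieb--Thirring inequality}
\begin{equation*}
\Tr(H^-_{A,W}) \ge -C\int P_B(W)\,dx - CR,
\end{equation*}
and there is no circularity: the only input about $\rho_\Psi$ is through $W$, which enters variationally. The quantity $\D(\rho_\Psi-\rho^\TF_B,\cdot)$ never needs to be known in advance.

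Your bootstrap plan is where the proposal breaks. After Lieb--Oxford you face $\int\rho_\Psi^{4/3}\,dx$, and your proposed splitting $\rho_\Psi^{4/3}\le 2(\rho^\TF_B)^{4/3}+C|\rho_\Psi-\rho^\TF_B|^{4/3}$ plus H\"older/HLS requires control of $\D(\rho_\Psi-\rho^\TF_B,\rho_\Psi-\rho^\TF_B)$ and $\|\rho_\Psi-\rho^\TF_B\|_{\sL^1}$. You acknowledge this creates a loop with Theorems~\ref{thm-27-7-8}--\ref{thm-27-7-9} and Corollary~\ref{cor-27-7-10}. You then say ``run a bootstrap'' but do not show the iteration closes, that the first crude pass yields exponents that feed back correctly, or that the iteration improves rather than merely reproduces itself. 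In fact, without the magnetic Lieb--Thirring input there is no mechanism forcing the crude $\D$-bound to be small enough for the H\"older/HLS step to beat the target $C(Z^{5/3}+B^{2/5}Z^{17/15}+B)$.

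The genuinely hard step that the paper isolates --- and that your proposal misses as the main lemma --- is that Erd\H{o}s's magnetic Lieb--Thirring theorem assumes the field direction is constant. In the combined-field setting this fails. The paper invokes the alternative version of Erd\H{o}s's Theorem~2.2 (valid if $|\partial A'|\le B$ and $|\partial^2 A|\le cB^{3/2}$), uses the minimizer estimates of Section~\ref{sect-27-5} to verify these bounds for $\ell(x)\ge r_*$ (with $r_*=B^{-3/2}Z^{1/3}$ when $B\le Z^2$, or $r_*=Z^{-1}$ when $Z^2\le B\le Z^3$), and then in the inner zone $\{\ell(x)\le 2r_*\}$ drops the external field and controls the error by $\int B^2\,dx$ over that zone, which is $O(R)$. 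This partition-in-space argument, not a Lieb--Oxford bootstrap, is what produces the error term in \textup{(\ref{27-A-8})}. Statement~\ref{prop-27-A-2-ii} is then handled exactly as in Appendix~\ref{25-A-1}; your intuition that the microlocal estimates of Sections~\ref{sect-27-3}--\ref{sect-27-4} supply the needed regularity for the exchange-correlation correction is in the right spirit but, in the actual proof, this is channeled through the magnetic Lieb--Thirring machinery rather than a direct Bach--Graf--Solovej argument on the one-particle density matrix.
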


\begin{remark}\label{rem-27-A-3}
Without self-generated magnetic field the last term was
$-C(Z^{\frac{5}{3}}+B^{\frac{2}{5}}Z^{\frac{17}{15}})$ and probably it holds here but does not give us any advantage; for $B\ge Z$ we need only
$C \bigl(Z^{\frac{5}{3}}+B^{\frac{4}{5}}Z^{\frac{3}{5}}\bigr)$ estimate.
\end{remark}

\begin{proof}
As we prove estimate from below we replace first
\begin{gather*}
\langle \sum_{1\le j\le N} (H_{A,V})_{x_j}\Psi,\,\Psi\rangle
\shortintertext{by}
\langle \sum_{1\le j\le N} (H_{A,W})_{x_j}\Psi,\,\Psi\rangle+
\int (W-V)\rho_\Psi\,dx
\end{gather*}
without changing anything else and then we estimate the first term here from below by $\Tr (H^-_{A,W})$ ; then in
$\Tr (H^-_{A,W})+\alpha^{-1}\|\partial A'\|^2$ we replace $A'$ by a minimizer for this expression (rather than for the original problem) only decreasing this expression. So we can now consider $A'$ a minimizer of Section~\ref{sect-27-5}.

Then we follow arguments of Appendix~\ref{25-A-1} but now we need to justify Magnetic Lieb-Thirring estimate (\ref{book_new-5-A-12})
\begin{equation}
\Tr (H_{A,W}^-) \ge -C \int P_B (W)\,dx
\tag{\ref*{book_new-25-A-12}}\label{25-A-12x}
\end{equation}
in the current settings and with $W: CP'(W)=\rho_\Psi$ and then
$W\asymp \min (B^{-2}\rho_\Psi^2; \rho_\Psi^{\frac{2}{3}})$.

Estimate (\ref{25-A-12x}) has been proven in L.~Erd\"os~\cite{erdos:magnetic} (Theorem 2.2) under assumption that intensity of the magnetic field $\vec{B}(x)$ has a constant direction which was the case in Chapter~\ref{book_new-sect-25} but not here.

However we actually we do not need (\ref{25-A-12x}); we need this estimate but with an extra term $-C R$ in the right-hand expression where in \ref{prop-27-A-2-i} $R= (Z^{\frac{5}{3}} + B^{\frac{4}{5}}Z^{\frac{3}{5}})$ is the last term in (\ref{27-A-8}) and in \ref{prop-27-A-2-ii} $R=CZ^{\frac{5}{3}-\delta}$.

Further, the same paper L.~Erd\"os~\cite{erdos:magnetic}) provides an alternative version of Theorem 2.2: as long as $|\partial A'|\le B$ it is sufficient to estimate $|\partial^2 A|\le c B^{\frac{3}{2}}$.

One can check easily that this happens as \underline{either} $B\le Z^2$ and
$\ell(x)\ge r_*\Def B^{-\frac{3}{2}}Z^{\frac{1}{3}}$ \underline{or} $Z^2 \le B\le Z^3$ and $\ell(x)\ge r_*= Z^{-1}$. Introducing partition into two zones
$\{x:\, \ell (x)\ge r_*\}$ and $\{x:\,\ell(x)\le 2r_*\}$ adds
$\ell^{-2}\phi (x)$ with
$\phi (x)=\boldsymbol{1}_{\{x:\,r_*\le \ell(x)\le 2r_*\}}$ which adds $-CR$ to the right-hand expression of (\ref{25-A-12x}).

Therefore we need to deal with zone $\{x:\, \ell(x)\le 2r_*\}$. In this zone however we can neglect an external field; indeed, as in Remark~\ref{rem-26-4-1} we get the same estimate (\ref{book_new-26-4-25}) but with $\mathsf{B}$ intensity of the combined field; however $\int B^2 \,dx$ over this zone does not exceed $C R$. This concludes proof of Statement~\ref{prop-27-A-2-i}.

Statement~\ref{prop-27-A-2-ii} is proven in the same manner as in Appendix~\ref{25-A-1}. We leave details to the reader.
\end{proof}

\section{Estimates for \texorpdfstring{$(hD_{x_j}-\mu x_j)e(x,y,\tau)|_{x=y}$}{\textGamma\textxinferior((hD\textjinferior -\textmu x\textjinferior)e(.,.,\texttau))} for pilot model operator}
\label{sect-27-A-3}

We will use here notations of Subsection~\ref{book_new-sect-26-5-1}.

\subsection{Calculations}
\label{sect-27-A-3-1}

Let us calculate the required expressions as $X=\bR^3$ and $A(x)$ and $V(x)$ are linear. To do this we can consider just Schr\"odinger operator (acting on vector-functions) and then replace $V$ by $V\pm \mu h $ where $\mu$ is the magnetic intensity; since $\upsigma_j\upsigma_k+\upsigma_k\upsigma_j=2\updelta_{jk}$ we have to consider scalar a Schr\"odinger operator. Let us apply calculations of Subsection~\ref{sect-16-5-1} with operator
\begin{equation}
H=h^2D_1^2 + (hD_2-\mu x_1)^2 + hD_3^2 - 2\alpha x_1 -2\beta x_3
\label{27-A-9}
\end{equation}
where without any loss of the generality we assume that $\alpha\ge 0$, $\beta\ge 0$.

After rescaling $x\mapsto \mu x$,
$y\mapsto \mu y$, $t\mapsto \mu t$, $\mu \mapsto 1$ (but we will need to use old $\mu$ in calculations), $h\mapsto \hbar=\mu h$ we have $U(x,y,t)= U_{(1)}(x_3,y_3,t) U_{(2)}(x',y',t)$ where from (\ref{book_new-16-5-4})
\begin{multline}
U_{(1)}(x_3,y_3,t)=\\
\frac{1}{2} \mu (2\pi \hbar |t|)^{-\frac{1}{2}}\exp \Bigl(i\hbar^{-1}\bigl(\mu^{-1}\beta t (x_3+y_3)+\frac{1}{8}t^{-1}(x_3-y_3)^2 +\frac{1}{3}\mu^{-2} \beta^2t^3\bigr)\Bigr);
\label{27-A-10}
\end{multline}
and repeating (\ref{book_new-16-1-9})--(\ref{27-A-11}) we get
\begin{equation}
U_{(2)}(x,y,t)=i(4\pi\hbar)^{-1} \mu^2
\csc(t) \,e^{i\hbar^{-1}\bar{\phi}_{(2)}(x',y',t)}
\label{27-A-11}
\end{equation}
with
\begin{align}
\bar{\phi}_{(2)}\Def\label{27-A-12}
&-\frac{1}{4}\cot(t) (x_1-y_1)^2\\
&+\frac{1}{2} (x_1+y_1+2\alpha\mu^{-1})(x_2-y_2+2t\alpha\mu^{-1})\notag\\
&-\frac{1}{4}\cot(t)(x_2-y_2+2t\alpha\mu^{-1})^2 - t\alpha^2\mu^{-2}.\notag
\end{align}
Then
\begin{equation}
U(x,y,t)=i(2\pi h)^{-\frac{3}{2}} |t|^{-\frac{1}{2}} \mu^{\frac{3}{2}}
\csc(t) \,e^{i\hbar^{-1}\bar{\phi}(x,y,t)}
\label{27-A-13}
\end{equation}
with
\begin{multline}
\bar{\phi}\Def
-\frac{1}{4}\cot(t) (x_1-y_1)^2\\
+\frac{1}{2} (x_1+y_1+2\alpha\mu^{-1})(x_2-y_2+2t\alpha\mu^{-1})
-\frac{1}{4}\cot(t)(x_2-y_2+2t\alpha\mu^{-1})^2 - t\alpha^2\mu^{-2}+ \\
\mu^{-1}\beta t (x_3+y_3)+\frac{1}{8}t^{-1}(x_3-y_3)^2 +
\frac{1}{3}\mu^{-2} \beta^2t^3\bigr);
\label{27-A-14}
\end{multline}

Therefore applying first $\hbar D_{x_1}$, $\hbar D_{x_2}-\mu x_1$, or $\hbar D_{x_3}$ and setting after this $x=y=0$ we conclude that
\begin{phantomequation}\label{27-A-15}\end{phantomequation}
\begin{align}
&\bigl(\hbar D_{x_1}U)|_{x=y=0}=
i\alpha \mu^{-1}t \!\times\! (2\pi h)^{-\frac{3}{2}} |t|^{-\frac{1}{2}} \mu^{\frac{3}{2}}e^{i\hbar^{-1}\varphi(t)},
\tag*{$\textup{(\ref*{27-A-15})}_1$}\\
&\bigl(\hbar D_{x_2}U)|_{x=y=0}=
i\alpha \mu^{-1}(1-t\cot(t)) \!\times\! (2\pi h)^{-\frac{3}{2}} |t|^{-\frac{1}{2}} \mu^{\frac{3}{2}}e^{i\hbar^{-1}\varphi(t)},
\tag*{$\textup{(\ref*{27-A-15})}_2$}\\
&\bigl(\hbar D_{x_3}U)|_{x=y=0}=
i\beta \mu^{-1}t \!\times\! (2\pi h)^{-\frac{3}{2}} |t|^{-\frac{1}{2}} \mu^{\frac{3}{2}}e^{i\hbar^{-1}\varphi(t)}
\tag*{$\textup{(\ref*{27-A-15})}_3$}
\end{align}
with
\begin{equation}
\varphi(t)=\alpha^2\mu^{-2}t -\alpha^2\mu^{-2}t^2\cot(t)+
\frac{1}{3}\mu^{-2} \beta^2t^3.
\label{27-A-16}
\end{equation}
In other words, in comparison with $U|_{x=y=0}$ calculated in Subsection~\ref{sect-16-5-1} expressions
$\bigl(\hbar D_{x_1}U)|_{x=y=0}$, $\bigl(\hbar D_{x_2}U)|_{x=y=0}$ and
$\bigl(\hbar D_{x_3}U)|_{x=y=0}$ acquire factors $\mu^{-1}\alpha t$,
$\mu^{-1}\alpha(1-t\cot(t) )$ and $\mu^{-1}\beta t $ respectively.

Recall that we had 2 cases: $\mu ^2h \le \alpha$ and $\mu ^2h \ge \alpha$.

\subsection{Case \texorpdfstring{$\alpha \ge \mu^2 h$}{\textalpha\textge\textmu\texttwosuperior}}
\label{sect-27-A-3-2}

Then for each $k$, $1\le |k|\le C_0\mu \alpha^{-1}$, the $k$-th tick contributed no more than
\begin{equation}
C\mu h^{-1}\underbracket{
(\mu^2 h/ \alpha|k|)^{\frac{1}{2}}}\times (\mu/h|k| )^{\frac{1}{2}}
\label{27-A-17}
\end{equation}
to $F_{t\to \hbar^{-1}\tau}U|_{x=y=0}$ (see Subsection~\ref{sect-16-5-2}) and then it contributed no more than this multiplied by $|t_k|^{-1}$ i.e.
\begin{equation}
C\mu h^{-1} |k|^{-1}
\underbracket{(\mu^2 h/ \alpha |k|)^{\frac{1}{2}}}\times
(\mu / h|k| )^{\frac{1}{2}}
\label{27-A-18}
\end{equation}
to the corresponding Tauberian expression. Even as we multiply by $\mu^{-1}|k|$ we get (\ref{27-A-17}) again proportional to $|k|^{-1}$; then summation with respect to $k$,
$1\le |k|\le k^*\Def C_0\mu (\alpha+\beta)^{-1}$\,\footnote{\label{foot-27-39} As $|t|\ge k^*$ we have $\phi'(t)\ge c_1$ and then integrating by parts there we can recover factor $(t/k^*)^{-n}$ thus effectively confining us to integration over $\{t:\,|t|\le k^*\}$. This observation can also improve some results of Sections \ref{sect-16-5}--\ref{sect-16-9}.}
returns its value as $k=1$ i.e. $\mu^{\frac{3}{2}}h^{-1}\alpha^{-\frac{1}{2}}$ multiplied by logarithm
$(1+|\log k^*|)$ and therefore we arrive to proposition \ref{prop-27-A-4} below for $j=1,3$.

Let $j=2$. Since $t_k/\cot(t_k)\asymp \alpha^{-1}\mu $ we conclude that contribution of $k$-th tick does not exceed
\begin{equation}
C\mu h^{-1} |k|^{-1}
\underbracket{(\mu^2 h/ \alpha |k|)^{\frac{1}{2}}}\times
(\mu / h|k| )^{\frac{1}{2}}
\label{27-A-19}
\end{equation}
and summation by $|k|\ge 1$ returns its value as $|k|=1$ i.e. $C\mu^{\frac{5}{2}}h^{-1}\alpha^{-\frac{1}{2}}$ and therefore we arrive to proposition \ref{prop-27-A-4} below for $j=2$.

\begin{proposition}\label{prop-27-A-4}
Let $\mu h\le \epsilon_0$, $\tau\asymp 1$, $\alpha \ge \mu^2h$.

Then
\begin{enumerate}[fullwidth, label=(\roman*)]
\item
Expression $(h D_{x_2}-\mu x_1) e(x,y,\tau)|_{x=y=0}$ does not exceed
$C\mu^{\frac{5}{2}}h^{-1}\alpha^{-\frac{1}{2}}$;
\item
Expression $h D_{x_j} e(x,y,\tau)|_{x=y=0}$ does not exceed\\
$C \mu^{\frac{3}{2}}h^{-1}\alpha^{\frac{1}{2}}
(1+|\log \mu (\alpha+\beta)^{-1}|)$, and
$C \mu^{\frac{3}{2}}h^{-1}\beta \alpha^{-\frac{1}{2}}
(1+|\log \mu (\alpha+\beta)^{-1}|)$ for $j=1,3$ respectively.
\end{enumerate}
\end{proposition}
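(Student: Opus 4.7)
The plan is to combine the explicit propagator calculation of Subsubsection~\ref{sect-27-A-3-1} with the per-tick Tauberian argument already applied to the bare propagator in Subsubsection~\ref{sect-27-A-3-2}. Note first that at $x=0$ the magnetic momentum $(hD_{x_2}-\mu x_1)$ reduces to $hD_{x_2}$, so part~(i) is governed by identity $\textup{(\ref{27-A-15})}_2$, while part~(ii) is governed by $\textup{(\ref{27-A-15})}_1$ and $\textup{(\ref{27-A-15})}_3$. These three identities show that replacing $U|_{x=y=0}$ by $(hD_{x_j}U)|_{x=y=0}$ simply multiplies the integrand by an elementary factor: $\mu^{-1}\alpha t$ for $j=1$, $\mu^{-1}\alpha(1-t\cot t)$ for the $(hD_{x_2}-\mu x_1)$ case, and $\mu^{-1}\beta t$ for $j=3$.

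The first step is a standard Tauberian one: represent $(hD_{x_j})e(\cdot,\cdot,\tau)|_{x=y=0}$ through $F_{t\to \hbar^{-1}\tau}\bar{\chi}_T(t)(hD_{x_j}U)|_{x=y=0}$ with $T=\epsilon_0$, split the $t$-integral into contributions from the ticks $t_k\asymp\pi k$ of the $\csc(t)$ singularity, and kill the tail $|t|\ge k^*\Def C_0\mu(\alpha+\beta)^{-1}$ by repeated integration by parts in the phase $\varphi(t)-\tau t$: on that tail $|\varphi'(t)|\ge c_1>0$, as noted in footnote~\ref*{foot-27-39}. The per-tick bound $\textup{(\ref{27-A-17})}$ of Subsubsection~\ref{sect-27-A-3-2} then converts, via the usual Tauberian denominator $|t_k|^{-1}\asymp|k|^{-1}$, into the per-tick contribution $\textup{(\ref{27-A-18})}$ to the spectral function.

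Summation over $1\le|k|\le k^*$ concludes the proof. For $j=1,3$ the multiplier at $t_k$ is of size $\mu^{-1}\alpha|k|$, respectively $\mu^{-1}\beta|k|$; the factor $|k|$ cancels the $|k|^{-1}$ in $\textup{(\ref{27-A-18})}$, producing a harmonic sum that yields the logarithm $(1+|\log\mu(\alpha+\beta)^{-1}|)$ and, after inserting the $\alpha,\beta$ dependence, the two bounds claimed in (ii). For part~(i) the multiplier $\mu^{-1}\alpha(1-t\cot t)$ has size $\asymp|k|^{-1}$ near $t_k$, as recorded in $\textup{(\ref{27-A-19})}$; the resulting sum is $\sum|k|^{-2}$, dominated by $k=1$, which removes the logarithm and gives exactly $C\mu^{5/2}h^{-1}\alpha^{-1/2}$. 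The main technical obstacle is the per-tick bookkeeping: the term $-\alpha^2\mu^{-2}t^2\cot t$ in $\varphi$ (see (\ref{27-A-16})) shifts the stationary point away from $\pi k$, and the multiplier $1-t\cot t$ in part~(i) is itself singular there. One has to check that these two singular factors balance correctly against the Gaussian width $(\mu^2h/\alpha|k|)^{1/2}$ of the effective stationary window, which is exactly what produces the saving of one logarithm for part~(i) relative to part~(ii). The remaining step is routine: undoing the rescaling $\hbar=\mu h$ introduced at the beginning of Subsubsection~\ref{sect-27-A-3-1} restores the original constants.
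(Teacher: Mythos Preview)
Your approach is exactly the paper's: apply the multipliers recorded in $\textup{(\ref*{27-A-15})}_{1,2,3}$ to the per-tick Tauberian contribution~(\ref{27-A-18}), sum over $1\le|k|\le k^*$ using footnote~\ref*{foot-27-39}, and read off the bounds. Parts~(ii) are handled correctly.

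There is, however, a misstatement in your treatment of part~(i). You claim that near $t_k$ the multiplier $\mu^{-1}\alpha(1-t\cot t)$ has size $\asymp|k|^{-1}$. It does not. At the stationary point of $\varphi-\tau t$ in the $k$-th tick one has $|\sin t|\asymp \alpha\mu^{-1}|k|$ (this is how the underbraced factor $(\mu^2h/\alpha|k|)^{1/2}$ in~(\ref{27-A-17}) arises), hence $|\cot t|\asymp \mu\alpha^{-1}|k|^{-1}$ and therefore $|t\cot t|\asymp \mu\alpha^{-1}$ --- which is precisely the relation the paper writes as ``$t_k/\cot(t_k)\asymp\alpha^{-1}\mu$''. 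Thus $\mu^{-1}\alpha(1-t\cot t)\asymp 1$, \emph{independent of $k$}. This is why the per-tick contribution~(\ref{27-A-19}) coincides verbatim with~(\ref{27-A-18}); you misread~(\ref{27-A-19}) as a pre-Tauberian bound and backed out the wrong multiplier size from it. Your final sum $\sum|k|^{-2}$ and the resulting bound $C\mu^{5/2}h^{-1}\alpha^{-1/2}$ are nonetheless correct, because (\ref{27-A-18}) already decays like $|k|^{-2}$ and the multiplier contributes no growth. So the conclusion stands, but the mechanism you describe for the logarithm saving is off: the point is not that the $j=2$ multiplier is \emph{smaller} in $|k|$ than the $j=1,3$ multipliers, but that it does not \emph{grow}, whereas those do.
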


\subsection{Case \texorpdfstring{$\alpha \le \mu^2 h$}{\textalpha\textle\textmu\texttwosuperior}}
\label{sect-27-A-3-3}

If $\mu ^2h \ge \alpha$ then the same arguments work only for
$\bar{k}\Def \mu^2h \alpha^{-1}\le |k|\le k^*$ resulting in contributions
$C \mu^{\frac{3}{2}}h^{-1}\alpha^{\frac{1}{2}}(1+|\log k^*\bar{k}^{-1}|)$,
$C\mu^{\frac{1}{2}}h^{-2}\alpha^{\frac{1}{2}}$, and
$C\mu^{\frac{3}{2}}h^{-1}\beta\alpha^{-\frac{1}{2}}(1+|\log k^*\bar{k}^{-1}|)$ for $j=1,2,3$ respectively as $\bar{k}\le k^*$ (i.e. $\mu h\beta \le \alpha$) or $0$ otherwise.

Let $1\le |k|\le \bar{k}$. We mainly consider the most difficult case $j=2$ and (as $|t|\ge \epsilon_0$) only term arising from $-\alpha\mu^{-1}t\cot(t)$ factor, namely
\begin{equation}
\alpha \mu^{-1} \times \mu^{\frac{3}{2}}h^{-\frac{3}{2}} \int |t|^{-\frac{1}{2}}\cos(t)(\sin(t))^{-2} e^{i\hbar ^{-1}(\varphi(t)-t\tau)}\,dt
\label{27-A-20}
\end{equation}
where we took into account that we need to divide by $t$ and skipped a constant factor.

Consider first (\ref{27-A-20}) with integration over interval
$\{t:\, |t-t_k|\le s_k\}$ near $t_k$. Observe that
\begin{equation}
\phi'(t)= (\sin(t))^{-2} \alpha^2 \mu^{-2}t^2 -
2t(\sin(t))^{-1}\alpha^2 \mu^{-2}t^2 +\beta^2 t^2
\label{27-A-21}
\end{equation}
and transform (\ref{27-A-20}) into
\begin{multline}
\alpha ^{-1}\mu^{\frac{7}{2}}h^{-\frac{1}{2}}
\int_{t_k-s_k}^{t_k+s_k}  |t|^{-\frac{5}{2}}\cos(t)
\partial_t \bigl[e^{i\hbar ^{-1}(\varphi(t)-t\tau)}\bigr]\,dt\\
+\alpha ^{-1} \mu^{\frac{5}{2}}h^{-\frac{3}{2}} \int_{t_k-s_k}^{t_k+s_k} |t|^{-\frac{5}{2}}\cos(t)
\bigl[2t(\sin(t))^{-1}\alpha^2 \mu^{-2} -\beta^2 t^2+\tau\bigr]e^{i\hbar ^{-1}(\varphi(t)-t\tau)}
\label{27-A-22}
\end{multline}
Integrating the first term by parts we get a non-integral term
\begin{equation}
\alpha ^{-1}\mu^{\frac{7}{2}}h^{-\frac{1}{2}}|t|^{-\frac{5}{2}}\cos(t)
e^{i\hbar ^{-1}(\varphi(t)-t\tau)}\bigl|_{t=t_k-s_k}^{t=t_k+s_k}
\label{27-A-23}
\end{equation}
and we get an integral term
\begin{multline}
\alpha ^{-1} \mu^{\frac{5}{2}}h^{-\frac{3}{2}} \int_{t_k-s_k}^{t_k+s_k} \Bigl[\mu h \partial_t \bigl[|t|^{-\frac{5}{2}}\cos(t)\bigr]
\\[3pt]
\shoveright{+ |t|^{-\frac{5}{2}}\cos(t)
\bigl[2t(\sin(t))^{-1}\alpha^2 \mu^{-2} -\beta^2 t^2+\tau\bigr]\Bigr]
e^{i\hbar ^{-1}(\varphi(t)-t\tau)}\,dt}\\[3pt]
= 2\alpha \mu^{\frac{1}{2}}h^{-\frac{3}{2}} \int_{t_k-s_k}^{t_k+s_k}
|t|^{-\frac{3}{2}}\cot(t)
e^{i\hbar ^{-1}(\varphi(t)-t\tau)}\,dt +
O\bigl(\alpha ^{-1}\mu^{\frac{5}{2}}h^{-\frac{3}{2}}s_k|k|^{-\frac{5}{2}}\bigr).
\label{27-A-24}
\end{multline}
Repeating the same trick we can eliminate the first term in the right-most expression. Therefore we arrive to (\ref{27-A-23}) with
$O\bigl(\alpha^{-1} \mu^{\frac{5}{2}}h^{-\frac{3}{2}}s_k|k|^{-\frac{5}{2}}\bigr)$ error.As $s_k\asymp \alpha^2\mu^{-3}h^{-1} k^2$ we get
\begin{equation}
C\mu^{\frac{3}{2}}h^{-\frac{3}{2}} \times (\alpha/\mu^2 h) |k|^{-\frac{1}{2}}
\label{27-A-25}
\end{equation}
error.

On the other hand, consider integral over
$[t_k+s_k, t_{k+1}-s_{k+1}]$, $k\ne 0$. Decomposing
$e^{i\mu^{-1}h^{-1}(\phi(t)-t\tau)}$ into Taylor series with respect to
$\alpha^2 h^{-1} \mu^{-3}\cot(t)$ one can prove easily that expression in question is
\begin{equation*}
\alpha ^{-1}\mu^{\frac{7}{2}}h^{-\frac{1}{2}}|t|^{-\frac{5}{2}}\cos(t)
\bigl(e^{i\hbar ^{-1}(\varphi(t)-t\tau)}-
e^{i\hbar ^{-1}(\varphi_{(1)}(t)-t\tau)}\bigr)
\bigl|_{t=t_k+s_k}^{t=t_{k+1}-s_{k+1}}
\end{equation*}
with $\varphi_{(1)}(t)=\frac{1}{3}\mu^{-2} \beta^2t^3$ and with error not exceeding (\ref{27-A-25}) multiplied by $(1+|\log s_k|)$:
\begin{equation}
C\mu^{\frac{3}{2}}h^{-\frac{3}{2}} \times (\alpha/\mu^2 h) |k|^{-\frac{1}{2}}\times (1+|\log (\alpha^2 \mu^{-3}h^{-1}k^2)|).
\label{27-A-26}
\end{equation}
So non-integral terms with $\varphi$ cancel one another because by the similar arguments we can also cover $[0, t_1-s_1]$ and $[ t_{-1}+s_{-1}]$ and due to non-singularity of $t^{-1}(1-t\cot(t))\csc (t)$ at $t=0$ there will be no non-integral terms with $k=0$. So we are left with
\begin{equation*}
-\alpha ^{-1}\mu^{\frac{7}{2}}h^{-\frac{1}{2}}|t|^{-\frac{5}{2}}\cos(t)
e^{i\hbar ^{-1}(\varphi_{(1)}(t)-t\tau)}\bigr)
\bigl|_{t=t_k-s_k}^{t=t_{k}+s_{k}}
\end{equation*}
and their absolute values do not exceed (\ref{27-A-25}).

Finally, summation of (\ref{27-A-26}) by $k: 1\le |k|\le \min(\bar{k},k^*)$ returns
\begin{multline}
C\mu^{\frac{3}{2}}h^{-\frac{3}{2}} (\alpha/\mu^2 h)^{\frac{1}{2}} \\
\times
\left\{\begin{aligned}
&(1+|\log (\mu h)|)\qquad
&&\beta \mu h\le \alpha,\\
&(\alpha/\beta \mu h)^{\frac{1}{2}}
(1+|\log (\mu h)|+|\log (\alpha/\beta \mu h)|)
\qquad
&&\beta \mu h\ge \alpha.
\end{aligned}\right.
\label{27-A-27}
\end{multline}
and we arrive to Proposition~\ref{prop-27-A-5} below as $j=2$:

\begin{proposition}\label{prop-27-A-5}
Let $\mu h\le \epsilon_0$, $\tau\asymp 1$, and $\alpha\le \mu^2h$.

Then
\begin{enumerate}[fullwidth, label=(\roman*)]
\item
Expression $(h D_{x_2}-\mu x_1) e(x,y,\tau)|_{x=y=0}$ does not exceed
\textup{(\ref{27-A-27})};
\item
Expression $h D_{x_3} e(x,y,\tau)|_{x=y=0}$ does not exceed
\begin{equation}
C\left\{\begin{aligned}
&\mu^{\frac{3}{2}}h^{-1} \beta \alpha^{-\frac{1}{2}}
(1+|\log (\alpha/\beta\mu h)|)\qquad
&&\beta \mu h\le \alpha,\\
&\mu h^{-\frac{3}{2}}\beta^{\frac{1}{2}}
\qquad
&&\beta \mu h\ge \alpha;
\end{aligned}\right.
\label{27-A-28}
\end{equation}
\item
Expression $h D_{x_1} e(x,y,\tau)|_{x=y=0}$ does not exceed
\begin{equation}
C\left\{\begin{aligned}
&\mu^{\frac{3}{2}}h^{-1} \alpha^{\frac{1}{2}}
(1+|\log (\alpha/\beta\mu h)|)\qquad
&&\beta \mu h\le \alpha,\\
&\mu h^{-\frac{3}{2}}\alpha^{\frac{1}{2}}
\qquad
&&\beta \mu h\ge \alpha;
\end{aligned}\right.
\label{27-A-29}
\end{equation}
\end{enumerate}
\end{proposition}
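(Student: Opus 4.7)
The plan is to extend the argument already carried out in Subsection~\ref{sect-27-A-3-3} for $j=2$ (which proves case (i)) to the simpler cases $j=1,3$, which correspond to (iii) and (ii) respectively. In formula \ref{27-A-15} the cotangent-bearing amplitude $i\alpha\mu^{-1}(1-t\cot t)$ is replaced by the smooth factors $i\alpha\mu^{-1}t$ and $i\beta\mu^{-1}t$; this is the only structural change. I will split the $t$-integral into the tick bands $\{|t-t_k|\le s_k\}$, $s_k\asymp\alpha^2\mu^{-3}h^{-1}k^2$, and divide the tick index into the outer regime $\bar k\le|k|\le k^*$ and the inner regime $1\le|k|\le\bar k$, with $\bar k=\mu^2 h/\alpha$ and $k^*=C_0\mu(\alpha+\beta)^{-1}$, exactly as before.

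In the outer regime — nonempty precisely when $\beta\mu h\le\alpha$ — I will reuse the Proposition~\ref{prop-27-A-4} tick analysis, multiplying bound \textup{(\ref{27-A-18})} by the amplitude value $|a_j(t_k)|$ at the $k$-th tick. Since $|t_k|\asymp|k|$, one has $|a_1(t_k)|\asymp\alpha\mu^{-1}|k|$ and $|a_3(t_k)|\asymp\beta\mu^{-1}|k|$, leading to per-tick bounds $C\mu^{3/2}h^{-1}\alpha^{1/2}|k|^{-1}$ and $C\mu^{3/2}h^{-1}\beta\alpha^{-1/2}|k|^{-1}$ respectively. Summation over $\bar k\le|k|\le k^*$ then produces the shared logarithmic factor $1+|\log(\alpha/\beta\mu h)|$ and yields the first branches of \textup{(\ref{27-A-29})} and \textup{(\ref{27-A-28})}.

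In the inner regime $1\le|k|\le\bar k$ I will adapt the integration-by-parts computation \textup{(\ref{27-A-20})}--\textup{(\ref{27-A-26})}. The key simplification compared with the $j=2$ case is that $a_1(t)$ and $a_3(t)$ are linear in $t$ and have no pole at $t_k=\pi k$, so integration by parts against $\bigl(i\hbar^{-1}(\varphi'(t)-\tau)\bigr)^{-1}\partial_t$, using $\varphi'$ from \textup{(\ref{27-A-21})}, produces boundary contributions at $t_k\pm s_k$ that telescope between consecutive ticks after the Taylor expansion in $\alpha^2 h^{-1}\mu^{-3}\cot t$ already used in \textup{(\ref{27-A-26})}. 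The residual error, controlled by the tick-boundary remainder term and summed over $|k|\le\bar k$, will deliver the second branches of \textup{(\ref{27-A-29})} and \textup{(\ref{27-A-28})}.

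The main obstacle will be the bookkeeping of the two subcases $\beta\mu h\le\alpha$ and $\beta\mu h\ge\alpha$: in the latter one has $k^*<\bar k$, so the outer regime is empty and the entire estimate must come from the inner-regime integration by parts, which is exactly what produces the distinct branches in \textup{(\ref{27-A-28})}, \textup{(\ref{27-A-29})}. Beyond this bookkeeping, the absence of a cotangent pole in $a_j$ for $j=1,3$ makes the boundary-term cancellation in the inner regime cleaner than in case (i), so no essentially new ideas beyond those already deployed for $j=2$ are required.
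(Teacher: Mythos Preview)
Your proposal is correct and follows exactly the approach the paper takes: the paper treats only the ``most difficult case'' $j=2$ in detail for the inner regime $1\le|k|\le\bar{k}$ and handles the outer regime $\bar{k}\le|k|\le k^*$ for all $j$ via the Proposition~\ref{prop-27-A-4} tick analysis, leaving the inner-regime adaptation to $j=1,3$ implicit. Your observation that the amplitude factors $a_1,a_3$ lack the $\cot(t)$ pole (so that, after division by $t$, the integrand has only the first-order $\csc(t)$ singularity rather than the second-order one in \textup{(\ref{27-A-20})}) is precisely why these cases are simpler, and the integration-by-parts and telescoping of boundary terms go through with fewer complications.
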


\subsection{Case \texorpdfstring{$\mu h\ge \epsilon_0$}{\textmu h \textge \textepsilon\textzeroinferior}}
\label{sect-27-A-3-4}

As $\mu h\ge \epsilon_0$ we consider a different representation: namely (\ref{book_new-16-1-15}) for a spectral projector in dimension $2$ (again after rescaling where we scale $e_*$ as functions rather than Schwartz kernels):
\begin{multline}
e_{(2)}(x',y',\tau)=\\
(2\pi)^{-1}\mu h^{-1}\sum_{m\in \bZ^+}\int
\upsilon_m \bigl(\eta+\mu^{-\frac{1}{2}}h^{-\frac{1}{2}}(x_1-y_1)\bigr)
\upsilon_m \bigl(\eta-\mu^{-\frac{1}{2}}h^{-\frac{1}{2}}(x_1-y_1)\bigr)\\[3pt]
\times\uptheta\Bigl( \tau - \alpha \mu^{-1}(x_1+y_1) -2\alpha \mu^{-\frac{1}{2}}h^{\frac{1}{2}}\eta -\alpha^2\mu^{-2}-2m \mu h\Bigr) e^{i\mu^{-\frac{1}{2}}h^{-\frac{1}{2}} (x_2-y_2)\eta} \,d\eta
\label{27-A-30}
\end{multline}
where we also replaced $H_{(2)}$ by $H_{(2)}-\mu h$ and $\tau$ by $\mu h +\tau$. Since
\begin{equation}
e(x,y,\tau)=e_{(2)}(x',y',.)*_\tau e_{(1)}(x_3,y_3,.)
\label{27-A-31}
\end{equation}
where $e_{(1)}(x_3,y_3,\tau)$ is a Schwartz kernel of the spectral projector of $1$-dimensional operator
\begin{equation}
\mu^2 h^2 D_3^2 - 2\beta \mu^{-1}x_3
\label{27-A-32}
\end{equation}
we conclude that
\begin{multline}
e (x,y,\tau)\\
=(2\pi)^{-1}\mu h^{-1}\sum_{m\in \bZ^+}\int
\upsilon_m \bigl(\eta+\mu^{-\frac{1}{2}}h^{-\frac{1}{2}}(x_1-y_1)\bigr)
\upsilon_m \bigl(\eta-\mu^{-\frac{1}{2}}h^{-\frac{1}{2}}(x_1-y_1)\bigr)\\[3pt]
\times e_{(1)}\Bigl(x_3,y_3, \tau - \alpha \mu^{-1}(x_1+y_1) -2\alpha \mu^{-\frac{1}{2}}h^{\frac{1}{2}}\eta -\alpha^2\mu^{-2}-2m \mu h\Bigr) e^{i\mu^{-\frac{1}{2}}h^{-\frac{1}{2}} (x_2-y_2)\eta} \,d\eta.
\label{27-A-33}
\end{multline}
Then
\begin{multline}
(\mu hD_{x_2}- x_1)e (x,y,\tau)\bigl|_{x=y=0}\\
=(2\pi)^{-1}\mu^{\frac{3}{2}} h^{-\frac{1}{2}}\sum_{m\in \bZ^+}\int
\upsilon_m ^2(\eta)\eta
\times e_{(1)}\Bigl(0,0, \tau - 2\alpha\mu^{-\frac{1}{2}}h^{\frac{1}{2}}\eta -\alpha^2\mu^{-2}-2m\mu h\Bigr) \,d\eta
\label{27-A-34}
\end{multline}
and since $\upsilon_m(.)$ is an even (odd) function for even (odd) $m$ we can replace
$e_{(1)}(0,0,\tau' -2\alpha\mu^{-\frac{1}{2}}h^{\frac{1}{2}}\eta)$ by \begin{equation}
e_{(1)}(0,0,\tau' -2\alpha\mu^{-\frac{1}{2}}h^{\frac{1}{2}}\eta)-
e_{(1)}(0,0,\tau' +2\alpha\mu^{-\frac{1}{2}}h^{\frac{1}{2}}\eta).
\label{27-A-35}
\end{equation}
In virtue of Subsubsection~5.2.1.3~\emph{\nameref{sect-5-2-1-3}} we know that an absolute value of this expression does not exceed $Ch^{-\frac{1}{2}}\alpha^{\frac{1}{2}}$\,\footnote{\label{foot-27-40} Only in the worst case when $ |\tau -2m\mu h|$ is not disjoint from $0$.}
we arrive to estimate\footnote{\label{foot-27-41} In the non-rescaled coordinates.}
\begin{equation}
| (hD_{x_2}- \mu x_1)e (x,y,\tau)\bigl|_{x=y=0}|\le
C\mu^{\frac{3}{2}} h^{-1}\alpha^{\frac{1}{2}}.
\label{27-A-36}
\end{equation}

Further,
\begin{multline}
\mu hD_{x_1} e (x,y,\tau)\bigl|_{x=y=0}\\
=i(2\pi)^{-1} \alpha\mu \sum_{m\in \bZ^+}\int
\upsilon_m ^2(\eta)
\times \partial_\tau e_{(1)}\bigl(0,0, \tau - 2\alpha \mu^{-\frac{1}{2}}h^{\frac{1}{2}}\eta -\alpha^2\mu^{-2}-2m \mu h\bigr) \,d\eta\\
=i(2\pi)^{-1} \mu^{\frac{3}{2}}h^{-\frac{1}{2}} \sum_{m\in \bZ^+}\int
\upsilon_m (\eta)\upsilon'_m(\eta)
\times e_{(1)}\bigl(0,0, \tau - 2\alpha \mu^{-\frac{1}{2}}h^{\frac{1}{2}}\eta -\alpha^2\mu^{-2}-2m \mu h\bigr) \,d\eta
\label{27-A-37}
\end{multline}
and using the same arguments we arrive to estimate\footref{foot-27-41}
\begin{equation}
| hD_{x_1} e (x,y,\tau)\bigl|_{x=y=0}|\le
C\mu^{\frac{3}{2}} h^{-1}\alpha^{\frac{1}{2}}.
\label{27-A-38}
\end{equation}

Finally,
\begin{multline}
\mu hD_{x_3} e (x,y,\tau)\bigl|_{x=y=0}=
(2\pi)^{-1}\mu h^{-1}\sum_{m\in \bZ^+}\int
\upsilon_m ^2(\eta)\\
\times \underbracket{\mu hD_{x_3} e_{(1)}\bigl(x_3,y_3, \tau
-2\alpha \mu^{-\frac{1}{2}}h^{\frac{1}{2}}\eta -\alpha^2\mu^{-2}-2m \mu h\bigr) \bigr|_{x_3=y_3=0} }\,d\eta
\label{27-A-39}
\end{multline}
and again in virtue of Subsubsection~5.2.1.3~\emph{\nameref{sect-5-2-1-3}} we know that an absolute value of selected expression does not exceed $Ch^{-\frac{1}{2}}\beta^{\frac{1}{2}}$\,\footnote{\label{foot-27-42} Without applying $(hD_j-\mu A_j(x))$ but it does not matter.} and we arrive to
estimate\footref{foot-27-41}
\begin{equation}
|hD_{x_3}e (x,y,\tau)\bigl|_{x=y=0}|\le
C\mu h^{-\frac{3}{2}}\beta^{\frac{1}{2}}.
\label{27-A-40}
\end{equation}

Therefore we have proven
\begin{proposition}\label{prop-27-A-6}
Let $\mu h\ge 1$, $\alpha\le 1$, $\beta\le 1$, $|\tau|\le c_01$. Then for operator $(H-\mu h)$ estimates \textup{(\ref{27-A-36})}, \textup{(\ref{27-A-38})} and \textup{(\ref{27-A-40})} hold.
\end{proposition}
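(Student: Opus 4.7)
The plan is to argue that Proposition~\ref{prop-27-A-6} is essentially a bookkeeping statement collecting what was already assembled in Subsection~\ref{sect-27-A-3-4}, and to spell out precisely the three ingredients that combine to yield each of the three estimates. First I would start from the decomposition \textup{(\ref{27-A-33})} of the full Schwartz kernel, which expresses $e(x,y,\tau)$ as a sum over Landau levels $m$, each term being a product of Hermite eigenfunctions $\upsilon_m$ (in the rescaled transverse variable) and the one-dimensional spectral projector $e_{(1)}$ for the operator \textup{(\ref{27-A-32})} with a linear (Stark) potential, evaluated at energy shifted by $-2\alpha\mu^{-1/2}h^{1/2}\eta-\alpha^2\mu^{-2}-2m\mu h$. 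The key input, borrowed from Subsubsection~5.2.1.3~\emph{\nameref{sect-5-2-1-3}}, is the uniform bound $|e_{(1)}(0,0,\tau)|\le Ch^{-1/2}\alpha^{1/2}$ (and similarly $|hD_{x_3}e_{(1)}(x_3,y_3,\tau)|_{x_3=y_3=0}|\le Ch^{-1/2}\beta^{1/2}$) which holds independently of the Landau shift, together with the analogous bound on the antisymmetrized difference of $e_{(1)}$ at symmetric energies.

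Next I would handle the three operators separately. For $(hD_{x_2}-\mu x_1)$, applying the operator to \textup{(\ref{27-A-33})} brings down a factor $\mu^{1/2}h^{-1/2}\eta$, leading to expression \textup{(\ref{27-A-34})}; since $\upsilon_m^2(\eta)\eta$ is odd, one may replace $e_{(1)}(0,0,\tau'-2\alpha\mu^{-1/2}h^{1/2}\eta)$ by the antisymmetric combination \textup{(\ref{27-A-35})}, which is $O(h^{-1/2}\alpha^{1/2})$ by the 1D Stark estimate; the remaining sum over $m$ and integration in $\eta$ produces an $L^1$ bound of $\sum_m\int\upsilon_m^2(\eta)\,d\eta\asymp \mu h$ absorbed into the prefactor, leaving \textup{(\ref{27-A-36})}. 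For $hD_{x_1}$, I would observe that $D_{x_1}$ acts on both the Hermite factor (giving $\mu^{1/2}h^{-1/2}\upsilon_m\upsilon_m'$) and on the $\alpha\mu^{-1}(x_1+y_1)$ shift inside $e_{(1)}$; after an integration by parts in $\eta$ one arrives at expression \textup{(\ref{27-A-37})} involving $\upsilon_m\upsilon_m'$, to which the same parity trick applies, yielding \textup{(\ref{27-A-38})}. For $hD_{x_3}$, the operator passes through the Hermite factor and falls directly onto $e_{(1)}$, giving \textup{(\ref{27-A-39})}; the selected 1D quantity $hD_{x_3}e_{(1)}(x_3,y_3,\tau')|_{x_3=y_3=0}$ is bounded by $Ch^{-1/2}\beta^{1/2}$ uniformly in the energy shift, again from Subsubsection~5.2.1.3, producing \textup{(\ref{27-A-40})}.

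The main (and essentially only) obstacle is verifying that the 1D Stark-projector estimates of Subsubsection~5.2.1.3 can be applied uniformly in the Landau shift $-2\alpha\mu^{-1/2}h^{1/2}\eta-\alpha^2\mu^{-2}-2m\mu h$; this requires that the bounds in that subsubsection depend only on the parameters $\alpha,\beta,h$ of the 1D operator and not on the spectral parameter, which is indeed the case since the 1D operator with linear potential has translation-covariant spectral kernel (shifting $\tau$ amounts to shifting $x_3$). Once this uniformity is in hand, the parity argument reduces each $\eta$-integral to an integral of $\upsilon_m^2$ or $\upsilon_m\upsilon_m'$ (both of which have total mass $\asymp 1$), and summation over $m$ contributes the factor $\mu h$ matching the prefactor in \textup{(\ref{27-A-33})}. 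No further microlocal machinery is needed because the explicit representation \textup{(\ref{27-A-33})} separates the magnetic and linear-potential degrees of freedom completely.
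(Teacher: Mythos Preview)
Your proposal follows the paper's approach exactly: it is indeed a bookkeeping statement, and the paper's ``proof'' is precisely the content of Subsection~\ref{sect-27-A-3-4} leading up to the words ``Therefore we have proven''. Your outline (start from \textup{(\ref{27-A-33})}, derive \textup{(\ref{27-A-34})}, \textup{(\ref{27-A-37})}, \textup{(\ref{27-A-39})}, exploit parity of $\upsilon_m$, and invoke the 1D Stark estimates) matches the paper step by step.

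There are, however, two bookkeeping slips you should fix. First, you write $|e_{(1)}(0,0,\tau)|\le Ch^{-1/2}\alpha^{1/2}$, but $e_{(1)}$ is the projector for the operator \textup{(\ref{27-A-32})}, which involves only $\beta$; the factor $\alpha^{1/2}$ enters only through the \emph{difference} \textup{(\ref{27-A-35})}, where the shift has size $2\alpha\mu^{-1/2}h^{1/2}\eta$. Second, your claim that ``summation over $m$ contributes the factor $\mu h$'' is wrong in this regime: since $\mu h\ge 1$ and $|\tau|\le c_0$, for $m\ge 1$ the shifted energy $\tau-2m\mu h$ is bounded away from $0$ from below and the corresponding $e_{(1)}$ term is negligible (this is exactly the content of footnote~\ref{foot-27-40}). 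Only the $m=0$ term contributes, and $\int\upsilon_0^2(\eta)|\eta|\,d\eta=O(1)$; combining with the prefactor $\mu^{3/2}h^{-1/2}$ and the difference bound $Ch^{-1/2}\alpha^{1/2}$ yields \textup{(\ref{27-A-36})} on the nose. With your counting the powers would not match. The same correction applies to the other two cases.
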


\subsection{Tauberian estimates}
\label{sect-27-A-3-5}

\begin{remark}\label{rem-27-A-7}
Assume now that all assumptions are fulfilled only in $B(0,\ell)$ rather than in $\bR^3$. Then there is also a Tauberian estimate which should be added to Weyl estimate. This Tauberian estimate (the same for all $j=1,2,3$) coincides with the Tauberian estimate was calculated in Chapter~\ref{sect-16}. Namely

\begin{enumerate}[label=(\roman*), fullwidth]
\item\label{rem-27-A-7-i}
As $\mu h\le 1$, $\ell \ge C_0\mu^{-1}$ this Tauberian estimate was calculated in Proposition~\ref{prop-16-5-2}(ii)\footref{foot-27-41};

\item\label{rem-27-A-7-ii}
As $\mu h \ge 1$, $\ell \ge C_0h$ this Tauberian estimate was calculated in Proposition~\ref{prop-16-5-7}(i) and corollary~\ref{cor-16-5-8}(i)\footref{foot-27-41} and it does not exceed
$C\mu h^{-\frac{3}{2}}\ell^{-1}$.
\end{enumerate}
\end{remark}

\end{subappendices}


\end{document}